\newtheorem*{rep@theorem}{\rep@title}
\newcommand{\newreptheorem}[2]{%
\newenvironment{rep#1}[1]{%
 \def\rep@title{#2 \ref{##1}}%
 \begin{rep@theorem}}%
 {\end{rep@theorem}}}
\theoremstyle{plain}
\newtheorem{Teorema}{Theorem}[section]
\newtheorem{thm}[Teorema]{Theorem}
\newtheorem{cor}[Teorema]{Corollary}
\newtheorem{Lemma}[Teorema]{Lemma}
\newtheorem{lemma}[Teorema]{Lemma}
\newtheorem{Corollario}[Teorema]{Corollary}
\newtheorem{Proposizione}[Teorema]{Proposition}
\newtheorem{prop}[Teorema]{Proposition}
\newtheorem{thm_intro}{Theorem}
\newtheorem{cor_intro}[thm_intro]{Corollary}
\newtheorem{quest_intro}[thm_intro]{Question}
\theoremstyle{definition}
\newtheorem{Definizione}[Teorema]{Definition}
\newtheorem{defn}[Teorema]{Definition}
\newtheorem{quest}[Teorema]{Question}
\theoremstyle{remark}
\newtheorem{Nota}[Teorema]{Note}
\newtheorem{Notazione}[Teorema]{Notation}
\newtheorem{Osservazione}[Teorema]{Remark}
\newtheorem{rem}[Teorema]{Remark}
\newtheorem{Esempio}[Teorema]{Example}
\newtheorem{Esempi}[Teorema]{Examples}
\newtheorem*{thm:repeated:isometry-lemma}{Theorem \ref{isometry-lemma-intro}}
\newtheorem*{thm:repeated:Van-Theor-intro}{Theorem \ref{Van-Theor-intro}}
\newtheorem*{thm:repeated:Fin-Theor-intro}{Theorem \ref{Fin-Theor-intro}}
\newtheorem*{thm:repeated:vanishing1_intro}{Theorem \ref{vanishing1_intro}}
\newtheorem*{thm:repeated:homotopy-weak-intro}{Theorem \ref{homotopy-weak-intro}}
\newtheorem*{thm:repeated:mapping1_intro}{Theorem \ref{mapping1_intro}}
\newtheorem*{thm:repeated:mapping-intro}{Theorem \ref{mapping-intro}}
\newtheorem*{thm:repeated:van-cor-intro}{Corollary \ref{van-cor-intro}}
\newtheorem*{thm:repeated:invisible:new-intro}{Theorem \ref{invisible:new-intro}}
\newtheorem*{thm:repeated:tre-intro}{Theorem \ref{tre-intro}}
\newtheorem*{thm:repeated:va-cor-intro}{Corollary \ref{va-cor-intro}}
\newtheorem*{thm:repeated:fi-cor-intro}{Corollary \ref{fi-cor-intro}}
\newcommand{\wdtX}{\widetilde{X}}
\newcommand{\G}{\ensuremath {\Gamma}}
\newcommand{\calC} {\ensuremath {\mathcal{C}}}
\newcommand{\calP} {\ensuremath {\mathcal{P}}}
\newcommand{\calS} {\ensuremath {\mathcal{S}}}
\newcommand{\calM} {\ensuremath {\mathcal{M}}}
\newcommand{\calK} {\ensuremath {\mathcal{K}}}
\newcommand{\calL} {\ensuremath {\mathcal{L}}}
\newcommand{\calA} {\ensuremath {\mathcal{A}}}
\newcommand{\calO} {\ensuremath {\mathcal{O}}}
\newcommand{\calR} {\ensuremath {\mathcal{R}}}
\newcommand{\calW} {\ensuremath {\mathcal{W}}}
\DeclareMathOperator{\Hom}{Hom}
\DeclareMathOperator{\aut}{Aut}
\DeclareMathOperator{\mult}{mult}
\DeclareMathOperator{\supp}{supp}
\DeclareMathOperator{\inte}{int}
\DeclareMathOperator{\id}{Id}
\DeclareMathOperator{\vol}{Vol}
\newcommand{\calU}{\ensuremath {\mathcal{U}}}
\newcommand{\matN}{\ensuremath {\mathbb{N}}}
\newcommand{\R} {\ensuremath {\mathbb{R}}}
\newcommand{\matZ} {\ensuremath {\mathbb{Z}}}
\newcommand{\hl}{\ensuremath{H^{\ell_1}}}
\newcommand{\cl}{\ensuremath{C^{\ell_1}}}
\newcommand{\lf}{\ensuremath{\textrm{lf}}}
\newcommand{\alt}{\ensuremath{\mathrm{alt}}}
\newcommand{\red}{\ensuremath{\mathrm{red}}}
\newcommand{\actson}{\ensuremath{\curvearrowright}}
\newcommand{\simf}{\ensuremath{\mathfrak{S}_{\rm fin}}}
\newcommand{\todo}[1]{\vspace{5mm}\par \noindent
\framebox{\begin{minipage}[c]{0.95 \textwidth} \tt #1\end{minipage}} \vspace{5mm} \par}
\numberwithin{section}{chapter}
\numberwithin{subsection}{section}
\title{Gromov's theory of multicomplexes\\ with applications to bounded cohomology \\ and simplicial volume}
\author[]{Roberto Frigerio}
\address{Dipartimento di Matematica, Universit\`a di Pisa, Largo B. Pontecorvo 5, 56127 Pisa, Italy}
\email{roberto.frigerio@unipi.it}
\author[]{Marco Moraschini}
\address{Fakult\"{a}t f\"{u}r Mathematik, Universit\"{a}t Regensburg, 93040 Regensburg, Germany}
\email{marco.moraschini@ur.de}
\thanks{The second author was supported both by the \emph{PhD program} of the University of Pisa and by the CRC~1085 \emph{Higher Invariants}
  (Universit\"at Regensburg, funded by the DFG)}
\keywords{bounded cohomology; simplicial volume; multicomplex; simplicial complex; simplicial set; Kan complex; singular homology; singular cohomology; $\ell^1$-homology; amenable group; homotopy groups; diffusion operator}
\subjclass[2010]{55N10, 55U10, 57N65, 57R19 (primary); 20J06, 43A07, 53C23, 55Q05, 57Q05 (secondary).}
\begin{document}

\begin{abstract}
The simplicial volume is a homotopy invariant of manifolds introduced by Gromov in his pioneering paper \emph{Volume and bounded cohomology}. 
In order to study the main properties of simplicial volume, 
Gromov himself initiated the dual theory of bounded cohomology, which then developed into a very active and independent research field. 
Gromov's theory of bounded cohomology of topological spaces was based on the use of multicomplexes, which are simplicial structures that generalize
simplicial complexes without allowing all the degeneracies appearing in simplicial sets. 
%While being based on very neat geometric ideas, Gromov?s theory of multicomplexes involves a certain amount of technicalities and raises several difficulties.
%These facts created  some interest in an alternative approach to bounded cohomology, which was developed some years later by Ivanov.

The first aim of this paper is to lay the foundation of the theory of multicomplexes. After setting the main definitions, we construct the singular multicomplex
$\calK(X)$ associated
to a topological space $X$, and we prove that the geometric realization of $\calK(X)$ is homotopy equivalent to $X$ for every CW complex $X$. Following Gromov, we introduce the notion
of completeness, which, roughly speaking, translates into the context of multicomplexes the Kan condition for simplicial sets. We then develop the homotopy theory of complete
multicomplexes, and we show that $\calK(X)$ is complete for every CW complex $X$.

In the second part of this work we apply the theory of multicomplexes to the study of the bounded cohomology of topological spaces. 
Our constructions and arguments culminate in the 
%We prove that the simplicial
%and the singular 
%bounded cohomologies of a complete multicomplex are isometrically isomorphic, and that the simplicial bounded cohomology of a complete multicomplex
%is isometrically isomorphic to the bounded cohomology of its aspherical quotient. Using this, we provide 
complete proofs of Gromov's Mapping Theorem (which implies
in particular that the bounded cohomology of a space only depends on its fundamental group) and of Gromov's Vanishing Theorem, which ensures the vanishing of the simplicial
volume of 
closed manifolds admitting an amenable cover of small multiplicity. 

The third and last part of the paper is devoted to the study of locally finite chains on non-compact spaces, hence to the simplicial volume of open manifolds. We expand some ideas of Gromov to provide detailed proofs of a criterion for the vanishing and a criterion for the finiteness of the simplicial volume of open manifolds. As a by-product of these results,
we prove a criterion for the $\ell^1$-invisibility of closed manifolds in terms of amenable covers. As an application, we give the first detailed proof of the vanishing
of the simplicial volume of the product of three open manifolds. 
\end{abstract}

\maketitle

\tableofcontents

\chapter*{Introduction}

\subsection*{Simplicial volume}
The simplicial volume is an invariant of manifolds introduced by Gromov in his pioneering paper \emph{Volume and bounded cohomology}~\cite{Grom82}. If $X$
is a topological space and $S_n(X)$ denotes the set of singular $n$-simplices with values in $X$, then the space $C_n(X)$ of singular $n$-chains with real coefficients is endowed with the 
$\ell^1$-norm defined by
$$
\left\| \sum_{\sigma\in S_n(X)} a_\sigma \sigma\right\| = \sum_{\sigma\in S_n(X)} |a_\sigma|\ .
$$ 
This norm descends to a seminorm on the homology $H_*(X)$ with real coefficients, and if $M$ is a closed oriented $n$-dimensional manifold, then the simplicial volume $\|M\|$ of $M$
is the $\ell^1$-seminorm of the real fundamental class $[M]\in H_n(M)$.

\subsection*{Simplicial volume and Riemannian geometry}
While being defined only in terms of singular chains, the simplicial volume is deeply related to many invariants of geometric nature. 
For example,
it vanishes for manifolds admitting a Riemannian metric with non-negative Ricci curvature, and it is positive for negatively curved manifolds~\cite{Grom82,Inoue}. 
Indeed, Gromov's paper~\cite{Grom82} describes many phenomena which relate the simplicial volume to other invariants of differential geometric nature. 
If $M$ is a compact Riemannian manifold, the so-called \emph{Gromov's main inequality} establishes an explicit upper bound for the simplicial volume in terms of the Riemannian  volume and the Ricci curvature of $M$. Using this, Gromov provided new estimates from below for the minimal volume in terms of the simplicial volume 
(the minimal volume of a Riemannian manifold $M$ is the infimum of the volumes of complete Riemannian metrics on $M$ subject to the condition that the absolute value of all sectional curvatures is  not bigger than $1$). In particular, Gromov proved in~\cite{Grom82} that the non-vanishing of the simplicial volume of $M$ implies the non-vanishing  of
the minimal volume of $M$, which in turn implies that $M$ cannot collapse while keeping the curvature uniformly bounded (i.e.~there exists no sequence of Riemannian
metrics on $M$ with uniformly bounded sectional curvature whose volumes converge to $0$). This result was exploited e.g.~in the proof of a collapsing theorem which plays a fundamental role in  
the last step of Perelman's proof of Thurston's Geometrization Conjecture~\cite{BBBMP,geom:book}.

Similar estimates hold when one replaces the minimal volume with the minimal entropy, an invariant which, roughly speaking, measures the rate of growth of balls in the universal covering. 
Gromov proved in~\cite{Grom82} that the non-vanishing of the simplicial volume implies the non-vanishing  of
 the minimal entropy. Since then, the simplicial volume has
 been extensively exploited in 
the study of volumes of balls and of systolic inequalities in
 Riemannian manifolds (see e.g.~\cite{Samb1,Guth,Bal}). We refer the reader to~\cite{BaSa} for recent results in this direction.

Even more can be said when working with manifolds with constant curvature or, more in general, with locally symmetric spaces.
For closed hyperbolic manifolds, a fundamental result by Gromov and Thurston shows that $\|M\|=\vol(M)/v_n$, where $v_n$
 is a constant only depending on the dimension $n$ of $M$~\cite{Thurston, Grom82}. This 
result (which provides one of the few exact computations of the simplicial volume) describes the hyperbolic volume explicitly in terms of a topological invariant, and plays a 
fundamental role in a celebrated proof of Mostow Rigidity Theorem due to Gromov and Thurston. 

  More in general, Gromov's Proportionality Principle~\cite{Grom82} ensures that the ratio $\|M\|/\vol(M)$ between the simplicial 
 volume and the classical volume of a closed Riemannian manifold only depends on the isometry type of the universal covering of $M$ (see also
 \cite{Thurston, Loh, Bucher, Frigerio, Franceschini1,Str-Lip}). 
 Lafont and Schmidt proved in~\cite{Lafont-Schmidt}
that the proportionality constant between $\|M\|$ and $\vol(M)$ is positive for every
locally symmetric space of non-compact type (see also~\cite{michelleprimo} for a case not covered by
Lafont and Schmidt's argument). Thus locally symmetric spaces of non-compact type have positive simplicial volume, and this answers a 
 question posed by Gromov~\cite{Grom82}.

Other long-standing questions on the relationship between curvature and  simplicial volume for Riemannian manifolds  are still unsolved.
 For example, Gromov conjectured in~\cite{Grom82} that the simplicial volume of a non-positively curved manifold with negative Ricci curvature should be positive. 
 We refer the reader to~\cite{CW1,CW2} for some recent progress on this topic.

 The simplicial volume has also been studied in relation with the Chern Conjecture, which predicts that the Euler characteristic of a closed manifold
 admitting an affine structure should vanish~\cite{BuGe,BCL}, or, equivalently, that if the tangent bundle $TM$ of a closed manifold $M$ admits a torsion-free flat connection, then
its Euler number of $TM$ should vanish. We refer the reader e.g.~to \cite[Chapter 13]{frigerio-libro} for  the use of simplicial volume in a possible approach to the Chern Conjecture.
%related conjectures on the simplicial volume of affine manifolds.

\subsection*{Simplicial volume and the topology of manifolds}

Of course, the simplicial volume has found applications also to problems  which are more topological in nature. The elementary remark that the degree
of any map $f\colon M\to N$ between closed manifolds of the same dimension is bounded above by the ratio $\|M\|/\|N\|$ laid the foundation for many results
on domination between manifolds (an orientable manifold $M$ dominates the orientable manifold $N$ if there exists a map $f\colon M\to N$ of non-vanishing degree).
We refer the reader e.g.~to~\cite{Derbez,BRW,Neo1,Neo2,Neo3} for some results on domination between 3-manifolds that were obtained via the study of the simplicial volume. 

Also when dealing with the topological features of simplicial volume, it turns out that some long-standing questions are still unsolved. For example,
 Gromov asked in~\cite[page~232]{gromovasymptotic}  whether the $\textup{L}^2$-Betti numbers  of a closed aspherical manifold $M$ with $\|M\|=0$ should vanish
 (see also \cite[3.1.~(e) on page~769]{gromov-cycles}). If answered in the affirmative, Gromov's question would imply, for example, that if $M$ is an orientable aspherical closed manifold
 admitting a self-map of degree $d\notin \{-1,0,1\}$, then the Euler characteristic of $M$ vanishes. 
 In order to approach his question, Gromov himself introduced a variation of the classical simplicial volume,
 called the \emph{integral foliated simplicial volume}~\cite{Gromovbook}. For some results on this topic we refer e.g.~to~\cite{Sauer, Sthesis, FFM,LP,FLPS,FLF,FLMQ}.

Other (even more elementary) open questions on the simplicial volume are related to products and fiber bundles. 
If $M,F$ are closed orientable manifolds of dimension $m,n$ respectively, then  the simplicial volume of $M\times F$ satisfies the bounds
%$$
%\| M\|\cdot \| F\|\leq \| M\times F\|\leq 
%\binom{n+m}{m}
%\|M\| \cdot \| F\|
%$$
$
\| M\|\cdot \| F\|\leq \| M\times F\|\leq 
c_{n,m}
\|M\| \cdot \| F\|
$, where $c_{n,m}$ is a constant only depending on $n,m$
(see e.g.~\cite{Grom82}).
%A natural question is whether $\|M\times F\|$ should be equal to $c_{n,m} \|M\|\cdot \|F\|$ for a constant $c_{n,m}$ depending
%only on $n=\dim M$, $m=\dim F$. The answer to this question is still unknown in general. 
Of course, products are just a special case of fiber bundles, so one may wonder whether the simplicial volume of the total space $E$ of
a fiber bundle with base $M$ and fiber $F$ could be estimated in terms of the simplicial volumes of $M$ and $F$. 
%Since there exist closed hyperbolic $3$-manifolds
%that fiber over the circle, no estimate of the form $\| E\|\leq k\cdot \|M\| \cdot \| F\|$ can hold for any $k>0$.
%On the contrary, 
In the case when $F$ is a surface it was proved
in~\cite{HK} that the inequality $\| M\|\cdot \| F\|\leq \|E\|$ still holds. This estimate was then improved in~\cite{Michelle:fiber}. It it still an open question whether
the inequality
$
\|E\|\geq \|M\|\cdot \|F\|
$
holds for every fiber bundle $E$ with fiber $F$ and base $M$, without any restriction on the dimensions of $F$ and $M$.

\bigskip

The simplicial volume has been exploited to the study of the topology and of the geometry of manifolds in further directions. Here we just mention the following ones:
the study of the geometry of smooth maps (e.g.~via the analysis of the complexity of their critical sets and of their fibers) ~\cite{gromov-cycles,GuthGromov};
the counting of trajectories of vector fields~\cite{Katz, Alpert1, Alpert2,Campa}.

\subsection*{Bounded cohomology}
Computing the simplicial volume has proved to be a very challenging task. Besides hyperbolic manifolds, the only other closed manifolds for which the exact value of the simplicial volume
is known are $4$-dimensional manifolds locally isometric to the product 
$\mathbb{H}^2\times \mathbb{H}^2$
 of two hyperbolic planes~\cite{Bucher3} (for example,
the product of two hyperbolic surfaces). Starting from these examples, more values for the simplicial volume can be obtained by taking connected sums or amalgamated sums along submanifolds with an amenable fundamental group. 

In order to study the simplicial volume, Gromov himself developed in~\cite{Grom82} the dual theory of \emph{bounded cohomology}. If $\varphi\in C^n(X)$ is a singular cochain with real coefficients, then one can define the $\ell^\infty$-norm of $\varphi$ by setting
$$
\|\varphi\|_\infty =\sup \{ |\varphi(\sigma)|\, ,\ \sigma\in S_n(X)\}\ \in\ [0,+\infty]\ .
$$
A cochain is bounded if its $\ell^\infty$-norm is finite. It is easily seen that bounded cochains define a subcomplex $C^*_b(X)$ of $C^*(X)$, whose cohomology 
is the bounded cohomology $H^*_b(X)$ of $X$. The inclusion $C^*_b(X)\hookrightarrow C^*(X)$ induces the \emph{comparison map}
$$
c^*\colon H^*_b(X)\to H^*(X)\ .
$$
For every $n\in\mathbb{N}$, the normed space $C^n_b(X)$ coincides with the topological dual of $C_*(X)$ (endowed with the $\ell^1$-norm). Using this, it is not difficult to show that the
vanishing of $H^n_b(X)$ implies the vanishing of the $\ell^1$-seminorm on $H_n(X)$ (hence, of the simplicial volume, if $X$ is a closed $n$-dimensional manifold). This and other duality results have been exhaustively exploited in the study of the simplicial volume. For example, Bucher's computation of the simplicial volume of manifolds locally isometric to $\mathbb{H}^2\times \mathbb{H}^2$ takes place in the context of bounded cohomology, and it is based on the study of bounded cocycles rather than of cycles. As a quite surprising consequence, there is no description
of any fundamental cycle for the product of two hyperbolic surfaces whose $\ell^1$-seminorm approximates the value of the simplicial volume.

One of the most peculiar features of the bounded cohomology of a space $X$ is that it only depends on the fundamental group of $X$. In particular, the bounded cohomology of any simply connected space vanishes. 
In order to prove
this fundamental
result (and other related results), Gromov introduced in~\cite{Grom82} the notion of \emph{multicomplex}. Since then, 
multicomplexes have been exploited in several papers on the simplicial volume (see e.g.~\cite{Kuessner, KimKue, strz-unp}).

While being based on very neat geometric ideas, Gromov's
theory of multicomplexes involves a certain amount of technicalities. In view of this, 
a lot of progress in the study of the bounded cohomology of topological spaces has been achieved thanks to an alternative approach
 exploiting tools from homological algebra.
%The first aim of this work is to lay the foundation of the theory of multicomplexes, by providing
%detailed proofs of Gromov's statements on multicomplexes, as well as of some applications of multicomplexes to the study of simplicial
%volume (both 
%Multicomplexes have then been exploited in several papers on the simplicial volume (see e.g.~\cite{Kuessner, KimKue, strz-unp}). However, all these papers 
%give for granted several fundamental 
%results from~\cite{Grom82} (like Theorems~\ref{homotopy-weak-intro} and~\ref{isometry-lemma-intro} below), whose proofs in~\cite{Grom82} are omitted or just sketched. 
%The difficulties encountered in working with multicomplexes encouraged the interested mathematicians to develop 
%alternative approach to the bounded cohomology of spaces (hence, to the simplicial volume of manifolds) was soon developed.
This alternative path to the study of bounded cohomology was initiated by Brooks~\cite{Brooks}
and  developed by
Ivanov in his foundational paper~\cite{Ivanov} (and, years later and in a wider context, by Burger and Monod~\cite{BM1,Monod,BM2}; see also~\cite{Bualg}).
Indeed, the first detailed proof that the bounded cohomology of a space only depends on its fundamental group appeared in~\cite{Ivanov}
(under the assumption that the space be homotopy equivalent to a countable CW complex;  recently, Ivanov modified his proof that applies now to any topological space~\cite{ivanov3}).
%As stated by Ivanov in the introduction of~\cite{ivanov3}, the need to provide a new proof of Gromov's result on the vanishing of bounded cohomology for simply connected spaces was due to the fact that, when reading Gromov's original paper, 
%``I failed in my attempts to understand the proofs of the general results about the bounded
%cohomology, such as the vanishing of the bounded cohomology of simply-connected spaces''. 
%Ivanov's proof~\cite{Ivanov}  was based on a modification of the Cartan--Serre killing
%homotopy group process~\cite{CS} and on the Dold--Thom construction~\cite{DT}, and applies only to spaces that are homotopy equivalent to countable CW complexes. 
%As observed by Buehler in~\cite{Bualg}, 
%``The proof of this result is quite difficult and not very well understood as is indicated
%by the strange hypothesis on $X$ (Gromov does not make this assumption explicit,
%his proof is however rather sketchy to say the least). The reason for this is the fact
%that the complete proof given by Ivanov uses the Dold--Thom construction which
%necessitates the countability assumption''. Recently, Ivanov modified his proof that applies now to any topological space.

In this paper we  come back to Gromov's original approach and exploit multicomplexes to give a detailed proof that 
the bounded cohomology of a space only depends on its fundamental group. In order to apply our argument
(which stays as close as possible to Gromov's) to any topological space, we build
on the invariance of bounded cohomology with respect to weak homotopy equivalences, which was recently proved in~\cite{ivanov3}.
For a self-contained proof based on
the techniques developed here that applies to any 
CW complex (or, more in general, to any 
\emph{good} topological space --
see Definition~\ref{good:def}), we address  the reader to~\cite[Addendum 4.A]{Marco:tesi}. 

We refer the reader e.g.~to~\cite{Monod:inv} for a description of the wide range of applications of bounded cohomology (of spaces and of groups)
to different fields in geometry and algebraic topology (see also the books~\cite{Monod, frigerio-libro}). 

%all the spaces which are homotopy equivalent
%to a CW complex (without any restriction on the cardinality of the cells)~\cite{ivanov3}. He also proved that weak homotopy equivalences induce isometric isomorphisms on bounded cohomology.
%Since every topological space is weakly homotopy equivalent to a CW complex, the vanishing of the bounded cohomology of any simply connected topological space
%is finally settled.

\subsection*{Multicomplexes}
The first aim of this paper is to lay the foundation of the theory of multicomplexes. We believe that renovating the interest towards this topic could be fruitful for at least two
reasons. First of all, approaching the simplicial volume via multicomplexes (rather than from the dual point of view of bounded cohomology) means working with cycles (in a quite concrete way) rather than with 
cocycles (via duality). In our view, this allows a more direct understanding of the topology and the geometry of cycles,
and could be of help in finding new approaches to some long-standing open questions on the simplicial volume. Secondly, while bounded cohomology is very effective 
in dealing with finite singular chains (hence, with the simplicial volume of compact manifolds), the use of multicomplexes is still necessary
in the study of locally finite chains (hence, of the simplicial
volume of open manifolds) -- we refer the reader to~\cite[Remark C.4]{Lothesis} %and to~\cite{Loeh} 
for a discussion of this issue. Similarly, we hope that multicomplexes could help to understand
some peculiar features of \emph{relative} bounded cohomology (hence, of the simplicial volume of manifolds with boundary) -- see  the discussion
in Section~\ref{relative:sec}.

Most results proved in this work  on the simplicial volume of \emph{closed} manifolds admit alternative proofs which do not make use of multicomplexes.
On the contrary, 
here we provide the first detailed proofs of several results stated by Gromov in~\cite{Grom82} on the simplicial volume of open manifolds. 
Indeed, in~\cite{Grom82} one may find outlines for the proofs of various results stated here. However, when trying to fill in the details of Gromov's arguments,
we often needed to face substantial difficulties that we were not able to overcome without diverging from Gromov's original path. 
%As a consequence, our constructions and our proofs . 
In some cases, Gromov's proofs were so concise that it is not even clear whether our work  follows his ideas or not.
%for example,
%we devoted 4 Chapters to provide detailed proofs of the Vanishing and the Finiteness Theorems for the simplicial volume of open manifolds, which are
%discussed in 4 pages of Gromov's manuscript~\cite{Grom82}. 
In expanding Gromov's hints, we developed tools and ideas that will hopefully 
be of use also in future works. Unfortunately, we also needed to introduce some (unavoidable, to the best of our efforts) technicalities, and 
we did our best to supply the most technical passages with an informal description of what is going on.

A natural question is whether one could obtain the results proved in this paper by working with simplicial sets rather than with multicomplexes. This
would give the unquestionable advantage of working with well-established and well-understood simplicial structures, for which  an exhaustive literature is available.
However, after trying to translate Gromov's ideas into the less exotic context of simplicial sets, we finally could not
disagree with  Gromov's  deep intuition that multicomplexes should be preferred as effective tools for the study of the bounded cohomology 
of topological spaces. We refer the reader e.g.~to Remark~\ref{rem:embedding:no:simpl:set} for a brief discussion of this issue.

Before stating the main results proved in this paper, let us briefly introduce the notion of multicomplex. 
As stated in~\cite{Grom82}, a multicomplex is 
``a set $K$ divided
into the union of closed affine simplices $\Delta_i$, $i\in I$,
such that the intersection of any
two simplices $\Delta_i\cap \Delta_j$ is a (simplicial) subcomplex in $\Delta_i$ as well as in $\Delta_j$''. More formally, a multicomplex is an unordered $\Delta$-complex in which every simplex
has distinct vertices, or, equivalently, a symmetric simplicial set in which every non-degenerate simplex has distinct vertices.
We refer the reader to Definition~\ref{multicomplex:def} for the precise definition of multicomplex, and to 
Section~\ref{comparison:subsec} for a thorough discussion of the relationship between
multicomplexes and other well-known simplicial structures.

\subsection*{The singular multicomplex} If $X$ is a topological space, then the \emph{singular multicomplex} $\calK(X)$ associated to $X$ is a multicomplex
whose $n$-simplices are given by the singular $n$-simplices with distinct vertices in $X$, up to affine automorphisms of the standard simplex $\Delta^n$.
This definition evokes the notion of \emph{singular complex} $\calS(X)$ of $X$, which is 
 the simplicial set having as simplices the singular simplices with values in $X$. The geometric realizations $|\calS(X)|$ and $|\calK(X)|$ of both $\calS(X)$ and $\calK(X)$ come equipped with natural projections
 $j\colon |\calS(X)|\to X$, $S\colon |\calK(X)|\to X$. A fundamental result in the theory of simplicial sets ensures that $j$ is a weak homotopy equivalence. In Section~\ref{Sec:weak:hom:type:K(X)} we establish the same result for the map $S$:
  
 \begin{thm_intro}%[{\cite[Ex.~(c), page 42]{Grom82}}]
 \label{homotopy-weak-intro}
 Let $X$ be a good space.
  The natural projection
  $$
  S\colon |\calK(X)|\to X
  $$
  is a weak homotopy equivalence.
 \end{thm_intro}
 We refer the reader to Definition~\ref{good:def} for the definition of good topological
 space. Here we just anticipate that every CW complex is good.

Gromov states in~\cite[Ex.~(c), page 42]{Grom82} that Theorem~\ref{homotopy-weak-intro} can be proved by a ``standard argument'', and refers the reader to the paper~\cite{moore}, 
which describes
some applications to homotopy theory  of the classical theory of simplicial sets.
However, 
due to the lack of functoriality of the singular multicomplex (see Section~\ref{Sec:no:functorial}) we were not able to adapt 
the classical ideas described in Moore's paper from the context of simplicial sets to the context of multicomplexes. Indeed, our proof 
of Theorem~\ref{homotopy-weak-intro}
is rather inspired to
Milnor's paper~\cite{milnor-geom}. It is also worth mentioning that 
Theorem~\ref{homotopy-weak-intro} is stated  in~\cite{Grom82} without any assumption on the topology of $X$. 
The question whether
Theorem~\ref{homotopy-weak-intro} could hold for any topological space is discussed in Question~\ref{question:whe} and in 
Remarks~\ref{nospecial:rem} and~\ref{controesempioa:rem}.

\subsection*{The Isometry Lemma} As mentioned above, Ivanov recently proved that weak homotopy equivalences induce isometric isomorphisms on bounded cohomology~\cite{ivanov3}. Therefore, by Theorem~\ref{homotopy-weak-intro} we can compute
the bounded cohomology of a good space $X$ by looking at the singular bounded cohomology of $|\calK(X)|$. Just as for simplicial sets, the simplicial cohomology of a multicomplex $K$
(which will be denoted by the symbol $H^*(K)$)
is isomorphic to the singular cohomology of its geometric realization. There is no hope to extend this result to bounded cohomology for general multicomplexes
(for example, if a multicomplex $K$ has a finite number of simplices, then its simplicial bounded cohomology $H^*_b(K)$ obviously coincides with the ordinary simplicial cohomology $H^*(K)$, which is in turn
isomorphic to the singular cohomology $H^*(|K|)$ of its geometric realization; but $H^*(|K|)$ is in general very far from being isomorphic to the singular bounded cohomology $H^*_b(|K|)$ of $|K|$). However, 
Gromov showed in~\cite{Grom82} that the inclusion of
bounded simplicial cochains into bounded singular cochains induces an isometric isomorphism $H^*_b(K)\to H^*_b(|K|)$ provided that the multicomplex $K$ is \emph{complete} and \emph{large}. We provide a detailed proof of this fact in Section~\ref{isometry:lemma:sec}:

\begin{thm_intro}[Isometry Lemma, {[Gro82, page 43]}]\label{isometry-lemma-intro}
Let $K$ be a complete and large multicomplex. Then for every $n\in\mathbb{N}$ there exists a canonical isometric isomorphism
$$
H^n_b(K)\to H^n_b(|K|)\ .
$$
\end{thm_intro}

\subsection*{Complete, minimal and aspherical multicomplexes}
A multicomplex $K$ is large if every connected component of $K$ contains infinitely many vertices. Completeness is a much subtler notion, which evokes in the context of multicomplexes the Kan condition for simplicial sets (see Definition~\ref{completeness:defn} and Remark~\ref{kan:rem}). Completeness
is particularly useful in the study of homotopy groups of geometric realizations. Indeed, the combinatorial description of homotopy groups of Kan simplicial sets is a classical topic in the theory of simplicial sets, 
and in Chapter~\ref{chap2:hom} we develop a similar theory for complete multicomplexes. 
In particular, following Gromov, we introduce the notion of minimal multicomplex
and of aspherical multicomplex, and we show that to every large and complete multicomplex $K$ there are associated a minimal and complete
multicomplex $L$ homotopy equivalent to $K$, and an aspherical, minimal and complete multicomplex $A$ having the same fundamental group of $K$.

A crucial intuition of Gromov is that, if $K,L$ and $A$ are as above, then, for every $n\in\mathbb{N}$,
 the $n$-skeleton of the aspherical multicomplex $A$ is the quotient of the $n$-skeleton of $L$ with respect to the action of an amenable group. This result ultimately depends
 (in a very indirect way)
 on the fact that $A$ is obtained from $L$ by killing all higher homotopy groups, and higher homotopy groups are abelian, hence amenable. The 
 fact that higher homotopy groups are abelian
is exploited also by Ivanov in his proof that the bounded cohomology of a space only depends on its fundamental group, which is based on 
the construction of a tower of weakly principal bundles with amenable (but infinite-dimensional!) structure groups.  
Here we develop  Gromov's intuition into self-contained and complete proofs.
To this aim,
we need to introduce some modifications not only to Gromov's arguments, but even to some (fundamental) definitions given in~\cite{Grom82}.
For example, our definition of the group of simplicial automorphisms $\G$ such that $L/\G\cong A$ is different from Gromov's. 
We refer the reader to Remark~\ref{rem:ip:agg:gruppi:gromov} for a detailed discussion of this issue (see also Remark~\ref{nohomorem}).

The combinatorial description of the homotopy groups of a complete multicomplex is carried out in Theorems~\ref{complete:special} and~\ref{complete:minimal:special},
and it plays a fundamental role in many of our proofs. For example, it allows us to prove that the singular multicomplex $\calK(X)$ associated to a good topological space is complete (see Theorem~\ref{K(X)-compl}), and 
to establish useful criteria to recognize completeness, minimality and asphericity of a given multicomplex (see Proposition~\ref{aspherical:char}). 
%It is also essential in the  proof
%that suitable quotients of a distinguished subgroup of the group $\G$ mentioned above are amenable (see Corollary~\ref{corC}). 
Theorems~\ref{complete:special} and~\ref{complete:minimal:special} are basically taken for granted in~\cite{Grom82},
where they are never explicitly stated (but they are extensively used). 

\subsection*{Bounded cohomology is determined by the fundamental group}
The second part of this paper is devoted to the use of multicomplexes in the study of bounded cohomology and of the simplicial volume of closed manifolds.
As just mentioned, a key result in the theory of multicomplexes is that, for every good topological space $X$, the singular multicomplex $\calK(X)$ is complete. 
Therefore, putting together Theorem~\ref{homotopy-weak-intro} with Theorem~\ref{isometry-lemma-intro} and the invariance of bounded cohomology
with respect to weak homotopy equivalences~\cite{ivanov3}, one obtains that $H^*_b(X)$ is isometrically isomorphic to
the bounded simplicial cohomology $H^*_b(\calK(X))$ of $\calK(X)$ for every good topological space $X$ (Gromov
states the same result for \emph{any} topological space in~\cite{Grom82}, see Remark~\ref{controesempioa:rem} for a discussion of this issue). 
Moreover,
one may define a complete and minimal multicomplex $\calL(X)$ and a complete, minimal and aspherical multicomplex $\calA(X)$ associated to $X$. 
Since the action of an amenable
group is invisible to bounded cohomology, from all these facts we can deduce that the bounded cohomology of $X$ is isometrically isomorphic to the simplicial
bounded cohomology of the aspherical multicomplex $\calA(X)$ (which satisfies $\pi_1(|\calA(X)|)\cong \pi_1(X)$). 
Using this, in Section~\ref{bd:fg:sc} we prove that the bounded cohomology of $X$ only depends on the fundamental group of $X$:

\begin{thm_intro}\label{mapping1_intro}
Let $f\colon X\to Y$ be a continuous map between path connected spaces, and suppose that $f$ induces an isomorphism on fundamental groups.
Then the induced map
$$
H^n_b(f)\colon H^n_b(Y)\to H^n_b(X)
$$
is an isometric isomorphism for every $n\in\mathbb{N}$.
\end{thm_intro}

As mentioned above, a complete proof of Theorem~\ref{mapping1_intro} which exploits tools from homological algebra may be found in~\cite{ivanov3} 
(see also~\cite{Ivanov} for the case when $X$ and $Y$ are countable CW complexes). Here
we first prove Theorem~\ref{mapping1_intro} by using multicomplexes under the assumption that $X,Y$ are good topological spaces. In order to deal with \emph{any} pair of topological
spaces $X,Y$, we then make use 
of the invariance of bounded cohomology with respect to weak homotopy equivalences~\cite{ivanov3}. The same strategy is applied in the proofs of Theorems~\ref{mapping-intro} and~\ref{vanishing1_intro} below.

An immediate corollary of Theorem~\ref{mapping1_intro} is 
the vanishing of bounded cohomology for simply connected spaces:

\begin{cor_intro}\label{simply:cor}
Let $X$ be a simply connected topological space. Then $$H^n_b(X)=0$$ for every $n\geq 1$. 
\end{cor_intro}

\subsection*{The Mapping Theorem}
With more work, in Chapter~\ref{chap:theorems} we prove in fact a stronger statement, which is usually known as \emph{Gromov's Mapping Theorem}:

\begin{thm_intro}[Mapping Theorem]\label{mapping-intro}
Let $X,Y$ be path connected topological spaces, let $f\colon X\to Y$ be a continuous map, and suppose that the map $f_*\colon \pi_1(X)\to \pi_1(Y)$ induced by $f$
on fundamental groups is surjective and has an amenable kernel. Then for every $n\in\mathbb{N}$ the map
$$
H^n_b(f)\colon H^n_b(Y)\to H^n_b(X)
$$
is an isometric isomorphism.
\end{thm_intro}

The proof of Theorem~\ref{mapping-intro} exploits the action on $\calA(X)$ of the group $\Pi(X,X)$, which was first defined by Gromov in~\cite[page 47]{Grom82}. 
Every element of the group $\Pi(X,X)$ consists of a family of homotopy classes (relative to the endpoints) of paths in $X$, subject to some additional conditions. 
Every normal subgroup $N$ of $\pi_1(X)$ gives rise to a normal subgroup $\widehat{N}$ of $\Pi(X,X)$, and we prove in Theorem~\ref{quoziente:che:vogliamo} that
the quotient of $\calA(X)$ by the action of $\widehat{N}$ is a complete, minimal and aspherical multicomplex whose fundamental group is isomorphic to 
$\pi_1(X)/N$ (this fact is stated in~\cite[page 47]{Grom82} without proof). Building on this fact, with some care one can then reduce Gromov's Mapping
Theorem to Theorem~\ref{mapping1_intro} above. 

\subsection*{Amenable subsets and the Vanishing Theorem}
The group $\Pi(X,X)$ plays a fundamental role in many results we prove in this paper, both when studying singular chains and bounded cohomology,
and when dealing with locally finite chains on open manifolds. Before switching our attention to the latter topic, let us state  a vanishing theorem whose proof 
 exploits
the action of $\Pi(X,X)$ just mentioned.

Let $X$ be a topological space and let 
$i\colon U\to X$  be the inclusion of a subset $U$ of $X$. Then $U$ is \emph{amenable} in $X$ if for every path connected
component $U'$ of $U$ the image of $i_* : \pi_1(U') \to \pi_1(X)$ is an amenable subgroup of
$\pi_1(X)$. A cover $\calU=\{U_i\}_{i\in I}$ of $X$ is amenable if every element of $\calU$ is amenable in $X$. 
We denote by 
%$N(\calU)$ and by 
$\mult(\calU)$ 
%the nerve and 
the multiplicity of $\calU$
%, so that
%$\mult(\calU)=\dim N(\calU)+1$ 
(see Section~\ref{amenable:vanishing:sec}).
%Moreover, the multiplicity $\mult(\calU)$
%of $\calU$ is the supremum of the natural numbers $n\in\mathbb{N}$ such that there exist distinct indices $i_1,\ldots,i_n\in I$
%with $U_{i_1}\cap\ldots \cap U_{i_n}\neq \emptyset$. Equivalently, if we denote by $N(\calU)$ the nerve of $\calU$ (see Section~\ref{subsec:vanishing-thm}),
%then $\mult(\calU)=\dim N(\calU)+1$. We then have the following:

\begin{thm_intro}[Vanishing Theorem]\label{vanishing1_intro}
Let $X$ be a topological space and let  $\calU$ be an amenable open cover of $X$. Then for every $n\geq \mult(\calU)$
the comparison map
$$
c^n\colon H^n_b(X)\to H^n(X)
$$
vanishes.
\end{thm_intro}

Theorem~\ref{vanishing1_intro} 
already appears in~\cite[page 40]{Grom82}, and 
is proved in Chapter~\ref{vanishing-thm:chap}.
Via duality, Theorem~\ref{vanishing1_intro} implies the vanishing of the simplicial volume for closed manifolds admitting amenable covers of small multiplicity
(see Section~\ref{amenable:vanishing:sec}):

\begin{cor_intro}%{\cite[Cor.~$(i)$, page~41]{Grom82}}
\label{van-cor-intro}
Let $X$ be a topological space admitting an open amenable cover of multiplicity $m$, and let $n\geq m$. Then
$$
\|\alpha\|_1=0
$$
for every $\alpha\in H_n(X)$. 
In particular, if
$M$ is a closed  oriented manifold admitting an open amenable cover $\calU$ such that 
$\mult(\calU)\leq \dim M$,
%of multiplicity smaller than or equal to $n$, 
then
$$
\|M\|=0\ .
$$
\end{cor_intro}

As originally proved by Ivanov~\cite{Ivanov,ivanov3}, in many cases of interest the Vanishing Theorem~\ref{vanishing1_intro} may be strengthened as follows:
if $\calU$ is an amenable open cover of the space $X$ and some mild additional hypotheses hold, then  
the comparison map
$
c^*\colon H^*_b(X)\to H^*(X)
$
factors through the ordinary singular cohomology of the \emph{nerve} of $\calU$ (see Section~\ref{amenable:vanishing:sec}), which is obviously null in degree $n\geq \mult(\calU)$.
We refer the reader to Theorem~\ref{van-thm2} for a precise statement.

\subsection*{The simplicial volume of open manifolds}
The third part of this work is devoted to the study of the simplicial volume of open (i.e.~connected, non-compact and without boundary) manifolds.
If $M$ is an open $n$-dimensional manifold, then $H_n(M)=0$. Therefore, in order to define a fundamental class for $M$, one needs to 
work with the complex $C^\lf_*(M)$ of \emph{locally finite} chains on $M$, rather than with the usual complex $C_*(M)$ of finite singular chains.
As in the  case of finite chains, for every topological space $X$ and for every $n\in\mathbb{N}$, the space of locally finite chains $C_n^\lf(X)$ may be endowed with an $\ell^1$-norm $\|\cdot \|_1$, which 
induces an $\ell^1$-seminorm $\|\cdot \|_1$
on the locally finite homology $H^\lf_n(X)$ of $X$ (see Section~\ref{simplicial:open:sec} for the details).
If $M$ is an $n$-dimensional oriented open manifold, then $H_n^\lf(M)$ is canonically isomorphic to $\R$ and generated by a preferred element $[M]\in H^\lf_n(M)$, called
the \emph{fundamental class} of $M$. The simplicial volume $\|M\|$ of $M$ is then defined by setting
$$
\|M\|=\|[M]\|_1\ \in\ [0,+\infty]\ .
$$

The simplicial volume of open manifolds is still quite mysterious. If $M$ is \emph{tame}, i.e.~it is the internal part of a compact manifold with boundary
$\overline{M}$, then it is known that $\|\overline{M}\|\leq \|M\|$, and that the equality holds provided that the fundamental group of every connected component
of $\partial \overline{M}$ is amenable~\cite{Loeh, KimKue} (we refer the reader to Section~\ref{relative:sec} for the definition of the simplicial volume of a manifold with boundary).
However, in the case when $\partial \overline{M}$ is not amenable, no example is known for which $\|M\|<+\infty$ and $\|M\|>\|\overline{M}\|$. Neither is known 
any example of an open manifold $M$ 
with non-amenable topological ends for which $\|M\|\notin \{0,+\infty\}$ (while we refer the reader e.g.~to~\cite{Loh-Sauer2} for the exact computation
of the simplicial volume of some manifolds
which compactify to manifolds with an amenable boundary). For example, a long-standing open question is the following:

\begin{quest_intro}
Let $\Sigma=(S^1\times S^1)\setminus \{p\}$ be a once-punctured torus. What is the exact value of $\|\Sigma\times \Sigma\|$?
In particular, is $\|\Sigma\times \Sigma\|$ positive or is it null? 
\end{quest_intro}

The simplicial volume of open manifolds lacks several topological and geometric properties enjoyed by the simplicial volume of closed manifolds.
For example, neither a biLipschitz estimate of $\|M_1\times M_2\|$ in terms
of $\|M_1\|\cdot \|M_2\|$ nor a Proportionality Principle holds for the simplicial volume of open manifolds (but see e.g.~\cite{Loh-Sauer2,MichelleKim} for some results
in this direction) and these facts somewhat 
illustrate the difficulties in understanding the topological and the geometric meaning of this invariant.
%compromise the topological and geometric
%understanding of this invariant. 

Gromov himself introduced in~\cite{Grom82} some variations of the simplicial volume for open manifolds. Among them, the most studied
is probably the so called \emph{Lipschitz simplicial volume}, for which one can recover, for example, both a product formula and the Proportionality Principle
(see e.g.~\cite{Loh-Sauer,  Franceschini1, strz-unp}; see also~\cite{Str-Lip}, where the additivity of the Lipschitz simplicial volume under connected sums
is proved by exploiting multicomplexes). However, in this work we will only deal with the classical simplicial volume of open manifolds.

\subsection*{Finiteness and Vanishing Theorems for non-compact spaces}
Even if exact computations  are very difficult, there 
exist criteria that provide conditions for the vanishing or the finiteness of the simplicial volume of open manifolds. Just as in the closed case, the vanishing of the simplicial volume of an open manifold may be deduced from the existence of a suitable open cover
 of small multiplicity. 
Indeed, Chapters~\ref{finvan:chap},
 \ref{diffusion:chap}, \ref{admissible:chap} and~\ref{vanfin:proof:chap} are  devoted to the following results, that were first stated in~\cite[page 58]{Grom82}. Recall that a \emph{triangulable space} is a topological space which is homeomorphic to the geometric realization of a simplicial complex.
We refer the reader to Definition~\ref{amenable:infinity} for the notion of 
 \emph{amenability at infinity} for a 
sequence  $\{U_i\}_{i\in\mathbb{N}}$ of open subsets of $X$.

\begin{thm_intro}[Vanishing Theorem for locally finite homology]\label{Van-Theor-intro}
Let $X$ be a connected non-compact triangulable topological space.
Let $\calU=\{U_j\}_{j\in\mathbb{N}}$ be an amenable open cover of $X$ such that each $U_j$ is relatively compact in $X$. 
Also suppose that 
the sequence $\{U_j\}_{j\in\mathbb{N}}$ is amenable at infinity.
%\item the multiplicity of the cover $\calU$ is equal to $m$.
%\end{enumerate}
Then for every $k\geq \mult(\calU)$ and every $h \in \, H^{\lf}_{k}(X)$ we have $$\lVert h \rVert_1 = 0\ .$$
\end{thm_intro}

A subset $W$ of $X$ is \emph{large} if $X\setminus W$ is relatively compact in $X$.

\begin{thm_intro}[Finiteness Theorem]\label{Fin-Theor-intro}
Let $X$ be a connected non-compact triangulable topological space.
Let $W$ be a large open subset of $X$, and
let $\calU=\{U_j\}_{j\in\mathbb{N}}$ be an open cover of $W$ such that each $U_j$ is relatively compact in $X$. 
Also suppose that 
%the following conditions hold:
%\begin{enumerate}
%\item 
the sequence $\{U_j\}_{j\in\mathbb{N}}$ is amenable at infinity (in particular, $\calU$ is locally finite in $X$).
%\item There exists a large subset $W\subseteq X$ such that the multiplicity of the restriction of $\calU$ to $W$ is equal to $m$.
%\end{enumerate}
Then for every $k\geq \mult(\calU)$  and every $h \in \, H^{\lf}_{k}(X)$ we have $$\lVert h \rVert_1 <+\infty\  .$$
\end{thm_intro}

The following corollaries provide the main applications of Theorems~\ref{Van-Theor-intro} and~\ref{Fin-Theor-intro} to the simplicial volume of open manifolds.

\begin{cor_intro}\label{va-cor-intro}
Let $M$ be an oriented open triangulable manifold of dimension $m$ and let $\calU=\{U_j\}_{j\in\mathbb{N}}$ be an amenable open cover of $M$ such that each $U_j$ is relatively compact in $M$. Also  suppose that the sequence $\{U_j\}_{j\in\mathbb{N}}$ is amenable at infinity, and that $\mult(\calU)\leq m$. Then
$$
\|M\|=0\ .
$$
 \end{cor_intro}

\begin{cor_intro}\label{fi-cor-intro}
Let $M$ be an oriented open triangulable manifold of dimension $m$. Let $W$ be a large open subset of $M$, and
let $\calU=\{U_j\}_{j\in\mathbb{N}}$ be an open cover of $W$ such that each $U_j$ is relatively compact in $M$. 
Also suppose that 
%the following conditions hold:
%\begin{enumerate}
%\item 
the sequence $\{U_j\}_{j\in\mathbb{N}}$ is amenable at infinity (in particular, $\calU$ is locally finite in $M$), and that $\mult(\calU)\leq m$.
%\item There exists a large subset $W\subseteq X$ such that the multiplicity of the restriction of $\calU$ to $W$ is equal to $m$.
%\end{enumerate}
Then
$$
\|M\|\ <\ +\infty\ .
$$
\end{cor_intro}

In order to prove Theorems~\ref{Van-Theor-intro} and~\ref{Fin-Theor-intro},
after introducing diffusion of chains, one needs to select
a suitable submulticomplex $\mathcal{AD}(X)$ of $\calK(X)$ with the following  property: 
an infinite family of simplices of  $\mathcal{AD}(X)$
 leaves every compact subset of $X$ provided that the set of vertices of the simplices in the family do so. Such a
multicomplex fits the need to relate the local finiteness of chains in $X$ to controlled combinatorial properties of the corresponding simplicial
chains in  $\mathcal{AD}(X)$.

A peculiarity of the proofs of Theorems~\ref{Van-Theor-intro} and~\ref{Fin-Theor-intro} is that they completely avoid any reference to bounded cohomology. 
Theorem~\ref{Van-Theor-intro} extends Corollary~\ref{van-cor-intro} (which deals with the usual singular homology) to the locally finite case.
However, Corollary~\ref{van-cor-intro} was obtained via duality from Theorem~\ref{vanishing1_intro}, which concerns the comparison map defined on bounded cohomology.
On the contrary, the proofs of Theorems~\ref{Van-Theor-intro} and~\ref{Fin-Theor-intro} are based on manipulations of  cycles. Just as in the finite case,
the amenability 
 of certain groups of simplicial automorphisms plays a fundamental role in our arguments. However, here amenability allows us to properly exploit the diffusion of locally finite chains,
 rather than to obtain vanishing results in bounded cohomology. In order 
  to illustrate the 
general ideas involving diffusion of chains
 while avoiding all the technicalities
 needed to deal with locally finite chains, in Section~\ref{toy:sec} we describe a proof of (some special cases of) Corollary~\ref{van-cor-intro} which is based on diffusion of chains
 rather than on bounded cohomology.
 
Even if inspired by Gromov's suggestions and by the ideas developed in the previous chapters, the  arguments described in the third part of this paper
are substantially new. Indeed, Gromov's strategy to prove Theorems~\ref{Van-Theor-intro} and~\ref{Fin-Theor-intro} is just sketched in~\cite{Grom82}, and  
it seems to underestimate several difficulties that emerge e.g.~in the case when $X$ is not assumed to be aspherical
or when proving the amenability of some relevant groups of simplicial automorphisms
(see Remark~\ref{gamma2}).  
For example,
the simplicial automorphisms belonging to these groups are allowed to move the vertices of $\mathcal{AD}(X)$.
As a consequence, when showing that these groups are amenable via a comparison with suitable subgroups of $\Pi(X,X)$, one needs to carefully take into account the role
played by basepoints (see again Remark~\ref{rem:ip:agg:gruppi:gromov}).

 \subsection*{Amenable covers and $\ell^1$-invisibility} 
 Other results on the vanishing and/or the finiteness of the simplicial volume of open manifolds were obtained e.g.~in \cite{Loeh, Loh-Sauer}. In particular,
L{\"o}h provides in \cite{Loeh} a complete criterion for the finiteness of the simplicial volume of tame open manifolds, in terms of the so-called
\emph{$\ell^1$-invisibility} of the boundary components of the manifolds (see Definitions~\ref{invisible:def} and~\ref{tame:defn} for the definition of $\ell^1$-invisible and tame manifold, respectively,  
and Theorem~\ref{invisible:thm} for L\"oh's criterion). 
As a by-product of our results, by comparing
the Finiteness Theorem~\ref{Fin-Theor-intro} with L{\"o}h's result, in Section~\ref{invisible:sec} we obtain a new sufficient condition for a closed manifold to be $\ell^1$-invisible:

\begin{thm_intro}\label{invisible:new-intro}
 Let $M$ be a closed oriented triangulable $n$-dimensional manifold admitting an amenable cover $\calU$ such that $\mult(\calU)\leq n$. Then $M$ is $\ell^1$-invisible.
\end{thm_intro}

Since the simplicial volume of an $\ell^1$-invisible closed manifold vanishes, Theorem~\ref{invisible:new-intro} strengthens
(at least in the case of triangulable manifolds) the last statement of Corollary~\ref{van-cor-intro}. We refer the reader to~\cite{Fri:amenable} for further progress in this direction.

\subsection*{The simplicial volume of the product of three open manifolds}
Let us conclude this introduction with an interesting consequence of Theorem~\ref{Van-Theor-intro} already pointed out by Gromov. 
As mentioned above,
it is not known whether the simplicial volume of the product of two puctured tori vanishes or not. More in general, it is not known whether the simplicial volume of the product
of two open manifolds $M_1,M_2$ should be positive, provided that $\|M_1\|>0$ and $\|M\|_2>0$.
%(on the contrary, it is well known that $\|M_1\times M_2\|\leq C_{n,m}\|M\|_1\cdot \|M_2\|$ where $C_{n,m}$ is a finite constant only depending on $n=\dim M_1$, $m=\dim M_2$). 
Actually, there exist no examples of open manifolds $M_1,M_2$ for which the simplicial volume $\|M_1\times M_2\|$ is known to be positive and finite.
When considering the product of three open manifolds, we have the following
striking result:

\begin{thm_intro}\label{tre-intro}
Let $M_1, M_2,M_3$ be tame open oriented PL manifolds of positive dimension.
 Then
$$\lVert M_1 \times M_2 \times M_3 \rVert = 0\ .$$
\end{thm_intro}

We refer the reader to Section~\ref{PL:section} for the definition of PL manifold. Theorem~\ref{tre-intro} is stated in~\cite[Ex.~(A), page 59]{Grom82} as an almost direct application of Corollary~\ref{va-cor-intro}. 
After clarifying some statements by Gromov on coamenable subcomplexes of open manifolds, in Section~\ref{tre-sec}
we exploit a  construction inspired by~\cite[Theorem 5.3]{Loh-Sauer} to provide a detailed proof of Theorem~\ref{tre-intro}.

%\bigskip

%\subsection*{Plan of the paper}

\part{The general theory of multicomplexes}

\chapter{Multicomplexes}\label{multicomplexes:chapter}
The main goal of this chapter is to introduce the notion of \textit{multicomplex}, defined by M. Gromov in \cite{Grom82}. 
Multicomplexes belong to the family of
simplicial structures. Roughly speaking, one can order the most common simplicial structures according to the degeneracies allowed for their simplices. In this spirit,
the least singular objects one can consider are simplicial complexes, while the most general ones are probably simplicial sets.
Somewhere between them we have $\Delta$-complexes, which were introduced in \cite{eil-zil} by Eilenberg and Zilber under the name of \textit{semi-simplicial complexes}.
As we will see, multicomplexes are special $\Delta$-complexes, so they may be considered as a mild generalization of simplicial complexes, the main difference being that simplices
in multicomplexes are not determined by their vertices (but they are still embedded: in particular, the vertices of any simplex of a multicomplex are necessarily pairwise distinct).
We refer the reader to Section~\ref{comparison:subsec} for more details about the relationship between multicomplexes and their relatives.
%For a much deeper discussion about the simplicial structures in literature we address the reader to the following classical literature~\cite{eil-zil, eil-sing, moore, rourke-sanderson, curtis, may, munkres, wu-braids, Piccinini, lundell}.

\begin{comment}
The chapter consists of eight sections. 

In Section~\ref{Sec:basic:def:multi} we introduce the notion of multicomplex, while Section~\ref{} is devoted to the geometric realization of multicomplexes.
%and in the following one we describe how to obtain a topological space from it. 

The third section we discuss the relationship between multicomplexes and the other simplicial structures already known in literature. 

In the fourth one,  we explain in detail how one can associate to any topological space $X$ its \emph{singular multicomplex}, which provides a slight variation of the well-known \emph{singular complex} of $X$. 
We will discuss its homotopy type in the sixth section using some results about the simplicial homology of a multicomplex introduced in the fifth one.

The seventh section is devoted to technical details about the (lack) of functoriality of the map the singular multicomplex of a space $X$ and $X$ itself. 

Finally, in the last section we discuss the quotient of a multicomplex under a group action.
\end{comment}

\section{Basic definitions}\label{Sec:basic:def:multi}

As mentioned above, simplicial multicomplexes (or, for short, multicomplexes) provide a generalization of simplicial complexes. Indeed, multicomplexes may be considered as 
simplicial complexes where distinct simplices may intersect in an arbitrary union of common faces (in particular, they may have the same set of vertices).
Gromov's definition of multicomplex summarizes these properties in a very informative and short sentence: according to~\cite{Grom82}, a multicomplex
is ``a set $K$ divided
into the union of closed affine simplices $\Delta_i$, $i\in I$,
such that the intersection of any
two simplices $\Delta_i\cap \Delta_j$ is a (simplicial) subcomplex in $\Delta_i$ as well as in $\Delta_j$''. As a topological space, a multicomplex 
is endowed with the weak topology associated to its decomposition into simplices. 
This concrete definition of multicomplex would probably suffice to provide a clear insight into the topological  features of multicomplexes. Nevertheless, 
in the sequel we will be interested in settling the foundation
of the (bounded) cohomology of multicomplexes. To this aim, it is convenient to have a more formal (and combinatorial) definition available.

For every set $V$ we denote by $\mathcal{P}_{f}(V)$ the set of finite subsets of $V$. 

\begin{Definizione}\label{multicomplex:def}
A multicomplex $K$ is a triple
$$
K=\left( V,\, I=\bigcup_{A\in \mathcal{P}_f(V)} I_A,\, \Omega\right)\ ,  
$$
where:
\begin{enumerate}
\item $V$ is any set, that will be called the \emph{set of vertices} of $K$;
\item for every $A\in\mathcal{P}_f(V)$, $I_A$ is a (possibly empty) set, that will be called the \emph{set of simplices
with vertex set $A$};
\item if $A=\{v\}$ is a singleton, then $I_{A}$ is also a singleton;
\item $\Omega$ is a set of maps $\{\partial_{A,B}\colon I_A\to I_B,\, A,B\in \mathcal{P}_f(V), A\supseteq B\}$ (that will be called the \emph{boundary maps} of the multicomplex)
such that
$$
\partial_{A,A}={\rm Id}_A
$$
for every $A\in\mathcal{P}_f(V)$, and
$$
\partial_{B,C}\circ \partial_{A,B}=\partial_{A,C}\quad \textrm{whenever}\ A\supseteq B\supseteq C\ .
$$ 
 \end{enumerate}
\end{Definizione}

For every finite set of vertices $A\subseteq V$, each element $\sigma \in I_A$ represents a simplex with vertex set $A$, and under this identification the 
element $\partial_{A,B} \sigma$ should be thought as the face of $\sigma$ with vertex set $B$. Note that, if $I_A\neq \emptyset$ and $B\subseteq A$,
then the existence of the  map $\partial_{A,B}\colon I_A\to I_B$ ensures that $I_B\neq \emptyset$. The condition $\partial_{B,C}\circ \partial_{A,B}=\partial_{A,C}$ just formalizes the fact that the face of a face
of a simplex $\sigma$ is itself a face of $\sigma$. For obvious reasons, we say that the dimension of a simplex $\sigma\in I_A$ is equal to $|A|-1$; simplices of dimension $n$
will be referred to as $n$-simplices. When this does not cause any ambiguity, we will denote simply by $\partial_B$ the map $\partial_{A,B}$, for every $A\supseteq B$.

Let us immediately point out two peculiarities of multicomplexes with respect to simplicial complexes and simplicial sets.
First of all, degenerate simplices are not allowed: each $n$-simplex has exactly $(n+1)$ distinct vertices. Moreover,
there is no ordering on the set of vertices of a simplex. As a (rather disappointing, at first sight) consequence, it makes no sense to speak of the $i$-th face of a simplex.
Therefore, multicomplexes are \emph{not} simplicial sets (but to every multicomplex one can canonically associate a simplicial set, see Section~\ref{comparison:subsec}). 
Indeed, the two properties we have just pointed out make perhaps multicomplexes more similar to simplicial complexes than to simplicial sets.
For more details, we refer the reader to Section~\ref{comparison:subsec}, where we will see that
(the geometric realization of) a multicomplex is what Hatcher calls a \emph{regular unordered $\Delta$-complex}~\cite{hatcher}.
Here we just point out that in this paper we will understand the following definition of simplicial complex,
which is in fact equivalent to the classical definition that may be found e.g.~in~\cite{hatcher, munkres}:

\begin{Definizione}\label{simpl:compl:defn}
 A \emph{simplicial complex} is a multicomplex $(V,I,\Omega)$ in which every simplex is determined by its vertices, i.e.~$I_A$ contains at most one element for every $A\in \calP_f(V)$.
\end{Definizione}

Having introduced the objects of our interest, let us define the notion of \emph{simplicial map} between multicomplexes. Even if multicomplexes do not contain degenerate simplices,
we will allow simplicial maps to shrink the dimension of simplices. 

\begin{Definizione}
Let $K=(V,I,\Omega)$, $K'=(V',I',\Omega')$ be multicomplexes. A (simplicial) map from $K$ to $K'$ is determined by the following data: a map $f\colon V\to V'$, and maps
$f_A\colon I_A\to I'_{f(A)}$, $A\in \mathcal{P}_f(V)$, such that 
$$
\partial_{f(A),f(B)}(f_A(\sigma))=f_B(\partial_{A,B} (\sigma))
$$
for every $A,B\in\mathcal{P}_f(V)$, $B\subseteq A$, $\sigma \in I_A$.

We say that a simplicial map $f$  as above is injective (resp. surjective) if both $f$ and each $f_A$ is injective (resp. surjective). Injective simplicial maps will often be called \emph{simplicial
embeddings}.

When this does not cause any possible misunderstanding, we will simply denote by $f(\sigma)$ the simplex
$f_A(\sigma)$, when $\sigma\in I_A$. 
%Indeed, we will use the symbol $f$ to denote the whole datum of a simplicial map as  above.
\end{Definizione}

\begin{rem}
 %The reader who is familiar with simplicial sets and their realizations may consider the above definition of the map $|f|$ induced by a simplicial $f$ quite intricate. Unfortunately,
 When defining maps between multicomplexes,
 the absence of degenerate simplices introduces some complications from the point of view of combinatorics.
% (and,
 %even more unpleasantly, of functoriality, see Section~\ref{Sec:no:functorial}). 
 For example, in order to be reasonably flexible we need to agree that simplicial maps can shrink simplices to simplices of lower dimensions. In the context of simplicial sets,
 degenerate simplices take care of this phenomenon, while in the context of multicomplexes we need to allow simplicial maps to be non-homogeneous with respect to the dimension of simplices.
\end{rem}

There is an obvious identity on every multicomplex, which of course is 
 a simplicial map; moreover, the composition of simplicial maps (which is defined in the natural way) is a simplicial map. The following definition
 will play an important role in the theory of multicomplexes:
 
 \begin{defn}
 Let $K=(V,I,\Omega)$ be a multicomplex.
 A simplicial map $f\colon K\to K'$ between multicomplexes is \emph{non-degenerate} if it maps every $n$-simplex of $K$ to an $n$-simplex of $K'$, or, in other words,
 if $f|_A$ is injective whenever $I_A\neq \emptyset$.
 \end{defn}

A \emph{submulticomplex} $K'$ of a multicomplex $K=(V,I,\Omega)$ is a triple $K'=(V',I',\Omega')$, such that 
$V'\subseteq V$, $I'_{A'}\subseteq I_{A'}$ for every $A'\in \mathcal{P}_f(V')\subseteq \mathcal{P}_f(V)$, and 
$$
\partial'_{A',B'}=\left(\partial_{A',B'}\right)|_{I'_{A'}}
$$
for every $A',B'\in \mathcal{P}_f(V')$, where we denote by $\partial_{A,B}$ the boundary maps of $K$, and by $\partial'_{A',B'}$ those of $K'$. Of course, a submulticomplex
is a multicomplex itself, and every submulticomplex of a multicomplex $K$ admits an obvious simplicial embedding in $K$. Moreover,
a submulticomplex is uniquely determined by the simplices it contains, so we will often define submulticomplexes just by describing their sets of simplices
(and leaving to the reader to check that such sets are closed with respect to the boundary maps of the ambient multicomplex).

For every $n\in\mathbb{N}$ we can now introduce the \emph{$n$-skeleton} $K^n=(V^n,I^n,\Omega^n)$ of $K=(V,I,\Omega)$ as the unique submulticomplex of $K$ such that
$V^n=V$ and $I^n_A=I_A$ if $|A|\leq n+1$, while $I^n_A=\emptyset$ if $|A|>n+1$. Any simplicial map $f\colon K\to K'$ induces
maps $f^n\colon K^n\to (K')^n$ on the skeleta of $K,K'$.

If $x_0$ is a vertex of $K$, then we say that the pair $(K,x_0)$ is a \emph{pointed} multicomplex.

\section{The geometric realization}
Let us now define the geometric realization of a  multicomplex $K=(V,I,\Omega)$. For every $n\in\mathbb{N}$ we denote by $\Delta^n$ the standard $n$-dimensional simplex with vertices
$e_0,\ldots,e_n$.
Then, we take one copy of $\Delta^n$ for every $n$-simplex of $K$, and we glue these simplices according to the boundary maps prescribed by $\Omega$. More formally, we proceed as follows.
For every $n\in\mathbb{N}$ let us denote by $I^n$ the set of $n$-simplices of $K$, i.e.~the set
$$
I^n=\bigcup \{I_A\, |\, A\in\mathcal{P}_f(V),\, |A|=n+1\}\ ,
$$
and let us set
$$
X^n=I^n\times \Delta^n \, ,\qquad X=\bigsqcup_{n\in\mathbb{N}} X^n\ .
$$
We now define an equivalence relation that encodes the gluings between the simplices of $K$. To this aim, 
for every $ A\in\mathcal{P}_f(V)$ with $|A|=n+1$, we first fix an auxiliary arbitrary
labelling of the vertices of $\Delta^n$ by the elements of $A$, i.e.~a bijection between $A$ and the set of vertices of $\Delta^n$. 
Then, if $(\sigma_A,x)\in I_A\times \Delta^n$ and $(\sigma_B,y)\in I_B\times \Delta^m$, where $n=|A|-1$, $m=|B|-1$, we set
$(\sigma_A,x)\sim (\sigma_B,y)$ if the following condition holds: $A\supseteq B$, and
$$
x=\varphi (y)\ ,
$$
where $\varphi\colon \Delta^m\to \Delta^n$ is the unique affine inclusion that preserves the labelling of the vertices of $\Delta^m$ and of $\Delta^n$ induced by
$\sigma_A,\sigma_B$, respectively.
Finally, we denote by $\approx$ the smallest equivalence relation containing $\sim$, and we set
$$
|K|=X \big/_\approx\ .
$$
The fact that multicomplexes do not contain degenerate simplices (i.e.~that every $n$-simplex has $(n+1)$ distinct vertices) readily implies that,
for every $(\sigma,\Delta^n)$, $\sigma\in I^n$, $n\in\mathbb{N}$, the composition
$$
\Delta^n \to |K|\, ,\qquad x\mapsto [(\sigma,x)]
$$
is injective. Therefore, if 
for every $\sigma\in I^n$ we denote by $|\sigma|\subseteq |K|$ the image of $\{\sigma\}\times \Delta^n$ in $|K|$, 
then we may endow $|\sigma|$ with the Euclidean topology inherited from $\{\sigma\}\times \Delta^n$.
We then endow $|K|$ with the topology for which a subset $Y\subseteq |K|$ is open if and only if its intersection
with every $|\sigma|$ is open. It is immediate to check that
$|K|$ is a CW complex, whose closed $n$-cells are given by the subsets $|\sigma|$ defined above, as  $\sigma$ varies in $I^n$.
Closed cells are embedded in $|K|$, i.e.~$|K|$ is a \emph{regular} CW complex, according to the usual terminology (see e.g.~\cite{Piccinini, hatcher}).
Moreover, there is an obvious identification between $|K^n|$ and the $n$-skeleton $|K|^n$ of $|K|$ as a CW complex. We stress that the construction just described is independent of
the choice of the labellings introduced above. More precisely, we understand that geometric $n$-simplices of $|K|$ do \emph{not} come with 
 preferred identifications  with the standard
simplex $\Delta^n$. 

\begin{rem}\label{rem:descr:punti:geom:real}
Points of $|K|$ admit the following easy description: if $\sigma\in I_A$ is a simplex with vertices $A=\{a_0,\ldots,a_n\}$ and $t_0,\ldots,t_n \in [0,1]$ are such that
$t_0+\ldots + t_n=1$, then we define the point 
$$
(\sigma,t_0a_0+\ldots+ t_na_n)\in |K|
$$
as follows: after labelling the $i$-th vertex of $\Delta^n$ by $a_i$, 
we set $(\sigma,t_0a_0+\ldots+ t_na_n)=[(\sigma,(t_0e_0+\ldots+t_ne_n))]$. This definition formalizes the obvious fact that points in the geometric realization
of a simplex $\sigma\in I_A$ correspond to convex combinations of vertices in $A$. This description of points of $|K|$ is almost unique:
indeed, 
$$
(\sigma,t_0a_0+\ldots+ t_na_n)=(\sigma',t'_0a'_0+\ldots+ t'_ma'_m)
$$
if and only if, after setting $\overline{A}=\{a_i\, |\, t_i\neq 0\}$, $\overline{A}'=\{a'_i\, |\, t_i\neq 0\}$, then
$\overline{A}=\overline{A}'$, $\partial_{\overline{A}} \sigma=\partial_{\overline{A}'}\sigma'$, and $t_i'=t_j$ whenever $a_i'=a_j$.
\end{rem}

Of course, geometric realization is functorial:
any simplicial map $f\colon K\to K'$ induces a continuous map
$$
|f|\colon |K|\to |K'|
$$
such that
$$
|f|((\sigma,t_0a_0+\ldots +t_na_n))=(f(\sigma),t_0f(a_0)+\ldots+t_nf(a_n))
$$
for every point $(\sigma,t_0a_0+\ldots +t_na_n)$ of $|K|$.
Observe that, even if $f$ is not injective on the vertices $\{a_0,\ldots,a_n\}$ of $\sigma$, then the symbol
$t_0f(a_0)+\ldots+t_nf(a_n)$ still represents a convex combination of the vertices of $f(\sigma)$. With a slight abuse, we will often denote by the symbol $f$ the map $|f|$,
and we will say that a map between the geometric realizations of multicomplexes is \emph{simplicial}
if it is induced by a simplicial map. 

\section[Multicomplexes and other simplicial structures]{Multicomplexes,  simplicial complexes, $\Delta$-complexes, and simplicial sets}\label{comparison:subsec}
In this section we briefly review the relationship between multicomplexes and other well-known simplicial structures, in order to help the reader
to get familiar both with the (many) similarities between multicomplexes and very well-studied objects, and with the (small but crucial to our purposes) 
peculiarities of multicomplexes.
For a deeper discussion of the other simplicial structures we refer the reader to~\cite{eil-zil, Piccinini, may, GJ}.

A $\Delta$-complex is a graded set $Z=\bigcup_{i\in \mathbb{N}} Z_i$ together with a collection of boundary maps 
$$\partial_i^q\colon Z_q\to Z_{q-1}\, ,\qquad  i=0,\ldots,q$$ that satisfy the identity $$\partial_i^q\partial_j^{q+1}=\partial_{j-1}^q\partial_i^{q+1}$$ for every
$i<j$, $0\leq 1\leq q$, $0\leq j\leq q+1$. Elements in $Z_q$ are called \emph{$q$-simplices}, and for every $s\in Z_q$ the element $\partial_i s \in Z_{q-1}$
is the \emph{$i$-th face} of $s$. If $s\in Z_q$, for $0\leq i\leq q$ one can also define the $i$-th vertex $v_i(s)$ of $s$ by setting
$$
v_i(s)=\partial_q^q\partial_{q-1}^{q-1}\cdots\partial_{i+1}^{i+1}\partial^i_0\partial^{i-1}_0\cdots \partial^1_0 s\ .
$$
With this definition, simplices of a $\Delta$-complex come with an ordering on their vertices.
% (hence, with an orientation, whatever this term could mean at this level). 
As already mentioned, we are interested in forgetting this ordering. We say that a $\Delta$-complex is \emph{unordered} if
 there exists a group homomorphism $\theta^q\colon \mathfrak{S}_{q+1}\to \mathfrak{S}(Z_q)$ from the group of permutations
 of $\{0,\ldots,q\}$ to the group of permutations of the set $Z_q$ which is equivariant with respect to boundary maps, i.e.~is such that $\partial_{\tau(i)}(\theta^q(\tau)(s))=\theta^{q-1}(\tau_i)(\partial_{i}(s))$, where 
$\tau_i\colon \{0,\ldots,q-1\}\to \{0,\ldots,q-1\}$ is obtained by ``rescaling'' $\tau$ after removing $i$ from its domain and $\tau(i)$ from its target set.
It is immediate to check that $v_i(\theta^q(\tau)(s))=v_{\tau(i)}(s)$ for every $s\in Z_q$, $0\leq i\leq q$. 

Simplicial complexes may be considered as unordered $\Delta$-complexes in which simplices have distinct vertices and are completely determined by their vertices.
Indeed, usually a simplicial complex with vertex set $V$ is defined as a subset $S$ of $\mathcal{P}_f(V)$ such that $A\in S$ implies $B\in S$ for every $B\subseteq A$.
From such a datum one can construct a $\Delta$-complex $Z$ as follows: $Z_q=\{(v_0,\ldots,v_q)\, |\, v_i\neq v_j\ {\rm for}\ i\neq j,\, \{v_0,\ldots,v_q\}\in S\}$, and
$\partial_i^q(v_0,\ldots,v_q)=(v_0,\ldots,\widehat{v}_i,\ldots,v_q)$.
%(observe that we do not require that ordered $(q+1)$-tuples contain pairwise distinct elements).
Moreover, by setting $\theta^q(\tau)(v_0,\ldots,v_q)= (v_{\tau(0)},\ldots,v_{\tau(q)})$ one gets an action
of $\mathfrak{S}_{q+1}$ on $Z_q$, which endows $Z$ with the structure of an unordered $\Delta$-complex. It is easily checked
that $v_i$ is indeed the $i$-th vertex of $(v_0,\ldots,v_q)$ according to the definition of $i$-th vertex given above, so
every simplex has distinct vertices. Moreover, every ordered $(q+1)$-tuple of elements in $Z_0$ is the ordered set of vertices of at most
one element of $Z_q$, so simplices are indeed determined by their vertices.

We can now formalize the statement that multicomplexes are just unordered $\Delta$-complexes in which each simplex has distinct vertices. As such, they provide
a class of objects which generalize simplicial complexes without allowing all the phenomena that can occur in generic $\Delta$-complexes. Indeed,
from any given multicomplex $(V,I,\Omega)$ one may construct an unordered $\Delta$-complex $Z$ by setting 
$$Z_q=\{(\sigma,(v_0,\ldots,v_{q}))\, |\, \sigma\in I_A, \, A=\{v_0,\ldots,v_{q}\}\in\mathcal{P}_f(V),\, |A|=q+1\}\ ,$$
$$ \partial_i ((\sigma,(v_0,\ldots,v_{q}))=(\partial_B\sigma,(v_0,\ldots,\widehat{v}_i,\ldots,v_q)),\quad
{\rm where}\ B=\{v_0,\ldots,\widehat{v}_i,\ldots,v_q\}\ ,
$$
$$
\theta^q(\tau)((\sigma,(v_0,\ldots,v_q)))=(\sigma,(v_{\tau(0)},\ldots,v_{\tau(q)}))\qquad \textrm{for\ every}\ \tau\in\mathfrak{S}_{q+1}\ .
$$
On the other hand, let $Z$ be a given unordered $\Delta$-complex in which each simplex has distinct vertices. Then, one may construct an associated multicomplex
$(V,I,\Omega)$ by setting $V=Z_0$, and putting in $I_A$ one simplex for every $\mathfrak{S}_q$-orbit 
of simplices in $Z_q$ with vertex set equal to $A$. Boundary maps of $\Omega$ may then be deduced from the boundary maps of $Z$, thanks to the fact that
each simplex in $Z$ has distinct vertices, and that boundary maps of the $\Delta$-complex are equivariant (in the sense explained above) with respect to the action of $\mathfrak{S}_{q+1}$ on $Z_q$.

When developing the abstract theory of $\Delta$-complexes, some difficulties arise from the fact that simplicial maps cannot shrink the dimension of simplices. These difficulties may be overcome
by introducing degenerate simplices, thus getting a richer structure in which simplices of dimension $n$ may correspond to geometric objects of dimension $m<n$. This approach leads to the theory of simplicial
sets,
which are now quickly reviewing.
Simplicial sets are $\Delta$-complexes endowed with the extra datum of degeneracy maps $s_i^q\colon Z_q\to Z_{q+1}$, $0\leq i\leq q$,  such that the following conditions hold:
$$
\begin{array}{ll}
s_is_j=s_{j+1}s_i & {\rm for}\ i\leq j\ , \\
\partial_is_j=s_{j-1}\partial_i & {\rm for}\ i<j\ ,\\
\partial_js_j={\rm Id}=\partial_{j+1}s_j\ , &\\
 \partial_i s_j =s_j\partial_{i-1} & {\rm for}\ i>j+1\ .
 \end{array}
 $$
 %Henceforth, for ease of notation we omit superscripts from the symbols denoting boundary and degeneracy operators, and we understand that the equalities we state hold
 %whenever they make sense. 
 A simplex of a simplicial set is \emph{degenerate} if it is the image of a simplex under some degeneracy operator. A simplicial set is \emph{symmetric}
 if it is an unordered $\Delta$-complex, and if the action of $\mathfrak{S}_{q+1}$ on $Z_q$ is equivariant  with respect to degeneracy operators. 
 A simplicial map between simplicial sets is just a map of graded sets of degree 0 which commutes with boundary and degeneracy operators. When dealing
 with symmetric simplicial sets, we also ask simplicial maps to be equivariant with respect to the action of $\mathfrak{S}_{q+1}$ on $q$-simplices.
 For those who are used to the category-theoretic definition of simplicial sets, just as simplicial sets may be viewed as functors from the category of finite ordinals (with monotone maps)
 to the category of sets, we have that symmetric simplicial sets can de defined as functors from the category of finite cardinals to the category of sets. For a thorough description of this approach we refer 
 the reader to~\cite{grandis1, grandis2, rosick-symm}.
 
It is well known that to every (unordered) $\Delta$-complex $(V,I,\Omega)$ it is possible to associate a (symmetric) simplicial set by suitably adding degenerate simplices. In particular,
it is possible to associate to a multicomplex a symmetric simplicial set $Z$, which is defined as follows. Since we need to allow degenerate simplices 
(which necessarily have non-distinct vertices) we set
$$
Z_q=\{(\sigma,(v_0,\ldots,v_{q}))\, |\, \sigma\in I_A\in\mathcal{P}_f(V), \, A=\{v_0,\ldots,v_{q}\}\} 
$$ 
(note that in the above formula we do not require the $v_i$ to be pairwise distinct). 
Then we set
$$ \partial_i ((\sigma,(v_0,\ldots,v_{q}))=(\partial_B\sigma,(v_0,\ldots,\widehat{v}_i,\ldots,v_q)),\quad
{\rm where}\ B=\{v_0,\ldots,\widehat{v}_i,\ldots,v_q\}\ ,
$$
$$
s_i(\sigma,(v_0,\ldots,v_{q}))=(\sigma,(v_0,\ldots,v_i,v_i,\ldots,v_q))\ ,
$$
$$
\theta^q(\tau)((\sigma,(v_0,\ldots,v_q))=(\sigma,(v_{\tau(0)},\ldots,v_{\tau(q)}))\qquad \textrm{for\ every}\ \tau\in\mathfrak{S}_{q+1}\ .
$$
We thus obtain a symmetric simplicial set in which every \emph{non-degenerate} simplex  has distinct vertices.
Conversely, it is immediate to check that if every non-degenerate simplex of a symmetric  simplicial set has distinct vertices, then the face of every non-degenerate
simplex is itself non-degenerate, so one can delete degenerate simplices from the structure thus obtaining an unordered $\Delta$-complex in which every simplex has distinct vertices,
i.e.~a multicomplex. The constructions just described are one the inverse of the other, so a multicomplex may be identified with a symmetric simplicial set in which
every non-degenerate simplex has distinct vertices. Under this identification, it is immediate to check that simplicial maps between multicomplexes
correspond to simplicial maps between symmetric simplicial sets (recall that such maps must be equivariant with respect to the action of $\mathfrak{S}_{q+1}$ on $q$-simplices).

We can sum up the discussion above in the following:

\begin{prop}
The following categories are equivalent:
multicomplexes, unordered $\Delta$-complexes in which every simplex has distinct vertices, symmetric simplicial sets in which every non-degenerate simplex has distinct vertices.
\end{prop}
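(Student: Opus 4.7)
The plan is to prove the equivalence by exhibiting functors in both directions between each pair of categories and verifying that the resulting compositions are naturally isomorphic to the identity. Most of the required data has already been produced in the discussion preceding the proposition; what remains is a verification of the compatibility relations and of functoriality.

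First I would handle the equivalence between multicomplexes and unordered $\Delta$-complexes with distinct vertices. On objects, the two constructions are described above: given $(V,I,\Omega)$ one sets $Z_q = \{(\sigma,(v_0,\ldots,v_q))\,|\,\sigma\in I_A,\,A=\{v_0,\ldots,v_q\},\,|A|=q+1\}$, and conversely, given $Z$ one sets $V=Z_0$, $I_A = Z_{|A|-1}^{A}/\mathfrak{S}_{|A|}$ where $Z_q^A$ denotes the simplices whose (unordered) vertex set is $A$. The key verifications are: (i) the prescribed boundary operators $\partial_i^q$ satisfy the $\Delta$-identities $\partial_i\partial_j = \partial_{j-1}\partial_i$ for $i<j$, which is forced by the cocycle identity $\partial_{B,C}\circ\partial_{A,B}=\partial_{A,C}$ in $\Omega$; (ii) the $\mathfrak{S}_{q+1}$-action defined by permuting the tuple $(v_0,\ldots,v_q)$ is equivariant with respect to the boundaries, which is immediate because the simplex data $\sigma$ is untouched and the face in the multicomplex only depends on the unordered vertex set; (iii) going from $Z$ to $(V,I,\Omega)$ and back reproduces the original object (the orbits $(\sigma,(v_0,\ldots,v_q))$ under $\mathfrak{S}_{q+1}$ are in canonical bijection with the elements of $I_A$, since the action on the tuple part is free once one fixes $A$). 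For morphisms, a simplicial map of multicomplexes extends componentwise to a map $(\sigma,(v_0,\ldots,v_q))\mapsto (f(\sigma),(f(v_0),\ldots,f(v_q)))$; this is automatically $\mathfrak{S}_{q+1}$-equivariant and commutes with the $\Delta$-boundaries. Conversely, an equivariant $\Delta$-map descends to the quotient, giving the required simplicial map of multicomplexes.

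Next I would treat the equivalence between multicomplexes and symmetric simplicial sets in which every non-degenerate simplex has distinct vertices. Passing from a multicomplex to a simplicial set requires checking the simplicial identities for the degeneracies $s_i$ defined by repeating the $i$-th vertex; these identities are routine since they only involve manipulating the tuple of vertices. One must also check that the non-degenerate simplices are exactly those $(\sigma,(v_0,\ldots,v_q))$ with all $v_i$ distinct, which follows because degeneracy operators always introduce a repeated entry and, conversely, a tuple with a repetition $v_i=v_{i+1}$ is the image under $s_i$ of the tuple with that entry removed, using that in a multicomplex the simplex $\sigma$ lives over a set with $v_i=v_{i+1}$ removed by one element (this is where the hypothesis that simplices in a multicomplex have distinct vertices is used, and it forces the faces of non-degenerate simplices to be non-degenerate). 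In the reverse direction, I would verify that deleting degenerate simplices from a symmetric simplicial set $Z$ with the stated property yields an unordered $\Delta$-complex whose simplices have distinct vertices (again because faces of non-degenerate simplices are non-degenerate, a standard consequence of the simplicial identities $\partial_i s_i = \mathrm{Id} = \partial_{i+1}s_i$ together with the hypothesis). On morphisms, simplicial maps of symmetric simplicial sets preserve (non-)degeneracy and commute with the symmetric group action, so they restrict to simplicial maps in the multicomplex sense, and conversely any such restriction extends uniquely because every simplex of $Z$ is an iterated degeneracy of a non-degenerate one.

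The main obstacle, to the extent there is one, is bookkeeping: one has to ensure that, in both directions and for both functors, the rather elaborate compatibility conditions (the cocycle identity for $\Omega$, the $\Delta$-identities, the simplicial and symmetry identities, equivariance of degeneracies and faces under $\mathfrak{S}_{q+1}$) translate correctly into one another. Once these identities are matched, the fact that the compositions are naturally isomorphic to the identity on each category is essentially tautological from the constructions, and the functors clearly respect identities and composition. I would finally invoke a criterion for equivalence of categories (e.g.\ each composite is naturally isomorphic to the identity) to conclude.
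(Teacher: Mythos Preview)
Your proposal is essentially the paper's approach: the proposition is presented there as a summary of the preceding discussion without a separate proof, and you are simply fleshing out that discussion with the routine verifications the paper leaves implicit.

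One small point to watch: when identifying the degenerate simplices of the associated symmetric simplicial set with those having a repeated vertex, you only treat \emph{adjacent} repetitions $v_i=v_{i+1}$ via $s_i$, but a tuple such as $(\sigma,(v_0,v_1,v_0))$ is not in the image of any standard $s_i$. The resolution lies in the symmetric structure itself: in the functorial description of symmetric simplicial sets (as presheaves on finite cardinals, as in the Grandis references the paper cites) every surjection of finite sets---not just the order-preserving ones---induces a degeneracy operator, so any repeated vertex makes the simplex degenerate; equivalently, one uses the $\mathfrak{S}_{q+1}$-action to permute the repetition into adjacent position first. This is the same imprecision present in the paper's informal discussion, so it does not set you apart from the source, but it is worth making explicit.
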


The geometric realization of $\Delta$-complexes and of simplicial sets is constructed in the very same way as the geometric realization of multicomplexes:
one takes one geometric simplex for every combinatorial simplex of the structure, and glue them according to the boundary (and degeneracy, in the case of simplicial sets) maps
(see e.g.~\cite{may} for a thorough discussion of this construction for simplicial sets).
In particular, since every degenerate simplex is glued to one of its faces via an affine projection, 
only non-degenerate simplices give rise to actual geometric simplices in the realization. Let us warn the reader about a small subtlety: with the usual
definitions, the geometric realization
of a multicomplex $K$ differs from the geometric realization of the associated (ordered!) simplicial set $Z$, in that every $q$-simplex  in $K$ gives rise to exactly $(q+1)!$ non-degenerate
simplices in $Z$, and the geometric realization of each simplex of $Z$ admits a canonical identification with the standard simplex $\Delta^n$. On the contrary, 
each $n$-cell of $|K|$ is  identified with $\Delta^n$ only \emph{up to affine isomorphisms of $\Delta^n$}. 
On the other hand, 
in the geometric  realization for \emph{unordered} $\Delta$-complexes and for \emph{symmetric} simplicial sets one asks that $q$-simplices
in the same $\mathfrak{S}_{q+1}$-orbit  be identified  to each other. With this definition,
the geometric realization of a multicomplex coincides with the geometric realization of the corresponding unordered $\Delta$-complex and of the corresponding
symmetric simplicial set. In fact, the geometric
realization of a multicomplex is a \emph{regular unordered $\Delta$-complex}, according to the definition given by Hatcher in~\cite[page 533]{hatcher}
(where \emph{regular} stands for ``with embedded closed cells'', i.e.~with simplices with distinct vertices).

\section{Simplicial (bounded) (co)homology}\label{sec:simpl:coom}
In this section we introduce the simplicial (co)homology of a multicomplex, and we compare it with the singular (co)homology of its geometric realization. The definitions
we are going to give are very natural and extend (or restrict) to the context of multicomplexes well-known definitions that are usually introduced in the theory of
simplicial complexes or of simplicial sets. However, since we will make an extensive use of the explicit chain complexes computing simplicial (co)homology,
we need to set the notation and state some fundamental results for later reference. 

\subsection{Simplicial (co)chains}
Let $K=(V,I,\Omega)$ be a multicomplex, and let $R$ be a ring with unity (in our cases of interest, we will  have either $R=\mathbb{Z}$ or $R=\mathbb{R}$).

The complex $(C_*(K;R),\partial)$ of simplicial chains over $K$ with coefficients in $R$ is simply the chain complex associated to the ordered simplicial set associated to $K$
(see Section~\ref{comparison:subsec}). Therefore, for every $n\in\mathbb{N}$ the module $C_n(K;R)$ is the free $R$-module generated by the set
$$
\{(\sigma,(v_0,\ldots,v_{n}))\, |\, \sigma\in I_A, \, A=\{v_0,\ldots,v_{n}\}\in\mathcal{P}_f(V)\}\ , 
$$ 
while the boundary operator $\partial_n\colon C_n(K;R)\to C_{n-1}(K;R)$ is the $R$-linear extension of the map
$$
(\sigma,(v_0,\ldots,v_{n}))\mapsto \sum_{i=0}^{n} (-1)^i (\partial_{B_i}\sigma,(v_0,\ldots,\widehat{v}_i,\ldots,v_n))\ ,
$$ 
where $B_i = \{v_0, \ldots, \hat{v}_i, \ldots, v_n\}$. Every element of $C_n(K;R)$ of the form 
$$
(\sigma,(v_0,\ldots,v_{n}))\, \qquad  \sigma\in I_A, \ A=\{v_0,\ldots,v_{n}\}\in\mathcal{P}_f(V)
$$ 
will be called an \emph{algebraic simplex} (or, when there is no danger of ambiguity, simply \emph{simplex}).
It is easy to check that $\partial_n\circ\partial_{n+1}=0$ for every $n\geq 1$ (where we understand that $\partial_0=0$).
The simplicial homology $H_*(K;R)$ of $K$ is then the homology of the complex $(C_*(K;R),\partial_*)$. 
As usual, by taking duals one may define the cohomology of $K$: namely, the complex of simplicial cochains
$(C^*(K;R),\delta^*)$ is defined by setting $C^n(K;R)=\Hom(C_n(K;R),R)$ and by letting the coboundary operator $\delta^n\colon C^n(K;R)\to C^{n+1}(K;R)$
be the dual of the map $\partial_{n+1}\colon C_{n+1}(K;R)\to C_n(K;R)$. The simplicial cohomology $H^*(K;R)$ of $K$ is then the cohomology of the
complex $(C^*(K;R),\delta^*)$.

%\todo{Aggiunte norma $l_1$ e $l_\infty$. Ma non per le catene singolari.}

\subsection{The $\ell^1$-norm on chains and the $\ell^\infty$-norm on cochains}
When $R=\mathbb{Z},\mathbb{R}$, one may also define \emph{bounded} simplicial cochains of $K$ as follows. 
Let $c=\sum_{i\in I} a_i \psi_i\in C_n(K;R)$ be a chain written in reduced form (i.e.~suppose that $I$ is a finite set, and the $\psi_i$ are algebraic simplices such that $\psi_i\neq\psi_j$ for $i\neq j$). Then we define
the $\ell^1$-norm of $c$ by setting
$$
\|c\|_1=\sum_{i\in I} |a_i|\ .
$$
In this way, the module $C_n(K;R)$ is endowed with a norm, which restricts to a norm on the submodule of cycles $Z_n(K;R)$, which in turn descends to  a quotient seminorm
(still denoted by $\|\cdot \|_1$) on $H_n(K;R)$. (Observe that, if $R=\mathbb{Z}$, then for every $\alpha \in H_n(K;\mathbb{Z})$ and every $\lambda\in\mathbb{Z}$
we have $\|\lambda \cdot \alpha\|_1\leq |\lambda|\cdot \|\alpha\|_1$, but the inequality may be strict; nevertheless, we will  call the map
$\|\cdot \|_1\colon H_n(K;\mathbb{Z})\to \mathbb{Z}$ a seminorm).

Dually, one can consider the submodule of \emph{bounded cochains} $C^n_b(K;R)$ of $C^n(K;R)$ given by those cochains which are continuous with respect to the $\ell^1$-norm of $C_n(K;R)$,
and endow this space with the dual norm $\|\cdot \|_\infty\colon C^n_b\colon (K;R)\to R$. For notational reasons, we assign an $\ell^\infty$-norm also to cochains
which are not bounded, by setting $\|\varphi\|_\infty=+\infty$ for every $\varphi\in C^n(K;R)\setminus C^n_b(K;R)$. It is readily seen that
$$
\|\varphi\|_\infty=\sup \{|\varphi(s)|, \ s\ \textrm{algebraic\ simplex\ in}\ C_n(K;R)\}
$$
for every
$\varphi\in C^n(K;R)$.
Moreover, the coboundary operator of $C^*(K;R)$ restricts to $C^*_b(K;R)$, which is thus a complex. The cohomology of $C^*_b(K;R)$ is called the \emph{(simplicial) bounded cohomology} of $K$, and it is denoted by $H^*_b(K;R)$. The $\ell^\infty$-norm on 
$C^n_b(K;R)$
restricts to a norm on the submodule of cocycles $Z^n_b(K;R)$, which in turn descends to  a quotient seminorm
(still denoted by $\|\cdot \|_\infty$) on $H^n_b(K;R)$. In the same way one may define an $\ell^\infty$-seminorm (taking values
in $[0,+\infty]$) on $H^n(K;R)$.

(Bounded) simplicial (co)homology is functorial: every simplicial map between multicomplexes induces a chain map on (bounded) (co)chains, which induces in turn a map
in (bounded) (co)homology. In fact, using that a simplicial map takes every single simplex to one single simplex, it is readily seen that simplicial maps between multicomplexes
induce norm non-increasing maps in homology (endowed with the $\ell^1$-norm) and in bounded cohomology (endowed with the $\ell^\infty$-norm).

\subsection{Reduced chains and alternating cochains}\label{subsec:reduced:alternating}
By definition, every $n$-simplex of a multicomplex $K$ gives rise to $(n+1)!$ algebraic simplices in $C_n(K;R)$, and to many algebraic simplices in
$C_m(K;R)$ for every $m>n$. It is sometimes useful to reduce the number of generators for simplicial chains (of course, without altering the resulting homology
modules) by discarding all chains that are degenerate, according to a suitable definition that we are now going to give. 

For every $n\in\mathbb{N}$, we denote by $D_n(K;R)$ the submodule of $C_n(K;R)$ generated by the set
\begin{align*}
& \left\{(\sigma,(v_0,\ldots,v_n))- \varepsilon(\tau) (\sigma,(v_{\tau(0)},\ldots,v_{\tau(n)}))\ |\ \tau\in\mathfrak{S}_{n+1}\right\} \\ \cup\  & \left\{(\sigma,(v_0,\ldots,v_n))\, |\, v_i=v_j\ {\rm for\ some}\ i\neq j\right\}\ ,
\end{align*}
where we denote by $\varepsilon(\tau)=\pm 1$ the sign of the permutation $\tau$. We then define the module of \emph{reduced} $n$-chains
$C_n(K;R)_{\red}$ as the quotient $$C_n(K;R)_{\red}=C_n(K;R)/D_n(K;R)\ .$$ It is readily seen that the boundary of a degenerate chain is still degenerate,
so the boundary operators of the complex $C_*(K;R)$ descend to boundary operators (still denoted by $\partial_*$) on $C_*(K;R)_{\red}$, which is therefore endowed
with the structure of a complex. Observe that to every $n$-simplex of $K$ there is now associated a unique algebraic simplex of $C_n(K;R)_{\red}$, and that
the elements of $C_n(K;R)_{\red}$ arising from the $n$-simplices of $K$ provide a basis of $C_n(K;R)_{\red}$ (which, in particular, is still free).
In order to ease notation, from now on we will often denote an element of $C_n(K;R)$ and its class in $C_n(K;R)_{\red}$ by the same symbol (however, it will always be clear whether
we are working in $C_*(K;R)$ or in $C_*(K;R)_{\red}$). For example, 
the identity 
$$
(\sigma,(v_0,v_1))=-(\sigma,(v_1,v_0))
$$
holds in $C_1(K;R)_{\red}$ (and does not hold in $C_1(K;R)$).

\begin{rem}
 If $R=\mathbb{R}$ (or, more in general, if $2$ is invertible in $R$), then the submodule $D_n'(K;R)$ of $C_n(K;R)$ generated by the set
$$\left\{(\sigma,(v_0,\ldots,v_n))- \varepsilon(\tau) (\sigma,(v_{\tau(0)},\ldots,v_{\tau(n)}))\ |\ \tau\in\mathfrak{S}_{n+1}\right\}$$ automatically contains every element of the form
$\alpha=(\sigma,(v_0,\ldots,v_n))$, where $v_i=v_j$
for some $i\neq j$. When $R=\mathbb{Z}$, for such an element $\alpha$ we just have $2\alpha \in D_n'(K;R)$, from which we cannot deduce that $\alpha\in D_n'(K,R)$ too.  
\end{rem}

Coming to cohomology, one can consider the dual of $C_n(K;R)_{\red}$, which may be identified with
the subspace $C^n(K;R)_{\alt}\subseteq C^n(K;R)$ of cochains which vanish on $D_n(K;R)$. Such cochains are called \emph{alternating}, because 
an element $\varphi\in C^n(K;R)$ lies in $C^n(K;R)_{\alt}$ if and only if it satisfies the equality
$$
\varphi(\sigma,(v_{\tau(0)},\ldots,v_{\tau(n)}))=\varepsilon(\tau) \varphi(\sigma,(v_0,\ldots,v_n))
$$
for every $\tau\in\mathfrak{S}_{n+1}$. The differential  preserves alternating cochains, so $(C^*(K;R)_{\alt},\delta^*)$ is a complex. We will be interested also in the complex
$(C^*_b(K;R)_{\alt},\delta^*)$ 
of alternating bounded cochains, which is defined just by setting $C^n_b(K;R)_{\alt}=C^n_b(K;R)\cap C^n(K;R)_{\alt}$ (recall that $\delta^*$ also preserves boundedness of cochains).
The $\ell^1$-norm on simplicial chains induces a quotient seminorm (which is in fact a norm) on reduced simplicial chains, and the $\ell^\infty$-norm on (bounded) cochains
restricts to a norm on alternating (bounded) cochains; we will still denote these norms by $\|\cdot \|_1$ and $\|\cdot\|_\infty$, respectively.

The following theorem ensures that the $\ell^1$-seminorms induced on $H_*(K;R)$ by $C_*(K;R)$ and by $C_*(K;R)_{\red}$ coincide, and that the same
holds for the $\ell^\infty$-seminorms induced on $H^*_b(K;R)$ by $C^*_b(K;R)$ and by $C^*_b(K;R)_{\alt}$.

\begin{thm}\label{reduced:equivalence}
The chain projection $C_*(K;R)\to C_*(K;R)_{\red}$ induces an isometric isomorphism between $H_*(K;R)$ and the homology of the complex
$C_*(K;R)_{\red}$. The inclusions of complexes $C^*(K;R)_{\alt}\to C^*(K;R)$ (resp.~$C_b^*(K;R)_{\alt}\to C_b^*(K;R)$) induce
isometric isomorphisms between the cohomology of the complex $C^*(K;R)_{\alt}$ (resp.~$C_b^*(K;R)_{\alt}$) and
$H^*(K;R)$ (resp.~$H_b^*(K;R)$).
\end{thm}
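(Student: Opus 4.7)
The plan is to prove the cohomology statements first (using an averaging/alternation argument) and then deduce or mirror the chain statement. In all four cases, one of the two inequalities defining the isometry is trivial: the projection $\pi\colon C_n(K;R)\to C_n(K;R)_{\red}$ sends every algebraic simplex to $\pm$ a reduced generator or to $0$ (the degenerate case), so it is norm non-increasing on chains, hence norm non-increasing on $H_*(K;R)$; dually, the inclusions $C^n_b(K;R)_{\alt}\hookrightarrow C^n_b(K;R)$ and $C^n(K;R)_{\alt}\hookrightarrow C^n(K;R)$ are isometric inclusions of $\ell^\infty$-normed spaces, hence induce norm non-increasing maps on cohomology. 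The content of the theorem is therefore two-fold: proving the algebraic isomorphism on (co)homology, and producing inverse maps whose norm is controlled in the opposite direction.

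The main technical tool I would introduce is the \emph{alternation operator}
\[
\alt_n\colon C_n(K;R)\to C_n(K;R),\qquad \alt_n(\sigma,(v_0,\ldots,v_n))=\frac{1}{(n+1)!}\sum_{\tau\in\mathfrak{S}_{n+1}}\varepsilon(\tau)\,(\sigma,(v_{\tau(0)},\ldots,v_{\tau(n)})),
\]
which, over $\mathbb{R}$ (or any ring in which $(n+1)!$ is invertible), is a chain map landing in the submodule of ``antisymmetric'' chains. A routine but standard computation — the same one that underlies the classical fact that alternating cochains compute the bounded cohomology of groups — produces an explicit chain homotopy between $\alt_*$ and $\id_{C_*(K;R)}$ (building the homotopy one transposition at a time, averaging progressively over larger subgroups of $\mathfrak{S}_{n+1}$, and combining with the Eilenberg--MacLane normalization chain homotopy that kills the submodule generated by simplices with repeated vertices). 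Dualizing, the cochain operator $\alt^n\colon C^n(K;R)\to C^n(K;R)_{\alt}$ is a retraction onto the alternating subcomplex, chain homotopic to the identity, and — being literally an average over $(n+1)!$ terms — satisfies $\|\alt^n(\varphi)\|_\infty\leq\|\varphi\|_\infty$. Since the same estimate holds for the dual of the normalization homotopy, this immediately yields the isometric isomorphism on $H^*_b(K;R)$ (and, with the same formulas, on $H^*(K;R)$): any bounded cocycle $\varphi$ is cohomologous to $\alt^*(\varphi)$, which is alternating and has $\ell^\infty$-norm at most $\|\varphi\|_\infty$.

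For the chain statement I would proceed dually. Given a reduced cycle $\bar z=\sum a_i\bar\sigma_i$ in reduced form, fix an arbitrary ordering of each $\bar\sigma_i$ to obtain a lift $\tilde z\in C_n(K;R)$ with $\|\tilde z\|_1=\|\bar z\|_1$. Its boundary $\partial\tilde z$ lies in the subcomplex $D_{n-1}(K;R)$ and is itself a cycle there; applying the norm-controlled chain contraction of $D_*(K;R)$ provided by the combination of the normalization homotopy and the ``transposition'' homotopy built from $\alt$, one can correct $\tilde z$ by an element of $D_n(K;R)$ so as to obtain a genuine cycle of $C_*(K;R)$ lifting $\bar z$ whose class has $\ell^1$-seminorm equal to $\|\bar z\|_1$. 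The hard part of the argument is precisely this last step: verifying that the explicit chain contraction of $D_*(K;R)$ can be arranged to be norm non-increasing, which is straightforward for the degeneracy component (it is a signed partial sum of degenerate lifts) but requires care for the symmetric/alternation component, where one must avoid the naive averaging that would pick up a factor $(n+1)!$. The remaining subtlety is the case $R=\mathbb{Z}$, where the operator $\alt$ is not defined over the integers; here I would either restrict attention to $R=\mathbb{R}$ (which is the main case of interest for simplicial volume and bounded cohomology) or invoke a universal coefficients argument to reduce to the rational case, noting that the explicit norm-controlled chain homotopies constructed above are defined over $\mathbb{Z}$ once one pairs each generator with the sign of a fixed ordering rather than averaging.
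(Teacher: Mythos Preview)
Your cohomology argument via the alternation operator $\alt^*$ is correct over $\mathbb{R}$: averaging over $\mathfrak{S}_{n+1}$ is manifestly norm non-increasing in $\ell^\infty$, the chain homotopy between $\alt$ and the identity is standard, and this settles the bounded-cohomology and ordinary-cohomology statements. This is a genuinely different route from the paper's, which never averages but instead builds a norm non-increasing \emph{section} at the chain level via a global vertex ordering (Eilenberg's argument) and then dualizes.

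Your chain argument, however, has a gap. When you lift a reduced cycle $\bar z=\sum a_i\bar\sigma_i$ by choosing ``an arbitrary ordering of each $\bar\sigma_i$'', the per-simplex orderings need not be compatible on faces, so $\tilde z$ is typically not a cycle. You then propose to correct by an element of $D_n(K;R)$ obtained from a contracting homotopy of $D_*$; but you give no reason for that correction to be norm-controlled --- you flag it as ``the hard part'' and leave it open. The resulting cycle $\tilde z-w$ may well have large norm, and nothing in your argument bounds the seminorm of its homology class by $\|\bar z\|_1$.

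The paper sidesteps this entirely. Fixing once and for all a \emph{global} total order on $V(K)$, one defines $s_n$ by sending (the class of) $(\sigma,(v_0,\ldots,v_n))$ to $\varepsilon(\tau)(\sigma,(v_{\tau(0)},\ldots,v_{\tau(n)}))$ with $v_{\tau(0)}<\cdots<v_{\tau(n)}$ (and to $0$ on degenerate tuples). Because the order is global, sorting is compatible with passing to faces, so $s_*$ is a genuine chain map; since each reduced generator goes to $\pm$ a single ordinary generator, $s_*$ is isometric. Then $p_*\circ s_*=\id$, while $s_*\circ p_*$ is chain homotopic to the identity via a bounded homotopy (Eilenberg's classical argument). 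Since both $p_*$ and $s_*$ are norm non-increasing, the isometric isomorphism on $H_*$ follows directly, over any ring $R$ including $\mathbb{Z}$. You essentially describe exactly this in your final sentence as a fix for the integral case; in fact it is the cleanest route in all cases and should be the main argument, not an afterthought. (Over $\mathbb{R}$ you could equally well use $\alt_*$ itself as the section --- its image is a subcomplex isometrically isomorphic to $C_*(K;R)_{\red}$ --- instead of your arbitrary per-simplex lift; that would repair your argument without the uncontrolled correction step.)
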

\begin{proof}
The fact that the maps described in the statement are isomorphisms is well known for simplicial sets, whence for multicomplexes, since the (co)homology of a multicomplex
is just the (co)homology of the associated simplicial set. However, we are interested here in showing that these isomorphisms are isometric, which is a bit more delicate
(see Remark~\ref{noisog}). To this aim we retrace
 an argument originally due to Eilenberg, referring the reader e.g.~to~\cite[Theorem~9.1]{eil-sing} and \cite[Theorems~6.9 and 6.10]{E-Steenrod} for the details.
Eilenberg's proof concerns in fact simplicial complexes, and uses the fact (which still holds in multicomplexes) that simplices have distinct vertices.

%\todo{Cercare la citazione di Eilenberg}

Let us consider the projection $p_*\colon C_*(K;R)\to C_*(K;R)_{\red}$,
let us fix an auxiliary total ordering of the vertices of $K$, and
let $s_*\colon C_*(K;R)_{\red}\to C_*(K;R)$
be the unique $R$-linear map such that the following conditions hold: $s_n(\sigma,(v_0,\ldots,v_n))=0$ whenever at least two of the $v_i$ coincide;
$s_n(\sigma,(v_0,\ldots,v_n))=\varepsilon(\tau) (\sigma,(v_{\tau(0)},\ldots,v_{\tau(n)}))$, where $\tau\in \mathfrak{S}_{n+1}$ is the unique
permutation such that $v_{\tau(0)}<v_{\tau(1)}<\ldots<v_{\tau(n)}$. It is readily seen that $s_*$ is a well-defined chain map. By definition, the composition $p_*\circ s_*$
is equal to the identity of $C_*(K;R)_{\red}$. Moreover,
it can be shown that $s_*\circ p_*$ is homotopic to the identity of $C_*(K;R)$ (via a bounded homotopy with respect to the $\ell^1$-norms), and this implies at once that
$p_*$ induces an isomorphism on homology. On the other hand, the inclusion of alternating cochains into generic ones is just the dual map of the projection $p_*$.
As a consequence, such inclusion is also a homotopy equivalence of complexes (where homotopies can be chosen to be bounded with respect to the $\ell^\infty$-norm).
The conclusion follows from the fact that both $p_*$ and $s_*$ (and, therefore, also their dual maps) are norm non-increasing.
\end{proof}

%\todo{aggiunto remark seguente}

\begin{rem}\label{noisog}
The fact that the homology (resp.~(bounded) cohomology) computed by
reduced chains (resp.~by alternating cochains) is isomorphic to the homology (resp.~(bounded) cohomology) 
of ordinary chains (resp.~(bounded) cochains)
holds true also for $\Delta$-complexes and simplicial sets.

On the contrary, the fact that reduced chains induce on $H_*(K;R)$ the same $\ell^1$-seminorm as ordinary chains is due to the fact that simplices
in a multicomplex have distinct vertices. This assumption is indeed necessary, as the following example shows. Let $K$ be the unordered $\Delta$-complex obtained from a 
$3$-simplex $\Delta$ with vertices $v_0,v_1,v_2,v_3$ via the following identifications: the face spanned by $v_1,v_2,v_3$ is identified with the face spanned
by $v_0,v_2,v_3$ via the affine isomorphism sending $v_1$ to $v_2$, $v_2$ to $v_3$ and $v_3$ to $v_0$;
 the face spanned by $v_0,v_1,v_3$ is identified with the face spanned
by $v_0,v_1,v_2$ via the affine isomorphism sending $v_0$ to $v_1$, $v_1$ to $v_2$ and $v_3$ to $v_0$. If $\sigma$ is the algebraic simplex 
$(\Delta,(v_0,v_1,v_2,v_3))$, then it is readily seen that $\partial \sigma$ is  degenerate but different from $0$. As a consequence,
the class $[\sigma]$ of $\sigma$ in $C_3(K;\mathbb{Z})_{\red}$ is a cycle. Let $\alpha\in H_3(K;\mathbb{Z})$ be the class of  $[\sigma]$. 
By construction, the seminorm of $\alpha$ induced by $C_*(K;\mathbb{Z})_{\red}$ is not bigger than 1 (and, since $\alpha\neq 0$, it is equal to 1).
On the other hand, using that $\partial \sigma \neq 0$ it is not difficult to show that $\alpha$ cannot be represented by any cycle consisting
of a single algebraic simplex in $C_3(K;R)$. This shows that the seminorms induced on $H_3(K;\mathbb{Z})$ by $C_*(K;\mathbb{Z})$ and
 by $C_*(K;\mathbb{Z})_{\red}$ are distinct.
\end{rem}

\begin{comment}
\begin{cor}\label{nondegenerate:inessential}
 Every class in $H_n(K;R)$ admits a representative in which no degenerate simplex appears.
\end{cor}
\begin{proof}
 Let $c\in C_n(K;R)$ be any cycle. If $p_n\colon C_n(K;R)\to C_n(K;R)_{\red}$ and $s_n\colon C_n(K;R)_{\red}\to C_n(K;R)$ are as in the proof of the previous theorem,
 then $s_n(p_n(c))$ is homologous to $c$ and does not contain degenerate simplices. 
\end{proof}
\end{comment}

Thanks to Theorem~\ref{reduced:equivalence}, we will simply denote by $H_*(K;R)$ (resp.~$H^*_{(b)}(K;R)$) the homology of the complex $C_*(K;R)_\red$ (resp.~$C^*_{(b)}(K;R)_{\alt}$),
i.e.~we will feel free to study (bounded) (co)homology by working directly with reduced chains and with alternating cochains, when useful.
Also observe that simplicial maps between multicomplexes take degenerate chains to degenerate chains, so they induce (norm non-increasing) maps on reduced chains and on alternating cochains. 

A classical and fundamental  result in the (co)homology theory of simplicial complexes and simplicial sets is that the simplicial (co)homology of such objects is canonically isomorphic to
the singular (co)homology of their geometric realizations. 
The same result also holds for multicomplexes. More precisely, 
let 
\begin{displaymath}
\phi_* \colon C_{*}(K ; R) \rightarrow C_{*}(\lvert K \rvert ; R)
\end{displaymath}
be the $R$-linear chain map
sending every element $(\sigma,(v_0,\ldots,v_n))\in C_*(K;R)$ to the singular simplex 
$$
\Delta^n\to |K|\, ,\quad (t_0,\ldots,t_n)\mapsto (\sigma,t_0v_0+\ldots+\ldots t_nv_n)\ ,
$$
and denote by $\phi^*\colon C^*(|K|;R)\to C^*(K;R)$ the dual chain map induced by $\phi_*$ on cochains. 

\begin{thm}\label{simpl-hom-eq-sing-one}
For any multicomplex $K$, the homomorphisms
\begin{displaymath}
H_{*}(K; R) \rightarrow H_{*}(\lvert K \rvert; R)\, ,\qquad
H^*(|K|;R)\to H^*(K;R)
\end{displaymath}
induced by $\phi_*$ and $\phi^*$, respectively, 
are isomorphims in every degree.
\end{thm}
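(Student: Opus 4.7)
My plan is to first invoke Theorem~\ref{reduced:equivalence} in order to work with the reduced chain complex $C_*(K;R)_{\red}$ (respectively the alternating cochain complex $C^*(K;R)_{\alt}$), whose great advantage is that each $n$-simplex of $K$ contributes a single generator (up to sign), thereby matching the CW structure on $|K|$ in which each $n$-cell corresponds to exactly one $n$-simplex of $K$. Although $\phi_*$ does not literally descend to $C_*(K;R)_{\red}$, at the homology level it induces a well-defined map $H_*(K;R)\to H_*(|K|;R)$: the classical argument exploiting the contractibility of $\Delta^n$ shows that precomposing a singular simplex with any affine automorphism of $\Delta^n$ alters the corresponding singular chain only by a boundary, up to the sign of the automorphism. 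We may therefore regard $\phi_*$ as sending the reduced chain associated with $\sigma\in I^n$ to the class of the characteristic map of the $n$-cell of $|K|$ determined by $\sigma$.

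The proof then proceeds by induction on the skeleta $K^n$. The base case $n=0$ is immediate, since both $H_0(K^0;R)$ and $H_0(|K^0|;R)$ are canonically the free $R$-module on the vertex set $V$. For the inductive step, I would compare the long exact sequences of the pairs $(K^n,K^{n-1})$ and $(|K^n|,|K^{n-1}|)$ via the five lemma. On the simplicial side, the relative reduced chain complex $C_*(K^n,K^{n-1};R)_{\red}$ is concentrated in degree $n$ and free on the set of $n$-simplices of $K$. On the singular side, $|K^n|/|K^{n-1}|$ is a wedge of $n$-spheres indexed by the same set, so $H_*(|K^n|,|K^{n-1}|;R)$ is also concentrated in degree $n$ and free with one generator per $n$-simplex of $K$. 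A direct inspection shows that $\phi_*$ realizes the tautological bijection between these two bases, so it is an isomorphism on relative (co)homology, and the five lemma then gives the isomorphism $H_*(K^n;R)\cong H_*(|K^n|;R)$.

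To pass from the skeleta to the whole multicomplex, I would appeal to a direct-limit argument: on the one hand, $H_*(K;R)=\varinjlim_n H_*(K^n;R)$ since a simplicial $k$-chain lies in $K^k$; on the other, $H_*(|K|;R)=\varinjlim_n H_*(|K^n|;R)$ because every compact subset of the CW complex $|K|$ (in particular, the support of every singular chain) is contained in some finite subcomplex and \emph{a fortiori} in some $|K^N|$. Compatibility of $\phi_*$ with the inclusions $K^n\hookrightarrow K^{n+1}$ then yields the isomorphism in homology. The cohomological statement is obtained either by a parallel inverse-limit argument on cochain complexes, or directly by dualization via the universal coefficient theorem, using that $C^*(K;R)$ is by definition the $R$-dual of $C_*(K;R)$ and similarly in the singular setting.

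The main obstacle will be the relative (co)homology computation, namely verifying that the cellular boundary operator of $|K|$ corresponds, under the identification above, to the simplicial boundary on reduced chains. Concretely, for each $n$-simplex $\sigma\in I_A$ and each facet $B\subset A$ with $|B|=n$, one must show that the composition of the attaching map $S^{n-1}\to|K^{n-1}|$ of the cell of $\sigma$ with the collapse onto the $(n-1)$-cell of $\partial_B\sigma$ has degree $\pm 1$, with the sign prescribed by the combinatorial boundary formula. This computation is essentially local on $\Delta^n$ and parallels the classical argument for simplicial complexes; the only novelty in the multicomplex setting is the bookkeeping required when several distinct simplices share the same vertex set, a situation ruled out by Definition~\ref{simpl:compl:defn} for simplicial complexes but typical for general multicomplexes. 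Once this identification is established, the rest of the argument is formal.
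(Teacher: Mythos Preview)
Your proposal is correct and is precisely the standard skeleton-by-skeleton five-lemma argument that the paper invokes by citing Hatcher's Theorem~2.27 for $\Delta$-complexes; the paper gives no further details, so you have in fact supplied more than it does. Two minor remarks: the preliminary passage through $C_*(K;R)_{\red}$ via Theorem~\ref{reduced:equivalence} is convenient but optional, since in the relative group $H_n(K^n,K^{n-1};R)$ the $(n+1)!$ orderings of a given $n$-simplex become homologous (up to sign) through the degenerate $(n+1)$-simplices, so the rank already matches that of $H_n(|K^n|,|K^{n-1}|;R)$; and for the cohomological statement the inverse-limit route is unnecessary, as $H^n$ of either complex depends only on the $(n+1)$-skeleton, so the skeletal induction already suffices.
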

\begin{proof}
The  standard argument for the isomorphism between 
simplicial and singular cohomology for $\Delta$-complexes (see e.g.~\cite[Theorem~2.27]{hatcher}) applies \emph{verbatim} to multicomplexes.
\end{proof}

Of course, Theorem~\ref{simpl-hom-eq-sing-one} cannot hold for \emph{bounded} cohomology: if $K$ is a finite multicomplex, then every simplicial cochain on $K$ is bounded, while there may well be
singular classes on $|K|$ which do not admit any bounded representative (we refer the reader to Section~\ref{bounded:singular:sec} for the definition of bounded cohomology
of a topological space).  
One of the key results on the bounded cohomology
of multicomplexes establishes a suitable version of Theorem~\ref{simpl-hom-eq-sing-one} for the notable class of \emph{complete} multicomplexes, that will be introduced and studied in Chapter~\ref{chap2:hom} (see Theorem~\ref{isometry-lemma-intro}).

\subsection{The relative case}
\begin{comment}
Notice that the boundary morphism just introduced satisfies the classical property of boundary maps in algebraic topology.
\begin{Lemma}
The following holds:
\begin{displaymath}
\partial_{n-1} \circ \partial_{n} =0.
\end{displaymath}
\end{Lemma}
\begin{proof}
\begin{align*}
&\partial_{n-1} \circ \partial_{n}((v_{0}, \cdots, v_{n}), i) = \sum_{j = 0}^{n} (-1)^{j} \partial_{n-1}((v_{0}, \cdots, \hat{v_{j}}, \cdots, v_{n})), f_{\{v_{j}\}}(i)) = \\
&= \sum_{k < j} (-1)^{j} (-1)^{k} ((v_{0}, \cdots, \hat{v_{k}}, \cdots, \hat{v_{j}}, \cdots, v_{n}), f_{\{v_{k}\}} \circ f_{\{v_{j}\}}(i)) + \\
&+ \sum_{j < k}(-1)^{j} (-1)^{k-1} ((v_{0}, \cdots, \hat{v_{j}}, \cdots, \hat{v_{k}}, \cdots, v_{n}), f_{\{v_{j}\}} \circ f_{\{v_{k}\}}(i)).
\end{align*}
Since by definition of multicomplex $f_{\{v_{k}\}} \circ f_{\{v_{j}\}}(i) =  f_{\{v_{j}\}} \circ f_{\{v_{k}\}}(i)$, the thesis follows from the fact that the two sums differ only by a sign.
\end{proof}
We can now define the homology with real coefficients of a multicomplex.
\begin{Definizione}
The homology of the chain complex $(C_{*}(K; \mathbb{R}), \partial_{*})$ is called \emph{simplicial homology} of the multicomplex $K$.
\end{Definizione}
Our main goal is to check that the simplicial homology of a multicomplex $K$ just introduced is isomorphic to the singular homology of 
its geometric realization $\lvert K \rvert$. The proof of this fact needs the definition of relative homology.
\end{comment}

Let $K$ be a multicomplex and let $L$ be a submulticomplex of $K$. The simplicial inclusion $L\hookrightarrow K$ induces chain maps $C_*(L;R)\to C_*(K;R)$ and 
$C_*(L;R)_{\red}\to C_*(K;R)_{\red}$, and, after identifying (reduced) chains on $L$ with their images in $C_*(K;R)$ (or in $C_*(K;R)_{\red}$), one can define as usual relative chains for the pair $(K,L)$ by setting
$$
C_*(K,L;R)=C_*(K;R)/C_*(L;R)\, ,\quad C_*(K,L;R)_{\red}=C_*(K;R)_{\red}/C_*(L;R)_{\red}\ .
$$
The boundary operator for absolute (reduced) chains induces a boundary operator for relative (reduced) chains, and we denote by
$H_*(K,L;R)$ the resulting homology. Theorem~\ref{reduced:equivalence} also holds in the relative context (with the very same proof as for the absolute case),
so by the symbol $H_*(K,L;R)$ we may equivalently denote the homology of chains or of reduced chains.

The short exact sequence of complexes 
\begin{displaymath}
\xymatrix{
0 \ar[r]  & C_{*}(L; {R}) \ar[r]^{i} & C_{*}(K; R) \ar[r]^{j} & C_{*}(K,L; R) \ar[r]  & 0,
}
\end{displaymath}
where $i$ is the inclusion and $j$ the quotient map, induces 
the long exact sequence 
\begin{align}\label{long-ex-seq-multi}
\begin{split}
%\xymatrix@C = 1.8em
\cdots \longrightarrow &\  H_{n}(L; R) \ 
%\ar[rr]^-{H_n(i_{n})} & & 
\xrightarrow{H_n(i_{n})}
\ H_{n}(K; R) \ 
%\ar[rr]^-{H_n(j_{n})} & & 
\xrightarrow{H_n(j_{n})}
\ H_{n}(K,L; R)\  \xrightarrow{\partial_n}\\
%\ar[r]^-{\partial_{n}}  \\
\xrightarrow{\partial_n} & \ H_{n-1}(L; R) \ \longrightarrow \ldots  
\end{split}
\end{align}

The inclusion of $L$ in $K$ also defines a restriction map $C^*_{(b)}(K;R)\to C^*_{(b)}(L,R)$, whose kernel
$C^*_{(b)}(K,L;R)$ is canonically identified with the dual (or the topological dual, in the case of bounded cochains)
of $C_*(K,L;R)$. After endowing $C^*_{(b)}(K,L;R)$ with the restriction of the differential of $C^*_{(b)}(K;R)$ we can compute the cohomology
$H^*_{(b)}(K,L;R)$
of the resulting complex, which is by definition the relative (bounded) cohomology of the pair $(K,L)$. Just as for homology, we can compute (bounded) cohomology of pairs
using alternating cochains, and we have a long exact sequence
\begin{equation}\label{long-ex-seq-multi-coh}
\xymatrix@C = 1.4em{
\cdots\ar[r] & H^n_{(b)}(K,L;R) \ar[r] & H^{n}_{(b)}(K; R) \ar[r] & H^{n}_{(b)}(L; R) \ar[r] & 
H^{n+1}_{(b)}(K,L; R) \ar[r] & \cdots
}
\end{equation}

As in the absolute case, if we denote by 
\begin{displaymath}
\phi_* \colon C_{*}(K, L ; R) \rightarrow C_{*}(\lvert K \rvert, \lvert L \rvert; R)
\end{displaymath}
the $R$-linear map 
sending (the class of) every algebraic simplex $(\sigma,(v_0,\ldots,v_n))\in C_{n}(K,L;R)$ to (the class of) the singular simplex 
$$
\Delta^n\to |K|\, ,\quad (t_0,\ldots,t_n)\mapsto (\sigma,t_0v_0+\ldots+\ldots t_nv_n)\ ,
$$
and  by $\phi^*\colon C^*(|K|,|L|;R)\to C^*(K,L;R)$ its dual chain map, then we have the following

\begin{thm}\label{simpl-hom-eq-sing-one-rel}
For any pair of multicomplexes $(K,L)$, the homomorphisms
\begin{displaymath}
H_{*}(K, L ; R) \rightarrow H_{*}(\lvert K \rvert, \lvert L \rvert; R)\, ,\qquad
H^*(|K|,|L|;R)\to H^*(K,L;R)
\end{displaymath}
induced by $\phi_*$ and $\phi^*$, respectively, 
are isomorphims in every degree.
\end{thm}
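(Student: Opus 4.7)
The plan is to deduce the relative statement from the absolute one (Theorem~\ref{simpl-hom-eq-sing-one}) by a Five Lemma argument applied to the long exact sequences of the pair. First, I would observe that the chain maps $\phi_*\colon C_*(K;R)\to C_*(|K|;R)$ and $\phi_*\colon C_*(L;R)\to C_*(|L|;R)$ are compatible with the simplicial inclusion $L\hookrightarrow K$ and with the continuous inclusion $|L|\hookrightarrow |K|$: this is immediate from the explicit formula defining $\phi_*$, since an algebraic simplex $(\sigma,(v_0,\ldots,v_n))$ with $\sigma\in L$ is sent to a singular simplex whose image lies in $|L|$. As a consequence, $\phi_*$ induces a morphism of short exact sequences of chain complexes
\[
\xymatrix{
0\ar[r] & C_*(L;R)\ar[r]\ar[d]^-{\phi_*} & C_*(K;R)\ar[r]\ar[d]^-{\phi_*} & C_*(K,L;R)\ar[r]\ar[d]^-{\phi_*} & 0\\
0\ar[r] & C_*(|L|;R)\ar[r] & C_*(|K|;R)\ar[r] & C_*(|K|,|L|;R)\ar[r] & 0\ .
}
\]

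Next, I would pass to the associated long exact sequences in homology. By naturality of the connecting homomorphism, the diagram of long exact sequences commutes, and the vertical arrows on the absolute terms $H_*(K;R)\to H_*(|K|;R)$ and $H_*(L;R)\to H_*(|L|;R)$ are isomorphisms by Theorem~\ref{simpl-hom-eq-sing-one}. Applying the Five Lemma, I conclude that the induced map $H_*(K,L;R)\to H_*(|K|,|L|;R)$ is an isomorphism in every degree.

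For the cohomological statement, I would dualize the preceding short exact sequence of chain complexes, obtaining a morphism of short exact sequences (this is a sequence of free $R$-modules, so dualization preserves exactness) which on applying cohomology yields a commutative ladder of long exact sequences. The absolute cohomological statement in Theorem~\ref{simpl-hom-eq-sing-one} together with the Five Lemma then give the desired isomorphism $H^*(|K|,|L|;R)\to H^*(K,L;R)$.

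The only step that needs a moment of care is verifying that the singular chain complex of the pair $(|K|,|L|)$ is really computed by the quotient $C_*(|K|;R)/C_*(|L|;R)$ appearing in the diagram, so that its homology is genuinely $H_*(|K|,|L|;R)$; but this is the standard definition of singular relative homology, since $C_*(|L|;R)$ embeds canonically as a subcomplex of $C_*(|K|;R)$ (the map being induced by the topological inclusion $|L|\hookrightarrow |K|$, which is an embedding as $L$ is a submulticomplex of $K$). Once this is in place, the Five Lemma mechanism closes the argument without further difficulty.
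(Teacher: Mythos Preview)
Your proof is correct and takes essentially the approach the paper intends. The paper does not write out a proof for this relative statement, but it sets up precisely the long exact sequences~\eqref{long-ex-seq-multi} and~\eqref{long-ex-seq-multi-coh} immediately before stating the theorem, and the absolute case (Theorem~\ref{simpl-hom-eq-sing-one}) is already in hand---so the Five Lemma argument you give is clearly what is meant (alternatively, as in the absolute case, one may simply note that Hatcher's proof of the simplicial/singular comparison for $\Delta$-complexes is already stated and proved for pairs).
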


\section{Group actions on multicomplexes}\label{quotient:sec}
Let $\Gamma$ be a group acting simplicially on a multicomplex $K=(V,I,\Omega)$ , i.e.~suppose that we are given a group homomorphism
$\rho\colon \Gamma \to {\rm Aut}(K)$, where ${\rm Aut}(K)$ is the group of simplicial automorphisms of $K$. In general, in order to give sense to the notion of quotient
multicomplex $K/\Gamma$ one needs to subdivide $K$ (see Remark~\ref{freeno}). However, things get much better for special classes of group actions.

\begin{defn}
A simplicial action $\Gamma\actson K$  is called \emph{$0$-trivial}
if $g(v)=v$ for every vertex $v$ of $K$ and every $g\in\Gamma$, i.e.~if $\Gamma$ acts trivially on the $0$-skeleton of $K$.
\end{defn}

%Let $K=(V,I,\Omega)$ be a multicomplex.
If an action $\Gamma\actson K$ is $0$-trivial, then we can define a quotient multicomplex $K/\Gamma=(V',I',\Omega')$ as follows:
$V'=V$, $(I')^n=I^n/\Gamma$ (i.e. $n$-simplices of $K/\Gamma$ are in bijection with $\Gamma$-orbits
of $n$-simplices of $K$), and $\partial_{A,B} [\sigma]=[\partial_{A,B} \sigma]$ for every $[\sigma] \in I'_A$ (where for every simplex $\sigma$ of $K$ we denote by $[\sigma]$ the equivalence class of $\sigma$ with respect to the action
of $\Gamma$). It is immediate to check that the triple
$(V',I',\Omega')$ indeed defines a multicomplex, and that the following proposition holds:

\begin{prop}\label{0trivial:prop}
Let $\Gamma\actson K$ be a $0$-trivial simplicial action. Then the map
$$
\pi\colon K\to K/\Gamma\, ,\qquad \pi(\sigma)=[\sigma]
$$
is a non-degenerate simplicial map. Moreover, if we denote by $|g|\colon |K|\to |K|$ (resp.~by $|\pi|\colon |K|\to|K/\Gamma|$) the map induced by $g\in\Gamma$ (resp.~by $\pi$)
on geometric realizations, then for every $g\in\Gamma$ the following diagram commutes:
$$
\xymatrix{|K| \ar[dr]_{|\pi|} \ar[rr]^{|g|} & & |K| \ar[dl]^{|\pi|} \\
& |K/\Gamma| &
}
$$
In particular, $|K/\Gamma|$ is naturally homeomorphic to $|K|/\Gamma$. 
\end{prop}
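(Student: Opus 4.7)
My plan is to address the three assertions in the statement in order, with the final homeomorphism claim requiring the bulk of the work.

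First, I would verify that $\pi$ is a well-defined non-degenerate simplicial map. Since the action is $0$-trivial, for every $g\in\Gamma$ and every $\sigma\in I_A$ we have $g\cdot\sigma\in I_A$: indeed, simplicial maps send $n$-simplices to simplices of dimension at most $n$, and since $g$ fixes the vertex set $A$ pointwise, the image must still span $A$. Hence $[\sigma]$ has a well-defined vertex set, and the assignment $I_A\to I_A/\Gamma$, $\sigma\mapsto[\sigma]$, gives maps $\pi_A$ whose targets live in the correct $I'_A$. The simplicial relation $\partial_{A,B}([\sigma])=[\partial_{A,B}\sigma]=\pi_B(\partial_{A,B}\sigma)$ holds by the very definition of $\Omega'$, so $\pi$ is simplicial, and it is non-degenerate because $\pi_A$ takes values in $I'_A$ (not in $I'_{A'}$ with $A'\subsetneq A$).

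Next, I would check commutativity of the diagram on every point of $|K|$. Using the description from Remark~\ref{rem:descr:punti:geom:real}, an arbitrary point of $|K|$ has the form $(\sigma,t_0a_0+\cdots+t_na_n)$ with $\sigma\in I_A$ and $A=\{a_0,\ldots,a_n\}$. Because $g$ fixes each $a_i$, we compute
\begin{equation*}
|\pi|\bigl(|g|(\sigma,\textstyle\sum t_ia_i)\bigr)=|\pi|\bigl(g\sigma,\textstyle\sum t_ig(a_i)\bigr)=([g\sigma],\textstyle\sum t_ia_i)=([\sigma],\textstyle\sum t_ia_i),
\end{equation*}
which agrees with $|\pi|(\sigma,\sum t_ia_i)$. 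Hence $|\pi|\circ|g|=|\pi|$ for every $g\in\Gamma$.

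The remaining, and most substantive, step is to upgrade this to a homeomorphism $|K|/\Gamma\cong|K/\Gamma|$. The commutativity just established gives a continuous factorization $\overline{\pi}\colon|K|/\Gamma\to|K/\Gamma|$ of $|\pi|$ through the orbit space. I plan to show $\overline{\pi}$ is a bijection and that $|\pi|$ is a quotient map; these two facts together identify $|K|/\Gamma$ and $|K/\Gamma|$. Surjectivity of $\overline{\pi}$ is clear since every simplex of $K/\Gamma$ lifts to one in $K$. For injectivity, suppose $|\pi|(x)=|\pi|(y)$ with $x=(\sigma,\sum t_ia_i)$ and $y=(\sigma',\sum s_jb_j)$. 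Using the uniqueness clause of Remark~\ref{rem:descr:punti:geom:real} applied in $|K/\Gamma|$, after passing to the supports $\overline{A}=\{a_i:t_i\neq 0\}$ and $\overline{A}'$, we obtain $\overline{A}=\overline{A}'$, matching coefficients, and $[\partial_{\overline{A}}\sigma]=[\partial_{\overline{A}}\sigma']$ in $K/\Gamma$. The latter yields $g\in\Gamma$ with $g\cdot\partial_{\overline{A}}\sigma=\partial_{\overline{A}}\sigma'$; since $g$ fixes vertices and commutes with boundary maps, $\partial_{\overline{A}}(g\sigma)=\partial_{\overline{A}}\sigma'$. Re-applying Remark~\ref{rem:descr:punti:geom:real} (this time in $|K|$) shows $g(x)=y$, as desired.

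Finally, I would verify that $|\pi|$ is a quotient map. Both $|K|$ and $|K/\Gamma|$ are CW complexes whose topology is characterized by their intersection with closed cells. The map $|\pi|$ restricted to any geometric $n$-simplex $|\sigma|$ of $|K|$ is a homeomorphism onto the corresponding cell $|[\sigma]|$ of $|K/\Gamma|$, because $\pi$ is non-degenerate. Given $U\subseteq|K/\Gamma|$ with $|\pi|^{-1}(U)$ open in $|K|$, for each closed cell $|[\sigma]|$ of $|K/\Gamma|$ the intersection $U\cap|[\sigma]|$ is the homeomorphic image of the open subset $|\pi|^{-1}(U)\cap|\sigma|$, hence open in $|[\sigma]|$; by the CW topology on $|K/\Gamma|$, $U$ is open. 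So $|\pi|$ is a quotient map, and since its fibers are exactly the $\Gamma$-orbits (by the injectivity argument above), $\overline{\pi}$ is a homeomorphism. The main obstacle is really the injectivity step, which requires carefully tracking the data in Remark~\ref{rem:descr:punti:geom:real} to promote a $\Gamma$-identification of faces to a $\Gamma$-identification of the original ambient simplices; the quotient-map verification is then a standard CW argument.
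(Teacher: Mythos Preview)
Your proof is correct. The paper does not actually supply a proof for this proposition; immediately before the statement it simply says ``It is immediate to check that the triple $(V',I',\Omega')$ indeed defines a multicomplex, and that the following proposition holds,'' and then moves on. You have filled in precisely the details the authors deemed routine: the non-degeneracy check, the commutativity of the diagram via the barycentric-coordinate description of points in $|K|$ (Remark~\ref{rem:descr:punti:geom:real}), and the identification $|K|/\Gamma\cong|K/\Gamma|$ through a bijectivity-plus-quotient-map argument. Each step is sound, and the injectivity argument---promoting a $\Gamma$-equivalence of faces to a $\Gamma$-equivalence of the representing points via the uniqueness clause in that remark---is exactly the point where $0$-triviality is used essentially.
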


\begin{rem}\label{freeno}
Proposition~\ref{0trivial:prop} cannot hold in general, even when restricting to free actions.
% (i.e.~to actions for which every non-trivial element of $\Gamma$ moves every simplex of $K$). 
For example, let $K$ be the $1$-dimensional multicomplex having exactly 2 vertices and 2 edges (so that $|K|$ is homeomorphic to $S^1$), let $\Gamma=\mathbb{Z}_2$ and suppose that the non-trivial
element $g_0$ of $\Gamma$ acts on $K$ by switching the vertices and the edges of $K$. Then $|g_0|$ acts on $|K|$ as the antipodal map, so $|K|/\Gamma$ is still homeomorphic to $S^1$. The cellular structure
of $|K|$ projects onto a cellular structure for $|K|/\Gamma$ having just one vertex and one edge. Such a structure cannot correspond to any multicomplex structure, since edges in (the geometric
realization of) a multicomplex always have distinct endpoints.
\end{rem}

\chapter{The singular multicomplex}\label{sing:mult:chap}

To our purposes, the most important example of multicomplex is the \emph{singular multicomplex} associated to a topological space. 
The name \emph{singular multicomplex} evokes the  (\emph{total}) \emph{singular complex} in the theory of $\Delta$-complexes and simplicial sets: 
if $X$ is a topological space,
the singular complex $\mathcal{S}(X)$ of $X$ is the simplicial set having as simplices the singular simplices with values in $X$ (see e.g.~\cite{eil-sing, eil-zil,  milnor-geom, Piccinini, GJ}).
%Following~\cite{Gromov}, we are now going to define 
Roughly speaking, the singular multicomplex $\mathcal{K}(X)$ is the multicomplex having as simplices
the singular simplices in $X$ with distinct vertices, up to affine symmetries. Therefore, the singular multicomplex differs from the singular complex
both beacuse of the requirement that singular simplices be injective on vertices, and because the geometric simplices of the singular multicomplex
do not come with a preferred ordering of their vertices.
%between $\mathcal{K}(X)$ and $\mathcal{S}(X)$ (both the one constructed in \cite{eil-zil} or in \cite{mayconc}) is the fact that the singular multicomplex has not a prescribed orientation on the simplices. 

Let us now be more precise. We denote by ${S}_n(X)$ the set of singular simplices with values in $X$, and 
for every $\sigma \in S_n(X)$ we define the set of vertices of $\sigma$ as the image of the set of vertices of $\Delta^n$ via $\sigma$.
Then, we denote by $S_n^0(X)$ the subset of 
$S_n(X)$ given by the singular simplices whose set of vertices consists of exactly $(n+1)$ points (i.e.~$S_n^0(X)$ is the set of singular $n$-simplices
that are injective on the vertices of $\Delta^n$).
We say that two simplices $\sigma,\sigma'\in S_n^0(X)$ are equivalent
if $\sigma'=\sigma\circ \tau$, where $\tau$ is an affine diffeomorphism of $\Delta^n$ into itself, and we denote by
$\overline{S}_n^0(X)$ the set of equivalence classes of elements of $S_n^0(X)$. Observe that, if $\overline{\sigma}\in \overline{S}_n^0(X)$, then 
we may define the set of vertices of $\overline{\sigma}$ as the set of vertices of any of its representatives. 
%By definition of $S^n_0(X)$, this set contains exactly $(n+1)$ elements.
We are now ready to define the \emph{singular multicomplex} 
$$
\mathcal{K}(X)=(V,I,\Omega)
$$
of $X$ 
as follows: $V=X$, i.e.~the vertices of $\mathcal{K}(X)$ are just the points of $X$; for every subset $A\subseteq V=X$ such that $|A|=n+1$, the set $I_A$ consists 
of the elements of $\overline{S}_n^0(X)$ having $A$ as set of vertices; finally, if $\overline{\sigma}\in I_A$ and $B\subseteq A$, then we choose a representative $\sigma$ of $\overline{\sigma}$,
and we define $\partial_{A,B}\overline{\sigma}$ as the equivalence class of the unique face of $\sigma$ having $B$ as set of vertices. 
It is easy to check that this construction indeed defines a multicomplex.

The geometric realization of $\mathcal{K}(X)$ is a CW complex whose $0$-cells are in bijection with the points of $X$. Moreover, there is a natural map
\begin{displaymath}
S_X \colon \lvert \mathcal{K}(X) \rvert \rightarrow X
\end{displaymath}
which is defined as follows. Recall that a point of $|\mathcal{K}(X)|$ is represented by a pair $(\overline{\sigma},t_0x_0+\ldots+t_nx_n)$,
where $\overline{\sigma}\in \overline{S}_n^0(X)$ and $x_0,\ldots,x_n$ are the vertices of $\overline{\sigma}$ (see Remark \ref{rem:descr:punti:geom:real}). We choose the representative $\sigma$ of $\overline{\sigma}$
sending the $i$-th vertex of $\Delta^n$ to $x_i$, and we set
$$
S_X((\overline{\sigma},t_0x_0+\ldots+t_nx_n))=\sigma (t_0 e_0 + \ldots + t_n e_n)\ .
$$
It is easy to check that $S_X$ is well defined and continuous. We will often call the map $S_X$ the \emph{natural projection} of $|\calK(X)|$ onto $X$. When the space $X$ is clear from the context, we will often denote
$S_X$ simply by $S$.

Let now $K$ be a multicomplex and let $X$ be a topological space. If $f\colon |K|\to X$ is any continuous map such that $f|_A$ is injective for every $A\subseteq V$ such that $I_A\neq \emptyset$, 
then we can define a non-degenerate simplicial map ${}^t f\colon K\to \calK(X)$ as follows: 
if $\overline{\sigma}$ is an $n$-simplex of $K$ and $\eta\colon \Delta^n\to |\sigma|$ is a characteristic map for $\sigma$, then ${}^t f(\sigma)=\overline{f\circ\sigma}$,
where $\sigma$ is any representative of $\overline{\sigma}$. 
On the other hand, if $g\colon K\to \calK(X)$ is a non-degenerate simplicial map, then we can define a continuous map ${}^t g\colon |K|\to X$ by setting ${}^t g=S_X\circ |g|$ (and, for every $A\subseteq V$ such that $I_A\neq \emptyset$, the
map ${}^t g$ is automatically injective on $A$). 

%\todo{Stesso nome alle due trasposte? Ho messo poi un $|g|$ da controllare in linea a quanto definito prima per la realizzazione grometrica}
%\todo{In questi passaggi lascerei $|g|$, \`e pi\`u chiaro. Usare lo stesso simbolo per le due trasposte e' standard in letteratura, lascerei cosi'}

The map ${}^t f$ (resp.~${}^t g$) is called the \emph{transpose} of $f$ (resp.~of $g$). It is straightforward to check that transposition satisfies the following 
identities:
$$
{}^t({}^t f)=f\, ,\qquad {}^t({}^t g) =g\, \ .
$$
In particular, if $X=|K|$, the identity ${\rm Id}\colon |K|\to |K|$ induces a non-degenerate simplicial map ${}^t{\rm Id}\colon K\to\calK(|K|)$. Since
$$
S_{|K|}\circ |{}^t{\rm Id}|= {}^t({}^t{\rm Id})={\rm Id}\ ,
$$
the map ${}^t{\rm Id}\colon K\to\calK(|K|)$ is injective, and will be called the \emph{natural embedding} of $K$ into $\calK(|K|)$.

Our next goal is to show that, at least for reasonable topological spaces, the natural projection $S\colon |\mathcal{K}(X)|\to X$
is a weak homotopy equivalence. 
%To this end we first need to introduce  the simplicial homology of a multicomplex.

\section{The weak homotopy type of the singular multicomplex}\label{Sec:weak:hom:type:K(X)}
In order to prove that the natural projection $S\colon |\mathcal{K}(X)|\to X$
is a weak homotopy equivalence we first need to select a suitable class of topological spaces to work with. 

%This section is devoted to the proof that the geometric realization of the singular multicomplex of a reasonable topological space $X$ is weakly homotopy equivalent
%to the space $X$ itself.

\begin{defn}\label{good:def}
 Let $X$ be a topological space. We say that $X$ is \emph{good} if:
 \begin{enumerate}
  \item $X$ is semilocally simply connected (i.e.~any path connected component of $X$ admits a universal covering);
  \item every non-empty path connected finite subset of $X$ is  a singleton.
 \end{enumerate}
\end{defn}

Of course, locally contractible spaces satisfy condition (1) in the above definition, while it is an easy exercise to show that condition (2) is satisfied e.g.~by every topological space with 
the separation property $T_1$. In particular,
CW complexes (with no restrictions on the cardinality of cells) are good spaces.

 As anticipated in the introduction, we have the following:
 
\begin{thm:repeated:homotopy-weak-intro}%\label{teor-w-h-e}
Let $X$ be a good topological space. Then, the natural projection
\begin{displaymath}
S \colon |\mathcal{K}(X)| \rightarrow X
\end{displaymath}
is a weak homotopy equivalence.
\end{thm:repeated:homotopy-weak-intro}

Since CW complexes are good spaces, thanks to Whitehead Theorem we immediately have the following:

\begin{cor}\label{weak-CW}
 Let $X$ be a CW complex. Then, the natural projection
\begin{displaymath}
S \colon |\mathcal{K}(X)| \rightarrow X
\end{displaymath}
is a homotopy equivalence.
\end{cor}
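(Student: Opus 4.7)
The plan is to deduce this corollary directly from Theorem \ref{homotopy-weak-intro} together with Whitehead's theorem, so the work reduces to verifying the three hypotheses needed to invoke Whitehead. First I would observe that every CW complex is good (this is noted immediately after Definition \ref{good:def}), so Theorem \ref{homotopy-weak-intro} applies and yields that $S\colon |\mathcal{K}(X)|\to X$ is a weak homotopy equivalence.

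Next I would check that both the domain and the codomain of $S$ are CW complexes. For the codomain this is by assumption. For the domain, I would appeal to the construction of the geometric realization of a multicomplex carried out in Section~\ref{Sec:basic:def:multi}, where $|\mathcal{K}(X)|$ was built as a (regular) CW complex by attaching one closed $n$-cell for each $n$-simplex of $\mathcal{K}(X)$ via the boundary maps of $\Omega$, and equipped with the weak topology with respect to this cell decomposition.

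The final step is to invoke Whitehead's theorem: any weak homotopy equivalence between CW complexes is a genuine homotopy equivalence. Applying this to $S$ produces the desired conclusion. I do not anticipate any obstacle: the nontrivial content is entirely concentrated in Theorem \ref{homotopy-weak-intro}, and the passage from weak to genuine homotopy equivalence in the CW setting is a standard and immediate application of Whitehead.
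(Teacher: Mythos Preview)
Your proposal is correct and follows exactly the same approach as the paper: the paper simply notes that CW complexes are good spaces and invokes Whitehead's Theorem on top of Theorem~\ref{homotopy-weak-intro}. You are slightly more explicit in verifying that $|\mathcal{K}(X)|$ is a CW complex, but this is indeed established in the construction of the geometric realization and the paper takes it for granted.
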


The singular multicomplex $\calK(X)$ is constructed by gluing (classes of) singular simplices that are injective on vertices. By considering all possible singular simplices,
one gets instead the 
classical \emph{singular simplicial set} $\calS(X)$ associated to  $X$ (see e.g.~\cite{eil-sing, milnor-geom, Piccinini, may} for the formal definition and the main properties of $\calS(X)$).
Also the geometric realization of the simplicial set $\calS(X)$ comes equipped with a natural projection onto $X$, and a fundamental classical result ensures that such projection is a weak homotopy equivalence
(see e.g.~\cite[Theorem~4]{milnor-geom} and \cite[Theorem~4.5.30]{Piccinini}). Moreover, as we will see at the end of the section, $\calK(X)$ embeds into $\calS(X)$, hence in order to prove 
Theorem~\ref{homotopy-weak-intro} it would be sufficient to show that $|\calK(X)|$ is a deformation retract of $|\calS(X)|$ (at least for good spaces). We will obtain this result as a corollary of Theorem~\ref{homotopy-weak-intro}.
But writing down an explicit  retraction of
 $|\calS(X)|$ onto $|\calK(X)|$ turned out to be so difficult that we decided to prove Theorem~\ref{homotopy-weak-intro} via a different strategy (which is inspired by Milnor's approach to the same problem in the context of simplicial sets~\cite{milnor-geom}).
 
Our proof of Theorem~\ref{homotopy-weak-intro} consists of two steps: we first prove that 
  $S$ induces an isomorphism at the level of fundamental groups, and then we show that $S$ may be lifted to a  weak homotopy equivalence between the universal coverings of  $|\mathcal{K}(X)|$ and $X$. 
  Since coverings induce isomorphisms on higher homotopy groups, these facts suffice to prove that $S$ 
is a weak homotopy equivalence.  In order to show that the universal coverings of  $|\mathcal{K}(X)|$ and $X$ are weakly homotopy equivalent, we will make use of Hurewicz Theorem \cite[Corollary 4.33]{hatcher},
which states that a map between simply connected spaces is a weak homotopy equivalence provided it induces an isomorphism on homology with integral coefficients.

Without loss of generality, we may (and henceforth we do) suppose that $X$ is path connected. We first characterize 
 the universal covering of $\lvert \mathcal{K}(X) \rvert$, showing at the same time that $S$ induces an isomorphism on fundamental groups. 
 We denote by $\pi \colon \widetilde{X} \rightarrow X$ the universal covering of $X$.
 Let $\widetilde{\mathcal{K}(X)}$ be the submulticomplex of $\mathcal{K}(\widetilde{X})$ such that a simplex $\overline{\sigma}$ of $\mathcal{K}(\widetilde{X})$ belongs to
 $\widetilde{\mathcal{K}(X)}$ if and only if $\overline{\pi\circ \sigma}$ belongs to $\mathcal{K}(X)$ for any choice of a representative $\sigma$ of $\overline{\sigma}$. In other words,
 $\widetilde{\mathcal{K}(X)}$ contains (the classes of) the singular simplices in $\widetilde{X}$ whose vertices project onto distinct points of $X$.

 There is an obvious simplicial map $\widetilde{\mathcal{K}(X)}\to \mathcal{K}(X)$ which takes the class of a singular simplex $\sigma\colon \Delta^n\to\widetilde{X}$
 to the class of its projection $\pi\circ\sigma$ on $X$. This map is well defined exactly thanks to the definition of $\widetilde{\mathcal{K}(X)}$. We denote by
 $\widetilde{\pi}\colon \left|\widetilde{\mathcal{K}(X)}\right| \to |\mathcal{K}(X)|$ the induced  map on geometric realizations. We also denote by 
 $$
 \widetilde{S}\colon \left|\widetilde{\mathcal{K}(X)}\right|\to \widetilde{X}
 $$
 the restriction of the natural projection $|\mathcal{K}(\widetilde{X})|\to \widetilde{X}$.

 \begin{lemma}
 The space $\left|\widetilde{\mathcal{K}(X)}\right|$ is path connected and simply connected.
 \end{lemma}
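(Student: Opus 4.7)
First I would observe that the vertices of $\widetilde{\calK(X)}$ are exactly the points of $\wdtX$, since the distinct-projection condition is vacuous for $0$-simplices. Path connectedness then follows directly from the path connectedness of $\wdtX$: given two vertices $\tilde{a},\tilde{b}$, if $\pi(\tilde{a}) \neq \pi(\tilde{b})$ any path in $\wdtX$ from $\tilde{a}$ to $\tilde{b}$ is already a $1$-simplex of $\widetilde{\calK(X)}$; otherwise I would interpose a third vertex $\tilde{c}$ with $\pi(\tilde{c})$ distinct from $\pi(\tilde{a})$, which certainly exists when $X$ has more than one point (and if $X$ is a singleton the statement is trivial).

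For simple connectedness, the plan is to contract an arbitrary loop via a disk whose $2$-cells are filled by $2$-simplices of $\widetilde{\calK(X)}$. Given a loop $\alpha$ in $|\widetilde{\calK(X)}|$, I would first apply cellular approximation, together with the standard fact that loops in a graph are homotopic to edge loops, to reduce $\alpha$ to a combinatorial loop traversing a sequence of edges $e_1,\ldots,e_n$ joining vertices $\tilde{v}_0 = \tilde{v}_n, \tilde{v}_1, \ldots, \tilde{v}_{n-1}$.

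The heart of the argument is then a bowtie construction. I would choose an auxiliary vertex $\tilde{w} \in \wdtX$ with $\pi(\tilde{w}) \notin \{\pi(\tilde{v}_0), \ldots, \pi(\tilde{v}_{n-1})\}$ (possible because any non-trivial path connected space has uncountably many points) and, for each $i$, a singular $1$-simplex $f_i$ in $\wdtX$ from $\tilde{w}$ to $\tilde{v}_i$; since $\pi(\tilde{w}) \neq \pi(\tilde{v}_i)$, each $f_i$ is an edge of $\widetilde{\calK(X)}$. For every $i = 1, \ldots, n$ the triple $\tilde{w}, \tilde{v}_{i-1}, \tilde{v}_i$ has pairwise distinct projections, and the loop $f_{i-1} \cdot e_i \cdot f_i^{-1}$ in $\wdtX$ bounds a singular disk $\sigma_i \colon \Delta^2 \to \wdtX$ because $\wdtX$ is simply connected. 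By the distinctness of projections each $\sigma_i$ is a $2$-simplex of $\widetilde{\calK(X)}$, and gluing $\sigma_1,\ldots,\sigma_n$ along their common $f$-edges yields a continuous map from a topological disk to $|\widetilde{\calK(X)}|$ whose boundary is the conjugate of $e_1 \cdots e_n$ by $f_0$, thereby contracting $\alpha$.

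The only potentially delicate step is the reduction to a combinatorial edge loop, which is entirely standard for CW complexes (the image of $\alpha$ is compact, hence lies in a finite subgraph of the $1$-skeleton). The good-space hypothesis enters only through the existence of the universal covering of $X$, and the simple connectedness of $\wdtX$ is used only to fill each of the $n$ triangular bowties.
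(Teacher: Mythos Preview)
Your argument is essentially the same as the paper's: reduce to a simplicial loop, pick an auxiliary vertex whose projection avoids the finitely many vertex projections of the loop, and cone the loop over it using the simple connectedness of $\widetilde{X}$ to fill each triangle. The paper packages the last step via its ``transpose'' construction ${}^t f\colon D\to \widetilde{\calK(X)}$, whereas you build the fan of $2$-simplices by hand, but the content is identical.

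There is one genuine slip in your justification. The claim ``any non-trivial path connected space has uncountably many points'' is false: finite (non-$T_1$) spaces can be path connected, and indeed the paper later exploits exactly this phenomenon (Remark~\ref{nospecial:rem}). What actually guarantees a point of $X$ outside $\{\pi(\tilde v_0),\dots,\pi(\tilde v_{n-1})\}$ is condition~(2) in the definition of a good space: a good path connected space with more than one point must be infinite, since otherwise $X$ itself would be a path connected finite subset that is not a singleton. For the same reason your closing remark that ``the good-space hypothesis enters only through the existence of the universal covering'' is not quite right; you use condition~(2) as well. With that correction the argument goes through.
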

 \begin{proof}
 Since $\left|\widetilde{\mathcal{K}(X)}\right|$ is a CW complex, it is sufficient to show that every pair of vertices are connected by an edge, and every simplicial path
 is null homotopic. Let $\widetilde{x}_0, \widetilde{x}_1$ be distinct vertices of $\left|\widetilde{\mathcal{K}(X)}\right|$, i.e.~points of $\widetilde{X}$. 
 Since $\widetilde{X}$ is path connected, we can choose a path $\gamma\colon [0,1]\to \widetilde{X}$ joining $\widetilde{x}_0$ and $\widetilde{x}_1$. 
 If $\pi(\widetilde{x}_0)\neq \pi(\widetilde{x}_1)$, such a path defines a $1$-simplex joining $\widetilde{x}_0$ and $\widetilde{x}_1$ in 
 $\left|\widetilde{\mathcal{K}(X)}\right|$. Suppose now  $\pi(\widetilde{x}_0)= \pi(\widetilde{x}_1)$. The path $\pi\circ \gamma$ cannot be constant, because otherwise the image of $\gamma$ would be contained in $\pi^{-1}(\pi(\widetilde{x}_0))$, which is discrete, so $\gamma$ itself should be constant, a contradiction. Therefore, the image of $\gamma$ contains points that do not project onto $\pi(\widetilde{x}_0)=\pi(\widetilde{x}_1)$, and we can express $\gamma$ as the concatenation of two paths that define $1$-simplices of $|\widetilde{\mathcal{K}(X)}|$. The concatenation of such $1$-simplices
 joins $\widetilde{x}_0$ and $\widetilde{x}_1$ in $|\widetilde{\mathcal{K}(X)}|$, and this concludes the proof of the fact that $|\widetilde{\mathcal{K}(X)}|$
 is path connected.
 
 Let now $Z$ be a finite multicomplex such that $|Z|$ is homeomorphic to $S^1$, and let $\gamma \colon Z\to \widetilde{\mathcal{K}(X)}$ be a non-degenerate simplicial map.
 As discussed above, in order to show that $\left|\widetilde{\mathcal{K}(X)}\right|$ is simply connected it is sufficient to show that the geometric realization
 $|\gamma|$ of $\gamma$ is null-homotopic. Of course, if $X$ consists of a single point there is nothing to prove, so since $X$ is good and path connected
 we can assume that $X$ contains infinitely many points.  In particular, there exists a point $\widetilde{x}_0$ in $\widetilde{X}$ whose projection on
 $X$ is not contained in $\pi(|\gamma|(Z^0))$. If $D$ is the multicomplex obtained by coning $Z$ over a point $O$ (so that $|D|$ is homeomorphic to
 the $2$-dimensional disc), using the fact that $\widetilde{X}$ is simply connected we can extend $\widetilde{S}\circ |\gamma|\colon |Z|\to \widetilde{X}$
to a map $f\colon |D|\to \widetilde{X}$ sending $O$ to $\widetilde{x}_0$. By construction, the restriction of $\pi\circ f$ to every simplex of $D$ is injective
on vertices, so $f$ defines a transpose simplicial map ${}^tf\colon D\to {\mathcal{K}(\widetilde{X})}$. 
By construction, the map ${}^t f$ takes values in $\widetilde{\mathcal{K}({X})}\subseteq {\mathcal{K}(\widetilde{X})}$, and its 
geometric realization  extends
$|\gamma|$ to $|D|$, thus showing that $|\gamma|$ is null-homotopic in $|\mathcal{K}(\widetilde{X})|$. 
 \end{proof}
  
 \begin{prop}\label{fund-group:prop}
 The map $\widetilde{\pi}\colon \left|\widetilde{\mathcal{K}(X)}\right| \to |\mathcal{K}(X)|$ is a universal covering. Moreover, the map
 $S \colon |\mathcal{K}(X)| \rightarrow X$ induces an isomorphism at the level of fundamental groups.
 \end{prop}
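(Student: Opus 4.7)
The plan is to realize $\widetilde{\pi}$ as the universal covering of $|\mathcal{K}(X)|$ by exhibiting $|\mathcal{K}(X)|$ as the quotient of $|\widetilde{\mathcal{K}(X)}|$ by a free cellular action of $\pi_{1}(X)$, and then to deduce that $S_{*}$ is an isomorphism on fundamental groups from the equivariance of $\widetilde{S}$.

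First I would define a simplicial action of $\pi_{1}(X)$ on $\widetilde{\mathcal{K}(X)}$ by postcomposition with deck transformations: for every $g \in \pi_{1}(X)$ and every simplex $\overline{\sigma}$ of $\widetilde{\mathcal{K}(X)}$ represented by $\sigma \colon \Delta^{n}\to \widetilde{X}$, set $g \cdot \overline{\sigma}=\overline{g\circ \sigma}$. Since $\pi \circ g = \pi$, this is well defined on $\widetilde{\mathcal{K}(X)}$, and the induced action on $|\widetilde{\mathcal{K}(X)}|$ is cellular and equivariant with $\widetilde{S}$ with respect to the deck action on $\widetilde{X}$. I would then verify freeness: if $g \cdot \overline{\sigma} = \overline{\sigma}$, there exists an affine automorphism $\phi$ of $\Delta^{n}$ with $g \circ \sigma = \sigma \circ \phi$; projecting via $\pi$ yields $\pi \circ \sigma = (\pi \circ \sigma) \circ \phi$, and since $\pi \circ \sigma$ has distinct vertices this forces $\phi = \mathrm{id}$, hence $g\circ \sigma = \sigma$, so $g$ fixes the vertex $\sigma(e_{0}) \in \widetilde{X}$ and $g=e$.

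Next I would identify $\widetilde{\pi}$ with the quotient by this action. For any simplex $\overline{\tau}$ of $\mathcal{K}(X)$ represented by $\tau \colon \Delta^{n}\to X$, the simple connectedness of $\Delta^{n}$ allows us to lift $\tau$ to $\widetilde{\tau}\colon \Delta^{n}\to \widetilde{X}$; the vertices of $\widetilde{\tau}$ are distinct because those of $\tau$ are, so $\overline{\widetilde{\tau}}$ defines a simplex of $\widetilde{\mathcal{K}(X)}$ above $\overline{\tau}$, and any two such lifts differ by a unique deck transformation. Thus $\widetilde{\pi}$ descends to a continuous bijection $|\widetilde{\mathcal{K}(X)}|/\pi_{1}(X)\to |\mathcal{K}(X)|$; since both sides are CW complexes and the map is cellular with matching characteristic maps on each simplex, it is a homeomorphism. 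Combining this with freeness and cellularity of the action, $\widetilde{\pi}$ is a covering map, and together with the preceding lemma it is the universal covering; in particular, the whole deck group of $\widetilde\pi$ is exhausted by $\pi_{1}(X)$.

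Finally, the commutative square
$$
\xymatrix{|\widetilde{\mathcal{K}(X)}|\ar[r]^-{\widetilde{S}}\ar[d]_-{\widetilde{\pi}} & \widetilde{X}\ar[d]^-{\pi}\\ |\mathcal{K}(X)|\ar[r]_-{S} & X}
$$
presents both vertical arrows as universal coverings with deck group canonically identified with $\pi_{1}(X)$, and the horizontal arrow $\widetilde{S}$ as $\pi_{1}(X)$-equivariant; hence $S_{*}\colon \pi_{1}(|\mathcal{K}(X)|)\to \pi_{1}(X)$ corresponds, via the canonical isomorphisms with the deck groups, to the identity, so it is an isomorphism. The main technical obstacle lies in the step showing that $|\widetilde{\mathcal{K}(X)}|/\pi_{1}(X)\to|\mathcal{K}(X)|$ is a homeomorphism, not merely a continuous bijection; since the action is not $0$-trivial, Proposition \ref{0trivial:prop} cannot be invoked, and one must compare topologies directly on each cell, exploiting that the orbits of vertices coincide with the fibres of $\pi$.
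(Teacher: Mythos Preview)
Your proposal is correct and follows essentially the same strategy as the paper: define the deck-transformation action of $\pi_1(X)$ on $\widetilde{\mathcal{K}(X)}$, prove it is free on simplices (using that the vertices of any simplex project to pairwise distinct points of $X$), identify $\widetilde{\pi}$ with the resulting quotient map, invoke the preceding lemma for simple connectedness, and read off the isomorphism on $\pi_1$ from the equivariance of $\widetilde{S}$ in the commutative square.

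The one place where the paper's presentation is slightly more efficient is precisely the technical obstacle you flag at the end. Rather than first showing that the continuous bijection $|\widetilde{\mathcal{K}(X)}|/\pi_1(X)\to |\mathcal{K}(X)|$ is a homeomorphism, the paper argues directly that the free simplicial action is \emph{wandering} on $|\widetilde{\mathcal{K}(X)}|$ (hence the orbit map is a covering), and then observes that $\widetilde{\pi}$ itself is open---simply because both domain and target carry the weak topology with respect to their simplices---and has exactly the $\pi_1(X)$-orbits as fibres. This bypasses the cell-by-cell topology comparison you anticipate and is worth knowing as the cleaner way to close that step.
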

 \begin{proof}
 Let $\Gamma\cong \pi_1(X)$ be the group of deck transformations of the covering $\pi\colon \widetilde{X}\to X$. Then $\Gamma$ acts simplicially on $\widetilde{\mathcal{K}(X)}$ as follows:
 if $\overline{\sigma}$ is a simplex of $\widetilde{\mathcal{K}(X)}$ represented by $\sigma \colon \Delta^n \to \widetilde{X}$, then $\gamma\cdot \overline{\sigma}$ is the class of $\gamma\circ \sigma$. Since $\gamma$ is a deck transformation, it is easily seen that this action is well defined. Moreover,
 an element  $\gamma\in \Gamma$ leaves a simplex of $\widetilde{\mathcal{K}(X)}$ invariant only if $\gamma={\rm Id}$. Indeed, being free on $\widetilde{X}$, the action of $\Gamma$ is free
 on the vertices of $\widetilde{\mathcal{K}(X)}$. Moreover, if $\gamma\cdot\overline{\sigma}=\overline{\sigma}$ for some simplex $\overline{\sigma}$, then $\gamma$ permutes the vertices of $\overline{\sigma}$; but the vertices
 of $\overline{\sigma}$ project to distinct vertices of $X$, so they are pairwise $\Gamma$-non-equivalent. This implies that $\gamma$ fixes each vertex of $\overline{\sigma}$, so it is the identity. The induced action on the
 geometric realization $\left|\widetilde{\mathcal{K}(X)}\right|$ is therefore \emph{wandering} (i.e.~every point of $\left|\widetilde{\mathcal{K}(X)}\right|$ admits a neighbourhood which is disjoint form the union of
 all its translates). This implies that the quotient map with respect to the $\Gamma$-action on $\left|\widetilde{\mathcal{K}(X)}\right|$ is a covering. In order to conclude that $\widetilde{\pi}$ is a covering
 we now just need to observe that $\widetilde{\pi}\colon \left|\widetilde{\mathcal{K}(X)}\right|\to
 |{\mathcal{K}(X)}|$  is open (since both $\left|\widetilde{\mathcal{K}(X)}\right|$
 and $|{\mathcal{K}(X)}|$ are endowed with the weak topology), and that $\widetilde{\pi}(x)=\widetilde{\pi}(y)$ if and only if $x$ and $y$ lie in the same $\Gamma$-orbit. 
 
 Finally, since $\left|\widetilde{\mathcal{K}(X)}\right|$
 is simply connected, the map $\widetilde{\pi}$ is a universal covering.
  
 The fact that $S$ induces an isomorphism on fundamental groups is now an easy consequence of the fact the the following diagram commutes
 \begin{displaymath}
\xymatrix{
\left|\widetilde{\mathcal{K}(X)}\right| \ar[r]^-{\widetilde{S}} \ar[d]_-{\tilde{\pi}} & \widetilde{X} \ar[d]^-{\pi} \\
\lvert \mathcal{K}(X) \rvert \ar[r]_-{S} & X\ ,
}
\end{displaymath}
together with the fact that
$\widetilde{S}$ commutes with the actions of $\Gamma$ on $\left|\widetilde{\mathcal{K}(X)}\right|$ and $\widetilde{X}$, respectively.
  \end{proof}

Our next goal is to prove that the map $\widetilde{S}$ induces an isomorphism on singular homology groups with integral coefficients. 
%Indeed, by Theorem~\ref{simpl-hom-eq-sing-one} the singular homology
%of $\left|\widetilde{\mathcal{K}(X)}\right|$ is canonically isomorphic to the simplicial homology of $\widetilde{\mathcal{K}(X)}$. On the other hand, the simplicial homology
%of the singular set $\mathcal{S}(\widetilde{X})$, defined in Section~\ref{sing:mult:sec}, coincides by definition with the singular homology of $\widetilde{X}$. Therefore, in order to conclude we only need to fill the gap between
%the simplicial homology of $\widetilde{\mathcal{K}(X)}$ and the one of $\mathcal{S}(\widetilde{X})$. 
To this aim we first introduce the $\Delta$-complex associated to a singular chain, referring the reader
to Section~\ref{comparison:subsec} for the definition of $\Delta$-complex.

Let $X$ be a topological space and let $c=\sum_{j=1}^k a_j\sigma_j\in C_n(X;\mathbb{Z})$ be an integral  singular $n$-dimensional chain with values in $X$. 
%Up to repeating some of the $\sigma_i$, we can (and we will) assume that $a_i=\pm 1$ for every $i$.
We construct a $\Delta$-complex $Z=\bigcup_{i\in\mathbb{N}} Z_i$ as follows. We set $Z_i=\emptyset$ for $i>n$. 
For $i=n$ we set $Z_n=\{\sigma_j\, |\, j=1,\ldots,k\}$. 
For $i\leq n$, we define $Z_i$ to be the set of all the singular $i$-faces of elements in $Z_n$, where the singular $i$-faces of a singular $n$-simplex $\sigma$ are the compositions
$\sigma\circ h$, where $h\colon \Delta^i\to \Delta^n$ is any order-preserving affine inclusion of the standard simplex $\Delta^i$ onto a face of $\Delta^n$ (of course,
vertices of standard simplices are canonically ordered). The boundary map $\partial^i_k\colon Z_i\to Z_{i-1}$ just sends every singular simplex in $Z_i$ to its $k$-th face. Finally, we denote
by $|Z|$ the geometric realization of $Z$. For every $j=1,\ldots,k$, there is a canonical characteristic map $\eta_j\colon \Delta^n \to |Z|$ 
from the standard simplex to the geometric simplex
of $|Z|$ associated to $\sigma_j$ (such a map may not be injective, e.g.~when $\sigma_j$ is not injective on vertices). By construction, 
the $\sigma_j$ glue up into a continuous map from $|Z|$ to $X$, i.e.~there exists
a continuous map $f\colon |Z|\to X$ such that $\sigma_j=f\circ \eta_j$ for every $j=1,\ldots,k$. If we set $c_Z=\sum_{j=1}^k a_j\eta_j \in C_n(|Z|;\mathbb{Z})$, then
$C_n(f)(c_Z)=c$. Moreover, if $c$ is a cycle, then also $c_Z$ is.

\begin{Proposizione}\label{prop-iso-hom}
The map
$\widetilde{S}\colon \left|\widetilde{\mathcal{K}(X)}\right|\to \widetilde{X}$
induces isomorphisms on homology groups with integral coefficients in every degree.
\end{Proposizione}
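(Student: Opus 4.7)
The plan is to identify the composite chain map $\widetilde{S}_{*}\circ\phi_{*}\colon C_{*}(\widetilde{\mathcal{K}(X)};\mathbb{Z})\to C_{*}(\widetilde{X};\mathbb{Z})$ (where $\phi_{*}$ is as in Theorem~\ref{simpl-hom-eq-sing-one}) with the inclusion of a naturally defined subcomplex, and then prove that this inclusion is a quasi-isomorphism via the $\Delta$-complex construction recalled just before the statement, combined with a vertex-perturbation argument based on the homotopy extension property (HEP) for CW pairs. Throughout we use Theorem~\ref{simpl-hom-eq-sing-one} to identify the simplicial homology $H_{*}(\widetilde{\mathcal{K}(X)};\mathbb{Z})$ with the singular homology $H_{*}(|\widetilde{\mathcal{K}(X)}|;\mathbb{Z})$.

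Let $C'_{*}\subseteq C_{*}(\widetilde{X};\mathbb{Z})$ be the subcomplex generated by those singular simplices $\tau\colon\Delta^{n}\to\widetilde{X}$ which are injective on vertices and whose vertices project to $n+1$ pairwise distinct points of $X$. A direct inspection of the definitions of $\phi_{*}$ and $\widetilde{S}$ reveals that $\widetilde{S}_{*}\circ\phi_{*}$ sends each algebraic simplex $(\overline{\sigma},(v_{0},\ldots,v_{n}))$ to the unique representative $\tau\in\overline{\sigma}$ with $\tau(e_{i})=v_{i}$; this yields a bijection between the generators of $C_{*}(\widetilde{\mathcal{K}(X)};\mathbb{Z})$ and those of $C'_{*}$, and hence a chain isomorphism $C_{*}(\widetilde{\mathcal{K}(X)};\mathbb{Z})\cong C'_{*}$. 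It therefore suffices to prove that the inclusion $j\colon C'_{*}\hookrightarrow C_{*}(\widetilde{X};\mathbb{Z})$ induces an isomorphism on homology in every degree.

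The key tool is a \emph{Perturbation Lemma}: given a finite $\Delta$-complex $Y$, a continuous map $f\colon|Y|\to\widetilde{X}$, and a subcomplex $A\subseteq|Y|$ on which $f$ already has the required vertex-distinctness, one can homotope $f$ rel $A$ to $f'$ such that on each top simplex of $Y$, all vertices of $f'$ project to pairwise distinct points of $X$. The argument processes each vertex $v\in V^{0}\setminus A$ in turn, joining $f(v)$ by a path in $\widetilde{X}$ to a new point whose $X$-projection avoids the (finitely many) projections of other vertices of top simplices containing $v$, and extending this $0$-skeleton homotopy to all of $|Y|$ via HEP. Such paths exist because $\widetilde{X}$ is path-connected and, assuming $|X|\geq 2$ (in the trivial case $|X|=1$ there is nothing to prove), condition~(2) of Definition~\ref{good:def} forces every non-trivial path in $X$ to have infinite image, so $X$ itself is infinite.

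Surjectivity of $H_{*}(j)$ follows directly: given a cycle $c\in Z_{n}(\widetilde{X};\mathbb{Z})$, form the $\Delta$-complex $Z$ and the map $f\colon|Z|\to\widetilde{X}$ with $f_{*}(c_{Z})=c$, and apply the Perturbation Lemma with $A=\emptyset$ to obtain $f'\simeq f$ with each $f'\circ\eta_{j}\in C'_{n}$; then $f'_{*}(c_{Z})\in Z_{n}(C'_{*})$ is homologous to $c$ in $C_{*}(\widetilde{X};\mathbb{Z})$ via the prism chain associated to the homotopy $f\simeq f'$. For injectivity, given $c'\in Z_{n}(C'_{*})$ with $c'=\partial b$, the plan is to apply the analogous construction to $b$, with the Perturbation Lemma taken relative to the subcomplex $A$ of the $\Delta$-complex $W$ given by the $n$-faces whose $g$-image is a summand of $c'$, so as to simultaneously realize the top $(n+1)$-simplices of $W$ as elements of $C'_{n+1}$ and preserve the boundary component $c'$. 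The main obstacle is precisely this compatibility check for the relative perturbation in the injectivity step: it rests on the combinatorial observation that any two vertices of a top $(n+1)$-simplex of $W$ lie on a common $n$-face, together with some further manipulation (possibly involving a second application of the Perturbation Lemma) to handle the $n$-faces whose $g$-images cancel out in the formation of $c'$ from $g_{*}(\partial b_{W})$.
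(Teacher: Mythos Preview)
Your overall plan---reduce to showing that the inclusion $C'_{*}\hookrightarrow C_{*}(\widetilde{X};\mathbb{Z})$ is a quasi-isomorphism via $\Delta$-complex models and vertex perturbation---is exactly the paper's strategy. There is a minor slip in the identification $C_{*}(\widetilde{\mathcal{K}(X)};\mathbb{Z})\cong C'_{*}$: the simplicial chain complex also has degenerate algebraic simplices $(\overline{\sigma},(v_{0},\ldots,v_{n}))$ with repeated $v_{i}$, so the bijection of generators only holds after passing to reduced chains (Theorem~\ref{reduced:equivalence}); this is easily repaired.

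The real gap is that your Perturbation Lemma is false as stated. The $\Delta$-complex $Z$ built from a chain can have top cells whose characteristic maps $\eta_{j}\colon\Delta^{n}\to|Z|$ are \emph{not} injective on vertices (e.g.\ if some $\sigma_{j}$ sends two vertices of $\Delta^{n}$ to the same point of $\widetilde{X}$, the construction identifies them in $Z_{0}$). If $\eta_{j}(e_{0})=\eta_{j}(e_{1})=v$ in $|Z|$, then for \emph{any} $f'$ one has $(f'\circ\eta_{j})(e_{0})=(f'\circ\eta_{j})(e_{1})$, so the conclusion cannot hold. Processing vertices of $|Z|$ one at a time cannot separate what the cell structure has already glued together. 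This breaks your surjectivity argument as written, and the same phenomenon is the real source of the ``main obstacle'' you flag in the injectivity step (it is not just a matter of faces that cancel in $\partial b$). The paper handles this by subdividing before perturbing: for surjectivity it passes to the second barycentric subdivision $Z'$ (now a genuine simplicial complex, so every simplex has distinct vertices) and then perturbs $f$ on the new vertex set; for injectivity it introduces the notion of \emph{special} simplices (those containing an edge whose endpoints have the same $X$-projection), observes that no special $n$-simplex can appear in $\partial c_{Z}$, subdivides \emph{only} the special simplices by adding barycenters, and moves only those new barycenters. This simultaneously fixes the repeated-vertex issue and leaves the non-special part---hence the boundary $\sum a_{j}\sigma_{j}$---literally untouched, which is what makes the relative argument go through cleanly.
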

\begin{proof}
Let us denote by $H_n(\widetilde{S})\colon H_n\left( \left| \widetilde{\mathcal{K}(X)}\right|;\mathbb{Z}\right)\to H_n(\widetilde{X};\mathbb{Z})$ 
the map induced by $\widetilde{S}$ on singular homology in degree $n$. Proposition~\ref{fund-group:prop} implies that 
$H_n(\widetilde{S})$ is an isomorphism for $n=0,1$, so we may suppose $n\geq 2$.
Recall that $\pi\colon \widetilde{X}\to X$ is the covering projection, and denote by $H_n(\phi_n)\colon H_n\left(\widetilde{\mathcal{K}({X})};\mathbb{Z}\right)\to H_n\left(\left|\widetilde{\mathcal{K}({X})}\right|;\matZ\right)$ the isomorphism between simplicial and singular
homology described in Theorem~\ref{simpl-hom-eq-sing-one}.

We first show that
$H_n(\widetilde{S})$ is surjective. 
Of course we can suppose that $X$ contains at least two points.
Let us take a singular cycle $c = \sum_{j=1}^k a_{j} \sigma_{j} \in \, C_{n}(\widetilde{X}; \mathbb{Z})$.
If the vertices of each $\sigma_j$ project to pairwise distinct points of $X$ (i.e.~if the class
$\overline{\sigma}_j$ of each $\sigma_j$ defines a simplex in $\widetilde{\calK(X)}$), then there is nothing to prove: indeed, to every $\sigma_j$ we can associate the
element 
$$
\widehat{\sigma}_j=(\overline{\sigma}_j,(v_0,\ldots,v_n))\in C_n(\widetilde{\calK(X)};\matZ)\ ,
$$
where $v_i$ is the $i$-th vertex of $\sigma_j$;
if $\widehat{c}=\sum_{j=1}^k a_j\widehat{\sigma}_j \in C_n(\widetilde{\calK(X)};\matZ)$, then 
it readily follows from the definitions that $\widehat{c}$ is a cycle, and
$$
c=C_n(\widetilde{S})(\phi_n(\widehat{c}))\ .
$$
In particular, the class of $c$ lies in the image of $H_n(\widetilde{S})$. 

Therefore, in order to prove that $H_n(\widetilde{S})$ is surjective we are left to show that every class $\alpha\in H_n(\wdtX;\matZ)$ is represented by a cycle in which the vertices of every singular simplex 
project to distinct points of $X$. Let  $c = \sum_{j=1}^k a_{j} \sigma_{j} \in \, C_n(\widetilde{X}; \mathbb{Z})$ be a representative of $\alpha$, and let
$Z,c_Z$ and $f\colon |Z|\to \widetilde{X}$ be the $\Delta$-complex, 
the singular cycle and the map defined above, so that $C_n(f)(c_Z)=c$. Let $Z'$ be the second barycentric subdivision of $Z$, which is now a simplicial complex (see \cite[Exercise~23, page~133]{hatcher}), 
%and recall that there exists a canonical identification between $|Z'|$ and $|Z|$. Also denote
and denote
by $c'$ (resp.~$c'_{Z}$) the second barycentric subdivision of $c$ (resp.~of $c_Z$). Let also $V\subseteq |Z|$ be the set of vertices of $|Z'|$ (where we understand the canonical identification
between $|Z|$ and $|Z'|$). Using that $X$ is good, it is not difficult to show that we can homotope the map $f\colon |Z|\to \widetilde{X}$ to a map $f'$ such that $\pi\circ f'$ is injective on $V$: indeed, since
we are assuming that $X$ contains at least two points and is path connected, it contains  infinitely many points; we can then homotope $(\pi\circ f)|_V$ to an injective map, extend such homotopy
using the homotopy extension property of the CW pair $(|Z|,V)$, and then homotope $f$ to the required map $f'$ thanks to the homotopy lifting property for coverings. 
By construction, each simplex of $c'_Z$ has vertices on distinct points  of $V$, so the vertices of each simplex of the cycle $c'=C_n(f')(c'_Z)$ project to distinct points of $X$. Since $\alpha=[c]=[c']$, this concludes the
proof of the surjectivity of $H_n(\widetilde{S})$.

Let us now prove the injectivity of $H_n(\widetilde{S})$. Let $\alpha\in \ker H_n(\widetilde{S})\subseteq H_n\left(\left|\widetilde{\calK(X)}\right|;\matZ\right)$. By Theorem~\ref{simpl-hom-eq-sing-one} 
we may assume that $\alpha$ is represented by the cycle $c=\phi_n(c_s)$, where 
$$
c_s=\sum_{j=1}^k a_j (\overline{\sigma}_j,(v_0^j,\ldots,v_n^j))\ \in\ C_n\left(\widetilde{\calK(X)};\matZ\right)
$$
is a {simplicial} cycle.
%By Corollary~\ref{nondegenerate:inessential}, we may suppose that $v_i^j\neq v_{i'}^j$ for every $i\neq i'$ and $j=1,\ldots,k$ (so that, 
%in particular, every $\overline{\sigma}_j$ is the class
%of an $n$-dimensional singular simplex, and $\pi(v_i^j)\neq \pi(v_{i'}^j)$ for every $i\neq i'$ and $j=1,\ldots,k$). 
By definition, we have
\begin{equation}\label{bordo:eq}
C_n(\widetilde{S})(c_s)=\sum_{j=1}^k a_j\sigma_j\ ,
\end{equation}
where
$\sigma_j\colon \Delta^n\to\wdtX$ is the unique singular simplex belonging to the class $\overline{\sigma}_j$ and having $v_i^j$ as $i$-th vertex.
Since $\alpha\in \ker H_n(\widetilde{S})$, we then have
\begin{equation}\label{n2:eq}
\sum_{j=1}^k a_{j} \sigma_{j} = \partial\left(\sum_{h=1}^l b_{h} \psi_{h}\right)\ ,
\end{equation}
where $\psi_{h}$ is an $(n+1)$-dimensional simplex in $\wdtX$ for every $h=1,\ldots,l$. 
If $\pi\circ \psi_h$ has distinct vertices in $X$ for every $h$, then just as in the argument for surjectivity we can define a simplicial chain
$\sum_{h=1}^l b_h \widehat{\psi}_h\in C_{n+1}\left(\widetilde{\calK(X)};\matZ\right)$ such that 
$$
c_s=\sum_{j=1}^k a_{j} (\overline{\sigma}_j,(v_0^j,\ldots,v_n^j)) = \partial\left(\sum_{h=1}^l b_{h} \widehat{\psi}_{h}\right)\ ,
$$
and this implies that $c_s$ is null-homologous, hence $\alpha=0$. 
Therefore, we are left to show that in equality~\eqref{n2:eq} we can assume that $\pi\circ\psi_h$ has distinct vertices for every $h=1,\ldots,l$.
%Observe first that if $\pi\circ\psi_{h}$ has at most $n$ distinct vertices, then the vertices of each face of $\psi_h$ project to at most $n$ distinct points in $X$, so
%$\sigma_j$ cannot be a face of $\psi(h)$.
%This means we can safely delete the term $b_h\psi_h$ from the right hand side of~\eqref{n2:eq}.
%We are thus left to
%consider the case when each $\pi\circ\psi_{h}$ has at least $(n+1)$ distinct vertices. 

Let $Z,c_Z$ and $f\colon |Z|\to \widetilde{X}$ be the $\Delta$-complex, the singular chain and the map associated to $\sum_{h=1}^l b_h\psi_h$. 
We will subdivide $Z$ and $c_Z$ 
so to obtain a simplicial chain whose image via (a map homotopic to) $f$ provides the desired singular $(n+1)$-chain.
%boundary is equal to $c_s$, thus showing that $\alpha=0$. 
%Indeed, the desired subdivision can be obtained as follows. 
Let us say that an edge of $Z$ is \emph{special} if its endpoints are sent by $\pi\circ f$ to the same point of $X$,
and that a $k$-dimensional simplex of $Z$, $0\leq k\leq n+1$, is special if it contains a special edge (in particular,
$0$-simplices are never special). 

We first observe that no special $n$-simplex can appear in $\partial c_Z\in C_n(Z;\matZ)$. 
In fact, if $\sigma_Z$ is a special simplex of $Z$, then 
the vertices of $C_n(f)(\sigma_Z)$ do not project to pairwise distinct points of $X$, so $C_n(f)(\sigma_Z)$ cannot appear
among the $\sigma_j$. But
%this observation and from the fact that
$$
C_n(f)(\partial c_Z)=\partial (C_n(f)(c_Z))=\partial \left(\sum_{h=1}^l b_h\psi_h\right)=\sum_{j=1}^k a_j\sigma_j\ ,
$$
hence $\sigma_Z$ cannot appear in $\partial c_Z$.
%If $\pi\circ\psi_h$ has $(n+2)$ distinct vertices, then we just left $\psi_h$ unaltered. 
%Otherwise, $\pi\circ\psi_h$ has
%exactly $(n+1)$ distinct vertices. This means that there exist two vertices which project to the same point of $X$. 
%Clearly, no face containing this edge may appear among the the $\sigma_{j}$.
%Let $Z,c_Z$ and $f\colon |Z|\to \widetilde{X}$ be the $\Delta$-complex, the simplicial chain and the map associated to $\sum_{h=1}^l b_h\psi_h$. 
%Since no special $n$-simplex can be sent by $\phi_n$ to any $\sigma_j$, from~\eqref{n2:eq} we deduce that special simplices cannot appear in
%the chain $\partial c_Z$. 

We now subdivide $Z$ by adding the barycenters of all special
simplices, and by subdividing only special simplices as follows: 
starting from dimension 1 and ending in dimension $(n+1)$, we inductively replace each special simplex with the cone on its barycenter
of its (already subdivided) boundary. 
We call $Z'$ the resulting $\Delta$-complex, and $c'_Z$ the simplicial chain obtained by subdividing $c_Z$ accordingly. 
By construction, $\partial c'_Z$ is a subdivision of $\partial c_Z$. But in $\partial c_Z$ no special simplex appears,
so 
%\begin{equation}\label{c'c}
$$
\partial c'_Z=\partial c_Z\ .
$$
%\end{equation}
Let $|L|\subseteq |Z|$ be the subcomplex given by the union of the geometric realizations of \emph{non}-special simplices (so that
$|L|$ contains all the vertices of $|Z|$), and let $V_0\subseteq X$ be the image of the set of vertices of $|Z|$ via $\pi\circ f$. 
Let also $W\subseteq |Z|$ be the set of vertices of $|Z'|$ that are not vertices of $|Z|$ (i.e.~the set of added barycenters).
Using that $X$
is good, we may homotope $f|_{|L|\cup W}$ to a map $f'\colon |L|\cup W\to \widetilde{X}$ with the following properties:
$f|_{|L|}=f'|_{L}$, $(\pi\circ f)|_W$ is injective, and $(\pi\circ f)(W)\cap V_0=\emptyset$. Using the homotopy extension property for CW pairs,
we can then extend this homotopy to a homotopy between $f$ and a map $f''\colon |Z|\to \widetilde{X}$ such that 
$f''|_{|L|\cup W}=f'$. With this choice the chain $C_{n+1}(f'')(c'_Z)\in C_{n+1}(\wdtX;\matZ)$ is a linear combination of singular simplices
each of which projects to a singular simplex with distinct vertices in $X$. Moreover,  $f''$ and $f$ coincide on the geometric realization of
non-special simplices and  no special simplex appears in $\partial c_Z$, hence
\begin{align*}
\partial( C_{n+1}(f'') (c'_Z))& =C_{n+1}(f'')(\partial c'_Z)=C_{n+1}(f'')(\partial c_Z)=C_{n+1}(f)(\partial c_Z)\\ & =\partial\left(\sum_{h=1}^l b_h\psi_h\right)=\sum_{j=1}^k a_j\sigma_j\ ,
\end{align*}
where the last equality is due to the definition of $c_Z$.
This concludes the proof.

\end{proof}

\begin{rem}
 Our Proposition~\ref{prop-iso-hom} is very similar in spirit to~\cite[Remark 1]{Kuessner}, where it is claimed that the inclusion into $C_*(X;R)$ 
 of the chain complex generated by singular simplices  in $X$ with distinct vertices  induces
 an isometric isomorphism in homology. When working with real coefficients, this fact may be deduced (at least for good spaces)
 via a duality argument from Theorem~\ref{isometria-good-K(X)}.
 Moreover, by putting together Proposition~\ref{prop-iso-hom}
 and Theorem~\ref{simpl-hom-eq-sing-one} one easily checks that, for good spaces, this inclusion induces an isomorphism also when working with integral coefficients. However,
 the $\ell^1$-seminorm induced in homology by the chain complex generated over $\mathbb{Z}$ by singular simplices with distinct vertices can be strictly bigger
 than the usual $\ell^1$-seminorm of $H_*(X;\mathbb{Z})$. For example, the  fundamental class $[S^1]\in H_1(S^1;\mathbb{Z})$ of the circle satisfies $\|[S^1]\|_1=1$,
 but the smallest representative of $[S^1]$ given by a linear combination of simplices with distict vertices has $\ell^1$-norm equal to 2.
\end{rem}

We are now ready to conclude the proof of Therem~\ref{homotopy-weak-intro}. We have already observed that the map $S\colon |\calK(X)|\to X$ induces isomorphisms on $i$-th homotopy groups for $i=0,1$. 
For $n\geq 2$, coverings induce isomorphisms on $n$-th homotopy groups, so thanks to the commutative diagram
\begin{displaymath}
\xymatrix{
\left|\widetilde{\mathcal{K}(X)}\right| \ar[r]^-{\widetilde{S}} \ar[d]_{\tilde{\pi}} & \widetilde{X} \ar[d]^{\pi} \\
\lvert \mathcal{K}(X) \rvert \ar[r]_-{S} & X
}
\end{displaymath}
we only need to prove that $\widetilde{S}$ is a weak homotopy equivalence. Now the conclusion follows from Hurewicz Theorem \cite[Corollary 4.33]{hatcher}, since
we know that $\widetilde{\calK(X)}$ and $\wdtX$ are simply connected (see Proposition~\ref{fund-group:prop}), and that $\widetilde{S}$ induces an isomorphism on singular homology with integral
coeffients in every degree (see Proposition~\ref{prop-iso-hom}).

\begin{quest}\label{question:whe}
Is it true that the map $S\colon |\calK(X)|\to X$ is a weak homotopy equivalence for every topological space (i.e.~even without the assumption that $X$ is good)? 
The singular set $\mathcal{S}(X)$ is always weakly homotopy equivalent to $X$, so there is no apparent obstruction to an affirmative answer to our question. In any case, 
in order to apply the homotopy theory of multicomplexes to bounded cohomology we will need to provide an explicit combinatorial description of the homotopy groups of $\calK(X)$ that does not
seem possible for pathological spaces. We refer the reader to Remarks~\ref{nospecial:rem} and~\ref{controesempioa:rem} for more speculations on this issue.
\end{quest}

As mentioned above, we can now prove that the singular multicomplex $\calK(X)$ is homotopy equivalent to the singular simplicial set $\calS(X)$ (at least for good spaces).
%The singular multicomplex $\calK(X)$ is constructed by gluing (classes of) singular simplices with values in $X$ that are injective on vertices. By considering all possible singular simplices,
%one gets instead the 
%classical \emph{singular simplicial set} $\calS(X)$ associated to  $X$ (see e.g.~\cite{eil-zil, eil-sing, milnor-geom, may} for the formal definition and the main properties of $\calS(X)$).
Just as for the singular multicomplex, the singular set $\calS(X)$ comes equipped with a continuous map $j \colon |\calS(X)|\to X$, where $|\calS(X)|$ denotes the geometric
realization of $\calS(X)$ (see e.g.~\cite{milnor-geom}). The multicomplex $|\calK(X)|$ may be realized as a subcomplex of $|\calS(X)|$ as follows. Let us fix an arbitrary ordering on $X$, now considered as a set.
Let $\overline{\tau}$ be an $n$-simplex of $\calK(X)$, so that $\overline{\tau}$ is the equivalence class of a singular simplex with values in $X$. In such equivalence class there exists a unique
singular simplex $\tau\colon \Delta^n\to X$ which is increasing on the set of vertices of $\Delta^n$. We then set $i(\overline{\tau})=\tau$, and we observe that this formula defines 
a cellular map (still denoted by $i$) from $|\calK(X)|$ to $|\calS(X)|$. In fact, if we endow $\calK(X)$ with the structure of simplicial set generated by its structure as a multicomplex,
then $i\colon \calK(X)\to\calS(X)$ is a simplicial embedding.

\begin{cor}\label{homKS}
 Let $X$ be a good topological space. Then the simplicial embedding
 $$
 i\colon |\calK(X)|\to |\calS(X)|
 $$
 is a homotopy equivalence.
\end{cor}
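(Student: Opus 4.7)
The plan is to compare the map $i$ with the two natural projections $S\colon|\calK(X)|\to X$ and $j\colon|\calS(X)|\to X$ and then invoke a two-out-of-three argument for weak equivalences together with Whitehead's theorem.

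First I would verify the identity $j\circ i=S$ by a direct check on each closed cell of $|\calK(X)|$. Fix an $n$-simplex $\overline{\tau}$ of $\calK(X)$ and let $\tau\colon\Delta^n\to X$ be its unique representative that is increasing on the vertices of $\Delta^n$ with respect to the chosen ordering of $X$; by construction $i(\overline{\tau})=\tau$ as an $n$-simplex of $\calS(X)$. Unpacking the recipe of Remark~\ref{rem:descr:punti:geom:real} and the definition of $S_X$, the restriction $S|_{|\overline{\tau}|}$ is, up to the canonical identification $|\overline{\tau}|\cong\Delta^n$ induced by any representative, just the evaluation of that representative on $\Delta^n$; the analogous statement for $\calS(X)$ gives that $j$ restricted to the closed cell of $\tau$ is the evaluation of $\tau$ on $\Delta^n$. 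Choosing $\tau$ itself as the representative used to identify $|\overline{\tau}|$ with $\Delta^n$, the equality $j\circ i|_{|\overline{\tau}|}=S|_{|\overline{\tau}|}$ becomes tautological. Since this holds for every closed cell, we obtain $j\circ i=S$ on the whole of $|\calK(X)|$.

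Next I would apply the two-out-of-three property of weak homotopy equivalences to the triangle
\[
\xymatrix{
|\calK(X)|\ar[rr]^-{i}\ar[dr]_-{S} & & |\calS(X)|\ar[dl]^-{j}\\
& X &
}
\]
The map $S$ is a weak homotopy equivalence by Theorem~\ref{homotopy-weak-intro} (this is where goodness of $X$ enters), and the map $j$ is the classical natural projection from the geometric realization of the singular simplicial set of $X$ to $X$, which is well known to be a weak homotopy equivalence for every topological space (see e.g.~Milnor~\cite{milnor-geom}, without any hypothesis on $X$). Consequently $i$ induces isomorphisms on all homotopy groups for every choice of basepoint.

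Finally, $|\calK(X)|$ and $|\calS(X)|$ are both CW complexes by construction of the geometric realization of, respectively, a multicomplex and a simplicial set. Whitehead's theorem then promotes the weak homotopy equivalence $i$ to an honest homotopy equivalence, which is the desired conclusion. The only step that requires genuine verification rather than quotation is the identity $j\circ i=S$; once this is established the rest is formal, so I expect no serious obstacle beyond being careful with the two ways in which $\Delta^n$ is identified with a closed cell (ordered for $|\calS(X)|$, unordered up to affine automorphisms for $|\calK(X)|$).
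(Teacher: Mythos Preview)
Your proof is correct and follows essentially the same route as the paper: establish the commutative triangle $j\circ i=S$, invoke Theorem~\ref{homotopy-weak-intro} for $S$ and the classical result for $j$, apply two-out-of-three for weak equivalences, and finish with Whitehead. The only difference is cosmetic: you spell out the cell-by-cell verification of $j\circ i=S$, whereas the paper simply asserts commutativity of the diagram.
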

\begin{proof}
 Let us consider the commutative diagram
 $$
 \xymatrix{
 |\calK(X)| \ar[rr]^{i} \ar[dr]_{S}& & |\calS(X)|\ar[ld]^{j}\\
 &\ X \ . &
 }
 $$
 We have proved in Therem~\ref{homotopy-weak-intro} that the map $S$ is a weak homotopy equivalence, while it is well known that $j$ is a weak homotopy equivalence (see e.g.~\cite[Theorem 4]{milnor-geom}).
 As a consequence, the map $i$ is also a weak homotopy equivalence. Since both $|\calK(X)|$ and $|\calS(X)|$ are CW complexes, the conclusion follows from Whitehead Theorem. 
\end{proof}

\section{(Lack of) functoriality}\label{Sec:no:functorial}
Unfortunately, the singular multicomplex $\calK(X)$ lacks the functoriality properties enjoyed by the singular complex $\calS(X)$. In fact,
if $f\colon X\to Y$ is a continuous map and $\sigma\colon \Delta^n\to X$ is a singular simplex, then of course $f\circ\sigma$ is a singular simplex
with values in $Y$, so that $f$ induces a simplicial map $\calS(f)\colon \calS(X)\to \calS(Y)$. However, even if $\sigma$ is injective on the vertices of $\Delta^n$,
it may well be that $f\circ \sigma$ is not so. Therefore, $f$ does not induce any meaningful map from $\calK(X)$ to $\calK(Y)$, unless it is itself injective on each path connected
component of $X$.  

Nevertheless, we are able to prove that, at least for good spaces, any map $X\to Y$ induces a well-defined homotopy class of maps $|\calK(X)|\to |\calK(Y)|$:

\begin{prop}\label{commutative:homotopy}
 Let $X,Y$ be good topological spaces and let $f\colon X\to Y$ be a continuous map. Then there exists a continuous map
 $K(f)\colon |\calK(X)|\to |\calK(Y)|$ such that the following diagram commutes up to homotopy:
 $$
 \xymatrix{
 |\calK(X)|\ar[r]^{K(f)}\ar[d]_{S_X} & |\calK(Y)|\ar[d]^{S_Y}\\
 X \ar[r]^f & Y\ .
 }
 $$
 %Here, $S_X$ (resp.~$S_Y$) denotes the natural projection of $|\calK(X)|$ onto $X$ (resp.~of $|\calK(Y)|$ onto $Y$).
 \end{prop}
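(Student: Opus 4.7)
The plan is to obtain $K(f)$ as a lift, up to homotopy, of $f\circ S_X$ through the natural projection $S_Y$. The crucial point is that $|\calK(X)|$ is always a CW complex (this is built into the definition of the geometric realization of a multicomplex), and that, since $Y$ is good, Theorem~\ref{homotopy-weak-intro} guarantees that $S_Y\colon |\calK(Y)|\to Y$ is a weak homotopy equivalence. So I would set up the problem as follows: consider the continuous map
$$g=f\circ S_X\colon |\calK(X)|\longrightarrow Y,$$
and look for a lift of $g$ along $S_Y$ up to homotopy.

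The key ingredient I would invoke is the standard fact from classical homotopy theory that a weak homotopy equivalence $p\colon E\to B$ induces a bijection
$$p_*\colon [Z,E]\longrightarrow [Z,B]$$
whenever $Z$ is a CW complex (this is essentially the compression/extension lemma for weak equivalences, see e.g.~\cite[Proposition~4.18]{hatcher} applied to the CW pair $(Z,\emptyset)$). Applying this fact to $p=S_Y$ and $Z=|\calK(X)|$, I obtain a continuous map $K(f)\colon |\calK(X)|\to|\calK(Y)|$, unique up to homotopy, such that $S_Y\circ K(f)\simeq f\circ S_X=g$. This $K(f)$ is the required map, and the commutative-up-to-homotopy diagram is precisely the homotopy $S_Y\circ K(f)\simeq f\circ S_X$.

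There is essentially no serious obstacle here: the proposition is a formal consequence of Theorem~\ref{homotopy-weak-intro} together with the fact that weak equivalences have the CW lifting property. The only subtlety to be careful about is that, even though $Y$ is not assumed to carry a CW structure, the source $|\calK(X)|$ does, which is exactly what is needed in order to apply the lifting statement cited above. I would also remark that the same argument yields the uniqueness of $K(f)$ up to homotopy, a fact that will be useful later (for instance when one wants to check that $K(g\circ f)\simeq K(g)\circ K(f)$, so that $\calK$ becomes functorial up to homotopy, even though it fails to be functorial on the nose as discussed in the preceding paragraph).
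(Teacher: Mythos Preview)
Your proof is correct and takes a genuinely different route from the paper's. The paper factors through the singular simplicial set: since $\calS$ is strictly functorial, $f$ induces a simplicial map $S(f)\colon |\calS(X)|\to|\calS(Y)|$, and one sets $K(f)=\rho_Y\circ S(f)\circ i_X$, where $i_X\colon|\calK(X)|\hookrightarrow|\calS(X)|$ is the natural embedding and $\rho_Y$ is a homotopy inverse of $i_Y$ (available by Corollary~\ref{homKS}). Your argument bypasses $\calS$ entirely and appeals directly to the fact that a weak equivalence induces a bijection on $[Z,-]$ for CW $Z$. Your route is shorter and more conceptual; the paper's route is more explicit (it gives a concrete formula for $K(f)$) and fits the local narrative, since Corollary~\ref{homKS} has just been proved. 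Your remark that $K(f)$ is unique up to homotopy, hence that $\calK$ becomes functorial on the homotopy category, is a useful observation that the paper does not make explicit.
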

\begin{proof}
 Let $i_X\colon |\calK(X)|\to |\calS(X)|$ (resp~$i_Y\colon |\calK(Y)|\to |\calS(Y)|$) be the embedding introduced at the end of the previous section, and recall from Corollary~\ref{homKS}
 that $i_X$ and $i_Y$ are homotopy equivalences.
 %We keep the notation from the end of the previous section.
 As mentioned above, the map
 $f\colon X\to Y$ induces a simplicial map $S(f)\colon |\calS(X)|\to |\calS(Y)|$ which makes the following diagram commute:
 $$
 \xymatrix{
 |\calS(X)|\ar[r]^{S(f)}\ar[d]_{j_X} & |\calS(Y)|\ar[d]^{j_Y}\\
 X \ar[r]^f & Y\ .
 }
 $$
 Now the conclusion follows by setting  $$K(f)\colon |\calK(X)|\to |\calK(Y)|\, ,\qquad  K(f)=\rho_Y\circ S(f)\circ i_X\ ,$$ where $\rho_Y\colon |\calS(Y)|\to |\calK(Y)|$ is a homotopy inverse
 of $i_Y$. In fact, we have
\begin{align*} 
f \circ S_X &= f \circ j_X \circ i_X \\
&= j_Y \circ S(f) \circ i_X \\
&\simeq j_Y \circ i_Y \circ \rho_Y \circ S(f) \circ i_X \\
&= S_Y \circ \rho_Y \circ S(f) \circ i_X \\
&= S_Y \circ K(f).
\end{align*}
\end{proof}

\chapter{The homotopy theory of complete multicomplexes}\label{chap2:hom}
This chapter is devoted to the study of the homotopy theory of multicomplexes.  
We have already seen in Section~\ref{Sec:weak:hom:type:K(X)} that, at least when $X$ is a good topological space, the singular multicomplex of $X$ has the same weak homotopy type of $X$, 
thus providing a simplicial model for $X$. We will see that the singular multicomplex is a \emph{complete} multicomplex. Completeness for multicomplexes is an analogue of the Kan property for simplicial
sets, and it is particularly useful in the study of homotopy groups of geometric realizations. Indeed, the combinatorial description of homotopy groups 
of Kan simplicial sets is a classical topic in the theory of simplicial sets (see e.g.~\cite{Piccinini, may, GJ}),
and in this chapter we develop a similar theory for multicomplexes. 

%\todo{In generale, non credo che liste di riferimenti bibliografici cosi' lunghe abbiano senso. Dovresti selezionare o i lavori in cui queste idee sono state introdotte, o uno o al massimo
%due libri o surveys dove sono disponibili trattazioni complete e user friendly (o un'unione delle due cose). Comunque se possibile starei su 3-4 riferimenti. La stessa osservazione
%vale in diversi casi anche piu' avanti.}

Sometimes a complete multicomplex $K$ can be unnecessarily big (this is the case, for example, of the singular multicomplex of a topological space). Thus it is often
useful to look for a minimal submulticomplex of $K$ which carries all the homotopy of the original multicomplex. Such a multicomplex is called \emph{minimal}. The formal definition and the geometric properties
of minimal multicomplexes are very close to the definition and the properties of minimal simplicial sets
(see again~\cite{Piccinini, may, GJ}).

%\todo{Si riesce a dare qui sotto la stessa lista di riferimenti data sopra?}

%After stating and proving a simplicial approximation theorem for multicomplexes, we will then show that 

To any complete and minimal multicomplex $K$ there is associated an aspherical multicomplex $A$
having the same fundamental group as $K$. The construction of $A$ is explicit, and exhibits $A$ as a quotient of $K$ by a group of simplicial automorphisms. 
When applied to the minimal multicomplex associated to the singular multicomplex $\calK(X)$ of a topological space $X$, this construction becomes particularly interesting,
both because it provides an explicit description of an Eilenberg-MacLane space associated to a given topological space (see the end of this chapter), and because it 
plays a fundamental role in our study of the bounded cohomology of topological spaces (see Chapter~\ref{chap2:chap}).

\section{Complete multicomplexes}
In this section we  introduce the notion of \textit{complete} multicomplex. 
As already mentioned, completeness may be interpreted as 
the counterpart, in the theory of multicomplexes, of the Kan property for simplicial sets.

% (see for instance \cite{eil-zil, moore, Piccinini, may} for a precise definition).

%\todo{Rimuovere questi riferimenti bibliografici (o sostituirli a quelli sopra)}

\begin{comment}

Indeed, as for the Kan property, completeness allows to reduce to a combinatorial framework the study of the homotopy type of the geometric realization of a multicomplex (see Theorem~\ref{complete:special}).
For example, completeness will allow us homotope continuous maps between multicomplexes to non-degenerate simplicial 
maps (see Lemma~\ref{homotopy-lemma} for a more precise statement). This fact will play a key role in the development of the simplicial (co)homology theory of multicomplexes.
An important fact for applications to the study of bounded cohomology of topological spaces is that
the singular multicomplex $\mathcal{K}(X)$ is complete for every good space $X$ (see Theorem~\ref{K(X)-compl}).
\end{comment}

%\todo{Alcune righe qui ripetevano tali e quali dei concetti gia' espressi. Le ho commentate, le potremmo traslare nell'introduzione generale.}

\begin{comment}
The great importance of working with complete multicomplexes encourages us to produce in this section also a characterization of complete multicomplexes.

We can now formally introduce the notion of completeness. Recall that as long as we do not work with simplicial (co)chains, we have to keep in mind the geometric definition of simplicial multicomplex.
\end{comment}

\begin{Definizione}\label{completeness:defn}
A multicomplex $K$ is  \textit{complete} if the following condition holds. Let $f \colon \lvert \Delta^{n} \rvert \rightarrow \lvert K \rvert$ be a continuous map  whose restriction to the boundary 
$f \vert_{\partial |\Delta^{n}|} \colon \partial |\Delta^{n}| \rightarrow |K|$ is a simplicial embedding. Then $f$ is homotopic relative to $\partial |\Delta^{n}|$ to a simplicial embedding $f' \colon |\Delta^{n}| \rightarrow |K|$.
\end{Definizione}

\begin{rem}\label{kan:rem}
Let us denote by $\Lambda^n$ the $n$-dimensional \emph{simplicial horn}, i.e.~the simplicial complex obtained by removing one $n$-dimensional face from $\partial \Delta^{n+1}$. 
If  the removed facet is opposite to the $i$-th vertex of $\Delta^{n+1}$, then we  denote the resulting simplicial horn by the symbol $\Lambda^{n}_{i}$.
We  endow $\Lambda^n$ with the induced structure of multicomplex (which coincides with its structure as a simplicial complex) and of (ordered) simplicial set (which is obtained
by adding all the degenerate simplices generated by the geometric faces of $\Lambda^n$). Recall that a simplicial set $Z$ satisfies the Kan condition (or, for brevity, \emph{is Kan})
if, for every $n\in\matN$ and $k \in \matN$, every simplicial map $f\colon \Lambda^{n}_{k}\to Z$ may be extended to a simplicial map $F\colon \Delta^{n+1}\to Z$.  

The Kan condition seems to be strictly related to an analogue of completeness for simplicial sets. Indeed, it turns out that a simplicial set $Z$ is Kan if and only if the following 
strong completeness condition holds:
for every $n\in\matN$ and every continuous map $f\colon |\Delta^n|\to |Z|$ which is simplicial on $\partial |\Delta^n|$, there exists a simplicial map
$f'\colon |\Delta^n|\to |Z|$ which agrees with $f$ on $\partial |\Delta^n|$ and is homotopic to $f$ relative to $\partial |\Delta^n|$. The ``if'' part of this statement is an easy exercise
(strong completeness allows to extend a simplicial map defined on $\Lambda^n$ to a null-homotopic simplicial map defined on the whole of $\partial \Delta^{n+1}$;
a further application of strong completeness then provides the needed extension of this map to a simplicial map defined on $\Delta^{n+1}$); the ``only if'' part is highly non-trivial,
and turns out to be very useful when developing the homotopy theory of Kan sets (see e.g.~\cite[Theorem 4.5.27]{Piccinini}). 

While (strong) completeness involves continuous maps on geometric realizations in its definition, the Kan condition is purely combinatorial, and usually much easier to check.
Therefore, it may be interesting to find, in the context of multicomplexes, an analogue of the Kan condition which is equivalent to completeness. 

Let $K$ be a multicomplex with associated (ordered) simplicial set $Z$. Unfortunately, completeness for $K$ does \emph{not} imply strong completeness for $Z$: if two edges of $K$ join the distinct vertices $v,w$
(and this phenomenon may well occur in complete multicomplexes),
then a simplicial map $\Lambda^1\to K$ having these edges as image does not extend to a simplicial map on $\Delta^2$, since non-degenerate simplices of $Z$ must have distinct vertices. Therefore, completeness for $K$ is \emph{not} equivalent to the Kan condition for $Z$ (and the Kan condition for $Z$ seems to be too restrictive to our purposes: if 
$K=\calK(X)$, then the associated simplicial set $Z$ is not Kan, while it is complete as we will prove in Theorem \ref{K(X)-compl}). We could therefore define the Kan condition for multicomplexes as follows: a multicomplex $K$ is e-Kan
(where ``e'' stands for ``embedding'')
if, for every $n\in\matN$, every simplicial \emph{embedding} $f\colon \Lambda^n\to K$ may be extended to a simplicial map (which is automatically an embedding)
$F\colon \Delta^{n+1}\to K$.  The singular multicomplex $\calK(X)$ satisfies the e-Kan condition for every topological space $X$. Moreover, the same argument showing that strong completeness implies the Kan condition for simplicial sets shows that completeness implies the e-Kan condition for multicomplexes (see also the proof of Lemma~\ref{quoz-compl-lemma1} below). Unfortunately, at the moment we are not able to show that the e-Kan condition implies (and therefore is equivalent to) completeness for multicomplexes.
\end{rem}

Before investigating more closely the features of completeness, let us 
discuss some  examples of complete multicomplexes.

\begin{Esempi}\label{ex-compl-multi}
\hspace{2em}
\begin{enumerate}
\item
Let $K$ be a complete connected one-dimensional multicomplex. Then $K$ is a segment.
Indeed, $K$ must contain at least two vertices (because otherwise it would be either empty or a point). 
Let us suppose by contradiction that $K$ contains at least three vertices $v_{0}, v_{1} $and $v_{2}$. Since $K$ is connected, up to reordering these vertices we can suppose that
$K$ contains a $1$-simplex $e_1$ joining $v_1$ to $v_2$ and a $1$-simplex $e_2$ joining $v_2$ to $v_3$. The concatenation of $e_1$ and $e_2$ provides a continuous map $|\Delta^1|\to |K|$
which is a simplicial embedding on $\partial |\Delta^1|$, so completeness ensures the existence of a $1$-simplex $e_3$ joining $v_1$ to $v_3$ and homotopic
to $e_1*e_2$ relative to the endpoints. The loop given by the concatenation of $e_1,e_2$ and (the inverse of) $e_3$ is null-homotopic, and can be realized as a simplicial embedding
of $\partial \Delta^2$ in $K$. By completeness, this implies that $K$ contains at least one $2$-dimensional simplex. 
This contradicts the fact that $K$ is $1$-dimensional, thus showing that $K$ contains exactly 2 vertices. 

Let us finally prove that the vertices of $K$ are joined by a unique edge. In fact,
if $K$ contained at least two edges, then we could take a reduced (i.e.~non-backtracking) simplicial path of length 3 in $|K|$. By completeness, this path should be homotopic relative to its endpoints
to a single  edge of $K$. But reduced paths in a graph are never homotopic relative to their endpoints, unless they coincide. This shows that $K$ must be a segment.
\item More in general, it can be easily proved by induction that, if $K$ is a complete and connected multicomplex with vertex set $V$, then for any $A\in \mathcal{P}_f(V)$ there is a simplex in $K$
with vertex set $A$. In particular, if $V$ is infinite then $K$ is infinite-dimensional. 
\item As a consequence of the previous item, 
if a connected complete multicomplex $K$ is a simplicial complex, then it equals the full simplicial complex on the vertex set of $K$. 
\end{enumerate}
\end{Esempi}

The following result will prove useful later:

\begin{Lemma}\label{quoz-compl-lemma1}
Let $K$ be a complete multicomplex. Let $f \colon (\Delta^{n})^{1} \rightarrow K$ be a simplicial embedding such that the restriction of $f$ to each triangular loop is null-homotopic. 
Then, $f$ extends to a simplicial embedding of the whole $\Delta^{n}$.
\end{Lemma}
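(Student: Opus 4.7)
The plan is to extend $f$ skeleton by skeleton from the 1-skeleton up to the full $\Delta^n$, applying completeness at each step. The subtle point is that completeness of $K$ requires the existence of a continuous extension of the boundary data, and such an extension is not automatic beyond the 2-skeleton (starting from dimension 3 one faces obstructions in $\pi_k(|K|)$ for $k\geq 2$). So I would not try to fill each face in isolation; instead, I would carry a global continuous extension along the induction.

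First I would use the hypothesis to show that $|f|\colon|(\Delta^n)^1|\to|K|$ extends to a continuous map $G\colon|\Delta^n|\to|K|$. Indeed, $|(\Delta^n)^1|$ is a graph whose fundamental group (after choosing a spanning tree rooted at, say, $v_0$) is freely generated by triangular loops $v_0 v_i v_j v_0$. The hypothesis that each triangular loop is null-homotopic in $|K|$ therefore forces the map $\pi_1(|(\Delta^n)^1|)\to\pi_1(|K|)$ to be trivial, so $|f|$ is null-homotopic. Combining this with the contractibility of $|\Delta^n|$ and the homotopy extension property of the CW pair $(|\Delta^n|,|(\Delta^n)^1|)$ (apply HEP to the reverse of a null-homotopy, starting from any constant extension), one obtains the desired continuous extension $G$.

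Next I would construct inductively, for $k=1,2,\ldots,n$, a simplicial embedding $f^{(k)}\colon(\Delta^n)^k\to K$ extending $f$, together with a continuous map $G^{(k)}\colon|\Delta^n|\to|K|$ such that $G^{(k)}|_{|(\Delta^n)^k|}=|f^{(k)}|$. Set $f^{(1)}=f$ and $G^{(1)}=G$. For the inductive step, fix a $(k+1)$-face $\sigma$ of $\Delta^n$: the restriction $G^{(k)}|_{|\sigma|}$ is a continuous extension of the simplicial embedding $f^{(k)}|_{\partial\sigma}\colon|\partial\sigma|\to|K|$, so completeness of $K$ yields a simplicial embedding $f_\sigma\colon\sigma\to K$ homotopic to $G^{(k)}|_{|\sigma|}$ relative to $|\partial\sigma|$. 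Gluing these $f_\sigma$ with $f^{(k)}$ produces the simplicial map $f^{(k+1)}\colon(\Delta^n)^{k+1}\to K$; it is a simplicial embedding because $f$ is injective on vertices, so distinct $(k+1)$-faces of $\Delta^n$ have distinct vertex sets and therefore map to distinct simplices of $K$. Finally, the homotopies $|f_\sigma|\simeq G^{(k)}|_{|\sigma|}$ rel $|\partial\sigma|$ assemble into a homotopy from $|f^{(k+1)}|$ to $G^{(k)}|_{|(\Delta^n)^{k+1}|}$ rel $|(\Delta^n)^k|$; applying the homotopy extension property of the CW pair $(|\Delta^n|,|(\Delta^n)^{k+1}|)$ to $G^{(k)}$ propagates this homotopy to all of $|\Delta^n|$, and the other endpoint gives $G^{(k+1)}$. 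Taking $k=n$ produces the required simplicial embedding $f^{(n)}\colon\Delta^n\to K$.

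The main obstacle is precisely the need to maintain the coherent global extension $G^{(k)}$ throughout the induction: without it, starting from $k=2$ the boundary $|f^{(k)}|_{\partial\sigma}|$ of a $(k+1)$-face is a $k$-sphere whose homotopy class in $\pi_k(|K|)$ is not controlled by the triangular hypothesis alone, and completeness cannot be invoked. The homotopy extension property for CW pairs is the key tool that lets the continuous extension be transported from stage $k$ to stage $k+1$, and a small bookkeeping point is that one must check at each stage that the simplicial gluings really produce a simplicial embedding, which relies on the injectivity of $f$ on the vertex set $V(\Delta^n)$.
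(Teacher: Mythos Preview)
Your proof is correct and takes a genuinely different route from the paper's. The paper argues by induction on $n$: assuming the lemma for $\Delta^{n-1}$, it first extends $f$ to a horn $\Lambda^{n-1}\subset\partial\Delta^n$, then uses a homeomorphism $|F|\cong|\Lambda^{n-1}|$ (where $F$ is the missing facet) together with completeness to fill $F$ simplicially, observes that the resulting simplicial sphere $\partial\Delta^n\to K$ is null-homotopic, and invokes completeness once more to fill $\Delta^n$. You instead induct on the skeleton of a fixed $\Delta^n$, carrying a global continuous extension $G^{(k)}\colon|\Delta^n|\to|K|$ throughout.

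The paper's argument is shorter but the sentence ``we can extend $f$ to a simplicial embedding $\phi\colon\Lambda^{n-1}\to K$'' conceals a coherence issue: for $n\geq 4$ the facets of the horn meet in $(n-2)$-faces, and applying the inductive hypothesis to each facet independently does not a priori yield compatible fillings. (This can be resolved by filling the facets one at a time and observing that the already-filled intersections always lie in a sub-horn of the next facet, but the paper does not spell this out.) Your approach sidesteps the problem entirely: the presence of $G^{(k)}$ guarantees that every boundary sphere encountered along the way bounds continuously, so completeness applies directly. The cost is the repeated use of the homotopy extension property; the gain is that the potential higher-homotopy obstructions are handled transparently rather than implicitly.
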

\begin{proof}
By definition of completeness, the statement holds if $n=2$. 

Let $f$ be a map as in the statement, and 
 assume by induction that the thesis is true for $(n-1)$-simplices. 
 Then we can extend $f$ to a simplicial embedding $\phi\colon \Lambda^{n-1}\to K$ of the $(n-1)$-dimensional simplicial horn $\Lambda^{n-1}\subseteq \partial \Delta^n$ (see Remark~\ref{kan:rem} 
 for the definition of simplicial horn). Let $F\subseteq \partial \Delta^n$ be the facet of $\Delta^n$ not contained in $\Lambda^{n-1}$, and let $r\colon |F|\to |\Lambda^{n-1}|$ 
 be any homeomorphism which is the identity on $\partial |F|=|F|\cap|\Lambda^{n-1}|$ (recall that a facet of a simplex is a codimension one face of the simplex).
The map $\phi\circ r\colon |F|\to |K|$ restricts to a simplicial embedding of $\partial F$, so by completeness
there exists a simplicial embedding $\iota\colon F\to K$ which is homotopic to $\phi\circ r$ relative to the boundary of $F$. We can now glue $\phi$ and $\iota$ along
$\partial F$ to obtain a simplicial embedding $\psi\colon\partial \Delta^n\to K$. It is not difficult to check that this embedding is null-homotopic, so again by completeness
we can extend $\psi$ to the desired embedding of $\Delta^n$ into $K$.
\end{proof}

\section{Special spheres}
Our next goal is the study of homotopy groups of (complete) multicomplexes. To this aim we will be interested in representing elements of the $n$-th homotopy group of (the geometric realization)
of a multicomplex as simplicial maps defined on a specific realization of the $n$-sphere as a multicomplex. 

\begin{defn}\label{Def-special-Sphere-cap-1}
 The $n$-dimensional \emph{special sphere} is an $n$-dimensional multicomplex $K=(V,I,\Omega)$ which is defined as follows: $V$ is a set of cardinality $(n+1)$; $I^n=I_V=\{\Delta^n_n,\Delta^n_s\}$, and
 $I_A$ consists of a single simplex $\Delta_A$ for every $A\subseteq V$, $A\neq V$. The boundary maps in $\Omega$ are the only possible ones: for every $B\subseteq A\subseteq V$,
 we have $\partial_{A,B}=\id$ if $A=B=V$, $\partial_{V,B}(\Delta^n_{n})=\partial_{V,B}(\Delta^n_s)=\Delta_B$ if $B\neq V$, and $\partial_{A,B}(\Delta_A)=\Delta_B$ in the other cases. 
 In other words, $K$ consists of a \emph{northern} $n$-simplex $\Delta^n_n$ and
 a \emph{southern} $n$-simplex $\Delta^n_s$ which are glued along their common boundary. It follows that $|K|$ is homeomorphic to $S^n$. The subspace
 $S^{n-1} \cong |K^{n-1}|=\partial |\Delta^n_n|=\partial |\Delta^n_s|\subseteq |K|\cong S^n$ is the \emph{equator} of $K$. We will usually denote (any multicomplex isomorphic to) a special $n$-sphere by the symbol $\dot{S}^n$.
 We will also denote by $j_{n,s}\colon |\Delta^n_n|\to |\Delta^n_s|$, $j_{s,n}\colon |\Delta^n_s|\to |\Delta^n_n|$ the unique simplicial maps that are the identity on
 the equator of $\dot{S}^n$.
 
 When dealing with homotopy groups, we  need to take care of orientations: a map from a space $S$ homeomorphic to the $n$-sphere defines an element of the $n$-th homotopy group
 only when a pointed identification of $S$ with $S^n$ is fixed. Therefore, henceforth we fix an ordering $s_0<\ldots<s_n$ 
 on the set of vertices $V(\dot{S}^n)$ of $\dot{S}^n$, 
 and an identification $(S^n,*)\cong (|\dot{S}^n|,s_0)$  which takes the basepoint $*$ of $S^n$ into
 $s_0$. 
 \end{defn}

%\begin{Nota}
%Since we are working with unordered simplicial structures, there exist exactly $(n+1)!$ possible embeddings of $\Delta^{n}$ into a given $n$-simplex in $K$. However, since the boundary map is prescribed, 
%there exists a unique characteristic map whose restriction on the boundary is exactly $f \vert_{\partial \Delta^{n}}$. 
%\end{Nota}
%\begin{Osservazione}
%As discussed above, completeness plays the same role of Kan property in the theory of simplicial sets. However, this strict correspondence will be clearer later when we will describe some features of completeness.
%\end{Osservazione}
The easy proof of the following lemma is left to the reader. 

\begin{Lemma}\label{lemma-the-following-3-cond-equivalent}
Let $(X,x_0)$ be a pointed topological space,
let $(\dot{S}^n,s_0)$ be a pointed special sphere, and let 
$$
f\colon (|\dot{S}^n|,s_0)\to (X,x_0)
$$
be a continuous map. Let also $f_n$ (resp.~$f_s$) be the restriction of $f$ to $|\Delta^n_n|$ (resp.~$|\Delta^n_s|$).
Then, the following facts are equivalent:
\begin{itemize}
\item[(i)] The class of $f$ in $\pi_{n}(X, x_{0})$ is null.
\item[(ii)] The maps $f_n\colon |\Delta^n_n|\to X$ and $f_s\circ i_{n,s}\colon |\Delta^n_n|\to X$ are homotopic relative to
$\partial |\Delta^n_n|$.
\item[(iii)] The map $f$ extends to an $(n+1)$-topological disc bounded by $|\dot{S}^n|$. 
\end{itemize}
\end{Lemma}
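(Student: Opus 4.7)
The plan is to establish the cycle (i) $\Rightarrow$ (iii) $\Rightarrow$ (ii) $\Rightarrow$ (iii) $\Rightarrow$ (i), although in fact the cleanest approach is to prove (ii) $\Leftrightarrow$ (iii) geometrically and then use the classical characterization for (i) $\Leftrightarrow$ (iii).

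\textbf{Step 1 (The standard equivalence (i) $\Leftrightarrow$ (iii)).} Identifying $(|\dot{S}^n|, s_0)$ with the pointed sphere $(S^n, *)$, this is the classical fact that a pointed continuous map $f \colon (S^n,*) \to (X,x_0)$ represents the trivial element in $\pi_n(X,x_0)$ if and only if it admits a continuous extension to the $(n+1)$-disc $D^{n+1}$. For $n \geq 2$ this uses that higher homotopy groups are abelian and independent of basepoint up to canonical isomorphism, while for $n=1$ it uses that being trivial in $\pi_1$ is a conjugacy-invariant condition. Either direction is obtained by moving between a null-homotopy $F \colon S^n \times [0,1] \to X$ (with $F_0 = f$ and $F_1 \equiv x_0$) and an extension $\widetilde F \colon D^{n+1} \to X$, via the standard identification $D^{n+1} \cong (S^n \times [0,1])/(S^n \times \{1\})$ and the fact that the basepoint condition can be arranged by a standard reparametrization argument along a path.

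\textbf{Step 2 (The geometric equivalence (ii) $\Leftrightarrow$ (iii)).} The central observation is that the $(n+1)$-disc can be described, up to homeomorphism, as the quotient
\[
D^{n+1} \;\cong\; \bigl(|\Delta^n_n| \times [0,1]\bigr) \big/ \!\sim, \qquad (x,t) \sim (x,t') \text{ for all } x \in \partial|\Delta^n_n|,\ t,t' \in [0,1],
\]
whose boundary $|\dot{S}^n|$ is realized by $|\Delta^n_n| \times \{0\}$ (the northern hemisphere) and $|\Delta^n_n| \times \{1\}$ (the southern hemisphere, identified with $|\Delta^n_s|$ via $j_{n,s}$), glued along the collapsed equator $(\partial|\Delta^n_n| \times [0,1])/\!\sim$. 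Under this description, giving a continuous extension of $f$ over $D^{n+1}$ is tautologically the same as giving a continuous map $H \colon |\Delta^n_n| \times [0,1] \to X$ which factors through the equivalence relation above and satisfies $H_0 = f_n$ and $H_1 = f_s \circ j_{n,s}$. The factorization condition amounts precisely to $H$ being constant on $\{x\} \times [0,1]$ for each $x \in \partial|\Delta^n_n|$, i.e.\ to $H$ being a homotopy relative to $\partial|\Delta^n_n|$. Thus (ii) and (iii) are two different descriptions of the same piece of data.

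\textbf{Expected obstacle.} The only real subtlety lies in Step 1, in handling the basepoint $s_0$ carefully: one must check that an unbased extension to $D^{n+1}$ can be promoted to one compatible with the pointed structure, so that triviality in $\pi_n(X,x_0)$ (a \emph{pointed} notion) truly corresponds to the existence of \emph{any} extension. This is a routine but slightly tedious argument that uses a path in $X$ connecting $F(\text{center})$ to $x_0$ and a deformation concentrated in a small disc around the basepoint. The equivalence (ii) $\Leftrightarrow$ (iii) is entirely elementary once the homeomorphism above is written down, and no combinatorial work on the multicomplex structure of $\dot{S}^n$ is needed beyond noting that $|\Delta^n_n|$ and $|\Delta^n_s|$ are two standard $n$-discs glued along their common boundary.
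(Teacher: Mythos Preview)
Your proposal is correct. The paper explicitly leaves this proof to the reader (``The easy proof of the following lemma is left to the reader''), and your argument---reducing (i)~$\Leftrightarrow$~(iii) to the classical extension characterization of null-homotopy and identifying (ii)~$\Leftrightarrow$~(iii) via the quotient description $D^{n+1}\cong (|\Delta^n_n|\times[0,1])/(\partial|\Delta^n_n|\times[0,1]\to\partial|\Delta^n_n|)$---is exactly the standard route the authors have in mind.
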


We now prove that singular multicomplexes are complete. Hence, by Theorem~\ref{homotopy-weak-intro}, for
any good space $X$ there always exists a complete multicomplex $\mathcal{K}(X)$ that is weakly homotopic to $X$.

\begin{thm}\label{K(X)-compl}
Let $X$ be a good topological space. Then
the singular multicomplex $\mathcal{K}(X)$ is complete.
\end{thm}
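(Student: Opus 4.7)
The plan is to reduce completeness of $\calK(X)$ to the already-established fact (Theorem~\ref{homotopy-weak-intro}) that the natural projection $S_X \colon |\calK(X)| \to X$ is a weak homotopy equivalence. Let $f \colon |\Delta^n|\to|\calK(X)|$ be a continuous map whose restriction to $\partial|\Delta^n|$ is a simplicial embedding, and let $v_0,\ldots,v_n$ denote the vertices of $|\Delta^n|$. Because $f|_{\partial|\Delta^n|}$ is a simplicial embedding, the points $x_i:=f(v_i)\in X$ are pairwise distinct, so the composition $\tilde f := S_X\circ f\colon \Delta^n\to X$ is a singular simplex with distinct vertices and thus represents a well-defined simplex $\bar\sigma$ of $\calK(X)$. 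First I would introduce the canonical simplicial embedding $g\colon|\Delta^n|\to|\bar\sigma|\subseteq|\calK(X)|$ defined by $g(\sum_i t_i v_i)=(\bar\sigma,\sum_i t_i x_i)$, which satisfies $S_X\circ g=\tilde f$ and $g(v_i)=x_i$ for every $i$.

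The next step is to verify that $g$ coincides with $f$ on $\partial|\Delta^n|$. On the facet $F_i\subseteq\partial|\Delta^n|$ opposite $v_i$, the map $f$ restricts to a simplicial embedding onto some $(n-1)$-simplex $|\bar\tau_i|$ of $|\calK(X)|$. Parameterizing $F_i$ via the $i$-th face inclusion $h_i\colon\Delta^{n-1}\to\Delta^n$ and applying $S_X$, one reads off directly from the definition of the singular multicomplex that $\bar\tau_i$ is represented by $\tilde f\circ h_i$, whence $\bar\tau_i$ equals the face of $\bar\sigma$ opposite $x_i$. Consequently, both $f|_{F_i}$ and $g|_{F_i}$ are affine parameterizations of the same simplex $|\bar\tau_i|$, and since they already agree on the vertices of $F_i$ (both sending $v_j$ to $x_j$ for $j\neq i$), they coincide on all of $F_i$.

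For the final step, I would assemble $g$, $f$, and the stationary map $(y,t)\mapsto f(y)=g(y)$ on $\partial|\Delta^n|\times[0,1]$ into a single continuous map $\hat H\colon\partial(|\Delta^n|\times[0,1])\cong S^n\to|\calK(X)|$; producing the desired homotopy relative to $\partial|\Delta^n|$ is then equivalent to extending $\hat H$ over the disk $|\Delta^n|\times[0,1]$, i.e.~to showing $[\hat H]=0$ in $\pi_n(|\calK(X)|)$. Now the composition $S_X\circ\hat H$ admits the obvious extension $(x,t)\mapsto\tilde f(x)$, so it is null-homotopic in $X$; since $X$ is good, Theorem~\ref{homotopy-weak-intro} gives that $(S_X)_*\colon\pi_n(|\calK(X)|,\ast)\to\pi_n(X,\ast)$ is an isomorphism, and hence $\hat H$ itself is null-homotopic. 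Any nullhomotopy provides the sought-after $H$, and $g$ is a simplicial embedding by construction. The technical hinge is the identification of $\bar\tau_i$ with the appropriate face of $\bar\sigma$, which is precisely what the design of $\calK(X)$ (simplices with distinct vertices, modulo affine symmetries) is meant to facilitate; once this combinatorial point is in place, the homotopy-theoretic step is essentially formal thanks to Theorem~\ref{homotopy-weak-intro}.
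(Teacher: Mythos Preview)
Your proof is correct and follows essentially the same approach as the paper's: compose $f$ with the natural projection $S_X$ to obtain a singular simplex $\tilde f$ with distinct vertices, let $g$ be the characteristic map of the corresponding simplex $\bar\sigma\in\calK(X)$, check that $g$ and $f$ agree on $\partial|\Delta^n|$, glue them into a sphere whose image under $S_X$ is null-homotopic, and conclude via the weak homotopy equivalence of Theorem~\ref{homotopy-weak-intro} (the paper packages the last step using Lemma~\ref{lemma-the-following-3-cond-equivalent}, which is exactly your $\hat H$ argument). Your verification that $g|_{\partial|\Delta^n|}=f|_{\partial|\Delta^n|}$ is in fact more detailed than the paper's, which simply asserts that the boundary of $\bar\sigma$ coincides with the image of $f$.
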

\begin{proof}
Let $F \colon \lvert \Delta^{n} \rvert \rightarrow \lvert \mathcal{K}(X) \rvert$ be a continuous map whose restriction 
$f=F|_{\lvert \partial \Delta^{n} \rvert}$ is a simplicial embedding. Let also $S \colon \lvert \mathcal{K}(X) \rvert \rightarrow X$ be the natural projection. 
The composition $\sigma=S\circ F$ is a singular simplex in $X$ with distinct vertices. Its class (with respect to the action of $\mathfrak{S}_{n+1}$ on singular $n$-simplices with distinct vertices)
is therefore a simplex $\overline{\sigma}$ of $\calK(X)$, whose boundary coincides with the image of $f$. 
Therefore, if $\sigma'\colon |\Delta^n|\to |\calK(X)|$ is the unique characteristic map of $\overline{\sigma}$ which coincides with $f$ on  $\partial |\Delta^n|$,
by construction $\sigma'$ is a simplicial embedding such that
$S\circ \sigma'=S\circ F$. In order to conclude we are left to show that $\sigma'$ is homotopic to $F$ relative to $\partial |\Delta^n|$.
To this aim we can exploit Lemma~\ref{lemma-the-following-3-cond-equivalent} as follows.
Let $g\colon |\dot{S}^n|\to |\calK(X)|$ be the map obtained by setting $g=F$ on the northern emisphere of $|\dot{S}^n|$ and
$g=\sigma'$ on the southern emisphere of $|\dot{S}^n|$. Since $S\circ F=S\circ \sigma'$ we have that $S\circ g$ is null-homotopic in $X$. But Theorem~\ref{homotopy-weak-intro}
ensures that $S$ is a weak homotopy equivalence,
so $g$ is null-homotopic in $|\calK(X)|$, and this implies in turn that $F$ and $\sigma'$ are homotopic relative to the boundary of $|\Delta^n|$.
\end{proof}

We are now going to show that, for complete multicomplexes, the homotopy groups of the geometric realization are completely carried by simplicial special spheres. Just as for Kan simplicial sets (see for instance \cite{kan-comb, may, GJ})
this allows us to approach the study of the homotopy type of a complete multicomplex via combinatorial methods. We first need the following definitions, that will be used throughout the whole paper.

\begin{defn}\label{special:def}
Let $K$ be a multicomplex. Then, two $n$-dimensional simplices $\Delta_{1}$ and $\Delta_{2}$ of $K$ are said to be 
\textit{compatible} if the following conditions hold: the vertex sets of $\Delta_1$ and of $\Delta_2$ coincide; moreover, for every
proper subset $B$ of such vertex set, we have $\partial_B \Delta_1=\partial_B \Delta_2$. Equivalently,
$\Delta_1$ and $\Delta_2$ are compatible if 
$|\partial \Delta_{1}| = |\partial \Delta_{2}|$ in the geometric realization $|K|$. 
Being compatible is obviously an equivalence relation, and
following~\cite{Grom82}, for every simplex $\Delta_1$ of $K$ we  denote by $\pi(\Delta_1)$ the set of simplices of $K$ that are compatible with $\Delta_1$.

Two $n$-dimensional simplices $\Delta_{1}$ and $\Delta_{2}$ of $K$ are \textit{homotopic} if they are compatible and the
following condition holds: Let $i_j\colon |\Delta^n|\to |\Delta_j|$ be characteristic maps for $\Delta_1,\Delta_2$ such that
$i_1|_{\partial |\Delta^n|}=i_2|_{\partial |\Delta^n|}$. Then $i_1$ is homotopic to $i_2$ relative to ${\partial |\Delta^n|}$.
Being homotopic is also an equivalence relation among simplices of $K$.

Let $\Delta_{1}$ and $\Delta_{2}$ be two compatible $n$-simplices in $K$ and let 
$(x_0,\ldots,x_{n+1})$ be an ordering  of their vertices. We now define the (pointed) special sphere $\dot{S}^n(\Delta_1,\Delta_2)$ associated to $\Delta_1$ and $\Delta_2$ as follows: 
$$
\dot{S}^n(\Delta_1,\Delta_2)\colon (\dot{S}^n,s_0)\to (K,x_0)
$$
is the unique simplicial map such which is order-preserving on vertices and sends $\Delta^n_s$ to $\Delta^n_1$ and $\Delta^n_n$ to $\Delta^n_2$.
We will usually denote by $\dot{S^n}(\Delta_1,\Delta_2)$ also the induced map on geometric realizations, that may be interpreted as a map
$(S^n,*)\to (|K|,x_0)$, and thus defines an element of $\pi_n(|K|,x_0)$.

It readily follows from Lemma~\ref{lemma-the-following-3-cond-equivalent} that the compatible simplices $\Delta_1,\Delta_2$ are homotopic if and only if the associated special sphere
$\dot{S}^n(\Delta_1,\Delta_2)$ defines the trivial element of $\pi_n(|K|,x_0)$.
\end{defn}

The following result provides a nice combinatorial description of the homotopy groups of complete multicomplexes (compare with \cite[Section~I.11, page~60]{GJ} and \cite[Definition~3.6]{may}, where the case of simplicial sets is addressed).

\begin{thm}\label{complete:special}
Let $K$ be a complete multicomplex, and
 let $\Delta_0$ be an $n$-simplex of $K$. Also fix an ordering on the vertices of $\Delta_0$ and denote by $x_0$ the minimal vertex of $\Delta_0$.
 The map
 $$
 \Theta\colon \pi(\Delta_0)\to \pi_n(|K|,x_0)\, ,\qquad
 \Theta(\Delta)=\left[\dot{S}^n(\Delta_0,\Delta)\right]
 $$
 is surjective, and $\Theta(\Delta)=\Theta(\Delta')$ if and only if $\Delta$ is homotopic to $\Delta'$.
\end{thm}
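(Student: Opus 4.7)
The plan is to establish (i) that $\Theta(\Delta)=\Theta(\Delta')$ whenever $\Delta\sim\Delta'$, (ii) surjectivity, and (iii) the converse implication $\Theta(\Delta)=\Theta(\Delta')\Longrightarrow\Delta\sim\Delta'$. For (i), observe that the fixed ordering of the vertices of $\Delta_0$ (which coincides with the ordering induced on the vertices of any $\Delta\in\pi(\Delta_0)$ by compatibility) together with the ordering $s_0<\cdots<s_n$ on $V(\dot{S}^n)$ unambiguously pins down the pointed map $\dot{S}^n(\Delta_0,\Delta)$. Moreover, if $\Delta\sim\Delta'$ in the sense of Definition~\ref{special:def}, there exist characteristic maps of $\Delta,\Delta'$ which coincide on $\partial|\Delta^n|$ (equal to $\partial\iota_0$, where $\iota_0$ is the order-preserving characteristic map of $\Delta_0$) and are homotopic rel $\partial|\Delta^n|$. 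Gluing this rel-boundary homotopy on the northern hemisphere with the constant homotopy $\iota_0$ on the southern hemisphere produces a pointed homotopy $\dot{S}^n(\Delta_0,\Delta)\simeq\dot{S}^n(\Delta_0,\Delta')$, so $\Theta(\Delta)=\Theta(\Delta')$.

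For surjectivity, given $\alpha\in\pi_n(|K|,x_0)$ with pointed representative $f\colon(|\dot{S}^n|,s_0)\to(|K|,x_0)$, I will modify $f$ by a basepoint-preserving homotopy in two steps until it equals some $\dot{S}^n(\Delta_0,\Delta)$. In the first step I replace $f$ by a homotopic $f'$ with $f'|_{|\Delta^n_s|}=\iota_0$. This is possible because $(|\Delta^n_s|,s_0)$ admits a pointed contraction, so both $f|_{|\Delta^n_s|}$ and $\iota_0$ are null-homotopic rel $s_0$ and hence mutually homotopic rel $s_0$; applying the homotopy extension property of the CW pair $(|\dot{S}^n|,|\Delta^n_s|)$ (which keeps $s_0$ fixed, since the homotopy on $|\Delta^n_s|$ already does) yields $f'$. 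In the second step the restriction $f'|_{|\Delta^n_n|}\colon|\Delta^n|\to|K|$ is a continuous map whose boundary restriction is exactly the simplicial embedding $\partial\iota_0$; by completeness of $K$ it is homotopic rel $\partial|\Delta^n|$ to a simplicial embedding realizing some $n$-simplex $\Delta$ of $K$, which therefore has the same boundary as $\Delta_0$, so $\Delta\in\pi(\Delta_0)$. Extending this rel-boundary homotopy by the constant homotopy on the southern hemisphere produces a pointed homotopy $f\simeq f'\simeq\dot{S}^n(\Delta_0,\Delta)$, proving $\Theta(\Delta)=\alpha$.

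For the harder direction (iii), assume $\Theta(\Delta)=\Theta(\Delta')$; the case $\Delta=\Delta'$ being trivial, suppose $\Delta\neq\Delta'$. I would consider the finite submulticomplex $M\subseteq K$ whose top-dimensional simplices are precisely $\Delta_0,\Delta,\Delta'$, glued along their common boundary (which exists as a submulticomplex by compatibility). The three canonical sphere classes
$$a=[\dot{S}^n(\Delta_0,\Delta)],\quad b=[\dot{S}^n(\Delta_0,\Delta')],\quad c=[\dot{S}^n(\Delta,\Delta')]$$
lie in $\pi_n(|M|,x_0)$ and push forward to their namesakes in $\pi_n(|K|,x_0)$ under the inclusion. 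The claim is that $c=a^{-1}b$ in $\pi_n(|M|,x_0)$ (multiplicatively): for $n\geq 2$, the space $|M|$ is simply connected (three $n$-disks glued along $S^{n-1}$) and the cellular chain complex gives $H_n(|M|)\cong\mathbb{Z}^2$, so Hurewicz yields $\pi_n(|M|)\cong\mathbb{Z}^2$ with the additive relation $c=b-a$; for $n=1$, $|M|$ is a theta-graph whose fundamental group is free of rank two, and a direct computation with the three edge-loops based at $x_0$ produces the claimed relation. Pushing forward to $|K|$, the hypothesis $a=b$ gives $c=0$ in $\pi_n(|K|,x_0)$, which by Lemma~\ref{lemma-the-following-3-cond-equivalent} is exactly the condition $\Delta\sim\Delta'$.

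The main obstacle I anticipate is the careful bookkeeping in the surjectivity step: both the preliminary straightening of the southern hemisphere to $\iota_0$ and the subsequent straightening of the northern hemisphere must be carried out via HEP in a way that rigorously preserves the basepoint $s_0$ and produces an actual equality (not merely a homotopy) with the combinatorial map $\dot{S}^n(\Delta_0,\Delta)$. The relation $c=a^{-1}b$ in (iii) is elementary but splits into two separate verifications (Hurewicz for $n\geq 2$, a free-group computation for $n=1$), and some care with orientations is required so that the signs work out uniformly.
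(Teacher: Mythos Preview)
Your proof is correct and, for the surjectivity step, arguably cleaner than the paper's.  The paper does not straighten the southern hemisphere via the HEP; instead it builds $g'$ on the northern hemisphere by a pinch construction (collapse the boundary of a small inner simplex to obtain $|\Delta^n|\vee S^n$, send the disk factor to $\Delta_0$ and the sphere factor by the original $g$), and then invokes completeness.  Your two-step HEP-then-completeness argument avoids this extra geometric construction and makes the basepoint bookkeeping transparent.

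For the converse implication (iii) the paper is terse: it simply asserts the statement is ``an easy consequence of Lemma~\ref{lemma-the-following-3-cond-equivalent}'', implicitly using the cocycle identity $[\dot S^n(\Delta_0,\Delta')]=[\dot S^n(\Delta_0,\Delta)]\cdot[\dot S^n(\Delta,\Delta')]$ (which they write out later, in the proof of Lemma~\ref{phi:hom}, as coming ``from the very definition of sum in $\pi_i$'').  Your route through the auxiliary subcomplex $M$, Hurewicz for $n\geq 2$, and a free-group computation for $n=1$ is correct but heavier than needed: the identity $c=a^{-1}b$ can be checked directly by an elementary pointed homotopy in $|K|$ (insert and cancel a copy of $\Delta$ along the equator), uniformly in $n$, with no appeal to homology.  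Once that identity is in hand, Lemma~\ref{lemma-the-following-3-cond-equivalent} applied to $\dot S^n(\Delta,\Delta')$ finishes the argument exactly as you do.
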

\begin{proof}
 Let $g\colon (S^n,*)\to (|K|,x_0)$ be a continuous map. We are going to define a continuous map $g'\colon (|\dot{S}^n|,s_0)\to (|K|,x_0)$  representing the same element of $\pi_n(|K|,x_0)$ as $g$, and sending the southern
 hemisphere of $\dot{S}^n$ onto $\Delta_0$. On $|\Delta^n_s|$, 
 the map $g'$ is just the simplicial isomorphism  between $\Delta^n_s$ and $\Delta_0$ which preserves the ordering on vertices. On the northern hemisphere, $g'$ is defined as follows.
 We take an affine copy $|\overline{\Delta}|$ of the $n$-simplex inside $|\Delta^n_n|$, in such a way that one vertex of $|\overline{\Delta}|$ coincides with $x_0$, and all the other vertices
 of $|\overline{\Delta}|$ lie in the internal part of $|\Delta^n_n|$. We then define $X$ as the topological space obtained from $|\Delta^n_n|$ by collapsing $\partial |\overline{\Delta}|$. It is readily seen
 that $X$ is homeomorphic to the wedge of an $n$-simplex $\Delta'$ and an $n$-sphere $\overline{S}$ (see Figure~\ref{special:fig}). 
   \begin{figure}
 \begin{center}
 \input{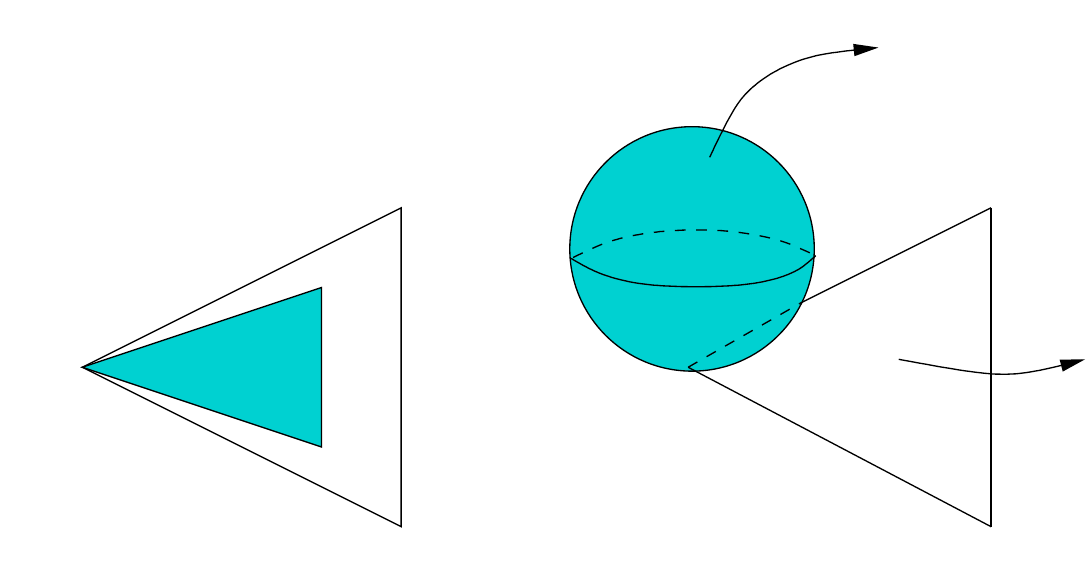_t}
   \caption{The construction of the map $g'$. We denote vertices by the name of their images in $|K|$.}\label{special:fig}
\end{center}
   \end{figure}

 We then define a map  $g''\colon X\to |K|$ as the wedge of the order-preserving affine identification between $\Delta'$ and $\Delta_0$ (where we order the vertices of $\Delta'$ 
as the corresponding ones in $\Delta^n_n$) and the map $g$ on $\overline{S}$, where we are identifying $\overline{S}$ with $S^n$ via an orientation-preserving homeomorphism which takes $*$ to $s_0$ (observe that
$\overline{S}$ inherits an orientation from the orientation of $|\Delta^n_n|$, whence of $\overline{\Delta}$). Finally, $g'\colon |\Delta^n_n|\to |K|$ is the composition of $g''$ with the quotient map
$|\Delta^n_n|\to X$.

By completeness, there exists a simplicial embedding $\Delta^n_n\to \Delta_1$ onto an $n$-simplex of $K$ which preserves the ordering on vertices and is  homotopic to $g'$ relative to
$\partial |\Delta^n_n|$. It is now easy to check that the simplicial special sphere $\dot{S}^n(\Delta_0,\Delta_1)$ defines the same element as $g$ in $\pi_n(|K|,x_0)$. In fact,
by construction such simplicial sphere is homotopic relative to $s_0$ to the wedge of the map $g$ and a map which coincides with $\Delta_0$ both on the nothern and on the southern hemisphere. This latter map
is clearly null-homotopic, and this concludes the proof of the surjectivity of $\Theta$.

The fact that $\Theta(\Delta)=\Theta(\Delta')$ if and only if $\Delta$ is homotopic to $\Delta'$ is an easy consequence of Lemma~\ref{lemma-the-following-3-cond-equivalent}.
\end{proof}

\begin{rem}\label{nospecial:rem}
 As explained in Question~\ref{question:whe}, at the moment we are not able to prove (or disprove) that the singular multicomplex $\calK(X)$ is weakly homotopy equivalent
 to $X$ for \emph{every} topological space $X$. However, even if this fact were true, other difficulties could arise when dealing with exotic topological spaces. 
 If $X$ is a good space, by combining Theorem~\ref{homotopy-weak-intro} with Theorem~\ref{complete:special} we can conclude that every element
 of $\pi_n(X)$, $n\geq 1$, is represented by a special sphere in $\calK(X)$. This result will play a fundamental role in the sequel, and 
 definitely cannot be true if we remove the assumption that $X$ is good. Indeed, let $X$ be a finite topological space 
 having the same weak homotopy type of the $2$-dimensional sphere $S^2$ (the fact that such a topological space exists is well known, see e.g.~\cite{mccord}). Let $n$
 be the number of points of $X$ (it is known that the smallest number of points in a finite topological space weakly homotopy equivalent to $S^m$ is $2m+2$, hence
 we may assume $n=6$). Then the set of vertices of the singular multicomplex $\calK(X)$ consists of $n$ points. In particular, if $k \geq n$, then 
  $\calK(X)$ cannot contain any  $k$-dimensional special sphere. But $\pi_k(X)=\pi_k(S^2)\neq 0$ for every $k \geq 2$~\cite{homotopy}, hence there is no hope
  that special spheres could carry the homotopy groups  $\pi_k(X)$ for $k\geq n$. 
    \end{rem}

\section{Simplicial approximation of continuous maps}

It is well known that, in the context of simplicial complexes and simplicial sets, up to suitably subdividing the domain, continuous maps can be assumed to be simplicial up to homotopy (see for instance \cite[Theorem~2C.1]{hatcher} and \cite{zeeman} for simplicial complexes and \cite[Section~4]{jardine} for simplicial sets).

Analogous results can be recovered for multicomplexes, and  by exploiting completeness we can obtain even stronger results.
Indeed, Proposition~\ref{approx-simpl} provides a controlled simplicial approximation for maps between multicomplexes, provided that the target is complete. In order to approximate continuous maps 
with simplicial ones, one usually needs to subdivide each simplex in the domain a number of times that depends on the simplex (and on the  map). 
This implies in particular that working with simplicial (co)chains rather than with singular ones has usually an uncontrolled cost in terms of the $\ell^1$ (resp.~$\ell^\infty$) seminorms
on homology (resp.~cohomology). A key property of complete multicomplexes is that they allow us to shift from singular (co)chains to simplicial ones without increasing
these seminorms (see in particular Theorem~\ref{isometry-lemma-intro}).

\begin{prop}\label{approx-simpl}
Let $L,K$ be multicomplexes, suppose that $K$ is complete, and let $f \colon \lvert L \rvert \rightarrow \lvert K \rvert$ be a continuous map
that is simplicial on $L^{0}$, i.e.~that maps vertices of $L$ to vertices of $K$. Suppose that, for every simplex $\sigma$ of $L$,
the map $f$ is injective on the vertices of $\sigma$, and that 
 $f$ is simplicial on a  submulticomplex $L_1$ of $L$.
 %and
%let $L_0$ be the 
%full submulticomplex of $L$ such that a simplex $\sigma$ of $L$ belongs to $L_0$ if and only if 
%$f$ is injective on the vertices of $\sigma$ (in particular, $L_0\supseteq L_1$).
 %Let $L'$ be the first barycentric subdivision of $L$ relative to $L_0$, and assume that $K$ is complete and large. 
 Then, there exists a non-degenerate simplicial map $f'\colon L\to K$
 whose geometric realization is homotopic to $f$ relative to $V(L)\cup L_1$
 (in particular, $f'|_{V(L)\cup L_1}=f|_{V(L)\cup L_1}$).
  \end{prop}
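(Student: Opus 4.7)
The proof goes by induction on the dimension of the skeleton of $L$, producing a sequence of continuous maps $f_n : |L| \to |K|$ with the following properties: $f_n$ is homotopic to $f$ relative to $V(L) \cup |L_1|$; the restriction of $f_n$ to $|L^n| \cup |L_1|$ is the geometric realization of a non-degenerate simplicial map; and $f_n$ coincides with $f_{n-1}$ on $|L^{n-1}| \cup |L_1|$. Granted such a sequence, the maps $f_n$ eventually stabilize on each simplex of $L$, so they define in the limit a continuous map $f' : |L| \to |K|$ induced by a non-degenerate simplicial map; a concatenation of the homotopies $f_{n-1} \simeq f_n$ (which are telescoping, in the sense that on any given simplex only finitely many are non-trivial) exhibits $f' \simeq f$ relative to $V(L) \cup |L_1|$.

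For the base case set $f_0 = f$, which works since by hypothesis $f$ is already simplicial on $L^0$ and on $L_1$. For the inductive step, assume $f_{n-1}$ has been constructed and consider an $n$-simplex $\sigma$ of $L$ not contained in $L_1$. By the inductive hypothesis, $f_{n-1}$ restricted to $\partial |\sigma|$ is simplicial; moreover, since $f_{n-1}$ agrees with $f$ on $V(L)$ and $f$ is injective on the vertices of $\sigma$ by assumption, this simplicial map on $\partial |\sigma|$ is in fact a simplicial embedding. Completeness of $K$ then yields a simplicial embedding $g_\sigma : |\sigma| \to |K|$ coinciding with $f_{n-1}$ on $\partial |\sigma|$ and homotopic to $f_{n-1}|_{|\sigma|}$ relative to $\partial |\sigma|$.

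By pasting the maps $g_\sigma$ on all $n$-simplices of $L$ not contained in $L_1$ with $f_{n-1}$ on $|L^{n-1}| \cup |L_1|$, one obtains a continuous map $h$ defined on $|L^n| \cup |L_1|$; the individual homotopies produced by completeness, each relative to its own $\partial |\sigma|$, are mutually compatible and glue to a homotopy between $f_{n-1}|_{|L^n| \cup |L_1|}$ and $h$ which is constant on $|L^{n-1}| \cup |L_1|$. The homotopy extension property of the CW-pair $(|L|, |L^n| \cup |L_1|)$ then extends this to a homotopy of $f_{n-1}$ on all of $|L|$, whose endpoint is the sought $f_n$. The main point to watch is that the approximation be non-degenerate on each simplex, so that $f'$ is a well-defined simplicial map at the end; this is precisely why we invoke the embedding version of completeness (Definition~\ref{completeness:defn}) rather than a weaker filling condition, and it is guaranteed at every stage by the hypothesis that $f$ be injective on the vertices of each simplex of $L$.
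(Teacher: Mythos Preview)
Your proof is correct and follows essentially the same approach as the paper: an induction over skeleta in which completeness is invoked on each new simplex not in $L_1$ to produce a simplicial embedding homotopic rel boundary to the current map, followed by the homotopy extension property to propagate to all of $|L|$, and a telescoping concatenation of the resulting homotopies at the end. The paper's write-up differs only in minor bookkeeping (it records the property ``$f'_n$ homotopic to $f'_{n-1}$ rel $|L|^{n-1}\cup L_1$'' rather than ``$f_n$ homotopic to $f$ rel $V(L)\cup|L_1|$'', and cites \cite[Proposition~11.2]{Strom} for the limiting homotopy).
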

\begin{proof}
%Without loss of generality we can assume that $K$ is connected, whence path connected. 
%Observe that the set of vertices of $L_0$ coincides with the set of vertices of $L$. 
%We say that a vertex of $L'$ is an $i$-vertex if it is the barycenter of an $i$-simplex
%of $L$ (but not of $L_0$, of course; in particular, there are no $0$-vertices). 
%Let us first define a map $g\colon V(L')\to V(K)$ such that $g$ is injective on any set of vertices of simplices in $L'$. 
%For every vertex $v$ of $L$ we set
%$g(v)=f(v)$.  
%Then, we define $g$ inductively on $i$-vertices as follows. 
%Since there are no $0$-vertices, we may suppose that $g$ is already defined on the set of $0$-vertices.
%If $v$ is an $(i+1)$-vertex, then by definition of relative barycentric subdivision $v$ is a vertex of a finite number of $k$-simplices of $L'$, $k\leq i+1$. 
%If $V_v\subseteq V(L')$ is the union of the set of vertices of these simplices, we let $g(v)$ be any vertex in $V(K)\setminus g(V_v)$ (observe that such a vertex exists because $K$ is large).
%By construction, the map $g$ satisfies the required properties. 
%Using the homotopy extension property for CW pairs (and the fact that $|K|$ is path connected) we can then homotope  $f \colon |L| \cong |L'| \to K$ (relative to $V(L)\cup L_1$) to a continuous map
%$f''\colon |L'|\to |K|$ which coincides with $g$ on $V(L')$. Using the completeness of $K$, we will now homotope $f''$ (relative to $V(L)\cup L_1$) to the desired map $f'$. 
We 
proceed
by induction on the skeleta of $L$. More precisely, for every $n\in\matN$ we  define maps $f'_n\colon |L|\to |K|$ with the following properties:
\begin{enumerate}
\item $f'_n$ is simplicial on the $n$-skeleton of $L$;
\item $f'_n$ is homotopic to $f'_{n-1}$ relative to $|L|^{n-1}\cup L_1$  (in particular, $f'_n|_{V(L)\cup L_1}=f'_{n-1}|_{V(L) \cup L_1}=f|_{V(L)\cup L_1}$ and 
$f'_n|_{|L|^{n-1}}=f'_{n-1}|_{|L|^{n-1}}$).
\end{enumerate}
We begin by setting $f'_0=f$. We then suppose that a map $f'_n$ as above is given, and we construct $f'_{n+1}$.
First of all we set $f'_{n+1}=f_n$ on $|L|^n$. Then we extend $f'_{n+1}$ to the $(n+1)$-skeleton as follows.
Let $\sigma$ be an $(n+1)$-simplex of $L$.
If $\sigma$ is a simplex of $L_1$, then we just set $f'_{n+1}(\sigma)=f(\sigma)$
(since this does not cause any ambiguity, we are now using the same symbol for a simplicial map and for the continuous map it induces on geometric realizations).
Otherwise, let $\alpha\colon |\Delta^{n+1}|\to |\sigma|$ be a characteristic map for $\sigma$. 
Since $f'_n$ is simplicial on $L^n$ and injective on the set  of vertices of $\sigma$, the restriction of
$f'_n\circ \alpha$ to $\partial |\Delta^{n+1}|$ is a simplicial embedding. By completeness of $K$, 
$f'_n\circ \alpha$ is homotopic relative to $\partial |\Delta^{n+1}|$ to a simplicial embedding of $\Delta^{n+1}$ onto a simplex $\sigma'$ of $K$. We then define
$f'_{n+1}(\sigma)=\sigma'$.
After repeating this procedure for all the $(n+1)$-simplices of $L$, we are left with a simplicial map $f'_{n+1}\colon |L|^{n+1}\to |K|$ which is homotopic
to $f'_n|_{ |L|^{n+1}}$ relative to  $|L|^{n}\cup |L_1|$. Using the homotopy extension property for CW pairs we can now extend $f'_{n+1}$ to the whole of $|L|$ in such a way that
conditions (1) and (2) above hold. 

We can now define the desired map $f'$ just by setting $f'(\sigma)=f_{n} (\sigma)$ for every $n$-simplex $\sigma$ of $L$. The fact that $f'$ is simplicial and non-degenerate is obvious,
and the fact that $f'$  
 is homotopic to $f$ relative to $V(L)\cup L_1$  follows from the fact that each $f'_n$ is, together with the fact that $f'_k|_{|L|^n}=f'_n|_{|L|^n}$ 
 for every $k\geq n$ (see e.g.~\cite[Proposition 11.2]{Strom}).
\end{proof}

We are now interested in turning continuous homotopies between simplicial maps into simplicial homotopies, at least when the target is a complete multicomplex.
To this aim we first define a structure of multicomplex on the product of a multicomplex times the interval. It is well known that products are more easily defined in the context of ordered
simplicial structures, like ordered simplicial sets. Nevertheless, it is easy to adapt to multicomplexes the definition of product between simplicial complexes 
(that, just as multicomplexes, are unordered simplicial structures). Our construction closely follows~\cite[Lemma 19.1]{munkres}. 

Let $I=[0,1]$. 
Let $\sigma_n$ denote the standard simplex $\Delta^n$, endowed with its natural structure of multicomplex (in particular, the vertices of $\sigma_n$ are \emph{not} ordered).
Of course, in order to define a multicomplex $K\times I$ it is sufficient to define a structure of multicomplex (i.e.~a suitable subdivision into non-degenerate simplices, which we will simply call \emph{triangulation}) 
on the product $\sigma_n \times I$ in such a way that the induced structure on each $(\partial_\alpha \sigma_n)\times I$,
where $\partial_\alpha \sigma_n$ denotes a facet of $\sigma_n$,
coincides with the structure of $\sigma_{n-1}\times I$. Indeed, we will endow $\sigma_n \times I$ with a structure of simplicial complex. By gluing the simplicial complexes
arising from the simplices of $K$ we will then obtain the multicomplex $K\times I$.

We proceed by induction. For $n=0$ we just define $\sigma_0\times I$ as the first barycentric subdivision of $I$.
We then suppose to have already triangulated 
$\sigma_{n-1}\times I$ in such a way that $\sigma_{n-1}\times \{0\}$ and $\sigma_{n-1}\times \{1\}$ are not subdivided (i.e.~they still appear as simplices of the multicomplex structure on
$\sigma_{n-1}\times I$). We then consider the triangulation of the geometric boundary $\partial (\sigma_{n}\times I)$ given by the union of the triangulations
of $(\partial_\alpha \sigma_{n})\times I\cong \sigma_{n-1}\times I$, where $\partial_\alpha\sigma_n$ varies among all the facets of $\sigma_n$, and of $\sigma_n\times\{0\}$ and $\sigma_n\times \{1\}$. 
We define the desired triangulation of $\sigma_n\times I$ by taking the cone of the triangulation of $\partial (\sigma_{n}\times I)$ over an internal point.

\begin{defn}\label{product:def}
 Let $K$ be a multicomplex. We define the multicomplex $K\times I$ as the multicomplex 
 obtained by gluing a copy of the multicomplex $\sigma\times I$ described above for every simplex $\sigma$ of $K$,
 according to the boundary maps of $K$ (and to the structure of $\sigma\times I$ as a multicomplex). 
 The geometric realization of $K\times I$ may be identified with $|K|\times [0,1]$, and 
 there are simplicial embeddings
 $$
i_0,i_1\colon K\hookrightarrow K\times I
 $$
 that induce the identifications $|K|\cong |K|\times \{0\}$, $|K|\cong |K|\times \{1\}$, respectively. We denote by $K\times \{0\}$, $K\times \{1\}$ the images of these embeddings.
By definition, they are submulticomplexes of $K\times I$ isomorphic to $K$. 

It is easily seen that a simplicial map $f\colon K\to L$ between multicomplexes gives rise to a simplicial map $f\times {\rm Id}\colon K\times I\to L\times I$ whose geometric realization
is just the usual product $f\times {\rm Id}\colon |K|\times [0,1]\to |L|\times [0,1]$. In particular, if $K_0$ is a submulticomplex of $K$, then the inclusion $K_0\hookrightarrow K$
induces a natural inclusion of $K_0\times I$ in $K\times I$. We will often denote just by $K_0\times I$ the submulticomplex of $K\times I$ obtained as the image of this inclusion.
\end{defn}

\begin{defn}
 Let $f_0,f_1\colon L\to K$ be simplicial maps between multicomplexes. We say that a simplicial map $F\colon L\times I\to K$ is a \emph{simplicial homotopy} between $f_0$ and $f_1$ (and that 
 $f_0,f_1$ are simplicially homotopic) if $F\circ i_0=f_0$, $F\circ i_1=f_1$, where $i_j\colon L\to L\times I$ is the canonical embedding onto $L\times \{j\}$ described in Definition~\ref{product:def}.
 We say that the homotopy $F$ is non-degenerate if $F$ is non-degenerate as a simplicial map (which implies that $f_0,f_1$ are non-degenerate too).
\end{defn}

As we may expect, simplicially homotopic maps induce the same maps on simplicial (bounded) (co)homology:

\begin{lemma}\label{homotopy-homology}
 Let $f_0,f_1\colon L\to K$ be simplicially homotopic simplicial maps between multicomplexes, and let 
 $$\begin{array}{ll} H_n(f_i)\colon &H_n(L;R)\to H_n(K;R)\ ,\\ H^n(f_i)\colon & H^n(K;R)\to H^n(L;R)\ ,\\
 H^n_b(f_i)\colon & H^n_b(K;R)\to H^n_b(L;R) \end{array} $$ be the induced maps on (bounded) (co)homology. 
 Then 
 $$
 H_n(f_0)=H_n(f_1)\, ,\quad
 H^n(f_0)=H^n(f_1)\, ,\quad
 H^n_b(f_0)=H_b^n(f_1)\, .
 $$
\end{lemma}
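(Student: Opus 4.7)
The plan is to construct, once and for all, a chain homotopy between the chain maps induced by the canonical simplicial embeddings $i_0, i_1 \colon L \to L \times I$, and then deduce the statement for $f_0, f_1$ by post-composition with $C_*(F)$. Since $f_j = F \circ i_j$, any $R$-linear chain homotopy $P_n \colon C_n(L;R) \to C_{n+1}(L\times I;R)$ satisfying
\[
\partial P_n + P_{n-1}\partial \;=\; C_n(i_1) - C_n(i_0)
\]
yields, after composition with $C_*(F)$, a chain homotopy between $C_*(f_0)$ and $C_*(f_1)$, giving at once $H_n(f_0) = H_n(f_1)$ and $H^n(f_0) = H^n(f_1)$. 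For the bounded statement, it will suffice to check that each $P_n$ is bounded with respect to the $\ell^1$-norm on reduced chains; this will be immediate since $P_n(\sigma)$ is supported on the fixed finite triangulation of $|\sigma|\times I$ of Definition~\ref{product:def}, so the number of algebraic simplices appearing depends only on $n$. The dual operator $P_n^*$ will then give a well-defined cochain homotopy on bounded cochains, which, precomposed with $C_b^*(F)$, proves $H^n_b(f_0) = H^n_b(f_1)$.

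I will construct $P_n$ by induction on $n$, working in the reduced chain complex (to which Theorem~\ref{reduced:equivalence} allows us to transfer the conclusions) and exploiting the inductive coning structure of Definition~\ref{product:def}: the triangulation of $|\sigma|\times I$ is obtained by coning the (inductively triangulated) boundary $\partial(|\sigma|\times I)$ over an interior vertex $b_n(\sigma)$ of $L\times I$. Recall the standard cone identity $\partial(b * c) = c - b * \partial c$, valid on reduced chains for every chain $c$ of positive dimension supported away from the apex $b$. I set $P_0(\sigma)$ to be the signed sum of the two edges in the subdivision of $\sigma_0\times I$ joining $b_0(\sigma)$ to $i_1(\sigma)$ and $i_0(\sigma)$, and for $n\geq 1$ I define inductively
\[
P_n(\sigma) \;=\; b_n(\sigma)\,*\,\Bigl( C_n(i_1)(\sigma) \,-\, C_n(i_0)(\sigma) \,-\, P_{n-1}(\partial \sigma) \Bigr).
\]
The argument of the outer cone lies in $\partial(|\sigma|\times I)$: the first two terms on $|\sigma|\times\{0,1\}$, and $P_{n-1}(\partial\sigma)$ on $\partial|\sigma|\times I$ by inductive hypothesis, so the cone operation is meaningful in $L\times I$. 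Applying $\partial$ to the displayed formula, then using the cone identity, the naturality $\partial\, C_n(i_j) = C_{n-1}(i_j)\,\partial$ and the inductive chain homotopy identity for $P_{n-1}$ (which makes $\partial(C_n(i_1)(\sigma) - C_n(i_0)(\sigma) - P_{n-1}(\partial\sigma))$ vanish), one verifies directly that $\partial P_n + P_{n-1}\partial = C_n(i_1) - C_n(i_0)$, as required.

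The main obstacle is carefully interpreting the expression $b_n(\sigma)*(-)$ in the multicomplex setting: a priori a multicomplex may contain several simplices with the same vertex set, so one must check that for each $n$-simplex $\tau$ appearing in the boundary triangulation of $|\sigma|\times I$ there is a \emph{unique} $(n+1)$-simplex of $L\times I$ with vertex set $\{b_n(\sigma)\}\cup V(\tau)$, which is precisely the cone $b_n(\sigma)*\tau$. This uniqueness follows directly from the inductive, cone-based construction in Definition~\ref{product:def}. Equally, one has to be careful about orderings of vertices — since multicomplexes are unordered, the cleanest formulation is in the reduced chain complex $C_*(L\times I;R)_\red$, where each non-degenerate simplex has a unique representative up to sign, and Theorem~\ref{reduced:equivalence} ensures the equivalence with the non-reduced setting. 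Once this bookkeeping is settled, the verification of the chain homotopy identity and of the boundedness of $P_n$ reduces to the formal calculations sketched above.
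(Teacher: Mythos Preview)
Your proposal is correct and follows essentially the same approach as the paper: reduce to constructing a chain homotopy between $C_*(i_0)$ and $C_*(i_1)$, build it by sending each simplex $\sigma$ to a signed sum of the $(n+1)$-simplices triangulating $|\sigma|\times I$, and observe that this operator is bounded in each degree because the triangulation of $\sigma_n\times I$ has a fixed finite number of top-dimensional simplices depending only on $n$. The paper simply cites Munkres for the construction of $P_n$, whereas you spell out the inductive cone formula adapted to the specific triangulation of Definition~\ref{product:def}; the content is the same.
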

\begin{proof}
 As usual, it is sufficient to show that the inclusions $i_0,i_1\colon L\hookrightarrow L\times I$ induce chain maps
 $C_*(i_0),C_*(i_1)\colon C_*(L;R)\to C_*(L\times I;R)$ that are algebraically homotopic via a homotopy which is bounded (with respect
 to the $\ell^1$-norm) in every degree. Such a homotopy may be constructed by sending each simplex $\sigma$ of $L$ to the sum (with signs)
 of the simplices triangulating the subset $\sigma\times I\subseteq L\times I$ (see \cite[Theorems 12.5, 13.3 and 19.2]{munkres} for the details). 
\end{proof}

\begin{rem}\label{homotopy-uniform}
 In the proof of the previous lemma we have observed  that 
 simplicially homotopic maps 
 are algebraically homotopic via a homotopy which is bounded in every degree.
 In fact, in the sequel we will be working with (possibly infinite) families of homotopic maps, and in that context it will prove useful to know that
 algebraic homotopies coming from simplicial homotopies have uniformly bounded norms. Indeed, the proof of Lemma~\ref{homotopy-homology} (compare with \cite[Theorems 12.5 and 19.2]{munkres}) makes clear that the following holds: 
 let $C_n$ be the number of $n$-simplices in the the multicomplex $\sigma_{n-1}\times I$ described above, where again $\sigma_{n-1}$ denotes the standard simplex $\Delta^{n-1}$ with its natural structure of multicomplex.
 Then, for every pair
 $f_0,f_1\colon L\to K$ of simplicially homotopic simplicial maps between multicomplexes, the induced maps $f_i^*\colon C^*_b(K)\to C^*_b(L)$, $i=0,1$, on bounded cochains are algebraically homotopic
 via a homotopy $T^*\colon C^*_b(K)\to C^{*-1}_b(L)$ such that $\|T^n\|_\infty\leq C_n$.
\end{rem}

\begin{defn}
A multicomplex $K$ is \emph{large} if every connected component of $K$ contains infinitely many vertices. 
\end{defn}

One of the main features of completeness is that it allows to turn topological homotopies into simplicial homotopies:

\begin{Lemma}[Homotopy Lemma]\label{homotopy-lemma}
Let $L,K$ be multicomplexes, and suppose that $K$ is large and complete.
Let $f_0,f_1\colon L\to K$ be non-degenerate simplicial maps that induce homotopic maps $|L|\to |K|$ on geometric realizations. 
Then $f_0,f_1$ are simplicially homotopic via a non-degenerate homotopy. 
%If $L_0$ is a submulticomplex of $L$ and
%the homotopy between the geometric realization is constant (whence, simplicial) on $|L_0|\times I$, then also the simplicial homotopy between $f_0$ and $f_1$ may be chosen to be constant on
%$L_0\times I$.
\end{Lemma}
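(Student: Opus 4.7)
The plan is to realize the topological homotopy between $|f_0|$ and $|f_1|$ as a continuous map defined on the geometric realization of the product multicomplex $L \times I$ of Definition~\ref{product:def}, to deform it relative to the two lids $L \times \{0\} \cup L \times \{1\}$ so that it satisfies the hypotheses of the simplicial approximation statement Proposition~\ref{approx-simpl}, and finally to invoke that proposition to manufacture the required non-degenerate simplicial homotopy.

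More precisely, I would fix a continuous homotopy between $|f_0|$ and $|f_1|$ and view it, via the canonical identification $|L \times I| \cong |L| \times [0,1]$, as a continuous map $H \colon |L \times I| \to |K|$. Set $L_1 = (L \times \{0\}) \cup (L \times \{1\})$; then $H$ restricted to $L_1$ coincides with $|f_0| \sqcup |f_1|$, and is therefore already simplicial and non-degenerate there. The vertex set of $L \times I$ splits as $V(L \times \{0\}) \sqcup V(L \times \{1\}) \sqcup W$, where $W$ consists of the cone-points $c_\sigma$ introduced by the inductive construction of $\sigma \times I$ in Definition~\ref{product:def}, one for each simplex $\sigma$ of $L$.

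The heart of the argument is to deform $H$ rel $L_1$ to a continuous map $H' \colon |L \times I| \to |K|$ which sends every $c \in W$ to a vertex of $K$ and is injective on the vertex set of every simplex of $L \times I$. The key structural observation, proved by a straightforward induction on the dimension of $\sigma$, is that no simplex of the triangulation of $\sigma \times I$ simultaneously contains a vertex of $\sigma \times \{0\}$ and a vertex of $\sigma \times \{1\}$: any boundary simplex is either entirely contained in one of the two lids or it lies inside the already subdivided $\tau \times I$ for some facet $\tau$ of $\sigma$, and any top-dimensional simplex is a cone from the center of $\sigma \times I$ over such a boundary simplex. Hence the injectivity constraint at a given center $c_\sigma$ only involves avoiding the images under $f_0$ (or under $f_1$) of vertices on a single lid that share a simplex with $c_\sigma$, together with the chosen images of the other centers $c_\tau$ sharing a simplex with $c_\sigma$. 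Using that $K$ is large, that is, every connected component of $K$ has infinitely many vertices, one picks inductively $H'(c_\sigma) \in V(K)$ inside the connected component of $|K|$ that contains $H(c_\sigma)$ so as to fulfil all these constraints. Once the vertex-choice function has been fixed, one deforms $H$ to $H'$ by exploiting the path-connectedness of the components of $|K|$ together with the homotopy extension property of the CW pair $(|L \times I|, V(L \times I) \cup |L_1|)$.

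At this point Proposition~\ref{approx-simpl} applies to $H'$ with $L_1$ as the prescribed submulticomplex on which the map is already simplicial; simpliciality on vertices and injectivity on vertex sets of simplices hold by construction. The output is a non-degenerate simplicial map $F \colon L \times I \to K$ whose geometric realization is homotopic to $H'$, hence to the original $H$, relative to $V(L \times I) \cup L_1$. In particular $F$ agrees with $f_0$ on $L \times \{0\}$ and with $f_1$ on $L \times \{1\}$, so $F$ is the sought non-degenerate simplicial homotopy. The main technical obstacle is precisely the inductive vertex-choice: although every simplex of $L \times I$ has finitely many vertices, a single center $c_\sigma$ may be contained in a large family of simplices (one contribution from each $\eta \supseteq \sigma$ in $L$), so the set of forbidden values at $c_\sigma$ can be substantial. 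It is exactly the largeness of $K$, combined with the structural fact that $L \times \{0\}$ and $L \times \{1\}$ are never jointly contained in a single simplex of $L \times I$, that ensures the relevant component of $V(K)$ is not exhausted and the induction can be carried out.
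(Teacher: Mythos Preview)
Your proposal is correct and follows essentially the same approach as the paper's proof: realize the homotopy as a map on $|L\times I|$, use the structural fact that no simplex of $L\times I$ meets both lids together with the largeness of $K$ and the homotopy extension property to adjust the map so that it is injective on the vertex set of every simplex while remaining equal to $f_0,f_1$ on the two lids, and then invoke Proposition~\ref{approx-simpl}. Your account is more detailed than the paper's (you spell out the vertex structure of $\sigma\times I$ and the inductive vertex-choice), but the strategy is identical; the only point worth tightening is to say explicitly that the induction proceeds by $\dim\sigma$, so that at the moment of choosing $H'(c_\sigma)$ only the finitely many faces $\tau\subsetneq\sigma$ and the finitely many lid vertices of $\sigma$ impose constraints, making the ``substantial'' forbidden set actually finite at each step.
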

\begin{proof}
Let $F\colon |L|\times I \to |K|$ be a continuous homotopy between the geometric realizations of $f_0$ and $f_1$. 
The structure of $L\times I$ as a multicomplex is such that no simplex of $L\times I$ can have vertices both on $L\times \{0\}$ and on $L\times \{1\}$. Therefore,
 by exploiting as usual the homotopy extension
property for CW pairs, the largeness of $K$ and  the fact that $f_0,f_1$ are non-degenerate, we can modify $F$ (without altering its behaviour on $|L|\times \{0,1\}$) 
in such a way that, if $\sigma$ is a simplex of $L\times I$, then the restriction of $F$ to the vertices
of $\sigma$ is injective. 
By applying Proposition~\ref{approx-simpl} (with $f=F$ and $L_1=L\times\{0\}\cup L\times\{1\}$) we obtain
a non-degenerate simplicial map $F'\colon L\times I$ which agrees with $f_i$ on $L\times \{i\}$. This concludes the proof. 
\end{proof}

\section{Minimal multicomplexes}\label{minimal:section}
In this section we will introduce the notion of \textit{minimal multicomplex}.
In complete multicomplexes, every simplicial embedding of $\partial \Delta^n$ which continuously extends over $|\Delta^n|$ may be deformed into
a simplicial embedding of $\Delta^n$ itself. In minimal multicomplexes such a deformation, when possible, leads to a \emph{unique} choice for the resulting simplicial embedding
(see Definition~\ref{minimal:def:1} for a precise formulation of this condition).
In some sense, minimal multicomplexes do not contain redundant simplices, i.e.~simplices that do not enrich the homotopy type of the geometric realization.  
We will show how to replace a complete multicomplex $K$ with a submulticomplex $L$ which is both complete and minimal, without affecting 
the homotopy type of the geometric realization (see Theorem \ref{exist-min}). This fact will prove very useful: indeed, we have already seen in Theorems \ref{homotopy-weak-intro} and \ref{K(X)-compl} that every good topological space $X$ is weakly homotopy equivalent
to the singular multicomplex $\calK(X)$, which is complete. Replacing $\calK(X)$ with a complete minimal submulticomplex will allow us to
dramatically reduce 
the number of simplices  needed to exhibit a multicomplex with the same homotopy type as $X$. 

The notion of miminality for multicomplexes is 
 very similar to the classical definition of minimality for simplicial sets (and for $\Delta$-complexes), and Theorem~\ref{exist-min} extends to multicomplexes
 the classical result that every Kan complex contains a minimal complex with the same homotopy type (see for 
instance \cite[Section~4]{eil-zil} and~\cite[Definition 9.1 and Theorem 9.5]{may}):

%\todo{Anche qui sopra, aggiungere qualche dettaglio alle citazioni?}

\begin{Definizione}\label{minimal:def:1}
A multicomplex $K$ is said to be \emph{minimal} if the following condition holds. Let $f \colon \lvert \Delta^{n} \rvert \rightarrow \lvert K \rvert$ be a continuous map  whose restriction to the boundary 
$f \vert_{\partial |\Delta^{n}|} \colon \partial |\Delta^{n}| \rightarrow |K|$ is a simplicial embedding. Then $f$ is homotopic relative to $\partial |\Delta^{n}|$ to at most one simplicial embedding $f' \colon |\Delta^{n}| \rightarrow |K|$.
In particular, $K$ is minimal and complete if $f$ is homotopic relative to $\partial |\Delta^{n}|$ to exactly one simplicial embedding $f' \colon |\Delta^{n}| \rightarrow |K|$.
\end{Definizione}

The following lemma readily follows from the definitions (see Definition~\ref{special:def} for the notion of homotopic simplices): 

\begin{lemma}\label{caratterizzazione-minimality-per-confronto-ammissibili}
 Let $K$ be a multicomplex. Then $K$ is minimal if and only if it does not contain any pair of distinct homotopic simplices.
\end{lemma}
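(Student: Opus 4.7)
The plan is essentially to unwind the definitions, observing that the notion of ``homotopic simplices'' (Definition~\ref{special:def}) is the combinatorial shadow of precisely the uniqueness condition appearing in Definition~\ref{minimal:def:1}. The only nontrivial bookkeeping is to check that a characteristic map of a simplex in a multicomplex is determined by its restriction to the boundary (which in turn is determined by the bijection between the vertices of $\Delta^n$ and the vertex set of the simplex). This uses in an essential way the fact that simplices in multicomplexes have distinct vertices, so that the labelling of the vertices on the boundary fixes the whole characteristic map.

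For the forward implication, I would assume $K$ is minimal and take two homotopic $n$-simplices $\Delta_1,\Delta_2$ of $K$. By definition of compatibility these simplices have the same vertex set $A$ and the same codimension-one faces. Fix characteristic maps $i_j\colon |\Delta^n|\to |\Delta_j|$ with $i_1|_{\partial|\Delta^n|}=i_2|_{\partial|\Delta^n|}$ (such a choice exists because compatibility determines each $i_j$ on the boundary once we pick a bijection between the vertices of $\Delta^n$ and $A$). By the definition of ``homotopic simplices'' we have $i_1\simeq i_2$ rel $\partial|\Delta^n|$. Now setting $f=i_1$, both $i_1$ and $i_2$ are simplicial embeddings homotopic to $f$ rel $\partial|\Delta^n|$, so the minimality hypothesis forces $i_1=i_2$ as maps, and hence $\Delta_1=\Delta_2$.

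For the backward implication, assume $K$ contains no pair of distinct homotopic simplices and let $f\colon|\Delta^n|\to|K|$ be continuous with $f|_{\partial|\Delta^n|}$ a simplicial embedding. Suppose $f',f''\colon|\Delta^n|\to|K|$ are simplicial embeddings homotopic to $f$ rel $\partial|\Delta^n|$. Then $f'|_{\partial|\Delta^n|}=f''|_{\partial|\Delta^n|}=f|_{\partial|\Delta^n|}$, and $f'\simeq f''$ rel $\partial|\Delta^n|$. Denoting by $\Delta',\Delta''$ the $n$-simplices of $K$ whose characteristic maps are $f'$ and $f''$ respectively, these two simplices have the same geometric boundary, hence the same vertex set and the same codimension-one faces, so they are compatible. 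The homotopy between $f'$ and $f''$ then witnesses that $\Delta'$ and $\Delta''$ are homotopic in the sense of Definition~\ref{special:def}, so by hypothesis $\Delta'=\Delta''$. Since $f'$ and $f''$ are now two characteristic maps for the same simplex whose restrictions to $\partial|\Delta^n|$ coincide (and characteristic maps are determined by the labelling of vertices on the boundary), we conclude $f'=f''$, as required by minimality.

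I do not anticipate any genuine obstacle; the only subtle point is the claim that a characteristic map of a simplex in a multicomplex is determined by its restriction to the boundary. This is not quite the observation that simplicial maps are determined by their values on vertices, but rather the observation that for an $n$-simplex of a multicomplex the characteristic map $\Delta^n\to|\Delta_j|$ is unique once one specifies the bijection between the vertices of $\Delta^n$ and the (pairwise distinct!) vertices of $\Delta_j$; and this bijection is read off from the restriction to $\partial|\Delta^n|$. Both directions of the lemma are then immediate verifications.
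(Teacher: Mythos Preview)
Your proof is correct and is precisely the direct verification from the definitions that the paper has in mind; the paper in fact omits the proof entirely, stating only that the lemma ``readily follows from the definitions.'' Your observation that a characteristic map of a simplex in a multicomplex is determined by its restriction to the boundary (equivalently, by its values on the vertices) is the one small point worth making explicit, and you have handled it correctly.
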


In the sequel we will make use of the following elementary fact about minimal multicomplexes:

\begin{Lemma}\label{min-allbutoneface}
Let $K$ be a minimal multicomplex and let $\sigma$ and $\tau$ be two $n$-simplices of $K$  that share all but at most one facets. 
Then, the set of facets of  $\sigma$ and  $\tau$ coincide, i.e.~$\sigma$ and $\tau$ are compatible. 
\end{Lemma}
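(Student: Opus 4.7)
The plan is to work with the characterization of minimality given by Lemma~\ref{caratterizzazione-minimality-per-confronto-ammissibili}: a multicomplex is minimal precisely when it contains no two distinct compatible simplices that are homotopic in the sense of Definition~\ref{special:def}. If $\sigma$ and $\tau$ share all $n+1$ facets they are compatible by definition, so the essential case is when they share exactly $n$ facets. The $n$ shared facets correspond to $n$ distinct $n$-element subsets of $A_\sigma \cap A_\tau$; since $|A_\sigma| = |A_\tau| = n+1$, the only way this can happen for $n \geq 2$ is $A_\sigma = A_\tau$ (the borderline case $n = 1$ reduces to an automatically compatible configuration once the common vertex set is assumed, since the facets of a $1$-simplex are exactly its two vertices). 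Set $A := A_\sigma = A_\tau$ and let $e := \partial_{A \setminus \{v_0\}} \sigma$, $e' := \partial_{A \setminus \{v_0\}} \tau$ be the possibly distinct facets, both opposite to some common vertex $v_0 \in A$.

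Next I would verify that $e$ and $e'$ are compatible as $(n-1)$-simplices. They share the vertex set $A \setminus \{v_0\}$ by construction. For any proper subset $B \subsetneq A \setminus \{v_0\}$ one has $|B| \leq n - 1$, so there exists $v_i \in A$ with $i \neq 0$ and $v_i \notin B$; in particular $B \subseteq A \setminus \{v_i\}$. Since the facet $\partial_{A \setminus \{v_i\}} \sigma = \partial_{A \setminus \{v_i\}} \tau$ is shared, transitivity of the boundary maps gives
$$
\partial_B e \;=\; \partial_B \sigma \;=\; \partial_B\bigl(\partial_{A \setminus \{v_i\}} \sigma\bigr) \;=\; \partial_B\bigl(\partial_{A \setminus \{v_i\}} \tau\bigr) \;=\; \partial_B \tau \;=\; \partial_B e',
$$
so $e$ and $e'$ are compatible.

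The final step is to show that $e$ and $e'$ are homotopic in the sense of Definition~\ref{special:def}, after which Lemma~\ref{caratterizzazione-minimality-per-confronto-ammissibili} immediately forces $e = e'$ and hence the compatibility of $\sigma$ and $\tau$. The key geometric observation is that $|\sigma| \cup |\tau| \subseteq |K|$ is homeomorphic to a topological $n$-disc. Indeed, the two $n$-simplices are glued along the union of the $n$ shared facets, all of which contain $v_0$; this union is a copy of the simplicial horn $\Lambda^n_0$ (the subcomplex of $\partial \Delta^n$ obtained by removing the facet opposite $v_0$) sitting inside each of $\partial |\sigma|$ and $\partial |\tau|$, and each such copy is an $(n-1)$-disc. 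Gluing two $n$-discs along an $(n-1)$-disc in their boundaries produces an $n$-disc whose boundary is $|e| \cup |e'| \cong S^{n-1}$. The special sphere $\dot{S}^{n-1}(e, e')$ has image $|e| \cup |e'|$ and factors through this contractible $n$-disc, so by Lemma~\ref{lemma-the-following-3-cond-equivalent} the simplices $e$ and $e'$ are homotopic.

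The main technical obstacle is the geometric claim that $|\sigma| \cup |\tau|$ is a topological $n$-disc: this requires carefully identifying the union of the $n$ shared facets with the simplicial horn $\Lambda^n_0$ and invoking the standard fact that gluing two $n$-discs along an $(n-1)$-disc in their boundaries again yields an $n$-disc. Once this is in hand, the argument reduces to routine bookkeeping with boundary transitivity and an application of the minimality characterization.
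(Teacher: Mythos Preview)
Your proof is correct and follows the same approach as the paper's: show that the two possibly-distinct facets $e = \partial_{A\setminus\{v_0\}}\sigma$ and $e' = \partial_{A\setminus\{v_0\}}\tau$ are compatible and homotopic, then invoke Lemma~\ref{caratterizzazione-minimality-per-confronto-ammissibili}. The paper's own proof is extremely terse (it simply asserts that $\partial_n\sigma$ and $\partial_n\tau$ are homotopic ``by construction''), so your explicit verification of compatibility via boundary-transitivity and your geometric justification---that $|\sigma|\cup|\tau|$ is an $n$-disc bounded by $|e|\cup|e'|$, hence the special sphere $\dot S^{n-1}(e,e')$ is null-homotopic---fill in exactly what the paper leaves implicit.

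One small remark: your parenthetical about $n=1$ is slightly muddled. For $n=1$ the hypothesis ``share all but at most one facets'' only forces a common vertex, not a common vertex set, so the statement is actually false in that case (take two edges of a path on three vertices). This is harmless since the lemma is only applied in the paper to simplices of dimension $\geq 2$, where your argument that $A_\sigma = A_\tau$ goes through cleanly.
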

\begin{proof}
Let us order the facets of $\sigma$ and $\tau$ in such a way that $\partial_i\sigma=\partial_i\tau$ for every $i=0,\ldots,n-1$, while possibly
$\partial_{n} \sigma\neq \partial_{n} \tau$. 
By construction, the $(n-1)$-simplices $\partial_{n} \sigma$ and $\partial_{n} \tau$ are homotopic. Thus, they coincide
 by Lemma \ref{caratterizzazione-minimality-per-confronto-ammissibili}.
\end{proof}

\begin{Esempio}\label{ex-not-minimal}
Of course, every simplicial complex is minimal as a multicomplex. Also the simplest multicomplex that is not a simplicial complex is  minimal:
if the multicomplex $L$ consists only of two vertices joined by two distinct edges, then $L$ is minimal (but not complete, see  Example~\ref{ex-compl-multi} (1)).
By contrast, the simplicial cone $C(L)$ over $L$ is not minimal: its geometric realization $|C(L)|$ is homeomorphic to a disc, whose boundary may be identified with $|L|$, 
so the two edges of $L$ are homotopic in $C(L)$. 
\end{Esempio}

The following result, which is an immediate corollary of 
Theorem~\ref{complete:special}, provides a combinatorial description of the homotopy groups of complete and minimal multicomplexes. It provides an analogue for multicomplexes
of a classical result for simplicial sets (see e.g.~\cite[Section I.11, page~ 60]{GJ}).

\begin{thm}\label{complete:minimal:special}
Let $K$ be a complete minimal  multicomplex, and
 let $\Delta_0$ be an $n$-simplex of $K$. Also fix an ordering on the vertices of $\Delta_0$ and denote by $x_0$ the minimal vertex of $\Delta_0$.
 The map
 $$
 \Theta\colon \pi(\Delta_0)\to \pi_n(|K|,x_0)\, ,\qquad
 \Theta(\Delta)=\left[\dot{S}^n(\Delta_0,\Delta)\right]
 $$
 is a bijection.
\end{thm}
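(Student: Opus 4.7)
The plan is to deduce Theorem~\ref{complete:minimal:special} as an immediate consequence of Theorem~\ref{complete:special} together with the minimality hypothesis, using the combinatorial characterization of minimality provided by Lemma~\ref{caratterizzazione-minimality-per-confronto-ammissibili}.

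First, I would observe that since $K$ is complete, Theorem~\ref{complete:special} already applies to $K$ and gives two pieces of information: the map $\Theta$ is surjective, and for two compatible simplices $\Delta, \Delta' \in \pi(\Delta_0)$, one has $\Theta(\Delta) = \Theta(\Delta')$ if and only if $\Delta$ and $\Delta'$ are homotopic in the sense of Definition~\ref{special:def}. Thus surjectivity of $\Theta$ is inherited directly from the complete case, and only injectivity needs to be addressed.

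For injectivity, suppose $\Delta, \Delta' \in \pi(\Delta_0)$ satisfy $\Theta(\Delta) = \Theta(\Delta')$. By the characterization above, $\Delta$ and $\Delta'$ are compatible (they both lie in $\pi(\Delta_0)$) and homotopic as simplices of $K$. Now invoke Lemma~\ref{caratterizzazione-minimality-per-confronto-ammissibili}, which asserts that in a minimal multicomplex there are no two distinct homotopic simplices. This forces $\Delta = \Delta'$, establishing injectivity and thus completing the proof.

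There is no genuine obstacle here: the theorem is essentially a packaging result that combines the surjectivity and the homotopy-classification part of Theorem~\ref{complete:special} with the defining combinatorial feature of minimality. The only thing worth double-checking is that the notion of ``homotopic simplices'' used in the statement of Theorem~\ref{complete:special} (via Lemma~\ref{lemma-the-following-3-cond-equivalent}, i.e.\ triviality of the associated special sphere $\dot{S}^n(\Delta,\Delta')$) agrees with the one appearing in Lemma~\ref{caratterizzazione-minimality-per-confronto-ammissibili}; but both are precisely the relation of Definition~\ref{special:def}, so this match is automatic.
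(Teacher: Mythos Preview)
Your proposal is correct and matches the paper's approach exactly: the paper states Theorem~\ref{complete:minimal:special} as an immediate corollary of Theorem~\ref{complete:special} without writing out a proof, and your argument (surjectivity from Theorem~\ref{complete:special}, injectivity from minimality via Lemma~\ref{caratterizzazione-minimality-per-confronto-ammissibili}) is precisely the intended one-line deduction.
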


Being a bijection, the map $\Theta$ described in the previous theorem induces a group structure on $\pi(\Delta_0)$. This group structure will be described in terms of the action
of particular groups of simplicial automorphisms of $K$ in the next chapter (see e.g.~Lemma~\ref{phi:hom}). 

Theorem~\ref{complete:minimal:special} suggests that it would be useful 
to replace (possibly in a canonical way) a complete multicomplex with a complete and minimal one, 
of course without affecting its homotopy type. The following theorem provides a statement in this direction, and generalizes to the context
of multicomplexes analogous results for simplicial sets (see e.g.~\cite[Theorems 9.5 and 9.8]{may}).

\begin{thm}\label{exist-min}
Let $K$ be a  complete multicomplex. Then, there exists a submulticomplex $L\subseteq K$ with the following properties:
\begin{enumerate}
 \item $L$ is complete and minimal;
 \item $L$ has the same vertices as $K$;
 \item there exists a simplicial retraction $r\colon K\to L$;
 \item the geometric realization of $r$ realizes $|L|$ as a strong deformation retract of $|K|$ (in particular, the inclusion of $|L|$ in $|K|$ is a homotopy equivalence).
\end{enumerate}
Moreover, such a submulticomplex $L$ is unique up to simplicial isomorphism.
\end{thm}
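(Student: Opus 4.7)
The plan is to construct $L$ skeleton by skeleton by picking one representative from each homotopy class of $n$-simplices whose boundary already lies in the previously chosen $(n-1)$-skeleton, and then to build the simplicial retraction $r$ together with a strong deformation homotopy $H$ by a parallel induction on skeleta that exploits the completeness of both $K$ and $L$ at each stage.

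First I would set $L^{0}:=K^{0}$ and, assuming $L^{n-1}$ has been defined as a submulticomplex of $K^{n-1}$, consider the relation on the set of $n$-simplices of $K$ whose boundary lies in $L^{n-1}$ given by being homotopic in the sense of Definition~\ref{special:def} (recall that homotopic simplices are automatically compatible, hence share the same boundary). Using the Axiom of Choice I would pick a single representative from each equivalence class to form the set of $n$-simplices of $L^{n}$. Then $L=\bigcup_{n}L^{n}$ is a submulticomplex of $K$ because the boundary of every chosen representative lies in $L^{n-1}$ by construction, and Lemma~\ref{caratterizzazione-minimality-per-confronto-ammissibili} immediately yields minimality since distinct simplices of $L^{n}$ belong to distinct equivalence classes. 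Completeness of $L$ is then a direct consequence of the completeness of $K$: given $f\colon|\Delta^{n}|\to|L|$ whose restriction to $\partial|\Delta^{n}|$ is a simplicial embedding, completeness of $K$ yields a simplex $\sigma\in K$ with $\partial\sigma=f|_{\partial|\Delta^{n}|}\subseteq L^{n-1}$ and $\sigma\simeq f$ rel $\partial|\Delta^{n}|$, so the chosen representative $\tau\in L^{n}$ of the homotopy class of $\sigma$ is also homotopic to $f$ rel boundary.

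Next I would construct $r\colon K\to L$ and $H\colon|K|\times I\to|K|$ (with $H_{0}=\id_{|K|}$, $H_{1}=i\circ r$, and $H_{t}|_{|L|}=\id_{|L|}$ for all $t$) simultaneously by induction on the skeleta. On $K^{0}=L^{0}$ set $r=\id$ and $H$ constant. Given the construction on $K^{n-1}$, for every $n$-simplex $\sigma$ of $K$ the fact that $r$ fixes all vertices makes $r|_{\partial\sigma}\colon\partial\sigma\to L^{n-1}$ a simplicial embedding. Gluing $\sigma$ on the bottom face of the topological cylinder $|\Delta^{n}|\times I$ with the already-built homotopy $H|_{|\partial\sigma|\times I}$ on the lateral boundary and viewing the union as a continuous map from a topological $n$-disk to $|K|$, I obtain $g\colon|\Delta^{n}|\to|K|$ with $g|_{\partial|\Delta^{n}|}=r|_{\partial\sigma}$. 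Completeness of $K$ produces a simplex $\sigma'\in K$ with $\partial\sigma'=r|_{\partial\sigma}\subseteq L^{n-1}$ homotopic to $g$ rel boundary; letting $\tau\in L^{n}$ be the chosen representative of the homotopy class of $\sigma'$, I set $r(\sigma):=\tau$ and extend $H$ over $|\sigma|\times I$ by concatenating the three homotopies $\sigma\simeq g$, $g\simeq\sigma'$, and $\sigma'\simeq\tau$. When $\sigma$ already belongs to $L^{n}$ the construction returns $r(\sigma)=\sigma$ automatically, since $H$ is inductively constant on $|\partial\sigma|$ and $\sigma$ is the representative of its own class. The result is a non-degenerate simplicial retraction $r$, whose faces match by construction, together with a strong deformation retraction $H$ of $|K|$ onto $|L|$.

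For uniqueness, suppose $L,L'\subseteq K$ are two submulticomplexes satisfying the conclusion, with retractions $r,r'$. Since $L^{0}=L'^{0}=K^{0}$, the restrictions $r|_{L'}\colon L'\to L$ and $r'|_{L}\colon L\to L'$ are the identity on vertices, and their compositions are simplicial self-maps of $L$ and $L'$ homotopic to the identity on geometric realizations via the restrictions of $H$ and $H'$. A skeleton-by-skeleton argument, adapted from the classical proof for minimal Kan simplicial sets and exploiting the combinatorial classification of homotopy classes given by Theorem~\ref{complete:minimal:special}, shows that any such self-map of a complete minimal multicomplex is necessarily a simplicial isomorphism: minimality forces each homotopy class to contain at most one simplex, and the homotopy to the identity forces $\varphi(\sigma)$ to lie in the same class as $\sigma$. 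Combining the two isomorphisms yields the desired $L\cong L'$.

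The main technical obstacle will be the simultaneous inductive construction of $r$ and $H$ in the second step: one must ensure that the fills produced by completeness of $K$ and the subsequent replacement by the chosen representatives in $L$ are mutually compatible across simplices sharing faces, so that $r$ is genuinely simplicial and $H$ glues continuously over all of $|K|$. Minimality of $L$ is precisely what defuses this potential difficulty, because the replacement $\tau$ depends only on the homotopy class of $\sigma'$, which is in turn determined by the already-constructed data on $\partial\sigma$.
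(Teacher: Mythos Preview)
Your construction of $L$, your construction of $r$ and $H$, and your uniqueness argument are all essentially the same as the paper's. There is, however, one genuine gap in your argument for completeness of $L$.

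You write: given $f\colon|\Delta^{n}|\to|L|$ with simplicial boundary, completeness of $K$ yields $\sigma\in K$ with $\sigma\simeq f$ rel $\partial|\Delta^{n}|$, and then the chosen representative $\tau\in L^{n}$ of the homotopy class of $\sigma$ is ``also homotopic to $f$ rel boundary.'' But both of these homotopies (the one from $f$ to $\sigma$, and the one from $\sigma$ to $\tau$) take place in $|K|$, not in $|L|$. Completeness of $L$ demands that $f$ and $\tau$ be homotopic rel boundary \emph{inside} $|L|$. This is not automatic: a priori $|L|$ could sit badly inside $|K|$ so that homotopies in the ambient space do not descend. The paper resolves this by reversing your order: it first constructs the simplicial retraction $r$ and the strong deformation retraction $H$, and only then proves completeness, by composing the $|K|$-homotopy between $f$ and $\tau$ with $r$ to push it into $|L|$ (since $r$ fixes $|L|$ pointwise, the result is still a homotopy between $f$ and $\tau$, and it is still rel boundary because $r$ fixes $f(\partial|\Delta^{n}|)\subseteq|L|$). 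Since you do build $r$ and $H$ anyway, your proof is easily repaired by postponing the completeness claim until after that construction, but as written the direct argument is incomplete.

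A minor expository point: in your construction of $H$ over $|\sigma|\times I$, the phrase ``concatenating the three homotopies $\sigma\simeq g$, $g\simeq\sigma'$, $\sigma'\simeq\tau$'' is imprecise, since the first of these is not a homotopy but rather the identification $(|\sigma|\times\{0\})\cup(|\partial\sigma|\times I)\cong|\Delta^{n}|$. What you are really doing is filling the boundary sphere $\partial(|\sigma|\times I)$, which is null-homotopic precisely because $g\simeq\tau$ rel $\partial|\Delta^{n}|$; equivalently, this is the homotopy extension property for the pair $(|\sigma|,|\partial\sigma|)$, which is how the paper phrases it.
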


\begin{proof}
We construct $L$ inductively on the dimension of simplices as follows. The set of vertices of $L$ is equal to the set of vertices of $K$, i.e.~$L^0=K^0$. Once $L^n$ has been constructed,
we define $L^{n+1}$ by adding to $L^n$ one $(n+1)$-simplex for each homotopy class of $(n+1)$ simplices of $K$ whose facets are all contained in $L^n$. With a slight abuse,
we denote by $i_n\colon L^n\to K^n$ both the simplicial inclusion of $L^n$ in $K^n$ and the induced inclusion on geometric realizations.

Let us first prove that $|L|$ is a strong deformation retract of $|K|$. 
By~\cite[Proposition 11.2]{Strom} it is sufficient to construct,
for every $n\in\matN$, a map $r_n\colon |K|\to |K|$
and a homotopy $h_n\colon |K|\times [0,1]\to |K|$ between $r_n$ (at time $0$) and $r_{n+1}$ (at time $1$) satisfying the following properties: $r_0$ is the identity of $|K|$;
$r_n|_{|K^n|}$ is a simplicial retraction of $|K^n|$ onto $|L^n|$; $r_{n+1}|_{|K^n|}=r_n|_{|K^n|}$; $h_{n+1}(x,t)=r_{n}(x)$ for every $x\in |K^{n}|$.
%and a homotopy $h_n\colon |K^n|\times [0,1]\to |K^n|$ between the identity of $K^n$ and $i_n\circ r_n$ in such a way that $r_n|_{K^{n-1}}=r_{n-1}$ and
%(up to rescaling the variable in $[0,1]$) $h_{n}|_{K^{n-1}\times [0,1]}=h_{n-1}$.

For $n=0$ we set $r_0={\rm Id}$. Having defined $r_n$ and $h_{n-1}$, we then set $r_{n+1}|_{|K^n|}=r_n$ and we define $r_{n+1}$ on 
$(n+1)$-simplices as follows.
Let $\sigma$ be an $(n+1)$-simplex
of $K$.
%and let $f_\sigma\colon |\Delta^{n+1}|\to |K|$ be a characteristic map for $\sigma$. 
Our inductive hypothesis ensures that the restriction of $r_n$ 
%$r_n\circ f_\sigma$
to ${\partial |\sigma |}$ is a simplicial embedding with values in $|L|$ (recall that $r_n=r_0={\rm Id}$ on the $0$-skeleton of $K$).
By completeness of $K$ and by definition of $L$, there exists a homotopy $h_\sigma\colon |\sigma|\times [0,1]\to |K|$
such that $h_\sigma (x,0)=x$ for every $x\in |\sigma|$, $h(x,t)=x$ for every $x\in\partial |\sigma|$, and
$h_\sigma(\cdot, 1)$ is an affine isomorphism between $|\sigma|$ and an $(n+1)$-simplex $r_{n+1}(\sigma)$ of $L$. 
Observe that, by construction, $r_{n+1}(\sigma)=\sigma$ if $\sigma$ is already a simplex of $L$
(and in this case the homotopy  $h_\sigma$ may be chosen to be constant). 
After repeating this construction for every $(n+1)$-simplex of $K$ we end up with well-defined maps $r_{n+1}\colon K^{n+1}\to L^{n+1}$ and $h_{n}\colon |K^{n+1}|\times [0,1]\to |L^{n+1}|$, which satisfy all the required
properties, except that they are not defined on the whole of $|K|$ and $|K|\times [0,1]$. 
However, 
thanks to the homotopy extension property  for CW pairs, we can then extend these maps to the required maps $r_{n+1}\colon |K|\to |L|$ and 
$h_{n+1}\colon |K|\times [0,1]\to |K|$. This concludes the construction of the strong deformation retraction $r$. Also observe that $r$ is simplicial.

%We now define $r\colon K\to L$ by setting $r(\sigma)=r_n(\sigma)$ for every $n$-simplex $\sigma$ of $K$. Using that $r_m|_{K^n}=r_n|_{K^n}$ for very $m\geq n$ one easily check that
%$r$ is a well-defined simplicial map; moreover, $r|_L$ is the identity of $L$ by construction. Finally, in order 

 In order to conclude we now need to show that $L$ is minimal and complete (and unique up to isomorphism).
 Minimality of $L$ is obvious: If two simplices are homotopic in $L$, then \emph{a fortiori} they are homotopic in $K$. But by construction $L$ does not contain any pair of distinct simplices
 that are homotopic in $K$, and this implies that $L$ is minimal. In order to show that $L$ is complete we make use of the retraction we have just constructed: if $f\colon |\Delta^n|\to |L|$ is a continuous
 map whose restriction to $\partial |\Delta^n|$ is a simplicial embedding, then by completeness of $K$ there exists a simplicial embedding $f'\colon |\Delta^n|\to |K|$ which is homotopic (in $|K|$) to $f$ relative
 to $\partial |\Delta^n|$. Since $f(\partial |\Delta^n|)\subseteq |L|$, by definition of $L$ we can choose $f'$ to have image in $|L|$. In order to conclude we now need to observe that, by composing the homotopy in $|K|$
 between $f$ and $f'$ with the retraction $r$, we obtain a homotopy between $f$ and $f'$ (relative to $\partial |\Delta^n|$) in $|L|$. This concludes the proof that $L$ is complete. 
 
 Finally, observe that, if $L_1$ and $L_2$ are complete and minimal multicomplexes as in the statement, then by composing the inclusion of $L_1$ in $K$ with the retraction of
 $K$ to $L_2$ we obtain a homotopy equivalence which is a bijection on vertices. Uniqueness of $L$ up to simplicial isomorphisms is then a consequence
 of Proposition~\ref{uniq:prop} below. 
\end{proof}

\begin{prop}\label{uniq:prop}
Let $L_{1}$ and $L_{2}$ be complete  minimal multicomplexes. Let $f \colon L_{1} \rightarrow L_{2}$ be a simplicial map which is bijective on the 0-skeletons. 
Also suppose that the geometric realization  $\lvert {f} \rvert\colon |L_1|\to |L_2|$ of $f$ is a homotopy equivalence. Then $f$ is a simplicial isomorphism.
\end{prop}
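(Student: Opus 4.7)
The plan is to proceed by induction on dimension, showing that $f$ is a bijection on the set of $n$-simplices for every $n$. Once this is achieved, since a simplicial map between multicomplexes is determined by what it does on simplices and on boundary maps (and it automatically commutes with boundary maps), such $f$ will be a simplicial isomorphism. The base case $n=0$ is the hypothesis. I first observe that $f$ is automatically non-degenerate: if $\sigma$ is an $n$-simplex of $L_1$ with vertex set $A$ of cardinality $n+1$, then $f(A)$ has cardinality $n+1$ as well (since $f$ is injective on vertices), so $f(\sigma)$ is genuinely an $n$-simplex of $L_2$. Hence $f$ restricts to maps $f_n\colon (L_1)^n_{\mathrm{simpl}} \to (L_2)^n_{\mathrm{simpl}}$ on the sets of $n$-simplices.

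For the inductive step, assume $f_k$ is a bijection for every $k\le n-1$. For injectivity of $f_n$, suppose $\sigma,\sigma'$ are $n$-simplices with $f(\sigma)=f(\sigma')$. Since $f$ is bijective on vertices, $\sigma$ and $\sigma'$ share the same vertex set. Moreover, for every proper subset $B$ of this vertex set, $f(\partial_B\sigma)=\partial_B f(\sigma)=\partial_B f(\sigma')=f(\partial_B\sigma')$, so the inductive injectivity of $f_{|B|-1}$ yields $\partial_B\sigma=\partial_B\sigma'$. Thus $\sigma$ and $\sigma'$ are compatible. Consider the special sphere $\dot{S}^n(\sigma,\sigma')\colon(\dot{S}^n,s_0)\to(|L_1|,x_0)$ obtained from Definition~\ref{special:def}. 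Composing with $|f|$ yields $\dot{S}^n(f(\sigma),f(\sigma'))$, which maps both hemispheres to the same simplex $f(\sigma)=f(\sigma')$ via identical maps; hence it is null-homotopic by Lemma~\ref{lemma-the-following-3-cond-equivalent} (take the two hemispheres to be the same map). Since $|f|_\ast$ is an isomorphism on $\pi_n$, also $[\dot{S}^n(\sigma,\sigma')]=0$ in $\pi_n(|L_1|,x_0)$. By Theorem~\ref{complete:minimal:special} applied to $L_1$ (with basepoint simplex $\sigma$), the map $\Theta\colon\pi(\sigma)\to\pi_n(|L_1|,x_0)$ is a bijection sending $\sigma'$ to $[\dot{S}^n(\sigma,\sigma')]=0=\Theta(\sigma)$, forcing $\sigma=\sigma'$.

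For surjectivity of $f_n$, let $\tau$ be an $n$-simplex of $L_2$. By the inductive bijectivity, there is a unique simplicial embedding $b\colon\partial\Delta^n\to L_1$ with $f\circ b = \partial\tau$ (viewed as simplicial embeddings). I first produce an $n$-simplex $\sigma_0$ of $L_1$ whose boundary equals the image of $b$. Let $g\colon|L_2|\to|L_1|$ be a homotopy inverse of $|f|$. The composition $g\circ|\tau|\colon|\Delta^n|\to|L_1|$ satisfies $(g\circ|\tau|)|_{\partial|\Delta^n|}=g\circ|f|\circ|b|\simeq|b|$. Using the homotopy extension property of the CW pair $(|\Delta^n|,\partial|\Delta^n|)$, I homotope $g\circ|\tau|$ to a continuous map $F\colon|\Delta^n|\to|L_1|$ with $F|_{\partial|\Delta^n|}=|b|$. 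Completeness of $L_1$ then yields an $n$-simplex $\sigma_0$ of $L_1$ with $\partial\sigma_0=b(\partial\Delta^n)$, and consequently $f(\sigma_0)$ and $\tau$ are compatible simplices of $L_2$.

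To conclude, I  promote $\sigma_0$ to an actual preimage of $\tau$ using the homotopy-theoretic bijections. Order the vertices of $\sigma_0$ and let $x_0$ be its minimal vertex. By Theorem~\ref{complete:minimal:special}, the maps
\[
\Theta_1\colon\pi(\sigma_0)\to\pi_n(|L_1|,x_0),\qquad \Theta_2\colon\pi(f(\sigma_0))\to\pi_n(|L_2|,f(x_0))
\]
are bijections. Since $f$ preserves boundaries, it restricts to a map $\pi(\sigma_0)\to\pi(f(\sigma_0))$, and for every $\sigma'\in\pi(\sigma_0)$ one has
\[
\Theta_2(f(\sigma'))=[\dot{S}^n(f(\sigma_0),f(\sigma'))]=|f|_\ast[\dot{S}^n(\sigma_0,\sigma')]=|f|_\ast\Theta_1(\sigma').
\]
Because $|f|_\ast$ is a bijection on $\pi_n$, the restriction $f\colon\pi(\sigma_0)\to\pi(f(\sigma_0))$ is itself a bijection, so there is a (unique) $\sigma\in\pi(\sigma_0)$ with $f(\sigma)=\tau$. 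This completes the induction. The main obstacle I expect is the surjectivity step, which requires combining completeness of $L_1$ (to produce some preimage of the boundary) with the $\pi_n$-bijection coming from minimality via Theorem~\ref{complete:minimal:special} (to upgrade a preimage of $\partial\tau$ to a preimage of $\tau$); the injectivity step and the non-degeneracy observation are comparatively formal.
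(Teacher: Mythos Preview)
Your proof is correct and follows essentially the same approach as the paper's: induction on dimension, using compatibility plus the $\pi_n$-isomorphism and minimality for injectivity, and completeness plus the bijection of Theorem~\ref{complete:minimal:special} for surjectivity. Your version is slightly more explicit in a couple of places (the non-degeneracy observation, and the use of a homotopy inverse together with the homotopy extension property to extend $b$ over $|\Delta^n|$), but the structure and key ideas are the same.
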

\begin{proof}
We prove by induction on $n\in\matN$ that $f$ is bijective on the set of $n$-simplices. The case $n=0$ holds by hypothesis, so we may suppose that
$f$ restricts to an isomorphism between $L_1^{n-1}$ and $L_2^{n-1}$.  Let $\sigma,\sigma'$ be $n$-simplices of $L_1$ such that $f(\sigma)=f(\sigma')$. Since $f$ is injective on
$L_1^{n-1}$ we have that $\sigma,\sigma'$ are compatible. Moreover, the special sphere $\dot{S}^n(\sigma,\sigma')$ is taken by $f$ onto a null-homotopic special sphere. Since $f$ is a  homotopy equivalence, by Lemma~\ref{lemma-the-following-3-cond-equivalent}
this implies that $\sigma$ and $\sigma'$ are homotopic, whence equal to each other by minimality of $L_1$. This shows that the restriction of $f$ to $L_1^n$ is injective. 

Let now $\sigma$ be an $n$-simplex of $L_2$ %with characteristic map $j_2\colon |\Delta^n|\to |\sigma|\subseteq |L_2|$.
and let 
$j_2\colon \partial\Delta^n\to L_2$ be a  simplicial isomorphism between
$\partial\Delta^n$ and $\partial\sigma$. Since $f$ restricts to an isomorphism between $L_1^{n-1}$ and $L_2^{n-1}$, there exists a simplicial embedding 
$j_1\colon \partial\Delta^n\to L_1$ such that $j_2=f\circ j_1$. Since $|f|$ is a  homotopy equivalence, from the fact that $\partial\sigma$ bounds a simplex in $L_2$
we deduce that the geometric realization of $j_1$ extends to a continuous map on $|\Delta^n|$. By completeness of $L_1$, this implies that there exists a simplex $\Delta_0$ such that $f(\partial \Delta_0)=\partial \sigma$.
Let now $x_0$ be a vertex of $\Delta_0$. Being a  homotopy equivalence, the map $|f|$ induces an isomorphism between $\pi_n(|L_1|,x_0)$ and $\pi_n(|L_2|,f(x_0))$. By Theorem~\ref{complete:minimal:special}, this implies in turn that
$f$ induces a bijection between $\pi(\Delta_0)$ and $\pi(\sigma)$ (recall that $\pi(\Delta)$ is the set of simplices compatible with $\Delta$). In particular, $\sigma$ lies in the image of $f$, and this concludes the proof.
\end{proof}

Theorem~\ref{exist-min} shows that, if $K$ is a complete multicomplex, then minimal and complete submulticomplexes of $K$ 
whose inclusion in $K$ defines a homotopy equivalence 
are unique up to isomorphism. Therefore, with a slight abuse we will often refer to such a submulticomplex as to  \emph{the} minimal multicomplex associated to $K$.

\begin{rem}
 Suppose that $K$ is large. Then, thanks to Lemma~\ref{homotopy-lemma} the statement of Theorem~\ref{exist-min} may be strengthened by requiring that $L$ is a simplicial deformation
 retract of $K$, i.e.~that there exists a non-degenerate simplicial retraction $r\colon K\to L$ which is simplicially homotopic to  the identity of $K$. However, we will not use this fact in the sequel.
\end{rem}

We will be particularly interested in studying the minimal multicomplex associated to the singular multicomplex of a topological space. Therefore, we introduce the following:

\begin{defn}
Let $X$ be a good topological space. We know from Theorem~\ref{K(X)-compl} that the multicomplex $\calK(X)$ is complete. We then denote by $\calL(X)$ the minimal multicomplex associated to $\calK(X)$. 
\end{defn}

\begin{cor}\label{cor-assoc-min-e-compl-to-X}
Let $X$ be a good topological space, 
and let $i\colon |\calL(X)|\to |\calK(X)|$ be the inclusion. Then the composition
$$
S \circ i \colon |\calL(X)| \to X
$$
is a weak homotopy equivalence. 
Moreover, if $X$ is a CW complex, then $S \circ i$ is a homotopy equivalence.
\end{cor}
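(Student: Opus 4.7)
The plan is to assemble the statement directly from results already proved in the excerpt. By Theorem \ref{K(X)-compl}, for a good space $X$ the singular multicomplex $\calK(X)$ is complete, so Theorem \ref{exist-min} applies to it and produces the complete minimal submulticomplex $\calL(X)\subseteq \calK(X)$ together with a simplicial retraction $r\colon \calK(X)\to \calL(X)$ exhibiting $|\calL(X)|$ as a strong deformation retract of $|\calK(X)|$. In particular the inclusion $i\colon |\calL(X)|\to |\calK(X)|$ is a (strong) homotopy equivalence.

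Next I would invoke Theorem \ref{homotopy-weak-intro} (which applies precisely because $X$ is assumed good) to conclude that the natural projection $S\colon |\calK(X)|\to X$ is a weak homotopy equivalence. Since the composition of a homotopy equivalence with a weak homotopy equivalence is again a weak homotopy equivalence, it follows that $S\circ i\colon |\calL(X)|\to X$ is a weak homotopy equivalence, which gives the first assertion.

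For the second assertion, assume that $X$ is a CW complex. Then Corollary \ref{weak-CW} strengthens the conclusion of Theorem \ref{homotopy-weak-intro}, so that $S$ is already a genuine homotopy equivalence (not merely a weak one). Composing with the homotopy equivalence $i$ yields that $S\circ i$ is a homotopy equivalence, as desired.

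There is essentially no obstacle: the result is a clean corollary of the two main constructions of the chapter (Theorem \ref{homotopy-weak-intro} for the comparison with $X$, and Theorem \ref{exist-min} for the passage from $\calK(X)$ to its minimal model $\calL(X)$), together with the completeness of $\calK(X)$ provided by Theorem \ref{K(X)-compl}. The only minor point worth mentioning explicitly is that Theorem \ref{exist-min} requires $\calK(X)$ to be complete in order to guarantee the existence of the retraction onto $\calL(X)$, and this is exactly what Theorem \ref{K(X)-compl} furnishes when $X$ is good.
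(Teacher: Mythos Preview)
Your proof is correct and follows essentially the same approach as the paper: both combine Theorem~\ref{homotopy-weak-intro} (for $S$) with Theorem~\ref{exist-min} (for $i$) to handle the first assertion. For the second assertion the paper applies Whitehead's Theorem directly to the composition $S\circ i$ (a weak equivalence between CW complexes), whereas you go through Corollary~\ref{weak-CW} to upgrade $S$ itself and then compose; this is a harmless variation, since Corollary~\ref{weak-CW} is itself just Whitehead applied to $S$.
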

\begin{proof}
%Let $X$ be a good topological space. We know from 
Theorems~\ref{homotopy-weak-intro} and~\ref{exist-min} imply that
the natural
projection $S \colon|\calK(X)|\to X$ 
and the inclusion $i \colon |\calL(X)|\to |\calK(X)|$ are weak homotopy equivalences. This proves the first statement. The second statement now follows from
Whitehead Theorem.
\end{proof}

Just as the singular multicomplex $\calK(X)$, also $\calL(X)$ enjoys some functorial properties.
If $X$ is a good topological space, with a slight abuse  we denote by $S_X\colon |\calL(X)|\to X$ the restriction of the natural projection
$S_X\colon|\calK(X)|\to X$.

\begin{prop}\label{commutative:homotopy:L}
 Let $X,Y$ be good topological spaces and let $f\colon X\to Y$ be a continuous map. Then there exists a continuous map
 $L(f)\colon |\calL(X)|\to |\calL(Y)|$ such that the following diagram commutes up to homotopy:
 $$
 \xymatrix{
 |\calL(X)|\ar[r]^-{L(f)}\ar[d]_-{S_X} & |\calL(Y)|\ar[d]^-{S_Y}\\
 X \ar[r]_-f & Y\ .
 }
 $$
  \end{prop}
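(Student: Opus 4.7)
The plan is to reduce Proposition~\ref{commutative:homotopy:L} to the analogous statement for $\calK$ (Proposition~\ref{commutative:homotopy}) by combining it with the strong deformation retraction $|\calK(Y)|\to|\calL(Y)|$ provided by Theorem~\ref{exist-min}. Concretely, write $i_X\colon |\calL(X)|\hookrightarrow|\calK(X)|$ and $i_Y\colon|\calL(Y)|\hookrightarrow|\calK(Y)|$ for the inclusions, and let $r_Y\colon |\calK(Y)|\to|\calL(Y)|$ be the simplicial retraction from Theorem~\ref{exist-min}, which satisfies $r_Y\circ i_Y=\mathrm{Id}_{|\calL(Y)|}$ and admits a homotopy $H\colon i_Y\circ r_Y\simeq \mathrm{Id}_{|\calK(Y)|}$. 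Let $K(f)\colon |\calK(X)|\to|\calK(Y)|$ be the map furnished by Proposition~\ref{commutative:homotopy}. The natural candidate to propose is then
\[
L(f)\ :=\ r_Y\circ K(f)\circ i_X\ \colon\ |\calL(X)|\longrightarrow |\calL(Y)|\ .
\]

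The verification that the diagram of the statement commutes up to homotopy is then a short diagram chase. By construction, the restriction of the natural projection $S_X\colon|\calK(X)|\to X$ to $|\calL(X)|$ (which, by the slight abuse fixed in the paper, is still denoted $S_X$) satisfies $S_X=S_X\circ\mathrm{Id}_{|\calL(X)|}$, and similarly on the target side we may view $S_Y$ on $|\calL(Y)|$ as the restriction of $S_Y$ on $|\calK(Y)|$, so that $S_Y\circ r_Y=S_Y\circ i_Y\circ r_Y$ as maps $|\calK(Y)|\to Y$. Composing with the homotopy $H$ and with $K(f)\circ i_X$ on the right yields
\[
S_Y\circ L(f)\ =\ S_Y\circ i_Y\circ r_Y\circ K(f)\circ i_X\ \simeq\ S_Y\circ K(f)\circ i_X\ .
\]
Applying Proposition~\ref{commutative:homotopy} to the homotopy $S_Y\circ K(f)\simeq f\circ S_X$ (valid on the whole of $|\calK(X)|$) and precomposing with $i_X$ gives
\[
S_Y\circ K(f)\circ i_X\ \simeq\ f\circ S_X\circ i_X\ =\ f\circ S_X\ ,
\]
so that $S_Y\circ L(f)\simeq f\circ S_X$, as required.

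There is really no substantial obstacle here: once Proposition~\ref{commutative:homotopy} and Theorem~\ref{exist-min} are in hand, the construction is forced and the verification is a two-line composition of homotopies. The only mild point worth flagging explicitly is that the homotopy $H\colon i_Y\circ r_Y\simeq \mathrm{Id}$ is supplied by Theorem~\ref{exist-min} (strong deformation retraction), which is crucial: without it, the replacement of $K(f)$ by its composition with $r_Y$ and $i_X$ would not automatically preserve commutativity up to homotopy with the natural projections onto $X$ and $Y$.
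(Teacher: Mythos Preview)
Your proposal is correct and follows exactly the approach the paper intends: define $L(f)=r_Y\circ K(f)\circ i_X$ using the inclusion $i_X$, the map $K(f)$ from Proposition~\ref{commutative:homotopy}, and the retraction $r_Y$ from Theorem~\ref{exist-min}, then chase the homotopies. In fact the paper's one-line proof appears to contain a typo (it writes $S_Y$ where the retraction onto $|\calL(Y)|$ is meant), and your write-up is the correct and fully spelled-out version of that argument.
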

\begin{proof}
Just set $L(f)=S_Y\circ K(f)\circ i_X$, where $i_X\colon |\calL(X)|\to |\calK(X)|$ is the inclusion, and the map
$K(f)\colon \calK(X)\to\calK(Y)$ is provided by Proposition~\ref{commutative:homotopy}.
\end{proof}

\section{Aspherical multicomplexes}\label{aspherical:sec}
We have seen in Theorem \ref{exist-min} that to every complete multicomplex $K$ it is possible to associate a complete and minimal multicomplex $L$, which is a retract of $K$. A peculiar characteristic
of bounded cohomology is that maps that induce an isomorphism on the fundamental group also induce isometric isomorphisms on bounded cohomology in every degree. In particular, 
bounded cohomology is not sensitive  to higher homotopy groups. In order to prove these facts we will need to work  with aspherical multicomplexes.
In fact, we need to be able to associate to a complete and minimal multicomplex $L$ an aspherical multicomplex $A$, together with a natural quotient map $L\to A$
which induces an isomorphism on fundamental groups of the geometric realizations. 

\begin{Definizione}
Let $A$ be a multicomplex. We say that $A$ is \textit{aspherical} if its geometric realization is an Eilenberg-MacLane space of type $(\pi_{1}(\lvert A \rvert), 1)$.
\end{Definizione}

We have the following characterization of complete, minimal and aspherical multicomplexes:

\begin{prop}\label{aspherical:char}
 Let $A$ be a large and connected multicomplex. Then $A$ is complete, minimal and aspherical if and only if the following conditions hold:
 \begin{enumerate}
  \item For every pair of distinct vertices $v_0,v_1$ of $A$ and every continuous path $\gamma\colon [0,1]\to |A|$ with $\gamma(0)=v_0$, $\gamma(1)=v_1$, there exists a unique
  simplicial embedding $\gamma'\colon \Delta^1\to A$ which is homotopic to $\gamma$ relative to the endpoints.
  \item Let  $n\geq 2$, let $(\Delta^n)^1$ be the $1$-skeleton of $\Delta^n$  
  and let $f\colon (\Delta^n)^1\to A$ be a simplicial embedding such that the restriction of $f$ to each triangular loop is null-homotopic. Then there exists
  a unique simplicial embedding $\overline{f}\colon \Delta^n\to A$ extending $f$.
 \end{enumerate}
\end{prop}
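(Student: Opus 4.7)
My plan is to prove both directions directly from the definitions, relying on Lemmas~\ref{quoz-compl-lemma1}, \ref{caratterizzazione-minimality-per-confronto-ammissibili}, and~\ref{lemma-the-following-3-cond-equivalent}, and on Theorem~\ref{complete:minimal:special}.

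For the forward direction, assume $A$ is complete, minimal, and aspherical. To verify (1), existence of $\gamma'$ is immediate from completeness applied in dimension~$1$, and uniqueness follows from Lemma~\ref{caratterizzazione-minimality-per-confronto-ammissibili}: two simplicial edges homotopic to $\gamma$ rel endpoints are homotopic to each other, hence compatible and homotopic as 1-simplices, hence equal by minimality. For (2), existence of $\bar{f}$ is precisely Lemma~\ref{quoz-compl-lemma1}; uniqueness I will handle by induction on $k \in \{2, \ldots, n\}$, showing that any two simplicial extensions agree on the $k$-skeleton of $\Delta^n$. At each inductive step, on each $k$-face the two candidates yield compatible $k$-simplices; their gluing is a special sphere $\dot{S}^k \to |A|$ representing an element of $\pi_k(|A|)$, which is trivial by asphericity (since $k \geq 2$), so Lemma~\ref{lemma-the-following-3-cond-equivalent} yields homotopic $k$-simplices that must coincide by minimality.

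For the backward direction, assume (1) and (2). Minimality comes quickly: in dimension~$1$, two homotopic compatible 1-simplices are both simplicial edges homotopic to their common underlying path, so uniqueness in (1) forces them to coincide; in dimension $n \geq 2$, compatible $n$-simplices share the entire 1-skeleton (whose triangular loops bound 2-faces of either simplex and are therefore null-homotopic), so uniqueness in (2) directly gives equality---a strictly stronger property than minimality, which I will record for later use.

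The main obstacle is then establishing completeness and asphericity simultaneously. Completeness in dimension~$n$ reduces, via (2) and its uniqueness applied iteratively to the proper faces of $\partial \Delta^n$ of dimension $\geq 2$, to null-homotopy of the special sphere $f \cup |\bar{f}| : \dot{S}^n \to |A|$---equivalently, to $\pi_n(|A|) = 0$. Meanwhile, asphericity in dimension~$n$ follows from Theorem~\ref{complete:minimal:special}, which identifies $\pi_n(|A|, x_0)$ with the set $\pi(\Delta_0)$ of compatible $n$-simplices; the strengthened uniqueness established above shows this set is a singleton for $n \geq 2$, giving $\pi_n(|A|) = 0$. The circular dependency---completeness in dimension~$n$ needs $\pi_n = 0$, while Theorem~\ref{complete:minimal:special} in dimension~$n$ a priori uses completeness through dimension~$n$---I plan to break by a simultaneous induction on~$n$, with base case $n = 1$ handled by (1), arranging that each invocation of Theorem~\ref{complete:minimal:special} only relies on completeness already secured at earlier stages. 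This delicate bookkeeping of the joint induction is the crux of the backward direction.
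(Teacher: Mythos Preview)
Your forward direction is fine and essentially matches the paper's argument (induction versus minimal counterexample is a matter of taste). The minimality part of your backward direction is also correct, and your observation that condition~(2) forces $\pi(\Delta_0)=\{\Delta_0\}$ for $n\geq 2$ is exactly right.

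The gap is in the joint induction you sketch for completeness and asphericity. The circularity does not actually break: at stage $n$, proving completeness in dimension $n$ (that your $f$ and $|\bar f|$ are homotopic rel $\partial|\Delta^n|$) is precisely the statement that the class $[\,f\cup|\bar f|\,]\in\pi_n(|A|)$ vanishes, so it needs $(A_n)$. Conversely, showing $\pi_n(|A|)=0$ via Theorem~\ref{complete:minimal:special} requires surjectivity of $\Theta$, and the only step in the proof of Theorem~\ref{complete:special} that produces a simplex from an arbitrary homotopy class is the one that invokes completeness in dimension $n$ to replace the auxiliary map $g'\colon|\Delta^n|\to|A|$ by a simplicial embedding. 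Condition~(2) gives you a candidate simplex $\Delta_1$ (necessarily $\Delta_0$ itself, since $\pi(\Delta_0)$ is a singleton), but you cannot conclude that $g'$ is homotopic rel boundary to the characteristic map of $\Delta_1$ without already knowing that the obstruction in $\pi_n(|A|)$ vanishes. So already at $n=2$ the induction stalls: neither $(C_2)$ nor $(A_2)$ follows from $(C_1)$ alone.

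The paper avoids this entirely by proving asphericity first, independently of completeness and minimality. It passes to the universal cover $\widetilde A$, uses conditions~(1) and~(2) to show that $\widetilde A$ is a simplicial complex in which an $(n+1)$-tuple of vertices spans a simplex exactly when their projections to $A$ are pairwise distinct, and then kills $H_n(\widetilde A;\mathbb Z)$ for $n\geq 2$ by an explicit cone (using largeness to find a new vertex). Hurewicz then gives contractibility of $|\widetilde A|$, hence asphericity of $|A|$. Only after this does the paper establish completeness and minimality, by lifting to the now-contractible $|\widetilde A|$. You will need an argument of this kind---something that proves $\pi_n=0$ without presupposing completeness in dimension $n$.
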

\begin{proof}
 We first proof the ``only if'' implication, which is much easier. In fact, condition (1) is an immediate consequence of completeness and minimality of $A$. Moreover, by Lemma~\ref{quoz-compl-lemma1}
 completeness of $A$ implies the existence of the simplicial embedding required in (2). In order to prove the uniqueness of such an embedding we argue by contradiction.
 Let  $\Delta_1,\Delta_2$ be distinct $n_0$-simplices of $A$ 
 sharing the same $1$-skeleton. By requiring that these simplices have the least possible dimension, we can assume that (2) holds in every dimension $n<n_0$. In particular, every codimension one face of
 $\Delta_1$ is also a codimension one face of $\Delta_2$, i.e.~$\Delta_1$ and $\Delta_2$ are compatible. By minimality of $A$, this implies in turn that $\Delta_1$ and $\Delta_2$ are \emph{not} homotopic. By
Lemma~\ref{lemma-the-following-3-cond-equivalent}, the embedded sphere $|\Delta_1|\cup |\Delta_2|$ is the support of a homotopically non-trivial $n_0$-sphere in $|A|$, and this contradicts the fact that $A$ is aspherical.

Let us now suppose that conditions (1) and (2) hold, and prove that $A$ is complete, minimal and aspherical. 
We first prove asphericity by showing that the universal covering $\widetilde{|A|}$ of $|A|$ is contractible.
Observe first that the universal covering of (the geometric realization) of a multicomplex admits a natural structure of multicomplex. We will denote by $\widetilde{A}$
the multicomplex such that $|\widetilde{A}|=\widetilde{|A|}$, and by
$p\colon |\widetilde{A}|\to |A|$ the covering projection (which is a simplicial map).

We claim that  $\widetilde{A}$ is a simplicial complex. In fact, assume by contradiction that there exist distinct $n$-simplices
$\widetilde{\Delta}_1,\widetilde{\Delta}_2$ having the same set of vertices, and denote by $\Delta_i$ the projection of $\widetilde{\Delta}_i$
in $A$, $i=1,2$. 
Since $p$ is locally injective, we have $\Delta_1\neq \Delta_2$. Thus,
if $n=1$, the simplices $\Delta_1,\Delta_2$ would be homotopic (and distinct) in $A$, against condition (1).
If $n\geq 2$, then 
the argument just described proves that the $1$-skeleton of $\widetilde{\Delta}_1$ should coincide with the $1$-skeleton of $\widetilde{\Delta}_2$. Therefore, 
$\Delta_1$ and $\Delta_2$ would be distinct simplices of $A$ sharing the same $1$-skeleton, and this contradicts condition (2). 

Let us now prove that an $(n+1)$-tuple $(a_0,\ldots,a_n)$ of pairwise distinct vertices of $\widetilde{A}$
spans an $n$-simplex if and only if the vertices $p(a_0),\ldots, p(a_n)$ of $A$ are pairwise distinct.
The ``only if'' implication is obvious: since the covering projection $p\colon\widetilde{A}\to A$ is simplicial and non-degenerate, the image of the set of vertices
of an $n$-simplex of $\widetilde{A}$ is the set of vertices of an  $n$-simplex of $A$, so it consists of $(n+1)$ distinct vertices of $A$.

In order to prove the converse implication, 
let $(a_0,\ldots,a_n)$ be an $(n+1)$-tuple of vertices of $\widetilde{A}$ such that $p(a_i)\neq p(a_j)$ for every $i\neq j$. Since $|A|$ is connected, also $\widetilde{|A|}$ is. 
If $n=1$, this means that we can fix a path $\gamma\colon [0,1]\to \widetilde{|A|}$ 
joining $a_0$ and $a_1$. By (1), the composition $p\circ \gamma$ is homotopic relative to the endpoints to a $1$-simplex of $A$. The lift of this $1$-simplex
with initial point $a_0$ provides a $1$-simplex of $\widetilde{A}$ with vertices $a_0,a_1$. 

Suppose now $n\geq 2$. We have just proved that for every $0\leq i<j\leq n$ there exists a $1$-simplex in $\widetilde{A}$ with vertices $a_i,a_j$. Therefore,
we can construct a simplicial embedding $\widetilde{f}\colon (\Delta^n)^1\to \widetilde{A}$ such that the images of the vertices of $\Delta^n$ are exactly the $a_i$. 
The composition $f=p\circ\widetilde{f}\colon  (\Delta^n)^1\to A$ is now 
a simplicial embedding such that the restriction of $f$ to each triangular loop (being the projection of a loop in $|\widetilde{A}|$, which is simply connected) is null-homotopic. 
By condition (2), we can extend $f$ to a simplicial embedding $g\colon \Delta^n\to A$. 
It is now easy to check that $g$ lifts to a simplicial embedding  $\widetilde{g}\colon \Delta^n\to \widetilde{A}$ with vertices $a_0,\ldots,a_n$. Therefore, the points
$a_0,\ldots,a_n$ are the vertices of a simplex of $\widetilde{A}$.  We have thus proved that
an $(n+1)$-tuple $(a_0,\ldots,a_n)$ of pairwise distinct vertices of $\widetilde{A}$
spans an $n$-simplex if and only if the vertices $p(a_0),\ldots, p(a_n)$ of $A$ are pairwise distinct.

We are now ready to prove that $\widetilde{|A|}$ is contractible. Since $\widetilde{|A|}$ is a simply connected CW complex, by Hurewicz Theorem it is sufficient
to show that $H_n(\widetilde{|A|};\mathbb{Z})=0$. Since singular homology and simplicial homology are canonically isomorphic, we can equivalently show
that $H_n(\widetilde{A};\mathbb{Z})=0$ for every $n\geq 2$. Recall that simplices of $\widetilde{A}$ are completely determined by their vertices, so we can  identify $C_n(\widetilde{A};\mathbb{Z})$
with the free $\mathbb{Z}$-module generated by the $(n+1)$-tuples of the form $(a_0,\ldots,a_n)$, where the set $\{a_0,\ldots,a_n\}$  is the set of vertices of a simplex
of $\widetilde{A}$. Let then
$$
z=\sum_{j=1}^k b_j (a_0^j,\ldots,a_n^j)
$$
be an $n$-cycle in $C_n(\widetilde{A};\mathbb{Z})$, $n\geq 2$, $b_j \in \, \matZ$. Since $A$ is large,  there exists a vertex $q$ of $A$ such that $q\neq p(a_i^j)$ for every $i=0,\ldots,n$, $j=1,\ldots,k$. Let $\overline{a}$ be a fixed lift of $q$ to $\widetilde{A}$. 
The description of $\widetilde{A}$ given above shows that, for each $j=1,\ldots,k$,
the set $\{\overline{a},a_0^j,\ldots,a_n^j\}$ spans a simplex of $\widetilde{A}$. Therefore, the sum
$$
c=\sum_{j=1}^k b_j (\overline{a},a_0^j,\ldots,a_n^j)
$$
defines an element of $C_{n+1}(\widetilde{A};\mathbb{Z})$. It is now readily seen that $\partial c=z$. We have thus shown that $H_n(\widetilde{A};\mathbb{Z})=0$ for every $n\geq 2$, and this implies in turn that $\widetilde{A}$ is contractible, and that $A$ is aspherical.
 
 In order to conclude the proof of the proposition we are left to show that $A$ is complete and minimal. So let $f\colon |\Delta^n|\to |A|$ be a continuous map
 which restricts to a simplicial embedding of $\partial |\Delta^n|$. We need to show that $f$ is homotopic relative to $\partial \Delta^n$ to a unique simplicial embedding
 $g\colon \Delta^n\to A$. If $n=1$, this is just  a restatement of condition (1). If $n\geq 2$, since $p$ is a non-degenerate simplicial map which is also locally injective, the map $f$ lifts to a map $\widetilde{f}\colon |\Delta^n|\to \widetilde{A}$
 which still restricts to a simplicial embedding of $\partial |\Delta^n|$. Being the lifts of $n+1$ distinct points in $A$, the vertices of $\widetilde{f}$ span a simplex of $\widetilde{A}$,
 so the restriction of $\widetilde{f}$ to $\partial |\Delta^n|$ extends to a simplicial embedding $\widetilde{g}\colon \Delta^n\to \widetilde{A}$. Since $|\widetilde{A}|$ is contractible,
 the maps $\widetilde{f}$ and $\widetilde{g}$ are homotopic relative to $\partial |\Delta^n|$. Therefore, the map $g=p\circ \widetilde{g}\colon \Delta^n\to A$
 is a simplicial embedding and is homotopic to $f$ relative to $\partial |\Delta^n|$. If $g'\colon \Delta^n\to A$ were another simplicial embedding homotopic to $f$ relative to $\partial |\Delta^n|$, 
 then the images of $g$ and $g'$ would lift to distinct simplices of $\widetilde{A}$ having the same set of vertices, against the fact that $\widetilde{A}$ is a simplicial complex.
 This concludes the proof.
\end{proof}

We are now able to describe how a large, complete, minimal and aspherical multicomplex $A$ can be canonically associated to a complete and minimal multicomplex $L$. 
We suppose until the end of the section that $L$ is a large, connected, complete and minimal  multicomplex, and we construct the desired multicomplex $A$ as follows.
The $1$-skeleton of $A$ just coincides with the  $1$-skeleton of $L$ (in particular, $L$ and $A$ have the same set of vertices). Then, if $T\subseteq A^1$ is 
 the $1$-skeleton of an $n$-simplex of $L$, $n\geq 2$, we add to $A$ one (and exactly one) $n$-simplex having $T$ as $1$-skeleton. In other words, for every  $n\geq 2$
the set of $n$-simplices of $A$ is given by the equivalence classes of $n$-simplices of $L$, where two $n$-simplices
are considered equivalent if they share the same $1$-skeleton. It is an easy exercise to check that $A$ is indeed a multicomplex, and that the map
which sends every $n$-simplex of $L$, $n\geq 2$, to its class in $A$ extends the identity $L^1\to A^1$ to a non-degenerate simplicial map $\pi\colon L\to A$.
We  call $A$ the \emph{aspherical quotient} of $L$.

\begin{thm}\label{aspherical:thm}
Let $L$ be a complete, minimal and large multicomplex, and let $A$ be the aspherical quotient of $L$. Then:
\begin{enumerate}
\item the projection $\pi\colon |L|\to |A|$ induces an isomorphism on fundamental groups;
\item $A$ is complete, minimal and aspherical.
\end{enumerate}
\end{thm}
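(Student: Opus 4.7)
The plan is to establish (1) first and then deduce (2) by applying the characterization of complete, minimal and aspherical multicomplexes given by Proposition~\ref{aspherical:char}.

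For (1), both $\pi_{1}(|L|,v_{0})$ and $\pi_{1}(|A|,v_{0})$ depend only on the respective $2$-skeleta, and the two multicomplexes share the same $1$-skeleton $L^{1}=A^{1}$. Fixing a spanning tree in $L^{1}$, both groups admit presentations with the same set of generators (one loop per edge of $L^{1}$ outside the tree, read via the tree). The relators come from the boundary triangles of $2$-simplices. A $2$-simplex of $L$ contributes a relation depending only on its $1$-skeleton, and by construction $2$-simplices of $A$ are precisely equivalence classes of $2$-simplices of $L$ sharing a common $1$-skeleton; hence each $2$-simplex of $L$ projects to a $2$-simplex of $A$ with the same boundary triangle, and conversely every $2$-simplex of $A$ is the class of some $2$-simplex of $L$. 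The two sets of relators thus coincide, and $\pi_{\ast}$, which is the identity on generators, is an isomorphism.

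For (2), I verify conditions (1) and (2) of Proposition~\ref{aspherical:char}. The key tool is a \emph{transfer principle} coming from part (1): since $\pi_{\ast}$ is an isomorphism and $\pi$ restricts to the identity on vertices, for any pair of vertices $v_{0},v_{1}$ the map $\alpha\mapsto\pi\circ\alpha$ sets up a bijection between path-homotopy classes in $|L|$ from $v_{0}$ to $v_{1}$ and path-homotopy classes in $|A|$ from $v_{0}$ to $v_{1}$ (both being torsors over the common $\pi_{1}(\cdot,v_{0})$). To check condition (1) of Proposition~\ref{aspherical:char}, given $\gamma\colon [0,1]\to |A|$ between distinct vertices, the transfer principle produces $\tilde\gamma$ in $|L|$ with $\pi\circ\tilde\gamma\simeq\gamma$ rel endpoints; completeness of $L$ yields a $1$-simplex $e\in L^{1}=A^{1}$ with $e\simeq\tilde\gamma$, hence $e\simeq\gamma$ in $|A|$. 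Uniqueness of such an $e$ in $A$ follows from the transfer principle (two edges of $A$ homotopic rel endpoints in $|A|$ are homotopic rel endpoints in $|L|$) combined with minimality of $L$ via Lemma~\ref{caratterizzazione-minimality-per-confronto-ammissibili}. To check condition (2) of Proposition~\ref{aspherical:char}, given a simplicial embedding $f\colon(\Delta^{n})^{1}\to A$ whose triangular loops are null-homotopic in $|A|$, view $f$ as a simplicial embedding into $L$ (since $A^{1}=L^{1}$); by the $\pi_{1}$-isomorphism, the triangular loops remain null-homotopic in $|L|$, so Lemma~\ref{quoz-compl-lemma1} extends $f$ to a simplicial embedding $\tilde f\colon\Delta^{n}\to L$, and $\pi\circ\tilde f\colon\Delta^{n}\to A$ is the required extension. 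Uniqueness is immediate from the construction of $A$: for $n\geq 2$ an $n$-simplex of $A$ is determined by its $1$-skeleton, so at most one $n$-simplex of $A$ can have $1$-skeleton $f((\Delta^{n})^{1})$, and at most one simplicial embedding $\Delta^{n}\to A$ can extend $f$.

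The main obstacle is part (1): one must be careful to verify that the relations in the two presentations of $\pi_{1}$ are identified correctly via the identity on $L^{1}=A^{1}$. Once $\pi_{\ast}$ is known to be an isomorphism, part (2) essentially reduces to applying completeness and minimality of $L$ (together with Lemma~\ref{quoz-compl-lemma1}) and using the combinatorial fact that $n$-simplices of $A$ are determined by their $1$-skeleta for $n\geq 2$.
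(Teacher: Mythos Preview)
Your proposal is correct and follows essentially the same approach as the paper: part (1) is deduced from the fact that $L$ and $A$ share the same $1$-skeleton and the same boundary-triangle relations, and part (2) is obtained by verifying the two conditions of Proposition~\ref{aspherical:char} exactly as you describe (pulling back to $L$ via the $\pi_1$-isomorphism, using completeness/minimality of $L$ together with Lemma~\ref{quoz-compl-lemma1} for existence, and the defining property of $A$ for uniqueness in dimension $\geq 2$). Your ``transfer principle'' is a slightly more abstract packaging of the paper's observation that paths in $A^{1}=L^{1}$ are homotopic rel endpoints in $|A|$ if and only if they are in $|L|$, but the content is the same.
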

\begin{proof}
The fact that $\pi\colon |L|\to |A|$ induces an isomorphism on fundamental groups is an immediate consequence of the fact that
$L$ and $A$ have the same $1$-skeleton, and a triangular loop in $L^1=A^1$ bounds a $2$-simplex in $L$ if and only if it bounds a $2$-simplex in $A$.  

In order to prove that $A$ is complete, minimal and aspherical we will check that $A$ satisfies the conditions described in Proposition~\ref{aspherical:char}.

Let  $v_0,v_1$ be distinct vertices of $A$, and let $\gamma \colon [0,1]\to |A|$ be a path joining $v_0$ and $v_1$. Up to homotopy (relative to the endpoints) we can suppose
that $\gamma$ is supported on $A^1=L^1$. Since $L$ is complete and minimal, the path $\gamma$ (now considered as a map to $|L|$) is homotopic relative to the endpoints (in $|L|$) to a unique
$1$-simplex of $L$. However, 
since $\pi\colon |L|\to |A|$ induces an isomorphism on fundamental groups, two paths in $L^1=A^1$ are homotopic relative to the endpoints in $|L|$ if and only if they are homotopic relative
to the endpoints in $|A|$. This shows that $\gamma$ is homotopic relative to the endpoints in $A$ to a unique $1$-simplex of $A$, i.e.~that condition (1) of Proposition~\ref{aspherical:char} is satisfied.

Let us now prove that also condition (2) of Proposition~\ref{aspherical:char} holds. To this aim, let $n\geq 2$, let $(\Delta^n)^1$ be the $1$-skeleton of $\Delta^n$  
  and let $f\colon (\Delta^n)^1\to A$ be a simplicial embedding such that the restriction of $f$ to each triangular loop is null-homotopic in $A$. We must show that there exists
  a unique simplicial embedding $\overline{f}\colon \Delta^n\to A$ extending $f$. The uniqueness of such an embedding readily follows from the very definition of $A$: indeed, by construction
  there do not exist in $A$ distinct simplices sharing the same $1$-skeleton. Therefore, we are left to show that a simplicial extension of $f$ to $\Delta^n$ exists.
  Since $A^1=L^1$, we can consider $f$ as a map with values in $L$, and since $\pi\colon L\to A$ induces an isomorphism on fundamental groups, the restriction of $f$ to each triangular loop is null-homotopic also in $|L|$. 
  Since $L$ is complete, by Lemma~\ref{quoz-compl-lemma1} we can extend $f$ to a simplicial embedding $f'\colon \Delta^n\to L$. The composition $\overline{f}=\pi \circ f' \colon \Delta^n\to A$ provides the desired extension
  of $f$, and this concludes the proof.
\end{proof}
  \medskip
 
 Recall that, for every good topological space, we have introduced the singular multicomplex $\mathcal{K}(X)$, which turned out to be complete, and
 the minimal complete multicomplex $\mathcal{L}(X)$ associated to $\mathcal{K}(X)$. 
 
 \begin{defn}\label{asphericalX:def}
 Let $X$ be a good topological space. Then we denote by $\mathcal{A}(X)$ the aspherical quotient of $\mathcal{L}(X)$. By Theorem~\ref{aspherical:thm}, the multicomplex
 $\mathcal{A}(X)$ is complete, minimal, and aspherical. 
 \end{defn}

We summarize some properties of $\calK(X)$, $\calL(X)$ and $\calA(X)$ in the following:

\begin{prop}\label{riassunto:prop}
Let $X$ be a good topological space. Then the simplicial retraction $r\colon \calK(X)\to \calL(X)$ is a homotopy equivalence, and the simplicial projection
$\pi\colon \calL(X)\to \calA(X)$ induces an isomorphism on fundamental groups.
\end{prop}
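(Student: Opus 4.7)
The plan is to verify that each of the two claims in the proposition follows essentially by unwinding the definitions of $\calK(X)$, $\calL(X)$, $\calA(X)$ and invoking the structural theorems already established (Theorems~\ref{exist-min} and~\ref{aspherical:thm}).

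For the first claim, I would argue as follows. By Theorem~\ref{K(X)-compl}, the singular multicomplex $\calK(X)$ is complete, so Theorem~\ref{exist-min} applies. Precisely, the multicomplex $\calL(X)$ was defined as the complete minimal submulticomplex provided by that theorem, and $r\colon \calK(X)\to \calL(X)$ is by construction the simplicial retraction mentioned in item (3) of Theorem~\ref{exist-min}. Item (4) of the same theorem states that the geometric realization of $r$ exhibits $|\calL(X)|$ as a strong deformation retract of $|\calK(X)|$. Since every strong deformation retraction is a homotopy equivalence, $|r|$ is a homotopy equivalence, which is the meaning of the first assertion.

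For the second claim, I would check that $\calL(X)$ satisfies the three hypotheses of Theorem~\ref{aspherical:thm}. Completeness and minimality of $\calL(X)$ are immediate from its construction in Theorem~\ref{exist-min}. Connectedness is handled componentwise: it suffices to establish the statement for each connected component of $\calL(X)$ separately, so without loss of generality I may assume $\calL(X)$ is connected, which corresponds to restricting attention to a single path component of $X$. The only slightly subtle point is largeness: I need each connected component of $\calL(X)$ to contain infinitely many vertices. The vertices of $\calL(X)$ coincide with those of $\calK(X)$ (Theorem~\ref{exist-min} (2)), which in turn are the points of $X$. Two vertices $x,y$ lie in the same connected component of $\calL(X)$ if and only if they lie in the same path component of $X$: indeed, any path in $X$ joining $x$ and $y$ defines an edge of $\calK(X)$ with endpoints $x,y$, and by construction of $\calL(X)$ (via the inductive procedure in the proof of Theorem~\ref{exist-min}) such an edge, or a homotopic one, lies in $\calL(X)$. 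Since $X$ is good, any path connected subset of $X$ with more than one point must be infinite (otherwise the defining property of good spaces would force it to be a singleton); hence every non-trivial connected component of $\calL(X)$ contains infinitely many vertices, and for the trivial components (isolated vertices) the claim $\pi_1(|\calL(X)|)\cong \pi_1(|\calA(X)|)$ is automatic.

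Having verified the hypotheses, Theorem~\ref{aspherical:thm} (1) immediately yields that the map $\pi\colon |\calL(X)|\to |\calA(X)|$ induced on geometric realizations is an isomorphism on fundamental groups, which is the content of the second claim. There is no real obstacle here; the only care required is the component-by-component argument needed to accommodate isolated-vertex components of $\calL(X)$ when passing from the hypothesis ``large and connected'' of Theorem~\ref{aspherical:thm} to the possibly disconnected multicomplex $\calL(X)$.
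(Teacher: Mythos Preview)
Your proof is correct and follows exactly the implicit reasoning of the paper: the proposition is stated there as a summary with no explicit proof, and the two claims are immediate consequences of Theorem~\ref{exist-min} (items (3)--(4)) and Theorem~\ref{aspherical:thm}(1), respectively. Your care in checking largeness of $\calL(X)$ is fine but slightly more than needed for the second claim: the proof of Theorem~\ref{aspherical:thm}(1) in the paper uses only that $\calL(X)$ and $\calA(X)$ share the same $1$-skeleton and the same set of boundaries of $2$-simplices, which holds regardless of largeness.
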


Before concluding the chapter, let us point out how the constructions introduced so far allow us to give an interesting description of a classifying map for a simplicial complex.
Suppose that $X$ is the geometric realization of a simplicial complex $T$.
The natural projection $S\colon |\calK(X)|\to X$ is a homotopy equivalence (see Corollary~\ref{weak-CW}). 
Moreover, the multicomplex $\calK(X)$ contains a submulticomplex $\calK_T(X)\cong T$ whose simplices are the equivalence classes of the affine parametrizations of simplices of $T$. 
This fact may be exploited to construct
a map $i\colon X\to |\calK(X)|$ such that $S\circ i\colon X\to X$ is the identity of $X$. 
Being a right homotopy inverse of $S$, the map $i$ is itself a homotopy equivalence.
Since $|\calA(X)|$ is aspherical, Proposition~\ref{riassunto:prop} implies
that the composition
$$
\xymatrix{
X\ar[r]^-{i} & |\calK(X)|\ar[r]^-{r} & |\calL(X)|\ar[r]^-{\pi} & |\calA(X)|
}
$$
is a classifying map for $X$.

\part{Multicomplexes, bounded cohomology and simplicial volume}

\chapter{Bounded cohomology of multicomplexes}\label{chap2:chap}
In this chapter we deepen our study of the simplicial (bounded) cohomology of multicomplexes. 
In particular, we state and prove the so-called \textit{Isometry Lemma} \ref{isometry-lemma-intro}. 
This result establishes an isometric isomorphism between the singular and the simplicial bounded cohomology of a complete multicomplex. 
By applying the Isometry Lemma to the singular multicomplex $\calK(X)$ and to the aspherical multicomplex $\calA(X)$ associated
to a good topological space $X$, we will then be able to show that the bounded cohomology of a topological space only depends on its fundamental
group. 

In particular, the bounded cohomology of a simply connected space vanishes. This result plays a fundamental role in the theory of bounded cohomology of topological spaces.
It was stated without any assumption on the space involved in~\cite[Section 3.1]{Grom82}, where Gromov deduced it from an argument based on the theory of multicomplexes. 
A completely different proof of this result was given by Ivanov (first for countable CW complexes~\cite{Ivanov}, then for any topological space~\cite{ivanov3}). 

Our proofs come back to Gromov's original approach. Following (but modifying, sometimes in a substantial way) some ideas introduced by Gromov, we will introduce and study peculiar groups of automorphisms of complete and minimal multicomplexes, 
paying a particular attention to their amenability. 
By making use of the invariance of bounded cohomology with respect to weak homotopy equivalences, we will prove that
 the bounded cohomology of any topological space only depends on its fundamental
group. As mentioned in the introduction, the techniques developed here can be easily exploited to provide a proof of this fact that works for any \emph{good} topological space and does not use Ivanov's result on weak homotopy equivalences (see~\cite{Marco:tesi}, where such a proof is written down in detail).

\section{$\ell^1$-homology and bounded cohomology of topological spaces}\label{bounded:singular:sec}
Let $X$ be a topological space, and let $R=\matZ,\R$. Recall that $C_* (X;R)$ (resp. $C^*(X;R)$) denotes the usual
complex of singular chains (resp.~cochains) on $X$ with coefficients in $R$, and
$S_i (X)$ is the set of singular $i$--simplices
in $X$. We also
regard $S_i (X)$ as a subset of $C_i (X;R)$, so that
for any cochain $\varphi\in C^i (X;R)$ it makes sense to consider its restriction
$\varphi|_{S_i (X)}$. For every $\varphi\in C^i (X;R)$, we set
$$
%\| \varphi \|=
\|\varphi\|_\infty  = \sup \left\{|\varphi (s)|\ |\ s\in S_i (X)\right\}\in [0,\infty].
$$
We denote by $C^*_b (X;R)$ the submodule of bounded cochains, i.e.~we set
$$C^*_b (X;R)= \left\{\varphi\in C^* (X;R)\ | \ \|\varphi\|<\infty\right\}\ .$$ Since
the differential takes bounded cochains to bounded cochains, $C^*_b (X;R)$
is a subcomplex of $C^*(X;R)$.
We denote by $H^*(X;R)$ (resp.~$H_b^*(X;R)$) 
the homology of the complex $C^*(X;R)$ (resp.~$C_b^*(X;R)$).
Of course, $H^*(X;R)$ is the usual singular cohomology module of $X$ with coefficients in $R$, while $H_b^*(X;R)$
is the \emph{bounded cohomology module} of $X$ with coefficients in $R$. 

The norm on $C^i (X;R)$ descends (after taking
the suitable restrictions) to a seminorm on each of the modules 
$H^*(X;R)$, $H_b^*(X;R)$. More precisely, if
$\varphi\in H$ is a class in one of these modules, which is obtained as a quotient
of the corresponding module of cocycles $Z$, then we set 
$$
\|\varphi\|_\infty=\inf \left\{\|\psi\|\ |\  \psi\in Z,\, [\psi]=\varphi\ {\rm in}\ H\right\}.
$$
This seminorm may take the value $\infty$ on elements in $H^*(X;R)$
and may be null on non-zero elements in $H_b^*(X;R)$.

The inclusion of bounded cochains into possibly unbounded cochains
induces the \emph{comparison map}
$$
%\begin{array}{llllll}
c^* \colon  H_b^*(X;R)\to H^*(X;R)\ .
%\end{array}
$$

The $\ell^\infty$-norm on singular cochains arises as the dual norm of an $\ell^1$-norm on chains. 
In fact, for every $n\geq 0$ one can put on the space $C_n(X;R)$ the $\ell^1$-norm
$$
\left\| \sum_{i \in I} a_i s_i\right\|_1 =\sum |a_i|
$$
(here $I$ is a finite set, $s_i\in S_n(X)$ for every $i\in I$, and $s_i\neq s_j$ if $i\neq j$). 
This norm descends to a seminorm $\|\cdot \|_1$ on $H_n(X;R)$. While $C^n(X;R)$ is the algebraic dual of $C_n(X;R)$, 
the module of bounded cochains $C^n_b(X;R)$ coincides with the \emph{topological} dual of $C_n(X;R)$, i.e.~with the set
of functionals on $C_n(X;R)$ that are continuous with respect to the $\ell^1$-norm just described. Moreover, the $\ell^\infty$-norm of an element
$\varphi\in C^n_b(X;R)$ coincides with the dual norm of $\varphi$ as an element of the topological dual of $C_n(X;R)$.

Henceforth, unless otherwise stated, all the (bounded) (co)homology modules will be understood with real coefficients. Therefore, we will omit the coefficients from our notation.

The duality pairing between $C^n_b(X)$ and $C_n(X)$ induces the \emph{Kronecker product}
$$
\langle \cdot,\cdot \rangle\colon H_b^n(X)\times H_n(X)\to \R\ .
$$
An easy argument based on Hahn-Banach Theorem implies the following result (see e.g.~\cite[Lemma 6.1]{frigerio-libro}):

\begin{lemma}\label{lemma:duality}
 Let $n\in\mathbb{N}$ and take  $\alpha\in H_n(X)$. Then 
 $$
\|\alpha\|_1=\max \{ \langle\beta,\alpha\rangle\, |\,  \beta\in H_b^n(X),\,  \|\beta\|_\infty \leq 1\}\ .
$$
\end{lemma}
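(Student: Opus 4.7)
The plan is to establish the two inequalities separately. For the easy direction, given $\beta \in H_b^n(X)$ with $\|\beta\|_\infty \leq 1$, I would pick any bounded cocycle representative $\widehat{\beta}$ of $\beta$ and any cycle representative $z$ of $\alpha$, then use that the Kronecker pairing factors through (co)homology, so $\langle \beta,\alpha\rangle = \widehat{\beta}(z)$ does not depend on the choices; from the definition of the dual norm we get $|\widehat{\beta}(z)| \leq \|\widehat{\beta}\|_\infty \|z\|_1$, and taking the infimum over representatives yields $|\langle \beta,\alpha\rangle| \leq \|\beta\|_\infty \cdot \|\alpha\|_1 \leq \|\alpha\|_1$. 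This also settles the degenerate case $\|\alpha\|_1 = 0$, where both sides vanish and the maximum is trivially attained by $\beta = 0$.

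For the reverse inequality, assuming $\|\alpha\|_1 > 0$, I would produce a bounded cocycle realizing the value via Hahn--Banach. Fix a cycle representative $z$ of $\alpha$ and let $\overline{B_n(X)}$ denote the $\ell^1$-closure of $B_n(X)$ in $C_n(X)$. Consider the subspace
\[
V = \mathbb{R}\cdot z + \overline{B_n(X)} \subseteq C_n(X),
\]
and define $\phi \colon V \to \mathbb{R}$ by $\phi(\lambda z + b) = \lambda \|\alpha\|_1$. The verification of well-definedness rests on the observation that $z \notin \overline{B_n(X)}$: indeed, if $z$ were in this closure, then $\|\alpha\|_1 = \inf_{b \in B_n(X)} \|z+b\|_1$ would vanish by density, contradicting our assumption. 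Boundedness of $\phi$ by $1$ with respect to the $\ell^1$-norm follows from $\|\lambda z + b\|_1 \geq |\lambda|\cdot \inf_{b' \in B_n(X)} \|z + b'\|_1 = |\lambda| \|\alpha\|_1$, again by density of $B_n(X)$ in $\overline{B_n(X)}$.

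Finally, since $C^n_b(X)$ is precisely the topological dual of $(C_n(X), \|\cdot\|_1)$, the Hahn--Banach theorem extends $\phi$ to a functional $\widetilde{\phi} \in C^n_b(X)$ with $\|\widetilde{\phi}\|_\infty \leq 1$. Because $\widetilde{\phi}$ agrees with $\phi$ on $B_n(X) \subseteq V$ and $\phi$ vanishes there, we get $\widetilde{\phi}(\partial c) = 0$ for every $c \in C_{n+1}(X)$, so $\widetilde{\phi}$ is a cocycle, and $\beta := [\widetilde{\phi}] \in H_b^n(X)$ satisfies $\|\beta\|_\infty \leq 1$ and $\langle \beta, \alpha\rangle = \widetilde{\phi}(z) = \phi(z) = \|\alpha\|_1$. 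This shows that the supremum is attained and equals $\|\alpha\|_1$, completing the proof.

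The main subtle point is the replacement of $B_n(X)$ by its $\ell^1$-closure: this is needed to ensure that the Hahn--Banach extension automatically vanishes on all of $B_n(X)$ (so that the resulting cochain is a cocycle) while keeping the bound on $\phi$ sharp on the one-dimensional piece $\mathbb{R}\cdot z$. All other verifications are routine computations with the $\ell^1$ and dual $\ell^\infty$ norms.
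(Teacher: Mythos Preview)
Your proof is correct and follows exactly the approach indicated in the paper, which does not give an explicit argument but simply states that the result follows from an easy application of the Hahn--Banach Theorem (referring to \cite[Lemma 6.1]{frigerio-libro}). Your write-up fills in precisely this standard argument.
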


\begin{cor}\label{vanish-cor}
 Suppose that $H^n_b(X)=0$. Then
 $$
 \|\alpha\|_1=0
 $$
 for every $\alpha\in H_n(X)$.
\end{cor}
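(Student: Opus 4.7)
The proof is an immediate consequence of Lemma \ref{lemma:duality}. The plan is simply to invoke the duality formula
$$\|\alpha\|_1 = \max \{\langle \beta, \alpha\rangle \,|\, \beta \in H_b^n(X),\ \|\beta\|_\infty \leq 1\}$$
and observe that under the hypothesis $H_b^n(X) = 0$, the only element $\beta \in H_b^n(X)$ satisfying the norm constraint is $\beta = 0$. Consequently the set over which we take the maximum contains only the value $\langle 0, \alpha\rangle = 0$, forcing $\|\alpha\|_1 = 0$.

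There is no main obstacle here: the entire content has been packaged into Lemma \ref{lemma:duality}, which itself is the standard Hahn--Banach-based duality statement between the seminorm on $H_n(X)$ and the seminorm on $H_b^n(X)$. The corollary should be presented as a one-line deduction, without any further calculation.
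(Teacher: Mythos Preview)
Your proposal is correct and matches the paper's approach exactly: the corollary is placed immediately after Lemma~\ref{lemma:duality} with no separate proof, since it is the one-line deduction you describe.
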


Any continuous map $f\colon X\to Y$ induces norm non-increasing maps 
$$H^n_b(f)\colon H^n_b(Y)\to H^n_b(X), \, \, \, H_n(f)\colon H_n(X)\to H_n(Y).$$
Moreover, homotopic maps induce the same map both on singular homology and on bounded cohomology.
Using these facts, it is not difficult to prove that
a homotopy equivalence induces isometric isomorphisms in every degree both on singular homology (endowed with the $\ell^1$-norm) and on bounded cohomology.
Indeed, a recent result by Ivanov shows that even \emph{weak} homotopy equivalences induce isometric isomorphisms on bounded cohomology:

\begin{thm}[{\cite[Corollary 6.4]{ivanov3}}]\label{weak:iso:thm}
Let $f\colon X\to Y$ be a weak homotopy equivalence. Then the map
$$
H^n_b(f)\colon H^n_b(Y)\to H^n_b(X)
$$
is an isometric isomorphism for every $n\in\mathbb{N}$.
\end{thm}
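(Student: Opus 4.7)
The plan is to reduce the statement to a question about simplicial sets via the singular simplicial set functor $\mathcal{S}$, and then exploit the fact that $\mathcal{S}(X)$ is always a Kan complex. This parallels the multicomplex strategy used elsewhere in the paper, but works unconditionally in $X$.

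First I would observe that the singular chain complex $C_*(X)$ coincides, on the nose and with $\ell^1$-norms matching degree by degree, with the unnormalized chain complex of the singular simplicial set $\mathcal{S}(X)$: both are freely generated by the set $S_n(X)$ of singular $n$-simplices, with the same boundary operator. Dualizing, $C^*_b(X)$ is isometrically isomorphic to the complex of bounded simplicial cochains on $\mathcal{S}(X)$, and under this identification a continuous map $f\colon X\to Y$ induces on $C^*_b$ exactly the map induced by the simplicial map $\mathcal{S}(f)\colon \mathcal{S}(X)\to \mathcal{S}(Y)$. Hence it suffices to show that, when $f$ is a weak homotopy equivalence, the simplicial map $\mathcal{S}(f)$ induces an isometric isomorphism on the bounded simplicial cohomology of simplicial sets.

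Next I would invoke two classical facts: (a) the singular simplicial set $\mathcal{S}(X)$ is always a Kan complex (horn-filling follows from the homotopy extension property of the inclusion $\Lambda^n_k\hookrightarrow \Delta^n$ in topological spaces); (b) the functor $\mathcal{S}$ sends weak homotopy equivalences of spaces to weak equivalences of simplicial sets. Together these imply that $\mathcal{S}(f)$ is a weak equivalence between Kan complexes. A foundational theorem of Kan then asserts that every weak equivalence between Kan complexes is in fact a simplicial homotopy equivalence: there exist a simplicial map $g\colon \mathcal{S}(Y)\to \mathcal{S}(X)$ and simplicial homotopies $g\circ \mathcal{S}(f)\sim \mathrm{id}$ and $\mathcal{S}(f)\circ g\sim \mathrm{id}$. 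The proof of this last fact passes through the theory of minimal Kan subcomplexes, directly analogous to the development in Section~\ref{minimal:section}.

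Finally, a simplicial homotopy $H\colon Z\times \Delta^1\to W$ gives, via the standard prism decomposition of $\Delta^n\times \Delta^1$ into $n+1$ non-degenerate simplices, a chain homotopy whose operator norm in degree $n$ is bounded by $n+1$ (compare Remark~\ref{homotopy-uniform}); dualizing yields a chain homotopy of bounded cochain complexes that is bounded in every degree, so the two endpoints induce the same map on bounded cohomology. Applying this to the two compositions $g\circ \mathcal{S}(f)$ and $\mathcal{S}(f)\circ g$, and noting that both $\mathcal{S}(f)^*$ and $g^*$ are norm non-increasing on bounded cochains (each is dual to a map sending one generator to one generator), one concludes that $H^*_b(\mathcal{S}(f))$ is an isometric isomorphism. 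The main obstacle is the classical theorem invoked in the previous paragraph: setting up the minimal-Kan-subcomplex machinery is substantial, and the presence of degenerate simplices introduces complications absent from the multicomplex setting. A more direct alternative, closer in spirit to Ivanov's strategy in~\cite{ivanov3}, would be to construct the inverse chain homotopies explicitly from the Kan horn-filling data, where the delicate point is to control the $\ell^\infty$-norms of the successive horn-fillings uniformly in degree so as to obtain an \emph{isometric}, and not merely bijective, isomorphism on bounded cohomology.
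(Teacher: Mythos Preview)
The paper does not prove this theorem: it is quoted as a black-box input from Ivanov~\cite{ivanov3}, and is used repeatedly (e.g.\ in Theorems~\ref{iso-KX}, \ref{mapping1_intro} and~\ref{mapping-intro}) precisely to avoid having to deal with arbitrary, possibly non-good, topological spaces. So there is no ``paper's own proof'' to compare against.

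Your argument is a correct and self-contained alternative. The identification of $C^*_b(X)$ with bounded cochains on $\mathcal{S}(X)$ is tautological; that $\mathcal{S}(X)$ is Kan and that $\mathcal{S}$ takes weak equivalences to weak equivalences are standard; and the simplicial Whitehead theorem (weak equivalences between Kan complexes are simplicial homotopy equivalences) does the real work. The final step---that a simplicial homotopy via $\Delta^1$ induces a chain homotopy with operator norm $\leq n+1$ in degree $n$, so that the two norm non-increasing maps $\mathcal{S}(f)^*$ and $g^*$ become mutual inverses on $H^*_b$---is exactly right and parallels Remark~\ref{homotopy-uniform}. The only heavy ingredient you invoke is the simplicial Whitehead theorem itself, whose proof (via minimal Kan subcomplexes, as you note) is indeed of comparable depth to the multicomplex machinery of Chapter~\ref{chap2:hom}; you are right to flag this rather than sweep it under the rug.

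By contrast, Ivanov's approach in~\cite{ivanov3} is based on homological algebra and relatively injective resolutions rather than on simplicial homotopy theory. Your route has the advantage of being conceptually close to the rest of this paper (complete/Kan objects, minimal models, simplicial homotopies with uniformly bounded prisms), and it makes transparent exactly where the isometry comes from: both the map and its inverse are induced by genuine simplicial maps, hence send single simplices to single simplices.
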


%Ivanov's result will play an important role in extending our results on bounded cohomology 
%from  good spaces to \emph{any}  topological space. 
As an immediate consequence of Theorems~\ref{homotopy-weak-intro}  and~\ref{weak:iso:thm} we have the following:

\begin{thm}\label{iso-KX}
 Let $X$ be a good space and let $S\colon |\calK(X)|\to X$ be the natural projection. Then the induced map 
 $$
 H^n_b(S)\colon H^n_b(X)\to H^n_b(|\calK(X)|)
 $$
 is an isometric isomorphism for every $n\in\mathbb{N}$.
\end{thm}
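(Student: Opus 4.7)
The plan is to combine the two theorems invoked immediately before the statement. Both are already available: Theorem \ref{homotopy-weak-intro} (proved in this paper, in Section \ref{Sec:weak:hom:type:K(X)}) asserts that $S\colon |\calK(X)|\to X$ is a weak homotopy equivalence whenever $X$ is good, and Theorem \ref{weak:iso:thm} (quoted from Ivanov's \cite{ivanov3}) asserts that any weak homotopy equivalence between arbitrary topological spaces induces an isometric isomorphism on bounded cohomology in every degree. So the proof is literally a one-line composition of these two inputs.

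Concretely, I would write: since $X$ is good, Theorem \ref{homotopy-weak-intro} implies that $S\colon |\calK(X)|\to X$ is a weak homotopy equivalence; then Theorem \ref{weak:iso:thm} applied to $f=S$ yields that $H^n_b(S)\colon H^n_b(X)\to H^n_b(|\calK(X)|)$ is an isometric isomorphism for every $n\in\mathbb{N}$. No further construction, and in particular no chain-level manipulation, is needed.

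There is essentially no obstacle in this proof, because the two substantial pieces of work have already been carried out: the careful analysis of $|\calK(X)|$ via its universal cover $|\widetilde{\calK(X)}|$, together with the Hurewicz-type argument showing that $\widetilde{S}$ induces isomorphisms on integral homology, was the content of Chapter \ref{sing:mult:chap}, and Ivanov's invariance result is black-boxed from \cite{ivanov3}. If one wanted to avoid invoking Theorem \ref{weak:iso:thm} (for instance to obtain a self-contained proof within the framework of multicomplexes), then the genuine difficulty would be to construct, at the cochain level, an explicit norm non-increasing homotopy inverse to $S^\ast\colon C^\ast_b(X)\to C^\ast_b(|\calK(X)|)$, and this is precisely the role played by the Isometry Lemma (Theorem \ref{isometry-lemma-intro}) developed in the next chapter in combination with the passage to the minimal and aspherical multicomplexes $\calL(X)$ and $\calA(X)$. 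For the statement at hand, however, the quickest route is the two-step argument above.
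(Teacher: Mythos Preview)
Your proof is correct and is exactly the approach taken in the paper, which introduces the theorem as ``an immediate consequence of Theorems~\ref{homotopy-weak-intro} and~\ref{weak:iso:thm}'' without further argument. Your additional remarks about the alternative route via the Isometry Lemma are also accurate and match the paper's own discussion in the remark following Theorem~\ref{isometria-good-K(X)}.
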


\section{Bounded cohomology of complete multicomplexes}\label{isometry:lemma:sec}
We have seen in Theorem~\ref{simpl-hom-eq-sing-one} that the simplicial (co)homology of a  multicomplex is canonically isomorphic to
the singular (co)homology of its geometric realization. We have also observed that this result cannot be true in general for bounded cohomology, 
since the fact that a singular coclass admits a bounded representative cannot be easily translated into an elementary property of the corresponding simplicial coclass.
Nevertheless, things get much better for complete multicomplexes: the main goal of this section is the proof of Gromov's \emph{Isometry Lemma} \ref{isometry-lemma-intro}, which states that 
the bounded cohomology (with real coefficients) of a large complete multicomplex is isometrically isomorphic to the bounded cohomology of its geometric realization.

Let $K$ be a complete multicomplex, recall that we have a natural chain map
\begin{displaymath}
\phi_* \colon C_{*}(K) \rightarrow C_{*}(\lvert K \rvert)
\end{displaymath}
sending every element $(\sigma,(v_0,\ldots,v_n))\in C_*(K)$ to the singular simplex 
$$
\Delta^n\to |K|\, ,\quad (t_0,\ldots,t_n)\mapsto (\sigma,t_0v_0+\ldots+ t_nv_n)\ ,
$$
and denote by $\phi^*\colon C_b^*(K)\to C_b^*(|K|)$ the induced dual chain map from simplicial bounded cochains to singular bounded cochains.

\begin{thm:repeated:isometry-lemma}[Isometry Lemma]
Let $K$ be a large and complete multicomplex. Then, the map
\begin{displaymath}
H^n_b(\phi^*) \colon H_{b}^{n}(\lvert K \rvert) \rightarrow H_{b}^n(K)
\end{displaymath}
is an isometric isomorphism for every $n\in\mathbb{N}$.
\end{thm:repeated:isometry-lemma}

\begin{proof}
We may suppose that $K$ is path connected.
Since the map $H^n_b(\phi^n)$ is obviously norm non-increasing, it is sufficient to construct a norm non-increasing chain map
$\psi^*\colon C^{*}_b(K)\to C^{*}_b(|K| )$ that induces the inverse of $H^n_b(\phi^*)$ on bounded cohomology. In fact, 
we will be able to define such a map only on $C^{n\leq N}_*(K)$, where $N$ is any arbitrary natural number. However, by choosing $N>n$, this certainly suffices to
deduce that $H^n_b(\phi^n)$ is indeed an isometric isomorphism for every $n\in\mathbb{N}$.

There is no straightforward formula for $\psi_*$, since a singular simplex with values in $|K|$ does not determine any 
simplex of $K$. In fact,
the first part of the proof will be devoted to the construction of a map which associates to a singular simplex in $|K|$ a simplicial chain in $C_*(K)$.

In order to simplify notation, for every $n\in\mathbb{N}$ we denote by $\Delta^n$ (rather than by $|\Delta^n|$) the geometric realization of the standard simplex.
Let us fix an integer $N\gg 0$ and, for every $n\leq N$,
let $M_n=\frac{(N+1)!}{(N - n)!}$. We denote by
$\sigma_{\mu}^{n} \colon \Delta^{n}  \rightarrow \Delta^{N}$, $\mu=1,\ldots,M_n$,  all possible linear isomorphisms of $\Delta^{n}$ onto an $n$-face of $\Delta^{N}$.
We then define the submulticomplex
$$
\tilde{K}_N\ \subseteq\ \calK(|K|\times \Delta^N) 
$$
as follows: the class of a singular simplex $\rho\colon \Delta^n\to |K|\times \Delta^N$ belongs to $\tilde{K}_N$ if and only if $\pi\circ\rho=\sigma_\mu^n$ for some
$\mu=1,\ldots,M_n$, where $\pi\colon |K|\times \Delta^N\to \Delta^N$ is the projection on the second factor. Observe that, if $\pi\circ\rho=\sigma_\mu^n$, then $\rho$ is automatically injective on vertices,
so it lies indeed in $\calK(|K|\times \Delta^N)$.

%Then, we consider all those singular simplices $\tau \colon \Delta^{n} \rightarrow \lvert K \rvert \times \lvert \Delta^{N} \rvert$ which make the following diagram commute
%\begin{displaymath}
%\xymatrix{
%& \lvert K \rvert \times \lvert \Delta^{N} \rvert \ar[d]^{\pi} \\
%\Delta^{n} \ar[ru]^{\tau} \ar[r]_{\sigma_{n}^{\mu}} & \Delta^{N} ,
%}
%\end{displaymath}
%where $\pi$ is the projection on the second coordinate. Notice that this requirement implies that all the $\tau$'s are linear in the second component and so they are injective on the vertices.
%, since they are so on the second component. 
%This means that we can define $\tilde{K}_{N}$ to be the $N$-dimensional multicomplex generated by $\tau$'s. 

%Notice that $\tilde{K}_{N}$ is contained in the product $\mathcal{S}(\lvert K \rvert) \times \mathcal{S}(\lvert \Delta^{N} \rvert)$. Moreover, we would like to underline also that each $n$-simplex $\Delta \subset \tilde{K}_{N}$ 
%is canonically isomorphic to $\Delta^{n}$ and so all the simplices $\Delta \subset \tilde{K}_{N}$ come with canonical orientations.

We will construct the required (partial) chain map 
$\psi^*\colon C^{*}_b(K)\to C^{*}_b(|K| )$ as
the composition of two chain maps
\begin{displaymath}
\theta^*\colon C_b^{n\leq N} (K)\to C_{b}^{n \leq N} (\tilde{K}_{N})\, ,\quad 
A^* \colon C_{b}^{n \leq N} (\tilde{K}_{N}) \rightarrow C_{b}^{n \leq N}(\lvert  K \rvert)\ .
\end{displaymath} 
We first describe the map $A^*$. 
To every singular simplex $\tau \colon \Delta^{n} \rightarrow \lvert K \rvert$, one can associate the average of the simplices $\Delta_{\mu} \subset \tilde{K}_{N}$, $\mu = 1, \cdots , M_{n} = \frac{(N+1)!}{(N-n)!}$, 
where $\Delta_\mu$ is such that the following diagram commutes
\begin{displaymath}
\xymatrix{
& \lvert K \rvert \times \Delta^{N}  \ar[d]^{\tilde{\pi}} \ar[dr]^{\pi} \\
\Delta^{n} \ar[ru]^{\Delta_{\mu}} \ar[r]^{\tau} \ar@/_1pc/[rr]_{\sigma_{\mu}^{n}} & \lvert K \rvert & \Delta^{N} 
}
\end{displaymath}
(here $\tilde{\pi}$ denotes the projection on the first factor).
By dualizing this definition, one then obtains the map
 $A^*$.
 %Observe that every $\Delta_{\mu}$ is completely determined by the two projections. 
More formally, we define $A_*\colon C_*(|K|)\to C_*(\tilde{K}_N)$ to be the $\R$-linear chain map such that
$$
A_n(\tau)=\frac{1}{M_n}\sum_{\mu=1}^{M_n} \left(\overline{\tau\times\sigma^n_\mu},((\tau(e_0),\sigma^n_\mu(e_0)),\ldots,(\tau(e_n),\sigma_\mu^n(e_n))\right)\ ,
$$
where $\overline{\tau\times\sigma^n_\mu}$ denotes the equivalence class of $\tau\times\sigma^n_\mu\colon \Delta^n\to |K|\times \Delta^N$. We then
define $A^n$ as the dual map of $A_n$. Observe that $A_n$ does not increase the $\ell^1$-norm of chains, so $A^n$ does not increase the $\ell^\infty$-norm of cochains.

Let us now turn to the construction of $\theta^*$. The natural projection $|\calK(|K|\times \Delta^N)|\to |K|\times \Delta^N$ restricts to a map
$\lvert \tilde{K}_{N} \rvert \rightarrow \lvert K \rvert\times \Delta^N$. By taking the composition of this map with the projection onto $|K|$, we thus have a map
$$
f\colon |\tilde{K}_N|\to |K|\ .
$$
%\begin{displaymath}
%\xymatrix{
%\lvert \tilde{K}_{N} \rvert \ar[rrdd]_{f} \ar[rr]^-{i} && \lvert \mathcal{S}(\lvert K \rvert) \rvert \times \lvert \mathcal{S}(\lvert \Delta^{N} \rvert) \rvert \ar[d]^{\pi_{1}} \\
%&& \lvert \mathcal{S}(\lvert K \rvert) \rvert \ar[d]^{\tilde{S}} \\
%&& \lvert K \rvert ,
%}
%\end{displaymath}
%where $\pi_{1}$ denotes the projection on the first coordinate and $\tilde{S}$ is the map defined in Remark \ref{sing-funct}. Let us denote by $f$ the composition $\tilde{S} \circ \pi_{1} \circ i$.

Observe that $f$ is very far from being simplicial, so it does not induce any map on simplicial (co)homology. 
However, by making use of the completeness of $K$ (and of the peculiar definition of $\tilde{K}_N$)
we will now show that $f$ is homotopic to a non-degenerate simplicial map 
$\tilde{f} \colon  \tilde{K}_{N}  \rightarrow K $. To this aim we exploit Proposition~\ref{approx-simpl}, which applies to maps
that are injective on the set of vertices of any simplex of the domain. 
%since in order to control the $\ell^\infty$-norm of the induced map on
%bounded cochains, we need $\tilde{f}$ to be defined on $\tilde{K}_N$, and not on any of its subdivisions.

%The crucial fact is that 
Every vertex of $\tilde{K}_{N}$ can be described as a pair $(x, i)$, where $x \in \, \lvert K \rvert$ and $i$ is a vertex of $\Delta^{N}$. It readily follows that two vertices of $\tilde{K}_{N}$ are 
joined by a $1$-simplex if and only if their second components are distinct. We then define an equivalence relation on the set of vertices of $\tilde{K}_{N}$ as follows: 
\begin{displaymath}
(x, i) \sim (y, j) \mbox{ if and only if } i = j\ .
\end{displaymath}
 Thus, $(x, i)$ is equivalent to $(y, i)$ if and only if there is no $1$-simplex joining $(x, i)$ and $(y, j)$. 

We can now define $\tilde{f}$ as follows.
Since $|K|$ is path connected and has infinitely many vertices, thanks to the homotopy extension property for CW complexes we can homotope 
$f$ to a map $\bar{f}\colon |\tilde{K}_N|\to |K|$ 
 sending all the vertices which lie in the same equivalence class to the same 
vertex of $K$, with the 
additional requirement that two distinct equivalence classes are sent to different vertices of $K$. By construction, if $B\subseteq V(\tilde{K}_N)$ is the set of vertices of some simplex
of $\tilde{K}_N$, then $\bar{f}|_B$ is injective. Therefore,
Proposition~\ref{approx-simpl} allows us to homotope $\bar{f}$ to a non-degenerate simplicial map 
\begin{displaymath}
\tilde{f} \colon  |\tilde{K}_{N}|  \rightarrow  |K| 
\end{displaymath}
without subdividing the domain. We then define 
$$
\theta^*\colon C_b^{n\leq N} (K) \to C_{b}^{n \leq N} (\tilde{K}_{N})
$$
as the map induced by $\tilde{f}$ on bounded cochains. Observe that $\theta^*$ is norm non-increasing in every degree, so the composition
$$
\psi^*=A^*\circ \theta^*\colon  C_b^{n\leq N} (K) \to  C_b^{n\leq N} (|K|)
$$
is norm non-increasing. In order to conclude we are now left to show that both compositions
$\phi^*\circ \psi^*$ and $\psi^*\circ\phi^*$ induce the identity on bounded cohomology.

Let us first analyze the map $H^n_b(\psi^*) \circ H^n_b(\phi^*)$, which, by construction, is given by the composition
\begin{equation}\label{comp:1}
\xymatrix{
H^n_b(|K|)\ar[rr]^{H^n_b(\phi^*)} & &    H^n_b(K)\ar[rr]^{H^n_b(\theta^*)} & & H^n_b(\tilde{K}_N)\ar[rr]^{H^n_b(A^*)} & & H^n_b(|K|)\ .
}
\end{equation}
Let us denote by $\Phi^*\colon C^*_b(|\tilde{K}_N|)\to C^*_b(\tilde{K}_N)$ the natural map from singular
cochains to simplicial cochains, which is constructed for $\tilde{K}_N$ in the very same way as it is for $K$. 

Since $\theta^*$ is induced by the (simplicial) map $\tilde{f}\colon \tilde{K}_N\to K$,
the diagram
$$
\xymatrix{
H^n_b(K)\ar[rr]^-{H^n_b(\theta^*)} & & H^n_b(\tilde{K}_N)\\
H^n_b(|K|)\ar[rr]_-{H^n_b(\tilde{f}^*)} \ar[u]^{H^n_b(\phi^*)} & & H^n_b(|\tilde{K}_N|)\ar[u]_{H^n_b(\Phi^*)}
}
$$
commutes. Now homotopic maps induce the same morphism on singular bounded cohomology, so also the diagram
$$
\xymatrix{
H^n_b(K)\ar[rr]^-{H^n_b(\theta^*)} & & H^n_b(\tilde{K}_N)\\
H^n_b(|K|)\ar[rr]_-{H^n_b({f}^*)} \ar[u]^{H^n_b(\phi^*)} & & H^n_b(|\tilde{K}_N|)\ar[u]_{H^n_b(\Phi^*)}
}
$$
commutes, and the composition described in~\eqref{comp:1} coincides with the composition
$$
\xymatrix{
H^n_b(|K|)\ar[rr]^{H^n_b({f}^*)} & &  H^n_b(|\tilde{K}_N|)\ar[rr]^{H^n_b(\Phi^*)} & &H^n_b(\tilde{K}_N)\ar[rr]^{H^n_b(A^*)} & & H^n_b(|K|)\ .
}
$$
Now it is immediate to check that the composition $f_*\circ \Phi_*\circ A_*\colon C_n(|K|)\to C_n(|K|)$ is equal to the identity,
and this concludes the proof that $H^n_b(\psi^*) \circ H^n_b(\phi^*)$ is the identity of $H^n_b(|K|)$.

Let us now deal with the map $H^n_b(\phi^*) \circ H^n_b(\psi^*)$, which is obtained by taking the map induced in bounded cohomology by the dual of the composition
$$
 \xymatrix{
 C_*(K)\ar[r]^{\phi_*} & C_*(|K|) \ar[r]^{A_*} & C_*(\tilde{K}_N) \ar[r]^{\theta_*} & C_*(K)\ .
 }
$$

We first define a submulticomplex $Z$ of $\tilde{K}_N$ as follows: a simplex $\overline{\tau\times \sigma^n_\mu}$ of $\tilde{K}_N$ belongs
to $Z$ if and only if the singular simplex $\tau\colon \Delta^i\to |K|$ is a characteristic map for a simplex of $K$, i.e.~if and only if
it is of the form $\phi_*((\eta,(v_0,\ldots,v_i))$ for some simplex $\eta$ of $K$ and some ordering $(v_0,\ldots,v_i)$ of the vertices of $\eta$. 

By definition, the composition $A_*\circ \phi_*\colon C_*(K)\to C_*(\tilde{K}_N)$ takes values into $Z$, so 
the map $H^n_b(\phi^*) \circ H^n_b(\psi^*)$ is induced by the composition
\begin{equation}
 \xymatrix{
 C_*(K)\ar[r]^{\phi_*} & C_*(|K|) \ar[r]^{A_*} & C_*(Z) \ar[r]^{\theta_*} & C_*(K) 
 }
\end{equation}
as well (where we denote by the same symbol both $\theta_*$ and its restriction to $C_*(Z)$). Observe now that the map $f\colon |\tilde{K}_N|\to |K|$ restricts to a non-degenerate simplicial map
on $|Z|$: indeed, if $\overline{\tau\times \sigma^n_\mu}$ is such that $\tau=\phi_*((\eta,(v_0,\ldots,v_i)))$, then  $f(\overline{\tau\times \sigma^n_\mu})=\eta$. 
Recall now that the (geometric realizations of the) maps $f$ and $\widetilde{f}$ are homotopic, so also their restrictions to $|Z|$ are. Since $K$ is complete, the maps $f,\widetilde{f}\colon |Z|\to |K|$ 
are \emph{simplicially} homotopic (see Lemma~\ref{homotopy-lemma}),
hence the maps
$$
H^n_b(f^*)\colon H^n_b(K)\to H^n_b(Z)\, ,\qquad
H^n_b(\theta^*)\colon H^n_b(K)\to H^n_b(Z)
$$
coincide (see Lemma~\ref{homotopy-homology}). As a consequence, the map $H^n_b(\phi^*) \circ H^n_b(\psi^*)$ is also induced by the composition 
$$
 \xymatrix{
 C_*(K)\ar[r]^{\phi_*} & C_*(|K|) \ar[r]^{A_*} & C_*(Z) \ar[r]^{f_*} & C_*(K)\ .
 }
$$
It is immediate to check that this composition is the identity, and this concludes the proof. 
\end{proof}

%\todo{Da qui ho abbastanza rivoluzionato per qualche pagina, sfruttando i risultati di Ivanov. Ho lasciato commentate le vecchie parti (compresa la dimostrazione della variante del teorema qui sopra)}

The following result  provides an important application of Theorem~\ref{isometry-lemma-intro}.
It implies that one can compute the bounded cohomology of a good topological space just by considering simplices with distinct vertices (in fact,  the 
 cochain complex $C^*_b(\calK(X))$ of simplicial bounded cochains on $\calK(X)$ may be isometrically identified with the complex 
obtained by dualizing the complex of singular chains involving only simplices with distinct vertices).
This apparently innocuous result 
is a fundamental step towards Gromov's proof of the Mapping Theorem (see also Remark~\ref{controesempioa:rem} for a discussion of possible counterexamples to this result in the context of non-good topological spaces).

%So let $X$ be a good space, denote as usual by $S\colon |\calK(X)|\to X$ the natural projection, and let
%$$
%S_*\colon C_*(|\calK(X)|)\to C_*(X)\, ,\quad S^*\colon C^*_b(X)\to C^*_b(|\calK(X)|)
%$$
%be the induced maps on singular chains and bounded cochains.
%Also recall that 
%there is a chain map
%\begin{displaymath}
%\alpha_* \colon C_{*}(\calK(X)) \rightarrow C_{*}(\lvert \calK(X) \rvert)
%\end{displaymath}
%sending every element $(\sigma,(v_0,\ldots,v_n))\in C_*(\calK(X))$ to the singular simplex 
%$$
%\Delta^n\to |\calK(X)|\, ,\quad (t_0,\ldots,t_n)\mapsto (\sigma,t_0v_0+\ldots+\ldots t_nv_n)\ .
%$$
%We then define 
%$$
%s_*\colon C_{*}(\mathcal{K}(X))\to C_*(X)
%$$
%by setting $s_*=S_*\circ \alpha_*$
%If we denote by $\alpha_b^*\colon C_b^*(|\calK(X)|)\to C_b^*(\calK(X))$ the dual chain map induced by $\alpha_*$ on bounded cochains, then the dual map of $s_*$
%$$s^{*} \colon C_{b}^{*}(X) \rightarrow C_{b}^{*}(\mathcal{K}(X))$$
%is given by $s^*=\alpha_b^*\circ S^*$.
%%the composition of $\alpha_b^*$ with the map $C_b^*(X)\to C_b^*(|\calK(X)|)$ induced by $S$ on bounded singular cochains.

\begin{Teorema}\label{isometria-good-K(X)}
Let $X$ be a good space. Then the 
composition
$$
\xymatrix{
H^n_b(X)\ar[rr]^{H^n_b(S)} & & H^n_b(|\calK(X)|)\ar[rr]^{H^n_b(\phi^n)} & & H^n_b(\calK(X))
}
$$
is an isometric isomorphism for every $n\in\mathbb{N}$.
%induced map
%$$H_{b}^{n}(s^{*}) \colon H_{b}^{n}(X) \rightarrow H_{b}^{n}(\mathcal{K}(X))$$ is an isometric isomorphism for every $n \in \, \mathbb{N}$.
\end{Teorema}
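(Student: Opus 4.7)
The plan is essentially to concatenate two results that have already been established in the excerpt, so the work reduces to checking their hypotheses in our setting. The map $H^n_b(S)$ is known to be an isometric isomorphism for any good space by Theorem~\ref{iso-KX}, which is itself a direct consequence of Theorem~\ref{homotopy-weak-intro} (the natural projection $S$ is a weak homotopy equivalence when $X$ is good) together with Ivanov's invariance Theorem~\ref{weak:iso:thm}. It therefore suffices to show that $H^n_b(\phi^n)\colon H^n_b(|\calK(X)|)\to H^n_b(\calK(X))$ is also an isometric isomorphism. Since the composition of isometric isomorphisms is again an isometric isomorphism, the theorem will follow.

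The second ingredient is the Isometry Lemma (Theorem~\ref{isometry-lemma-intro}), which applies to large and complete multicomplexes. Completeness of $\calK(X)$ is immediate from Theorem~\ref{K(X)-compl} once $X$ is assumed to be good, so the only point worth verifying is largeness, i.e.~that every connected component of $\calK(X)$ contains infinitely many vertices. The vertex set of $\calK(X)$ is $X$, and the connected components of $|\calK(X)|$ correspond bijectively to the path connected components of $X$ (this is built into the construction, and also follows from Theorem~\ref{homotopy-weak-intro}). Now I invoke condition (2) in Definition~\ref{good:def}: every non-empty path connected finite subset of $X$ is a singleton. Hence each path connected component of $X$ is either a single point or an infinite set.

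To handle the possible singleton components, I would observe that both bounded cohomology and singular multicomplexes split over path components: $H^n_b(X)\cong \prod_\alpha H^n_b(X_\alpha)$, $|\calK(X)|$ is the disjoint union of $|\calK(X_\alpha)|$, and the complex $C^*_b(\calK(X))$ splits accordingly as a product. Moreover, all the maps in the statement respect this decomposition and are isometries with respect to the sup norm over components. It therefore suffices to treat each path connected component separately. If $X_\alpha$ is a single point, all three bounded cohomology modules are trivial in positive degree and canonically identified in degree $0$, so the statement is immediate. Otherwise $X_\alpha$ contains infinitely many points, so $\calK(X_\alpha)$ is connected and large; being also complete by Theorem~\ref{K(X)-compl}, we may apply Theorem~\ref{isometry-lemma-intro} to conclude that $H^n_b(\phi^n)$ is an isometric isomorphism on this component. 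Combined with Theorem~\ref{iso-KX}, this yields the claim.

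No step here looks genuinely hard; the only minor annoyance is the bookkeeping required to reduce to the path connected case with infinitely many points, which is needed because the Isometry Lemma as stated assumes largeness. Once that reduction is in place, the theorem is a one-line composition of previously established isometric isomorphisms.
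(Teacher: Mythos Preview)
Your proof is correct and follows exactly the paper's approach: the paper's proof is the single sentence ``The conclusion follows from Theorems~\ref{iso-KX} and~\ref{isometry-lemma-intro}.'' You are in fact more careful than the paper, since you explicitly verify the largeness hypothesis of the Isometry Lemma by reducing to path connected components and invoking condition~(2) of Definition~\ref{good:def}; the paper leaves this point implicit.
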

\begin{proof}
 The conclusion follows from Theorems~\ref{iso-KX} and~\ref{isometry-lemma-intro}.
\end{proof}

\begin{rem}
Theorem~\ref{isometria-good-K(X)} could be obtained even without exploiting the invariance of bounded cohomology
with respect to weak homotopy equivalences. In fact,  the proof of the Isometry Lemma can be easily adapted
to show directly that the bounded cohomology of $X$ is isometrically isomorphic to the simplicial bounded cohomology
of $\calK(X)$ via the natural map described above. We refer the reader to~\cite{Marco:tesi} for further details on this issue.
\end{rem}

\begin{rem}\label{controesempioa:rem}
 It is worth mentioning that we do not expect Theorem~\ref{isometria-good-K(X)} to hold for \emph{any} topological space.
 If this were the case, then we would be able to prove the following striking result: If $G$ is a finitely presented group, then
 the bounded cohomological dimension of $G$ (with real coefficients) is finite. (To our purposes, we can define the bounded cohomology of $G$ as the singular
 bounded cohomology of any aspherical CW complex with fundamental group isomorphic to $G$. The fact that this definition is well posed readily descends from the fact that homotopy equivalences
 induce isometric isomorphisms on bounded cohomology in every degree.)
 
 In fact, if $G$ is a finitely presented group, then $G\cong \pi_1(Z)$ for some 
  finite topological space $Z$ (indeed, it is easy to construct  a finite simplicial complex with fundamental group
  isomorphic to $G$, and every finite simplicial complex is weakly homotopy equivalent to a finite topological space~\cite{mccord}). 
Moreover, by Theorem~\ref{mapping1_intro} proved
below we have $H^n_b(G)\cong H^n_b(Z)$ for every $n\in\mathbb{N}$. However, if $m$ is the number of points of $Z$,
then $\dim \calK(Z)=m-1$. Since the simplicial bounded cohomology of $\calK(Z)$ may be computed via alternating cochains, this implies 
that $H^{n}_b(\calK(Z))=0$ for every $n\geq m$. Therefore, if $H^{n}_b(\calK(Z))$ were isomorphic to $H^n_b(Z)\cong H^n_b(G)$ for every $n\geq m$, then we could conclude
that the bounded cohomological dimension of $G$ is finite.  
  
  If $X$ is a non-good topological space for which Theorem~\ref{isometria-good-K(X)} fails, then either $H^n_b(X)$ is not isometrically isomorphic
  to $H^n_b(|\calK(X)|)$, or $H^n_b(|\calK(X)|)$ is not isometrically isomorphic to $H^n_b(\calK(X))$ (or both). By Theorem~\ref{iso-KX}, in the first case 
  the natural projection $S\colon |\calK(X)|\to X$ would not be a weak homotopy equivalence. Observe however that the hypothesis that $X$ is good was used
  both in the proof that (for good spaces) the natural projection is a weak homotopy equivalence, and in the proof that (for good spaces)
  the simplicial and the singular bounded cohomology of $\calK(X)$ are isometrically isomorphic.
\end{rem}

With a slight abuse, let us denote by $S\colon |\calL(X)|\to X$ the restriction of the natural projection $S\colon |\calK(X)|\to X$ to $|\calL(X)|$, and by
$\phi^*\colon C^*_b(|\calL(X)|)\to C^*_b(\calL(X))$ the usual restriction of singular cochains to simplicial cochains. 

\begin{Teorema}\label{isometria-good-L(X)}
Let $X$ be a good space. Then the 
composition
$$
\xymatrix{
H^n_b(X)\ar[rr]^{H^n_b(S)} & & H^n_b(|\calL(X)|)\ar[rr]^{H^n_b(\phi^*)} & & H^n_b(\calL(X))
}
$$
is an isometric isomorphism for every $n\in\mathbb{N}$.
%induced map
%$$H_{b}^{n}(s^{*}) \colon H_{b}^{n}(X) \rightarrow H_{b}^{n}(\mathcal{K}(X))$$ is an isometric isomorphism for every $n \in \, \mathbb{N}$.
\end{Teorema}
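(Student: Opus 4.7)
The plan is to realize the composition in the statement as a concatenation of two isometric isomorphisms, both already established in the excerpt.

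First, I would reduce to the case where every path-connected component of $X$ contains infinitely many points. Bounded cohomology decomposes as a product over path components, and the multicomplexes $\calK(X)$ and $\calL(X)$ (which have $X$ as vertex set) decompose correspondingly because each singular simplex has path-connected image. For a path component reduced to a single point $\{p\}$, the spaces $\{p\}$, $|\calL(\{p\})|$, and $\calL(\{p\})$ all collapse to a single vertex, and the statement is trivial. Since $X$ is good, every non-singleton path component is infinite by condition (2) of Definition~\ref{good:def}, and therefore the corresponding component of $\calL(X)$ (which shares its vertex set with $\calK(X)$ by Theorem~\ref{exist-min}(2)) has infinitely many vertices. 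Hence $\calL(X)$ is large in the non-trivial case.

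Second, I would invoke Corollary~\ref{cor-assoc-min-e-compl-to-X}, which guarantees that $S\colon |\calL(X)| \to X$ is a weak homotopy equivalence. Ivanov's Theorem~\ref{weak:iso:thm} then immediately yields that
\[
H^n_b(S)\colon H^n_b(X) \to H^n_b(|\calL(X)|)
\]
is an isometric isomorphism for every $n\in\matN$.

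Third, since $\calL(X)$ is large (by the reduction above) and complete (by Theorem~\ref{exist-min}(1)), the Isometry Lemma (Theorem~\ref{isometry-lemma-intro}) applies and gives that
\[
H^n_b(\phi^*)\colon H^n_b(|\calL(X)|) \to H^n_b(\calL(X))
\]
is an isometric isomorphism. Composing these two isometric isomorphisms produces the desired conclusion. Since every nontrivial ingredient has already been proved in the excerpt, there is no substantive obstacle: the only delicate point is the verification of largeness of $\calL(X)$, which the goodness hypothesis on $X$ handles cleanly via the reduction to non-singleton path components.
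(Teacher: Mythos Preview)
Your proof is correct and follows essentially the same approach as the paper: apply Corollary~\ref{cor-assoc-min-e-compl-to-X} together with Theorem~\ref{weak:iso:thm} for the first map, and the Isometry Lemma (Theorem~\ref{isometry-lemma-intro}) for the second. Your explicit reduction to non-singleton path components to verify that $\calL(X)$ is large is a point the paper leaves implicit, so in that respect your argument is slightly more careful.
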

\begin{proof}
 The conclusion follows from Theorem~\ref{iso-KX} and Corollary~\ref{cor-assoc-min-e-compl-to-X}.
\end{proof}

Let us mention another corollary of Theorem~\ref{isometry-lemma-intro}:

\begin{cor}\label{cor:equiv:hom:induces:iso:simpl:bc}
Let $f \colon L \rightarrow K$ be a simplicial map between complete and large multicomplexes. Suppose that the geometric realization of $f$ is a homotopy equivalence. 
Then $$H^*_b(f) \colon H^n_b(K) \rightarrow H^n_b(L)$$ is an isometric isomorphism.
\end{cor}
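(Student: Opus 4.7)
The plan is to reduce the statement to the Isometry Lemma (Theorem~\ref{isometry-lemma-intro}) together with the (much older and easier than Theorem~\ref{weak:iso:thm}) fact that an ordinary homotopy equivalence of topological spaces induces isometric isomorphisms on bounded cohomology in every degree. Concretely, I would consider the commutative square
$$
\xymatrix{
H^n_b(|K|) \ar[r]^{H^n_b(|f|)} \ar[d]_{H^n_b(\phi^*_K)} & H^n_b(|L|) \ar[d]^{H^n_b(\phi^*_L)} \\
H^n_b(K) \ar[r]_{H^n_b(f)} & H^n_b(L)
}
$$
where $\phi^*_K$ and $\phi^*_L$ are the natural chain maps from singular to simplicial bounded cochains used in Theorem~\ref{isometry-lemma-intro}.

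The first step is to verify that this square commutes. At the level of chains, the map $\phi_*$ sending an algebraic simplex $(\sigma,(v_0,\ldots,v_n))$ to the corresponding singular characteristic map is natural with respect to simplicial maps: if $f\colon L\to K$ is simplicial then $\phi_{*,K}\circ C_*(f) = C_*(|f|)\circ \phi_{*,L}$, simply because $|f|$ sends the affine parametrization of a simplex of $L$ to the affine parametrization of its image. Dualizing gives commutativity of the displayed square on cochains, and hence on bounded cohomology.

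The second step collects the isometric isomorphisms. Since $L$ and $K$ are both large and complete, Theorem~\ref{isometry-lemma-intro} implies that both vertical arrows $H^n_b(\phi^*_K)$ and $H^n_b(\phi^*_L)$ are isometric isomorphisms. Since by hypothesis $|f|\colon |L|\to |K|$ is a homotopy equivalence, the top horizontal map $H^n_b(|f|)$ is an isometric isomorphism (this is the elementary fact that homotopy equivalences between topological spaces induce isometric isomorphisms on bounded cohomology, which does not require Ivanov's Theorem~\ref{weak:iso:thm}). Combining these three isometric isomorphisms in the commutative square forces the remaining arrow $H^n_b(f)$ to be an isometric isomorphism as well, which is the conclusion.

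There is really no obstacle here: once one has the Isometry Lemma for large complete multicomplexes and the naturality of $\phi^*$, the corollary is a three-line diagram chase. The only point that deserves a brief check in the write-up is the naturality statement, since $\phi^*$ is not defined categorically on all pairs but only via the ad hoc geometric formula; however, as observed above, the formula is manifestly natural with respect to simplicial maps.
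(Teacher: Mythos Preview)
Your proof is correct and is essentially identical to the paper's own argument: the paper also writes down the same commutative square, invokes Theorem~\ref{isometry-lemma-intro} for the vertical arrows, and uses that $|f|$ is a homotopy equivalence for the top arrow. Your additional remark on the naturality of $\phi^*$ with respect to simplicial maps is a welcome clarification of a point the paper leaves implicit.
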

\begin{proof}
It suffices to consider the following commutative diagram
$$
\xymatrix{
H^*_b(|K|) \ar[r]^{H^*_b(|f|)} \ar[d] & H^n_b(|L|) \ar[d] \\
H^*_b(K) \ar[r]_{H^*_b(f)} & H^n_b(L)
}
$$
and observe that the vertical arrows are isometric isomorphisms by Theorem \ref{isometry-lemma-intro}, while the top horizontal arrow
is an isometric isomorphism because $|f|$ is a homotopy equivalence.
\end{proof}

When $f\colon L\to K$ and its homotopy inverse $g\colon K\to L$ are both non-degenerate, then the previous corollary directly follows from the Homotopy Lemma~\ref{homotopy-lemma},
which ensures that $f\circ g $ and $g\circ f$ are \emph{simplicially} homotopic to the identity of $L$ and of $K$, respectively.

\section{Amenable groups of simplicial automorphisms}
We have seen in Sections~\ref{minimal:section} and~\ref{aspherical:sec} that to every good space $X$ there are associated a complete multicomplex $\calK(X)$, a minimal complete multicomplex $\calL(X)$ and
a minimal, complete and aspherical multicomplex $\calA(X)$. Moreover, $\calL(X)$ is a retract (hence, a quotient) of $\calK(X)$, and $\calA(X)$ is a quotient of $\calL(X)$.
In this section we will prove that $\calA(X)$ is in fact obtained from $\calL(X)$ by taking the quotient with respect to an interesting simplicial action on $\calL(X)$.
As far as the induced action on cochains of bounded degree is concerned, this action turns out to be equivalent to the action by an amenable group. Since amenable groups
are invisible to bounded cohomology, this fact will be the key ingredient  to prove Gromov's Mapping Theorem, which (in a simplified version) asserts that 
any continuous map which induces an isomorphism on fundamental groups also induces an isometry on singular bounded cohomology (see Theorem~\ref{mapping1_intro} proved below). Even if our strategy follows~\cite[Section 3.3]{Grom82}, our definitions sometimes differ from Gromov's  (see e.g.~Remark~\ref{rem:ip:agg:gruppi:gromov}), and many of our proofs
are not even sketched in Gromov's paper.

As usual, we will not use distinct symbols for a simplicial map between multicomplexes and the induced map on geometric realizations. By \emph{simplicial action} of a group on a multicomplex $K$
we will understand a representation of a group into the group of simplicial automorphisms of $K$ (or, equivalently, into the group of 
homeomorphisms of $|K|$ induced by simplicial maps). We are now ready to introduce some groups that will play an important role in the sequel.

\begin{Definizione}\label{definizioneGamma}
Let $K$ be a multicomplex. Then, we define $\Gamma \coloneqq \Gamma(K)$ to be the group of all simplicial automorphisms that are homotopic to the identity relative to the $0$-skeleton. 

Moreover, for every $i \geq 1$ we define the subgroup
$$
\Gamma_{i}=\{g\in \Gamma\, |\,  g|_{K^i}={\rm Id}_{K^i}\}\ .
$$
\end{Definizione}

\begin{rem}\label{rem:ip:agg:gruppi:gromov}
Gromov's definition of the groups $\G$ and $\G_i$ is slightly different from ours: in~\cite[page 59]{Grom82}, elements of $\G$ (hence of the $\G_i$) are not required to be homotopic
to the identity relative to $K^0$ (while they are still required to be homotopic to the identity). However, without our further assumptions we are not able to prove Lemma~\ref{phi:hom}, hence Corollary~\ref{corC}, which 
states that the quotient $\G_1/\G_i$ is amenable for every $i\in\mathbb{N}$.
Indeed, Lemma~\ref{phi:hom} is false even if elements of $\Gamma$ are required to fix $K^0$ and to be homotopic to the identity, possibly without being homotopic to the identity \emph{relative to $K^0$}
(see Remark~\ref{nohomorem} and Lemma~\ref{homaction}).

The  amenability of  $\G_1/\G_i$ is a key ingredient in the proof of
Theorem~\ref{teoD}, on which Gromov's Mapping Theorem~\ref{mapping-intro} is based.
\end{rem}

Of course, for every $i\geq 1$ the subgroup $\Gamma_i$ is normal in $\Gamma$. Recall that, if $\Delta_0$ is an $(i+1)$-simplex of $K$, then
$\pi(\Delta_{0})$ the set of all  $(i+1)$-simplices compatible with $\Delta_{0}$. The following important result shows that the groups $\Gamma_i$ act as transitively as possible
on $(i+1)$-simplices.

\begin{Lemma}\label{lemmaB}
Let $K$ be a complete and minimal multicomplex. Let $i\geq 1$, let us fix an arbitrary $(i+1)$-simplex $\Delta_0$, and let $\Delta\in \pi(\Delta_{0})$. Let 
$F$ be a facet of $\Delta_0$ (i.e.~a codimension-1 face of $\Delta_0$). Then, there exists an element $g\in \Gamma_i$ such that the following conditions hold:
$g(\Delta_0)=\Delta$, and $g(\Delta')=\Delta'$ for every $m$-simplex $\Delta'$, $m>i$, which does not contain $F$. 
\end{Lemma}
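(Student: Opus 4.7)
The plan is to construct $g$ by induction on the dimension $m$ of simplices of $K$, simultaneously building a continuous homotopy $H^m \colon |K^m| \times [0,1] \to |K|$ from the inclusion of $|K^m|$ to $g|_{|K^m|}$, relative to $K^0$. I will carry along the inductive hypothesis that $H^m$ is stationary on $|K^i|$ and on $|\sigma|$ for every simplex $\sigma$ of dimension greater than $i$ not containing $F$, so that in particular $g$ will be the identity there.

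For dimensions $\leq i$, I would set $g = \mathrm{id}$ and take $H^m$ stationary. To pass to dimension $i+1$, I put $g(\tau) = \tau$ for every $(i+1)$-simplex $\tau$ not containing $F$ and extend $H$ stationarily on these. The delicate point is to specify $g$ on $(i+1)$-simplices containing $F$. My plan is to use Theorem~\ref{complete:minimal:special} to identify, for each such $\tau$, the set $\pi(\tau)$ with the abelian group $\pi_{i+1}(|K|, v)$ for $v$ a chosen vertex of $\tau$. After fixing paths in $|K|$ from $v_0$ to such basepoints, the element $\alpha := [\dot{S}^{i+1}(\Delta_0, \Delta)] \in \pi_{i+1}(|K|, v_0)$ transports to an element $\alpha_\tau$ in each $\pi_{i+1}(|K|, v)$, and I would define $g(\tau) \in \pi(\tau)$ to be the unique simplex whose class is $[\tau] + \alpha_\tau$. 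This forces $g(\Delta_0) = \Delta$, and bijectivity on the set of $(i+1)$-simplices containing $F$ follows because translation in an abelian group is a bijection. Using a chosen representative of $\alpha$, I would then construct, for each $\tau$ containing $F$, a continuous homotopy from $\tau$ to $g(\tau)$ relative to the vertices of $\tau$, thus assembling $H^{i+1}$.

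For the inductive step from $m-1$ to $m$ with $m \geq i+2$, take an $m$-simplex $\sigma$. If $\sigma$ does not contain $F$, set $g(\sigma) = \sigma$ and extend $H$ stationarily; this is consistent since no facet of $\sigma$ contains $F$, so $H$ on $|\partial \sigma|$ is already stationary. If $\sigma$ contains $F$, then $g|_{\partial \sigma}$ is a simplicial embedding of $\partial |\Delta^m|$ into $|K|$ given by induction, and $H|_{|\partial \sigma| \times [0,1]}$ is a continuous homotopy from the inclusion of $|\partial \sigma|$ to $g|_{\partial \sigma}$. I would invoke the homotopy extension property for the CW pair $(|\sigma|, |\partial \sigma|)$ to extend this homotopy to $\hat H_\sigma \colon |\sigma| \times [0,1] \to |K|$ with $\hat H_\sigma|_{t=0}$ the inclusion of $|\sigma|$. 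The time-one map $\hat H_\sigma|_{t=1}$ extends the simplicial embedding $g|_{\partial \sigma}$ continuously to $|\sigma|$, so by completeness it is homotopic rel $|\partial \sigma|$ to a simplicial embedding $g(\sigma) \colon |\sigma| \to |K|$; by minimality this $g(\sigma)$ is unique, and concatenating $\hat H_\sigma$ with the rel-boundary homotopy produces $H^m$ on $|\sigma| \times [0,1]$.

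The hard part will be the coherent definition of $g$ on the $(i+1)$-simplices containing $F$ together with the construction of the initial homotopy $H^{i+1}$: one must ensure that translation by $\alpha$ on the different compatibility classes $\pi(\tau)$ can be realized by continuous homotopies that fit together and that admit the inductive extension described above. The abelianness of $\pi_{i+1}(|K|)$ is what makes the notion of ``translation by $\alpha$'' well-defined modulo the $\pi_1$-action (absorbed by the path choices), and building the homotopies from a single fixed representative of $\alpha$ is what guarantees their compatibility. Bijectivity of $g$ in each dimension, and hence the fact that $g$ is a simplicial automorphism, follow from the uniqueness provided by minimality together with the observation that running the whole construction with $-\alpha$ in place of $\alpha$ produces an inverse $g^{-1}$; homotopicity of $g$ to the identity relative to $K^0$ is furnished by the assembled $H^m$'s.
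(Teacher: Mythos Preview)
There is a genuine gap in your plan, located precisely at the passage from dimension $i$ to dimension $i+1$. You carry the inductive hypothesis that $H^m$ is stationary on all of $|K^i|$. But if $H^{i+1}$ is stationary on $|K^i|$, then for every $(i+1)$-simplex $\tau$ (in particular $\tau=\Delta_0$) the restriction of $H^{i+1}$ to $|\tau|\times[0,1]$ is a homotopy from $\tau$ to $g(\tau)$ \emph{relative to $|\partial\tau|$}; by minimality of $K$ this forces $g(\tau)=\tau$. So your inductive hypothesis is incompatible with the conclusion $g(\Delta_0)=\Delta$ whenever $\Delta\neq\Delta_0$. Saying the homotopies on the $(i+1)$-simplices are only ``relative to the vertices of $\tau$'' does not help: if they do not agree with the stationary homotopy already fixed on $|K^i|\supseteq|\partial\tau|$, they cannot be assembled into a continuous $H^{i+1}$ on $|K^{i+1}|$.

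The paper's proof resolves this by allowing the homotopy to move on the single facet $F$ (and only there within $K^i$). Concretely, one retracts $|\Delta_0|$ onto the horn $|\Lambda^{i+1}_0|=|\partial\Delta_0|\setminus\mathrm{int}|F|$ and then pushes forward into $|\Delta|$ along the reverse horn retraction; this yields a homotopy from the inclusion $K^i\cup\Delta_0\hookrightarrow K$ to the map sending $\Delta_0$ to $\Delta$, relative to $K^i\setminus\mathrm{int}(F)$ (not relative to all of $K^i$). The homotopy extension property then propagates this movement precisely to the simplices containing $F$, after which completeness (via Proposition~\ref{approx-simpl}) straightens the resulting continuous map to a non-degenerate simplicial one. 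Your ``translation by $\alpha$'' picture of the effect on the $\pi(\tau)$'s is not wrong as a description of what happens, but it cannot be realised by homotopies stationary on $|K^i|$; the horn trick is the missing ingredient. As a secondary point, your bijectivity argument (``run the construction with $-\alpha$'') is shaky because the higher-dimensional extensions involve choices; the paper sidesteps this by noting that the constructed map is a homotopy equivalence bijective on vertices, hence an isomorphism by Proposition~\ref{uniq:prop}.
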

\begin{proof}
Let $f_{0} \colon \Delta_{0} \rightarrow \Delta$ be a linear isomorphism which keeps fixed the vertices of $\Delta_{0}$
(hence,
the whole boundary of $\Delta_{0}$). Then, keeping fixed the $i$-skeleton of $K$, we can extend $f_{0}$ to a map $\tilde{f} \colon K^{i} \cup \Delta_{0} \rightarrow K$ which sends $K^{i} \cup \Delta_{0}$ isomorphically to 
$K^{i} \cup \Delta$. Let  now $K'$ be the submulticomplex of $K^i$ obtained by removing from $K^i$ the (internal part of the) $i$-simplex $F$. We first prove that
$\tilde{f}$ is homotopic to the inclusion map $K^{i} \cup \Delta_{0} \hookrightarrow K$ relative to $K'$.

Let $v_0$ be the vertex of $\Delta_0$ opposite to $F$, and denote by $\Lambda^{i+1}_{0}$ the simplicial horn obtained by removing  the internal parts
of $F$ and of $\Delta_0$ from $\Delta_0$. 
Then $|\Lambda^{i+1}_{0}|$ is a strong deformation retract of $|\Delta_0|$.
We denote by $H^{\Delta_0}\colon |\Delta_0|\times [0,1]\to |\Delta_0|$ the homotopy realizing the strong deformation retraction.
Observe now that, being $\Delta$ and $\Delta_0$ compatible, $|\Lambda^{i+1}_{0}|$ is contained in $|\Delta|$ too. Moreover,
 arguing in the same way as for $\Delta_0$,
we can construct a homotopy (relative to $|\Lambda^{i+1}_{0}|$)
$H^\Delta\colon |\Delta|\times [0,1]\to |\Delta|$ between the identity and a retraction of $|\Delta|$ onto $|\Lambda^{i+1}_{0}|$.  
We can define the desired homotopy $h$ between $\tilde{f}$ and the inclusion to be the following one:
\begin{displaymath}
h \colon \lvert K^{i} \cup \Delta_{0} \rvert \times I \rightarrow \lvert K \rvert
\end{displaymath}
\begin{displaymath}
h(x, t) = \begin{cases} x &\mbox{if } x \in \, \lvert K^{i} \setminus \Delta_{0} \rvert\ , \\
H^{\Delta_{0}}(x, 2t) & \mbox{if } t \in \left[0, \frac{1}{2} \right] \mbox{ and } x \in \, \Delta_{0}\ , \\
H^{\Delta}(x, 2 - 2t) & \mbox{if } t \in \left[\frac{1}{2}, 1 \right] \mbox{ and } x \in \, \Delta_{0}\ . \end{cases}
\end{displaymath}
This proves that
$\tilde{f}$ is homotopic to the inclusion map $K^{i} \cup \Delta_{0} \hookrightarrow K$ relative to $K'$.

Observe now that the 
inclusion $i \colon \lvert K^{i} \cup \Delta_{0} \rvert \hookrightarrow \lvert K \rvert$ extends to the identity map $\textrm{Id}_{\lvert K \rvert}$. Then, by the Homotopy Extension Property for CW-pairs \cite[Proposition~0.16]{hatcher}, 
applied to the pair $(\lvert K \rvert, \lvert K^{i} \cup \Delta_{0} \rvert)$, we know that there exists
a homotopy $H \colon |K| \times I \rightarrow |K|$ extending $h$. Let $\tilde{f}_1=H(\cdot,1)\colon |K|\to |K|$, and let $K''$ be
the submulticomplex of $K$ obtained by removing from $K$ the $i$-simplex $F$ and all the simplices containing $F$.
By looking at the explicit 
description of $H$ given in  \cite[Proposition~0.16]{hatcher}, one realizes that $H$ may be assumed to be constant
on $K''$. 
%In particular, the map $\widetilde{f}_1$ 
%is simplicial on $K^{i} \cup \Delta_{0}$. 
%Indeed, $H$ is constructed inductively on the dimension of cells of $|K|$ that are not contained in $\lvert K^{i} \cup \Delta_{0} \rvert$. In particular,
%if $\Delta'$ is any $m$-simplex of $K$ not containing $F$, $m>i$, then the $i$-skeleton of $\Delta'$ is 
%kept fixed by $\tilde{f}$, and this implies that so we may suppose that $H(\cdot,t)$ restricts to the identity of $|\Delta'|$ for every $t\in [0,1]$. In other words,
%if $K''$ is the submulticomplex of $K$ obtained by removing from $K$ the $i$-simplex $F$ and all the simplices containing $F$,
%then the homotopy $H$ is constant on $|K''|$. 
In particular, $\tilde{f}_1$ is equal to the identity  on $K''$, hence it is simplicial on $K''\cup \Delta_0$. Since $K$ is complete, we can now invoke
Proposition~\ref{approx-simpl} to homotope $\tilde{f}_1$ (relative to $K''\cup \Delta_0$) to a non-degenerate simplicial map $g\colon K\to K$ which coincides with $\tilde{f}_1$ on $K''\cup\Delta_0$.
By construction, the map $g$ is homotopic to the identity relative to $K^0$, sends $\Delta_0$ to $\Delta_1$, and restricts to the identity on every $m$-simplex of $K$ 
not containing $F$, $m>i$.

In order to conclude the proof we are left to show that $g$ is a simplicial automorphism. 
By construction, $g$ restricts to a bijection of the $0$-skeleton of $K$.
Moreover, being homotopic to the identity, it is a homotopy equivalence. Then the conclusion follows from Proposition~\ref{uniq:prop}.
  \end{proof}

\begin{cor}\label{transitive-on-compatible}
Let $K$ be a complete and minimal multicomplex.
Then, two $(i+1)$-simplices of $K$ are compatible if and only if they lie in the same $\Gamma_i$-orbit. In other words, if $\Delta_0$ is any $(i+1)$-simplex, then
$\Gamma_i$ acts transitively on $\pi(\Delta_0)$.
\end{cor}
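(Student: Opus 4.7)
The plan is to deduce this corollary directly from Lemma \ref{lemmaB}, after verifying that the action of $\Gamma_i$ preserves compatibility classes. The proof is essentially immediate, but it splits naturally into two implications.

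First I would check the easy implication: if two $(i+1)$-simplices $\Delta$ and $\Delta'$ lie in the same $\Gamma_i$-orbit, then they are compatible. Indeed, pick $g\in\Gamma_i$ with $g(\Delta)=\Delta'$. By definition of $\Gamma_i$, the automorphism $g$ restricts to the identity on $K^i$; since the boundary of any $(i+1)$-simplex is contained in $K^i$, the simplex $g(\Delta)=\Delta'$ must have the same facets as $\Delta$, and in particular the same vertex set. This is precisely the definition of compatibility (see Definition~\ref{special:def}), so $\Delta'\in\pi(\Delta)$.

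For the converse, fix an arbitrary $(i+1)$-simplex $\Delta_0$ and let $\Delta\in\pi(\Delta_0)$. Here I would invoke Lemma~\ref{lemmaB} directly: after choosing any facet $F$ of $\Delta_0$ (which exists because $i\geq 1$, so $\Delta_0$ has at least two facets), the lemma provides a simplicial automorphism $g\in\Gamma_i$ with $g(\Delta_0)=\Delta$. This shows that every element of $\pi(\Delta_0)$ lies in the $\Gamma_i$-orbit of $\Delta_0$, i.e., $\Gamma_i$ acts transitively on $\pi(\Delta_0)$.

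The two implications together yield the biconditional statement, and the second formulation of the corollary is just a restatement of the ``only if'' direction. I do not anticipate any substantial obstacle: the entire content of the corollary is already encoded in Lemma~\ref{lemmaB} (whose proof contains the only nontrivial work, namely the construction of the required automorphism via the completeness and minimality hypotheses), and the forward direction is a one-line consequence of the definition of $\Gamma_i$.
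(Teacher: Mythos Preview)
Your proof is correct and follows essentially the same approach as the paper's: the paper's own proof is a two-line remark that same $\Gamma_i$-orbit implies compatible by definition of $\Gamma_i$, and that the converse is an obvious consequence of Lemma~\ref{lemmaB}. You have simply spelled out both directions in more detail.
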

\begin{proof}
By definition of $\Gamma_i$, two $(i+1)$-simplices lying in the same $\Gamma_i$-orbit are necessarily compatible, so the 
 statement is an obvious consequence of Lemma~\ref{lemmaB}.
\end{proof}

In fact, we can strengthen the previous corollary as follows:

\begin{Proposizione}\label{action-1-skeleton}
Take integers $i,n\geq 1$ such that $n\geq i+1$,
let $K$ be a complete and minimal multicomplex, and let $\Delta,\Delta'$ be two $n$-simplices of $K$. Then
$\Delta$ and $\Delta'$ lie in the same $\Gamma_i$-orbit if and only if they share the same $i$-skeleton. 
\end{Proposizione}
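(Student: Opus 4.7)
The ``only if'' direction is immediate: if $g \in \Gamma_i$ satisfies $g(\Delta) = \Delta'$, then since $g|_{K^i}$ is the identity, every $i$-face of $\Delta$ is fixed by $g$ and hence also appears as an $i$-face of $\Delta'$, so $\Delta$ and $\Delta'$ share the same $i$-skeleton. For the converse, I would proceed by induction on $n$. The base case $n = i+1$ follows directly from Corollary~\ref{transitive-on-compatible}, since two $(i+1)$-simplices sharing the $i$-skeleton share their entire boundary and are therefore compatible.

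For the inductive step, assume the statement in dimension $n-1$ and let $\Delta, \Delta'$ be $n$-simplices of $K$ sharing the $i$-skeleton. The plan is to construct the required $g \in \Gamma_i$ following the template of the proof of Lemma~\ref{lemmaB}. Let $\phi\colon |\Delta| \to |\Delta'|$ be the unique affine isomorphism fixing vertices; because $\Delta$ and $\Delta'$ share the $i$-skeleton, $\phi$ restricts to the identity on $|\Delta| \cap |K^i|$, and hence the partial map $\tilde f\colon |K^i \cup \Delta| \to |K|$ defined by the identity on $|K^i|$ and by $\phi$ on $|\Delta|$ is continuous. Assuming one can show that $\tilde f$ is homotopic to the inclusion $\iota\colon |K^i \cup \Delta| \hookrightarrow |K|$ relative to $K^0$, one extends this homotopy to all of $|K|$ by the homotopy extension property for CW pairs, obtaining a continuous $\bar f\colon |K| \to |K|$ that agrees with $\tilde f$ on $K^i \cup \Delta$ and is homotopic to the identity relative to $K^0$. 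Setting $L_1 := K^i \cup \Delta$, where $\bar f$ is already simplicial, Proposition~\ref{approx-simpl} produces a non-degenerate simplicial map $g\colon K \to K$ that coincides with $\bar f$ on $L_1$ and is homotopic to $\bar f$ relative to $V(K) \cup L_1$. By construction $g|_{K^i} = \mathrm{Id}$, $g(\Delta) = \Delta'$, $g|_{K^0} = \mathrm{Id}$ is a bijection, and $|g|$ is a homotopy equivalence; Proposition~\ref{uniq:prop} then implies that $g$ is a simplicial automorphism, so $g \in \Gamma_i$ is the desired element.

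The hard step is constructing the homotopy from $\tilde f$ to $\iota$ relative to $K^0$. If one tries the simplest choice, a homotopy that is constant on $|K^i|$, one is reduced to homotoping $\phi$ to the identity on $|\Delta|$ relative to $|\Delta| \cap |K^i|$; but this cannot be done in general, because already in the extreme case $i = n-1$ (when $\Delta$ and $\Delta'$ are compatible), the existence of such a rel-boundary homotopy would, by Lemma~\ref{lemma-the-following-3-cond-equivalent}, make $\Delta$ and $\Delta'$ homotopic, contradicting the minimality of $K$ whenever $\Delta \neq \Delta'$. Hence the homotopy on $|K^i|$ must be allowed to be non-constant. This is where the inductive hypothesis enters: applied to each pair of corresponding $(n-1)$-facets $F_j \subset \Delta$, $F_j' \subset \Delta'$ (which share the $i$-skeleton), it yields an element $h_j \in \Gamma_i$ with $h_j(F_j) = F_j'$, together with a homotopy from the identity to $h_j$ relative to $K^0$ coming from the fact that $h_j \in \Gamma$. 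The plan is to assemble these facet-level homotopies into a coherent homotopy on $|K^i \cup \Delta|$, using the completeness and largeness of $K$ together with Proposition~\ref{approx-simpl} to adjust intermediate stages so that the homotopies glue consistently along common lower-dimensional faces. Ensuring this consistent gluing is the main technical obstacle, and mirrors the delicate use of the retraction to the simplicial horn that appears in the proof of Lemma~\ref{lemmaB}.
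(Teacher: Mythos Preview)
Your proposal has a genuine gap at precisely the point you flag. You reduce everything to building a homotopy (rel $K^0$) from $\tilde f$ to the inclusion on $|K^i\cup\Delta|$, and you propose to do this by gluing together the homotopies $H_j$ attached to the facet-level automorphisms $h_j\in\Gamma_i$. But these $H_j$ are global homotopies of $|K|$, and there is no reason for $H_j$ and $H_k$ to agree on the common $(n-2)$-face $|F_j\cap F_k|$, so the gluing does not go through. The horn retraction from Lemma~\ref{lemmaB} does not help here: that argument works because $\Delta_0$ and $\Delta$ share their \emph{entire} boundary, so removing one facet leaves a contractible horn common to both; when only the $i$-skeleton is shared and $i<n-1$, no such common horn exists.

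The paper sidesteps the gluing problem entirely by never attempting to patch local homotopies. Instead it \emph{strengthens} the inductive statement: for $n$-simplices $\Delta,\Delta'$ sharing the $i$-skeleton and a chosen facet $F$ of $\Delta$, there exists $g\in\Gamma_i$ with $g(\Delta)=\Delta'$ that moreover restricts to the identity on any $m$-simplex $\Delta''$ (with $m\ge n$) meeting $\Delta$ in a common facet of $\Delta$ and $\Delta'$ other than $F$. With this stronger hypothesis, the inductive step proceeds by composing global automorphisms rather than gluing homotopies: order the vertices so the facets already shared by $\Delta$ and $\Delta'$ come first, then successively apply the inductive hypothesis to send $\partial_{j}(g_{j-1}\cdots g_0\Delta)$ to $\partial_j\Delta'$, using the ``fix $\Delta''$'' clause to guarantee that each new $g_j$ leaves all previously-corrected facets untouched. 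After $n$ steps you obtain an $(n+1)$-simplex $\Delta_n$ agreeing with $\Delta'$ in all but one facet; minimality (Lemma~\ref{min-allbutoneface}) forces the last facet to agree as well, and a final application of Lemma~\ref{lemmaB} (now legitimate, since $\Delta_n$ and $\Delta'$ are compatible) produces $g_{n+1}\in\Gamma_n<\Gamma_i$ finishing the job. The required $g$ is the composite $g_{n+1}\cdots g_0$, which automatically lies in $\Gamma_i$ and needs no local gluing.
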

\begin{proof}
Let $i\geq 1$ be fixed.
It is clear that if two simplices are $\Gamma_{i}$-equivalent, then they share the same $i$-skeleton, so we need to prove that $n$-simplices with the same $i$-skeleton 
lie in the same $\Gamma_i$-orbit.

Let $i\geq 1$ be fixed. We prove by induction on $n\geq i+1$ the following slightly stronger claim: if $\Delta,\Delta'$ are $n$-simplices sharing the same $i$-skeleton, and $F$ is a fixed facet of $\Delta$,
then there exists an element $g\in\Gamma_i$ such that the following conditions hold: 
\begin{enumerate}
\item
$g(\Delta)=\Delta'$;
\item
%Let $\overline{F}$ be a common facet of $\Delta$ and $\Delta'$, suppose that $\overline{F}\neq F$, and let $\Delta''$ be an $m$-simplex of $K$, $m\geq n$, such that
%$\Delta''\cap \Delta=\overline{F}$;
let $m\geq n$ and suppose that $\Delta''\neq \Delta$ is an $m$-simplex of $K$ such  that $\Delta''\cap \Delta=\overline{F}$ is a common facet of 
$\Delta$ and of $\Delta'$ distinct from $F$; 
%also assume that $\Delta''$ does not contain the vertex of $\Delta$ opposite to $\overline{F}$;
%suppose that $\Delta''\cap \Delta\subseteq \Delta''\cap\Delta'$, and that $\Delta''$ does not contain $F$;
%suppose that there exists a facet $F''$ of $\Delta$ such that $F''\subseteq \Delta''\cap \Delta\cap \Delta'$ (in particular, $F''$ is a common facet of $\Delta$ and of $\Delta'$), 
%and also assume that $\Delta''$ does not contain $F$;
%suppose that $\overline{F}$ is a facet of $\Delta$ which is already a facet of $\Delta'$, and assume that $\overline{F}\neq F$; 
%also let $\Delta''$ be any $m$-simplex of $K$, $m\geq n$, such that $\Delta''\cap \Delta=\Delta''\cap\Delta'=\overline{F}$; 
then $g$ restricts to the identity of $\Delta''$.
\end{enumerate}

If $n=i+1$, then $\Delta$ and $\Delta'$ are compatible (i.e.~they share the same set of facets), so the conclusion readily follows from Lemma~\ref{lemmaB}.
Let us now suppose that the claim is true for a fixed $n\geq i+1$, 
and take two $(n+1)$-simplices $\Delta,\Delta'$
sharing the same $i$-skeleton. 
We fix an ordering $v_0,\ldots,v_{n+1}$ of the vertices of $\Delta$ such that $F$ is the facet of $\Delta$ opposite to $v_{n+1}$, and 
for every $(n+1)$-simplex $\overline{\Delta}$ with the same set of vertices as $\Delta$ we denote by $\partial_j \overline{\Delta}$ the facet
of $\overline{\Delta}$ opposite to $v_j$. We also choose the ordering of the $v_i$ so that the facets shared by $\Delta$ and $\Delta'$ come first: in other words, we suppose that there exists
$0\leq j_0\leq n+1$ such that $\partial_j \Delta=\partial_j \Delta'$ for $0\leq j<j_0$, and $\partial_j\Delta\neq \partial_j \Delta'$ for $j_0\leq j\leq n$ (observe that we are not making any assumption
whether $F$ is also a facet of $\Delta'$ or not). 
Observe that, if $j_0=0$ (i.e.~if $\Delta$ and $\Delta'$ have no facet in common), then condition (2) automatically holds. Otherwise, 
we denote by
$\Delta''\neq \Delta$ a generic $m$-simplex of $K$, $m\geq n+1$, such  that $\Delta''\cap \Delta=\partial_j \Delta$ for some 
$0\leq j<j_0$.
%, and $v_{j}\notin \Delta''$.

We construct elements $g_j\in\Gamma_i$, $j=0,\ldots,n$ such that $g_0$ takes $\partial_0 \Delta$ to $\partial_0 \Delta'$, $g_1$ takes $\partial_1 (g_0\Delta)$ to $\partial_1 \Delta'$
without affecting $\partial_0 g_0(\Delta)=\partial_0 \Delta'$, 
$g_2$ takes $\partial_2 (g_1g_0\Delta)$ to $\partial_2 \Delta'$
without affecting $\partial_0 g_1g_0(\Delta)=\partial_0\Delta'$ and  $\partial_1 g_1g_0(\Delta)=\partial_1\Delta'$, and so on.
For $j=0,\ldots, j_0-1$ we have $\partial_j \Delta=\partial_j \Delta'$, and we simply set $g_j={\rm Id}_K$.
The first non-trivial transformation comes into play when settling the $j_0$-th facet. 
Let us now apply 
the inductive hypothesis by letting $\partial_{j_0} \Delta$ play the role of $\Delta$, $\partial_{j_0} \Delta'$ the role of $\Delta'$, and $\partial_{j_0} \Delta\cap \partial_{j_0+1} \Delta$
the role of $F$. Then, there 
 exists $g_{j_0}\in\Gamma_i$ such that $g_{j_0}(\partial_{j_0} \Delta)=\partial_{j_0}\Delta'$ and
 $\partial_j g_{j_0}(\Delta)=\partial_j \Delta=\partial_j \Delta'$ for every $j<j_0$. Moreover,
 since $\Delta''\cap \Delta=\partial_j \Delta$, $j<j_0$, the set $\Delta''\cap \partial_{j_0} \Delta$ 
 is equal to a common facet of $\partial_{j_0} \Delta$ and of $\partial_{j_0} \Delta'$ distinct from $\partial_{j_0} \Delta\cap \partial_{j_0+1} \Delta$.
 Therefore, we may also assume that
 $g_{j_0}(\Delta'')=\Delta''$. We set $\Delta_{j_0}=g_{j_0} (\Delta)=g_{j_0}g_{j_0-1}\cdots g_0(\Delta)$.

Now we can proceed as above to send $\partial_{j_0+1}\Delta_{j_0}$ to  $\partial_{j_0+1}\Delta'$,
without affecting $\Delta''$ and $\partial_j\Delta_{j_0}=\partial_j\Delta'$, $j=0,\ldots,j_0$:
to this aim, we apply the inductive hypothesis by letting $\partial_{j_0+1} \Delta$ play the role of $\Delta$, $\partial_{j_0+1} \Delta'$ the role of $\Delta'$, and $\partial_{j_0+1} \Delta\cap \partial_{j_0+2} \Delta$
the role of $F$. Hence, there 
 exists $g_{j_0+1}\in\Gamma_i$ such that $g_{j_0+1}(\partial_{j_0+1} \Delta)=\partial_{j_0+1}\Delta'$, 
 $\partial_j g_{j_0+1}(\Delta)=\partial_j \Delta=\partial_j \Delta'$ for every $j<j_0+1$, and $g_{j_0+1}(\Delta'')=\Delta''$. 
 We set $\Delta_{j_0+1}=g_{j_0+1} (\Delta_{j_0})=g_{j_0+1}g_{j_0}\cdots g_{0}(\Delta)$.

After $n$ steps, we end up with an $(n+1)$-simplex $\Delta_n=g_n\cdots g_0(\Delta)$ such that
$\partial_j\Delta_n=\partial_j \Delta'$ for every $j=0,\ldots,n$, and $g_n\cdots g_0(\Delta'')=\Delta''$. Observe that we cannot transform $\partial_{n+1} \Delta_n$ into $\partial_{n+1} \Delta'$
by following the same strategy as above, since no facet of $\partial_{n+1}\Delta_n$ can be moved without affecting $\partial_j \Delta_n$ for some $j<n+1$. 
Anyway, by minimality of $K$, Lemma~\ref{min-allbutoneface} implies that the equality $\partial_{n+1} \Delta_n=\partial_{n+1} \Delta'$ must automatically hold.
Therefore, $\Delta_n$ is compatible with $\Delta'$, so by Lemma~\ref{lemmaB} there exists $g_{n+1}\in \Gamma_n<\Gamma_i$ such that $g_{n+1}(\Delta_n)=\Delta'$ and
$g_{n+1}$ fixes every simplex of $K$ not containing $\partial_{n+1} \Delta_n$.
We then have
$$
\Delta'=g_{n+1}\cdots g_0 (\Delta)\ .
$$
Moreover, observe that $\Delta''$ cannot contain $\partial_{n+1} \Delta_n$, because otherwise 
it would contain the vertex $v_j$ for every $j=0,\ldots, n$, against the fact that
 $\Delta''\cap \Delta=\partial_j \Delta$ for some $j<j_0\leq n+1$.
 It follows
that 
$$
\Delta''=g_{n+1}(\Delta'')=g_{n+1}\cdots g_0 (\Delta'')\ .
$$
This concludes the proof.

\end{proof}

Recall from Section~\ref{quotient:sec} that the quotient of a multicomplex with respect to a simplicial action
which is trivial on the $0$-skeleton is still a multicomplex. In fact, Proposition~\ref{action-1-skeleton} shows that, at least when $K$ is large, complete and minimal, the quotient $K/\Gamma_1$ just coincides with 
the aspherical multicomplex associated to $K$ introduced in Section~\ref{aspherical:sec}:

\begin{cor}\label{Aquotient}
 Let $K$ be a large, complete and minimal multicomplex, and let $\Gamma_1=\Gamma_1(K)$. Let also $A$ be the aspherical quotient of $K$. Then $K/\Gamma_1$ is canonically isomorphic to $A$.
\end{cor}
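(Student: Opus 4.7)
The plan is to exhibit an explicit simplicial isomorphism $\Phi\colon K/\G_1 \to A$ and show it is a bijection on simplices of each dimension which commutes with the boundary operators.

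First I would check that the $\G_1$-action on $K$ is $0$-trivial (in fact, it fixes the entire $1$-skeleton by the very definition of $\G_1$), so that Proposition~\ref{0trivial:prop} applies and $K/\G_1$ is well-defined as a multicomplex, with the same vertex set as $K$ and with $n$-simplices in bijection with the $\G_1$-orbits of $n$-simplices of $K$. Since $\G_1$ fixes $K^1$ pointwise, the $1$-skeleton of $K/\G_1$ is canonically identified with $K^1$, which is also (by construction) the $1$-skeleton of $A$. Define $\Phi$ on $K^1 = (K/\G_1)^1$ to be the identity onto $A^1$.

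For $n \geq 2$, I would use Proposition~\ref{action-1-skeleton} as the key input: two $n$-simplices of $K$ lie in the same $\G_1$-orbit if and only if they share the same $1$-skeleton. Therefore the $n$-simplices of $K/\G_1$ are in canonical bijection with the set of $1$-skeleta of $n$-simplices of $K$; but by construction this is exactly the set of $n$-simplices of $A$. This bijection defines $\Phi$ in dimension $n$, and the bijection is canonical (no choice of representative is needed because $\G_1$ preserves the $1$-skeleton).

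It remains to check compatibility with boundary maps. For a simplex $[\sigma]$ in $K/\G_1$ with $\sigma$ an $n$-simplex of $K$ and any subset $B$ of its vertex set of size $\geq 2$, the face $\partial_B [\sigma] = [\partial_B \sigma]$ is determined by the $1$-skeleton of $\partial_B \sigma$, which is a subcomplex of the $1$-skeleton of $\sigma$; this matches precisely the definition of the boundary in $A$, where the face with vertex set $B$ of the unique $n$-simplex having a prescribed $1$-skeleton $T$ is the unique simplex of $A$ with the corresponding sub-$1$-skeleton of $T$. For faces of dimension $\leq 1$ the identification is tautological since $\Phi$ is the identity on $K^1$. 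Thus $\Phi$ commutes with all boundary maps, so it is a simplicial isomorphism. The only step that requires substance is the use of Proposition~\ref{action-1-skeleton} (together with minimality, which enters through Lemma~\ref{min-allbutoneface}); the rest is a bookkeeping verification that $K/\G_1$ and $A$ are built by the same recipe.
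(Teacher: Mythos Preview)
Your proof is correct and follows exactly the approach the paper intends: the corollary is stated immediately after Proposition~\ref{action-1-skeleton} with no explicit proof, the paper treating it as an immediate consequence of that proposition together with the definition of the aspherical quotient $A$. You have simply spelled out the details the paper omits—identifying the $1$-skeleta, using Proposition~\ref{action-1-skeleton} to match $\G_1$-orbits of $n$-simplices with ``same $1$-skeleton'' equivalence classes for $n\geq 2$, and verifying compatibility with boundary maps—and each step is sound.
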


%\todo{Il remark che ho aggiunto qui sotto e' fondamentale come giustificazione del lavoro, lo mettero' anche nell'introduzione. Secondo me sta bene qui,
%vicino ai risultati che commenta, ma pensa se possa andare meglio un'altra collocazione.}

%\todo{Nel Remark qui sotto ho aggiunto la citazione ad un libro che mi sembra molto bello, e che potremmo citare anche altrove come riferimento moderno alla teoria
%degli insiemi simpliciali}

\begin{rem}\label{rem:embedding:no:simpl:set}%qui
As already mentioned in the introduction, it is very natural to wonder whether
Gromov's Mapping Theorem could be proved by resorting to the classical theory of simplicial sets, rather than 
to the quite exotic theory of multicomplexes. 
Unfortunately, this seems very hard. 
A first issue regards the fact that, in order to effectively exploit the action of $\Gamma$ on the simplices of $\calK(X)$ (or of $\calL(X)$), we definitely need to
be able to construct elements in $\G$ which act on some simplices of $\calK(X)$ as an orientation-reversing affine automorphism
(see the conclusion of the proof of the Vanishing Theorem~\ref{vanishing1_intro} at  the very end of Chapter~\ref{vanishing-thm:chap}, or Corollary~\ref{scambi}). To this aim, it is crucial that simplices of $\calK(X)$ correspond to singular simplices in $X$ \emph{up to affine automorphisms}, and this implies
in turn that simplices of $\calK(X)$ must come with no orderings on their vertices. Due to this fact, one could try to work with \emph{unordered} simplicial sets, as
described e.g.~in~\cite{grandis1, grandis2, rosick-symm}. However, also the fact that the simplices of $\calK(X)$ have distinct vertices plays a fundamental role in our arguments, as the following discussion shows.

When working with a pointed (Kan) simplicial set $(\calS,x_0)$, the $n$-th homotopy group of $(|\calS|,x_0)$ is usually studied via the analysis
of \emph{spherical} simplices, that can be defined as follows: a simplex $\Delta$ of $\calS$ is spherical if every face of $\Delta$ is the degenerate
$(n-1)$-simplex supported on the singleton $x_0$. In other words, $\Delta$ is spherical if $|\partial \Delta|=\{x_0\}$, i.e.~if the geometric
realization of $\Delta$ induces a map of pairs $(|\Delta|,|\partial \Delta|)\to (|\calS|,x_0)$. Indeed, when $\calS$ is Kan and minimal, 
the elements of the $n$-th homotopy group of $|\calS|$ bijectively correspond to the spherical simplices of $\calS$ based at $x_0$
(see e.g.~\cite[Section~I.11, page 60]{GJ} and \cite[Definition~3.6]{may}).

Recall now from Corollary~\ref{homKS} that $\calK(X)$ (hence $\calL(X)$) may be identified with a subcomplex
of $\calS(X)$. However, $\calS(X)$ contains many more simplices than $\calK(X)$ and $\calL(X)$:
for example, in positive dimension spherical simplices cannot belong to $\calK(X)$. 
When we tried to adapt Gromov's ideas to the context of simplicial sets,
this raised several difficulties that we were not able to overcome.
Indeed, let us choose  a minimal Kan subcomplex $\calM(X)$ of $\calS(X)$ such that the inclusion $|\calM(X)|\hookrightarrow |\calS(X)|$ is a homotopy equivalence,
and let $\G^S$ be the group of simplicial automorphisms of $\calM(X)$ which are homotopic to the identity relative to the $0$-skeleton. Also let $x_0$ be a vertex of $\calM(X)$
(which is also a point of $X$).
Since $\calM(X)$ is Kan and minimal, the group $\pi_n(|\calM(X)|,x_0)\cong \pi_n(X,x_0)$ is in bijection with the set of spherical simplices
of $\calM(X)$ based at $x_0$. Moreover, every element of $\G^S$, being homotopic to the identity relative to $x_0$, fixes every spherical simplex based at $x_0$.
This implies that no analogue of Corollary~\ref{transitive-on-compatible} or of Proposition~\ref{action-1-skeleton} can hold for the action of $\G^S$ on $\calM(X)$. In particular, the projections of all the spherical simplices
of $\calM(X)$ to the quotient $\calM(X)/\G^S$ remain pairwise distinct. Therefore, either $\calM(X)/\G^S$ is not aspherical, or it is not minimal.
In any case, the realization of an aspherical simplicial set with the same fundamental group of $X$ as a quotient of $\calM(X)$ by a simplicial action 
seems hard to achieve. This fact represents a major difficulty in the approach to Gromov's arguments via simplicial sets rather than via multicomplexes.
\end{rem}

As we will see (and is very well known), amenable groups are invisible to bounded cohomology, so if $\Gamma_1$ were amenable we could conclude that the bounded cohomology of a large, complete and aspherical multicomplex
$K$ should coincide with the one of its aspherical quotient. However, $\Gamma_1$ is \emph{not} amenable in general. Nevertheless, we will now show that $\Gamma_1/\Gamma_i$ is solvable (hence, amenable) for every $i\geq 1$. This will prove sufficient to show that the bounded
cohomology of $K$ coincides with the one of $A$ in every degree smaller than $i$, hence in every degree, thanks to the arbitrariness of $i$.

We begin with the following result, which 
establishes a relationship between the action of $\Gamma_{i-1}$ on $K$ and the description of $\pi_i(|K|)$ via the use of special spheres. Let $i\geq 2$,
and fix a set of representatives $\{\Delta_\alpha\}_{\alpha\in J}$ for the action of $\Gamma_{i-1}$ on the set of $i$-simplices of $K$. Also fix an ordering on the vertices of
$\Delta_\alpha$ for every $\alpha\in J$, and denote by $p_\alpha$ the first vertex of $\Delta_\alpha$.
%If $K$ is complete, then by Corollary~\ref{transitive-on-compatible} the $\Gamma_{i-1}$-orbit of $\Delta_{\alpha}$ turns out to be the whole $\pi(\Delta_{\alpha})$. 
For every
$\alpha\in J$ and 
for every $\gamma \in  \Gamma_{i-1}$ we can define a special sphere $\dot{S}^i_\alpha(\gamma)\colon (\dot{S}^i,s_0)\to (K,p_\alpha)$ by setting
$$
\dot{S}^i_\alpha(\gamma)=\dot{S}^i(\Delta_\alpha,\gamma(\Delta_\alpha))\ 
$$
(see Definition~\ref{special:def} for the definition of $\dot{S}^i(\Delta_1,\Delta_2)$ for any pair $\Delta_1,\Delta_2$ of compatible $i$-simplices).

%In the following result, we need our slightly modification of the groups $\Gamma_i$ with respect to Gromov's original definition \cite[page 89]{Grom82} as discussed in Remark~\ref{rem:ip:agg:gruppi:gromov}.

\begin{lemma}\label{phi:hom}
Let $i\geq 2$. The map
$$
\phi_\alpha\colon \Gamma_{i-1}\to \pi_i(|K|,p_\alpha)\, ,\quad 
\phi_\alpha(\gamma)=\left[\dot{S}^i_\alpha(\gamma)\right]
$$
is a group homomorphism.
\end{lemma}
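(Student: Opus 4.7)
The plan is to prove $\phi_\alpha(\gamma_1\gamma_2) = \phi_\alpha(\gamma_1) + \phi_\alpha(\gamma_2)$ in the abelian group $\pi_i(|K|, p_\alpha)$ (which is abelian since $i\geq 2$) by first using the homotopy $\gamma_1 \simeq \id_{|K|}$ rel $K^0$ to transport one summand, and then invoking an additivity formula for special spheres built from three pairwise compatible $i$-simplices. Observe first that $\phi_\alpha$ is well-defined: for $\gamma\in\Gamma_{i-1}$ we have $\gamma|_{K^{i-1}}=\id$ so $\partial\gamma(\Delta_\alpha)=\partial\Delta_\alpha$, i.e.~$\gamma(\Delta_\alpha)\in\pi(\Delta_\alpha)$, and moreover $\gamma(p_\alpha)=p_\alpha$.

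First, I would exploit the basepoint-preserving homotopy $\gamma_1\simeq\id_{|K|}$ rel $K^0$, which is part of the definition of $\Gamma\supseteq\Gamma_{i-1}$. For any pointed map $f\colon(\dot{S}^i,s_0)\to(|K|,p_\alpha)$, composing with such a homotopy shows that $[\gamma_1\circ f]=[f]$ in $\pi_i(|K|,p_\alpha)$. Taking $f=\dot{S}^i_\alpha(\gamma_2)=\dot{S}^i(\Delta_\alpha,\gamma_2(\Delta_\alpha))$ and noting from the definition of $\dot{S}^i(-,-)$ that $\gamma_1\circ\dot{S}^i(\Delta_\alpha,\gamma_2(\Delta_\alpha))=\dot{S}^i(\gamma_1(\Delta_\alpha),\gamma_1\gamma_2(\Delta_\alpha))$, I get
\[
\bigl[\dot{S}^i(\gamma_1(\Delta_\alpha),\gamma_1\gamma_2(\Delta_\alpha))\bigr]=\phi_\alpha(\gamma_2).
\]

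The main step is then an \emph{additivity lemma} for special spheres, which I expect to be the principal obstacle: if $\Delta_A,\Delta_B,\Delta_C$ are three pairwise compatible $i$-simplices and $p$ is any common vertex, then
\[
\bigl[\dot{S}^i(\Delta_A,\Delta_C)\bigr]=\bigl[\dot{S}^i(\Delta_A,\Delta_B)\bigr]+\bigl[\dot{S}^i(\Delta_B,\Delta_C)\bigr]
\]
in $\pi_i(|K|,p)$. To prove this I plan a standard pinching argument. Pick a smaller concentric $i$-simplex $\Delta'$ inside the northern hemisphere $|\Delta^i_n|\subset|\dot{S}^i|$, splitting $|\Delta^i_n|$ into the inner disc $|\Delta'|$ and an annular region $A\cong\partial|\Delta^i_n|\times[0,1]$. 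I define a map $F\colon|\dot{S}^i|\to|K|$ by sending $|\Delta^i_s|$ to $\Delta_A$ affinely, $|\Delta'|$ to $\Delta_C$ affinely, and $A$ to $\Delta_B$ via a map whose two boundary components both go to $\partial\Delta_B=\partial\Delta_A=\partial\Delta_C$ by the identifications, and which sweeps out $\Delta_B$ by traversing from $\partial\Delta_B$ to the barycenter and back. Since $\Delta_B$ is contractible, collapsing $A$ to $\partial\Delta_B$ produces a homotopy (rel the two boundary circles of $A$) from $F$ to $\dot{S}^i(\Delta_A,\Delta_C)$. On the other hand, pinching the middle sphere $\partial\Delta_B\times\{1/2\}\subset A$ to a point exhibits $F$ as the composition of the standard pinch map $|\dot{S}^i|\to|\dot{S}^i|\vee|\dot{S}^i|$ with $\dot{S}^i(\Delta_A,\Delta_B)\vee\dot{S}^i(\Delta_B,\Delta_C)$, which by definition of the group operation in $\pi_i$ represents the claimed sum. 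Care must be taken to keep the basepoint $p$ fixed throughout, but since $p\in\partial\Delta_A=\partial\Delta_B=\partial\Delta_C$ lies on the common equator, the construction can be arranged to fix $p$.

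Finally, combining these two ingredients with the choice $\Delta_A=\Delta_\alpha$, $\Delta_B=\gamma_1(\Delta_\alpha)$, $\Delta_C=\gamma_1\gamma_2(\Delta_\alpha)$ yields
\[
\phi_\alpha(\gamma_1\gamma_2)=\bigl[\dot{S}^i(\Delta_\alpha,\gamma_1\gamma_2(\Delta_\alpha))\bigr]
=\bigl[\dot{S}^i(\Delta_\alpha,\gamma_1(\Delta_\alpha))\bigr]+\bigl[\dot{S}^i(\gamma_1(\Delta_\alpha),\gamma_1\gamma_2(\Delta_\alpha))\bigr]
=\phi_\alpha(\gamma_1)+\phi_\alpha(\gamma_2),
\]
concluding the proof. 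The delicate point is really the additivity lemma, and in particular ensuring that both identifications (homotoping through the contractible $\Delta_B$ and factoring through the pinch) can be carried out simultaneously with the correct basepoint; a careful but routine bookkeeping of the collapse and pinch maps should close this gap.
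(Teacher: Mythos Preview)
Your proposal is correct and follows essentially the same approach as the paper: both proofs rest on the same two ingredients, namely the additivity formula $[\dot{S}^i(\Delta_A,\Delta_C)]=[\dot{S}^i(\Delta_A,\Delta_B)]+[\dot{S}^i(\Delta_B,\Delta_C)]$ for compatible simplices, and the fact that $\gamma_1\simeq\id$ rel $K^0$ implies $[\gamma_1\circ\dot{S}^i(\Delta_\alpha,\gamma_2(\Delta_\alpha))]=[\dot{S}^i(\Delta_\alpha,\gamma_2(\Delta_\alpha))]$. The paper simply asserts the additivity as ``readily checked'' from the definition of the sum in $\pi_i$, whereas you spell out the pinching/collapse argument in some detail; otherwise the arguments are the same (you just reverse the order of the two steps).
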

\begin{proof}
Take elements $\gamma_1,\gamma_2\in\Gamma_{i-1}$. By exploiting the very definition of sum  in $\pi_i(|K|,p_\alpha)$ it is readily checked that
\begin{align*}
\phi_\alpha(\gamma_1\gamma_2)&=\left[\dot{S}^i (\Delta_\alpha,\gamma_1\gamma_2(\Delta_\alpha))\right]=
\left[\dot{S}^i (\Delta_\alpha,\gamma_1(\Delta_\alpha))\right]+ \left[\dot{S}^i (\gamma_1(\Delta_\alpha),\gamma_1\gamma_2(\Delta_\alpha))\right]\\ &=
\phi_\alpha(\gamma_1)+\left[\gamma_1\circ \dot{S}^i (\Delta_\alpha,\gamma_2(\Delta_\alpha))\right]\ .
\end{align*}
Recall now that, as a map from $|K|$ to itself, every element of $\Gamma_{i-1}$ is homotopic to the identity relative to the $0$-skeleton of $|K|$, hence relative to
$p_\alpha$. As a consequence we have
$$
\left[\gamma_1\circ \dot{S}^i (\Delta_\alpha,\gamma_2(\Delta_\alpha))\right]
=\left[\dot{S}^i (\Delta_\alpha,\gamma_2(\Delta_\alpha))\right]=\phi_\alpha(\gamma_2)\ ,
%\left[\dot{S}^i(\gamma(\Delta_1),\gamma(\Delta_2))\right]=\left \gamma\circ \dot{S}^i(\Delta_1,\Delta_2)\right]=\left[ \dot{S}^i(\Delta_1,\Delta_2)\right]\ .
$$
and this concludes the proof.
\end{proof}

%We now consider the direct product $\prod_{\alpha \in \, J} \pi_{i}(\lvert K \rvert, p_{\alpha})= \prod_{\alpha \in \, J} \pi_{i}(\lvert K \rvert, p_{\alpha})$. 
The direct product of the homomorphisms $\phi_\alpha$
defines a homomorphism 
\begin{displaymath}
\phi^{(i)} \colon \Gamma_{i-1} \rightarrow \prod_{\alpha \in \, J} \pi_{i}(\lvert K \rvert, p_{\alpha})\ .
\end{displaymath}

\begin{lemma}\label{kerphi}
Let $K$ be a minimal multicomplex and let $i\geq 2$. Then $\ker \phi^{(i)}=\Gamma_i$.
\end{lemma}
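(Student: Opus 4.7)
The plan is to prove the two inclusions separately, exploiting the combinatorial description of homotopy classes via special spheres given in Lemma~\ref{lemma-the-following-3-cond-equivalent} together with minimality (via Lemma~\ref{caratterizzazione-minimality-per-confronto-ammissibili}), and the fact that $\ker\phi^{(i)}$ is automatically normal in $\Gamma_{i-1}$.

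First I would establish $\Gamma_i\subseteq\ker\phi^{(i)}$, which is essentially immediate. If $\gamma\in\Gamma_i$, then $\gamma$ restricts to the identity on the $i$-skeleton, so in particular $\gamma(\Delta_\alpha)=\Delta_\alpha$ for every $\alpha\in J$, and moreover the two characteristic maps of $\Delta_\alpha$ used to build the sphere $\dot S^i(\Delta_\alpha,\gamma(\Delta_\alpha))=\dot S^i(\Delta_\alpha,\Delta_\alpha)$ coincide. By the equivalence (i)$\Leftrightarrow$(ii) of Lemma~\ref{lemma-the-following-3-cond-equivalent}, this special sphere is null-homotopic, so $\phi_\alpha(\gamma)=0$ for all $\alpha$, i.e.\ $\gamma\in\ker\phi^{(i)}$.

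Next, for the harder direction $\ker\phi^{(i)}\subseteq\Gamma_i$, take $\gamma\in\ker\phi^{(i)}$. For each $\alpha\in J$, the vanishing of $\phi_\alpha(\gamma)$ together with Lemma~\ref{lemma-the-following-3-cond-equivalent} implies that the compatible $i$-simplices $\Delta_\alpha$ and $\gamma(\Delta_\alpha)$ (compatible because $\gamma\in\Gamma_{i-1}$ fixes $\partial\Delta_\alpha$ pointwise) are homotopic in the sense of Definition~\ref{special:def}. Minimality of $K$ and Lemma~\ref{caratterizzazione-minimality-per-confronto-ammissibili} then force $\gamma(\Delta_\alpha)=\Delta_\alpha$. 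Now let $\Delta$ be an arbitrary $i$-simplex of $K$. By Corollary~\ref{transitive-on-compatible} (applied within each $\Gamma_{i-1}$-orbit, and using that $\{\Delta_\alpha\}_{\alpha\in J}$ is a set of representatives for these orbits), we can write $\Delta=\eta(\Delta_\alpha)$ for some $\eta\in\Gamma_{i-1}$ and some $\alpha\in J$. Since $\ker\phi^{(i)}$ is a normal subgroup of $\Gamma_{i-1}$, the element $\eta^{-1}\gamma\eta$ also belongs to $\ker\phi^{(i)}$; applying the previous step to $\eta^{-1}\gamma\eta$ yields $\eta^{-1}\gamma\eta(\Delta_\alpha)=\Delta_\alpha$, so $\gamma(\Delta)=\eta(\Delta_\alpha)=\Delta$. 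Hence $\gamma$ fixes every $i$-simplex of $K$ as a simplex of the multicomplex.

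To upgrade this to $\gamma\in\Gamma_i$, one must check that $\gamma$ is the identity on $|K^i|$, not merely a permutation of the $i$-simplices. But because $i\geq 2$ and $\gamma\in\Gamma_{i-1}$, $\gamma$ fixes every vertex of $K$; since the restriction of a simplicial automorphism to a single simplex is an affine isomorphism permuting the vertices, the fact that $\gamma$ fixes $\Delta$ as a simplex and fixes each of its vertices forces $\gamma|_{|\Delta|}=\mathrm{Id}$, so indeed $\gamma\in\Gamma_i$. There is no serious obstacle here; the only subtle point — and thus the main thing to get right in the write-up — is to explicitly invoke normality of $\ker\phi^{(i)}$ to transfer the conclusion from the chosen representatives $\Delta_\alpha$ to arbitrary $i$-simplices, and to note the use of $i\geq 2$ when passing from ``fixed as a set'' to ``fixed pointwise''.
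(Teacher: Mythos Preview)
Your proof is correct and follows essentially the same approach as the paper: identify $\ker\phi_\alpha$ with the stabilizer of $\Delta_\alpha$ via Lemma~\ref{lemma-the-following-3-cond-equivalent} plus minimality, then use normality to propagate this to the whole $\Gamma_{i-1}$-orbit. Two small remarks: the citation of Corollary~\ref{transitive-on-compatible} is unnecessary (and that corollary assumes completeness, which this lemma does not) --- the fact that every $i$-simplex has the form $\eta(\Delta_\alpha)$ is immediate from the definition of $\{\Delta_\alpha\}_{\alpha\in J}$ as a set of $\Gamma_{i-1}$-orbit representatives; and the paper uses normality of each individual $\ker\phi_\alpha$ rather than of $\ker\phi^{(i)}$, which is a cosmetic difference. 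Your final paragraph making explicit why fixing each $i$-simplex setwise plus fixing vertices yields $\gamma|_{|K^i|}=\mathrm{Id}$ is a detail the paper leaves implicit.
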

\begin{proof}
%Of course we have
%\begin{displaymath}
%\ker(\phi^{(i)}) = \bigcap_{\alpha \in \, J} \ker(\phi_{\alpha})\ ,
%\end{displaymath}
%so we need to describe $\ker \phi_\alpha$ as $\alpha$ varies in $J$.

Notice that $\gamma \in \, \ker(\phi_{\alpha})$ if and only if  the special sphere
$\dot{S}^i(\Delta_\alpha,\gamma(\Delta_\alpha))$ is homotopically trivial. By Lemma \ref{lemma-the-following-3-cond-equivalent}, this holds
if and only if $\gamma(\Delta_\alpha)$ is homotopic to $\Delta_\alpha$, hence, by minimality of $K$, if and only if $\gamma(\Delta_\alpha)=\Delta_\alpha$. 
 This proves that the kernel 
of $\phi_{\alpha}$ coincides with the stabilizer of $\Delta_{\alpha}$ in $\Gamma_{i-1}$. However, the kernel of $\phi_\alpha$ is obviously normal in
$\Gamma_{i-1}$, so it must coincide with the stabilizer of any simplex in the orbit of $\Delta_\alpha$.  Since the union of the orbits of the $\Delta_\alpha$, $\alpha\in J$, is just
the set of all $i$-simplices of $K$, we can conclude that
\begin{displaymath}
\ker\left(\phi^{(i)}\right) = \bigcap_{\alpha \in J} \ker(\phi_{\alpha}) = \Gamma_{i}\ .
\end{displaymath}
\end{proof}

Let $i\geq 2$. Being the direct product of $i$-th homotopy groups, the group $\prod_{\alpha \in \, J} \pi_{i}(\lvert K \rvert, p_{\alpha})$ is abelian. Therefore, from the previous lemma we immediately deduce the following:

\begin{cor}\label{corabelian}
Let $K$ be minimal and let $i\geq 2$. Then the group $\Gamma_{i-1}/\Gamma_{i}$ is abelian.
\end{cor}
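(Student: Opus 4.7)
The plan is to deduce the corollary directly from the two preceding lemmas, essentially as a one-line consequence once one observes that higher homotopy groups are abelian. I would proceed as follows.

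First, I would invoke Lemma~\ref{phi:hom}: since each component map $\phi_\alpha \colon \Gamma_{i-1}\to \pi_i(|K|,p_\alpha)$ is a group homomorphism, the product map
\[
\phi^{(i)}\colon \Gamma_{i-1}\longrightarrow \prod_{\alpha\in J}\pi_i(|K|,p_\alpha)
\]
is also a group homomorphism. Next, by Lemma~\ref{kerphi} (where minimality of $K$ is needed), the kernel of $\phi^{(i)}$ equals $\Gamma_i$. Therefore $\phi^{(i)}$ descends to an \emph{injective} homomorphism
\[
\overline{\phi}^{(i)}\colon \Gamma_{i-1}/\Gamma_i \hookrightarrow \prod_{\alpha\in J}\pi_i(|K|,p_\alpha).
\]

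To conclude, I would use the standard fact that the homotopy groups $\pi_i(|K|,p_\alpha)$ are abelian for every $i\geq 2$; hence their direct product is also abelian. A subgroup of an abelian group is abelian, so $\Gamma_{i-1}/\Gamma_i$ is abelian, which is exactly the statement of the corollary. There is no real obstacle here: both of the non-trivial inputs (the fact that $\phi_\alpha$ is a homomorphism, and the identification of its kernel with $\Gamma_i$) have already been established in Lemmas~\ref{phi:hom} and~\ref{kerphi}, and commutativity of $\pi_i$ for $i\geq 2$ is a classical result.
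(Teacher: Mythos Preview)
Your proof is correct and follows exactly the same route as the paper: use Lemma~\ref{kerphi} to embed $\Gamma_{i-1}/\Gamma_i$ into the product $\prod_\alpha \pi_i(|K|,p_\alpha)$, and then note that this product is abelian since $i\geq 2$.
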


\begin{cor}\label{corC}
Let $K$ be minimal. Then for every $i\geq 2$ the group $\Gamma_1/\Gamma_i$ is solvable, hence amenable.
\end{cor}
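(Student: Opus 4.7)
The plan is to proceed by induction on $i\geq 2$. For the base case $i=2$, Corollary~\ref{corabelian} directly gives that $\Gamma_1/\Gamma_2$ is abelian, hence solvable.

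For the inductive step, assume that $\Gamma_1/\Gamma_{i-1}$ is solvable. Since $\Gamma_i$ is a normal subgroup of $\Gamma_1$, and $\Gamma_{i-1}$ contains $\Gamma_i$ and is also normal in $\Gamma_1$, we have a short exact sequence
$$
1 \longrightarrow \Gamma_{i-1}/\Gamma_i \longrightarrow \Gamma_1/\Gamma_i \longrightarrow \Gamma_1/\Gamma_{i-1} \longrightarrow 1 \ .
$$
By Corollary~\ref{corabelian} the kernel $\Gamma_{i-1}/\Gamma_i$ is abelian, hence solvable, while the quotient $\Gamma_1/\Gamma_{i-1}$ is solvable by the inductive hypothesis. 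Since any extension of a solvable group by a solvable group is solvable, we conclude that $\Gamma_1/\Gamma_i$ is solvable.

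Finally, solvable groups are amenable (as amenability is preserved under extensions and abelian groups are amenable), so $\Gamma_1/\Gamma_i$ is amenable. There is no real obstacle here: the whole argument is a straightforward induction built on top of Corollary~\ref{corabelian}, which provides the abelian quotients $\Gamma_{i-1}/\Gamma_i$ that make $\Gamma_1$ behave like a ``nilpotent-of-length-$i$'' filtration modulo $\Gamma_i$.
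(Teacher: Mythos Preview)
Your proof is correct and is essentially the same as the paper's: both use the normal series $\Gamma_1 \supseteq \Gamma_2 \supseteq \cdots \supseteq \Gamma_i$ together with Corollary~\ref{corabelian} to see that all successive quotients are abelian. The paper writes down the subnormal series $\Gamma_1/\Gamma_i \unrhd \Gamma_2/\Gamma_i \unrhd \cdots \unrhd \{1\}$ explicitly, while you package the same argument as an induction via the extension $1 \to \Gamma_{i-1}/\Gamma_i \to \Gamma_1/\Gamma_i \to \Gamma_1/\Gamma_{i-1} \to 1$; these are two phrasings of the same idea.
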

\begin{proof} We have
\begin{displaymath}
\Gamma_{1} \slash \Gamma_{i} \unrhd \Gamma_{2} \slash \Gamma_{i} \unrhd \cdots \unrhd \Gamma_{i} \slash \Gamma_{i} = \{ 1 \}.
\end{displaymath}
Moreover, for every $j=1,\ldots,i-1$ we have
\begin{displaymath}
\frac{\Gamma_{j-1} \slash \Gamma_{i}}{\Gamma_{j} \slash \Gamma_{i}} \cong \Gamma_{j-1} \slash \Gamma_{j}\ ,
\end{displaymath}
which is abelian by Corollary~\ref{corabelian}.
\end{proof}

\section{Bounded cohomology is determined by the fundamental group}\label{bd:fg:sc}
As anticipated above, 
our next goal is to prove that, if $K$ is large, complete and minimal,
then the quotient map $K\to K/\Gamma_1$ on the aspherical quotient 
induces an isometric isomorphism in bounded cohomology. We begin with the following result, which concerns simplicial actions of amenable groups. 
Recall that, if $G$ acts simplicially on a multicomplex $K$, we denote by $C_b^*(K)^G\subseteq C_b^*(K)$ the subcomplex of simplicial $G$-invariant
bounded cochains. 

\begin{Teorema}\label{teor-amen-action}
Let $G\actson K$ be a group action on a multicomplex $K$. For every $k\in\mathbb{N}$, let $G_k$ denote the subgroup of $G$ acting trivially on $K^k$, and suppose
that the quotient $G/G_k$ is amenable for every $k\in\mathbb{N}$.
Then, the inclusion
$$
\iota^*\colon C^*_b (K)^G \hookrightarrow C^*_b(K)
$$
induces an isometric embedding  
$$
H^*_b(\iota^*) \colon H^*_b(C^*_b(K)^G)\hookrightarrow H^*_b(K)\ .
$$
If we assume in addition that for every $\gamma\in G$ the simplicial automorphism $\gamma\colon K\to K$ is simplicially homotopic to the identity, 
then the isometric
embedding 
$$
H^*_b(\iota^*) \colon H^*_b(C^*_b(K)^G)\hookrightarrow H^*_b(K)\ .
$$
is also surjective (i.e.~it is an isometric isomorphism).
\end{Teorema}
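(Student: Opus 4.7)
The plan is to prove both assertions by constructing an averaging chain map $\operatorname{Av}^{*}\colon C^*_b(K)\to C^*_b(K)^G$ of operator norm $\leq 1$ whose composition with the inclusion $\iota^{*}$ equals the identity of $C^*_b(K)^G$. Granted such an $\operatorname{Av}$, the first assertion follows immediately: if $\varphi\in Z^n_b(K)\cap C^n_b(K)^G$ equals $\delta\psi$ for some $\psi\in C^{n-1}_b(K)$, then $\varphi=\operatorname{Av}^n(\delta\psi)=\delta(\operatorname{Av}^{n-1}(\psi))$ with $\|\operatorname{Av}^{n-1}(\psi)\|_\infty\leq\|\psi\|_\infty$, so the seminorms of $[\varphi]$ in the two cohomology modules agree and $H^n_b(\iota^{*})$ is an isometric embedding.

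To construct $\operatorname{Av}$, the key observation is that $G_n$ fixes $K^n$ pointwise, so the left action of $G$ on the set of $n$-simplices of $K$ factors through the amenable quotient $G/G_n$. I would first produce a \emph{coherent} family of right-invariant means $\{m_n\}_{n\in\mathbb{N}}$, where $m_n$ is a mean on $\ell^\infty(G/G_n)$, satisfying the compatibility $m_{n+1}(f\circ p^{n+1}_n)=m_n(f)$ for every $f\in\ell^\infty(G/G_n)$, with $p^{n+1}_n\colon G/G_{n+1}\twoheadrightarrow G/G_n$ the canonical projection. Such coherent families exist by a compactness argument: for each $N$ the set of truncated coherent sequences $(m_0,\ldots,m_N)$ is non-empty (push any invariant mean on $G/G_N$ down the tower) and closed in the weak-$*$ compact product of mean spaces, and the resulting nested family has non-empty intersection. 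Setting
\[
\operatorname{Av}^n(\varphi)(\sigma):=m_n\bigl(\gamma G_n\mapsto \varphi(\gamma\sigma)\bigr)
\]
for $\varphi\in C^n_b(K)$ and $\sigma$ an $n$-simplex, one verifies in turn: the norm bound $\|\operatorname{Av}^n\|\leq 1$ from $\|m_n\|=1$; the $G$-invariance of $\operatorname{Av}^n(\varphi)$ from right-invariance of $m_n$; the identity $\operatorname{Av}\circ\iota=\mathrm{id}$ from unitality of $m_n$ together with the constancy of $\gamma\mapsto\varphi(\gamma\sigma)$ when $\varphi$ is $G$-invariant; and the chain-map identity $\delta\operatorname{Av}^n=\operatorname{Av}^{n+1}\delta$ from the coherence of the $m_n$ applied to the functions $\gamma\mapsto\varphi(\gamma\partial_i\sigma)$, each of which factors through $G/G_n$ because $\partial_i\sigma$ is an $n$-simplex.

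For the second statement, under the extra hypothesis that every $\gamma\in G$ is simplicially homotopic to $\mathrm{id}$, it suffices to show that $\operatorname{Av}(\varphi)-\varphi$ is a coboundary for every cocycle $\varphi\in Z^n_b(K)$. For each $\gamma$ select a simplicial homotopy $H_\gamma\colon K\times I\to K$ from $\mathrm{id}$ to $\gamma$; by Lemma~\ref{homotopy-homology} and Remark~\ref{homotopy-uniform} the induced operators $T^n_\gamma\colon C^n_b(K)\to C^{n-1}_b(K)$ satisfy $\gamma^{*}-\mathrm{id}=\delta T_\gamma+T_\gamma\delta$ with $\|T^n_\gamma\|_\infty\leq C_n$ for a uniform constant $C_n$. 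On a cocycle this yields $\gamma^{*}\varphi-\varphi=\delta(T^n_\gamma\varphi)$ with $\|T^n_\gamma\varphi\|_\infty\leq C_n\|\varphi\|_\infty$. The natural candidate primitive of $\operatorname{Av}(\varphi)-\varphi$ is then
\[
\Psi(\varphi)(\tau):=m_n\bigl(\gamma G_n\mapsto T^n_\gamma\varphi(\tau)\bigr),\qquad \tau\text{ an }(n-1)\text{-simplex},
\]
for which the same algebraic manipulation as in the chain-map argument for $\operatorname{Av}$ gives formally $\delta\Psi(\varphi)(\sigma)=m_n(\gamma\mapsto\delta(T^n_\gamma\varphi)(\sigma))=m_n(\gamma\mapsto(\gamma^{*}\varphi-\varphi)(\sigma))=\operatorname{Av}(\varphi)(\sigma)-\varphi(\sigma)$.

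The main obstacle is that, for arbitrary choices of $H_\gamma$, the assignment $\gamma\mapsto T^n_\gamma\varphi(\tau)$ need not factor through $G/G_n$, so $m_n$ cannot legitimately be applied to it and the interchange of $\delta$ with the mean in the computation above is unjustified. I would overcome this by constructing the $H_\gamma$ \emph{coherently in $\gamma$}, arranging inductively on skeletal dimension $k$ that the restriction $H_\gamma|_{(K\times I)^{k}}$ depends only on the coset $\gamma G_k$. The inductive step is feasible because $\gamma|_{K^k}$ itself depends only on $\gamma G_k$ and because, when extending a coherent choice from the $k$-skeleton to a prism $\sigma\times I$ over a $(k+1)$-simplex $\sigma$, the boundary data on $\partial(\sigma\times I)$ agree for $\gamma$ and $\gamma g$ whenever $g\in G_{k+1}$ (since $g$ fixes $\sigma$ pointwise and the lower-dimensional data coincide by induction), so a common simplicial filling for the two homotopies can be chosen thanks to the completeness of $K$. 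With such coherent homotopies in place, $\gamma\mapsto T^n_\gamma\varphi(\tau)$ is $G_n$-invariant for every $(n-1)$-simplex $\tau$, the computation above becomes rigorous, and $\delta\Psi(\varphi)=\operatorname{Av}(\varphi)-\varphi$ holds on cocycles, completing the proof.
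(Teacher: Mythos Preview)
Your first part is correct and matches the paper: both construct a norm non-increasing averaging retraction using invariant means on the amenable quotients $G/G_n$. You build a globally coherent family $\{m_n\}$ via weak-$*$ compactness, while the paper instead fixes a large $k$ and pushes a single mean on $G/G_k$ down to all $G/G_i$ for $i\le k$, obtaining only a partial chain map (which suffices since $k$ is arbitrary).

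For the second part your diagnosis is right but your cure introduces an unjustified hypothesis: you invoke completeness of $K$ to carry out the skeletal extension of the homotopies $H_\gamma$, yet the theorem does not assume $K$ complete. Without completeness there is no reason the simplicial filling of a prism $\sigma\times I$ with prescribed boundary data can be performed.

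Fortunately coherent homotopies are unnecessary. Since you only need $\delta\Psi(\varphi)=\operatorname{Av}(\varphi)-\varphi$ for \emph{cocycles}, fix $n$ and simply choose, for each coset $c\in G/G_n$, one representative $\gamma_c$ and one simplicial homotopy $H_c$ from $\mathrm{id}$ to $\gamma_c$ (existence is exactly the hypothesis). Then
\[
\Psi(\varphi)(\tau):=m_n\bigl(c\mapsto T^n_{H_c}(\varphi)(\tau)\bigr)
\]
is automatically well-defined (there is nothing to factor) and bounded by $C_n\|\varphi\|_\infty$. For an $n$-simplex $\sigma$ the individual homotopy identity gives $\delta(T^n_{H_c}\varphi)(\sigma)=\varphi(\gamma_c\sigma)-\varphi(\sigma)$, the $T^{n+1}$ term vanishing on cocycles; since $\sigma$ lies in $K^n$, the value $\varphi(\gamma_c\sigma)$ depends only on the coset $c$, and averaging yields $\delta\Psi(\varphi)=\operatorname{Av}^n(\varphi)-\varphi$ as desired. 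The paper follows essentially this route, fixing a large $k$ and making one homotopy choice per coset of $G/G_k$; its line ``$T^n_\gamma$ only depends on the behaviour of $\gamma$ on the $(n-1)$-skeleton'' should be read as shorthand for precisely such a choice rather than as a claim about arbitrary homotopies.
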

\begin{proof}
In order to prove the first statement of the theorem, it is sufficient
to construct, for every $k\in\mathbb{N}$, a norm non-increasing (partial) chain map 
$$
A^i\colon C^i_b(K)\to C^i_b (K)^G\, ,\quad i\leq k\, ,
$$
such that  $A^i\circ\iota^i$ is the identity of $C^i_b(K)^G$ for every $i\leq k$.

\begin{comment}
\todo{Mi rendo conto solo ora che c'e' un problema di nomenclatura: per noi un simplex e' un oggetto direi quasi fisico in K: ogni simplex in questo senso da' luogo a (n+1)! (nel caso generale) o a 1 (nel caso di catene non degeneri) elemento
della base delle catene (nel caso di catene non degeneri, questo elemento e' ben definito solo a meno del segno). Invece qui chiamiamo simplex anche gli elementi della base dello spazio delle catene (cioe' quegli (n+1)! oggetti che menzionavo sopra). 
Forse dovremmo trovare un nome per loro: tipo elementary simplices, o qualcosa del genere. Oppure dire (non appena li introduciamo) che con un abuso chiameremo simplessi anche loro.}
\end{comment}

Let $k\in\mathbb{N}$, and set $G^i=G/G_i$ for every $i\leq k$.  
Let $m_k\colon \ell^\infty(G^k)\to\R$ be a left-invariant mean on $G^k$. For every $i\leq k$, the group $G^i=G/G_i\cong (G/G_k)/(G_i/G_k)$ is a quotient of $G^k$, so we may endow
$G^i$ with the left-invariant mean $m_i$ induced by $m_k$ (see e.g.~\cite[Proposition 3.4]{frigerio-libro}). 

The action of $G_i$ on $C_i(K)$ is trivial, so the quotient $G^i$ naturally acts on $C_i(K)$, hence on $C^i_b(G)$. 
Therefore, if $\sigma$ is any algebraic simplex in $C_i(K)$ and $\phi\in C^i_b(K)$, then the map 
$$G^i\to\R\, ,\qquad \gamma\mapsto \phi(\gamma^{-1}\sigma)$$ 
is well defined and bounded, i.e.~it 
lies in $\ell^\infty(G^i)$. 
We may thus set
$$
A^i(\phi)(\sigma)=m_i (\gamma\mapsto \phi(\gamma^{-1}\sigma))\ ,
$$
and extend $A^i(\phi)$ to the whole $C_i(K)$ by linearity. The left-invariance of $m_i$ ensures that 
$A(\phi)$ is $G^i$-invariant, hence $G$-invariant.  Moreover, for every $\phi\in C^i_b(K)$
and every algebraic $i$-simplex $\sigma$ we have
$$
|A^i(\phi)(\sigma)|\leq \sup_{\gamma\in G} |\phi(\gamma^{-1}\sigma)|\leq \|\phi\|_\infty\ ,
$$
so $A^i$ is norm non-increasing. Each $A^i$ is linear by definition, 
while the fact that $A^i$, $0\leq i\leq k$, is a (partial) chain map may be checked by using that, by construction, the mean $m_i$ is the push-forward of the mean $m_{i+1}$ under the quotient map
$G^{i+1}\to G^i$ for every $i\leq k$ (see again~\cite[Proposition 3.4]{frigerio-libro}). 
We are thus left to show that $A^i\circ \iota^i$ is the identity of $C^i_b(K)^G$,
i.e.~that, if $\phi$ is a $G$-invariant bounded $i$-cochain, then $A^i(\phi)=\phi$. But this descends from the fact that the mean of a constant function is equal to the unique value
it takes. 

In order to prove the second statement, for every $\gamma \in G$ let us  denote by $t_\gamma^*\colon C^*_b(K)\to C^*_b(K)$ the
map on bounded cochains induced by $\gamma$.
Recall that, if
$\gamma\colon K\to K$ is simplicially homotopic to the identity of $K$, then there exists an algebraic homotopy
$
T^*_\gamma\colon C^*_b(K)\to C^{*-1}_b(K)
$
such that
\begin{equation}\label{homo:eq}
\delta^{*-1}T^*_\gamma+T^{*+1}_\gamma\delta^* =t_\gamma^*-{\rm Id}^*\ ,
\end{equation}
and
$$
\|T^n_\gamma\|_\infty\leq C_n
$$
for some universal constant only depending on the degree (see Remark~\ref{homotopy-uniform}). Moreover, the map $T^n_\gamma$ only depends on the behaviour of $\gamma$ on the $(n-1)$-skeleton of $K$.  

Let now $i\leq k$, 
take $\phi\in C^i_b(K)$, and let $\sigma$ be an algebraic $(i-1)$-simplex. Then the map
$$
G^{i-1}\to \R\, ,\qquad 
\gamma\mapsto T^i_{\gamma^{-1}}(\phi)(\sigma)
$$
is well defined and bounded (by the value
$C_i\|\phi\|_\infty$), so we can define the real value
$$
T^i(\phi)(\sigma)=m_{i-1} (\gamma\mapsto T^i_{\gamma^{-1}}(\phi)(\sigma))\ .
$$
Also observe that, thanks to the properties of means, $|T^i(\phi)(\sigma)|\leq C_i \|\phi\|_\infty$, so 
by extending linearly $T^i(\phi)$ over $C_{i-1}(K)$ we obtain an element $T^i(\phi)\in C^{i-1}_b(K)$.

By evaluating equality~\eqref{homo:eq} on $\phi\in C^i_b(K)$ and $\sigma\in C_{i-1}(K)$ as above we now get
$$
\delta^{i-1}(T^i_{\gamma^{-1}}(\phi))(\sigma)+T^{i+1}_{\gamma^{-1}}(\delta^i(\phi))(\sigma) =t_{\gamma^{-1}}^i(\phi)(\sigma)-\sigma=\phi(\gamma^{-1}\sigma)-\sigma\  .
$$
Both the right and the left hand side only depend on the behaviour of $\gamma$ on $K^i$, so we can consider both sides as functions defined over $G^i$, and by
taking the average with respect to $m_i$ of the right and of the left hand sides we then get
$$
\delta^{i-1}(T^i(\phi))(\sigma)+T^{i+1}(\delta^i(\phi))(\sigma) =A^i(\phi)(\sigma)-\sigma\ .
$$
We have thus shown that the (partial) chain map $ A^i\colon C^i_b(K)\to C^i_b(K)$, $i\leq k$, is homotopic to the identity via a (partial) bounded homotopy. As a consequence,
the map induced by $\iota^*$ on bounded cohomology is surjective for every $i<k$, and thanks to the arbitrariness of $k$ this concludes the proof.
\end{proof}

\begin{Corollario}\label{cor-amen-action}
Let $\Gamma\actson X$ be a $0$-trivial action of an amenable group $\Gamma$ on a multicomplex $K$,
%(recall that an action is $0$-trivial if it fixes the $0$-skeleton of $K$),
and let $\pi\colon K\to K/\Gamma$ be the corresponding projection (see Section~\ref{quotient:sec}). Then the map
\begin{displaymath}
H^n_b(\pi)\colon H^n_b(K/\Gamma)\to H^n_b(\Gamma)
\end{displaymath} 
induced by $\pi$ on bounded cohomology is an isometric embedding for every $n\in\mathbb{N}$. 
If we assume in addition that 
%$K$ is large, complete and minimal, and that for
 every $\gamma\in \G$ is simplicially homotopic to the identity, then 
$H^n_b(\pi)$ is an isometric isomorphism.
\end{Corollario}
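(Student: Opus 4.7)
The plan is to reduce the statement directly to Theorem~\ref{teor-amen-action} by observing that the pullback along $\pi$ identifies the bounded cochain complex of the quotient multicomplex with the subcomplex of $\Gamma$-invariant bounded cochains on $K$.

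First I would check that $\pi^*$ gives an isometric isomorphism of cochain complexes
\[
\pi^*\colon C^n_b(K/\Gamma)\ \xrightarrow{\ \cong\ }\ C^n_b(K)^\Gamma
\]
for every $n\in\mathbb{N}$. Indeed, since the action is $0$-trivial, by Proposition~\ref{0trivial:prop} the quotient $K/\Gamma$ is a multicomplex whose $n$-simplices are exactly the $\Gamma$-orbits of $n$-simplices of $K$, and $\pi$ is a non-degenerate simplicial map. At the level of algebraic simplices, two bases elements of $C_n(K)$ of the form $(\tau,(v_0,\ldots,v_n))$ and $(\tau',(v_0,\ldots,v_n))$ have the same image in $C_n(K/\Gamma)$ if and only if $\tau'=\gamma(\tau)$ for some $\gamma\in\Gamma$. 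Hence pulling back cochains identifies $C^n(K/\Gamma)$ with the space of $\Gamma$-invariant cochains on $K$, and because each $\Gamma$-orbit of algebraic simplices contributes a single basis element on both sides with the same coefficient, this identification is isometric for the $\ell^\infty$-norms, and is compatible with the coboundary operator (since $\pi$ is simplicial).

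Next I would invoke Theorem~\ref{teor-amen-action} with $G=\Gamma$. For every $k\in\mathbb{N}$, the subgroup $G_k\subseteq \Gamma$ acting trivially on $K^k$ is normal in $\Gamma$, and the quotient $\Gamma/G_k$ is amenable, since it is a quotient of the amenable group $\Gamma$. Therefore the hypotheses of Theorem~\ref{teor-amen-action} are satisfied, and the inclusion $\iota^*\colon C^*_b(K)^\Gamma\hookrightarrow C^*_b(K)$ induces an isometric embedding in cohomology. Composing with the isometric isomorphism induced by $\pi^*$, we obtain that
\[
H^n_b(\pi)\colon H^n_b(K/\Gamma)\ \longrightarrow\ H^n_b(K)
\]
is an isometric embedding for every $n\in\mathbb{N}$, proving the first part.

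For the second part, under the additional assumption that every $\gamma\in\Gamma$ is simplicially homotopic to the identity of $K$, the second statement of Theorem~\ref{teor-amen-action} applies and ensures that $H^n_b(\iota^*)$ is in fact surjective, hence an isometric isomorphism. Combining this again with the isometric identification $C^*_b(K/\Gamma)\cong C^*_b(K)^\Gamma$ yields the desired conclusion that $H^n_b(\pi)$ is an isometric isomorphism for every $n$. The only subtle point is the verification that $\pi^*$ really is an isometry of complexes onto $C^*_b(K)^\Gamma$; the amenability machinery is already fully encapsulated in Theorem~\ref{teor-amen-action}, so no further averaging argument is needed here.
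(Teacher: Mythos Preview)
Your proof is correct and follows exactly the same approach as the paper: identify $C^*_b(K/\Gamma)$ isometrically with $C^*_b(K)^\Gamma$ via $\pi^*$, then apply Theorem~\ref{teor-amen-action}. The paper's proof is a two-line version of yours (``obvious isometric chain isomorphism'' plus ``follows from Theorem~\ref{teor-amen-action}''); your explicit verification that the quotients $\Gamma/G_k$ are amenable as quotients of $\Gamma$ is a detail the paper leaves implicit.
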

\begin{proof}
%By Lemma~\ref{homotopy-lemma}, if $K$ is large, complete and minimal, then every simplicial automorphism of $K$ which is topologically homotopic to the identity
%is simplicially homotopic to the identity. Moreover,
 We have an obvious isometric chain isomorphism between the complex $C^n_b(K/\Gamma)$ and the complex $C^n_b(K)^\Gamma$, so the conclusion follows from
 Theorem~\ref{teor-amen-action}.
 \end{proof}

We are now ready to prove the main result of this section.

\begin{thm}\label{teoD}
Let $K$ be a large, complete and minimal multicomplex, let $A$ be the aspherical quotient associated to $K$, and let $\pi\colon K\to A$
be the natural projection of $K$ to $A$. Then the induced maps 
\begin{displaymath}
\begin{array}{c}
H^n_b(\pi)
\colon H^{n}_{b}(A) \rightarrow H^{n}_{b}(K)\ ,\\
H^n_b(\pi)
\colon H^{n}_{b}(|A|) \rightarrow H^{n}_{b}(|K|)
\end{array}
\end{displaymath}
are isometric isomorphisms for every $n\in\mathbb{N}$.
\end{thm}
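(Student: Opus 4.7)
The plan is to deduce the theorem from the machinery already developed: the identification of $A$ as a quotient $K/\Gamma_1$ (Corollary \ref{Aquotient}), the amenability of the quotients $\Gamma_1/\Gamma_k$ (Corollary \ref{corC}), and the Isometry Lemma (Theorem \ref{isometry-lemma-intro}). The heart of the argument is Theorem \ref{teor-amen-action} applied to the action of $\Gamma_1$ on $K$.

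First I would verify the simplicial statement. By Corollary \ref{Aquotient}, $A$ is canonically isomorphic to $K/\Gamma_1$, and the projection $\pi\colon K\to A$ coincides with the quotient map, so under the natural identification $C^*_b(A)\cong C^*_b(K)^{\Gamma_1}$ the map $H^n_b(\pi)$ corresponds to the inclusion $C^*_b(K)^{\Gamma_1}\hookrightarrow C^*_b(K)$. I would then apply Theorem \ref{teor-amen-action} with $G=\Gamma_1$. To check the hypotheses, note that the subgroup $G_k$ of $\Gamma_1$ acting trivially on $K^k$ is $\Gamma_k$ for $k\geq 1$ (and is all of $\Gamma_1$ for $k=0$), so each quotient $\Gamma_1/G_k$ is amenable by Corollary \ref{corC}. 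To upgrade the first conclusion of Theorem \ref{teor-amen-action} to an isometric isomorphism I need every $\gamma\in\Gamma_1$ to be simplicially homotopic to the identity: by definition of $\Gamma$, each such $\gamma$ is (topologically) homotopic to $\mathrm{Id}_{|K|}$ relative to $K^0$, and since $K$ is large and complete the Homotopy Lemma \ref{homotopy-lemma} converts this into a simplicial homotopy between the two non-degenerate simplicial maps $\gamma,\mathrm{Id}_K\colon K\to K$. Thus $H^n_b(\pi)\colon H^n_b(A)\to H^n_b(K)$ is an isometric isomorphism.

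For the statement about geometric realizations I would invoke the Isometry Lemma twice. Both $K$ and $A$ are large and complete: $K$ by assumption, and $A$ by Theorem \ref{aspherical:thm}, while largeness of $A$ follows from the fact that $A$ and $K$ have identical $0$-skeleta. The Isometry Lemma is natural with respect to the simplicial map $\pi$, so we obtain a commutative square
\begin{equation*}
\xymatrix{
H^n_b(|A|) \ar[r]^-{H^n_b(\pi)} \ar[d]_-{\cong}^-{\mathrm{isom.}} & H^n_b(|K|) \ar[d]^-{\cong}_-{\mathrm{isom.}} \\
H^n_b(A) \ar[r]_-{H^n_b(\pi)}^-{\cong\ \mathrm{isom.}} & H^n_b(K)
}
\end{equation*}
in which three arrows are isometric isomorphisms, forcing the top one to be an isometric isomorphism as well.

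The only delicate point in the argument is the verification that the hypotheses of Theorem \ref{teor-amen-action} are met, and in particular the step upgrading continuous homotopies to simplicial ones. This is where the choice to define $\Gamma$ by homotopies relative to the $0$-skeleton (rather than merely by homotopies to the identity) becomes essential, since Lemma \ref{phi:hom}, and hence Corollary \ref{corC}, would otherwise fail (see Remark \ref{rem:ip:agg:gruppi:gromov}). Everything else is essentially a bookkeeping exercise combining results already established in this chapter.
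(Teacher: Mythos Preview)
Your proof is correct and follows essentially the same route as the paper: identify $A$ with $K/\Gamma_1$ via Corollary~\ref{Aquotient}, apply Theorem~\ref{teor-amen-action} to the $\Gamma_1$-action (using Corollary~\ref{corC} for amenability and Lemma~\ref{homotopy-lemma} to upgrade the homotopies to simplicial ones), and then invoke the Isometry Lemma to pass to geometric realizations. The only cosmetic difference is that the paper states the reduction to simplicial cohomology at the outset, whereas you prove the simplicial case first and then draw the commutative square; the content is the same.
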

\begin{proof}
By Theorem~\ref{isometry-lemma-intro}, it is sufficient to show that the map $\pi$ induces isometric isomorphisms in every degree on \emph{simplicial} bounded cohomology.

Recall from  Corollary~\ref{Aquotient} that $A$ is canonically isomorphic to the quotient $K/\Gamma_1$. Moreover, 
there is an obvious isometric chain isomorphism between the complex $C^n_b(A)=C^n_b(K/\Gamma_1)$ and the complex $C^n_b(K)^{\Gamma_1}$, and under this identification
the projection $\pi$ induces the inclusion $C^*_b (K)^{\Gamma_1} \hookrightarrow C^*_b(K)$ on bounded cochains. Finally, recall 
from Corollary \ref{corC} that the group $\Gamma_{1} \slash \Gamma_{m}$ is amenable for every $m\in\mathbb{N}$, where $\Gamma_m$ is the subgroup of $\Gamma_1$ acting trivially
on $K^m$. Moreover, every element in $\Gamma_1$ is homotopic to the identity, hence it is simplicially homotopic to the identity thanks to the fact that $K$ is large and complete 
(see Lemma~\ref{homotopy-lemma}). Therefore, the conclusion follows from Theorem~\ref{teor-amen-action}.
\end{proof}

Recall from Definition~\ref{asphericalX:def} that to every good topological space there is canonically associated a complete, minimal and aspherical
multicomplex $\calA(X)$.
% (this multicomplex is also large provided that $X$ does not reduce to one point).
The following corollary shows that the bounded cohomology of  $X$ is isometrically isomorphic to the simplicial
bounded cohomology of $\calA(X)$. Since the simplicial bounded cohomology of $\calA(X)$ is isomorphic to the singular bounded cohomology of $|\calA(X)|$, 
and $|\calA(X)|$ is an aspherical CW complex with the same fundamental group of $X$, this already implies that the bounded cohomology of $X$ only depends
on the fundamental group of $X$ (see Theorem~\ref{mapping1_intro} below for a precise formulation of this statement).

\begin{Corollario}\label{cor-remarkE}
Let $X$ be a good space. Then 
$H^n_b(X)$ is canonically isometrically isomorphic to $H^n_b(\calA(X))$ for every $n\in\mathbb{N}$.
\end{Corollario}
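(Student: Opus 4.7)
The plan is to chain together two isometric isomorphisms that are already available in the excerpt. First I would invoke Theorem \ref{isometria-good-L(X)}, which supplies, for any good space $X$, a canonical isometric isomorphism $H^n_b(X)\cong H^n_b(\calL(X))$ in every degree, realized as the composition of the pull-back by the natural projection $S\colon|\calL(X)|\to X$ with the simplicial-to-singular comparison $H^n_b(\phi^*)$. Then I would apply Theorem \ref{teoD} to $K=\calL(X)$ and $A=\calA(X)$: since by Definition \ref{asphericalX:def} $\calA(X)$ is precisely the aspherical quotient of $\calL(X)$, that theorem yields a canonical isometric isomorphism $H^n_b(\calA(X))\cong H^n_b(\calL(X))$ induced by the projection $\pi\colon\calL(X)\to\calA(X)$. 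Composing the first isomorphism with the inverse of the second produces the desired canonical isometric isomorphism $H^n_b(X)\cong H^n_b(\calA(X))$.

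The only non-routine point will be to verify the largeness hypothesis required to invoke Theorem \ref{teoD}. To do so I would first reduce to the case in which $X$ is path connected, using that all the objects involved (the bounded cohomology of $X$, the multicomplexes $\calK(X)$, $\calL(X)$, $\calA(X)$, and the relevant maps) split compatibly over path components. If the path component of $X$ reduces to a single point, then both $H^n_b(X)$ and $H^n_b(\calA(X))$ vanish for $n\geq 1$ and agree trivially with $\R$ in degree $0$, so there is nothing to prove. Otherwise the component contains at least two points, and condition (2) of Definition \ref{good:def} then forces it to contain infinitely many (any non-empty path connected finite subset of a good space is a singleton). Since $\calL(X)$ has the same vertex set as $\calK(X)$ by Theorem \ref{exist-min}, and the vertex set of $\calK(X)$ is by definition the underlying point-set of $X$, we conclude that $\calL(X)$ is large. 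With largeness in hand, Theorem \ref{teoD} applies and the two-step composition above gives the corollary.

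I do not anticipate a genuine obstacle here: the real work has been concentrated in Theorem \ref{isometria-good-L(X)} (which packages the Isometry Lemma together with Ivanov's invariance of bounded cohomology under weak homotopy equivalences and the deformation retraction $\calK(X)\to\calL(X)$) and in Theorem \ref{teoD} (which rests on the amenability of each quotient $\Gamma_1/\Gamma_m$ together with the averaging argument of Theorem \ref{teor-amen-action}). Canonicity of the final isomorphism is immediate from the canonicity of each of the two constituent maps, so once the largeness check is done, the proof is just a concatenation of already-established natural isomorphisms.
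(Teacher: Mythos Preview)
Your proposal is correct and follows essentially the same route as the paper. The paper goes through $\calK(X)$ explicitly (Theorem~\ref{isometria-good-K(X)} for $H^n_b(X)\cong H^n_b(\calK(X))$, then Corollary~\ref{cor:equiv:hom:induces:iso:simpl:bc} for the retraction $\calK(X)\to\calL(X)$, then Theorem~\ref{teoD}), whereas you start from the already-packaged Theorem~\ref{isometria-good-L(X)}; this is a cosmetic difference. Your explicit verification of the largeness hypothesis for $\calL(X)$ is a welcome addition that the paper leaves implicit.
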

\begin{proof}
On the one hand,  Theorem~\ref{isometria-good-K(X)} shows that $H^n_b(X)$ is isometrically isomorphic to  $H^n_b(\calK(X))$. On the other hand, if 
$
r\colon \calK(X) \to \calL(X) 
$
and $\pi\colon \calL(X)\to \calA(X)$ are the retraction and the projection described in the previous chapter, then
Corollary~\ref{cor:equiv:hom:induces:iso:simpl:bc} and Theorem~\ref{teoD} imply that both $$H^n_b(r)\colon H^n_b(\calL(X))\to H^n_b(\calK(X))$$ and 
$$H^n_b(\pi)\colon \calA(X)\to \calL(X)$$ are isometric isomorphisms. 
The conclusion follows.
\end{proof}

As already observed, Corollary~\ref{cor-remarkE} implies that the bounded cohomology of a good space $X$ only depends on its fundamental group. We provide a precise formulation of this fact in the next result.
In fact, as anticipated in the introduction, building on the fact that weak homotopy equivalences induce isometric isomorphisms in bounded cohomology, we may even get rid of the assumption that the topological spaces we consider are good.

\begin{thm:repeated:mapping1_intro}%\label{bcdependsonpi1}
 Let $X$ and $Y$ be path connected topological spaces, and let $f\colon X\to Y$ be a map inducing an isomorphism on fundamental groups. Then the map
 $$
H^n_b( f)\colon H^n_b(Y)\to H^n_b(X)
 $$
is an isometric isomorphism for every $n\in\mathbb{N}$.
\end{thm:repeated:mapping1_intro}
\begin{proof}
Let us first assume that $X$ and $Y$ are good.
Then Proposition~\ref{commutative:homotopy:L} implies that there exists a map $L(f)\colon |\calL(X)|\to |\calL(Y)|$ such that the following diagram commutes up to homotopy:
$$
\xymatrix{
|\calL(X)| \ar[rr]^-{L(f)} \ar[d]_-{S_{X}} & & |\calL(Y)| \ar[d]^-{S_Y} \\
X \ar[rr]_-{f} & & Y\ }
$$
(in particular, the map $L(f)$ induces an isomorphism on fundamental groups).
The vertical arrows of this diagram are weak homotopy equivalences, hence by Theorem~\ref{weak:iso:thm}
%By Theorem~\ref{iso-KX}, 
we are reduced to show that the map 
$$H^n_b(L(f))\colon H^n_b(|\calL(Y)|)\to H^n_b(|\calL(X)|)$$ 
is an isometric isomorphism for every $n \in \matN$.

Let us consider the aspherical multicomplexes $\calA(X)$ and $\calA(Y)$ associated to $\calL(X)$ and $\calL(Y)$ respectively, and recall from Proposition~\ref{riassunto:prop}
that the  projections $\pi_X\colon |\calL(X)| \to |\calA(X)|$ and $\pi_Y \colon |\calL(Y)| \to |\calA(Y)|$
induce isomorphisms on fundamental groups. 
%Moreover, let us call $\phi \colon \pi_1(|\calK(X)|, x_0) \to \pi_1(|\calA(X)|, x_0)$ and $\psi \colon \pi_1(|\calK(Y)|, y_0) \to \pi_1(|\calA(Y)|, y_0)$ 
%the isomorphisms 
%associated to the projections $\pi_X \circ r_X \colon |\calK(X)| \to |\calA(X)|$ and $\pi_Y \circ r_Y \colon |\calK(Y)| \to |\calA(Y)|$.
The general theory of Eilenberg-MacLane spaces provides a map $A(f) \colon |\calA(X)| \to |\calA(Y)|$ such that the following diagram commutes up to homotopy:
$$
\xymatrix{
|\calL(X)| \ar[rr]^-{L(f)} \ar[d]_-{\pi_X } && |\calL(Y)| \ar[d]^-{\pi_Y} \\
|\calA(X)| \ar[rr]_-{A(f)} && |\calA(Y)|\ .}
$$
From the commutativity (up to homotopy) of the diagram we deduce that $A(f)$ induces an isomorphism on fundamental groups. Since $|\calA(X)|$ and $|\calA(Y)|$ are aspherical CW complexes,
this implies that $A(f)$ is a homotopy equivalence, hence it induces isometric isomorphisms on bounded cohomology in every degree. The conclusion that also $L(f)$ induces
an isometric isomorphism on bounded cohomology now follows from Theorem~\ref{teoD}, which asserts that the vertical arrows induce isometric isomorphisms on bounded cohomology.

%in every degree now follows from the commutativity of the following diagram, together with ENUNCIARE I RISULTATI CHE SERVONO:
%$$
%\xymatrix{
%H^n_b(|\calA(Y)|) \ar[rr]^-{H^n_b(A(f))} \ar[d]_-{H^n_b(\pi_Y \circ r_Y)} && H^n_b(|\calA(X)|) \ar[d]^-{H^n_b(\pi_X \circ r_X)} \\
%H^n_b(|\calK(Y)|) \ar[rr]_-{H^n_b(K(f))} && H^n_b(|\calK(X)|).}
%$$
This concludes the proof of the theorem under the assumption that $X$ and $Y$ are good.

In the general case, let $\calS(X)$ and $\calS(Y)$ be the singular simplicial sets associated to $X$ and $Y$, respectively, and let
$\calS(f)\colon |\calS(X)|\to |\calS(Y)|$ be the (simplicial) map associated to $f$. If we denote by $j_X\colon |\calS(X)|\to X$, $j_Y\colon |\calS(Y)|\to Y$ the natural
projections, then $j_Y\circ \calS(f)=f\circ j_X$.  Moreover, $j_X$ and $j_Y$ are weak homotopy equivalences, hence they induce isometric isomorphisms on bounded cohomology in every degree.
Finally, $\calS(f)$ induces an isomorphism on fundamental groups. Since $|\calS(X)|$ and $|\calS(Y)|$ are good spaces, we may thus conclude that in the following diagram
$$
\xymatrix{
H^n_b(Y) \ar[rr]^-{H^n_b(f)} \ar[d]_-{H^n_b(j_Y )} && H^n_b(X)) \ar[d]^-{H^n_b(j_X)} \\
H^n_b(|\calS(Y)|) \ar[rr]_-{H^n_b(S(f))} && H^n_b(|\calS(X)|)\ ,}
$$
the vertical arrows and the bottom horizontal arrow are isometric isomorphisms. Thus also the top horizontal arrow is an isometric isomorphism, and this concludes the proof.
\end{proof}

In fact, a stronger result holds: a continuous map $f\colon X\to Y$ induces 
isometric isomorphisms on bounded cohomology in every degree provided that the induced map on fundamental groups is an epimorphism with amenable kernel.
To this fact, which is usually known as \emph{Gromov's Mapping Theorem}, will be devoted the next chapter.

\section{Multicomplexes and relative bounded cohomology}\label{relative:sec}
In this section we briefly discuss how the theory developed so far could be adapted to the relative case, in order to produce some tools
for the study of the simplicial volume of manifolds with boundary. 

Let $M$ be an oriented compact manifold with boundary, and let $n=\dim M$. It is well known that $H_n(M, \partial M;\mathbb{Z})$ is isomorphic to $\mathbb{Z}$, and generated
by a preferred element $[M,\partial M]_\mathbb{Z}$, which is called the (integral) \emph{fundamental class} of $M$. As usual, one can define
the (real) fundamental class $[M,\partial M]\in H_n(M, \partial M;\mathbb{R})=H_n(M, \partial M)$ as the image of $[M,\partial M]_\mathbb{Z}$ via the change of coefficients
map $H_n(M, \partial M;\mathbb{Z})\to H_n(M, \partial M)$. The $\ell^1$-norm on $C_n(M)$ descends to a seminorm (which is in fact a norm) on $C_n(M,\partial M)$,
which defines in turn a seminorm $\|\cdot \|_1$ on $H_n(M,\partial M)$. The simplicial volume of $M$ is then defined by setting
$$
\|M\|=\|[M,\partial M]\|_1\ .
$$
Just as in the closed case, there exist very few exact computations for the simplicial volume of manifolds with boundary.
We refer the reader e.g.~to~\cite{FP,BFP,BFP2} for some results and speculations on this topic. A variation of the simplicial volume of compact manifolds with boundary, called \emph{ideal simplicial volume}, has been recently defined by the authors in~\cite{FMo}.  While keeping the most important topological and geometric
features of the classical simplicial volume, the ideal simplicial volume should be a bit easier to compute exactly (at least, it is so in some concrete examples). 

Of course, in order to study the simplicial volume of manifolds with boundary via the use of multicomplexes, the first step should consist in understanding
how to compute the relative bounded cohomology (or the $\ell^1$-seminorm on singular homology) of a pair of spaces in terms
of the simplicial relative bounded cohomology (or the $\ell^1$-seminorm on simplicial homology) of a suitably defined (pair of) multicomplex(es).

Probably the first question  that arises in this context is whether Gromov's and Ivanov's results on the coincidence of bounded cohomology of a space with the bounded cohomology of its fundamental group may be extended to the relative case. To this aim, some attempts were made to adapt Ivanov's techniques also to the study of the bounded cohomology
of pairs of spaces. 
We now briefly describe the results obtained via this strategy, which were proved in~\cite{Park,FP2,Blank} (see also~\cite{BBFIPP}).

Let us first focus on the easier case when the pair of spaces $(X,Y)$ is such that
$X$ and $Y$ are both path connected, and the inclusion $Y\hookrightarrow X$ induces an injective map on fundamental groups. In this case, to the pair $(X,Y)$ one
can associate the pair of groups $(\pi_1(X),\pi_1(Y))$, where $\pi_1(Y)$ may be considered as a subgroup of $\pi_1(X)$. Under these assumptions, 
%(and the additional hypothesis that
%$(X,Y)$ is a countable CW pair), 
a straightforward application of the Five Lemma shows
that  $H^n_b(X,Y)$ is  isomorphic to $H^n_b(\pi_1(X),\pi_1(Y))$ (see~\cite{Park, FP2}). However, some difficulties arise when trying to promote
the isomorphism to an isometry. In order to adapt the killing homotopy procedure described by Ivanov to the relative case, some unpleasant 
(and conjecturally unnecessary) additional hypothesis is needed (see~\cite[Remark 4.9]{FP2}).
Namely, 
%Unfortunately, following Ivanov's approach to bounded cohomology it seems that this can be achieved only under additional (and quite technical) assumptions. Indeed, first notice that in the case in which the subspace is assumed to be connected the corresponding pair of group is simply a group and one of its subgroup. This situation which is in a certain sense the easiest one, it was first studied by Park in \cite{Park} and then by Pagliantini and the first author in \cite{FP2}. As anticipated above, unfortunately, it seems that Ivanov's techniques run into some difficulties and so some extra assumption on the inclusion of the subspace is needed. More precisely, 
Pagliantini and the first author proved in \cite[Theorem 1.8]{FP2} that $H^n_b(X,Y)$ is  isometrically isomorphic to $H^n_b(\pi_1(X),\pi_1(Y))$ provided
 that the map $\pi_n(Y)\to \pi_n(X)$ induced by the inclusion is an isomorphism for every $n\geq 2$.

Some more care is needed when $Y$ is not path connected (but still every path connected component of $Y$ is $\pi_1$-injective in $X$).
A suitable notion of bounded cohomology for pairs $(\Gamma, \{H_i\}_{i})$, where $\{H_i\}_i$ is a family of subgroups of $\Gamma$,
was defined by Mineyev and Yaman
  in \cite{Yaman} (see also~\cite{Franceschini2}). Building on this definition (and on the theory of fundamental groupoids) Blank extended the results of~\cite{FP2}
  to the case when $Y$ is not necessarily path connected (but the inclusion of every path connected component of $Y$ in $X$ still induces an injective map 
  on fundamental groups and an isomorphism on higher homotopy groups,
see  \cite[Theorem~9.11]{Blank} for a precise statement).

One can get rid of all these assumptions
in the case when every component of $Y$ has an amenable fundamental group. In this case, it is proved in~\cite{BBFIPP} that the relative bounded
cohomology $H^n_b(X,Y)$ is isometrically isomorphic to the absolute bounded cohomology $H_b^n(X)$ for every $n\geq 2$, and from this fact one can easily deduce 
that, if every path connected component of $Y$ is $\pi_1$-injective in $X$, then $H^n_b(X,Y)$ is also isometrically isomorphic to the bounded cohomology of the corresponding
pairs of groups
(see also~\cite{KimKue} for an alternative proof of this result, which exploits multicomplexes).

%The results just summarized were all proved by exploiting the homological algebraic approach to bounded cohomology developed by Ivanov.
%However, 
It is worth noting that Gromov stated in~\cite{Grom82} a \emph{Relative Mapping Theorem} in which no hypothesis on the spaces involved appear. Namely,
he stated the following:

\begin{thm}[{\cite[page 57]{Grom82}}]\label{rel:gro}
Let $f\colon (X,Y)\to (X',Y')$ be a map of pairs which is bijective on the set of path connected components, i.e.~it induces bijections
$\pi_0(X)\cong \pi_0(X')$, $\pi_0(Y)\cong \pi_0(Y')$. Furthermore, let $f$ induce an isomorphism on fundamental groups on every component of $X$ and
on every component of $Y$. Then the induced map
$$
H^n_b(f)\colon H^n_b(X',Y')\to H^n_b(X,Y)
$$
is an isometric isomorphism for every $n\in\mathbb{N}$.
\end{thm}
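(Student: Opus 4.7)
The natural strategy is to deduce the relative statement from the absolute Mapping Theorem already at our disposal (Theorem \ref{mapping-intro}, the promised strengthening of Theorem \ref{mapping1_intro}) by exploiting the long exact sequences in bounded cohomology. First I would reduce to the path connected case. Since bounded cohomology of a space decomposes as a direct product over its path connected components, and the hypothesis forces $f$ to induce a bijection on $\pi_0(X)\to\pi_0(X')$ and $\pi_0(Y)\to\pi_0(Y')$, one can split the problem component by component. For the remainder of the argument one may therefore assume that $X$ and $X'$ are path connected (while $Y$ and $Y'$ may still be disconnected, each of their components being $\pi_1$-injective in the ambient space by hypothesis).

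The second step is to compare long exact sequences. The inclusions $Y\hookrightarrow X$ and $Y'\hookrightarrow X'$ induce, via \eqref{long-ex-seq-multi-coh} applied to singular (co)chains, two long exact sequences in bounded cohomology, and $f$ provides a morphism between them:
$$
\xymatrix@C=1.2em{
\cdots\ar[r] & H^{n-1}_b(Y') \ar[r]\ar[d] & H^n_b(X',Y') \ar[r]\ar[d]^-{H^n_b(f)} & H^n_b(X')\ar[r]\ar[d] & H^n_b(Y')\ar[d]\ar[r] & \cdots\\
\cdots\ar[r] & H^{n-1}_b(Y) \ar[r]& H^n_b(X,Y) \ar[r] & H^n_b(X)\ar[r] & H^n_b(Y)\ar[r] & \cdots
}
$$
By the componentwise application of Theorem \ref{mapping1_intro}, both the absolute columns are isometric isomorphisms in every degree. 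The Five Lemma then shows that the middle column $H^n_b(f)$ is an isomorphism of vector spaces. Since $H^n_b(f)$ is obviously norm non-increasing, we already obtain one of the two inequalities needed for the isometric claim.

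The hard part will be upgrading this to an \emph{isometric} isomorphism, i.e.~showing that $\|H^n_b(f)(\alpha)\|_\infty=\|\alpha\|_\infty$ for every $\alpha\in H^n_b(X',Y')$. The Five Lemma does not preserve norms, so a more refined argument is required. The approach inspired by the techniques of this chapter is to pass through multicomplexes: following the blueprint of Corollary \ref{cor-remarkE}, one would like to construct aspherical, minimal and complete multicomplexes $\calA(X)$ and $\calA(X')$ together with submulticomplexes $\calA_Y(X)\subseteq\calA(X)$ and $\calA_{Y'}(X')\subseteq\calA(X')$ encoding the inclusions of $Y$ and $Y'$, a simplicial map of pairs $\calA(X,Y)\to\calA(X',Y')$ inducing $f$ up to homotopy, and a \emph{relative} Isometry Lemma asserting that $H^n_b(X,Y)$ is isometrically isomorphic to the simplicial relative bounded cohomology $H^n_b(\calA(X),\calA_Y(X))$ (and similarly for the primed pair). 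Given these ingredients, the isometric version of Theorem \ref{teoD} in the relative setting, combined with the fact that a map between aspherical pairs inducing isomorphisms on the relevant fundamental groups is a homotopy equivalence of pairs (up to the needed technicalities), would deliver the conclusion.

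The principal obstacle is precisely the construction of a well-behaved aspherical submulticomplex $\calA_Y(X)\subseteq\calA(X)$ associated to the inclusion $Y\subseteq X$, together with the corresponding relative Isometry Lemma. The discussion of Section \ref{relative:sec} indicates that this step is delicate: a norm non-increasing relative killing homotopy compatible with Ivanov's inductive scheme is currently available only under additional hypotheses (e.g.~$\pi_n$-injectivity of $Y$ in $X$ for all $n\geq 2$, as in \cite{FP2, Blank}, or amenability of each component of $Y$, as in \cite{BBFIPP, KimKue}). Under any of these extra assumptions the plan above can be carried out, and in that range Theorem \ref{rel:gro} becomes a theorem; in the full generality stated by Gromov, the isometric statement remains, to the best of our knowledge, an open problem.
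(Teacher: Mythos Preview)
Your assessment is accurate, and in fact it matches the paper's own position: the paper does \emph{not} prove Theorem~\ref{rel:gro}. The theorem appears in Section~\ref{relative:sec} as a statement attributed to Gromov~\cite[page~57]{Grom82}, immediately followed by the remark that ``Gromov asserts that Theorem~\ref{rel:gro} may be proved by adapting to the relative case his proof of the absolute Mapping Theorem. We are quite optimistic about the possibility of exploiting the theory developed in this paper to provide a detailed proof of Theorem~\ref{rel:gro}, following Gromov's suggestion.'' In other words, the authors explicitly leave this as a project rather than a result.

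Your sketch is a faithful account of the state of the art as described in Section~\ref{relative:sec}: the Five Lemma argument does give the algebraic isomorphism, the obstacle is the isometric upgrade, and the partial results you cite (\cite{FP2}, \cite{Blank}, \cite{BBFIPP}, \cite{KimKue}) under extra hypotheses on $Y$ are exactly the ones the paper surveys. Your proposed route via a relative Isometry Lemma for a pair $(\calA(X),\calA_Y(X))$ is precisely the adaptation the authors have in mind when they say they are ``optimistic'' about Gromov's suggestion, and your identification of the construction of $\calA_Y(X)$ as the missing ingredient is the right diagnosis. So there is no gap in your reasoning relative to the paper; rather, both you and the paper agree that a complete proof in full generality has not been written down.
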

(In fact, Gromov's statement is more general, since it allows the map $f$ to induce epimorphisms with an amenable kernel on 
the fundamental groups of all the components of $X$ and $Y$.) 
Gromov asserts that Theorem~\ref{rel:gro} may be proved 
by adapting to the relative case
his proof of the absolute Mapping Theorem. We are quite optimistic about the possibility of exploiting the theory developed in this paper to provide a detailed proof of Theorem~\ref{rel:gro}, following Gromov's suggestion.

For related results on  relative bounded cohomology  obtained via the theory of multicomplexes,
we refer the reader to~\cite{Kuessner}.

\chapter{The Mapping Theorem}\label{chap:theorems}
%This chapter is devoted to the complete proof of two fundamental theorems stated by Gromov in~\cite{Grom82}: the Mapping Theorem \ref{mapping-theorem} and the Vanishing Theorem \ref{van-thm1}.

Gromov's Mapping Theorem 
asserts that a continuous map between two topological spaces induces an isometric isomorphism in bounded cohomology provided that the induced map on fundamental groups is a surjection with amenable kernel.
It was stated without any assumption on the spaces involved in~\cite[Section 3.1]{Grom82}. 
A completely different proof of the Mapping Theorem was given by Ivanov in~\cite{Ivanov} for spaces that are homotopy equivalent to countable CW complexes.
Ivanov recently extended his result to any topological space in~\cite{ivanov3}. Just as for the proof that the bounded cohomology of simply connected spaces vanishes,
Ivanov's argument exploits tools of homological algebra (see also the discussion at the beginning of Chapter~\ref{chap2:chap}). On the contrary, our approach follows Gromov's original ideas,
and it is based on the theory of multicomplexes.

%\todo{Come al solito, ho commentato il sommario del capitolo}

\begin{comment}
The chapter is divided in five sections.

In the first one, we just state Gromov's mapping Theorem. In the second section we introduce the group $\Pi(X, X_0)$, where $X$ is a space and $X_0$ a subset of $X$. The third one is devoted to the action of the group $\Pi(X, X)$ over the minimal, complete, large and aspherical multicomplex $\calA(X)$ associated to the good space $X$. In the fourth section we finally give a complete proof of Gromov's Mapping Theorem. In the last section, we introduce the notion of amenable sets and amenable cover and we state and prove the vanishing Theorem.
\end{comment}

We have seen in the previous chapter that, if $f\colon X\to Y$ is a map between path connected topological spaces inducing an isomorphism on fundamental groups,
 then the induced map
 $
H^n_b( f)\colon H^n_b(Y)\to H^n_b(X)
 $
is an isometric isomorphism for every $n\in\mathbb{N}$.
As anticipated in the introduction, Gromov's Mapping Theorem strengthens this result as follows:

%\todo{Ho tolto l'ipotesi good. Controlla che torni nel seguito}

\begin{thm:repeated:mapping-intro}[Gromov's Mapping Theorem]%\label{mapping-theorem}
Let $X_1,X_2$ be path connected topological spaces, and let $f \colon X_{1} \rightarrow X_{2}$ be a continuous map.
Suppose that 
the induced homomorphism $f_{*} \colon \pi_{1}(X_{1}) \rightarrow \pi_{1}(X_{2})$ is an epimorphism with an amenable kernel. 
Then, for every $n\in\mathbb{N}$ the induced map  $H^n_b(f) \colon H_{b}^{n}(X_{2}) \rightarrow H_{b}^{n}(X_{1})$ is an isometric isomorphism. 
\end{thm:repeated:mapping-intro}

As we did in the previous chapter, we will first prove Theorem~\ref{mapping-intro} under the additional assumption that $X_1$ and $X_2$ are good,
and we will then deduce the general case from the fact that weak homotopy equivalences induce isometric isomorphisms on bounded cohomology. 

\section{The group $\Pi(X, X_0)$}\label{sec:mapping:pi(U,V)}

%\todo{Qui nell'introduzioncina metterei perche' non e' il gruppoide fondamentale e citare Ronnie Brown o altri. Marco, lo lascio fare a te.}

Let us fix a path connected good topological space $X$.
Recall from Definition~\ref{asphericalX:def} that to $X$ there is associated a complete, minimal and aspherical multicomplex $\calA(X)$.
In order to prove the Mapping Theorem we need to introduce a group action on $\calA(X)$ which will prove useful also to other purposes.  

\begin{Definizione}
Let $X_0$ be a subset of $X$. We define the set $\Omega(X,X_0)$ as follows. An element of $\Omega(X,X_0)$ is a family of paths $\{\gamma_x\}_{x\in X_0}$ satisfying the following conditions:
\begin{enumerate}
 \item each $\gamma_x\colon [0,1]\to X$
is a continuous path such that $\gamma_x(0)=x$ and $\gamma_x(1)\in X_0$;
\item  for all but finitely many $x\in X_0$, the path $\gamma_x$ is constant (i.e.~$\gamma_x(t)=x$ for every $t\in [0,1]$);
\item the map 
$$
X_0\to X_0\ ,\qquad x\mapsto \gamma_x(1)
$$
is a bijection of $X_0$ onto itself (which, by (2), is a permutation of $X_0$ with finite support).
\end{enumerate}
Therefore, an element of $\Omega(X,X_0)$ can be considered as a path of maps $I_t\colon X_0\to X$, $t\in [0,1]$ such that $t\mapsto I_t(x_0)$ is continuous for every $x_0\in X_0$
and constant for all but finitely $x_0\in X_0$. Note however that the map $X_0\times [0,1]\to X$ defined by $(x_0,t)\mapsto I_t(x_0)$ need not be continuous
if $X_0$ is not discrete.

By definition,  only a finite number of paths in a fixed element of $\Omega(X,X_0)$ is non-constant, so we will often describe an element
of $\Omega(X,X_0)$ just by listing its non-constant elements. In other words, an element of $\Omega(X,X_0)$ will be often considered as a finite
set of continuous paths $\gamma_i\colon [0,1]\to X$, $i=1,\ldots,n$, such that
\begin{itemize}
\item[(i)] $\gamma_{i}(0) \neq \gamma_{j}(0), \gamma_{i}(1) \neq \gamma_{j}(1) \mbox{ for } i \neq j$;
\item[(ii)] $\{\gamma_{1}(0), \cdots , \gamma_{n}(0)\} = \{\gamma_{1}(1), \cdots , \gamma_{n}(1)\}\subseteq X_0$.
\end{itemize}
There is an obvious multiplication in $\Omega(X,X_0)$, given by the usual concatenation of paths: we denote by $I_t*I'_t\in \Omega(X,X_0)$ the family of paths
obtained by concatenating the paths in $I_t$ with the paths in $I_t'$ (where, as usual, paths in $I_t$ come before paths in $I_t'$). With this multiplication, $\Omega(X,X_0)$ is just a semigroup. 

In order
to obtain a group we consider the set $\Pi(X,X_0)$ of homotopy classes of elements of $\Omega(X,X_0)$, where two elements $\{\gamma_x\}_{x\in X_0}$, $\{\gamma'_x\}_{x\in X_0}$ of $\Omega(X,X_0)$ are said to be homotopic if 
$\gamma_x$ is homotopic to $\gamma_x'$ relative to the endpoints for every $x\in X_0$ (in particular, $\gamma_x(1)=\gamma_x'(1)$ for every $x\in\ X_0$). It is now immediate to check that the multiplication on $\Omega(X,X_0)$
induces a multiplication on $\Pi(X,X_0)$, which endows $\Pi(X,X_0)$ with the structure of a group.
In order to avoid a heavier notation, we will sometimes denote an element of $\Pi(X, X_{0})$ just by specifying the list of homotopically non-trivial paths $\{\gamma_{1}, \cdots, \gamma_{n}\}$
in one of its representatives.
\end{Definizione}

\begin{Esempio}
If $X_{0} = \{x_{0}\}$ is a single point, then $\Pi(X, X_{0}) = \pi_{1}(X, x_{0})$ is the fundamental group of $X$ with basepoint $x_0$.
\end{Esempio}

More in general, there exists an obvious injective homomorphism
\begin{displaymath}
\bigoplus_{x \in \, X_{0}} \pi_{1}(X, x) \hookrightarrow \Pi(X, X_{0}) \ .
\end{displaymath}
Via this inclusion, henceforth we will consider $\bigoplus_{x \in \, X_{0}} \pi_{1}(X, x)$ as a subgroup of $\Pi(X,X_0)$.

\section{The action of $\Pi(X,X)$ on $\calA(X)$}\label{actiononA}
Let $X$ be a path connected good topological space.
% Recall from Definition~\ref{asphericalX:def} that $\calA(X)$ denotes a complete, minimal and aspherical
%multicomplex canonically associated to $X$ (this multicomplex is also large provided that $X$ does not reduce to one point).
We are now going to describe a simplicial action of $\Pi(X,X)$ on $\calA(X)$ that will play an important role in the proofs of the Mapping Theorem \ref{mapping-intro} and of the Vanishing Theorem \ref{vanishing1_intro}.
%and \ref{van-thm2}.

Let us fix an element $g\in\Pi(X,X)$, and let $\{\gamma_x\}_{x\in X}$ be a representative of $g$.
Recall that the $1$-skeleton of $\calA(X)$ is canonically identified with the $1$-skeleton of the complete and minimal multicomplex $\calL(X)$, whose set of vertices coincides with $X$ (considered as a set). 
In particular, the set of vertices of $\calA(X)$ is canonically identified with $X$. We observed in Proposition~\ref{morph-pi-to-permfinite} that every element of $\Pi(X,X)$ induces a permutation (with finite support) of $X$.
Therefore, we can define the action of $g$ on the $0$-skeleton of $\calA(X)$ as the permutation induced by $g$ on $X$.

Let now $e$ be a $1$-simplex of $\calA(X)$, let $v_0,v_1$ be the vertices of $e$, and let us fix an affine parametrization
$\gamma_e\colon [0,1]\to |e|$ of $|e|$. 
If $S\colon |\calK(X)|\to X$ is the natural projection, then (after considering $e$ as a simplex of $\calL(X)\subseteq \calK(X)$) we can take the concatenation of paths
$\gamma'\colon [0,1]\to X$ given by
$$
\gamma'=\gamma_{v_0}^{-1} * (S\circ \gamma_e) * \gamma_{v_1}\ ,
$$

%\todo{Controllare se la notazione e' al contrario come mi sembra. Nel caso correggere anche dopo.}
%\todo{A me sembra corretta. Per convenzione, la composizione di lacci e cammini e' tale per cui quello che viene percorso prima viene scritto a sinistra.}

where as usual we denote by $\gamma^{-1}$ the path $\gamma^{-1}(t)=\gamma(1-t)$. Observe that $\gamma'(0)=\gamma_{v_0}(1)$ and $\gamma'(1)=\gamma_{v_1}(1)$, so the endpoints of $\gamma'$ coincide with
the images of the endpoints of $e$ via the action of $g$ on the $0$-skeleton of $\calA(X)$ (in particular, they are distinct). 
Also recall that $1$-simplices of $\calL(X)$ (hence, of $\calA(X)$) bijectively correspond
to homotopy classes relative to endpoints of paths in $X$ with distinct endpoints (this readily follows from the very definition of $\calL(X)$, together with the fact
that the natural projection $|\calL(X)|\to X$ is a weak homotopy equivalence). Therefore, we can define $g\cdot \gamma_e$ as the homotopy class  of $\gamma'$ relative to the endpoints. It is easy to check that 
this definition does not depend on the numbering of the vertices of $e$. Moreover, the action of $g$ on the edges of $\calA(X)$ indeed  extends the action on vertices, and for every $g,g'\in \Pi(X,X)$ the action of
$gg'$ is equal to the composition of the actions of $g$ and of $g'$. 

Let us now extend the action of $g$ on the whole of $\calA(X)$. Let us first check that, if $\gamma\colon \partial \Delta^2\to \calA(X)$ is a null-homotopic simplicial embedding (i.e.~a null-homotopic triangular loop), 
then $g\circ \gamma$ is also a null-homotopic triangular loop. To this end, let us denote by $\iota \colon \partial \Delta^2 \to \calA(X)$ the simplicial embedding corresponding to $g \circ \gamma$. Then, if $\{v_0,v_1,v_2\}=\gamma((\Delta^2)^0)$
and $e_i$ is the edge of $\calA(X)$ opposite to $v_i$ (with the orientation induced by $\gamma$), then by construction the projection $S \circ \iota$ 
of the triangular loop $g \circ \gamma$ on $X$ is freely homotopic 
to the loop
$$\alpha = \left(\gamma_{v_0}^{-1} * (S\circ \gamma_{e_2}) * \gamma_{v_1}\right) * \left(\gamma_{v_1}^{-1} * (S\circ \gamma_{e_0}) * \gamma_{v_2}\right) *
\left(\gamma_{v_2}^{-1} * (S\circ \gamma_{e_1}) * \gamma_{v_0}\right)\ ,$$
%Fixing $\gamma_{e_0}(1)$ as basepoint, it is immediate to check that $\alpha$ is homotopic 
hence
to $S\circ (\gamma_{e_2}*\gamma_{e_0}*\gamma_{e_1})=S\circ \gamma$. But $\gamma$ is homotopically trivial, so also $S\circ\iota\colon \partial \Delta^2\to X$ is homotopically trivial. 
By Proposition~\ref{riassunto:prop}, this implies that 
$\iota$ is null-homotopic in $\calA(X)$, as claimed. 

Observe now that Proposition~\ref{aspherical:char} implies that, if $f\colon (\Delta^n)^1\to \calA(X)$ is a simplicial embedding 
of the $1$-skeleton of $\Delta^n$, $n\geq 2$, 
such that the restriction of $f$ to each triangular loop is null-homotopic, then there exists
  a unique simplicial embedding $\overline{f}\colon \Delta^n\to A$ extending $f$. This result implies at once that the action of $g\in \Pi(X,X)$ on $\calA(X)^1$ can be uniquely extended to a 
  non-degenerate simplicial map $\psi(g)\colon \calA(X)\to\calA(X)$. The uniqueness of the extension also ensures that 
  $\psi(gg')=\psi(g)\psi(g')$ for every $g,g'\in\Pi(X,X)$. In particular, $\psi(g)$ is an automorphism of $\calA(X)$ for every $g\in\Pi(X,X)$. 
  
  \begin{thm}\label{actionPi}
   Let $X$ be a good space, and let
   $$
   \psi\colon \Pi(X,X)\to \aut(\calA(X))
   $$
   be the action described above. Then $\psi(g)$ is simplicially homotopic to the identity for every $g\in \Pi(X,X)$. 
  \end{thm}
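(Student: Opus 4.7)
The plan is to first establish that $|\psi(g)|$ is continuously homotopic to the identity on $|\calA(X)|$, and then upgrade this to a simplicial homotopy via the Homotopy Lemma~\ref{homotopy-lemma}. Two preliminary observations ensure the setup is non-vacuous. If $X$ is a single point, then $\Pi(X,X)$ is trivial and there is nothing to prove. Otherwise, condition (2) in Definition~\ref{good:def} forces $X$ to be infinite, so that $\calA(X)$ has infinitely many vertices and is thus large. Since $\calA(X)$ is complete by Theorem~\ref{aspherical:thm}, the hypotheses of Lemma~\ref{homotopy-lemma} will indeed apply to $\psi(g)$ and $\id$ once a continuous homotopy is in place.

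To produce such a continuous homotopy I would fix a representative $\{\gamma_x\}_{x\in X}$ of $g$ and, exploiting that all but finitely many $\gamma_x$ are constant, choose a vertex $v_0$ such that $\gamma_{v_0}$ is the constant path at $v_0$. Then $\psi(g)(v_0)=v_0$, so $|\psi(g)|$ is a \emph{pointed} self-map of $(|\calA(X)|,v_0)$. Since $|\calA(X)|$ is an aspherical CW-complex, two pointed self-maps of $(|\calA(X)|,v_0)$ are pointed-homotopic if and only if they induce the same homomorphism on $\pi_1$. The crucial step is therefore to verify that $\psi(g)_{*}\colon \pi_1(|\calA(X)|,v_0)\to\pi_1(|\calA(X)|,v_0)$ is the identity.

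To check this I would represent an arbitrary class in $\pi_1(|\calA(X)|,v_0)$ by a simplicial loop $e_1*\cdots*e_n$ passing through vertices $v_0=w_0,w_1,\ldots,w_n=v_0$. By Proposition~\ref{aspherical:char} applied to $\calA(X)$, each $1$-simplex $e_i$ corresponds canonically to a homotopy class $[c_i]$ (relative to endpoints) of paths in $X$ from $w_{i-1}$ to $w_i$, and the isomorphism $\pi_1(|\calA(X)|,v_0)\cong\pi_1(X,v_0)$ induced by the natural maps sends the class of $e_1*\cdots*e_n$ to $[c_1*\cdots*c_n]$. By construction of $\psi(g)$ on edges, $\psi(g)(e_i)$ corresponds to the class of $\gamma_{w_{i-1}}^{-1}*c_i*\gamma_{w_i}$, so after concatenation the intermediate terms $\gamma_{w_i}*\gamma_{w_i}^{-1}$ telescope, producing
$$
\gamma_{v_0}^{-1}*(c_1*\cdots*c_n)*\gamma_{v_0}\ ,
$$
which is homotopic rel $v_0$ to $c_1*\cdots*c_n$, since $\gamma_{v_0}$ is constant. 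This shows $\psi(g)_{*}=\id$.

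The hardest part is likely the bookkeeping needed to make the telescoping argument fully rigorous: one must carefully justify that the concatenation $\psi(g)(e_1)*\cdots*\psi(g)(e_n)$, interpreted as an element of $\pi_1(|\calA(X)|,v_0)$, really does correspond under the natural isomorphism with $\pi_1(X,v_0)$ to the class of the telescoped loop above. Once $\psi(g)_{*}=\id$ is established, the asphericity of $|\calA(X)|$ yields a continuous homotopy between $|\psi(g)|$ and the identity, and the Homotopy Lemma~\ref{homotopy-lemma}, applied to the non-degenerate simplicial automorphisms $\psi(g)$ and $\id$ of the large and complete multicomplex $\calA(X)$, produces the desired simplicial homotopy.
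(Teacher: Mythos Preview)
Your argument is correct and follows the same overall strategy as the paper: first show that $\psi(g)_*=\id$ on $\pi_1(|\calA(X)|,v_0)$, invoke asphericity to get a continuous homotopy, then apply the Homotopy Lemma~\ref{homotopy-lemma}.

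The only difference is in how $\psi(g)_*=\id$ is verified. You fix one vertex $v_0$ with $\gamma_{v_0}$ constant, take an arbitrary simplicial loop through $v_0$, and perform a telescoping computation in $\pi_1(X,v_0)$. The paper instead exploits that \emph{two} vertices $x_0,x_1$ can be chosen with $\gamma_{x_0},\gamma_{x_1}$ constant; then $\psi(g)$ fixes every edge joining $x_0$ and $x_1$, and since every class in $\pi_1(|\calA(X)|,x_0)$ is represented by a special sphere built from two such edges (Theorem~\ref{complete:special}), triviality of $\psi(g)_*$ follows immediately without any telescoping. Your route is slightly more hands-on but has the advantage of not invoking the special-sphere description of $\pi_1$; the paper's trick is cleaner and avoids precisely the bookkeeping you flagged as delicate.
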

\begin{proof}
 Since $\calA(X)$ is an aspherical CW complex, 
 from the general theory of Eilenberg-MacLane spaces we know that $\psi(g)$ is topologically homotopic to the identity provided that
 it induces the identity on $\pi_1(|\calA(X)|,x_0)$ for some $x_0\in |\calA(X)|$ (see for instance  \cite[Theorem 7.2]{whitehead} or \cite[Proposition 1B.9]{hatcher}).

 Without loss of generality we may assume that $X$ is path connected and contains at least two points, so that $\calA(X)$ is large. In particular, 
 if $g=\{\gamma_x\}_{x\in X}$, then there exist two points $x_0,x_1\in X$ such that $\gamma_{x_i}$ is constant for $i=0,1$. This implies that $\psi(g)$ acts as the identity
 on every edge of $\calA(X)$ having $x_0,x_1$ as endpoints. Since $\calA(X)$ is complete, we know that every element of $\pi_1(|\calA(X)|,x_0)$ may be represented by a pair of edges
 with endpoints $x_0,x_1$ (see Theorem~\ref{complete:special}). Therefore, $\psi(g)$ acts as the identity on $\pi_1(|\calA(X)|,x_0)$,
 and this proves that $\psi(g)$ is topologically homotopic to the identity.
 
  Finally, the Homotopy Lemma~\ref{homotopy-lemma} implies that any automorphism of $\calA(X)$ which is topologically homotopic to the identity is also simplicially homotopic
 to the identity.
\end{proof}

We now concentrate our attention on the action on $\calA(X)$ of the subgroup $\bigoplus_{x\in X} \pi_1(X,x)$ of $\Pi(X,X)$. To this aim we first fix some notation.
For every $x\in X$ we set $G_x=\pi_1(X,x)$. Let us fix a basepoint $\overline{x}\in X$, let $G=G_{\overline{x}}$ and let $N$ be a normal subgroup of $G$. 
It is well known that, for every $x\in X$, there exists an isomorphism $G\cong G_x$, which is canonical up to conjugacy. Since
$N$ is normal, this implies that for every $x\in X$ there exists a well-defined isomorphic image $N_x$ of $N$ in $G_x$ (so that $N_{\overline{x}}=N$). We then set
$$
\widehat{N}=\bigoplus_{x\in X} N_x\ .
$$
In particular, $\widehat{G}=\bigoplus_{x\in X} G_x=\bigoplus_{x\in X} \pi_1(X,x)$. 

Recall that the natural projections $|\calL(X)|\to X$, $|\calL(X)|\to|\calA(X)|$ induce isomorphisms on fundamental groups, so for every $x\in X$ there exists a canonical identification
between $\pi_1(X,x)$ and $\pi_1(|\calA(X)|,x)$ (where we denote by $x\in |\calA(X)|$ the vertex of $\calA(X)$ corresponding to $x$). Therefore, 
for every $x\in X$ we will denote by $N_x$ also the subgroup of $\pi_1(|\calA(X)|,x)$ corresponding to $N_x<\pi_1(X,x)$.
For brevity, we will also say that a loop $\gamma\colon S^1\to |\calA(X)|$ is in $N$ if it is freely homotopic to a representative of an element in $N_x$ for some (and hence, every) vertex $x$ of $\calA(X)$. 
Using that $N$ is normal it is readily seen that a loop $\gamma\colon S^1\to |\calA(X)|$
is in $N$ if and only if for every $p\in S^1$ with $\gamma(p)\in \calA(X)^0$ the induced pointed map $\gamma\colon (S^1,p)\to (|\calA(X)|,\gamma(p))$ defines an element in $N_{\gamma(p)}$.

If $g$ is an element of $\widehat{N}$, then $\psi(g)\colon \calA(X)\to\calA(X)$ restricts to the identity of the $0$-skeleton of $\calA(X)$. In other words, the action of $\widehat{N}$ on
$\calA(X)$ is $0$-trivial according to the terminology introduced in Section~\ref{quotient:sec}. In particular, the quotient $\calA(X)/\widehat{N}$ admits a natural structure of multicomplex.
Moreover, the set of vertices of $\calA(X)/\widehat{N}$ is canonically identified with the set of vertices of $\calA(X)$, hence with $X$.
We denote by $\pi\colon \calA(X)\to \calA(X)/\widehat{N}$ the (simplicial) quotient map.

We are now going to prove that the (geometric realization) of the multicomplex  $\calA(X)/\widehat{N}$ is a $K(G/N,1)$-space. We begin with the following:

\begin{lemma}\label{actionNhat}
 Let $e,e'$ be edges of $\calA(X)$ such that there exists $g\in\widehat{N}$ such that $g\cdot e=e'$ (in particular, $e$ and $e'$ share the same endpoints $x_0,x_1$).
 Then, for every $g_0\in N_{x_0}$ there exists a unique element $g_1\in N_{x_1}$ such that 
 $$
 (g_0,g_1)\cdot e =e'\ .
 $$
\end{lemma}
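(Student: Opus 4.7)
The plan is to translate the statement into an equality of homotopy classes of paths in $X$, and then exploit the normality of $N$ to pin down the unique candidate for $g_{1}$ and verify that it lies in $N_{x_{1}}$.

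First I would use the canonical identification between edges of $\calA(X)$ (with distinct endpoints) and homotopy classes relative to the endpoints of paths in $X$. Let $\beta,\beta'\colon[0,1]\to X$ be representatives of $e,e'$ with $\beta(0)=\beta'(0)=x_{0}$ and $\beta(1)=\beta'(1)=x_{1}$. By the description of the action given in Section~\ref{actiononA}, if $g_{0}\in N_{x_{0}}$ (resp.~$g_{1}\in N_{x_{1}}$) is represented by a loop $\eta_{0}$ at $x_{0}$ (resp.~$\eta_{1}$ at $x_{1}$), then $(g_{0},g_{1})\cdot e$ is the edge represented by $\eta_{0}^{-1}*\beta*\eta_{1}$. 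Hence $(g_{0},g_{1})\cdot e=e'$ is equivalent to
\[
[\eta_{0}^{-1}*\beta*\eta_{1}]=[\beta']\qquad\text{in }\pi_{1}(X),
\]
which, after rearrangement, means
\[
[\eta_{1}]=[\beta^{-1}*\eta_{0}*\beta']\quad\text{in }\pi_{1}(X,x_{1}).
\]
This immediately gives uniqueness: the right-hand side depends only on $g_{0}$, $e$ and $e'$, so at most one $g_{1}$ can do the job.

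Next, I would take the element $g_{1}:=[\beta^{-1}*\eta_{0}*\beta']\in\pi_{1}(X,x_{1})$ as the candidate, and check that it actually lies in $N_{x_{1}}$. Let $g=\{\gamma_{x}\}_{x\in X}\in\widehat{N}$ be any representative of the element from the hypothesis $g\cdot e=e'$, and let $\alpha_{0}$, $\alpha_{1}$ be loops based at $x_{0}$, $x_{1}$ representing $g_{x_{0}}\in N_{x_{0}}$ and $g_{x_{1}}\in N_{x_{1}}$ respectively (the remaining $\gamma_{x}$ do not affect the action on $e$). The equation $g\cdot e=e'$ reads $[\beta']=[\alpha_{0}^{-1}*\beta*\alpha_{1}]$, and substituting this into the definition of $g_{1}$ yields
\[
g_{1}=[\beta^{-1}*\eta_{0}*\alpha_{0}^{-1}*\beta]\cdot[\alpha_{1}]\qquad\text{in }\pi_{1}(X,x_{1}).
\]

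The final step is to recognize each factor as an element of $N_{x_{1}}$. Since $[\eta_{0}]=g_{0}\in N_{x_{0}}$ and $[\alpha_{0}]=g_{x_{0}}\in N_{x_{0}}$, their product $[\eta_{0}*\alpha_{0}^{-1}]$ lies in the subgroup $N_{x_{0}}$. The isomorphism $\pi_{1}(X,x_{0})\to\pi_{1}(X,x_{1})$ induced by conjugation along $\beta$ is one of the canonical-up-to-conjugacy identifications used to define the subgroups $N_{x}$; since $N$ is normal in $G$, this isomorphism sends $N_{x_{0}}$ bijectively onto $N_{x_{1}}$, so $[\beta^{-1}*\eta_{0}*\alpha_{0}^{-1}*\beta]\in N_{x_{1}}$. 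Together with $[\alpha_{1}]=g_{x_{1}}\in N_{x_{1}}$, this shows $g_{1}\in N_{x_{1}}$, completing the proof. The only mildly subtle point — and the one I would be most careful about — is the well-definedness of $N_{x}$ and the fact that conjugation by an arbitrary path $\beta$ sends $N_{x_{0}}$ to $N_{x_{1}}$: this is exactly the place where the normality of $N$ in $G=\pi_{1}(X,\overline{x})$ is used, and it is what makes the candidate $g_{1}$ actually admissible.
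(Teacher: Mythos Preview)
Your proof is correct and follows the same path as the paper: both translate the edge equation into a homotopy-class equation for paths and solve for the unique $g_{1}=[\beta^{-1}*\eta_{0}*\beta']$. You go a bit further than the paper's write-up by explicitly verifying that this $g_{1}$ lies in $N_{x_{1}}$ (using the hypothesis $g\cdot e=e'$ and the normality of $N$), a step the paper leaves to the reader.
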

\begin{proof}
 Let $\beta\colon [0,1]\to X$ (resp.~$\beta'\colon [0,1]\to X$) be the path associated to $e$ (resp.~to $e'$) such that $\beta(0)=x_0$, $\beta(1)=x_1$
 (resp.~$\beta'(0)=x_0$, $\beta'(1)=x_1$), and
 let $\alpha_{0}$  be a representative of $g_{0}$. Let also $\alpha_1\colon [0,1]\to X$ be a loop based at $x_1$, and denote by $g_1$ the class of $\alpha_1$
 in $\pi_1(X,x_1)$. 
  Then $(g_0,g_1)\cdot e=e'$ if and only if the paths 
 $
 \alpha_0^{-1}* \beta * \alpha_1
 $
 and $\beta'$ 
 are homotopic in $X$ relative to the endpoints, i.e.~if and only if 
 $$
 (\beta')^{-1}*\alpha_0^{-1}* \beta * \alpha_1
 $$
 defines the trivial element of $\pi_1(X,x_1)$. This condition holds if and only if 
 $$
 g_1=[\alpha_1]=[\beta^{-1}*\alpha_0*\beta']\ \textrm{in}\ \pi_1(X,x_1)\ ,
 $$
 and this concludes the proof.
\end{proof}
 
 \begin{lemma}\label{charinN}
  Let $e,e'$ be edges of $\calA(X)$ sharing the same endpoints and denote by $e*e'$ the loop obtained as one of the possible concatenations of $e$ and $e'$. 
  Then $\pi(e)=\pi(e')$ if and only if $e*e'$ is in $N$.
 \end{lemma}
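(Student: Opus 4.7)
The plan is to reduce the statement to an explicit computation in $\pi_1(X,x_0)$, using the identification of the edge set of $\calA(X)$ with the set of homotopy classes (rel endpoints) of paths in $X$ between distinct points, together with the concrete description of the action of $\widehat{N}$ on edges provided by the proof of Lemma~\ref{actionNhat}. Throughout, let $x_0,x_1$ be the common endpoints of $e$ and $e'$, and choose representative paths $\beta,\beta'\colon [0,1]\to X$ with $\beta(0)=\beta'(0)=x_0$, $\beta(1)=\beta'(1)=x_1$, in the classes of $e$ and $e'$ respectively. With a consistent choice of concatenation, $e*e'$ then corresponds to the loop $\beta*(\beta')^{-1}$ based at $x_0$, so ``$e*e'$ is in $N$'' means precisely that $[\beta*(\beta')^{-1}]\in N_{x_0}$.

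For the implication $(\Rightarrow)$, suppose $\pi(e)=\pi(e')$. By definition of the quotient there exists $g\in\widehat{N}$ with $g\cdot e=e'$. Since $g$ preserves the (finite) set of endpoints of $e$, only the components $g_{x_0}\in N_{x_0}$ and $g_{x_1}\in N_{x_1}$ of $g$ are relevant, and by the description of the action recalled in the proof of Lemma~\ref{actionNhat} there exist loops $\alpha_0,\alpha_1$ representing $g_{x_0},g_{x_1}$ such that $\alpha_0^{-1}*\beta*\alpha_1\simeq \beta'$ rel endpoints. Rearranging this homotopy in $\pi_1(X,x_0)$ yields
\[
[\beta*(\beta')^{-1}]=[\alpha_0]\cdot\bigl[\beta*\alpha_1^{-1}*\beta^{-1}\bigr].
\]
The first factor lies in $N_{x_0}$ by construction, and the second lies in $\beta N_{x_1}\beta^{-1}=N_{x_0}$ thanks to the normality of $N$ (which is exactly what makes the subgroups $N_{x}\subseteq\pi_1(X,x)$ transport coherently along any path). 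Hence $[\beta*(\beta')^{-1}]\in N_{x_0}$, i.e.~$e*e'$ is in $N$.

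For the converse $(\Leftarrow)$, assume $[\beta*(\beta')^{-1}]\in N_{x_0}$, and set $g_0=[\beta*(\beta')^{-1}]$. Define $g=\{\gamma_x\}_{x\in X}\in \widehat{N}$ by letting $\gamma_{x_0}$ be a representative loop for $g_0$ and $\gamma_x$ be constant for every $x\neq x_0$; only the $x_0$-component is non-trivial, so this is a well-defined element of $\widehat{N}$. The action of $g$ on $e$ is then represented by the path $\gamma_{x_0}^{-1}*\beta$, and
\[
\gamma_{x_0}^{-1}*\beta\ \simeq\ \bigl(\beta*(\beta')^{-1}\bigr)^{-1}*\beta\ =\ \beta'*\beta^{-1}*\beta\ \simeq\ \beta'\quad\text{rel endpoints},
\]
so $g\cdot e=e'$ and consequently $\pi(e)=\pi(e')$.

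No step presents a real obstacle: the content of the lemma is essentially the reformulation of Lemma~\ref{actionNhat} modulo the normality of $N$, and the only mild subtlety to keep track of is the correct bookkeeping of basepoints and of the direction of concatenations, which is handled by the identification of edges in $\calA(X)$ with homotopy classes of paths in $X$ coming from Proposition~\ref{riassunto:prop} and the construction of the action in Section~\ref{actiononA}.
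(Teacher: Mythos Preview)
Your proof is correct and follows essentially the same approach as the paper: both reduce the question to checking whether $[\beta*(\beta')^{-1}]\in N_{x_0}$ via the explicit description of the $\widehat{N}$-action on edges. The paper compresses both directions into a single sentence (``it is readily seen that this is in turn equivalent to\ldots''), while you spell out the two implications separately; in particular, your choice in the $(\Leftarrow)$ direction of an element of $\widehat{N}$ supported only at $x_0$ makes explicit what the paper leaves to the reader. One cosmetic point: in your displayed equation for $(\Rightarrow)$ the two factors appear in the opposite order to what the computation actually gives, but since both lie in $N_{x_0}$ this does not affect the conclusion.
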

\begin{proof}
Let $x_0,x_1$ be the endpoints of $e$ and $e'$. Let also $\beta$ (resp.~$\beta'$) be a path with values in $X$ in the class of $e$ (resp.~$e'$) and such that
$\beta(0)=x_0$, $\beta(1)=x_1$ (resp.~$\beta'(0)=x_0$, $\beta'(1)=x_1$).
By definition, $\pi(e)=\pi(e')$ if and only if the exists an element
$\{\gamma_x\}_{x\in X}\in\widehat{N}$
such that the path
$\gamma_{x_0}^{-1} * \beta * \gamma_{x_1}$ is homotopic relative to the endpoints to $\beta'$, or, equivalently, 
the path $\gamma_{x_0}^{-1}*\beta*\gamma_{x_1}*(\beta')^{-1}$ is null-homotopic relative to $x_0$. It is readily seen that this is in turn equivalent to the fact that
the class of the path $\beta* (\beta' )^{-1}$ is in $N_{x_0}$, and this concludes the proof. 
\end{proof}

\begin{lemma}\label{sameprojection}
 Let $\gamma,\gamma'$ be pointed non-degenerate simplicial paths in $\calA(X)$ of the same length, i.e.~let
 $\gamma,\gamma'\colon (S,v_0)\to (\calA(X),x_0)$ be non-degenerate simplicial maps, where
 $S$ is a multicomplex homeomorphic to $S^1$.
 If $\pi\circ \gamma=\pi\circ \gamma'$, then (the geometric realization of) $\gamma'*\gamma^{-1}$ lies in $N_{x_0}$.
 \end{lemma}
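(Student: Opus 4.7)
The plan is to reduce the statement to a vertex-by-vertex and edge-by-edge comparison, exploiting that the action of $\widehat{N}$ on $\calA(X)$ is $0$-trivial, together with Lemma~\ref{charinN}.

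First I would observe that since the action of $\widehat{N}$ on $\calA(X)$ fixes every vertex (it is $0$-trivial), the projection $\pi\colon \calA(X)\to \calA(X)/\widehat{N}$ induces a bijection on $0$-skeleta. Consequently, if $\pi\circ \gamma=\pi\circ\gamma'$ and both are non-degenerate simplicial maps from $S$ (a multicomplex homeomorphic to $S^{1}$) of the same length, then $\gamma$ and $\gamma'$ must agree on the $0$-skeleton of $S$. Ordering the vertices of $S$ cyclically as $v_{0},v_{1},\ldots,v_{n}=v_{0}$ and writing $x_{i}=\gamma(v_{i})=\gamma'(v_{i})$ (with $x_{0}$ the basepoint), we can write $\gamma=e_{1}*e_{2}*\cdots *e_{n}$ and $\gamma'=e'_{1}*e'_{2}*\cdots *e'_{n}$ where, for each $i$, the edges $e_{i}$ and $e'_{i}$ have the common endpoints $x_{i-1}$ and $x_{i}$ and satisfy $\pi(e_{i})=\pi(e'_{i})$.

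Next I would apply Lemma~\ref{charinN} to each index $i$: since $e_{i},e'_{i}$ share the same endpoints and have the same $\pi$-image, the loop $e'_{i}*e_{i}^{-1}$ based at $x_{i-1}$ is in $N$, i.e.~its class lies in $N_{x_{i-1}}<\pi_{1}(|\calA(X)|,x_{i-1})$. Now I would rewrite the loop $\gamma'*\gamma^{-1}$ based at $x_{0}$ by inserting trivial loops, obtaining
\[
\gamma'*\gamma^{-1} \ \simeq\ \prod_{i=1}^{n} \alpha_{i}\,*\,(e'_{i}*e_{i}^{-1})\,*\,\alpha_{i}^{-1}, \qquad \alpha_{i}=e_{1}*\cdots *e_{i-1},
\]
where each factor is a loop based at $x_{0}$ obtained by conjugating the loop $e'_{i}*e_{i}^{-1}$ (at $x_{i-1}$) through the path $\alpha_{i}$ (from $x_{0}$ to $x_{i-1}$).

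Finally, I would conclude using normality. By definition, the subgroup $N_{x_{i-1}}<\pi_{1}(|\calA(X)|,x_{i-1})$ is the image of $N$ under the isomorphism $\pi_{1}(|\calA(X)|,x_{0})\cong \pi_{1}(|\calA(X)|,x_{i-1})$ induced by any path from $x_{0}$ to $x_{i-1}$; this is well-defined precisely because $N$ is normal. Hence conjugation by $\alpha_{i}$ carries the class of $e'_{i}*e_{i}^{-1}$ into $N_{x_{0}}$, and thus each factor $\alpha_{i}*(e'_{i}*e_{i}^{-1})*\alpha_{i}^{-1}$ represents an element of $N_{x_{0}}$. Since $N_{x_{0}}$ is a subgroup, the product lies in $N_{x_{0}}$, so $\gamma'*\gamma^{-1}$ is in $N_{x_{0}}$, as required. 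The main (minor) obstacle is the bookkeeping in the telescoping decomposition above; everything else is a direct consequence of the $0$-trivial action of $\widehat{N}$, Lemma~\ref{charinN}, and normality of $N$.
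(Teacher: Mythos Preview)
Your proof is correct and follows essentially the same approach as the paper's: both decompose $\gamma'*\gamma^{-1}$ as a product of conjugates of the loops $e'_i*e_i^{-1}$, invoke Lemma~\ref{charinN} to see that each such loop lies in $N$, and then use normality of $N$ to conclude that each conjugate (hence the product) lies in $N_{x_0}$. The only differences are cosmetic (indexing conventions and how explicitly the $0$-triviality and normality steps are spelled out).
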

\begin{proof}
Let us fix a circular ordering $v_0,\ldots,v_{n-1}$ on the vertices of $S$. Since $\pi\circ\gamma=\pi\circ\gamma'$ and the projection $\pi$ induces
an identification between the $0$-skeleton of $\calA(X)$ and the $0$-skeleton of the quotient, we can set $x_i=\gamma(v_i)=\gamma'(v_i)$.
For every $i=0,\ldots,n-1$ let $e_i=\gamma([v_i,v_{i+1}])$ (resp.~$e'_i=\gamma'([v_i,v_{i+1}])$), where we take indices mod $n$. Let us also orient $e_i$ and $e_i'$ from
$x_i$ to $x_{i+1}$. 

By Lemma~\ref{charinN}, for every $i=0,\ldots,n-1$ the loop $e_i'*e_i^{-1}$ is in $N$, so for every $i=0,\ldots,n-1$ also the loop
$$
\alpha_i=e_0*e_1*\ldots*e_{i-1}*(e_i'*e_i^{-1})*e_{i-1}^{-1}*\ldots* e_0^{-1}
$$
is in $N$. But the concatenation
$$
\alpha_0*\alpha_1*\ldots*\alpha_{n-1}
$$
defines the same element of $\pi_1(|\calA(X)|,x_0)$ as the loop $\gamma'*\gamma^{-1}$. This concludes the proof.
\end{proof}

\begin{lemma}\label{trianglesinN}
 Let $\gamma$ be a triangular loop in $\calA(X)$, i.e.~a simplicial embedding $\gamma\colon \partial \Delta^2\to \calA(X)$, and suppose that $\pi\circ\gamma$ bounds a $2$-dimensional simplex of $\calA(X)/\widehat{N}$. Then
$\gamma$ is in $\widehat{N}$.
\end{lemma}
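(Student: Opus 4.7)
The plan is to reduce to Lemma~\ref{sameprojection} by lifting the 2-simplex in the quotient that fills $\pi\circ\gamma$ back up to a 2-simplex in $\calA(X)$, whose boundary will be a triangular loop that projects to the same thing as $\gamma$ and is in addition null-homotopic.

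First, by hypothesis there exists a 2-simplex $\sigma$ of $\calA(X)/\widehat{N}$ with $\partial\sigma=\pi\circ\gamma$. Recall from Section~\ref{quotient:sec} (see also the construction just before Lemma~\ref{actionNhat}) that, since the action of $\widehat{N}$ on $\calA(X)$ is $0$-trivial, the projection $\pi\colon\calA(X)\to\calA(X)/\widehat{N}$ is a non-degenerate simplicial map inducing a bijection on vertices, and the 2-simplices of $\calA(X)/\widehat{N}$ are precisely the $\widehat{N}$-orbits of 2-simplices of $\calA(X)$. Hence we may pick a 2-simplex $\tilde\sigma$ of $\calA(X)$ with $\pi(\tilde\sigma)=\sigma$. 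Denote by $v_0,v_1,v_2$ the vertices of $\partial\Delta^2$ (with $v_0$ the basepoint used for $\gamma$) and set $x_i=\gamma(v_i)$. Since $\pi$ is a bijection on vertices and $\partial\sigma=\pi\circ\gamma$, the vertex set of $\tilde\sigma$ coincides with $\{x_0,x_1,x_2\}$, so we can parametrize $\partial\tilde\sigma$ as a pointed simplicial map $\gamma'\colon(\partial\Delta^2,v_0)\to(\calA(X),x_0)$ with $\gamma'(v_i)=x_i$ for $i=0,1,2$.

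Next I would check that $\pi\circ\gamma'=\pi\circ\gamma$ as pointed simplicial maps $(\partial\Delta^2,v_0)\to(\calA(X)/\widehat{N},\pi(x_0))$. On vertices this is immediate; on edges it follows from the facts that $\partial(\pi\circ\tilde\sigma)=\pi\circ(\partial\tilde\sigma)=\pi\circ\gamma'$ and $\partial\sigma=\pi\circ\gamma$, combined with the observation that an edge of a 2-simplex in a multicomplex is determined by its two endpoints among the facets of that simplex. Applying Lemma~\ref{sameprojection} to $\gamma$ and $\gamma'$ then yields that the loop $\gamma'*\gamma^{-1}$ represents an element of $N_{x_0}$ inside $\pi_1(|\calA(X)|,x_0)$.

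Finally, $\gamma'$ is the boundary of the 2-simplex $\tilde\sigma$, hence it is null-homotopic in $|\calA(X)|$. Therefore $\gamma'*\gamma^{-1}$ is freely homotopic (and, based at $x_0$, homotopic up to a null-homotopic loop) to $\gamma^{-1}$, which forces $[\gamma^{-1}]\in N_{x_0}$ and hence $[\gamma]\in N_{x_0}$. By the convention introduced before Lemma~\ref{actionNhat}, this exactly means that $\gamma$ is in $N$, and via the canonical inclusion $N_{x_0}\hookrightarrow\widehat{N}$, that $\gamma$ is in $\widehat{N}$. The only slightly delicate point is the bookkeeping in the second step: one must be sure that the parametrization of $\partial\tilde\sigma$ chosen to match $\gamma$ on vertices also matches $\pi\circ\gamma$ on edges, which is what makes Lemma~\ref{sameprojection} applicable. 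Everything else is a direct consequence of the properties of the quotient projection $\pi$ and of the asphericity (used implicitly through the fact that a null-homotopic loop in $|\calA(X)|$ represents the trivial element of $\pi_1$).
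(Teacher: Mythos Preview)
Your proposal is correct and follows essentially the same approach as the paper: lift the $2$-simplex $\sigma$ to $\tilde\sigma$ in $\calA(X)$, take $\gamma'=\partial\tilde\sigma$, apply Lemma~\ref{sameprojection} to conclude that $\gamma'*\gamma^{-1}$ lies in $N_{x_0}$, and then use that $\gamma'$ is null-homotopic. The paper's proof is simply a terser version of what you wrote; your additional bookkeeping on why $\pi\circ\gamma'=\pi\circ\gamma$ as parametrized maps is a helpful clarification but not a different idea.
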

\begin{proof}
By definition, the hypothesis implies that there exists a triangular loop $\gamma'\colon \partial \Delta^2\to \calA(X)$ which bounds a $2$-dimensional simplex of $\calA(X)$ and is such that
$\pi\circ\gamma'=\pi\circ\gamma$. By Lemma~\ref{sameprojection} we now obtain that $\gamma*(\gamma')^{-1}$ is in $\widehat{N}$. But $\gamma'$ is homotopically trivial in $|\calA(X)|$,
so $\gamma$ is in $\widehat{N}$.
\end{proof}

We are now ready to compute the fundamental group of $|\calA(X)/\widehat{N}|$.

\begin{prop}\label{fund:group:quotient}
 Let $x_0$ be a vertex of $\calA(X)$. Then the projection $\pi\colon \calA(X)\to \calA(X)/\widehat{N}$
 induces an epimorphism
 $$
 \pi_*\colon \pi_1(|\calA(X)|,x_0)\to \pi_1(|\calA(X)/\widehat{N}|,x_0)
 $$
 with kernel 
 $$
 \ker \pi_*= N_{x_0}\ .
 $$
 In particular, $\pi_1(|\calA(X)/\widehat{N}|,x_0)$ is canonically isomorphic to $\pi_1(X,x_0)/N_{x_0}\cong G/N$. 
\end{prop}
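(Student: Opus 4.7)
The plan is to establish the surjectivity of $\pi_*$, the inclusion $N_{x_0}\subseteq\ker\pi_*$, and the reverse inclusion $\ker\pi_*\subseteq N_{x_0}$ in this order. Surjectivity is straightforward: by cellular approximation any loop in $|\calA(X)/\widehat{N}|$ based at $x_0$ is homotopic to a concatenation of edges of the quotient, and since $\widehat{N}$ fixes the $0$-skeleton of $\calA(X)$ pointwise, each edge of $\calA(X)/\widehat{N}$ lifts to an edge of $\calA(X)$ with the same endpoints. The edges thus lift one by one to a simplicial loop in $|\calA(X)|$ based at $x_0$ that projects onto the given loop.

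For $N_{x_0}\subseteq\ker\pi_*$ I would fix an edge $\Delta_0$ of $\calA(X)$ incident to $x_0$ and invoke Theorem~\ref{complete:minimal:special} to represent every $g\in\pi_1(|\calA(X)|,x_0)$ as the class $[\dot{S}^1(\Delta_0,\Delta)]$ of a special sphere, for a unique edge $\Delta$ compatible with $\Delta_0$ depending on $g$. The represented loop is, up to orientation, the concatenation of $\Delta_0$ with $\Delta$; when $g\in N_{x_0}$, Lemma~\ref{charinN} then yields $\pi(\Delta_0)=\pi(\Delta)$, so $\pi\circ\dot{S}^1(\Delta_0,\Delta)$ is supported on a single edge of the quotient and hence is null-homotopic in $|\calA(X)/\widehat{N}|$.

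The main obstacle is the reverse inclusion, which I plan to handle by constructing an explicit left inverse
\[
\varphi\colon \pi_1(|\calA(X)/\widehat{N}|,x_0)\longrightarrow \pi_1(|\calA(X)|,x_0)\big/N_{x_0}
\]
of the map induced by $\pi_*$ on the quotient. Given a simplicial loop $\gamma$ in $|\calA(X)/\widehat{N}|$ based at $x_0$, I lift it edge-by-edge as in the proof of surjectivity to a simplicial loop $\widetilde\gamma$ in $|\calA(X)|$ and set $\varphi([\gamma])$ equal to the class of $[\widetilde\gamma]$ modulo $N_{x_0}$. Independence of the choice of lift follows from Lemma~\ref{sameprojection}: two lifts of $\gamma$ differ by a loop that is in $N$ and hence, after basepoint adjustment exploiting the normality of $N$ in $G$, represents an element of $N_{x_0}$. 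Invariance under cancellation of consecutive inverse edges is immediate, while invariance under $2$-cell relations is the delicate point: insertion in $\gamma$ of the boundary of a $2$-simplex $\pi(\widetilde\sigma)$ of the quotient must not affect $\varphi([\gamma])$, which reduces to showing that any lift $\widetilde\tau$ of $\pi(\partial\widetilde\sigma)$ to a triangular loop in $\calA(X)$ defines the trivial element in $\pi_1(|\calA(X)|,x_0)/N_{x_0}$. This is exactly Lemma~\ref{trianglesinN}: $\widetilde\tau$ lies in $\widehat{N}$, and conjugating its base vertex back to $x_0$ along $\widetilde\gamma$ places its class in $N_{x_0}$ by normality of $N$. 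Once $\varphi$ is shown to be a well-defined homomorphism, the identity $\varphi\circ\pi_*=\mathrm{Id}$ holds by construction, so $\pi_*$ is injective modulo $N_{x_0}$ and therefore $\ker\pi_*\subseteq N_{x_0}$. The final isomorphism $\pi_1(|\calA(X)/\widehat{N}|,x_0)\cong G/N$ then follows from the canonical identification $\pi_1(|\calA(X)|,x_0)\cong\pi_1(X,x_0)=G$, under which $N_{x_0}$ corresponds to $N$.
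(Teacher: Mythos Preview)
Your proof is correct and uses essentially the same ingredients as the paper's proof: surjectivity via edge-by-edge lifting, the inclusion $N_{x_0}\subseteq\ker\pi_*$ via Lemma~\ref{charinN} applied to a special-sphere representative, and the reverse inclusion via the combinatorial edge-path description of $\pi_1$ together with Lemmas~\ref{sameprojection} and~\ref{trianglesinN}. The only difference is organizational: for $\ker\pi_*\subseteq N_{x_0}$, the paper starts from a null-homotopy of $\pi\circ\gamma$ written as a sequence of elementary moves from the constant loop, and inductively lifts each intermediate loop to one lying in $N_{x_0}$, whereas you package the same computation as the well-definedness of a left inverse $\varphi$ to $\pi_*$ modulo $N_{x_0}$. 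These are two ways of phrasing the same verification (checking invariance of the lift's class under elementary moves), so the arguments are interchangeable.
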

\begin{proof}
 The fact that $\pi_*$ is surjective easily follows from the fact that each element of $\pi_1(|\calA(X)/\widehat{N}|,x_0)$ is represented
 by a simplicial loop contained in $\calA(X)/\widehat{N}$, toghether with the fact that each such simplicial loop lifts to a simplicial loop in $\calA(X)$.
 
Let now $g\in N_{x_0}$. Since $\calA(X)$ is large and complete, by Theorem~\ref{complete:special} we have $g=[e'*e^{-1}]$, where $e$ and $e'$ are two edges of $\calA(X)$ with endpoints $x_0,x_1$
 (and $x_1$ is some vertex of $\calA(X)$ distinct from $x_0$). By Lemma~\ref{charinN} we have $\pi(e)=\pi(e')$, and this implies at once that $g$ lies in the kernel of $\pi_*$.

On the other hand, let now $g$ be an element in $\ker\pi_*$. We may suppose that $g$ is represented by a non-degenerate simplicial loop $\gamma\colon (S,v_0)\to (\calA(X),x_0)$, where $S$ is a multicomplex
 homeomorphic to $S^1$. The combinatorial description of the fundamental group of simplicial complexes obviously applies also to multicomplexes, so from the fact that
 $\gamma'=\pi\circ \gamma$ is null-homotopic in $|\calA(X)/\widehat{N}|$ we deduce that there exists a finite sequence of non-degenerate simplicial loops $\gamma'_0,\ldots,\gamma'_i,\ldots,\gamma_k'=\gamma'$ 
 such that $\gamma_0'$ is the constant path at $x_0$ and $\gamma_{i+1}'$ is obtained from $\gamma_i'$ via one
 of the following moves:
 \begin{enumerate}
  \item removal from $\gamma_i'$ of a subpath of the form $e*e^{-1}$, where $e$ is an edge of $\calA(X)/\widehat{N}$, or viceversa;
  \item if $e_0,e_1,e_2$ are the edges of a $2$-simplex of $\calA(X)/\widehat{N}$, replacement of a subpath of $\gamma_i'$ of the form
  $e_0*e_1$ with the remaining edge $e_2$ (endowed with the obvious orientation), or viceversa.
 \end{enumerate}
Let us now prove that for every $i=0,\ldots, k$, there exists a lift $\alpha_i$ of $\gamma_i'$ to $\calA(X)$ such that $\alpha_i$ belongs to $N_{x_0}$. 
We argue by induction on $i=0,\ldots,k$, the case $i=0$ being obvious. So suppose that $\gamma_i'$ lifts to a loop $\alpha_i$ in $N_{x_0}$.

If $\gamma_{i+1}'$ is obtained from $\gamma_i'$ by adding  a subpath of the form $e*e^{-1}$, then we can simply add to $\alpha_i$ a subpath of the form $f*f^{-1}$,
where $f$ is an edge lifting $e$, thus obtaining a lift $\alpha_{i+1}$ of $\gamma_{i+1}'$ still lying in $N_{x_0}$. Let us now suppose that
$\gamma_{i+1}'$ is obtained from $\gamma_i'$ by removing a subpath of the form $e*e^{-1}$, and let $f_1*f_2$ be the subpath of $\alpha_i$ which lifts
$e*e^{-1}$. We obviously have $\pi(f_1)=\pi(f_2)=e$, so from Lemma~\ref{charinN} we deduce that $f_1*f_2$ is a loop in $\widehat{N}$. 
Therefore, we can remove the path $f_1*f_2$ from $\alpha_i$, thus obtaining a lift $\alpha_{i+1}$ of $\gamma_{i+1}'$ still lying in $N_{x_0}$.
A similar argument (using now Lemma~\ref{trianglesinN}) describes how to construct a lift of $\gamma_{i+1}'$ in $N_{x_0}$ from a lift of $\gamma_i'$ in $N_{x_0}$ when
$\gamma_{i+1}'$ is obtained from $\gamma_i'$ via a move of type (2).

We have thus shown that our loop $\pi\circ\gamma=\gamma'=\gamma_k'$ lifts to a loop $\overline{\gamma}$ in $N_{x_0}$. By construction we have $\pi\circ\gamma=\pi\circ\overline{\gamma}$,
so Lemma~\ref{sameprojection} ensures that the concatenation $\gamma*\overline{\gamma}^{-1}$ also lies in $N_{x_0}$. Therefore, $\gamma$ lies in $N_{x_0}$ too, which concludes the proof.
 \end{proof}

We are now ready to prove the following:

\begin{thm}\label{quoziente:che:vogliamo}
The multicomplex  $\calA(X)/\widehat{N}$ is complete, minimal and aspherical.
Therefore, the CW complex $|\calA(X)/\widehat{N}|$ is a $K(G/N,1)$-space.
\end{thm}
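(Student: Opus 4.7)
The plan is to verify that $\calA(X)/\widehat{N}$ satisfies conditions (1) and (2) of Proposition~\ref{aspherical:char}, which will yield that it is complete, minimal, and aspherical. The final statement about $|\calA(X)/\widehat{N}|$ being a $K(G/N,1)$-space then follows immediately from asphericity and Proposition~\ref{fund:group:quotient}. The trivial case where $X$ consists of a single point can be discarded; otherwise $\calA(X)$ is large, and since its vertex set equals $X$ and the $\widehat{N}$-action is $0$-trivial, the quotient $\calA(X)/\widehat{N}$ is also large and connected.

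For condition (1), let $v_0,v_1$ be distinct vertices and $\gamma\colon[0,1]\to|\calA(X)/\widehat{N}|$ a path from $v_0$ to $v_1$. For existence, I would homotope $\gamma$ to a concatenation of edges $e_1,\ldots,e_k$, lift each $e_i$ to an edge $\tilde{e}_i$ of $\calA(X)$ compatible at endpoints (possible since $\pi$ is the identity on vertices), and apply condition (1) of Proposition~\ref{aspherical:char} for $\calA(X)$ to get a unique edge $\tilde{e}$ homotopic rel endpoints to $\tilde{e}_1*\cdots*\tilde{e}_k$; the projection $\pi(\tilde{e})$ is the sought edge. For uniqueness, suppose $e,e'$ are edges with endpoints $v_0,v_1$ homotopic rel endpoints in $\calA(X)/\widehat{N}$, and lift them to edges $\tilde{e},\tilde{e}'$ of $\calA(X)$ with the same endpoints $v_0,v_1$. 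The loop $\tilde{e}*\tilde{e}'{}^{-1}$ projects to a null-homotopic loop in $\calA(X)/\widehat{N}$, hence by Proposition~\ref{fund:group:quotient} lies in $N_{v_0}$. Lemma~\ref{charinN} then gives $\pi(\tilde{e})=\pi(\tilde{e}')$, i.e. $e=e'$.

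For condition (2), let $n\geq 2$ and let $f\colon(\Delta^n)^1\to\calA(X)/\widehat{N}$ be a simplicial embedding whose triangular loops are null-homotopic. Fix a vertex $v_0$ of $\Delta^n$, denote the others by $v_1,\ldots,v_n$, and lift each edge $f([v_0,v_i])$ arbitrarily to an edge $\tilde{e}_{0i}$ of $\calA(X)$ with endpoints $v_0,v_i$. For each pair $i,j\geq 1$ with $i\neq j$, applying condition (1) of Proposition~\ref{aspherical:char} to $\calA(X)$ yields a unique edge $\tilde{e}_{ij}$ of $\calA(X)$ with endpoints $v_i,v_j$ homotopic rel endpoints to $\tilde{e}_{0i}^{-1}*\tilde{e}_{0j}$. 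By construction every triangular loop containing $v_0$ is null-homotopic in $|\calA(X)|$; and for a triangle $(v_i,v_j,v_k)$ not containing $v_0$ one rewrites
\[
\tilde{e}_{ij}*\tilde{e}_{jk}*\tilde{e}_{ik}^{-1}\ \simeq\ (\tilde{e}_{0i}^{-1}*\tilde{e}_{0j})*(\tilde{e}_{0j}^{-1}*\tilde{e}_{0k})*(\tilde{e}_{0i}^{-1}*\tilde{e}_{0k})^{-1}\ \simeq\ \textrm{const},
\]
so every triangular loop of $\tilde{f}$ is null-homotopic. Next, both $\pi(\tilde{e}_{ij})$ and $f([v_i,v_j])$ are edges of $\calA(X)/\widehat{N}$ with endpoints $v_i,v_j$ which make the triangle $(v_0,v_i,v_j)$ null-homotopic, hence are both homotopic rel endpoints to $\pi(\tilde{e}_{0i})^{-1}*\pi(\tilde{e}_{0j})$; the uniqueness in condition (1) (already proved for $\calA(X)/\widehat{N}$) forces $\pi(\tilde{e}_{ij})=f([v_i,v_j])$, so $\tilde{f}$ genuinely lifts $f$. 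Condition (2) of Proposition~\ref{aspherical:char} applied to $\calA(X)$ now produces a unique simplex $\overline{\tilde{f}}\colon\Delta^n\to\calA(X)$ extending $\tilde{f}$, and $\overline{f}=\pi\circ\overline{\tilde{f}}$ is the desired extension (it is a simplicial embedding because its $(n+1)$ vertices are pairwise distinct). For uniqueness, if $\sigma,\sigma'$ are two $n$-simplices of $\calA(X)/\widehat{N}$ extending $f$, lift them to $\tilde{\sigma},\tilde{\sigma}'$ in $\calA(X)$; their boundaries are two lifts of $f$, hence related by some $g\in\widehat{N}$, and then $g\cdot\tilde{\sigma}$ and $\tilde{\sigma}'$ are simplices of $\calA(X)$ sharing the same $1$-skeleton, so they coincide by condition (2) applied to $\calA(X)$, yielding $\sigma=\sigma'$.

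The main obstacle I foresee is the lifting construction in condition (2): showing simultaneously that every triangular loop of $\tilde{f}$ becomes null-homotopic in $\calA(X)$ \emph{and} that the resulting $\tilde{f}$ actually projects to $f$. The delicate point is that an edge $[v_i,v_j]$ with $i,j\geq 1$ belongs to several triangles when $n\geq 3$, so one cannot independently tune each triangle. Using a preferred basepoint $v_0$ and defining the remaining edges through $v_0$-based homotopies resolves this consistency issue, at the cost of having to argue that the identification $\pi(\tilde{e}_{ij})=f([v_i,v_j])$ does hold a posteriori; this in turn relies crucially on condition (1) for the quotient, which fortunately has already been settled.
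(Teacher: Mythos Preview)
Your approach is the same as the paper's: verify conditions (1) and (2) of Proposition~\ref{aspherical:char} for the quotient. Your treatment of condition (1) matches the paper's, and for the existence half of condition (2) you use a ``star'' construction (lift the edges out of $v_0$ and define the remaining $\tilde e_{ij}$ by the null-homotopy requirement) where the paper uses a ``chain'' (lift $\tilde e_{i,i+1}$ and define $\tilde e_{i,j}$ as the edge homotopic to the concatenation). Both work, and your verification that $\pi(\tilde e_{ij})=f([v_i,v_j])$ via the already-established condition (1) is clean.

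There is, however, a genuine gap in your uniqueness argument for condition (2). You assert that the $1$-skeletons of $\tilde\sigma$ and $\tilde\sigma'$ ``are two lifts of $f$, hence related by some $g\in\widehat N$''. This does not follow for free: each edge $\tilde e_{ij}$ is individually $\widehat N$-equivalent to $\tilde e'_{ij}$ (same image under $\pi$), but what you need is a \emph{single} element $g\in\widehat N=\bigoplus_x N_x$ that carries all $\binom{n+1}{2}$ edges simultaneously. The paper devotes its final paragraph to exactly this point, constructing $g$ via repeated use of Lemma~\ref{actionNhat}: with $g_{v_0}=1$, for each $i\ge 1$ there is a unique $g_{v_i}\in N_{v_i}$ such that $(1,g_{v_i})\cdot\tilde e_{0i}=\tilde e'_{0i}$; one then checks that $g=(g_{v_0},\ldots,g_{v_n})$ sends the whole $1$-skeleton correctly, because both $1$-skeletons have null-homotopic triangles and are therefore determined by their edges from $v_0$. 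In your star parametrization this verification is even a one-line path computation, but it must be said. Ironically, the ``main obstacle'' you foresaw in the existence lift is handled perfectly by your argument; it is the uniqueness half that needs this missing step.
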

\begin{proof}
 We prove that $\calA(X)/\widehat{N}$ satisfies the conditions described in Proposition~\ref{aspherical:char}, which we recall here for the convenience of the reader:
 \begin{enumerate}
  \item For every pair of distinct vertices $x_0,x_1$ of $\calA(X)/\widehat{N}$ and every continuous path $\gamma\colon [0,1]\to |\calA(X)/\widehat{N}|$ with $\gamma(0)=x_0$, $\gamma(1)=x_1$, there exists a unique
  simplicial embedding $\gamma'\colon \Delta^1\to \calA(X)/\widehat{N}$ which is homotopic to $\gamma$ relative to the endpoints.
  \item Let  $n\geq 2$, let $(\Delta^n)^1$ be the $1$-skeleton of $\Delta^n$  
  and let $f\colon (\Delta^n)^1\to \calA(X)/\widehat{N}$ be a simplicial embedding such that the restriction of $f$ to each triangular loop is null-homotopic. Then there exists
  a unique simplicial embedding $\overline{f}\colon \Delta^n\to \calA(X)/\widehat{N}$ extending $f$.
 \end{enumerate}
So let $x_0,x_1$ and  $\gamma$ be as in (1). 
Using that the map $$\pi_*\colon \pi_1(|\calA(X)|,x_0)\to \pi_1(|\calA(X)/\widehat{N}|,x_0)$$ is surjective it is not difficult to show that there exists a path $\widetilde{\gamma}\colon [0,1]\to |\calA(X)|$
such that $\widetilde{\gamma}(0)=x_0$, $\widetilde{\gamma}(1)=x_1$ and $\gamma$, $\pi\circ \widetilde{\gamma}$ lie in the same homotopy class relative to the endpoints. By completeness
of $\calA(X)$, there exists an edge $e$ of $\calA(X)$ homotopic to $\widetilde{\gamma}$ relative to the endpoints, and by construction the simplex $\pi(e)$
provides an edge of $\calA(X)/\widehat{N}$ which is homotopic to $\gamma$ relative to the endpoints. 
 
Suppose now that $e,e'$ are homotopic edges of $\calA(X)/\widehat{N}$ with endpoints $x_0,x_1$, and let $\widetilde{e},\widetilde{e}'$ be edges of $\calA(X)$ such that $\pi(\widetilde{e})=e$, $\pi(\widetilde{e}')=e'$. Since $e,e'$ are homotopic,
if we orient $\widetilde{e},\widetilde{e}'$ from $x_0$ to $x_1$, then
the projection $\pi\circ (\widetilde{e}'*\widetilde{e}^{-1})$ is null-homotopic. By Proposition~\ref{fund:group:quotient}, this implies that $\widetilde{e}'*\widetilde{e}^{-1}$ lies in $N$, so by Lemma~\ref{charinN} $\pi(\widetilde{e}')=\pi(\widetilde{e})$, i.e.~$e=e'$. 
This concludes the proof of (1).

Let now $f\colon (\Delta^n)^1\to \calA(X)/\widehat{N}$ be as in (2). We fix an ordering $v_0,\ldots,v_n$ of the vertices of $\Delta^n$, and we set $x_i=f(v_i)$. We also denote by $e_{i,j}$ the
edge $f([v_i,v_j])$, oriented from $x_i$ to $x_j$. We first fix an arbitrary lift $\widetilde{e}_{i,i+1}\subseteq \calA(X)$ of $e_{i,i+1}$ for every $i=0,\ldots,n-1$. Then for every $i<j$ we define $\widetilde{e}_{i,j}$
as the unique edge of $\calA(X)$ homotopic relative to the endpoints to the concatenation $\widetilde{e}_{i, i+1}*\ldots*\widetilde{e}_{j-1, j}$ (such an edge exists and is unique by completeness an minimality of $\calA(X)$). We also define
$\widetilde{f}\colon (\Delta^n)^1\to \calA(X)$ by setting $f([v_i,v_j])=\widetilde{e}_{i,j}$ for every $i<j$.  We now prove 
 that $\pi\circ\widetilde{f}=f$. In fact, for every $i<j$ we show
by induction on $k$ that 
$\pi \circ \widetilde{f}([v_i, v_k]) = f([v_i, v_k])$ for every  $k= i+1, \ldots, j$. 
The claim holds by construction for $k=i+1$. Assume that it holds for a fixed $i+1\leq k<j$ and consider the edge $\widetilde{f}([v_i, v_{k+1}]) = \widetilde{e}_{i, k+1}$. By hypothesis $\gamma = \widetilde{e}_{i, k} \ast \widetilde{e}_{k, k+1} \ast \widetilde{e}_{i, k+1}^{-1}$ is a null-homotopic loop in $\calA(X)$. Thus, $\pi \circ \gamma$ is also null-homotopic. This implies that $\pi(\widetilde{e}_{i, k} \ast \widetilde{e}_{k, k+1}) = e_{i, k} \ast e_{k, k+1}$ is a path homotopic to $\pi(\widetilde{e}_{i, k+1})$ relative to its endpoints. Since $\calA(X) /\widetilde{N}$ satisfies (1), the concatenation $e_{i, k} \ast e_{k, k+1}$ can be homotoped relative to its endpoints only to the $1$-simplex $e_{i, k+1}$ and so $\pi(\widetilde{e}_{i, k+1})$ must coincide with $e_{k, k+1}$. This proves that $\pi \circ \widetilde{f}([v_i, v_{k+1}]) = f([v_i, v_{k+1}])$, whence the claim.
Now by construction, the restriction of $\widetilde{f}$ to each triangular loop is null-homotopic in $\calA(X)$,
so by Proposition~\ref{aspherical:char} (applied to $\calA(X)$) there exists a unique simplicial embedding $h\colon \Delta^n\to \calA(X)$ extending $\widetilde{f}$. The composition $\overline{f}=\pi\circ h$ provides the desired extension
of $f$ to a simplicial embedding of $\Delta^n$ into $\calA(X)/\widehat{N}$. 

In order to show that the extension is unique, let $\overline{f}'\colon \Delta^n\to \calA(X)/\widehat{N}$ be an arbitrary extension of $f$, and observe that $\overline{f}'=\pi\circ h'$ for some embedding
$h' \colon \Delta^n \to \mathcal{A}(X)$. Let $e'_{i,j}=h'([v_i,v_j])$, and observe that the argument above shows that $h$ (resp.~$h'$) is uniquely determined by the edges $e_{i,i+1}$ (resp.~$e'_{i,i+1}$), $i=0,\ldots,n-1$.
Therefore, if $g\in \widehat{N}$ is such that $g(e_{i,i+1})=e'_{i,i+1}$ for every $i=0,\ldots,n-1$, then  $h'=g\circ h$, whence $\overline{f}'=\pi\circ h'=\pi\circ h=\overline{f}$, which concludes the proof.

We are thus left to find an element $g\in \widehat{N}$ is such that $g(e_{i,i+1})=e'_{i,i+1}$ for every $i=0,\ldots,n-1$. Since $\pi(e_{i,i+1})=\pi(e'_{i,i+1})$, for every $i$ 
the edges $e_{i,i+1}$ and $e'_{i,i+1}$ are $\widehat{N}$-equivalent. By repeatedly using Lemma~\ref{actionNhat} (with $g_0=1$ when $i=1$), we can then find elements $g_i\in N_{x_i}$, $i=1,\ldots,n$,
such that the following conditions hold: 
\begin{align*}
g_1\cdot e_{0,1}& =e'_{0,1},\\ (g_1,g_2)\cdot e_{1,2}&=e'_{1,2},\\ \ldots  \\(g_{n-1},g_n)\cdot e_{n-1,n}&=e'_{n-1,n}\ .
\end{align*}
Let now $g=(g_1,\ldots,g_n)\in \widehat{N}=\bigoplus_{x\in X} N_x$. By construction we have $g\cdot e_{i,i+1}=e'_{i,i+1}$ for every $i=0,\ldots,n$, and this concludes the proof.
\end{proof}

%\todo{Aggiunto il remark qui sotto}

\begin{rem}
 The arguments developed in this section apply more in general to any complete, minimal and aspherical multicomplex $A$. Indeed, if $A$ is such a multicomplex and $N$ is a normal subgroup
 of $G=\pi_1(|A|,x_0)$ for some vertex $x_0\in V(A)$ of $A$, then one can define as above a subgroup $\widehat{N}$ of $\bigoplus_{v\in V(A)} \pi_1(|A|,v)$. This group
 naturally acts via simplicial isomorphisms on $A$, and the quotient $A/\widehat{N}$ is a minimal, complete and aspherical multicomplex with fundamental group isomorphic to $G/N$.
\end{rem}

\section{Proof of Gromov's Mapping Theorem}
We are now ready to conclude the proof of Gromov's Mapping Theorem
\ref{mapping-intro}. Let $f \colon X_{1} \rightarrow X_{2}$ be a continuous map between path connected  topological spaces, and suppose that the map induces an epimorphism 
$f_*\colon \pi_1(X_1)\to \pi_1(X_2)$ 
with an amenable kernel.

Let us first observe that it is sufficient to consider the case when $X_1,X_2$ are good topological spaces. In fact,
if $\calS(X_i)$ is the simplicial set associated to $X_i$ and $j_i\colon |\calS(X_i)|\to X_i$ is the natural projection, then the map
$f$ induces a (simplicial) map $\calS(f)\colon |\calS(X_1)|\to |\calS(X_2)|$ such that $j_2\circ \calS(f)=f\circ j_1$. 
Since $j_1$ and $j_2$ are weak homotopy equivalences, also the map $\calS(f)$ induces an empimorphism with an amenable kernel on fundamental groups. Moreover,
thanks to Theorem~\ref{weak:iso:thm}, the vertical arrows of the commutative diagram
$$
\xymatrix{ 
H^n_b(|\calS(X_2)|)\ar[rr]^{H^n_b(\calS(f))} & & H^n_b(|\calS(X_1)|)\\
H^n_b(X_2)\ar[rr]^{H^n_b(f)} \ar[u]^{H^n_b(j_2)} & & H^n_b(X_1)\ar[u]^{H^n_b(j_1)}
}
$$
are isometric isomorphisms for every $n\in\mathbb{N}$. Thus, up to replacing $X_i$ with $\calS(X_i)$ for $i=1,2$, we may assume that $X_1$ and $X_2$ are good.

Recall from Proposition~\ref{commutative:homotopy} that 
there exists a continuous map
 $K(f)\colon |\calK(X_1)|\to |\calK(X_2)|$ such that the following diagram commutes up to homotopy:
 $$
 \xymatrix{
 |\calK(X_1)|\ar[r]^{K(f)}\ar[d]_{S_{X_1}} & |\calK(X_2)|\ar[d]^{S_{X_2}}\\
 X_1 \ar[r]^f & X_2\ .
 }
 $$
 ($S_{X_i}$ denotes the natural projection of $|\calK(X_i)|$ onto $X_i$).
Recall from Corollary~\ref{iso-KX} that the maps $S_{X_1}$ and $S_{X_2}$ induce isometric isomorphisms on bounded cohomology. Moreover, homotopic maps induce the same morphism on bounded
cohomology, so it is sufficient to show that the map $K(f)$ induces an isometric isomorphism on singular bounded cohomology. Let $\widetilde{\pi}_i = \pi_i \circ r_i \colon |\calK(X_i)|\to |\calA(X_i)|$ be the canonical projection
of $\calK(X_i)$ on its aspherical quotient, and recall that $\widetilde{\pi}_i$ induces an isomorphism on fundamental groups for $i=1,2$ (see Theorems~\ref{exist-min} and~\ref{aspherical:thm}).
Using the asphericity of $|\calA(X_2)|$ we can construct a map 
$A(f)\colon |\calA(X_1)|\to |\calA(X_2)|$ which (under the identifications $\pi_1(|\calK(X_i)|)\cong \pi_1(|\calA(X_i)|)$) induces the same morphism as $K(f)$ on fundamental groups. The general theory of Eilenberg-MacLane
spaces now ensures that the diagram 
$$
 \xymatrix{
 |\calK(X_1)|\ar[r]^-{K(f)}\ar[d]_{\widetilde{\pi}_{1}} & |\calK(X_2)|\ar[d]^{\widetilde{\pi}_{2}}\\
 |\calA(X_1)| \ar[r]_-{A(f)} & |\calA(X_2)| 
 }
 $$
 commutes up to homotopy (see e.g.~\cite[Proposition 1B.9]{hatcher}). Moreover, we know from Theorem~\ref{teoD} that the projections $\widetilde{\pi}_i$, $i=1,2$, induce isometric isomorphisms on bounded cohomology, so
 we are left to show that $A(f)$ induces an isometric isomorphism on bounded cohomology. Let now $N<\pi_1(X_1)$ be the kernel of the map $f_*\colon \pi_1(X_1)\to \pi_1(X_2)$, and denote by
 $\pi\colon |\calA(X_1)|\to |\calA(X_1)/\widehat{N}|$ the projection introduced above. 
 Under the identifications $\pi_1(X_1)\cong \pi_1(|\calK(X_1)|))\cong \pi_1(|\calA(X_1)|)$, we have 
 \begin{align*}
 N&\cong\ker \big( \pi_*\colon \pi_1(|\calA(X_1)|)\to \pi_1(|\calA(X_1)/\widehat{N}|)\big)\, ,\\
 N&\cong\ker \big( A(f)_*\colon \pi_1(|\calA(X_1)|)\to \pi_1(|\calA(X_2)|)\big) \ .
 \end{align*}
 Since both $|\calA(X_2)|$ and $|\calA(X_1)/\widehat{N}|$ are aspherical, we can then construct a homotopy equivalence $h\colon |\calA(X_1)/\widehat{N}|\to |\calA(X_2)|$ such that the following diagram commutes up to homotopy
$$
\xymatrix{
|\calA(X_1)|\ar[rr]^{A(f)} \ar[rd]_{\pi} & & |\calA(X_2)|\\
& |\calA(X_1)/\widehat{N}|\ar[ur]_{h}
}\ .
$$
Recall now from Theorem~\ref{actionPi} that for every $g\in \widehat{N}$ the action of $g$ on $|\calA(X_1)|$ is homotopic to the identity. Therefore, a straightforward application of Corollary~\ref{cor-amen-action}
implies that the map $\pi$ induces an isometric isomorphism on bounded cohomology. Since homotopy equivalences always induce isometric isomorphisms on bounded cohomology, the same is true also
for $h$, whence for $A(f)$. This concludes the proof of Gromov's Mapping Theorem.

\chapter{The Vanishing Theorem}\label{vanishing-thm:chap}

The Vanishing Theorem establishes a criterion for the vanishing of the comparison map (whence of the simplicial volume, when applied to compact manifolds), in terms of amenable covers of the space.
Just as the Mapping Theorem, it was stated by Gromov in~\cite{Grom82} without any assumption on the topology of the space involved, and proved (with a slightly different formulation, see Remark~\ref{vanishing:literature}) by 
Ivanov in~\cite{Ivanov} (for countable CW complexes)
and in~\cite{ivanov3} (in the general case).

%Gromov's Vanishing Theorem concerns the vanishing of the comparison map  for topological spaces which admit  simple covers by sets with an amenable fundamental group.

\section{Amenable covers, the Nerve Theorem and the Vanishing Theorem}\label{amenable:vanishing:sec}
We first introduce the fundamental notion of amenable subset of a given topological space.

\begin{Definizione}
Let $X$ be a topological space and let $i\colon Y\hookrightarrow X$ be the inclusion of a subset $Y$ of $X$. Then
$Y$ is \textit{amenable} (in $X$) if for every path-connected component $Y'$ of $Y$ the image of 
$i_{*} \colon \pi_{1}(Y') \rightarrow \pi_{1}(X)$ is an amenable subgroup of $\pi_{1}(X)$.
\end{Definizione}

Let us now fix a topological space $X$ and let $\calU=\{U_i\}_{i\in I}$ be a cover of $X$, i.e.~suppose that $U_i\subseteq X$ for every $i\in I$ and that $X=\bigcup_{i\in I} U_i$. We say that the cover
is \emph{open} if each $U_i$ is open in $X$, and \emph{amenable} if each $U_i$ is amenable in $X$. The multiplicity of $\calU$ is defined by
\begin{align*}
\mult(\calU)&=\sup \left\{n\, |\, \exists\ i_1,\ldots,i_n\in I,\, i_h\neq i_k\ \textrm{for}\ h\neq k,\, U_{i_1}\cap\ldots\cap U_{i_n}\neq\emptyset\right\}\\ &\in\ \mathbb{N}\cup \{\infty\}\ .
\end{align*}

To any cover $\calU=\{U_i\}_{i\in I}$ of $X$ there is associated a simplicial complex $N(\calU)$, called the \emph{nerve} of the cover,
which is defined as follows: the set of vertices of $N(\calU)$ is $I$, and $n+1$ elements $i_0,\ldots, i_n$ of $I$ span a simplex of $N(\calU)$ if and only if 
$$
U_{i_0}\cap\ldots\cap U_{i_n} \neq \emptyset \ .
$$
By definition, $\mult(\calU)=1+\dim N(\calU)$. As usual, we denote by $H^n(N(\calU))$ the simplicial cohomology of $N$ with real coefficients.

Suppose now that $X$ is paracompact, and let $\Phi=\{\varphi_i\}_{i\in I}$ be a partition of unity subordinate to the cover $\calU$. Then the map
$$
f_\Phi\colon X\to |N(\calU)|\, ,\qquad f_\Phi (x)=\sum_{i\in I} \varphi_i(x) \cdot i
$$
is  well defined and continuous. Moreover, if $\Phi'$ is another partition of unity subordinate to $\calU$, then for every $t\in [0,1]$ the convex combination
$$tf_\Phi+(1-t)f_{\Phi'}\colon X\to |N(\calU)|$$ is well defined. As a consequence, $f_\Phi$ and $f_{\Phi'}$ are homotopic, and they induce the same map
$$
\beta^*\colon H^n(|N(\calU)|)\to H^n(X) 
$$
in singular cohomology.  

This chapter is devoted to the proof of the following two results, the first of which was  anticipated in the introduction. We refer the reader to Remark~\ref{vanishing:literature} for a brief discussion of similar statements that are already available in the literature. 
Recall that a topological space is triangulable if it is homeomorphic to the geometric realization of a simplicial complex. 

\begin{Teorema}[Nerve Theorem]\label{van-thm2}
Let $X$ be a triangulable space and let  $\calU$ be an amenable open cover of $X$. Also suppose that, for every finite subset $I_0\subseteq I$,
the intersection $\bigcap_{i\in I_0} U_{i}$ is path connected (possibly empty).
Then for every $n\in\mathbb{N}$ there exists a map $\Theta^n\colon H^n_b(X)\to H^n(N(\calU))$ such that the following diagram commutes:
$$
\xymatrix{
H^n_b(X)\ar[rr]^{c^n}\ar[d]_{\Theta^n} & & H^n(X)\\
 H^n(N(\calU)) \ar[rr]^{\cong} & & H^n(|N(\calU)|)\ar[u]_{\beta^n}\ .
}
$$
\end{Teorema}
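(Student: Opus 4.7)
The plan is to reduce to the multicomplex setting and then exploit amenability via averaging over suitably constructed subgroups of $\Pi(X,X)$ adapted to the cover. By Corollary \ref{cor-remarkE}, I identify $H^n_b(X)$ with the simplicial bounded cohomology $H^n_b(\calA(X))$ of the complete, minimal, aspherical multicomplex associated to $X$. Since $X$ is triangulable, I fix a triangulation $T$ with $|T|=X$ and, after iterated barycentric subdivisions, assume that the closed star of each vertex of $T$ is contained in some $U_i$; choosing an index $i(v)$ for each vertex then yields a simplicial classifying map $p\colon T\to N(\calU)$ lying in the homotopy class of $f_\Phi$, which passes to $\calA(X)$ via the canonical chain $T\hookrightarrow\calK(X)\to\calL(X)\to\calA(X)$.

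The main new ingredient is a family of amenable subgroups of $\Pi(X,X)$ indexed by cells of $N(\calU)$. For each nonempty intersection $U_{I_0}=\bigcap_{i\in I_0} U_i$, which is path connected by hypothesis, I set $\Pi_{I_0}\subseteq\Pi(X,X)$ to be the subgroup of path-systems whose non-constant components are based in and stay inside $U_{I_0}$; the amenability of each $U_i$ in $X$, together with the path-connectedness assumption, implies that the quotients of $\Pi_{I_0}$ acting on the skeleta of $\calA(X)$ are amenable. Reproducing the averaging scheme of Theorem \ref{teor-amen-action} with these groups in place of $\G$, I define $\Theta^n$ at the cochain level by setting, for a bounded cocycle $\varphi$ on $\calA(X)$ and a cell $\tau=[i_0,\ldots,i_n]$ of $N(\calU)$, the value $\Theta^n(\varphi)(\tau)$ to be the invariant-mean average of $\varphi(\sigma_\tau)$ over all $n$-simplices $\sigma_\tau$ of $\calA(X)$ whose $j$-th vertex lies in $U_{i_j}$. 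Such $\sigma_\tau$ exist because $U_\tau\ne\emptyset$ and $\calA(X)$ is large and complete (one builds them edge by edge, using completeness to fill higher-dimensional faces once the $1$-skeleton is chosen), and amenability makes the value independent of this choice and turns $\Theta^n(\varphi)$ into a simplicial cocycle on $N(\calU)$. Commutativity of the diagram would then be checked cochain-theoretically: the pullback of $\Theta^n(\varphi)$ through $p$ is cochain-homotopic to $\varphi$ in $C^*_b(\calA(X))$, the required homotopy being assembled from the same amenable averages combined with the simplicial homotopy produced by the Homotopy Lemma \ref{homotopy-lemma}.

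The hard part is to organize the averaging coherently so as to respect both the simplicial coboundary on $\calA(X)$ and the \v{C}ech coboundary on $N(\calU)$: a generic simplex of $\calA(X)$ has vertices scattered over many $U_i$, so the operators attached to distinct cells of $N(\calU)$ interact nontrivially, and one must ensure that face maps on the nerve side translate to the correct restrictions of averages on the multicomplex side. The clean way to package this is through a double complex whose rows are indexed by cells of $N(\calU)$ and whose columns consist of $\Pi_{I_0}$-averaged bounded cochains of $\calA(X)$, with a spectral sequence whose $E_1$-page collapses to $C^*(N(\calU))$ thanks to the amenability-driven vanishing along rows. The key fact making this workable in our framework is the transitivity of $\Gamma_i$ on compatible simplices (Corollary \ref{transitive-on-compatible}) together with the rich supply of vertex-moving elements in $\Pi(X,X)$ constructed in Section \ref{actiononA}, which let one realize all the required averaging and homotopy operators inside $\calA(X)$ itself, without resorting to the more elaborate resolutions used by Ivanov in the purely topological setting.
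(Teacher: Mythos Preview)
Your overall strategy—passing to $\calA(X)$ and exploiting amenable subgroups of $\Pi(X,X)$—is the right one, but the organization has a genuine gap. Indexing the groups $\Pi_{I_0}$ by \emph{cells of the nerve} creates exactly the coherence problem you acknowledge: the averages attached to a cell $\tau$ and to its faces are taken over different groups, so there is no reason your $\Theta^n$ commutes with the coboundary. Worse, $\Pi_{I_0}$ (paths supported in $U_{I_0}=\bigcap_{i\in I_0}U_i$) does not act transitively on the set of simplices ``with $j$-th vertex in $U_{i_j}$'': it can only move points lying in the intersection, so your invariant-mean average is not well defined as stated. Your fallback to a double complex is essentially Ivanov's route (cf.\ Remark~\ref{vanishing:literature}), and you do not carry it out; the references to Corollary~\ref{transitive-on-compatible} concern compatible simplices (same boundary), which is a different transitivity from the one you need.

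The paper avoids all of this by using a \emph{single} amenable group $\Gamma=\bigoplus_{i\in I}\Pi_X(U_i,V_i)$, indexed by cover elements, where the $V_i$ partition the vertices of $T$ by color. One first averages (Theorem~\ref{teor-amen-action}) to obtain a $\Gamma$-invariant alternating representative $z\in C^n_b(\calA(X))^\Gamma_{\alt}$. Then for a nerve simplex $(i_0,\dots,i_n)$ with distinct entries one builds an explicit simplex $s$ of $\calA(X)$: choose $q\in\bigcap_j U_{i_j}$, points $v_{i_j}\in V_{i_j}$, and paths $\alpha_j$ in $U_{i_j}$ from $v_{i_j}$ to $q$; the concatenations $\alpha_j*\alpha_h^{-1}$ give edges whose triangular loops are null-homotopic, hence span a unique $s$ by Proposition~\ref{aspherical:char}. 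Set $\Omega^n(z)(i_0,\dots,i_n)=z(s,(v_{i_0},\dots,v_{i_n}))$. Path-connectedness of the intersection is used precisely here: a second choice $(q',v'_{i_j},\alpha'_j)$ produces $s'$ with $g\cdot s'=s$ for $g=\oplus_j g_j\in\Gamma$, where $g_j$ is the class of $\alpha_j*\beta*(\alpha'_j)^{-1}$ and $\beta$ joins $q$ to $q'$ inside $\bigcap_j U_{i_j}$. Since one group governs everything, $\Omega^*$ is automatically a chain map, and commutativity of the diagram is checked by direct evaluation on simplicial cycles of $T\subseteq\calA(X)$, using that the simplicial map $T\to N(\calU)$, $v\mapsto i(v)$, represents $\beta^n$.
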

In particular, under the hypothesis of Theorem~\ref{van-thm2}, if $n\geq \mult(\calU)$, then the comparison map $c^n\colon H^n_b(X)\to H^n(X)$ vanishes.
In fact, the same conclusion holds even without any assumption on the topological space $X$ or on the connectedness of the $U_i$:

\begin{thm:repeated:vanishing1_intro}[Vanishing Theorem]%\label{van-thm1o}\label{vanishing1_intro}
Let $X$ be a topological space and let  $\calU$ be an amenable open cover of $X$. Then for every $n\geq \mult(\calU)$ (or, equivalently, for every $n>\dim N(\calU)$)
the comparison map
$$
c^n\colon H^n_b(X)\to H^n(X)
$$
vanishes.
\end{thm:repeated:vanishing1_intro}

\begin{rem}\label{vanishing:literature} 
Theorem~\ref{vanishing1_intro} is originally due to Gromov~\cite[page 40]{Grom82}, while Theorem~\ref{van-thm2} was proved (in a slightly different version) by Ivanov in~\cite[Theorem 6.2]{Ivanov} and in~\cite[Theorem 9.1]{ivanov3} 
(with  weaker hypotheses than ours on the topology of $X$). Ivanov's argument is completely different from ours,
and it  is based on the use of a variation of the Mayer--Vietoris double complex for singular cohomology. As a consequence, it is not clear at first sight whether the factoring map
$H^n(|N(\calU)|)\to H^n(X)$ decribed by Ivanov coincides with the canonical map $\beta^n$ introduced here. Our Theorem~\ref{van-thm2}  suggests that this should indeed be the 
case, and a complete proof of this fact may be found in~\cite{Maffei}.

%In Ivanov's papers, no assumption is made on the connectedness of the elements of the cover $\calU$. 
No assumption on the connectedness of finite intersections of elements of $\calU$ appears in Theorem~\ref{vanishing1_intro}, hence one may wonder whether this requirement could be removed
from the hypotheses of Theorem~\ref{van-thm2}. The following example shows that this is not the case.
%i.e.~that 
%Theorem~\ref{van-thm2} cannot hold without the assumption that the elements  of $\calU$ are path connected. 
Indeed, let $X$ be a finite simplicial complex of dimension $n$. One can construct an amenable 
cover $\calU=\{U_0,\ldots,U_n\}$ by requiring that, for every $i=0,\ldots,n$, the set $U_i$ is the disjoint union of contractible open neighbourhoods of the interiors of the $i$-simplices of $X$, chosen in such a way that
$U_i\cap X^j=\emptyset$ for every $j<i$. This cover is obviously amenable, and $N(\calU)$ is isomorphic to the $n$-simplex $\Delta^n$. Therefore, $H^i(|N(\calU)|)=0$ for every $i>0$, and if Theorem~\ref{van-thm2} were
true for the cover $\calU$, then we would conclude that the comparison map $H^i_b(X)\to H^i(X)$ is null for every $i>0$, which is clearly false in general. Be aware that the assumption that finite intersections of elements
of $\calU$ be connected seems to be missing in~\cite[Theorem 6.2]{Ivanov}, \cite[Theorem 9.1]{ivanov3}.
\end{rem}

Via duality, Theorem~\ref{vanishing1_intro} implies the vanishing of the simplicial volume for closed manifolds admitting amenable covers of small multiplicity:

\begin{thm:repeated:van-cor-intro}%\label{van-cor}
Let $X$ be a topological space admitting an open amenable cover of multiplicity $m$, and let $n\geq m$. Then
$$
\|\alpha\|_1=0
$$
for every $\alpha\in H_n(X)$. 
In particular, if
$M$ is a closed  oriented manifold admitting an open amenable cover $\calU$ such that 
$\mult(\calU)\leq \dim M$,
%of multiplicity smaller than or equal to $n$, 
then
$$
\|M\|=0\ .
$$
\end{thm:repeated:van-cor-intro}
\begin{proof}
Just combine Theorem~\ref{vanishing1_intro} and Corollary~\ref{vanish-cor}.
\end{proof}

We will strengthen the second statement of the previous corollary in Section~\ref{invisible:sec}. In fact, in Theorem~\ref{invisible:new-intro} we will show that, 
 if
 $M$ is a closed  oriented manifold admitting an open amenable cover $\calU$ such that 
$\mult(\calU)\leq \dim M$,
 then $M$ is $\ell^1$-invisible (see Definition~\ref{invisible:def}).
This implies in particular that $\|M\|=0$.

\section{Amenable subgroups of $\Pi(X,X)$ and their action on $\calA(X)$}\label{am:sub:sec}
Before going into the proofs of the Vanishing Theorem and the Nerve Theorem, we need to collect some preliminary results.
We refer the reader to Section~\ref{sec:mapping:pi(U,V)} for the definition of the group $\Pi(X,X_0)$, where $X_0$ is any subset of $X$.

\begin{Definizione}
For any set $X$, we denote by $\simf(X)$ 
the group of permutations of $X$ with finite support, i.e.~the subgroup of the symmetric group $\mathfrak{S}(X)$ given by those permutations which fix all but finitely many elements of $X$. 
\end{Definizione}

The following proposition readily follows from the definitions.
\begin{Proposizione}\label{morph-pi-to-permfinite}
Let $X$ be a topological space and let $X_{0} \subset X$ be any subset. Then, there exists an exact sequence of groups
$$
\xymatrix{
1\ar[r] & \bigoplus_{x \in \, X_{0}} \pi_{1}(X, x) \ar[r] & \Pi(X,X_0) \ar[r]^{\phi} & \simf(X_0)\ ,
}
$$
where the homomorphism $\phi\colon \Pi(X,X_0)\to \simf(X_0)$ associates to the class of an element $\{\gamma_x\}_{x\in X_0}\in\Omega(X,X_0)$ the permutation $x\mapsto \gamma_x(1)$. 

If $X_0$ is contained in a path connected component of $X$, then the map $$\phi\colon \Pi(X, X_{0}) \twoheadrightarrow \simf(X_{0})$$ is surjective, so the sequence above may be extended to a short
exact sequence
$$
\xymatrix{
1\ar[r] & \bigoplus_{x \in \, X_{0}} \pi_{1}(X, x) \ar[r] & \Pi(X,X_0) \ar[r]^{\phi} & \simf(X_0)\ar[r] & 1\  .
}
$$
\end{Proposizione}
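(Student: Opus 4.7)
The plan is to verify the claims as four separate routine checks: well-definedness of $\phi$, the fact that $\phi$ is a group homomorphism, the identification of its kernel, and surjectivity under the extra hypothesis. None of these requires any work beyond unwinding the definitions of $\Omega(X,X_0)$, $\Pi(X,X_0)$ and their group operation, so I expect the whole argument to fit in a few lines; the only point where one must be slightly careful is the compatibility between concatenation of path-families and composition of permutations.

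First I would fix a representative $\{\gamma_x\}_{x\in X_0} \in \Omega(X,X_0)$ of a class $g\in \Pi(X,X_0)$ and set $\phi(g)(x) = \gamma_x(1)$. Condition (3) in the definition of $\Omega(X,X_0)$ says that this assignment is a bijection of $X_0$, and condition (2) that it is the identity outside a finite set, so $\phi(g)\in \simf(X_0)$. Since the value $\gamma_x(1)$ is invariant under homotopy of $\gamma_x$ relative to its endpoints, $\phi(g)$ depends only on $g$. If $g,g'\in \Pi(X,X_0)$ are represented by $\{\gamma_x\}$ and $\{\gamma'_x\}$, then by definition of the concatenation product the family representing $g\cdot g'$ is $x\mapsto \gamma_x * \gamma'_{\gamma_x(1)}$, whose terminal point is $\gamma'_{\gamma_x(1)}(1) = \phi(g')(\phi(g)(x))$; with the standard composition convention on $\simf(X_0)$ this gives $\phi(g\cdot g') = \phi(g)\phi(g')$, so $\phi$ is a homomorphism.

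Next I would identify $\ker\phi$ with the subgroup $\bigoplus_{x\in X_0}\pi_1(X,x)\hookrightarrow \Pi(X,X_0)$ described immediately before the statement. Indeed, $g\in \ker\phi$ if and only if some (hence every) representative $\{\gamma_x\}$ satisfies $\gamma_x(1)=x$ for every $x\in X_0$, i.e.\ each $\gamma_x$ is a loop based at $x$, and all but finitely many such loops are constant; by construction the map sending $\{\gamma_x\}$ to the collection of classes $([\gamma_x])_{x\in X_0}\in \bigoplus_{x\in X_0}\pi_1(X,x)$ is a group isomorphism onto $\ker\phi$, since concatenation in $\Pi(X,X_0)$ of elements of $\ker\phi$ is componentwise and homotopy of loops relative to the basepoint coincides with homotopy relative to the endpoints. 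Combined with the injectivity recorded before the statement, this yields exactness of the first sequence.

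Finally, assume that $X_0$ is contained in a single path-connected component of $X$, and let $\sigma\in \simf(X_0)$ with finite support $F\subseteq X_0$. For every $x\in F$ one can choose a continuous path $\gamma_x\colon [0,1]\to X$ with $\gamma_x(0)=x$ and $\gamma_x(1)=\sigma(x)$, because the endpoints lie in a common path-connected component of $X$; for $x\in X_0\setminus F$ take $\gamma_x$ to be the constant path at $x$. Using that $\sigma$ restricts to a bijection of $F$, one checks that $\{\gamma_x\}_{x\in X_0}\in \Omega(X,X_0)$ and $\phi([\{\gamma_x\}]) = \sigma$, proving surjectivity of $\phi$ and hence exactness of the extended sequence.
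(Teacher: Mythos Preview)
Your proposal is correct and follows exactly the approach the paper takes: the paper simply says the proposition ``readily follows from the definitions'' and gives no further argument, while you have unpacked those definitions in detail to verify well-definedness of $\phi$, that it is a homomorphism, the identification of its kernel with $\bigoplus_{x\in X_0}\pi_1(X,x)$, and surjectivity under path-connectedness. Your care with the compatibility between concatenation and composition of permutations is appropriate and the argument is complete.
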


% and we 
%recall  that for every pair of subsets $U,V$ of $X$ with $V\subseteq U\subseteq X$ there is a natural group homomorphism $\Pi(U,V)\to \Pi(X,X)$.
If $V\subseteq U\subseteq X$, then the inclusion $(U,V)\hookrightarrow (X,X)$ induces a group homomorphism
$\Pi(U,V)\to \Pi(X,X)$, which is injective if and only if every path connected component of $U$ intersecting $V$ is $\pi_1$-injective in $X$.
In what follows a lot of attention will be payed to the image
of this homomorphism, which we denote henceforth by $\Pi_X(U,V)$. Of course, since $\Pi(X,X)$ acts on $\calA(X)$, also $\Pi_X(U,V)$ does.

\begin{lemma}\label{prop-psi-pi-u-v-amenable}
Let $U$ be an amenable subset of $X$, and take any subset $V\subseteq U$. Then, the subgroup $\Pi_X(U,V)<\Pi(X,X)$ is amenable.
\end{lemma}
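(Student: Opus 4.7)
The plan is to exhibit $\Pi_X(U,V)$ as an extension of amenable groups, by pulling back the exact sequence from Proposition~\ref{morph-pi-to-permfinite} applied to the pair $(U,V)$.

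First I would consider the exact sequence
\[
1 \longrightarrow \bigoplus_{x \in V} \pi_1(U,x) \longrightarrow \Pi(U,V) \xrightarrow{\ \phi_U\ } \simf(V)
\]
provided by Proposition~\ref{morph-pi-to-permfinite}. By definition the natural map $\Pi(U,V) \to \Pi(X,X)$ has image equal to $\Pi_X(U,V)$, and composing with $\phi\colon \Pi(X,X) \to \simf(X)$ recovers (the restriction to $V$ of) $\phi_U$. Thus I get a short exact sequence
\[
1 \longrightarrow K \longrightarrow \Pi_X(U,V) \xrightarrow{\ \phi|_{\Pi_X(U,V)}\ } P \longrightarrow 1,
\]
where $K$ is the kernel and $P$ is the image of $\phi|_{\Pi_X(U,V)}$. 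By the standard fact that an extension of amenable groups by amenable groups is amenable, it suffices to show that $K$ and $P$ are amenable.

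For $K$: an element in the kernel comes from an element of $\bigoplus_{x \in V} \pi_1(U,x)$, and its image in $\Pi(X,X)$ lies inside $\bigoplus_{x \in V} \pi_1(X,x)$, factoring through $\bigoplus_{x \in V} \mathrm{im}\bigl(\pi_1(U'_x,x) \to \pi_1(X,x)\bigr)$, where $U'_x$ denotes the path-connected component of $U$ containing $x$. The amenability hypothesis on $U$ says exactly that each such image is amenable in $\pi_1(X,x)$. Since amenability is preserved by arbitrary direct sums (being closed under finite direct products and directed unions), the ambient direct sum is amenable, and hence so is its subgroup $K$.

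For $P$: an element of $P$ is a permutation of $V$ with finite support realised by a family of paths in $U$; such a permutation must preserve the partition of $V$ induced by path-connected components of $U$, and conversely any such permutation is realised by concatenating paths inside each component. Hence $P = \bigoplus_\alpha \simf(V \cap U_\alpha)$, where $\{U_\alpha\}$ ranges over the path-connected components of $U$ meeting $V$. Each $\simf(W)$ is the directed union of the finite symmetric groups $\mathfrak{S}(F)$ as $F$ ranges over finite subsets of $W$; since finite groups are amenable and amenability is preserved under directed unions, each $\simf(V \cap U_\alpha)$ is amenable, and the direct sum $P$ is amenable as well. Combining these two facts through the short exact sequence above yields amenability of $\Pi_X(U,V)$.

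The only subtlety (rather than a real obstacle) is keeping track of the fact that only finitely many components are affected by a given element, so that we genuinely obtain a direct sum rather than a direct product, both for $K$ and for $P$; this is what allows the passage from amenability of each factor to amenability of the whole, via directed unions of finite direct products.
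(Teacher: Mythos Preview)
Your proof is correct and follows essentially the same approach as the paper: both exhibit $\Pi_X(U,V)$ as an extension with kernel contained in $\bigoplus_{x\in V}\mathrm{im}(\pi_1(U,x)\to\pi_1(X,x))$ and image contained in $\simf(V)$, and then invoke closure of amenability under direct sums, subgroups, and extensions. Your version is slightly more explicit about decomposing over path components of $U$, while the paper simply notes the image lands in the locally finite group $\simf(V)$ and the kernel in a direct sum of amenable groups; the substance is the same.
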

\begin{proof}
%If $U=\bigsqcup_{\alpha\in A} U_\alpha$ is the decomposition of $U$ into its path connected components and $V_\alpha=V\cap U_\alpha$, then we have a natural isomorphism
%$\Pi(U,V)=\bigoplus_{\alpha \in A} \Pi(U_\alpha,V_\alpha)$. Since the direct sum of amenable groups is amenable, the image of $\Pi(U,V)$ in $\Pi(X,X)$ is amenable if and only if
%the image of each $\Pi(U_\alpha,V_\alpha)$ is. Therefore, we may assume that $U$ is path connected.
Recall that every element $g\in \Pi(X,X)$ induces a permutation with finite support $\phi(g)\in\simf(X)$. It is then easy to check that the exact sequence of 
Proposition~\ref{morph-pi-to-permfinite} induces the exact sequence
\begin{displaymath}
1 \rightarrow \bigoplus_{y \in \, V} \mathrm{Im}(\pi_{1}(U, y) \rightarrow \pi_{1}(X, y)) \xrightarrow{j} \Pi_X(U,V) \xrightarrow{\phi} \simf(V) ,
\end{displaymath}
where $j$ is the obvious inclusion (the map $\phi$ is surjective if and only if $V$ is contained in a path connected component of $U$). 
 
Every finitely generated subgroup of $\simf(V)$ is finite, hence amenable. Since locally amenable groups are amenable, this implies that $\simf(V)$ is itself amenable. 
 Moreover, being a direct sum of amenable groups, also the group
\begin{displaymath}
\bigoplus_{y \in \, V} \mathrm{Im}(\pi_{1}(U, y) \rightarrow \pi_{1}(X, y))
\end{displaymath}
is amenable. We have thus described $\Pi_X(U,V)$ as an extension of an amenable group by an amenable group, and this concludes the proof.
\end{proof}

%In order to understand better the previous proposition let us show the following counterexample.
%\begin{Controesempio}\label{counterexample-amen}
%It is important to underline the importance of taking the image $\Psi(\Pi(U,V))$ in the previous proposition. Indeed, in principle one might be led to believe that the same result also holds for $\Pi(X, U)$, if $U$ is a path-connected amenable set of $X$. However, let us show an easy counterexample. 
%
%Let $U \coloneqq x_{0}$ be the basepoint of $X$. Clearly, $U$ is an amenable set of $X$, however it is not true in general that $\Pi(X, U) \coloneqq \pi_{1}(X, x_{0})$ is amenable. 
%
%In particular, the previous proof fails in the very first step, because the previous rearranged sequence turns out to be no more exact. Indeed, considering the previous counterexample, we know, by Proposition \ref{morph-pi-to-permfinite}, that the kernel of $\phi$ must coincide with $\Pi(X, U) = \pi_{1}(X, x_{0})$. However, as long as the fundamental group is not trivial we know that $j$ is not surjective. This proves that the considered sequence cannot be exact.
%\end{Controesempio}

Let now $X$ be a topological space admitting a triangulation $T$, i.e.~assume that $X$ is equal to the geometric realization $|T|$ of a simplicial complex $T$.
For every vertex $v$ of $T$, the closed star of $v$ in $T$ is defined as the subcomplex of $T$ containing all the simplices containing $v$ (and all their faces). 
By suitably subdividing $T$ we may suppose that the following condition holds (see for instance \cite[Theorem~16.4]{munkres}): for every vertex $v$ of $T$ there exists $i(v)\in I$ such that the closed star of $v$ in $T$ 
is contained in the 
element $U_{i(v)}$ of the cover (of course, the choice of $i(v)$ may be non-unique).

For every $i\in I$ we set 
$$
V_i=\{v\in V(T)\, |\, i(v)=i\}\ .
$$
By construction we have $V_i\subseteq U_i$. Let us now set 
$$
\G=\bigoplus_{i\in I} \Pi_X(U_i,V_i)\ .
$$
The direct sum of amenable groups is amenable, so Lemma~\ref{prop-psi-pi-u-v-amenable} implies that $\G$ is amenable. Also observe that, if $i\neq j$, then $V_i\neq V_j$, so elements
in $\Pi_X(U_i,V_i)$ commute with elements in $\Pi_X(U_j,V_j)$. As a consequence, the group $\G$ naturally sits in $\Pi(X,X)$ as a subgroup, and acts on $\calA(X)$.

We will now construct a copy of $T$ inside the multicomplex $\calA(X)$. 
The multicomplex $\calK(X)$ contains a submulticomplex 
 $\calK_T(X)\cong T$ whose simplices
are the equivalence classes of the affine parametrizations of simplices of $T$. When constructing the submulticomplex $\calL(X)$ of $\calK(X)$, one needs to choose
a representative for every homotopy class (relative to the boundary) of simplices of $\calK(X)$ (see the proof of Theorem~\ref{exist-min}). Of course, we may choose the simplices
of $\calK_T(X)$ as representatives of their homotopy classes, thus obtaining $\calK_T(X)\subseteq \calL(X)$. Now it is obvious that the quotient map 
$\calL(X)\to \calA(X)$ is injective on $\calK_T(X)$. As a consequence, we can realize $T\cong \calK_T(X)$ as a submulticomplex of $\calA(X)$.

Recall that classes in $H^n_b(X)$ may be represented by simplicial cocycles
on $\calA(X)$. More precisely, if $\pi\colon \calK(X)\to \calA(X)$ is the composition of the retraction $\calK(X)\to \calL(X)$ with the projection $\calL(X)\to\calA(X)$,
and $S\colon |\calK(X)|\to X$ is the natural projection, then the composition
$$
\xymatrix{
H^n_b(X)\ar[rr]^{H^n_b(S)} & & H^n_b(|\calK(X)|)\ar[rr]^{H^n_b(\phi^n)} & & H^n_b(\calK(X))
}
$$
and the map $H^n_b(\pi)\colon H^n_b(\calA(X))\to H^n_b(\calK(X))$ are isometric isomorphisms for every $n\in\mathbb{N}$
(see Theorems~\ref{isometria-good-K(X)} and~\ref{aspherical:thm}). In order to save words, henceforth we will denote by
$$
\Psi^*\colon H^*_b(\calA(X))\to H^*_b(X)
$$
the isometric isomorphism obtained by composing $H^*_b(\pi)$ with $(H^*_b(\phi^*)\circ H^*_b(S))^{-1}$. 

\begin{lemma}\label{criterion:equal}
 Let $z\in C^n_b(\calA(X))$ be a bounded cocycle which vanishes on
 $C_n(T)\subseteq C_n(\calA(X))$. Then 
 $$
 c^n(\Psi^n([z]))=0\quad \textrm{in}\ H^n(X)\ .
 $$
\end{lemma}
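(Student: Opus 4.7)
The plan is to transfer the problem from bounded cohomology to ordinary singular cohomology, where the claim becomes essentially formal once one unpacks how the isomorphism $\Psi^{n}$ is built.

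First, I would observe that every piece of the isometric isomorphism $\Psi^{n}$ has an unbounded analogue: the projection $\pi \colon \calK(X) \to \calA(X)$ induces $\pi^{*}\colon H^{n}(\calA(X)) \to H^{n}(\calK(X))$; the map $\phi^{*}$ induces an isomorphism $H^{n}(|\calK(X)|) \to H^{n}(\calK(X))$ (Theorem \ref{simpl-hom-eq-sing-one}); and $S\colon |\calK(X)| \to X$ induces an isomorphism on singular cohomology because $X = |T|$ is a CW complex, so Corollary \ref{weak-CW} gives that $S$ is a genuine homotopy equivalence. Call the resulting composition $\Psi^{n}_{\mathrm{unb}}\colon H^{n}(\calA(X)) \to H^{n}(X)$. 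By naturality of the comparison map, the square
\[
\xymatrix{
H^{n}_{b}(\calA(X)) \ar[r]^-{\Psi^{n}} \ar[d]_-{c^{n}_{\calA(X)}} & H^{n}_{b}(X) \ar[d]^-{c^{n}} \\
H^{n}(\calA(X)) \ar[r]^-{\Psi^{n}_{\mathrm{unb}}} & H^{n}(X)
}
\]
commutes, so it suffices to prove that $\Psi^{n}_{\mathrm{unb}}(c^{n}_{\calA(X)}([z])) = 0$, i.e.\ that $\Psi^{n}_{\mathrm{unb}}([z]_{\mathrm{unb}}) = 0$.

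Next I would give a concrete description of $\Psi^{n}_{\mathrm{unb}}$. A homotopy inverse of $S$ is explicitly provided by the section $s\colon X \to |\calK(X)|$ sending each point $x \in |T|$ to the corresponding point of the affine parametrization living in the submulticomplex $\calK_{T}(X) \subseteq \calK(X)$; indeed $S \circ s = \mathrm{id}_{X}$, and $S$ is a homotopy equivalence. Unpacking the definition of $\Psi^{n}_{\mathrm{unb}}$ then yields
\[
\Psi^{n}_{\mathrm{unb}}([z]_{\mathrm{unb}}) = s^{*}\bigl((\phi^{*})^{-1}(\pi^{*}z)\bigr).
\]
The key observation is that $s$ factors through $|\calK_{T}(X)|$ as $X \xrightarrow{\iota} |\calK_{T}(X)| \xrightarrow{|j|} |\calK(X)|$, where $\iota$ is the canonical identification $|T| \cong |\calK_{T}(X)|$ and $j\colon \calK_{T}(X) \hookrightarrow \calK(X)$ is the simplicial inclusion.

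The main (and essentially only) step is now naturality of the simplicial-vs.-singular isomorphism $\phi^{*}$ with respect to simplicial inclusions: applied to $j$, it gives
\[
|j|^{*} \circ (\phi^{*})^{-1} \;=\; (\phi^{*})^{-1} \circ j^{*}.
\]
Since $\pi \circ j\colon \calK_{T}(X) \hookrightarrow \calA(X)$ coincides with the embedding of $T \cong \calK_{T}(X)$ into $\calA(X)$, we get $j^{*}(\pi^{*}z) = (\pi \circ j)^{*} z = z|_{C_{n}(T)}$, which is zero by hypothesis. Therefore $|j|^{*}((\phi^{*})^{-1}(\pi^{*}z)) = 0$, whence $s^{*}((\phi^{*})^{-1}(\pi^{*}z)) = \iota^{*}(0) = 0$, and the lemma follows.

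I do not expect a serious obstacle here: the only subtle point is book-keeping to ensure that the maps $\pi$, $\phi$, $S$, $s$, $\iota$, and $j$ compose as claimed, and in particular that $\pi$ restricted to $\calK_{T}(X)$ really is the inclusion of $T$ into $\calA(X)$ described in the excerpt. Once these identifications are verified, the conclusion follows from a purely diagrammatic chase.
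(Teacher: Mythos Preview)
Your proof is correct and follows essentially the same idea as the paper's: both arguments observe that the composite $c^{n}\circ\Psi^{n}$ factors through the restriction to $T\hookrightarrow\calA(X)$, where $z$ vanishes by hypothesis. The paper packages this into a single commutative diagram $H^{n}_{b}(\calA(X))\xrightarrow{r^{n}}H^{n}_{b}(T)\xrightarrow{c^{n}}H^{n}(T)\cong H^{n}(X)$ and simply asserts its commutativity, whereas you first commute the comparison map past $\Psi^{n}$ to obtain $\Psi^{n}_{\mathrm{unb}}$, and then explicitly verify (via the section $s$ and naturality of $\phi^{*}$) that $\Psi^{n}_{\mathrm{unb}}$ factors through $H^{n}(\calK_{T}(X))\cong H^{n}(T)$; this is just the paper's ``easy to check'' step carried out in detail.
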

\begin{proof}
The inclusion $T\hookrightarrow \calA(X)$ induces a restriction map $r^*\colon H^*_b(\calA(X))\to H^*_b(T)$. 
It is easy to check that the diagram
$$
\xymatrix{
H^n_b(\calA(X))\ar[r]^{r^n} \ar[d]^{\Psi^n} & H^*_b(T)\ar[r]^{c^n} & H^n(T)\ar[d]^{\cong}\\
H^n_b(X)\ar[rr]^{c^n} & & H^n(X) 
}
$$
is commutative (the vertical arrow on the right is the usual isomorphism between the simplicial cohomology of $T$ and the singular cohomology of $|T|=X$). 
Of course, if $z\in C^n_b(\calA(X))$ vanishes on $C_n(T)$, then  $r^n(z)=0$, and this concludes the proof.
\end{proof}

\section{Proof of the Vanishing Theorem}\label{van1:sec}
Let us first suppose that $X$ is homeomorphic to the geometric realization of a simplicial complex $T$. We keep the notation of the previous section, we
 suppose that $\mult(\calU)=m$, and we take a bounded class
 $\alpha \in H^n_b(X)$. Since $\Psi^*\colon H^n_b(\calA(X))\to H^n_b(X)$ is an isomorphism, we have $\alpha = \Psi^*(\beta)$ for some 
 $\beta\in H^n_b(\calA(X))$.
% $\alpha \in H^n_b(\calK(X))$, where $n\geq m$. 
By Lemma~\ref{criterion:equal}, it suffices to show that $\beta$ admits a representative that vanishes on every algebraic $n$-simplex
of $C_n(T)\subseteq C_n(\calA(X))$. 

%Recall that we have projections $\calK(X)\to\calL(X)$, $\calL(X)\to\calA(X)$, the former of which is a homotopy equivalence, while the latter induces an isomorphism on bounded cohomology
%(see Theorems~\ref{exist-min} and~\ref{teoD}). Therefore, if $\pi\colon \calK(X)\to\calA(X)$ is the composition of these maps, then
%we may suppose that $\alpha=H^n_b(\pi)(\beta)$ for some $\beta\in H^n_b(\calA(X))$. 

Let now $z\in C^n_b(\calA(X))$ be an alternating cocycle representing $\beta$. Recall that we have an action of the amenable group $\G$ on $\calA(X)$ via simplicial 
automorphisms that are homotopic to the identity. By Theorem~\ref{teor-amen-action} we may suppose that $z$ is $\G$-invariant. Therefore, after possibly alternating it,
we can assume that $z\in C^n_b(\calA(X))_\alt^\G$. 
%Since $\pi^*(z)\in C^n_b(\calK(X))_\alt$ is a representative of $\alpha$, in order to conclude it is sufficient to show that
%$\pi^*(z)(s)=0$ for every $n$-simplex $s$ in $\calK_T(X)$. 

Let $(s,(x_0,\ldots,x_n))$ be 
an algebraic simplex in $C_n(T)\subseteq C_n(\calA(X))$.
If $x_h=x_k$ for some $h\neq k$, then $z(s,(x_0,\ldots,x_n))=0$ because $z$ il alternating, and we are done. We may thus suppose $x_h\neq x_k$ for $h\neq k$, and denote by
$e_{hk}$ the edge of
$s$ joining $x_h$ with $x_k$, $h,k=0,\ldots,n$. Since $n\geq m=\mult(\calU)$, there exist $h,k\in \{0,\ldots,n\}$, $h\neq k$, such that $i(x_h)=i(x_k)$, i.e. both $x_h$ and
$x_k$ belong to the same $V_i$. Now the closed stars of $x_h$ and of $x_k$ in $T$ are contained in $U_i$ by the assumptions on $\calU$, so the edge $e_{hk}$ of $\calA(X)$ 
(which, since $\calA(X)^1=\calL(X)^1$, is also an edge of $\calL(X)\subseteq \calK(X)$) projects via the map $S\colon |\calK(X)|\to X$ onto (the image of) a path
$\gamma\colon [0,1]\to U_i$ such that $\gamma(0)=x_h$, $\gamma(1)=x_k$. 
Let  us now consider the element $g=\{\gamma,\gamma^{-1}\}\in \Pi_X(U_i,V_i)< \G$. It is immediate to check that $g\cdot s=s$. Moreover, $g\cdot x_h=x_k$, $g\cdot x_k=x_h$, and $g\cdot x_l=x_l$ for every $l\neq h,k$.
As a consequence, from the fact that $z$ is alternating we obtain $z(s,(x_0,\ldots,x_n))=-z(g\cdot (s,(x_0,\ldots,x_n)))$, while from the fact that $z$ is $\G$-invariant we get 
 $z(s,(x_0,\ldots,x_n))=z(g\cdot (s,(x_0,\ldots,x_n)))$. This shows that
$z(s,(x_0,\ldots,x_n))=0$, and concludes the proof of the Vanishing Theorem~\ref{vanishing1_intro} under the assumption that $X$ is triangulable.

Let now $X$ be any topological space, and denote by $\calS(X)$ the singular simplicial set associated to $X$. 
Since the second barycentric subdivision of $\calS(X)$ is a simplicial complex, 
Theorem~\ref{vanishing1_intro} holds for $|\calS(X)|$. Therefore, in order to conclude that
Theorem~\ref{vanishing1_intro} holds for $X$ it is sufficient to prove the following claims: if $X$ admits an amenable cover of multiplicity $m$, then also $|\calS(X)|$ admits an amenable cover
of multiplicity $m$;  if the comparison map $c^n\colon H^n_b(|\calS(X)|)\to H^n(|\calS(X)|)$ vanishes,
then also the comparison map
$c^n\colon H^n_b(X)\to H^n(X)$ vanishes.

Let $j\colon |\calS(X)|\to X$ be the natural projection.
If $\calU$ is a open cover of $X$, then $j^{-1}\calU=\{j^{-1}(U),\, U\in\calU\}$ is an open cover of $|\calS(X)|$ such that $\mult(j^{-1}\calU)=\mult(\calU)$.
 Moreover,
 the map $j\colon |\calS(X)|\to X$ induces an isomorphism on fundamental groups, so 
 if $Z$ is a path connected component of $j^{-1}(U)$ for some $U\in\calU$, then from the commutative diagram
 $$
 \xymatrix{
 Z\ar[r]\ar[d]& |\calS(X)|\ar[d]^{j}\\
 U\ar[r] & X
 }
 $$
 we deduce that the image of $\pi_1(Z)$ in $\pi_1(|\calS(X)|)$ is isomorphic to a subgroup of the image of $\pi_1(U)$ in $\pi_1(X)$. In particular,
 the cover $j^{-1}\calU$ is amenable if $\calU$ is. This proves the first claim. 

The diagram 
 $$
 \xymatrix{H^n_b(X)\ar[d]_{H^n_b(j)} \ar[r]^-{c^n} & H^n(X)\ar[d]^{H^n(j)}\\ H^n_b(|\calS(X)|)\ar[r]_-{c^n} & H^n(|\calS(X)|) }
 $$
 commutes. 
The map $j$ is a weak homotopy equivalence, thus Theorem~\ref{weak:iso:thm} ensures that $H^n_b(j)$ is an isometric isomorphism. 
It is a classical result that also $H^n(j)$ is an isomorphism (see e.g.~\cite[Proposition~4.21]{hatcher}). Therefore, the comparison map
$c^n\colon H^n_b(X)\to H^n(X)$ vanishes if and only if also the comparison map $c^n\colon H^n_b(|\calS(X)|)\to H^n(|\calS(X)|)$ vanishes.
This proves the second claim, thus concluding the proof of Theorem~\ref{vanishing1_intro} for any topological space.

\section{Proof of the Nerve Theorem}
Let us now come back to the case when $X=|T|$ is the geometric realization of a simplicial complex $T$. Let $\calU$ be as in the statement
of Theorem~\ref{van-thm2}, and keep the notation from the previous sections.

Let us first construct the required map $\Theta^n\colon H^n_b(X)\to H^n(N(\calU))$. If $\calU'$ is a locally finite refinement of $\calU$, then we have a natural map
$r\colon N(\calU')\to N(\calU)$. Moreover, from a partition of unity $\Phi'$ relative to $\calU'$ we may construct a partition of unity $\Phi$ relative to $\calU$, such that the  map
$f_\Phi \colon X\to N(\calU)$ associated to $\Phi$ is equal to $r\circ f_{\Phi'}$. As a consequence, up to replacing $\calU$ with $\calU'$,
we may assume that $\calU$ is locally finite. 

Using again that $\calU$ is locally finite, up to subdividing $T$ we may suppose that $V_i\neq \emptyset$ for every $i\in I$.
%Let us now choose a point $v_i\in U_i$ for every $i\in I$. Since $\calU$ is locally finite, we can suppose that $v_i\neq v_j$ for $i\neq j$. 
%Recall that classes in $H^n_b(X)$ may be represented by simplicial cocycles
%on $\calA(X)$. More precisely, if $\pi\colon \calS(X)\to \calA(X)$ is the composition of the retraction $\calK(X)\to \calL(X)$ with the projection $\calL(X)\to\calA(X)$,
%and $S\colon |\calK(X)|\to X$ is the natural projection, then the composition
%$$
%\xymatrix{
%H^n_b(X)\ar[rr]^{H^n_b(S)} & & H^n_b(|\calK(X)|)\ar[rr]^{H^n_b(\phi^n)} & & H^n_b(\calK(X))
%}
%$$
%and the map $H^n_b(\pi)\colon H^n_b(\calA(X))\to H^n_b(\calK(X))$ are isometric isomorphisms for every $n\in\mathbb{N}$
%(see Theorems~\ref{isometria-good-K(X)} and~\ref{aspherical:thm}). 
Recall that there exists an isometric isomorphism $\Psi^*\colon H^*_b(\calA(X))\to H^*_b(X)$, and that
the bounded cohomology of $\calA(X)$ is computed by the complex of $\G$-invariant (alternating) cochains, 
where $\G=\bigoplus_{i\in I} \Pi_X(U_i,V_i)$ as above.
Therefore, in order to define $\Theta^*\colon H^*_b(X)\to H^*(N(\calU))$ it is sufficient to construct  a
chain map $\Omega^*\colon C^*_b(\calA(X))^\G_\alt \to C^*(N(\calU))$.

Thus, let $z\in C^n_b(\calA(X))_\alt^\G$ and take an algebraic  $n$-simplex $({i_0},\ldots,{i_n})$ of $N(\calU)$ (recall that $N(\calU)$ is a simplicial complex
with  $I$ as set of vertices). 
If there exist $0\leq j<h\leq n$ with ${i_j}={i_h}$, then we simply set $\Omega^n(z)({i_0},\ldots,{i_n})=0$. Otherwise,
by definition of nerve of a cover we have $\bigcap_{j=0}^n U_{i_j}\neq \emptyset$, hence we may choose a point
$q\in \bigcap_{j=0}^n U_{i_j}$. Moreover, for every $j=0,\ldots,n$ we may choose a point $v_{i_j}\in V_{i_j}$. 
Since $U_{i_j}$ is path connected, for every $j=0,\ldots,n$,
there exists a path $\alpha_j\colon [0,1]\to U_{i_j}$ such that $\alpha_j(0)=v_{i_j}$, $\alpha_j(1)=q$. For $0\leq j<h\leq n$ we then set
$\alpha_{jh}=\alpha_j*\alpha_h^{-1}\colon [0,1]\to U_{i_j}\cup U_{i_h}$. Since the $v_{i_j}$ are pairwise distinct, by definition of $\calL(X)$ there exists a unique oriented $1$-simplex in $\calL(X)$
which projects onto a path in $X$ that is homotopic to $\alpha_{jh}$ relative to the endpoints. We denote by $e_{jh}\in \calL(X)^1=\calA(X)^1$ this simplex.
It readily follows from the definitions that for every $0\leq j<h<k\leq n$ the loop $\alpha_{jh}*\alpha_{hk}*\alpha_{jk}^{-1}$ is null-homotopic in $X$. 
By Proposition~\ref{riassunto:prop}, this implies that the concatenation of oriented simplices $e_{jh}*e_{hk}*e_{jk}^{-1}$ is null-homotopic in $|\calA(X)|$. As a consequence,
Proposition~\ref{aspherical:char} ensures that there exists a unique $n$-simplex $s$ of $\calA(X)$ whose $1$-skeleton is given by the union of the $e_{jh}$. We then
set 
$$
\Omega^n(z)({i_0},\ldots,{i_n})=z((s,(v_{i_0},\ldots,v_{i_n})))\ .
$$

We need to show that $\Omega^n(z)$ is well defined, i.e.~that different choices for the point $q$, for the points $v_{i_0},\ldots,v_{i_n}$ and for the paths $\alpha_0,\ldots,\alpha_n$ lead
to the same value for $z((s,(v_{i_0},\ldots,v_{i_n})))$. Indeed, let $q'\in \bigcap_{i=0}^n U_i$, 
let $v_{i_j}'\in V_{i_j}$ for every $j=0,\ldots,n$, and let 
   $\alpha_j'\colon [0,1]\to U_{i_j}$ be a path with $\alpha'_j(0)=v'_{i_j}$, $\alpha'_j(1)=q'$. For $0\leq j<h\leq n$ set
$\alpha'_{jh}=\alpha'_j*(\alpha'_h)^{-1}\colon [0,1]\to U_i\cup U_j$. Finally, let  $e'_{jh}$ be the $1$-simplex
of $\calA(X)$ corresponding to $\alpha'_{jh}$, and let $s'$ be the $n$-simplex of $\calA(X)$ whose $1$-skeleton is given by the $e_{jh}'$. 

Since $\bigcap_{j=0}^n U_{i_j}$ is path connected, we can choose a path $\beta\colon [0,1]\to \bigcap_{j=0}^n U_{i_j}$ such that $\beta(0)=q$, $\beta(1)=q'$.
%Moreover, for every $j=0,\ldots, n$, since $U_{i_j}$ is path connected we may define a path $\beta_j\colon [0,1]\to U_{i_j}$ such that $\beta_j(0)=v_{i_j}$ and $\beta_j(1)=v'_{i_j}$.
For every $i=0,\ldots,n$, let now $\gamma_j=\alpha_j*\beta*(\alpha'_j)^{-1}$. By construction, the path $\gamma_j$ joins $v_{i_j}$ with $v'_{i_j}$ and is supported in $U_{i_j}$. Therefore, if we set 
$g_j=\{\gamma_j\}$ if $v_{i_j}=v'_{i_j}$, and $g_j=\{\gamma_j,\,\gamma_j^{-1}\}$ if $v_{i_j}\neq v'_{i_j}$, then
$g_j$ is an element of $\Pi(U_{i_j},V_{i_j})$. Using that ${i_j}\neq {i_h}$ if $j\neq h$, we can thus define the element
$g=\oplus_{j=0}^n g_j\in \G$, and
 it is  immediate to check that 
$g\cdot e_{jh}'=e_{jh}$ for every $0\leq j<h\leq n$. Since simplices in $\calA(X)$ are determined by their $1$-skeleton (see Proposition~\ref{aspherical:char}), this implies
$g\cdot s'=s$.
Since $z$ is $\G$-invariant, this implies in turn that
$z((s,(v_{i_0},\ldots,v_{i_n})))=z((s',(v'_{i_0},\ldots,v'_{i_n})))$, i.e.~$\Omega^n(z)$ is well defined.

For later reference we observe that, if $(s,(v_0,\ldots,v_n))\in C_n(T)\subseteq C_n(\calA(X))$ is an algebraic $n$-simplex such that $v_j\in V_{i_j}$ for every $j=0,\ldots,n$, then
\begin{equation}\label{oksus}
\Omega^n(z)(i_0,\ldots,i_n)=z((s,(v_{0},\ldots,v_{n})))\ .
\end{equation}
Indeed, if $i_j=i_h$ for some $j\neq h$, then by definition $\Omega^n(z)(i_0,\ldots,i_n)=0$, while
$z((s,(v_{0},\ldots,v_{n})))=0$ because $z$ is alternating and $\G$-invariant (see the proof of Theorem~\ref{vanishing1_intro}). Otherwise,
our assumptions on the fineness of $T$ imply that $s$ is supported in $\bigcap_{j=0}^n U_{i_j}$. This easily implies that $s$ may be obtained as a simplex
of $\calA(X)$ associated to the $(n+1)$-tuple $(i_0,\ldots,i_n)$ via the construction above, and the conclusion follows. 

Once we know that $\Omega^*\colon C^*_b(\calA(X))^\G_\alt \to C^*(N(\calU))$ is well defined, it is easy to show that it is indeed a chain map:
the fact that $\Omega^n(z)(\partial ({i_0},\ldots,i_{n+1}))=(\Omega^n(\delta z))({i_0},\ldots,{i_{n+1}})$ readily follows from the definitions if ${i_j}\neq {i_h}$ for every $0\leq j<h\leq n+1$,
and from the fact that $z$ is alternating and $\Gamma$-invariant if ${i_j}= {i_h}$ for some $j\neq h$.

We are now left to show that the diagram
\begin{equation}\label{diagram1}
\xymatrix{
H^n_b(X)\ar[rr]^{c^n}\ar[d]_{\Theta^n} & & H^n(X)\\
 H^n(N(\calU)) \ar[rr]_-{H^n(\phi^n)} & & H^n(|N(\calU)|)\ar[u]_{\beta^n}
}
\end{equation}
commutes, where $H^*(\phi^*)$ denotes the classical isomorphism between the simplicial cohomology of $T$ and the singular cohomology of $|T|=X$.

Let $f\colon X\to |N(\calU)|$ be
the unique simplicial map that sends every vertex $v\in V_i$ of $T$ to the vertex $i$ of $N(\calU)$ (this map indeed exists thanks to the assumption that $T$ is sufficiently fine). 
Observe that $f$ is the map associated to the partition of unity
$\Phi=\{\varphi_i\}_{i\in I}$ defined as follows: 
 $\varphi_i\colon X\to \R$ is the unique map which, on every simplex of $T$, affinely extends the characteristic function $\chi_{V_i}$ of $V_i$. Therefore, $\beta^n=H^n(f)$ for every $n\in\mathbb{N}$. 
 
 Let us take an element $\varphi\in H^n_b(X)$. 
 As above, after identifying $H^n_b(X)$ with $H^n_b(\calA(X))$, we can choose 
 an alternating $\G$-invariant representative $z\in C^n_b(\calA(X))$ of $\varphi$. 
   If $c\in C_n(T)$ is  a simplicial cycle (which defines also a singular cycle in $C_n(X)$ and a simplicial cycle in $C_n(\mathcal{A}(X))$, still denoted by $c$), then  we have
 \begin{equation}\label{unau}
 \langle(\beta^n\circ H^n(\phi^n)\circ \Theta^n)(\varphi),[c]\rangle =\langle(H^n(\phi^n)\circ \Theta^n)(\varphi),H_n(f)([c])\rangle\ .
 \end{equation}
 Since $f\colon |N(\calU)|\to X$ is simplicial, the chain $C_n(f)(c)$ is itself simplicial, hence (by identifying as usual simplicial chains/classes with the corresponding singular chains/classes)
 \begin{equation}\label{dueu}
 \langle(H^n(\phi^n)\circ \Theta^n)(\varphi),H_n(f)([c])\rangle=\langle \Theta^n(\varphi),H_n(f)([c])\rangle \ ,
 \end{equation}
 and, putting together~\eqref{unau} and~\eqref{dueu}, 
 $$
 \langle(\beta^n\circ H^n(\phi^n)\circ \Theta^n)(\varphi),[c]\rangle=\langle \Theta^n(\varphi),H_n(f)([c])\rangle\ .
 $$
  
 Let us now look more closely at the right hand side of the last equality.
 %We are thus left to show that $c^n(\varphi)([c])=\varphi(H_n(f)([c]))$. 
Since $\Theta^n(\varphi)=[\Omega^n(z)]$, we have
 $$
\langle \Theta^n(\varphi),H_n(f)([c])\rangle=\langle \Omega^n(z),C_n(f)(c)\rangle\ .
 $$
 Moreover, one easily checks that, since $\Psi^n([z]) = \varphi$, we have
 \begin{equation}\label{eq:spiegazione:comparison}
 \langle z, c  \rangle = \langle c^n(\varphi), [c] \rangle \ .
 \end{equation}
 Now, if $(s,(v_0,\ldots,v_n))$ is an algebraic simplex in $C_n(T)$, where $v_j\in V_{i_j}$ for every $j=0,\ldots,n$, then 
 %$C_n(f)(s,(v_0,\ldots,v_n))=(i_0,\ldots,i_n)$, hence 
 $$
 \langle \Omega^n(z),C_n(f)(s,(v_0,\ldots,v_n))\rangle=\langle \Omega^n(z),(i_0,\ldots,i_n)\rangle=\langle z, (s,(v_0,\ldots,v_n))\rangle\ ,
 $$
 where the first equality is due to the definition of $f$, and the second one to Equation~\eqref{oksus}. Thus
 $$
 \langle(\beta^n\circ H^n(\phi^n)\circ \Theta^n)(\varphi),[c]\rangle=\langle \Omega^n(z),C_n(f)(c)\rangle=\langle z,c\rangle =\langle c^n(\varphi),[c]\rangle\ ,
 $$
 where the last equality is due to \eqref{eq:spiegazione:comparison}. We have thus shown that the coclasses $(\beta^n\circ H^n(\phi^n)\circ \Theta^n)(\varphi)$ and $c^n(\varphi)$ coincide on every simplicial class in $H_n(T)$.
 Thanks to the Universal Coefficient Theorem and to the canonical isomorphism $H^n(T)\cong H^n(|T|)=H^n(X)$, this implies that these coclasses coincide,
 i.e.~the diagram~\eqref{diagram1} commutes. This concludes the proof of Theorem~\ref{van-thm2}.

\part{The simplicial volume of open manifolds}

\chapter{Finiteness and Vanishing Theorems\\ for locally finite homology}\label{finvan:chap}

The third part of this work is devoted to the study of the simplicial volume of open manifolds. The main results we  prove are Gromov's {Vanishing} and {Finiteness Theorems} for locally finite homology,
which provide useful criteria for the finiteness or the vanishing of the simplicial volume of open manifolds. 
As the name suggests, the Vanishing and the Finiteness Theorems for locally finite homology are very close in spirit to Theorem~\ref{vanishing1_intro}, and just as Theorem~\ref{vanishing1_intro} 
they are based on the study of spaces which admit
special covers of small multiplicity. 
The main ingredient of the proof is local diffusion of chains, that was introduced by Gromov in~\cite{Grom82} (see also \cite{KimKue, strz-unp}).

Other results on the vanishing and/or the finiteness of the simplicial volume were obtained e.g.~in \cite{Loeh, Loh-Sauer}. In particular,
L{\"o}h provides in \cite{Loeh} a complete criterion for the finiteness of the simplicial volume of tame open manifolds, in terms of the so-called
\emph{$\ell^1$-invisibility} of the boundary components of the manifolds (see Definition~\ref{invisible:def} and Theorem~\ref{invisible:thm}). 
As a by-product of our results, by comparing
the Finiteness Theorem~\ref{Fin-Theor-intro}   with Theorem~\ref{invisible:thm} we obtain a new sufficient condition for a closed manifold to be $\ell^1$-invisible (see Theorem~\ref{invisible:new-intro}).

\section{The simplicial volume of open manifolds}\label{simplicial:open:sec}
A manifold is
\emph{open} if it is connected, non-compact  and without boundary. If $M$ is an open manifold and $n=\dim M$, then $H_n(M,R)=0$ for every ring with unity $R$, so in order to define the simplicial
volume for open manifolds we need to introduce locally finite homology.

\begin{Definizione}
Let $X$ be a topological space, and let $S_n(X)$ denote the set of singular simplices with values in $X$.
A subset $A \subset S_n(X)$ is \emph{locally finite} if any compact set $K \subseteq X$ intersects the image of only finitely many singular simplices of $A$.

Following \cite[Chapter 5.1]{Lothesis}, a (possibly infinite) singular $n$-chain on $X$ (with coefficients in the ring with unity $R$) is a formal sum $\sum_{\sigma\in S_n(X)} a_\sigma \sigma$, where $a_\sigma$ is an element of $R$. We say that such a chain
is \emph{locally finite} if the set $\{\sigma\in S_n(X)\, |\, a_\sigma\neq 0\}$ is locally finite, and we denote by $C_n^{\lf}(X;R)$ the $R$-module of locally finite chains on $X$. 

The $R$-linear extension of the usual boundary operator sends locally finite chains to locally finite chains, so it makes sense
to define the locally finite homology $H^{\lf}_*(X;R)$ of $X$ as the homology of the complex $C_*^{\lf}(X;R)$.
\end{Definizione}

Henceforth we  denote the vector spaces $C_*^\lf(X;\R)$ and $ H^{\lf}_*(X;\R)$ simply by $C_*^\lf(X)$ and $H_*^\lf(X)$. 

As in the  case of finite chains, $C_n^\lf(X)$ is endowed with the $\ell^1$-norm defined by
$$
\left\|\sum_{\sigma \in S_n(X)} a_\sigma \sigma\right\|_1=\sum_{\sigma \in S_n(X)} |a_\sigma|\ \in\ [0,+\infty]
$$
(which may now take the value $+\infty$). This norm induces
an $\ell^1$-seminorm (with values in $[0,+\infty]$) on $H_*^\lf(X)$, which will still be denoted by $\|\cdot\|_1$.

It is a standard result of algebraic topology (see for instance \cite[Theorem~5.4]{Lothesis}) that, if $X$ is an $n$-dimensional oriented open manifold, then $H^{\lf}_n(X;\mathbb{Z})\cong \mathbb{Z}$ is generated by a preferred
element $[X]_\mathbb{Z}\in H^{\lf}_n(X;\mathbb{Z})$, called the \emph{fundamental class} of $X$. Under the obvious change of coefficients homomorphism
$H^{\lf}_*(X;\mathbb{Z})\to H^{\lf}_*(X)$, the element $[X]_\mathbb{Z}$ is taken  to the \emph{real} fundamental class $[X]\in H^{\lf}_n(X)$ of $X$.

The simplicial volume of $X$ is then defined as follows.
\begin{Definizione}
Let $X$ be an oriented open $n$-manifold. The \emph{simplicial volume} of $X$ is $$\lVert X \rVert \coloneqq \lVert [X] \rVert_{1} = \inf \{\lVert c \rVert_1 \, | \, c \in \, C_{n}^{\lf}(X) \mbox{ is a fundamental cycle of } X\} \in \, [0, +\infty].$$
\end{Definizione}

It is straightforward to realize that when $X$ is  compact we recover the definition of the classical simplicial volume.

\section{Statement of the main results}
Let $X$ be a connected non-compact space. 
%We 
%now focus our attention on some special subsets of $X$, which will play a fundamental role in the following results.

\begin{Definizione}
A subset  $U$ of $X$ is  \emph{large} if its complement  in $X$ is relatively compact.
In particular, a large set $U \subset X$ must contain all the ends of $X$.
\end{Definizione}

\begin{comment}
\begin{Definizione}
An \emph{exhaustion} of $X$ by compact sets is a sequence $\{K_j\}_{j\in\mathbb{N}}$ satisfying the following properties: each $K_j$ is a compact subset of $X$
for every $j\in\mathbb{N}$,  the set $K_j$ is contained the internal part of $K_{j+1}$, and $\bigcup_{j\in\mathbb{N}} K_j=X$.
\end{Definizione}

It readily follows from the definitions that, if $\{K_j\}_{j\in\mathbb{N}}$ is an exhaustion of $X$ and $W_j=X\setminus K_j$, then $\{W_j\}_{j\in\mathbb{N}}$ is a 
locally finite sequence of large sets.

The following definition combines the previous properties with the fundamental notion of amenable set.
\end{comment}

\begin{comment}
\begin{Definizione}
We say that the topological space $X$ is \emph{amenable at infinity} if every large set $U \subset X$ contains a large set $\widetilde{U} \subset U$ such that $\widetilde{U}$ is an amenable subset of $U$.
\end{Definizione}
\end{comment}

\begin{Definizione}\label{amenable:infinity}
A sequence of subsets $\{U_{j}\}_{j\in\mathbb{N}}$ of $X$ is said to be \emph{amenable at infinity} if 
there exists a
sequence $\{W_{j}\}_{j\in\mathbb{N}}$ of large open subsets of $X$ such that the following conditions hold:
\begin{enumerate}
\item the family $\{W_{j}\}_{j\in\mathbb{N}}$  is locally finite;
\item $U_{j} \subset W_{j}$ for every $j$;
\item there exists $\bar{j}\in\mathbb{N}$ such that $U_{j}$ is an amenable subset of $W_{j}$ for every $j \geq \bar{j}$.
\end{enumerate}
By (1) and (2), any sequence of subsets which is amenable at infinity is necessarily locally finite.
\end{Definizione}

We now state the main results of the third part of this paper.

\begin{thm:repeated:Van-Theor-intro}[Vanishing Theorem for locally finite homology]%\label{thm:repeated:Van-Theor-intro}\label{thm:repeated:Van-Theor-intro}
Let $X$ be a connected non-compact triangulable topological space.
Let $\calU=\{U_j\}_{j\in\mathbb{N}}$ be an amenable open cover of $X$ such that each $U_j$ is relatively compact in $X$. 
Also suppose that 
%the following conditions hold:
%\begin{enumerate}
%\item Each $U_i$, $i\in\mathbb{N}$, is amenable in $X$;
%\item 
the sequence $\{U_j\}_{j\in\mathbb{N}}$ is amenable at infinity.
%\item the multiplicity of the cover $\calU$ is equal to $m$.
%\end{enumerate}
Then for every $k\geq \mult(\calU)$ and every $h \in \, H^{\lf}_{k}(X)$ we have $$\lVert h \rVert_1 = 0\ .$$
\end{thm:repeated:Van-Theor-intro}

\begin{thm:repeated:Fin-Theor-intro}[Finiteness Theorem]%\label{thm:repeated:Fin-Theor-intro}\label{thm:repeated:Fin-Theor-intro}
Let $X$ be a connected non-compact triangulable topological space.
Let $W$ be a large open subset of $X$, and
let $\calU=\{U_j\}_{j\in\mathbb{N}}$ be an open cover of $W$ such that each $U_j$ is relatively compact in $X$. 
Also suppose that 
%the following conditions hold:
%\begin{enumerate}
%\item 
the sequence $\{U_j\}_{j\in\mathbb{N}}$ is amenable at infinity (in particular, $\calU$ is locally finite in $X$).
%\item There exists a large subset $W\subseteq X$ such that the multiplicity of the restriction of $\calU$ to $W$ is equal to $m$.
%\end{enumerate}
Then for every $k\geq \mult(\calU)$  and every $h \in \, H^{\lf}_{k}(X)$ we have $$\lVert h \rVert_1 <+\infty  .$$
\end{thm:repeated:Fin-Theor-intro}

In this paper we prove Theorems~\ref{Van-Theor-intro} and \ref{Fin-Theor-intro}   
only for triangulable spaces. We believe that these results should hold
for any topological space, but due to the amount of technicalities already involved in their proofs we prefer to confine ourselves to the context of triangulable spaces, which certainly suffices
for many interesting applications. 

\begin{comment}
\begin{Definizione}\label{def:PL:mflds}
A topological manifold $X$ is  a \emph{PL manifold} if it admits a piecewise linear structure. Thanks to~\cite{Hudson}, a topological manifold is PL if and only if it 
is homeomorphic to the geometric realization of a  simplicial complex in which the closed star of every vertex is combinatorially equivalent to the standard simplex (i.e.~it has a subdivision which is isomorphic to a subdivision of the standard simplex).

A well-known result of Whitehead~\cite{Wh-Trian} ensures that any smooth (indeed, piecewise smooth) manifold is a PL manifold. 
 \end{Definizione}
\end{comment}

The following corollaries provide the main applications of Theorems~\ref{Van-Theor-intro}  and~\ref{Fin-Theor-intro}    to the simplicial volume of open manifolds.

\begin{thm:repeated:va-cor-intro}%\label{van_simpl_cor}
Let $M$ be an oriented open triangulable manifold of dimension $m$ and let $\calU=\{U_j\}_{j\in\mathbb{N}}$ be an amenable open cover of $M$ such that each $U_j$ is relatively compact in $M$. Also  suppose that the sequence $\{U_j\}_{j\in\mathbb{N}}$ is amenable at infinity, and that $\mult(\calU)\leq m$. Then
$$
\|M\|=0\ .
$$
\end{thm:repeated:va-cor-intro}

\begin{thm:repeated:fi-cor-intro}
Let $M$ be an oriented open triangulable manifold of dimension $m$. Let $W$ be a large open subset of $M$, and
let $\calU=\{U_j\}_{j\in\mathbb{N}}$ be an open cover of $W$ such that each $U_j$ is relatively compact in $M$. 
Also suppose that 
%the following conditions hold:
%\begin{enumerate}
%\item 
the sequence $\{U_j\}_{j\in\mathbb{N}}$ is amenable at infinity (in particular, $\calU$ is locally finite in $M$), and that $\mult(\calU)\leq m$.
%\item There exists a large subset $W\subseteq X$ such that the multiplicity of the restriction of $\calU$ to $W$ is equal to $m$.
%\end{enumerate}
Then
$$
\|M\|\ <\ +\infty\ .
$$
\end{thm:repeated:fi-cor-intro}

\section{$\ell^1$-homology and invisibility}\label{invisible:sec}
As mentioned above, L{\"o}h described in~\cite{Loeh} a necessary and sufficient condition for a tame manifold $M$ to have finite simplicial volume.
In order to describe L{\"o}h's criterion we first need to introduce $\ell^1$-homology. 

Let $X$ be a topological space. The space $\cl_*(X)$ of $\ell^1$-chains on $X$ is the metric completion of $C_*(X)$ with respect to the $\ell^1$-norm.
More concretely, an element $c\in \cl_n(X)$ is a formal sum
$$
c=\sum_{\sigma\in S_n(X)} a_\sigma \sigma
$$
such that
$$
\sum_{\sigma \in S_n(X)} |a_\sigma|< +\infty\ ,
$$
where $S_n(X)$ is the set of singular simplices with values in $X$, and  $a_\sigma\in \R$ for every $\sigma\in S_n(X)$. The boundary operator
$\partial_n\colon C_n(X)\to C_{n-1}(X)$ is continuous with respect to the $\ell^1$-norm, hence it induces a boundary operator (still denoted by the same symbol)
$\partial_n\colon \cl_n(X)\to \cl_{n-1}(X)$. The $\ell^1$-homology $\hl_*(X)$ is then the homology of the complex $\cl_*(X)$. 
The  inclusion of complexes $C_*(X)\hookrightarrow \cl_*(X)$ induces a map $c_*\colon H_*(X)\to \hl_*(X)$. 

\begin{defn}[\cite{Loeh}]\label{invisible:def}
 Let $M$ be a closed orientable $n$-manifold with real fundamental class $[M]\in H_n(M)$. Then $M$ is \emph{$\ell^1$-invisible} if 
 $$
 c_n([M])=0\quad \textrm{in}\ \hl_n(X)\ .
 $$
 If $M$ is closed and non-orientable, then $M$ is $\ell^1$-invisible if its orientable double covering is so, and if $M$ is compact, without boundary and disconnected, then
 it is $\ell^1$-invisible if every connected component of $M$ is so.
\end{defn}

 \begin{defn}\label{tame:defn}
An open topological manifold $X$ is \emph{tame} if it is homeomorphic to the internal part of a compact manifold with boundary. 
 \end{defn}
 
The following result provides a complete characterization of  tame manifolds having a finite simplicial volume:

\begin{thm}[\cite{Loeh}]\label{invisible:thm}
 Let $M=\inte (\overline{M})$, where $\overline{M}$ is a compact manifold with boundary. Then $\|M\|< +\infty$ if and only if  $\partial \overline{M}$ is $\ell^1$-invisible.
\end{thm}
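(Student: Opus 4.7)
The plan is to work entirely at the $\ell^1$-chain level, using a collar of $\partial \overline{M}$ in $\overline{M}$ to translate between locally finite cycles on $M$ of finite $\ell^1$-norm and $\ell^1$-cycles on $\partial \overline{M}$. I would first recall that for a closed connected orientable manifold $N$, $\ell^1$-invisibility is equivalent to $\|N\| = 0$ (since $C_*(N)$ is norm-dense in $\cl_*(N)$, the $\ell^1$-seminorm on $H_*(N)$ and the induced seminorm on $\hl_*(N)$ coincide in the top degree), so the statement becomes $\|M\|<+\infty$ if and only if $\|\partial \overline{M}\| = 0$. Next I would set up the natural isomorphism $H^\lf_n(M) \cong H_n(\overline{M}, \partial \overline{M})$ sending $[M]$ to the relative fundamental class, and fix a collar $\phi \colon \partial \overline{M} \times [0, 1) \hookrightarrow \overline{M}$ with $\phi(\cdot, 0) = \mathrm{id}$.

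For the forward direction, fix a locally finite fundamental cycle $c \in C^\lf_n(M)$ with $\|c\|_1 < +\infty$. After replacing $c$ by a simplicial approximation generic with respect to the collar coordinate, perform a transverse cut along the hypersurface $\Sigma_t := \phi(\partial \overline{M} \times \{t\})$ at a generic level $t \in (0, 1)$, obtaining a decomposition $c = c_t^{\mathrm{in}} + c_t^{\mathrm{out}}$ into an inner piece supported in $\phi(\partial \overline{M} \times [0, t])$ and an outer piece supported in the complement, with $\partial c_t^{\mathrm{in}} = -\partial c_t^{\mathrm{out}}$ a cycle on $\Sigma_t$. Since $c$ is a fundamental cycle of $M$ and the collar region is homeomorphic to $\partial \overline{M} \times [0, t]$, standard degree theory identifies $[\partial c_t^{\mathrm{in}}] \in H_{n-1}(\Sigma_t)$ with the fundamental class of $\Sigma_t \cong \partial \overline{M}$. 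The transverse cutting enlarges the $\ell^1$-norm of the affected simplices by at most a universal factor $K_n$ depending only on the dimension. Selecting a countable family of pairwise disjoint slices $\phi(\partial \overline{M} \times [t_{k+1}, t_k])$ with $t_k \to 0$, the total $\ell^1$-mass of $c$ on these slices is bounded by $\|c\|_1$, so along a subsequence the cut representatives of $[\partial \overline{M}]$ have $\ell^1$-norm tending to $0$, yielding $\|\partial \overline{M}\| = 0$.

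For the backward direction, fix a singular relative fundamental cycle $z \in C_n(\overline{M})$ with $\partial z = z_\partial$ a chosen fundamental cycle of $\partial \overline{M}$. The assumption $\|\partial \overline{M}\|=0$ provides $b \in \cl_n(\partial \overline{M})$ with $\partial b = z_\partial$, together with a sequence of finite approximations $b^{(k)} \in C_n(\partial \overline{M})$ with $\|b - b^{(k)}\|_1 \to 0$. I would plug an infinite telescope into the collar, reparametrized as an open embedding $\phi \colon \partial \overline{M} \times [0, +\infty) \hookrightarrow \overline{M}$: in the $k$-th slice $\phi(\partial \overline{M} \times [k, k+1])$, insert the prismatic triangulation of the chain $(b^{(k+1)} - b^{(k)}) \times [k, k+1]$. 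The $\ell^1$-norm of the $k$-th piece is controlled by $C_n(\|b - b^{(k)}\|_1 + \|b - b^{(k+1)}\|_1)$ and is therefore summable; across adjacent slices the top face of slice $k$ cancels the bottom face of slice $k+1$, so the boundary of the total telescope collapses to a single copy of $z_\partial$ at level $0$. Glueing this telescope to $z$ (placed in the complement of the collar) cancels $\partial z$ and produces a locally finite fundamental cycle of $M$ of finite $\ell^1$-norm, whence $\|M\| < +\infty$.

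The main obstacle is the bookkeeping in the backward direction: one must verify that the prismatic fillings in consecutive collar slices align so that their boundary contributions at the shared interface cancel exactly (with only $z_\partial$ surviving at the innermost face), and that the sum of all pieces defines a genuine locally finite $n$-chain on $M$ with vanishing boundary. A secondary technical difficulty, in the forward direction, is to make the transverse cutting of a singular simplex into a well-defined chain-level operation with a uniform $\ell^1$-bound on the subdivision, which is handled by a standard general-position argument after passing to a simplicial approximation of $\phi$; componentwise and orientation issues for disconnected or non-orientable $\partial \overline{M}$ are dealt with via the conventions fixed in Definition~\ref{invisible:def}.
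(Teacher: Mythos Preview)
Your opening reduction contains a genuine error. You assert that for a closed orientable manifold $N$, $\ell^1$-invisibility is equivalent to $\|N\|=0$, arguing via density of $C_*(N)$ in $\cl_*(N)$. Density of chain groups does \emph{not} imply that the induced seminorms on homology coincide, because the space of $\ell^1$-boundaries need not be closed; a class in $\hl_{n-1}(N)$ can have seminorm zero without vanishing. In fact, the paper explicitly flags this as open: ``it is still not known whether the vanishing of the simplicial volume is sufficient to ensure $\ell^1$-invisibility for closed manifolds.'' So your reformulation ``$\|M\|<+\infty$ iff $\|\partial\overline{M}\|=0$'' is not known to be equivalent to the theorem.

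This bites in your forward direction. You produce fundamental cycles of $\partial\overline{M}$ with arbitrarily small $\ell^1$-norm, which yields $\|\partial\overline{M}\|=0$ but \emph{not} $\ell^1$-invisibility. What you actually need is a chain $b\in\cl_n(\partial\overline{M})$ with $\partial b$ a fundamental cycle. The fix is close to your cutting idea: once you have cut $c$ at a single level $t$ and obtained the inner piece $c_t^{\mathrm{in}}$ supported in $\phi(\partial\overline{M}\times[0,t))$ with finite $\ell^1$-norm and with $\partial c_t^{\mathrm{in}}$ a fundamental cycle of $\Sigma_t$, push $c_t^{\mathrm{in}}$ to $\partial\overline{M}$ via the collar projection $p\colon\partial\overline{M}\times[0,t)\to\partial\overline{M}$. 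The image $p_*(c_t^{\mathrm{in}})$ is an honest element of $\cl_n(\partial\overline{M})$ (it is an infinite sum with finite $\ell^1$-norm, hence an $\ell^1$-chain even though it is no longer locally finite), and its boundary is a fundamental cycle of $\partial\overline{M}$. That is the $\ell^1$-filling you need.

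Your backward direction is essentially correct once you drop the mislabeling: you are really using $\ell^1$-invisibility (the existence of $b\in\cl_n(\partial\overline{M})$ with $\partial b=z_\partial$), not $\|\partial\overline{M}\|=0$, and the telescope construction from there is fine. Note that the paper itself does not prove this theorem; it is quoted from L\"oh's work, whose argument proceeds along the lines sketched above.
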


L{\"o}h describes in~\cite{Loeh} several families of $\ell^1$-invisible manifolds. For example, If $p\colon M \to B$ is a fibration of oriented, closed, connected manifolds
whose fiber $F$ is also an oriented, closed, connected manifold of non-zero
dimension and if $\pi_1(F)$ is amenable, then $M$ is $\ell^1$-invisible
(see~\cite[Example 6.7]{Loeh}). As anticipated in the introduction, here we strengthen this result as follows:

\begin{thm:repeated:invisible:new-intro}%\label{invisible:new}
 Let $M$ be a closed oriented triangulable $n$-dimensional manifold admitting an amenable cover $\calU$ such that $\mult(\calU)\leq n$. Then $M$ is $\ell^1$-invisible.
\end{thm:repeated:invisible:new-intro}
\begin{proof}
If $M$ is non-orientable, then the pull-back of $\calU$ defines an amenable cover $\widetilde{\calU}$ of the orientable double covering $\widetilde{M}$ of $M$ such that
$\mult(\widetilde{U})=\mult(\calU)$. Therefore, we may suppose that $M$ is oriented.

Of course we may suppose that $\calU=\{U_1,\ldots,U_k\}$ is finite. Following~\cite[Theorem 5.3]{Loh-Sauer},
we now construct an amenable cover $\calU'$ of $M\times \mathbb{R}$ satisfying the hypothesis of the Theorem \ref{Van-Theor-intro} and such that $\mult(\calU')=\mult(\calU)+1$. 
Let us consider locally finite open covers $\mathcal{R}_1,\ldots,\mathcal{R}_k$ of the real line $\mathbb{R}$ with the following properties: each set in any $\calR_i$ is a bounded interval;  
$\mult(\mathcal{R}_i)= 2$ for every $i=1,\ldots,k$;  $\mult(\mathcal{R}_i \sqcup \mathcal{R}_j) = 3$ for every $1\leq i < j \leq k$ (the existence of such covers 
is observed in~\cite[Theorem~5.3]{Loh-Sauer}).
Then, let us set $$\calU'=\{U_i\times V_\alpha\, |\, U_i\in\calU\, ,\ V_\alpha\in\calR_i\, ,\ i=1,\ldots,k\}\ .$$

Using that $\mult(\calU')=\mult(\calU)+1$ 
(see~\cite[Theorem 5.3]{Loh-Sauer} or the proof of Theorem~\ref{teor-co-am} below), it is straightforward to check that the cover $\calU'$ satisfies the hypothesis of Corollary~\ref{va-cor-intro} (with $M$ replaced by $M\times \mathbb{R}$).  Therefore we have $\|M\times \mathbb{R}\|=0$. 
But $M\times \mathbb{R}\cong \inte(M\times [0,1])$, hence
from L{\"o}h's Theorem~\ref{invisible:thm}
we deduce that the manifold $\partial (M\times [0,1])\cong M\sqcup M$ is $\ell^1$-invisible. Thus $M$ itself is $\ell^1$-invisible, as desired.
\end{proof}

It is maybe worth mentioning that $\ell^1$-invisibility is a very interesting but a bit elusive notion. It is easy to show that the simplicial volume of any closed $\ell^1$-invisible manifold vanishes. Thus, Theorem~\ref{invisible:new-intro} strengthens the second statement of Corollary~\ref{van-cor-intro}.
Moreover, the property of being $\ell^1$-invisible is closed under amenable gluings~\cite{Loeh}.
However, it is still not known whether the vanishing of the simplicial volume is sufficient to ensure $\ell^1$-invisibility for closed manifolds.

Using the techniques developed in this paper, Theorem~\ref{invisible:new-intro}
has been recently generalized by the first author in~\cite{Fri:amenable}: namely, it is now known that, if $X$ is a topological space admitting an amenable open cover of multiplicity $k$,
then for every $n\geq k$ and every $\alpha\in H_n(X)$, the image of $\alpha$ in $H_n^{\ell^1}(X)$ vanishes.

\chapter{Diffusion of chains}\label{diffusion:chap}
Diffusion of chains is a technique introduced by Gromov in~\cite{Grom82}. It has then been exploited for the study of the simplicial volume
e.g.~in \cite{KimKue, strz-unp}. Gromov himself proposed a slightly different approach to diffusion in  \cite{gromov-cycles}, which was then developed by Alpert~\cite{Alpert1, Alpert2}.
In this work we resort to the original definition of diffusion, which perfectly fits with our purposes.

In presence of suitable actions by amenable groups, diffusion allows to
decrease the $\ell^1$-norm of cycles without altering their homology class. 
%Using diffusion of (finite) chains, Gromov provided in~\cite{Grom82} an alternative proof of the vanishing of the simplicial volume of closed $n$-manifolds admitting an amenable open cover of multiplicity not bigger than $n$ (see Corollary~\ref{van-cor}). 
In this paper we  mainly concentrate
on diffusion of \emph{locally finite} chains, which as far as the authors know is the only technique available to prove  Theorems~\ref{Van-Theor-intro} and~\ref{Fin-Theor-intro}. 
Diffusion of ordinary singular chains (and its applications) may be recovered as a special case of the theory for locally finite chains. In fact, we prefer
to deal first with the case of diffusion of ordinary chains (which applies for example to the study of the simplicial volume of compact manifolds), in order to 
describe the main ideas behind the use of diffusion without begin forced to work in a very technical context. More precisely, following Gromov, in Section~\ref{toy:sec} we will illustrate
how diffusion of finite chains may be exploited to obtain a new proof of (a special case of) Corollary~\ref{van-cor-intro}. The applications of local diffusion of locally finite chains will be discussed
in the next chapters.

\section{Diffusion operators}
Let $\G$ be a group acting on a set $\Lambda$. 
For every function $f\colon \Lambda\to \R$ we denote by $\supp(f)$ the support of $f$, i.e.~the set
$$
\supp(f)=\{x\in \Lambda\, |\, f(x)\neq 0\}\ ,
$$
and
we denote by $\ell_0(\Lambda)$ the set of real functions on $\Lambda$ with finite support. For every $f\in \ell_0(\Lambda)$ we denote by
$$
\|f\|_1=\sum_{x\in \Lambda} |f(x)|
$$
the $\ell^1$-norm of $f$.

If $\mu$ is a probability measure on $\G$ with (at most) countable support, then we will
often consider $\mu$ as a non-negative function $\mu\colon \G\to\R$ such that $\sum_{g\in\G} \mu(g)=1$.

Let us now fix a probability measure $\mu$  on $\G$ with finite support.

\begin{Definizione}
The \emph{diffusion operator} associated to $\mu$ is the $\R$-linear map
\begin{displaymath}
\mu \ast \colon \ell_{0}(\Lambda) \rightarrow \ell_{0}(\Lambda)
\end{displaymath}
defined by
\begin{displaymath}
f(x) \mapsto (\mu \ast f) (x) = \sum_{\gamma \in \, \Gamma} \mu(\gamma) f(\gamma^{-1} x).
\end{displaymath}
(The fact that $\supp(\mu*f)$ is finite easily follows from the fact that $\supp(f)$ and $\supp(\mu)$ are.)
\end{Definizione}

\begin{comment}
\begin{lemma}\label{ell1norm:diff}
For every $f\in \ell_0(\Lambda)$ we have
$$
\|\mu*f\|_1\leq \|f\|_1\ .
$$
\end{lemma}
\begin{proof}
We have
\begin{align*}
& \sum_{x\in \Lambda} |(\mu*f)(x)|=\sum_{x\in \Lambda}\Big|\sum_{\gamma\in\G} \mu(\gamma) f(\gamma^{-1} x)\Big|\leq \sum_{x\in \Lambda} \sum_{\gamma\in\G} \mu(\gamma) |f(\gamma^{-1} x)|\\ &=
\sum_{\gamma\in\G} \mu(\gamma)\left(\sum_{x\in \Lambda} |f(\gamma^{-1}x)|\right)=\sum_{\gamma\in\G} \mu(\gamma)\|f\|_1=\|f\|_1\ .
\end{align*}
\end{proof}
\end{comment}

An easy computation shows that
$
\|\mu*f\|_1\leq \|f\|_1
$ for every $f\in \ell_0(\Lambda)$. We 
are  interested in establishing much stronger inequalities under some additional conditions (among them, the amenability of the group $\G$).
 To this aim we  introduce the following definition (see \cite[Section 4.2, page 60]{Grom82}).
 
 \begin{Definizione}
Given an element $\varphi \in \, \Gamma$, we define the \emph{derivative} of $\mu$ at $\varphi$ by setting
\begin{displaymath}
D_{\varphi}\mu (\gamma) = \mu(\gamma \varphi) - \mu(\gamma)\ .
\end{displaymath}
Moreover, given a subset $\Phi$ of $\Gamma$, we set  $$\lVert D_{\Phi} \mu \rVert = \sup_{\varphi \in \, \Phi} \lVert D_{\varphi} \mu \rVert_1\ .$$
\end{Definizione}

If $\Phi$ is a subset of $\G$, then we denote by $\Phi^{-1}$ the set $\{\gamma^{-1}\in\G\, |\, \gamma\in\Phi\}$. 

\begin{Proposizione}\label{Proposition-diff-oper}
Let $f\in\ell_0(\Lambda)$  and let  $x_0\in\G$. 
Let $\Phi\subseteq \G$
 be a finite subset of $\G$ such that
$
\Phi\cdot x_0\supseteq \supp(f)
$.
Then for any probability measure $\mu$ on $\G$ we have
\begin{displaymath}
\lVert \mu \ast f \rVert_1 \leq \Big\lvert \sum_{x \in \, \supp(f)} f(x) \Big\rvert + \lVert D_{\Phi^{-1}} \mu \rVert \cdot \lVert f \rVert_1\ .
\end{displaymath} 
\end{Proposizione}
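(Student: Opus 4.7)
The plan is to reduce the general inequality to the case of a single Dirac mass $\delta_{x_0}$ by writing $f$ as a linear combination of translates of $\delta_{x_0}$. Concretely, for each $x\in\supp(f)$ I would choose some $\varphi_x\in\Phi$ with $\varphi_x\cdot x_0=x$ (which exists by hypothesis), and write
\[
f=\sum_{x\in\supp(f)} f(x)\,\delta_{\varphi_x x_0}.
\]
The point of this decomposition is that the diffusion $\mu\ast \delta_{\varphi x_0}$ can be compared with $\mu\ast \delta_{x_0}$ using a right translation $\gamma\mapsto \gamma\varphi^{-1}$: a direct computation yields $(\mu\ast\delta_{\varphi x_0})(y)-(\mu\ast\delta_{x_0})(y)=\sum_{\gamma:\gamma x_0=y}(D_{\varphi^{-1}}\mu)(\gamma)$.

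Next I would split $\mu\ast f$ into a ``mean'' part and an ``error'' part by adding and subtracting $\sum_x f(x)\,\mu\ast\delta_{x_0}$:
\[
\mu\ast f=\Big(\sum_{x\in\supp(f)}f(x)\Big)\,\mu\ast\delta_{x_0}+\sum_{x\in\supp(f)} f(x)\,\bigl(\mu\ast\delta_{\varphi_x x_0}-\mu\ast\delta_{x_0}\bigr).
\]
Applying the triangle inequality for $\|\cdot\|_1$ and the elementary fact that $\|\mu\ast\delta_{x_0}\|_1=\sum_\gamma \mu(\gamma)=1$ (since $\mu$ is a probability measure) handles the first summand, and it remains to control the error summand.

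For each $\varphi\in\Phi$ the computation above together with the triangle inequality gives
\[
\|\mu\ast\delta_{\varphi x_0}-\mu\ast\delta_{x_0}\|_1\leq \sum_{\gamma\in\Gamma}|(D_{\varphi^{-1}}\mu)(\gamma)|=\|D_{\varphi^{-1}}\mu\|_1\leq \|D_{\Phi^{-1}}\mu\|.
\]
Plugging this into the error summand yields $\sum_x|f(x)|\cdot\|D_{\Phi^{-1}}\mu\|=\|D_{\Phi^{-1}}\mu\|\cdot\|f\|_1$, which combined with the bound on the mean part is exactly the desired inequality. The proof is essentially a bookkeeping argument; there is no significant conceptual obstacle, the only delicate point being to make sure the change of variable $\gamma\mapsto\gamma\varphi$ in the definition of $\mu\ast\delta_{\varphi x_0}$ is carried out correctly so that the $\Phi^{-1}$ (rather than $\Phi$) appears in the final bound.
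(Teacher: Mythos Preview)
Your proof is correct and is essentially the same argument as the paper's, just organized on $\Lambda$ rather than on $\Gamma$. The paper lifts $f$ to a function $g\in\ell_0(\Gamma)$ supported on $\Phi$ (by setting $g(\varphi_x)=f(x)$, which is exactly your choice of $\varphi_x$), uses that the push-forward $q_*\colon\ell_0(\Gamma)\to\ell_0(\Lambda)$ along the orbit map is norm non-increasing and commutes with diffusion, and then performs the identical add-and-subtract step $\mu(\gamma\lambda)=\mu(\gamma)+(\mu(\gamma\lambda)-\mu(\gamma))$ on $\Gamma$; your decomposition $\mu\ast f=(\sum_x f(x))\mu\ast\delta_{x_0}+\sum_x f(x)(\mu\ast\delta_{\varphi_x x_0}-\mu\ast\delta_{x_0})$ is precisely the image of that step under $q_*$.
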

\begin{proof}
The map
$$q \colon \Gamma \rightarrow \Lambda\, ,\quad q(\gamma) = \gamma \cdot x_{0}$$ 
induces
a push-forward operator: $$q_{*} \colon \ell_{0}(\Gamma) \rightarrow \ell_{0}(\Lambda)\, ,\quad (q_{*}g) (x) = \sum_{\gamma \in \, q^{-1}(x)} g(\gamma)\ ,$$
which is easily seen to be norm non-increasing.

Let us now construct a function $g\in\ell_0(\G)$ such that $\supp(g)\subseteq \Phi$ and $q_*(g)=f$. To this aim, for every $x\in \supp(f)$ we choose $\gamma_x\in \Phi$ such that $\gamma_x(x_0)=x$, and we set
$g(\gamma)=f(x)$ if $\gamma=\gamma_x$ for some $x\in \Lambda$ (of course, if such an $x\in \Lambda$ exists, it is unique), and $g(\gamma)=0$ otherwise. It is now obvious that
$q_*(g)=f$ and $\supp(g)\subseteq \Phi$. Also observe that

\begin{equation}\label{proprietag}
 \Big|\sum_{\lambda\in\G} g(\lambda)\Big|=\Big|\sum_{x\in \, \supp(f)} f(x)\Big|\, ,\qquad
 \|g\|_1=\|f\|_1\ .
\end{equation}

After endowing $\G$ with the action given by left translations, we can let $\mu$ act also on $\ell_0(\G)$, thus defining an operator
$$\mu \ast\colon \ell_0(\G)\to \ell_0(\G)\ .$$
%\, ,\quad  (\mu \ast g)(\gamma) = \sum_{\lambda \in \, \Gamma} \mu(\lambda) g(\lambda^{-1} \gamma),$$ where $g \in \, \ell^{1}(\Gamma)$.
It is not difficult to show that $q_{*}(\mu \ast g) = \mu \ast (q_{*}g)$, i.e.~that diffusion commutes with the push-forward operator $q_{*}$.
%Indeed, for every $x\in \Lambda$ we have
%\begin{align*}
%\mu \ast (q_{*}g)(x) &= \sum_{\gamma \in \Gamma} \mu(\gamma) q_{*}g(\gamma^{-1}x) \\
%&= \sum_{\gamma \in \, \Gamma} \sum_{\lambda \in \, q^{-1}(\gamma^{-1}x)} \mu(\gamma) g(\lambda) \\
%&= \sum_{\gamma \in \, \Gamma} \sum_{\gamma\lambda \in \, q^{-1}(x)} \mu(\gamma) g(\lambda) \\
%&= \sum_{\gamma \in \, \Gamma} \sum_{\rho \in \, q^{-1}(x)} \mu(\gamma)g(\gamma^{-1}\rho) \\
%&= \sum_{\rho \in \, q^{-1}(x)} \sum_{\gamma \in \, \Gamma} \mu(\gamma) g(\gamma^{-1} \rho)\\
%&= q_{*}(\mu \ast g)(x)\ .
%\end{align*}
This implies in particular that
$$\lVert \mu \ast f \rVert_{1} = \lVert \mu \ast (q_{*}g) \rVert_{1} = \lVert q_{*}(\mu \ast g) \rVert_{1} \leq \lVert \mu \ast g \rVert_{1}\ .$$ 
Now the conclusion follows from the following inequality:
\begin{align*}
\lVert \mu \ast g \rVert_1 &= \sum_{\gamma \in \, \Gamma} \Big\lvert \sum_{\lambda \in \Gamma} \mu(\lambda) g(\lambda^{-1} \gamma) \Big\rvert  \\
&= \sum_{\gamma \in \, \Gamma} \Big\lvert \sum_{\lambda \in \, \Gamma} \mu(\gamma \lambda)g(\lambda^{-1})\Big\rvert \\
&= \sum_{\gamma \in \, \Gamma} \Big\lvert \sum_{\lambda \in \, \Gamma} (\mu(\gamma \lambda) - \mu(\gamma)) g(\lambda^{-1}) + \sum_{\lambda \in \, \Gamma} \mu(\gamma) g(\lambda^{-1}) \Big\rvert \\
&\leq \sum_{\gamma \in \, \Gamma} \sum_{\lambda \in \, \Gamma} \lvert \mu(\gamma \lambda) - \mu(\gamma) \rvert \cdot \lvert g(\lambda^{-1}) \rvert + \Big\lvert \sum_{\gamma \in \, \Gamma} \mu(\gamma) \Big\rvert \cdot 
\Big\lvert \sum_{\lambda \in \, \Gamma} g(\lambda^{-1}) \Big\rvert \\
&=\sum_{\lambda \in \, \Phi^{-1}} \lVert D_{\lambda} \mu \rVert_1  \lvert g(\lambda^{-1}) \rvert + \Big\lvert \sum_{\lambda \in \, \Gamma} g(\lambda) \Big\rvert \\
&\leq \lVert D_{\Phi^{-1}} \mu \rVert \lVert g \rVert_1 + \Big\lvert \sum_{\lambda \in \, \Gamma} g(\lambda) \Big\rvert\\
& =\lVert D_{\Phi^{-1}} \mu \rVert \lVert f \rVert_1 + \Big\lvert \sum_{\lambda \in \, \supp(f)} f(\lambda) \Big\rvert\ ,
\end{align*}
where the last equality is due to~\eqref{proprietag}, the inequality between the third and the fourth lines is due to the fact that
$$
\sum_{\gamma\in\Gamma}\Big\lvert\sum_{\lambda \in \, \Gamma} \mu(\gamma) g(\lambda^{-1}) \Big\rvert=
\sum_{\gamma\in\Gamma} \left(\mu(\gamma) \Big\lvert\sum_{\lambda \in \, \Gamma}g(\lambda^{-1}) \Big\rvert\right)=
\Big\lvert \sum_{\gamma \in \, \Gamma} \mu(\gamma) \Big\rvert \cdot 
\Big\lvert \sum_{\lambda \in \, \Gamma} g(\lambda^{-1}) \Big\rvert \ ,
$$
and the equality between the fourth and fifth lines is due to the fact that $g$ is supported on $\Phi$.

\end{proof}

In order to exploit the previous lemma we need to 
construct probability measures on $\G$ with a small derivative. This can be done under the assumption that $\G$ is amenable:

\begin{Proposizione}[{\cite[Theorem~4.4]{Paterson}}]\label{prop-amen-paterson}
Let $\Gamma$ be an amenable group. 
Let $\Phi$ be a finite subset of $\G$ and take 
$\varepsilon > 0$. Then there exists a probability measure $\mu$ on $\Gamma$ with finite support such that $$\lVert D_{\Phi} \mu \rVert_1 < \varepsilon\  .$$ 
\end{Proposizione}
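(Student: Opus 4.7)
\medskip

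The plan is to deduce this from the Følner characterization of amenability, which is one of the classical equivalent formulations of amenability for discrete groups (equivalent to the existence of a left-invariant mean, to Reiter's condition, etc.). Concretely, I would invoke the following fact: since $\G$ is amenable, for every finite subset $\Phi\subseteq \G$ and every $\varepsilon>0$ there exists a finite non-empty subset $K\subseteq \G$ (a Følner set) such that
\[
\frac{|K\varphi^{-1}\triangle K|}{|K|}\ <\ \varepsilon \quad \text{for every } \varphi\in\Phi.
\]
Having fixed such a $K$, I would simply take $\mu$ to be the normalized counting measure on $K$, i.e.\ $\mu(\gamma)=\frac{1}{|K|}\chi_K(\gamma)$. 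This is clearly a probability measure on $\G$ with finite support.

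The rest is a direct calculation. For any $\varphi\in\Phi$ and $\gamma\in\G$ we have $\chi_K(\gamma\varphi)=\chi_{K\varphi^{-1}}(\gamma)$, hence
\[
D_\varphi\mu(\gamma)=\mu(\gamma\varphi)-\mu(\gamma)=\frac{1}{|K|}\bigl(\chi_{K\varphi^{-1}}(\gamma)-\chi_K(\gamma)\bigr).
\]
Summing $|D_\varphi\mu(\gamma)|$ over $\gamma\in\G$ gives $\|D_\varphi\mu\|_1=|K\varphi^{-1}\triangle K|/|K|<\varepsilon$. Taking the supremum over $\varphi\in\Phi$ yields $\|D_\Phi\mu\|<\varepsilon$, as required.

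The only non-trivial ingredient is thus the Følner condition itself, and the only mildly delicate point is making sure one uses the \emph{right} variant of Følner's condition: the derivative $D_\varphi$ is defined in terms of the right translation $\mu\mapsto \mu(\cdot\,\varphi)$, so one needs an ``almost right-invariant'' Følner set, i.e.\ one controlling $|K\varphi^{-1}\triangle K|$ rather than $|\varphi K\triangle K|$. Since amenability is preserved by passing to the opposite group (equivalently, by the involution $\gamma\mapsto\gamma^{-1}$), the left and right Følner conditions are equivalent, so no genuine obstacle arises. For a complete self-contained discussion of these equivalences I would refer the reader to Paterson's book (in particular \cite[Theorem~4.4]{Paterson}), from which the statement is extracted essentially verbatim.
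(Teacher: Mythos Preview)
Your proof is correct and in fact more direct than the paper's. The paper invokes Paterson's Theorem~4.4 to obtain a probability measure $\mu'$ with $\|D_\Phi\mu'\|_1<\varepsilon/3$ but \emph{possibly infinite support}, and then spends the rest of the argument truncating $\mu'$ to a finite set and renormalizing, estimating $\|\mu-\mu'\|_1$ and using $\|D_\varphi\mu\|_1\leq 2\|\mu-\mu'\|_1+\|D_\varphi\mu'\|_1$ to conclude. By going straight to the F{\o}lner condition you produce a finitely supported measure from the outset, so no truncation step is needed. Your remark about the right-versus-left F{\o}lner condition is also well placed. The paper's approach has the minor advantage of quoting a named theorem verbatim rather than relying on the equivalence of amenability characterizations, but yours is shorter and more transparent.
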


\begin{proof}
By~\cite[Theorem~4.4]{Paterson}, there exists a probability measure $\mu'$ on $\Gamma$ 
(possibly with infinite support)  such that $$\lVert D_{\Phi} \mu' \rVert_1 < \frac{\varepsilon}{3}\  .$$ 
%Let $\varepsilon'>0$ be such that $\varepsilon' + {\varepsilon'}/({1 -\varepsilon'})\leq \varepsilon/3$. 
Since $\mu'$ is a probability measure, there exists a finite subset $\Gamma_{0} \subset \Gamma$ such that
\begin{displaymath}
D=\sum_{\gamma \in \, \Gamma_{0}} \mu'(\gamma) \geq 1 - \frac{\varepsilon}{6}.
\end{displaymath}
Let  $\mu$ be the probability measure defined by

\begin{displaymath}
\mu(\gamma) = \begin{cases} 
\frac{\mu'(\gamma)}{D}& \mbox{ if } \gamma \in \, \Gamma_{0} \\
0 & \mbox{ if } \gamma \not\in \, \Gamma_{0}.
\end{cases}
\end{displaymath}
An easy computation shows that

\begin{align*}
\lVert \mu - \mu' \rVert_1 
\leq 2(1-D)\leq \frac{\varepsilon}{3},
\end{align*}
so for every $\varphi\in \Phi$ we have

\begin{align*}
\|D_{\varphi}\mu (\gamma)\|_1 &= \sum_{\gamma\in\G} |\mu(\gamma \varphi) - \mu(\gamma)|\\ &=\sum_{\gamma\in\G}|\mu(\gamma \varphi) - \mu'(\gamma\varphi)+\mu'(\gamma\varphi)-\mu'(\gamma)+\mu'(\gamma)-\mu(\gamma)|
\\ &\leq \sum_{\gamma\in\G}|\mu(\gamma \varphi) - \mu'(\gamma\varphi)|+\sum_{\gamma\in\G}|\mu'(\gamma\varphi)-\mu'(\gamma)|+\sum_{\gamma\in\G}|\mu'(\gamma)-\mu(\gamma)|\\ &\leq 2\|\mu-\mu'\|_1+\|D_{\Phi}\mu'\|_1\leq\varepsilon\ .
\end{align*}
This concludes the proof.
\end{proof}

We can now put together the previous results to prove the following:
\begin{Corollario}\label{Cor-A-diff-oper}
Let $\Gamma$ be an amenable group acting transitively on a set $\Lambda$,  
let $S\subseteq \Lambda$ be finite, and
%and let $S=S_1\sqcup\ldots\sqcup S_k$ be the  partition of $S$ induced by the partition of $\Lambda$
%into $\Gamma$-orbits.  
let  $\varepsilon > 0$ be given.
Then there exists a probability measure $\mu$ on $\Gamma$ with finite support such that 
for every $f\in \ell_0(\Lambda)$ with $\supp(f)\subseteq S$ the following inequality holds:
\begin{displaymath}
\lVert \mu \ast f \rVert_1 \leq \Big\lvert \sum_{x \in \, S} f(x) \Big\rvert + \varepsilon \cdot \lVert f \rVert_1 \ .
\end{displaymath}
\end{Corollario}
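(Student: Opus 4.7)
The proof is a direct combination of the two preceding propositions, so the plan is essentially to package them together with a suitable choice of finite set $\Phi \subseteq \Gamma$. First I would fix an arbitrary basepoint $x_0 \in \Lambda$. Since $\Gamma$ acts transitively on $\Lambda$ and $S$ is finite, for each $x \in S$ I may choose some $\gamma_x \in \Gamma$ with $\gamma_x \cdot x_0 = x$. Setting $\Phi = \{\gamma_x : x \in S\}$ then gives a finite subset of $\Gamma$ satisfying $\Phi \cdot x_0 = S$.

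Next I would apply Proposition~\ref{prop-amen-paterson} to the finite set $\Phi^{-1}$ (which is finite since $\Phi$ is) and to the prescribed $\varepsilon > 0$: this yields a probability measure $\mu$ on $\Gamma$ with finite support such that $\|D_{\Phi^{-1}} \mu\| < \varepsilon$. This is where the amenability of $\Gamma$ is used in an essential way.

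Finally, I would invoke Proposition~\ref{Proposition-diff-oper} with this choice of $\mu$, $x_0$, and $\Phi$. For any $f \in \ell_0(\Lambda)$ with $\supp(f) \subseteq S$, the hypothesis $\Phi \cdot x_0 \supseteq \supp(f)$ is satisfied (since $\Phi \cdot x_0 = S \supseteq \supp(f)$), so the proposition gives
\[
\lVert \mu \ast f \rVert_1 \leq \Big\lvert \sum_{x \in \supp(f)} f(x) \Big\rvert + \lVert D_{\Phi^{-1}} \mu \rVert \cdot \lVert f \rVert_1 \leq \Big\lvert \sum_{x \in S} f(x) \Big\rvert + \varepsilon \cdot \lVert f \rVert_1,
\]
where the first sum can be harmlessly extended from $\supp(f)$ to $S$ since $f$ vanishes on $S \setminus \supp(f)$. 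This yields the claim. The only subtlety is noting that the same $\mu$ works uniformly for \emph{all} $f$ with $\supp(f) \subseteq S$, which is clear from the construction since both $\Phi$ and hence $\mu$ depend only on $S$ and $\varepsilon$, not on the particular $f$; so there is no real obstacle.
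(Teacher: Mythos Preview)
Your proof is correct and follows essentially the same approach as the paper: choose a basepoint, use transitivity to find a finite $\Phi \subseteq \Gamma$ with $\Phi \cdot x_0 \supseteq S$, apply Proposition~\ref{prop-amen-paterson} to $\Phi^{-1}$ to obtain $\mu$, and conclude via Proposition~\ref{Proposition-diff-oper}. The only cosmetic difference is that the paper takes $x_0 \in S$ while you take $x_0 \in \Lambda$ arbitrary, which is immaterial.
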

\begin{proof}
Since $\G$ acts transitively, 
we can choose an element $x_0\in S$ and a finite subset $\Phi$ of $\G$ such that 
$$S \subseteq \Phi \cdot x_{0}\ .$$ 
We  
can then apply Proposition \ref{prop-amen-paterson} to the finite subset $\Phi^{-1}$ of $\G$, thus obtaining  
 a probability measure $\mu$ on $\Gamma$ with finite support such that $$\lVert D_{\Phi^{-1}} \mu \rVert \leq \varepsilon\ .$$

Now  Proposition \ref{Proposition-diff-oper} gives
$$
\lVert \mu \ast f \rVert_1  
\leq   \Big|\sum_{x \in \, S} f(x)\Big| + \lVert D_{\Phi^{-1}} \mu \rVert\cdot  \lVert f  \rVert_1 \\
\leq \Big| \sum_{x \in S} f(x)\Big| + \varepsilon \lVert f \rVert_1 \ .
$$
\end{proof}

\begin{cor}\label{oss-scelta-epsilon}
Let $\Gamma$ be an amenable group acting transitively on a set $\Lambda$, and take an element
  $f\in \ell_0(\Lambda)$.
Then for every $\eta>0$ there exists a probability measure $\mu$ on $\Gamma$ with finite support such that 
\begin{displaymath}
\lVert \mu \ast f \rVert_1 \leq  \Big| \sum_{x \in \Lambda} f(x) \Big| + \eta\ .
\end{displaymath}
\end{cor}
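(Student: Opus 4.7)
The plan is to deduce this statement as an immediate specialization of Corollary~\ref{Cor-A-diff-oper}. Indeed, Corollary~\ref{Cor-A-diff-oper} already produces a diffusion bound of essentially the required shape; the only remaining work is to choose the error tolerance $\varepsilon$ appearing there as a function of both $\eta$ and the fixed datum $f$.

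First I would dispose of the trivial case $f \equiv 0$, in which case any probability measure $\mu$ yields $\mu \ast f = 0$ and the inequality holds with both sides equal to zero. So I assume $\lVert f \rVert_1 > 0$, and I set $S = \supp(f)$, which is a finite subset of $\Lambda$ since $f \in \ell_0(\Lambda)$.

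Next, given $\eta > 0$, I choose
\[
\varepsilon = \frac{\eta}{\lVert f \rVert_1} > 0 \, .
\]
Since $\Gamma$ acts transitively on $\Lambda$ and $S \subseteq \Lambda$ is finite, the hypotheses of Corollary~\ref{Cor-A-diff-oper} are met. Applying that corollary to $S$ and this $\varepsilon$, I obtain a probability measure $\mu$ on $\Gamma$ with finite support such that, for every $g \in \ell_0(\Lambda)$ with $\supp(g) \subseteq S$,
\[
\lVert \mu \ast g \rVert_1 \leq \Bigl\lvert \sum_{x \in S} g(x) \Bigr\rvert + \varepsilon \lVert g \rVert_1 \, .
\]
Applying this with $g = f$, and observing that $\supp(f) = S$ implies $\sum_{x \in S} f(x) = \sum_{x \in \Lambda} f(x)$, yields
\[
\lVert \mu \ast f \rVert_1 \leq \Bigl\lvert \sum_{x \in \Lambda} f(x) \Bigr\rvert + \varepsilon \lVert f \rVert_1 = \Bigl\lvert \sum_{x \in \Lambda} f(x) \Bigr\rvert + \eta \, ,
\]
which is exactly the desired inequality.

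There is no genuine obstacle here: the whole argument is bookkeeping, and all the real work has already been done in Proposition~\ref{Proposition-diff-oper} and Proposition~\ref{prop-amen-paterson} (combined in Corollary~\ref{Cor-A-diff-oper}). The point worth emphasizing is only that $f$ must be fixed \emph{before} the measure $\mu$ is chosen, since the allowed tolerance $\varepsilon$ in Corollary~\ref{Cor-A-diff-oper} must be taken smaller than $\eta/\lVert f \rVert_1$, and this in turn determines (via Proposition~\ref{prop-amen-paterson}) the F\o lner-type set $\Phi^{-1}$ on which $\mu$ must be nearly invariant.
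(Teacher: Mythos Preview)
Your proof is correct and follows exactly the same approach as the paper: dispose of the trivial case $f=0$, then set $\varepsilon=\eta/\lVert f\rVert_1$ and apply Corollary~\ref{Cor-A-diff-oper}. The paper's proof is just a one-line version of what you wrote.
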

\begin{proof}
Of course we may suppose $f\neq 0$. We then
set $\varepsilon = \frac{\eta}{\lVert f \rVert_1 }$ and apply Corollary~\ref{Cor-A-diff-oper}.
\end{proof}

\begin{comment}
\begin{cor}\label{oss-scelta-epsilon}
Let $\Gamma$ be an amenable group acting on a set $\Lambda$.
Let $S\subseteq \Lambda$ be a finite subset of $\Lambda$,
and let $S=S_1\sqcup\ldots\sqcup S_k$ be the  partition of $S$ induced by the partition of $\Lambda$
into $\Gamma$-orbits.  
Finally,  $f\in \ell_0(\Lambda)$ be such that  $\supp(f)\subseteq S$.

Then for every $\eta>0$ there exists a probability measure $\mu$ on $\Gamma$ with finite support such that 
\begin{displaymath}
\lVert \mu \ast f \rVert_1 \leq  \sum_{i=1}^k\Big| \sum_{x \in S_i} f(x) \Big| + \eta\ .
\end{displaymath}
\end{cor}
\begin{proof}
Just set $\varepsilon = \frac{\eta}{\lVert f \rVert_1 + 1}$ and apply Corollary~\ref{Cor-A-diff-oper}.
\end{proof}
\end{comment}

\begin{rem}
 In~\cite[page 61]{Grom82} it is stated that, if an amenable group $\G$ acts transitively on a set $\Lambda$, then for every finite set $S$ and every $\varepsilon>0$ there exists
 a probability measure with finite support such that 
 $$
 \lVert \mu \ast f \rVert_1 \leq  \Big| \sum_{x \in S} f(x) \Big| + \varepsilon
 $$
 for every $f\in\ell_0(\Lambda)$ with $\supp(f)\subseteq S$. The following example shows that this stronger formulation of Corollary~\ref{oss-scelta-epsilon} cannot hold
 in general. Let $\Lambda=\G=\mathbb{Z}$ act  on itself by translations,
 let $S=\{0,1\}$ and
 and let $f_n\colon \mathbb{Z}\to\R$ be the map such that $f_n(0)=n$, $f_n(1)=-n$ and $f_n(m)=0$ for $m\notin\{0,1\}$. Since $\mu*$ is linear and $\sum_{x\in S} f_n(x)=0$ for every $n$,
 if the above inequality were true we would have 
 $$
n\cdot \lVert \mu \ast f_1 \rVert_1= \lVert \mu \ast f_n \rVert_1  \leq  \Big| \sum_{x \in S} f_n(x) \Big| + \varepsilon=\varepsilon
 $$
 for every $n\in\mathbb{N}$, hence $\lVert \mu \ast f_1 \rVert_1=0$ and $\mu*f_1(x)=0$ for every $x\in\mathbb{Z}$. However,
 $\mu*f_1(x)=\sum_{m\in\mathbb{Z}} \mu(m)f_1(x-m)=\mu(x)-\mu(x-1)$, so in order for $\mu*f_1$ to vanish 
 we should have that $\mu$ is constant, against the fact that $\mu$ is a probability measure on an infinite set. 

This shows that  
  the measure $\mu$ provided by Corollary~\ref{oss-scelta-epsilon} necessarily depends on $f$. 
 \end{rem}

\section{Locally finite actions and diffusion}\label{Locally-Finite-Diff}
Let us fix an action $\G\actson \Lambda$ of a group $\G$ on a set $\Lambda$, and let us suppose that the number of the orbits of the action is countable.
We  denote by $\Lambda_s\subseteq \Lambda$, $s\in\mathbb{N}$, the orbits of the action.

\begin{Definizione}\label{locallyfinitefun}
A function $f \colon \Lambda \rightarrow \mathbb{R}$ is \emph{locally finite} 
if $\supp(f)\cap \Lambda_s$ is finite for every $s\in\mathbb{N}$, i.e.~if $f|_{\Lambda_s}\in \ell_0(\Lambda_s)$ for every
$s\in\mathbb{N}$.
We denote by $\lf(\Lambda)$ the set of locally finite functions on $\Lambda$.
\end{Definizione}

\begin{Definizione}
The \emph{support} $\supp(\Gamma\actson \Lambda)$ of the action $\Gamma\actson \Lambda$ is defined by setting 
$$\supp(\Gamma\actson \Lambda) = \{x \in \, \Lambda\ |\ \exists \, \gamma \in \, \Gamma \mbox{ such that } \gamma \cdot x \neq x\} \subseteq \Lambda\ .$$
\end{Definizione}

\begin{Definizione}\label{Def-locally-finite-action}
The action $\G\actson \Lambda$ is \emph{locally finite} (relatively to the family of subgroups $\{\Gamma_{s}\}_{s \in \, \mathbb{N}}$) 
if there exist subgroups $\Gamma_{s} < \Gamma$, $s \in \, \mathbb{N}$
such that the following conditions hold:
\begin{enumerate}
\item
$\Gamma_s$ acts transitively on $\Lambda_s$ for every $s\in\mathbb{N}$;
\item 
the
actions of the groups $\Gamma_{s}$ are \emph{asymptotically disjoint}, i.e.~for every $s \in \, \mathbb{N}$ there exists $ k({s}) \in \, \mathbb{N}$ such that 
$$\left(\Lambda_s\cup\supp(\Gamma_{{s}}\actson \Lambda)\right) \cap \supp(\Gamma_{s'}\actson \Lambda) = \emptyset \, \mbox{ for every } s' \geq k({s}).$$
\end{enumerate}
%Condition (ii) can be rephrased as follows: for every $x \in \, \Lambda$ and every $s\in\mathbb{N}$, either $x$ is kept fixed by every $\gamma \in \, \Gamma_{{s}}$, or $x$ is kept fixed by every $\gamma \in \, \bigcup_{s'\geq k(s)}\Gamma_{s'}$.
\end{Definizione}
We are now ready to introduce the notion of  \emph{local} diffusion operator. 
Suppose that the action $\G\actson \Lambda$  is locally finite 
relatively to the sequence of subgroups $\{\Gamma_{s}\}_{s \in \, \mathbb{N}} \leq \Gamma$.

\begin{Lemma}\label{prop-local-diff-pres-orb-pre}
Let $f\colon \Lambda\to\mathbb{R}$ be \emph{any} function, and let $\mu$ be a finitely supported  probability measure on $\G$. Then the map
$$
\mu*f\colon \Lambda\to \R\, ,\quad (\mu*f)(x)=\sum_{\gamma\in\G} \mu(\gamma)f(\gamma^{-1} x)
$$
is well defined.
Moreover, if $f\in \lf(\Lambda)$ then $\mu*f \in \lf(\Lambda)$, and for every $s\in\mathbb{N}$ we have
$$\sum_{x \in \Lambda_s} (\mu \ast f)(x) = \sum_{x \in \Lambda_s} f(x)\ ,$$
$$
\lVert ({\mu} \ast f) \vert_{\Lambda_{s}} \rVert_1 \leq \lVert f \vert_{\Lambda_{s}} \rVert_1\ .
$$
\end{Lemma}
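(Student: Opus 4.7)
The plan is to prove the four assertions in the order in which they are stated, each essentially following from the two hypotheses that $\mu$ has finite support and that $\G$ preserves every orbit $\Lambda_s$ (recall that the $\Lambda_s$ are by definition $\G$-orbits, so $\gamma \cdot \Lambda_s = \Lambda_s$ for every $\gamma \in \G$ and every $s$). Note that the ``locally finite'' structure of the action, i.e.\ the subgroups $\Gamma_s$ and the asymptotic disjointness condition, is not needed at this stage; these hypotheses will enter only in the subsequent results where one actually diffuses by measures supported in $\Gamma_s$.

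First, well-definedness of $\mu\ast f$ for an arbitrary $f\colon\Lambda\to\mathbb{R}$ is immediate, because the sum $\sum_{\gamma\in\G}\mu(\gamma)f(\gamma^{-1}x)$ reduces to the finite sum over $\gamma\in\supp(\mu)$. Next, assume $f\in\lf(\Lambda)$ and fix $s\in\mathbb{N}$. Since $\G$ stabilises $\Lambda_s$ setwise, for $x\in\Lambda_s$ the expression $(\mu\ast f)(x)$ only involves values of $f$ on $\Lambda_s$. If $(\mu\ast f)(x)\neq 0$, then there is some $\gamma\in\supp(\mu)$ with $\gamma^{-1}x\in\supp(f)\cap\Lambda_s$, so that
\[
\supp(\mu\ast f)\cap\Lambda_s\;\subseteq\;\bigcup_{\gamma\in\supp(\mu)}\gamma\cdot\bigl(\supp(f)\cap\Lambda_s\bigr),
\]
which is a finite union of finite sets. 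Hence $\mu\ast f\in\lf(\Lambda)$.

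For the orbit-sum identity, both sums appearing below are finite by the previous point and by local finiteness of $f$, so Fubini is unproblematic:
\begin{align*}
\sum_{x\in\Lambda_s}(\mu\ast f)(x)
&=\sum_{x\in\Lambda_s}\sum_{\gamma\in\G}\mu(\gamma)f(\gamma^{-1}x)\\
&=\sum_{\gamma\in\G}\mu(\gamma)\sum_{x\in\Lambda_s}f(\gamma^{-1}x)
=\sum_{\gamma\in\G}\mu(\gamma)\sum_{y\in\Lambda_s}f(y)
=\sum_{y\in\Lambda_s}f(y),
\end{align*}
where in the third equality I substitute $y=\gamma^{-1}x$, using that the map $x\mapsto\gamma^{-1}x$ is a bijection of $\Lambda_s$ onto itself, and in the last equality I use $\sum_\gamma\mu(\gamma)=1$.

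Finally, the norm inequality follows from the same manipulation together with the triangle inequality:
\[
\bigl\lVert(\mu\ast f)\vert_{\Lambda_s}\bigr\rVert_1
=\sum_{x\in\Lambda_s}\Bigl|\sum_{\gamma\in\G}\mu(\gamma)f(\gamma^{-1}x)\Bigr|
\leq\sum_{\gamma\in\G}\mu(\gamma)\sum_{x\in\Lambda_s}|f(\gamma^{-1}x)|
=\lVert f\vert_{\Lambda_s}\rVert_1.
\]
There is no genuine obstacle here; the statement is essentially a bookkeeping lemma verifying that the diffusion operator restricts coherently to each orbit, preserves orbit masses, and is norm non-increasing on each orbit. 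The only point worth flagging is the use of $\G$-invariance of the orbits, which is what allows us to restrict and reindex the sums on $\Lambda_s$ without any boundary effects.
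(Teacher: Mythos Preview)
Your proof is correct and follows essentially the same approach as the paper's: both argue well-definedness from finiteness of $\supp(\mu)$, local finiteness from the inclusion $\supp(\mu\ast f)\subseteq\supp(\mu)\cdot\supp(f)$ (which you spell out orbitwise), and the orbit-sum and norm statements by the same Fubini plus $\G$-invariance computation.
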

\begin{proof}

Since $\mu$ has finite support,  
the sum $\sum_{\gamma\in\G} \mu(\gamma)f(\gamma^{-1} x)$ is always finite, and this shows that $\mu* f$ is indeed well defined.
Moreover,  it readily follows from the definition that $\supp (\mu *f)\subseteq \supp(\mu)\cdot \supp (f)$, which implies
that $\mu* f$ is locally finite, if $f$ is so.

Suppose now that $f\in \lf(\Lambda)$. Since $\Lambda_s$ is  a $\G$-orbit we readily have
\begin{align*}
\sum_{x \in \, \Lambda_{s}} (\mu \ast f)(x) &= \sum_{x \in \, \Lambda_{s}} \sum_{\gamma \in \, \G} \mu(\gamma) f(\gamma^{-1} x) \\
&= \sum_{\gamma \in \, \G}\sum_{x \in \, \Lambda_{s}}  \mu(\gamma) f(\gamma^{-1} x) \\
&= \sum_{\gamma \in \, \G} \sum_{x \in \, \Lambda_{s}} \mu(\gamma) f(x) \\
&= \left(\sum_{\gamma \in \, \G} \mu(\gamma)\right) \sum_{x \in \, \Lambda_{s}} f(x) \\
&= \sum_{x \in \, \Lambda_{s}} f(x)\ .
\end{align*}
This proves the equality in the statement. For the inequality between the  $\ell^1$-norms we compute
\begin{align*}
& \sum_{x\in \Lambda_s} |(\mu*f)(x)|=\sum_{x\in \Lambda_s}\Big|\sum_{\gamma\in\G} \mu(\gamma) f(\gamma^{-1} x)\Big|\leq \sum_{x\in \Lambda_s} \sum_{\gamma\in\G} \mu(\gamma) |f(\gamma^{-1} x)|\\ &=
\sum_{\gamma\in\G} \mu(\gamma)\left(\sum_{x\in \Lambda_s} |f(\gamma^{-1}x)|\right)=\sum_{\gamma\in\G} \mu(\gamma)\|f|_{\Lambda_s}\|_1=\|f|_{\Lambda_s}\|_1\ .
\end{align*}
\end{proof}

For every $s\in\mathbb{N}$ let $\mu_s$ be  a probability measure on $\G$ such that the support of $\mu_s$ is finite and contained in $\G_s$.
For ease of notation we set
$\overline{\mu}=\{\mu_{s}\}_{s \in \, \mathbb{N}}$.

Take an element $f\in \lf(\Lambda)$. We inductively define the sequence of maps $f_s\colon \Lambda\to \Lambda$, $s\in\mathbb{N}$, by setting
$$
f_{1} = \mu_{1} \ast f\, ,\quad  f_{s} = \mu_{s} \ast f_{s-1}\ \mbox{for every}\ s\in\mathbb{N}
$$
(see Lemma~\ref{prop-local-diff-pres-orb-pre}).
We claim that the value $f_s(x)$ does not depend on $s$ for large $s$. Indeed, for every $x\in \Lambda$ there exists $s\in\mathbb{N}$ such that
$x\in \Lambda_s$. Since the actions of the $\G_s$ are asymptotically disjoint, there exists $k(s)\in\mathbb{N}$ such that $\gamma\cdot x=x$ for every
$\gamma\in\bigcup_{s'\geq k(s)} \G_{s'}$. As a consequence, for every  $s'\geq k(s)$ we have $(\mu_{s'}*{f_{s'-1}})(x)=f_{s'-1}(x)$. We have thus shown that
\begin{equation}\label{fundamental:local}
 f_{s'}(x)=f_{k(s)}(x)\quad \mbox{for every}\ x\in \Lambda_s,\ s'\geq k(s)\ .
\end{equation}
This shows in particular that by setting
$$
(\overline{\mu}*f)(x)=\lim_{s\to \infty} f_s(x)\quad (=f_{k(s)}(x)\ \mbox{if}\ x\in \Lambda_s)
$$
we obtain a well-defined map $\overline{\mu}*f\colon \Lambda\to \R$. 

Let us now show that the function $\overline{\mu}*f$ is locally finite. By~\eqref{fundamental:local}, it is sufficient
to show that $\supp(f_{k(s)})\cap \Lambda_s$ is finite for every $s\in\mathbb{N}$. However, this readily follows from Lemma~\ref{prop-local-diff-pres-orb-pre}, 
 since
the function $f_{k(s)}$
is obtained from $f$ by applying 
a finite number of convolutions with finitely supported measures. 

\begin{defn}
The map
$$
\overline{\mu}*\colon \lf(\Lambda) \rightarrow \lf(\Lambda)\, , \quad f\mapsto \overline{\mu}*f
$$
just introduced is called the 
\emph{local diffusion operator} associated to $\overline{\mu}$.
\end{defn}

\begin{comment}
It suffices to notice that the value $f(x)$ does not depend on $s$ for large $s$. Indeed, given a chosen point $x \in \, \Lambda$ there exists a $k(x)$ such that $x$ is kept fixed by all $\gamma \in \, \Gamma_{s}$, with $s \geq k(x)$. This means that 
\begin{displaymath}
\mu_{k(x)} \ast f_{k(x)-1}(x) = \sum_{\gamma \in \, \Gamma_{k(x)}} \mu_{k(x)}(\gamma) f_{k(x) - 1}(\gamma^{-1} \cdot x) = f_{k(x) - 1}(x)
\end{displaymath}
and so $\mu \ast f(x) = f_{k(x) - 1}(x)$. This proves the lemma.
\end{comment}

We will exploit (local) diffusion operators to construct (locally finite) chains with small $\ell^1$-norm. To this aim we need to extend 
Corollary \ref{oss-scelta-epsilon} to the context we are interested in. Henceforth we fix a local diffusion operator $\overline{\mu}*$ as above.

\begin{Lemma}\label{prop-local-diff-pres-orb}
For every $f\in\lf(\Lambda)$ and every $s\in\mathbb{N}$ we have
\begin{displaymath}
\sum_{x \in \, \Lambda_{s}} (\overline{\mu} \ast f)(x) = \sum_{x \in \, \Lambda_{s}} f(x)\ ,
\end{displaymath}
$$
\lVert (\overline{\mu} \ast f) \vert_{\Lambda_{s}} \rVert_1 \leq \lVert f \vert_{\Lambda_{s}} \rVert_1\ .
$$
\end{Lemma}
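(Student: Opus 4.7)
The plan is to reduce this statement to a finite iteration of the one-step result already established in Lemma~\ref{prop-local-diff-pres-orb-pre}. The crucial observation is equation~\eqref{fundamental:local}, which tells us that on each orbit $\Lambda_s$ the sequence $f_j$ stabilises: $(\overline{\mu}\ast f)(x) = f_{k(s)}(x)$ for every $x \in \Lambda_s$. Hence both identities in the statement become assertions about the locally finite function $f_{k(s)}$ obtained from $f$ by the \emph{finitely many} convolutions $\mu_1\ast,\ldots,\mu_{k(s)}\ast$.

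First I would fix $s\in\mathbb{N}$ and apply Lemma~\ref{prop-local-diff-pres-orb-pre} to each intermediate step. For each $j = 1,\ldots,k(s)$, the measure $\mu_j$ is a finitely supported probability measure on $\Gamma$ and $f_{j-1}\in\lf(\Lambda)$, so the lemma gives
\begin{equation*}
\sum_{x\in\Lambda_s} f_j(x) \;=\; \sum_{x\in\Lambda_s} f_{j-1}(x), \qquad \bigl\lVert f_j\vert_{\Lambda_s}\bigr\rVert_1 \;\leq\; \bigl\lVert f_{j-1}\vert_{\Lambda_s}\bigr\rVert_1.
\end{equation*}
Iterating these identities from $j=1$ to $j=k(s)$ yields
\begin{equation*}
\sum_{x\in\Lambda_s} f_{k(s)}(x) \;=\; \sum_{x\in\Lambda_s} f(x), \qquad \bigl\lVert f_{k(s)}\vert_{\Lambda_s}\bigr\rVert_1 \;\leq\; \bigl\lVert f\vert_{\Lambda_s}\bigr\rVert_1.
\end{equation*}

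Finally I would invoke~\eqref{fundamental:local} to replace $f_{k(s)}(x)$ by $(\overline{\mu}\ast f)(x)$ throughout (and in particular in the left-hand sides above), which immediately yields both formulas in the statement. There is no real obstacle here: the only subtlety worth double-checking is that every sum that appears is in fact a finite sum, so that the term-by-term rearrangements are legitimate; this is guaranteed because each $f_j$ is locally finite (which is part of the content of Lemma~\ref{prop-local-diff-pres-orb-pre}) and the $\ell^1$-norm restricted to a single orbit $\Lambda_s$ is thus a finite sum.
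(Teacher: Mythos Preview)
Your proposal is correct and follows essentially the same approach as the paper: the paper's proof simply notes that $(\overline{\mu}\ast f)(x)=\mu_{k(s)}\ast\cdots\ast\mu_1\ast f(x)$ for $x\in\Lambda_s$ and then says the conclusion readily follows from Lemma~\ref{prop-local-diff-pres-orb-pre}. You have written out explicitly the iteration that the paper leaves implicit, but the argument is the same.
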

\begin{proof}
For every $x\in \Lambda_s$ we have 
$$ \overline{\mu} \ast f (x) = \mu_{k(s)} \ast \cdots \ast \mu_{1} \ast f(x)\ ,$$
so the conclusion readily follows from Lemma~\ref{prop-local-diff-pres-orb-pre}.
\end{proof}

\begin{Proposizione}\label{Prop-locally-finite-diff-op-norm-min-eps}
Let $\Gamma\actson \Lambda$ be a locally finite action, let $f\in \lf(\Lambda)$ and suppose that there exists $\bar{s}\in\mathbb{N}$ such that the following conditions hold:
\begin{enumerate}
 \item 
 $\sum_{x \in \Lambda_s} f(x) = 0$ for every $s \geq \bar{s}$;
 \item the group $\Gamma_{s}$ is amenable for every $s \geq \bar{s}$.
\end{enumerate}
Then, for any arbitrary sequence $\{\varepsilon_{s}\}_{s\geq  {\bar{s}}}$ of positive numbers, there is a local diffusion operator $\overline{\mu} \ast$ such that
\begin{displaymath}
\lVert (\overline{\mu}\ast f) \vert_{\Lambda_{s}} \rVert_1 \leq \varepsilon_{s}
\end{displaymath}
for every $s \geq \bar{s}$.
\end{Proposizione}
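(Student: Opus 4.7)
The plan is to construct the measures $\mu_s$ inductively, arranging for each $s \geq \bar{s}$ that the restriction $f_s|_{\Lambda_s}$ already has $\ell^1$-norm at most $\varepsilon_s$, and then to observe that this bound is preserved by all subsequent convolutions. For the first few indices $s < \bar{s}$, where no hypothesis is available, we simply set $\mu_s = \delta_e$ to be the Dirac measure at the identity (which is trivially a finitely supported probability measure on $\Gamma_s$), so that $f_s = f_{s-1}$.

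For the inductive step, fix $s \geq \bar{s}$ and suppose $\mu_1, \ldots, \mu_{s-1}$ have already been chosen. By iterated application of Lemma \ref{prop-local-diff-pres-orb-pre}, the function $f_{s-1}$ is locally finite, so that $f_{s-1}|_{\Lambda_s} \in \ell_0(\Lambda_s)$; moreover, the orbit-sum equality of the same lemma gives
$$\sum_{x \in \Lambda_s} f_{s-1}(x) \;=\; \sum_{x \in \Lambda_s} f(x) \;=\; 0,$$
where the last equality uses hypothesis (1). Since $\Gamma_s$ acts transitively on $\Lambda_s$ (by local finiteness of the action) and is amenable by hypothesis (2), Corollary \ref{oss-scelta-epsilon} applied to $\Gamma_s \actson \Lambda_s$, to the function $f_{s-1}|_{\Lambda_s}$, and with $\eta = \varepsilon_s$ produces a finitely supported probability measure $\mu_s$ on $\Gamma_s$ such that
$$\bigl\| \mu_s \ast \bigl(f_{s-1}|_{\Lambda_s}\bigr) \bigr\|_1 \;\leq\; \Bigl|\sum_{x \in \Lambda_s} f_{s-1}(x)\Bigr| + \varepsilon_s \;=\; \varepsilon_s.$$
Because $\mu_s$ is supported in $\Gamma_s$ and $\Gamma_s$ preserves the orbit $\Lambda_s$, the restriction $(\mu_s \ast f_{s-1})|_{\Lambda_s}$ coincides with $\mu_s \ast (f_{s-1}|_{\Lambda_s})$, and therefore $\|f_s|_{\Lambda_s}\|_1 \leq \varepsilon_s$.

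It remains to promote the bound on $f_s|_{\Lambda_s}$ to a bound on $(\overline{\mu}\ast f)|_{\Lambda_s}$. By the argument preceding the definition of the local diffusion operator (using the asymptotic disjointness of the actions to choose $k(s)$), we have $(\overline{\mu}\ast f)(x) = f_{k(s)}(x)$ for every $x \in \Lambda_s$, and
$$f_{k(s)}|_{\Lambda_s} \;=\; \bigl(\mu_{k(s)} \ast \cdots \ast \mu_{s+1} \ast f_s\bigr)\big|_{\Lambda_s}.$$
Each intermediate convolution $g \mapsto \mu_{s'} \ast g$, with $s' > s$, is norm-non-increasing on every orbit by the inequality in Lemma \ref{prop-local-diff-pres-orb-pre}, so composing these bounds yields
$$\bigl\|(\overline{\mu}\ast f)|_{\Lambda_s}\bigr\|_1 \;\leq\; \|f_s|_{\Lambda_s}\|_1 \;\leq\; \varepsilon_s,$$
as desired.

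The main conceptual point — which is really the only non-formal ingredient — is that the orbit sum $\sum_{x \in \Lambda_s} f_{s-1}(x)$ equals the original orbit sum $\sum_{x \in \Lambda_s} f(x)$, so that hypothesis (1) licenses the application of Corollary \ref{oss-scelta-epsilon} at each stage of the induction; after that, the asymptotic disjointness of the $\Gamma_{s'}$-actions and the $\ell^1$-monotonicity of convolution (both packaged into Lemma \ref{prop-local-diff-pres-orb-pre}) handle the passage from a single-step bound to a bound for the full local diffusion. There is no serious obstacle: the content is that the three technical lemmas fit together cleanly.
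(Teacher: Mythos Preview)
Your proof is correct and follows essentially the same approach as the paper: construct the $\mu_s$ inductively by applying Corollary~\ref{oss-scelta-epsilon} to $f_{s-1}|_{\Lambda_s}$ (using Lemma~\ref{prop-local-diff-pres-orb-pre} to preserve the vanishing orbit-sum), and then use the norm-non-increasing part of that same lemma to pass from $f_s|_{\Lambda_s}$ to $(\overline{\mu}*f)|_{\Lambda_s}$. The only cosmetic differences are your specific choice $\mu_s=\delta_e$ for $s<\bar{s}$ (the paper allows arbitrary measures there) and your uniform treatment of the inductive step rather than singling out the base case.
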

\begin{proof}
For every $s<\bar{s}$ we arbitrarily choose a probability measure $\mu_s$ on $\G$ whose support is finite and contained in $\G_s$, and we set as usual
$f_i=\mu_i*\mu_{i-1}*\ldots*\mu_1*f$ for every $i<\bar{s}$.

We will now prove inductively that for every $s\geq \bar{s}$ there exists a finitely supported measure $\mu_s$ on $\G$ such that $\supp(\mu_s)\subseteq \G_s$ and 
$$\lVert ( \mu_{s} \ast f_{s-1}) \vert_{\Lambda_{s}} \rVert_1 \leq \varepsilon_{s}\ ,$$
where $f_{s-1}=\mu_{s-1}*\mu_{s-2}*\ldots*\mu_1*f$.

Let us just consider the base case $s=\bar{s}$ of the induction, the inductive step being identical. 
The map
$f_{\bar{s}-1}$ is locally finite, so its restriction to $\Lambda_{\bar{s}}$ has finite support.  
Our assumptions imply that $\G_{\bar{s}}$ is amenable, hence we can apply Corollary \ref{oss-scelta-epsilon}
to the action $\G_{\bar{s}}\actson \Lambda_{\bar{s}}$ and to the function $f_{\bar{s}-1}|_{\Lambda_{\bar{s}}}$, thus getting a finitely supported probability measure $\mu'_{\bar{s}}$
on $\G_{\bar{s}}$ such that 
\begin{equation}\label{abceq}
\lVert \mu'_{\bar{s}} \ast (f_{\bar{s}-1}|_{\Lambda_{\bar{s}}})  \rVert_1 \leq \varepsilon_{\bar{s}}+ \Big| \sum_{x\in \Lambda_{\overline{s}}} f_{\bar{s}-1}(x)\Big|\ .
\end{equation}
Our assumptions imply that $\sum_{x \in \, \Lambda_{\overline{s}}} f(x) = 0$; moreover, by construction
$f_{\bar{s}-1}=\mu_{\bar{s}-1}*\mu_{\bar{s}-2}*\ldots*\mu_1*f$, so Lemma~\ref{prop-local-diff-pres-orb-pre} implies that 
$\sum_{x\in \Lambda_{\bar{s}}}f_{\bar{s}-1}(x)=0$. Therefore, from~\eqref{abceq} we deduce that
$$\lVert \mu'_{\bar{s}} \ast (f_{\bar{s}-1}|_{\Lambda_{\bar{s}}})  \rVert_1 \leq \varepsilon_{\bar{s}}\ .
$$
If we denote by $\mu_{\bar{s}}$ the unique probability measure on $\G$ obtained by extending $\mu'_{\bar{s}}$, then we have
$$\lVert (\mu_{\bar{s}} \ast f_{\bar{s}-1}) \vert_{\Lambda_{\bar{s}}} \rVert_1=\lVert \mu'_{\bar{s}} \ast (f_{\bar{s}-1}|_{\Lambda_{\bar{s}}})  \rVert_1  \leq \varepsilon_{s}\ .$$
This proves the base case of the induction. The proof of the inductive step is identical, and it is left to the reader.

Let us now set $\overline{\mu}=\{\mu_s\}_{s\in\mathbb{N}}$, take
%Therefore, Lemmas \ref{prop-local-diff-pres-orb} and \ref{lemma2-locally-diff} allow us to conclude. 
$s \geq \bar{s}$ and let $k(s)\in\mathbb{N}$ be  such that $\Gamma_{i}$ acts trivially on $\Lambda_{s}$ for every $i \geq k(s)$. Then by repeatedly applying Lemma~\ref{prop-local-diff-pres-orb-pre}
we obtain
\begin{align*}
\lVert (\overline{\mu} \ast f) \vert_{\Lambda_{s}} \rVert_1 &= \| f_{k(s)}|_{\Lambda_s}\|_1  \\
&=  \|(\mu_{k(s)} \ast \cdots \ast \mu_{s}  \ast f_{s-1})|_{\Lambda_s}\|_1 \\
%\leq \sum_{x \in \, \Lambda_{s}} |\mu_{s} \ast f_{s-1} (x) | \\
&\leq \lVert (\mu_{s} \ast f_{s-1}) \vert_{\Lambda_{s}} \rVert_1 \\
&\leq \varepsilon_s\ .
\end{align*}
This concludes the proof.
\end{proof}

\section{A toy example}\label{toy:sec}

The local diffusion of chains is very useful to study the behaviour of the simplicial volume of open manifolds. However, 
before applying local diffusion to locally finite chains, 
for the sake of clarity we prefer to deal with the case of ordinary singular chains (which is of use in studying the simplicial volume of closed manifolds). 
We proved in Corollary~\ref{van-cor-intro} that, 
if $X$ be a topological space admitting an open amenable cover of multiplicity $m$, and if $n\geq m$, then
$$
\|\alpha\|_1=0
$$
for every $\alpha\in H_n(X)$. 
In this section we  provide a different proof of this result under the additional hypothesis that $X$ is aspherical. 
In order to avoid the need to restrict to aspherical spaces we should introduce some more technicalities in our argument. Since this section is only meant to illustrate the 
general ideas involving diffusion of chains, we do not believe that treating the general case would be worth the  effort.  Anyway, we point out that
a proof via diffusion of chains of Corollary~\ref{van-cor-intro} in its full generality is now available in~\cite{Fri:amenable} (where it is proved the stronger fact that, if 
$X$ and $\alpha$ are as above, then the image of $\alpha$ in $H_n^{\ell^1}(X)$ is null).

Let $K$ be a multicomplex and let $\Theta(k)$ be the set of all $k$-algebraic simplices of $K$ (see Section~\ref{sec:simpl:coom}).
There exists a natural isometric identification between the space $\ell_0(\Theta(k))$ and the chain module $C_k(K)$ (both endowed with their $\ell^1$-norms),
which
 identifies an element $f\in \ell_0(\Theta(k))$ with the simplicial chain $\sum_{\sigma \in \Theta(k)} f(\sigma)\cdot \sigma$. Therefore, if $\mu$ is a probability measure on $\G$
 with finite support, the operator $\mu*\colon \ell_0(\Theta(k))\to\ell_0(\Theta(k))$ defines a diffusion operator on chains
 $$
 \mu*\colon C_k(K)\to C_k(K)\ .
 $$
An easy computation shows that, if $c = \sum_{\sigma\in\Theta(k)} a_\sigma\cdot \sigma$, then
$$\mu \ast c = \sum_{\sigma \in \, \Theta(k)} \left(\sum_{\gamma \in \, \Gamma} \mu(\gamma) f(\gamma^{-1} \cdot \sigma)\right) \sigma
=\sum_{\gamma\in\G} \mu(\gamma)(\gamma\cdot c)\ .$$
 
The fundamental result of this section is the following:

\begin{Teorema}\label{Thm:pre:van:toy:ex}
Let $K$ be a multicomplex, and
% of dimension greater than or equal to $k$ 
%and let $c = \sum_{\sigma\in\Theta(k)} a_\sigma\cdot \sigma \in \, C_k(K)$ be an alternating simplicial cycle. 
suppose that there exists a group $\Gamma$ of simplicial automorphisms of $K$ which satisfies the following properties:
\begin{itemize}
\item[(i)] $\Gamma$ is amenable;

\item[(ii)] For any $\sigma = (\Delta,(v_0,\ldots,v_k)) \in \, \Theta(k)$ there exists an element $\gamma \in \, \Gamma$ such that $\gamma \cdot \sigma = (\Delta,(v_{\tau(0)},\ldots,v_{\tau(k)}))$, where $\tau$ is an odd permutation.

\item[(iii)] Every automorphism $\gamma \in \, \Gamma$ is simplicially homotopic to the identity.
\end{itemize}
Then, for every $\alpha\in H_k(K)$ we have 
%$$\| H_*(\phi_*)([c]) \|_1 = 0.$$
$$
\|\alpha\|_1=0\ .
$$
\end{Teorema}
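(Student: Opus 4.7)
The plan is to show that every class $\alpha \in H_k(K)$ admits, for any $\varepsilon > 0$, a representative cycle $c'$ with $\|c'\|_1 < \varepsilon$. The candidate for $c'$ will be a diffusion of a given representative $c$: namely $c' = \mu \ast c = \sum_{\gamma \in \Gamma} \mu(\gamma)(\gamma \cdot c)$, where $\mu$ is a suitable finitely supported probability measure on $\Gamma$. Assumption (iii) ensures that each $\gamma$ induces the identity on simplicial homology (Lemma~\ref{homotopy-homology}), so that $\gamma \cdot c - c$ is a boundary for every $\gamma$; by finiteness of $\supp(\mu)$, the same holds for $c' - c$, and hence $[c'] = \alpha$ regardless of the choice of $\mu$. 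The whole game is therefore to force $\|\mu \ast c\|_1$ to be arbitrarily small.

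First I would reduce to the case in which $c$ is \emph{alternating}. By Theorem~\ref{reduced:equivalence} the $\ell^1$-seminorm on $H_k(K)$ is computed indifferently by ordinary or reduced chains; consequently, the standard alternation operator $\alt \colon C_k(K) \to C_k(K)$ (which is a chain map, idempotent, norm non-increasing, and inducing the identity on $H_k(K)$) may be applied to replace $c$ with a homologous cycle whose coefficients satisfy $a_{\sigma \circ \tau} = \varepsilon(\tau) a_\sigma$ for every $\tau \in \mathfrak{S}_{k+1}$. The crucial observation is that, on such an alternating cycle, the orbit sums for the action of $\Gamma$ on $\Theta(k)$ automatically vanish: for every $\Gamma$-orbit $\Lambda_s \subseteq \Theta(k)$ one has $\sum_{\sigma \in \Lambda_s} a_\sigma = 0$. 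Indeed, fix a non-degenerate $\sigma_0 \in \Lambda_s$ and decompose $\Lambda_s$ according to the underlying geometric simplex; the permutations $\tau \in \mathfrak{S}_{k+1}$ for which $\sigma_0 \circ \tau$ lies in $\Lambda_s$ form a subgroup $H \leq \mathfrak{S}_{k+1}$ which, thanks to assumption (ii) applied to $\sigma_0$, contains at least one odd element. Any subgroup of $\mathfrak{S}_{k+1}$ containing an odd permutation has equally many even and odd elements, so
\begin{displaymath}
\sum_{\tau \in H} a_{\sigma_0 \circ \tau} \;=\; a_{\sigma_0}\sum_{\tau \in H}\varepsilon(\tau) \;=\; 0,
\end{displaymath}
and summing over the finitely many geometric simplices in the orbit yields the claimed vanishing.

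Once all relevant orbit sums are zero, the diffusion estimate does the rest. Since $c$ has finite support, only finitely many orbits $\Lambda_{s_1}, \ldots, \Lambda_{s_r}$ meet $\supp(c)$; picking representatives $x_i \in \Lambda_{s_i}$ and a finite $\Phi \subseteq \Gamma$ with $\supp(c) \cap \Lambda_{s_i} \subseteq \Phi \cdot x_i$ for each $i$, the amenability of $\Gamma$ (Proposition~\ref{prop-amen-paterson}) furnishes, for every $\eta > 0$, a finitely supported probability measure $\mu$ on $\Gamma$ with $\|D_{\Phi^{-1}}\mu\| < \eta$. Applying Proposition~\ref{Proposition-diff-oper} orbit by orbit and summing gives
\begin{displaymath}
\|\mu \ast c\|_1 \;=\; \sum_{i=1}^r \|(\mu \ast c)|_{\Lambda_{s_i}}\|_1 \;\leq\; \sum_{i=1}^r \Bigl(\Big|\sum_{\sigma \in \Lambda_{s_i}} a_\sigma\Big| + \|D_{\Phi^{-1}}\mu\| \cdot \|c|_{\Lambda_{s_i}}\|_1\Bigr) \;\leq\; \eta \|c\|_1,
\end{displaymath}
which is as small as we please. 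The main obstacle lies in the orbit-sum cancellation of the middle paragraph: it crucially hinges on combining the alternating property of $c$ with assumption (ii), together with the elementary but essential group-theoretic fact about subgroups of $\mathfrak{S}_{k+1}$ containing an odd permutation.
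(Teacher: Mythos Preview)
Your proof is correct and takes a genuinely more streamlined route than the paper's. The key difference lies in the diffusion step: the paper iterates, choosing a separate measure $\mu_j$ for each of the finitely many orbits $\Theta(k)_j$ and composing $\mu_s * \cdots * \mu_1 * c$, relying on the fact that later diffusions cannot increase the norm on orbits already handled. You instead choose a single finite set $\Phi \subseteq \Gamma$ that simultaneously covers the supports in all relevant orbits, then a single measure $\mu$ with small $\|D_{\Phi^{-1}}\mu\|$ via Proposition~\ref{prop-amen-paterson}, and apply Proposition~\ref{Proposition-diff-oper} orbit by orbit with that same $\mu$. This is cleaner and avoids the inductive bookkeeping. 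The paper's iterative scheme, on the other hand, is deliberately chosen to foreshadow the \emph{local diffusion} machinery developed later for locally finite chains (Proposition~\ref{Prop-locally-finite-diff-op-norm-min-eps}), where the orbits are infinitely many and must be handled sequentially; the toy case is written to mirror that structure. Your subgroup argument for the vanishing of orbit sums is also more explicit than the paper's, which simply asserts that property~(ii) plus alternation ``easily implies'' it; the verification that $H=\{\tau:\sigma_0\circ\tau\in\Lambda_s\}$ is closed under composition (via the observation that any $\gamma$ realizing $\gamma\cdot\sigma_0=\sigma_0\circ\tau_1$ satisfies $\gamma\cdot(\sigma_0\circ\tau_2)=\sigma_0\circ(\tau_1\tau_2)$) is worth recording.
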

\begin{proof}
 %As discussed in Subsection~\ref{subsec:reduced:alternating}, 
 In order to (isometrically) compute the simplicial homology of $K$  we can restrict to considering chains that are \emph{alternating}, in the following sense: if $c=\sum_{\sigma\in\Theta(k)} a_\sigma\cdot \sigma\in C_k(K)$  and $\sigma=(\Delta,(v_0,\ldots,v_k))$, $\sigma'=(\Delta,(v_{\tau(0)},\ldots,v_{\tau(k)}))$ are algebraic simplices
which can be obtained one from the other via a permutation $\tau$ of the vertices, then $a_\sigma=\varepsilon(\tau)a_{\sigma'}$
(here $\varepsilon(\tau)=\pm 1$ denotes the sign of $\tau$).  
In fact, the linear operator
$\alt_*\colon C_*(K)\to C_*(K)$ such that
$$
\alt(\Delta,(v_0,\ldots,v_k))=\frac{1}{(k+1)!} \sum_{\tau\in\mathfrak{S}_{k+1}} \varepsilon(\tau)(\Delta,(v_{\tau(0)},\ldots,v_{\tau(k)})) 
$$
is well defined and homotopic to the identity. 

Therefore, if $\alpha$ is an element of $H_k(K)$, we may suppose that $\alpha=[c]$ for 
an alternating simplicial cycle
$c = \sum_{\sigma\in\Theta(k)} a_\sigma\cdot \sigma$. 
%We associate to $c$ the function $f\in \ell_0(\Theta(k))$ 
%defined by $f(\sigma) = a_\sigma$ for every $\sigma \in \, \Theta(k)$. 
Since $c$ is a finite linear combination of simplices in $\Theta(k)$, there exist $\G$-orbits 
$ \Theta(k)_1,\ldots, \Theta(k)_s$ for the action of $\G$ on $\Theta(k)$ such that 
%$$\supp(f) \subseteq \Theta(k)_1 \cup \cdots \cup \Theta(k)_s\ .$$ For every $j = 1, \cdots, s$
%we set $f_j=f\cdot \chi_{\Theta(k)_j}\in  \ell_0(\Theta(k))$, 
%$f_j$ be the real function on $\Theta(k)$ of finite support defined as follows:$$f_j(\sigma) = \begin{cases} f(\sigma) &\mbox{ if } \sigma \in \, \Theta(k)_j, \\ 0 &\mbox{ otherwise}. \end{cases}$$ Therefore, we may express the finite chain $c$ as follows: 
%and we observe that
$$c = \sum_{j = 1}^{s} c_j\ ,$$
where
$$c_j = \sum_{\sigma\in\Theta(k)_j} a_\sigma \sigma$$
for every $j=1,\ldots,s$.

%Let us now show that for every $\varepsilon$ there exists a probability measure $\mu$ with finite support on $\Gamma$ such that $\|\mu \ast c \|_1 < \varepsilon$ and $[\mu \ast c] = [c]$. Indeed, since $H_*(\phi_*)$ is norm non-increasing, the previous conditions immediately implies the vanishing of the $\ell^1$-seminorm of $H_*(\phi_*)([c])$.

We now look for a diffusion operator $\mu*\colon C_k(K)\to C_k(K)$ such that $\|\mu \ast c \|_1 < \varepsilon$ and $[\mu \ast c] = [c]$ in $H_k(K)$.
%First we show that there exists a probability measure $\mu$ with finite support such that $\|\mu \ast c\|_1 < \epsilon$. 
Unfortunately, to this end we cannot directly apply Corollary~\ref{oss-scelta-epsilon} because the action of $\Gamma$ is not transitive on $\Theta(k)$. 
Therefore, rather than a single diffusion operator, we will need to exploit a finite composition of diffusion operators.

Let $\eta = \varepsilon \slash s$. We inductively define probability measures $\mu_1,\ldots,\mu_s$ satisfying the following property:
for every $j=1,\ldots, s$ and every $1\leq i\leq j$
\begin{equation}\label{inductivemu}
\|\mu_j*(\mu_{j-1}*(\ldots*(\mu_1*c_i)))\|_1\leq \eta\ .
\end{equation}
So let $i=j=1$, and
let $f_1\in \ell_0(\Theta(k))$ be the function associated to $c_1$ (by construction, $f_1$ is supported on $\Theta(k)_1$).
Since $\supp(f_1)$ is finite and contained in a single $\G$-orbit, there exist a finite subset $\Phi$ of $\Gamma$ and an algebraic simplex $\sigma_1\in\Theta(k)_1$
such that $\supp(f_1) \subseteq \Phi \cdot \sigma_1$. %Since by construction each function $f_j$ is also of finite support and $\Gamma$ transitively acts on each of its orbits, 
We can then apply Corollary~\ref{oss-scelta-epsilon} to  $f_1$ (now considered as a function on $\Theta(k)_1$), thus
obtaining a  probability measure $\mu_1$ on $\G$ with finite support such that 
\begin{equation}\label{formula:diffusione:toy}
\| \mu_1\ast c_1\|_1=\| \mu_1 \ast f_1 \|_1 \leq \Big\lvert \sum_{\sigma \in \, \Theta(k)_1} f_1(\sigma) \Big\rvert + \eta\ .\end{equation}
% for every $j = 1, \cdots, s$. 

Recall now that $c$ is an alternating chain and that $\Gamma$ satisfies property (ii), that is for any algebraic simplex 
$\sigma = (\Delta,(v_0,\ldots,v_k)) \in \, \Theta(k)_1$
there exists an element $\gamma\in \Gamma$ 
such that $\gamma\cdot \sigma = (\Delta,(v_{\tau(0)},\ldots,v_{\tau(k)}))$, where $\tau$ is an odd permutation. This easily implies that 
$\sum_{\sigma \in \, \Theta(k)_1} f_1(\sigma)= 0$ which, together with~\eqref{formula:diffusione:toy}, gives 
\begin{equation}\label{formula:diffusione:toy2}
\| \mu_1 \ast c_1 \|_1 \leq \eta \ .\end{equation}
This settles the case $j=1$. 

Suppose now we have constructed probability measures $\mu_1,\ldots,\mu_l$ satisfying property~\eqref{inductivemu}, and let
$g=\mu_l*(\mu_{l-1}*(\ldots*(\mu_1*f_{l+1})))$. It is readily seen that $\supp(g)$ is finite and contained in $\Theta(k)_{l+1}$. Moreover,
diffusion preserves the alternation of chains, hence the finite chain associated to $g$ is alternating. This allows us to argue as above to 
obtain a probability measure $\mu_{l+1}$ on $\G$ with finite support such that $\|\mu_{l+1}\ast g\|_1\leq \eta$, which implies 
$$
\|\mu_{l+1}*(\mu_{l}*(\ldots*(\mu_1*c_{l+1})))\|_1
=\|\mu_{l+1}*(\mu_{l}*(\ldots*(\mu_1*f_{l+1})))\|_1=
\|\mu_{l+1}\ast g\|_1\leq \eta\ .
$$
Moreover, for every $1\leq i\leq l$, if $c'=\mu_i*(\mu_{i-1}*(\ldots*(\mu_1*c_i)))$ then we know by our inductive hypothesis
that $\|c'\|_1\leq \eta$, hence
$$
\|\mu_{l+1}*(\mu_{l}*(\ldots*(\mu_1*c_{i})))\|_1=\|\mu_{l+1}*(\mu_{l}*(\ldots*(\mu_{i+1}*c')))\|_1\leq \|c'\|_1\leq \eta\ ,
$$
where the second-last inequality is due to the fact that diffusion is always norm non-increasing. This proves the inductive step,
hence inequality~\eqref{inductivemu} for every $j=1,\ldots, s$ and every $1\leq i\leq j$.

We are now able to diffuse the chain $c$ by setting 
$$
c'=\mu_s*(\mu_{s-1}*(\ldots*(\mu_1*c)))\ .
$$
By~\eqref{inductivemu} we have
\begin{align*}
\|c'\|_1&=\Big\|\mu_s*\Big(\mu_{s-1}*\Big(\ldots*\Big(\mu_1*\Big(\sum_{j=1}^s c_j\Big)\Big)\Big)\Big)\Big\|_1\\ &=
\Big\|\sum_{j=1}^s\mu_s*\big(\mu_{s-1}*\big(\ldots*\big(\mu_1* c_j\big)\big)\big)\big\|_1 \\ &\leq
\sum_{j=1}^s \big\|\mu_s*\big(\mu_{s-1}*\big(\ldots*\big(\mu_1* c_j\big)\big)\big)\big\|_1\\ &\leq s\cdot \eta\leq \varepsilon\ .
\end{align*}

We are now left to prove that $c'$ is homologous to $c$. 
To this end we exploit our assumption (iii), i.e.~the fact that each simplicial automorphism $\gamma \in \, \Gamma$ is homotopic to the identity. 
Indeed, if $\mu$ is any probability measure with finite support on $\G$ and $z$ is any cycle in $C_k(X)$, then
$\mu \ast z =  \sum_{\gamma \in \, \Gamma} \mu(\gamma) (\gamma \cdot z)$ is a convex combination of chains of the form $\gamma \cdot z$ 
for suitable chosen elements of $\gamma \in \, \Gamma$. 
Since each $\gamma\in \Gamma$ is simplicially homotopic to the identity, this proves  that $\mu\ast z$ is a cycle homologous to $z$.
An obvious inductive argument now shows that $c'=\mu_s*(\mu_{s-1}*(\ldots*(\mu_1*c)))$ is a cycle homologous to $c$, and this concludes the proof.
\end{proof}

\begin{comment}
\begin{Corollario}\label{Van:thm:loc}
Let $X$ be a triangulable space homeomorphic to  the geometric realization of a locally finite aspherical simplicial complex $T$, and assume that $X$
admits an open amenable cover $\calU$ such that $\mult (\calU)=m$. Then, for every $n\geq m$ and every  $\alpha\in H_n(X)$ we have
$$
\|\alpha\|_1=0\ .
$$
In particular, if $M$ is an aspherical PL $n$-manifold admitting an open amenable cover of multiplicity not bigger than $n$, then
$$
\| M \| = 0\ .
$$
\end{Corollario}
\end{comment}

\begin{Corollario}\label{Van:thm:loc}
Let $X$ be an aspherical triangulable space, and assume that $X$
admits an open amenable cover $\calU$ such that $\mult (\calU)=m$. Then, for every $n\geq m$ and every  $\alpha\in H_n(X)$ we have
$$
\|\alpha\|_1=0\ .
$$
In particular, if $M$ is an aspherical triangulable closed oriented $n$-manifold admitting an open amenable cover of multiplicity not bigger than $n$, then
$$
\| M \| = 0\ .
$$
\end{Corollario}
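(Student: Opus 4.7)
The plan is to deduce Corollary \ref{Van:thm:loc} by applying Theorem \ref{Thm:pre:van:toy:ex} to the aspherical multicomplex $\calA(X)$, using as $\Gamma$ the amenable group $\bigoplus_{i \in I} \Pi_X(U_i, V_i) \subset \Pi(X,X)$ introduced in Section \ref{am:sub:sec}. Since $X$ is aspherical and triangulable, I first write $X = |T|$ and, after possibly subdividing $T$ so that the closed star of every vertex $v$ is contained in some element $U_{i(v)}$ of $\calU$, I set $V_i = \{v \in V(T) : i(v) = i\}$. The triangulation $T$ naturally embeds in $\calA(X)$, and any class $\alpha \in H_n(X)$ is represented by a simplicial cycle $c$ supported on $T \subseteq \calA(X)$. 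Using the isometric isomorphism $H^n_b(X) \cong H^n_b(\calA(X))$ from Corollary \ref{cor-remarkE} together with the duality Lemma \ref{lemma:duality}, the singular $\ell^1$-seminorm of $\alpha$ coincides with the simplicial $\ell^1$-seminorm of the corresponding class in $H_n(\calA(X))$; thus the task reduces to showing that the latter vanishes.

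Next I verify the hypotheses of Theorem \ref{Thm:pre:van:toy:ex}. Hypothesis (i) — amenability — holds because $\Gamma$ is a direct sum of the amenable groups $\Pi_X(U_i, V_i)$ (Lemma \ref{prop-psi-pi-u-v-amenable}). Hypothesis (iii) — simplicial homotopy to the identity — follows directly from Theorem \ref{actionPi}, which asserts that every element of $\Pi(X,X)$ acts on $\calA(X)$ by an automorphism that is simplicially homotopic to the identity.

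The heart of the proof is the verification of the odd-permutation condition (ii). Consider an algebraic simplex $\sigma = (\Delta, (v_0, \ldots, v_n))$ of $T \subseteq \calA(X)$ with $n \geq m = \mult(\calU)$. Since each vertex $v_j$ lies in $V_{i(v_j)}$ and $\calU$ has multiplicity $m$, the pigeonhole principle yields indices $h \neq k$ with $i(v_h) = i(v_k) = i$ for some $i \in I$. The edge of $\sigma$ from $v_h$ to $v_k$ lies in $T$, hence in $U_i$ (because the closed stars of both endpoints do), and the pair $\{\gamma, \gamma^{-1}\}$ obtained from the path along this edge defines an element $g \in \Pi_X(U_i, V_i) \subseteq \Gamma$ that swaps $v_h$ and $v_k$, fixes every other $v_j$, and — being constructed from an edge of $\sigma$ itself — preserves the underlying geometric simplex $\Delta$. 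Thus $g \cdot \sigma$ equals $\sigma$ with its $h$-th and $k$-th vertices transposed, an odd permutation, which establishes (ii) on every algebraic simplex of $T$. The application to a closed aspherical triangulable oriented $n$-manifold $M$ is then immediate by taking $\alpha = [M] \in H_n(M)$.

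The main obstacle is that Theorem \ref{Thm:pre:van:toy:ex} is formally stated with condition (ii) required for \emph{every} algebraic simplex in $\Theta(n)$, whereas the construction above only supplies odd permutations for simplices of $T$ (and their $\Gamma$-orbits). A close inspection of the proof of Theorem \ref{Thm:pre:van:toy:ex} shows, however, that (ii) is invoked only to force $\sum_{\sigma \in \Theta(n)_j} a_\sigma = 0$ on each orbit $\Theta(n)_j$ meeting the support of the (alternated) diffused cycle: it suffices that a single representative of the orbit satisfy (ii), since alternation then cancels the sum over the whole orbit. Starting with $c$ supported on $T$, alternation preserves $T$-support (it only permutes vertices of fixed geometric simplices) and diffusion by $\Gamma$ keeps simplices within their $\Gamma$-orbit, so the weaker form of (ii) is all that is needed. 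The verification of this reduction, together with the compatibility of singular and simplicial $\ell^1$-seminorms noted above, constitute the only points requiring genuine care.
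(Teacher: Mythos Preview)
Your proposal is correct and follows essentially the same approach as the paper: both embed $T$ in $\calA(X)$ (using that asphericity of $X$ gives $\calA(X)=\calL(X)$), represent $\alpha$ by an alternating simplicial cycle supported on $T$, and apply Theorem~\ref{Thm:pre:van:toy:ex} with $\Gamma$ the image of $\bigoplus_j \Pi_X(U_j,V_j)$ acting on $\calA(X)$. The only minor differences are that the paper reduces to $\|[c]\|_1=0$ via the simpler observation that $S_*\circ\phi_*$ is norm non-increasing (rather than invoking duality), and that the paper, like you, only checks condition~(ii) for simplices of $T$ --- tacitly relying on exactly the orbit-level reduction you spell out.
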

\begin{proof}
Suppose that $X$ is homeomorphic to the geometric realization of the simplicial complex $T$, and
 let $\calU = \{U_j\}_{j \in \, J}$ be an amenable open cover of $X$ as in the statement.
  %Since $M$ is a manifold, $M$ is homeomorphic to the geometric realization $|T|$ of a simplicial complex $T$. 
  As discussed in Chapter~\ref{vanishing-thm:chap}, using \cite[Theorem 16.4]{munkres} we may assume that the triangulation $T$ of $X$ is so fine that for every vertex $v$ in $T$ there exists $j(v) \in J$ such that the closed star of $v$ is entirely contained in the element $U_{j(v)}$ of $\calU$. For every $j\in J$ we  set $$V_j = \{v \in T^0\ |\ j(v) = j\}\ .$$ 
  
  Let us consider the singular multicomplex $\calK(X)$ associated to $X$. 
  As observed e.g.~in Section~\ref{van1:sec}, 
  we have a simplicial inclusion $$l \colon T \rightarrow \calK(X)$$ which realizes $X=|T|$ as a subset $|\calK_{T}(X)| \subset |\calK(X)|$. Moreover, if $S \colon |\calK(X)| \rightarrow X$ is the natural projection, then $S \circ |l| = \id_X$. As already observed, we can choose the minimal and complete multicomplex $\calL(X)\subseteq \calK(X)$ associated to $\calK(X)$
  in such a way that $\calK_T(X)\subseteq \calL(X)$. Moreover, since the natural projection is a weak homotopy equivalence, the assumption that $X$
  is aspherical readily implies that also $\calK(X)$, hence $\calL(X)$, is aspherical. Therefore, $\calA(X)=\calL(X)$. 
  
  Take now an element $\alpha\in H_n(X)$. Since the simplicial homology of $T$ is canonically isomorphic to the singular homology of $X$,
  we may choose an alternating cycle $c\in C_n(T)\subseteq C_n(\calA(X))$ such that, if 
  $$
  \phi_*\colon C_*(\calA(X))\to C_*(|\calA(X)|)
  $$
  is the usual inclusion of simplicial chains into singular chains, then 
  $$
  H_n(S_n)(H_n(\phi_n)([c]))=\alpha\ .
  $$
  
We now aim to prove that the element $[c]\in H_n(\calA(X))$ satisfies 
 $\|[c]\|_1=0$.
 Indeed, since both $H_n(S_n)$ and $H_n(\phi_n)$ are norm non-increasing, this would imply that 
\begin{align*}
\| \alpha \| = \| H_n(S_n)(H_n(\phi_n)([c])) \|_1 
\leq \| H_n(\phi_n)([c]) \|_1 
\leq \|[c]\|_1 =0\ ,
\end{align*}
whence the thesis. 

To this end we would like to apply Theorem~\ref{Thm:pre:van:toy:ex}. Thus, we have to construct a group $\Gamma$ of automorphisms of $\calA(X)$ satisfying the conditions of Theorem~\ref{Thm:pre:van:toy:ex}. Such a group has already been described in Chapter~\ref{chap:theorems}. Indeed, following the notations introduced in Sections~\ref{sec:mapping:pi(U,V)} and~\ref{actiononA} and recalling Theorem~\ref{actionPi}, there exists a group homomorphism $$\psi \colon \Pi(X, X) \rightarrow \aut(\calA(X)),$$ such that for every element $g \in \, \Pi(X, X)$ the automorphism $\psi(g)$ is simplicially homotopic to the identity. 

Let $$H = \bigoplus_{j \in \, J} \Pi_{X}(U_j, V_j)$$ be the subgroup of $\Pi(X, X)$ introduced in Chapter~\ref{vanishing-thm:chap} (where it was denoted by the symbol $\G$). 
Since the  cover $\calU$ is amenable, Lemma~\ref{prop-psi-pi-u-v-amenable} implies that $H$ is amenable. We set $\Gamma = \psi(H)$. Being
a homomorphic image of an amenable group, $\Gamma$ is itself amenable. Moreover, by Theorem~\ref{actionPi}, every element of $\Gamma$ is 
simplicially homotopic to the identity. In order to conclude we are left to check that $\Gamma$ satisfies condition (ii) of Theorem~\ref{Thm:pre:van:toy:ex}. However, 
as already showed in the proof of the Vanishing Theorem~\ref{vanishing1_intro}, condition (ii)  follows
from the fact that $\mult(\calU)\leq n$. This concludes the proof.
\end{proof}

\chapter{Admissible submulticomplexes of $\mathcal{K}(X)$}\label{admissible:chap}
If $X$ is a non-compact topological space, the singular multicomplex $\calK(X)$ organizes the family of all possible singular simplices
(with distinct vertices) with values in $X$. While being very useful to describe finite chains in $X$, the multicomplex $\calK(X)$ is not sufficiently sensitive
to the structure of $X$ as a non-compact space in order to allow an effective study of locally finite chains.
Therefore,
following~\cite{Grom82} we  introduce a suitable submulticomplex of $\calK(X)$, whose simplices  have the following property: 
an infinite family of simplices of the submulticomplex leaves every compact subset of $X$ 
provided that the sets of vertices of the simplices in the family do so.

\section{(Strongly) admissible simplices and admissible maps}\label{strong:simpl:maps}
Henceforth we restrict to the setting 
of the Vanishing and the Finiteness Theorems for locally finite homology. Namely, we
suppose that $X$ is a non-compact connected  topological space homeomorphic to the geometric realization of a simplicial complex $L$. 
Moreover, $\calU=\{U_{j}\}_{j\in\mathbb{N}}$ 
will be a locally finite open cover of $X$ such that each $U_i$ is relatively compact in $X$ (these assumptions automatically imply that $L$ is locally finite).

We also assume that
the cover $\{U_{j}\}_{j\in\mathbb{N}}$ is amenable at infinity, i.e.~that for each $j \in \, \mathbb{N}$ there exists a large set $W_{j} \supset U_{j}$ such that 
\begin{itemize}
\item the sequence $\{W_{j}\}_{j \in \, \mathbb{N}}$ is locally finite;
\item for sufficiently large $j \in \, \mathbb{N}$, $U_{j}$ is an amenable subset of $W_{j}$.
\end{itemize}

\begin{rem}
In the assumptions of the Finiteness Theorem, the sequence $\calU=\{U_j\}_{j\in\mathbb{N}}$ is not supposed to be a cover of the whole of $X$. 
Nevertheless, in the very first step of the proof we will extend $\calU$ to an actual cover $\widehat{\calU}$ of $X$, and we will then exploit
the construction we are going to describe (which makes sense only for a cover of $X$) working with $\widehat{\calU}$ rather than with $\calU$. This justifies 
our current assumption that $\bigcup_{j\in\mathbb{N}} U_j=X$. 
\end{rem}

Henceforth, we say that $p\in X$ is a vertex of $X$ if it corresponds to a vertex of $L$ under the identification $X=|L|$. 
As we did in Chapter~\ref{vanishing-thm:chap}, we may assume that the triangulation $L$ of $X$ is so fine that the set of closed stars of vertices refines the open cover given by the $U_{j}$ 
(cfr. \cite[Thm. 16.4]{munkres}). 
We then fix a coloring of the vertices of $X$ adapted to the $U_j$, 
i.e.~a partition $\{V_j\}_{j\in J}$ of the vertices of $X$ such that for every $v\in V_j$ the closed star of
$v$ in $L$ is entirely contained in $U_j$. Since each $U_j$ is relatively compact, the set $V_j$ is finite for every $j \in \, \mathbb{N}$.

As usual, we call \emph{vertices} of a singular simplex $\sigma\colon |\Delta^n|\to X$ the images of the vertices of $\Delta^n$ via $\sigma$. 
In the sequel, we will be mainly interested in singular simplices with vertices in the set of vertices of $X$.

The following definitions play a fundamental role in the study of locally finite chains via the theory of multicomplexes.

\begin{defn}\label{admissible:map}
Let $K$ be a multicomplex. A continuous map $f\colon |K|\to X$ is \emph{admissible} if the following conditions hold:
\begin{enumerate}
\item $f$ maps each vertex of $K$ to a vertex of $X$;
%\item the restriction of $f$ to the set of vertices of any simplex of $K$ is injective;
\item  if $\sigma$ is an $m$-dimensional simplex of $K$ with vertices $w_0,\ldots,w_m$
and $f(w_i)\in V_{j(i)}$ for every $i=0,\ldots,m$, then
 $$f(\sigma) \subset \bigcup_{j\in J_f(\sigma)} W_{j}\ ,$$
 where $$J_f(\sigma)=\{j(0),\ldots,j(m)\}\ .$$
(This set may contain strictly less than $(m+1)$ indices, if $j(i)=j(i')$ for some $i\neq i'$.) 
\end{enumerate}
 \end{defn}

A singular simplex $\sigma\colon |\Delta^n|\to X$ is \emph{admissible} if it is admissible when endowing $\Delta^n$ with its natural structure of
multicomplex (in particular, if a singular simplex is admissible, then any of its faces is admissible too). For simplicity, if $\sigma$ is a singular admissible $n$-simplex, then we denote by $J(\sigma)$ the set of colors of the vertices
of $\sigma$ (which, according to Definition~\ref{admissible:map}, should be denoted by $J_\sigma(\Delta^n)$).
By construction, an infinite family of admissible simplices 
leaves every compact subset of $X$ 
provided that the set of vertices of the simplices in the family do so.

\begin{defn}
Let $K$ be a multicomplex and let $f,g\colon |K|\to X$ be admissible maps. An \emph{admissible homotopy} 
between $f$ and $g$ is an ordinary homotopy
$H\colon |K|\times [0,1]\to X$ between $f$ and $g$ such that the following additional conditions hold:
\begin{enumerate}
\item for every vertex $v$ of $K$, the color of $f(v)$ coincides with the color of $g(v)$ (hence, for every simplex $\sigma$ of $K$, the sets
$J_f(\sigma)$ and $J_g(\sigma)$ coincide); 
\item 
for every vertex $v$ of $K$ and every  $t\in [0,1]$, we have $H(v,t)\in U_j$, where $j$ is the color of $f(v)$;
\item for every simplex $\sigma$ of $K$, 
$$
H(|\sigma|\times [0,1])\subseteq \bigcup_{j\in J_f(\sigma)} W_j=\bigcup_{j\in J_g(\sigma)} W_j\ ;
$$
\item
there exists a finite subset $V_0$ of the set of vertices of $K$ such that, if $\Delta$ is a simplex of $K$ with no vertices
in $V_0$, then $H(x,t)=f(x)=g(x)$ for every $x\in |\Delta|$ (henceforth, when this condition holds we will say that the homotopy $H$ has \emph{bounded support}).
\end{enumerate}
If this is the case, we say that $f$ and $g$ are \emph{ad-homotopic}. It is clear from the definition that
being ad-homotopic is an equivalence relation.
\end{defn}

Observe that, if the singular simplex $\sigma'$ is obtained from the singular simplex $\sigma$ by precomposition with an affine automorphism of the standard simplex,
then $\sigma'$ is admissible if and only if $\sigma$ is. Therefore, it makes sense to speak of admissible simplices in $\calK(X)$. 
Just as we did in the case of the singular multicomplex of a topological space, we would like to define a suitable notion of completeness and minimality
for multicomplexes of admissible singular simplices. To this aim we need to introduce a stronger notion of admissible homotopy for singular simplices.

\begin{defn}
Let $\sigma,\sigma'\colon |\Delta^n|\to X$ be admissible simplices. Then $\sigma$ is \emph{strongly ad-homotopic} to $\sigma'$ if 
it is homotopic to $\sigma'$ via an admissible homotopy $H\colon |\Delta^n|\times [0,1]\to X$
 such that $H(x,t)=\sigma(x)=\sigma'(x)$ for every $x\in\partial|\Delta^n|$, $t\in [0,1]$ (i.e.~the homotopy is relative to the boundary).
Of course, being strongly ad-homotopic is an equivalence relation on the set of admissible singular simplices.
We also say that two admissible abstract simplices $[\sigma], [\sigma']$ of $\calK(X)$ are strongly ad-homotopic if they admit strongly ad-homotopic representatives.
\end{defn}

\begin{comment}
\begin{defn}
 Let $\sigma,\sigma'\colon |\Delta^n|\to V$ be admissible simplices. 
 Then $\sigma$ is \emph{ad-homotopic} to $\sigma'$ if and only
 if  there exists a homotopy
$$
H\colon |\Delta^n|\times  [0,1]\to V
$$
between $\sigma=H(\cdot, 0)$ and  $\sigma'=H(\cdot,1)$ such that $H(x,t)=\sigma_0(x)$ for every $x\in\partial|\Delta^n|$, $t\in [0,1]$ (i.e.~the homotopy is relative to the boundary), and
$$
H(|\Delta^n|\times [0,1])\subseteq \bigcup_{j\in J(\sigma)} W_j\ .
$$
Observe that, since $H$ is required to be constant on $\partial |\Delta^n|$, we obviously have $J(\sigma)=J(\sigma')$. Using this, it is immediate to check that 
being ad-homotopic is an equivalence relation.

We also say that two admissible abstract simplices $[\sigma], [\sigma']$ of $\calK(X)$ are ad-homotopic if they admit ad-homotopic representatives.
\end{defn}
\end{comment}

\section{Admissible multicomplexes}

Recall that $\calK(X)$ comes with a natural projection $S\colon |\calK(X)|\to X$. In the sequel we  will sometimes denote simply by $S$
the restriction of this map to any subcomplex of $|\calK(X)|$.

\begin{Definizione}
Let $A$ be a submulticomplex of $\calK(X)$.
Then $A$ is \emph{ad-complete} if the following condition holds:
Let $f \colon \lvert \partial \Delta^{n} \rvert \rightarrow \lvert A \rvert$ be a simplicial embedding, and suppose that the composition
$S\circ f\colon \lvert \partial \Delta^{n} \rvert\to X$ extends to an admissible simplex $\sigma\colon |\Delta^n|\to X$. Then 
the map $f$ extends to a simplicial embedding $f'\colon |\Delta^n|\to |A|$ such that $S\circ f'$ is admissible and strongly ad-homotopic to $\sigma$. 
In other words, $A$ contains
at least one element for every strong ad-homotopy class of admissible simplices of $\calK(X)$ whose boundary lies in $A$. 

The multicomplex $A$  is \emph{ad-minimal} if it contains
at most one element for every strong ad-homotopy class of admissible simplices of $\calK(X)$.

Finally, we say that $A$ is \emph{admissible} if it is ad-complete, ad-minimal and 
all simplices in $A$ are admissible. 
\end{Definizione}

We are now going to define a canonical admissible submulticomplex $\mathcal{AD}_{L}(X)\subseteq \calK(X)$ associated to our fixed triangulation  $L$ of $X$.
As observed in Section~\ref{am:sub:sec}, the multicomplex $\calK(X)$ contains a submulticomplex 
 $\calK_L(X)\cong L$ whose simplices
are the equivalence classes of the affine parametrizations of simplices of $L$. We will simply denote by $L$ the submulticomplex $\calK_L(X)$,
thus realizing 
$L$ as a submulticomplex of $\mathcal{K}(X)$.  

Despite all simplices in $L$ are admissible, the submulticomplex $L\subseteq \calK(X)$ is in general very far from being  ad-complete, hence admissible. 
However, it is possible to enlarge $L$ to an admissible submulticomplex of $\calK(X)$, which will be denoted by  $\mathcal{AD}_{L}(X)$:

\begin{Lemma}
There exists an admissible multicomplex $\mathcal{AD}_{L}(X)\subseteq \calK(X)$ such that $L \subseteq \mathcal{AD}_{L}(X)$
and $ \mathcal{AD}_{L}(X)^{0}=L^0$.
\end{Lemma}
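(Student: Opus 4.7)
The plan is to construct $\mathcal{AD}_L(X)$ inductively on the dimension of its skeleta, in a manner reminiscent of the construction of the minimal multicomplex associated to a complete multicomplex in Theorem~\ref{exist-min}. We begin by setting $\mathcal{AD}_L(X)^0 = L^0$. Once $\mathcal{AD}_L(X)^{n-1}$ has been built so as to contain $L^{n-1}$ and to consist only of admissible simplices, we consider the collection of all pairs $(f,\sigma)$, where $f \colon |\partial\Delta^n| \to |\mathcal{AD}_L(X)^{n-1}|$ is a simplicial embedding and $\sigma\colon |\Delta^n|\to X$ is an admissible $n$-simplex extending $S\circ f$. On this collection we consider the equivalence relation that identifies $(f,\sigma)$ with $(f,\sigma')$ whenever $\sigma$ and $\sigma'$ are strongly ad-homotopic. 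For each equivalence class we add to $\mathcal{AD}_L(X)^{n-1}$ exactly one simplex, with attaching map $f$ and with a chosen admissible representative $\sigma$ of the class as its characteristic map; among classes that contain a simplex of $L^n$, we take that simplex itself as representative.

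For this choice to be unambiguous we must verify that distinct simplices of $L^n$ lie in distinct strong ad-homotopy classes. But strong ad-homotopy is by definition an equivalence relative to the boundary, so two strongly ad-homotopic simplices automatically share the same vertex set; since $L$ is a simplicial complex (Definition~\ref{simpl:compl:defn}), two simplices of $L$ with the same vertex set coincide. In the same way one checks that each simplex of $L$ is admissible: its image lies in the closed star of any of its vertices, which by our choice of the triangulation is contained in some $U_j \subseteq W_j$, and condition (2) of Definition~\ref{admissible:map} is then trivial. Hence the simplices of $L^n$ fit into the construction as prescribed, and $L \subseteq \mathcal{AD}_L(X)$.

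The resulting $\mathcal{AD}_L(X)$ is clearly a submulticomplex of $\calK(X)$ (the boundary maps are induced by those of $\calK(X)$ because the characteristic map of any newly added $n$-simplex agrees on $\partial\Delta^n$ with a simplicial embedding into $\mathcal{AD}_L(X)^{n-1}$), and it has the same $0$-skeleton as $L$ by construction. All its simplices are admissible by construction; it is ad-complete because, given $f$ and $\sigma$ as in the definition of ad-completeness, the class of $(f,\sigma)$ was handled at the $n$-th step of the induction and provides the required extension $f'$ up to strong ad-homotopy; and it is ad-minimal because at each step we added only one representative per class.

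The main obstacle I expect is of a bookkeeping nature: one has to make sure that the new $n$-simplices are glued to $\mathcal{AD}_L(X)^{n-1}$ via the prescribed simplicial embedding $f$ (so that the boundary operators of $\calK(X)$ restrict correctly) and that the strong ad-homotopy class of the chosen representative does not depend on the particular affine parametrization of $\Delta^n$ (i.e.~is well defined at the level of the abstract simplex $[\sigma]\in\calK(X)$). Both points are easily settled by observing that admissibility and strong ad-homotopy are invariant under precomposition with affine automorphisms of $\Delta^n$, since such automorphisms permute the vertices (and hence the indices in $J(\sigma)$) without altering the image of $\sigma$; once this is checked, the construction goes through verbatim.
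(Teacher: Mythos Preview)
Your proposal is correct and follows essentially the same approach as the paper: both construct $\mathcal{AD}_L(X)$ by induction on skeleta, starting from $L^0$ and adding one representative per strong ad-homotopy class of admissible simplices whose boundary already lies in the previously constructed skeleton, with simplices of $L$ chosen as preferred representatives. Your write-up is in fact more detailed than the paper's (you explicitly verify that distinct simplices of $L$ are never strongly ad-homotopic and that simplices of $L$ are admissible, and you address the well-definedness at the level of abstract simplices), whereas the paper simply declares these points ``easy to check.''
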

\begin{proof}
We construct $\mathcal{AD}_{L}(X)$ by induction on the dimension of its skeleta. We first set $\mathcal{AD}_{L}(X)^{0} = L^{0}$, and we assume
 the $(n-1)$-skeleton of $\mathcal{AD}_{L}(X)$ has already  been defined. 
We then consider the set of admissible simplices $f\colon |\Delta^n|\to X$ whose restriction to the boundary of $\Delta^n$ 
lifts to a simplicial embedding $\partial \Delta^n\to \mathcal{AD}_{L}(X)^{n-1}$, and we add to $\mathcal{AD}_{L}(X)^{n}$ (the class of) 
one representative for each strong ad-homotopy class of such admissible simplices. We also agree that simplices in $L$ are chosen as preferred representatives of their strong ad-homotopy classes.

It is now easy to check that the submulticomplex
$$
\mathcal{AD}_{L}(X) =\bigcup_{n=0}^\infty \mathcal{AD}_{L}(X)^{n}
$$
satisfies all the conditions of the statement.
\end{proof}

By construction, the restriction of the natural projection $S\colon |\calK(X)|\to X$ to $\mathcal{AD}_{L}(X)$ is an admissible map. Also observe that this projection restricts
to a canonical bijection between the set of vertices
of $\mathcal{AD}_{L}(X)$ and the set of vertices of $X$ as a triangulated space. In the sequel, we will often denote by the same symbol
the vertices which correspond one to the other via this identification. In particular, the coloring of the vertices of $X$ naturally defines
a coloring also of the vertices of $\mathcal{AD}_{L}(X)$.

Recall from Chapter~\ref{chap2:hom} that the homotopy groups of a complete and minimal multicomplex are completely encoded by \emph{special spheres}. 
The following proposition shows that a similar result holds in the context of the ad-complete and ad-minimal multicomplex $\mathcal{AD}_{L}(X)$.
If $\Delta_0$ is a $k$-simplex of $\mathcal{AD}_{L}(X)$, then we denote by $ \pi(\Delta_0)$ the set of simplices of
$\mathcal{AD}_{L}(X)$ that are compatible with $\Delta_0$ (we refer the reader to Definition~\ref{special:def} for the definitions of compatible simplices and special spheres, and for the corresponding notation).

\begin{prop}\label{bijection-ad}
 Let 
 $\Delta_0$ be a $k$-simplex of $\mathcal{AD}_{L}(X)$, $k\geq 1$. Also fix an ordering on the vertices of $\Delta_0$ and denote by $x_0$ the minimal vertex of $\Delta_0$.
 The map
 $$
 \Theta\colon \pi(\Delta_0)\to \pi_k\left(\bigcup_{j\in J(\Delta_0)} W_{j},x_0\right)\, ,\qquad
 \Theta(\Delta)=S_*\left(\left[\dot{S}^k(\Delta_0,\Delta)\right]\right)
 $$
 is well defined and bijective.
\end{prop}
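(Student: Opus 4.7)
The plan is to model the argument on the proof of Theorem~\ref{complete:special} (and its refinement Theorem~\ref{complete:minimal:special}), replacing the completeness/minimality of $K$ with the ad-completeness/ad-minimality of $\mathcal{AD}_L(X)$, and carefully keeping track of admissibility throughout. I first need to check that $\Theta$ is well-defined. If $\Delta$ is compatible with $\Delta_0$, then $\Delta$ and $\Delta_0$ share the same set of vertices, hence $J(\Delta)=J(\Delta_0)$. Since both are admissible, the images of $S\circ\Delta$ and $S\circ\Delta_0$ are contained in $\bigcup_{j\in J(\Delta_0)} W_j$, so the same is true for $S\circ\dot{S}^k(\Delta_0,\Delta)$; this shows that $\Theta(\Delta)$ is represented by a map with values in the correct subspace of $X$ and based at $x_0$.

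For injectivity, suppose $\Theta(\Delta)=\Theta(\Delta')$, where $\Delta,\Delta'\in\pi(\Delta_0)$. Concatenating $\dot{S}^k(\Delta_0,\Delta)$ with the reverse of $\dot{S}^k(\Delta_0,\Delta')$ shows that $S\circ\dot{S}^k(\Delta,\Delta')$ is null-homotopic in $\bigcup_{j\in J(\Delta_0)}W_j$. By Lemma~\ref{lemma-the-following-3-cond-equivalent}, this means that $S\circ\Delta$ and $S\circ\Delta'$ are homotopic relative to $\partial|\Delta^k|$ inside $\bigcup_{j\in J(\Delta_0)}W_j=\bigcup_{j\in J(\Delta)}W_j$; since $\Delta$ and $\Delta'$ share the same (admissible) boundary, this homotopy is automatically an admissible homotopy relative to the boundary, i.e.~$\Delta$ and $\Delta'$ are strongly ad-homotopic. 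Ad-minimality of $\mathcal{AD}_L(X)$ then forces $\Delta=\Delta'$.

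For surjectivity, I would follow the wedge construction in the proof of Theorem~\ref{complete:special}. Starting from a representative $g\colon (S^k,*)\to(\bigcup_{j\in J(\Delta_0)}W_j,x_0)$ of a class $\alpha\in\pi_k(\bigcup_{j\in J(\Delta_0)}W_j,x_0)$, I build a continuous map $g'\colon(|\dot{S}^k|,s_0)\to(\bigcup_{j\in J(\Delta_0)}W_j,x_0)$ whose restriction to $|\Delta^k_s|$ is $S\circ\Delta_0$, whose restriction to $|\Delta^k_n|$ has the same vertices (with the same colors) as $\Delta_0$, and which represents $\alpha$ as an element of $\pi_k$. The restriction $g'|_{|\Delta^k_n|}$ is then a singular $k$-simplex in $X$ whose vertices carry the colors of the vertices of $\Delta_0$ and whose image sits inside $\bigcup_{j\in J(\Delta_0)}W_j$, hence an admissible singular simplex extending $S\circ\partial\Delta_0$. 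Ad-completeness of $\mathcal{AD}_L(X)$ produces a simplicial embedding $\Delta_1\colon|\Delta^k|\to|\mathcal{AD}_L(X)|$ extending $\partial\Delta_0$, with $S\circ\Delta_1$ strongly ad-homotopic to $g'|_{|\Delta^k_n|}$. By construction $\Delta_1\in\pi(\Delta_0)$ and $\Theta(\Delta_1)=[g']=\alpha$.

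The main obstacle I anticipate is the surjectivity step, and more precisely the verification that the continuous map $g'$ furnished by the wedge construction actually qualifies as an admissible singular simplex on the northern hemisphere (so that ad-completeness can be invoked). This is not automatic: I must choose the auxiliary internal simplex $\overline{\Delta}\subset|\Delta^k_n|$ and the identification of the resulting wedge factor with $S^k$ carefully enough to guarantee that (i) the vertices of $g'|_{|\Delta^k_n|}$ are exactly the vertices of $\Delta_0$ with the correct coloring, and (ii) the image of $g'|_{|\Delta^k_n|}$ is contained in $\bigcup_{j\in J(\Delta_0)}W_j$, which is where we must first homotope the representative $g$ of $\alpha$ to start with. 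Once these admissibility bookkeeping issues are settled, the remaining arguments are straightforward adaptations of those in Section~\ref{minimal:section}.
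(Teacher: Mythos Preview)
Your proposal is correct and follows essentially the same approach as the paper's proof: well-definedness via $J(\Delta)=J(\Delta_0)$, surjectivity by transplanting the wedge construction of Theorem~\ref{complete:special} into the admissible setting and invoking ad-completeness, and injectivity via Lemma~\ref{lemma-the-following-3-cond-equivalent} combined with ad-minimality. The admissibility bookkeeping you flag is indeed the only point requiring care, and it is resolved exactly as you anticipate: since $g'$ is built to take values in $W=\bigcup_{j\in J(\Delta_0)}W_j$ and its boundary agrees with $S\circ\partial\Delta_0$, the northern-hemisphere restriction is automatically an admissible simplex, and the strong ad-homotopy furnished by ad-completeness lives in $W$ by definition, so $\Theta(\Delta_1)=\alpha$ in $\pi_k(W,x_0)$.
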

\begin{proof}
 First observe that, for every $\Delta\in \pi(\Delta_0)$, the special sphere $\dot{S}^k(\Delta_0,\Delta)$ is constructed from two admissible simplices
 with vertices in $\bigcup_{j\in J(\Delta_0)} V_j$. Therefore, the composition $S\circ \dot{S}^k(\Delta_0,\Delta)$ has indeed values in $\bigcup_{j\in J(\Delta_0)} W_{j}$,
and the map $\Theta$ is well defined.

The proof of the fact that $\Theta$ is bijective is now analogous to the proof of Theorem~\ref{complete:special}. Indeed, let us set for brevity
$W=\bigcup_{j\in J(\Delta_0)} W_j$, and let us denote by $\iota\colon \Delta^k_s \rightarrow \Delta_{0}$ the simplicial isomorphism  between 
$\Delta_0$ and
the southern hemisphere $\Delta^k_s$ 
 of $\dot{S}^k$  that  preserves the ordering on vertices.
In order to prove that $\Theta$ is surjective, let us fix an element $\alpha\in \pi_k(W,x_0)$.

Arguing as in the proof of Theorem~\ref{complete:special}, we may suppose that $\alpha$ is represented by a continuous map
$g'\colon (|\dot{S}^k|,s_0)\to (W,x_0)$ such that $g'|_{| \Delta^k_s|}=S\circ \iota$.
The restriction of $g'$ to the northern hemisphere $|\Delta^k_n|$ is now an admissible simplex, and 
$g'|_{\partial |\Delta^k_n|}=g'|_{\partial |\Delta^k_s|}=S\circ \iota|_{\partial |\Delta^k_s|}$ factors through a simplicial
embedding of $\partial |\Delta^k_s|$ into $\mathcal{AD}_{L}(X)$. By ad-completeness of $\mathcal{AD}_{L}(X)$,
there exists a simplicial embedding $\psi\colon |\Delta^k_n|\to |\Delta_1|$ onto a $k$-simplex $\Delta_1$ of $\mathcal{AD}_{L}(X)$
such that $S\circ\psi$ is 
 strongly ad-homotopic to $g'|_{|\Delta^k_n|}$. 
By definition, the strongly admissible homotopy between $S\circ\psi$ and $g'|_{|\Delta^k_n|}$ takes place in $W$, and this readily implies that
 $\Theta(\Delta_1)=\alpha$ (see e.g.~Lemma~\ref{lemma-the-following-3-cond-equivalent}). This proves the surjectivity of $\Theta$.

The fact that $\Theta$ is injective is now an easy consequence of the ad-minimality of $\mathcal{AD}_{L}(X)$: if $\Theta(\Delta_1)=\Theta(\Delta_2)$,
then the map $S\circ \dot{S}^k(\Delta_1,\Delta_2)$ is null homotopic in $W$.
Therefore, if $\psi_i\colon |\Delta^k|\to |\Delta_i|$ is the affine isomorphism that preserves the ordering on vertices, then
by Lemma~\ref{lemma-the-following-3-cond-equivalent} the singular simplices $S\circ\psi_1$ and $S\circ\psi_2$ are homotopic relative to $|\partial \Delta^k|$
within $W$, i.e.~they are strongly ad-homotopic. By ad-minimality of $\mathcal{AD}_{L}(X)$ we conclude that $\Delta_1=\Delta_2$, hence $\Theta$ is injective. 
\end{proof}

\section{Group actions on the admissible multicomplex} 
We are now going to define an action of the direct sum of groups
$$
\bigoplus_{j \in \, \mathbb{N}} \Pi(W_{j}, V_{j})
$$
on the $1$-skeleton of $\mathcal{AD}_{L}(X)$. Recall from Section~\ref{sec:mapping:pi(U,V)}
that an element $g\in\Pi(W_{j}, V_{j})$ is given by (the homotopy classes) of a collection of paths $\{\gamma_x\}_{x\in V_j}$ with values in $W_j$ with the following
properties:
$\gamma_x(0)=x$ and $\gamma_x(1)\in V_j$ for every $x\in V_j$, and the map $V_j\to V_j$ given by $x\mapsto \gamma_x(1)$ defines
a permutation of $V_j$.

As we did in the case of aspherical multicomplexes (cfr. Section \ref{sec:mapping:pi(U,V)}), we begin by defining an action 
of $\bigoplus_{j \in \, \mathbb{N}} \Pi(W_{j}, V_{j})$
on the $1$-skeleton of $\mathcal{AD}_{L}(X)$.

Let us fix an element $g=(g_j)_{j\in \mathbb{N}}$ of $\bigoplus_{j \in \, \mathbb{N}} \Pi(W_{j}, V_{j})$, and choose a set of representatives
$\{\gamma_x\}_{x\in V_j}$ for every $g_j\in \Pi(W_{j}, V_{j})$ (this notation is not ambiguous because every $x\in \mathcal{AD}(X)^0$
belongs to exactly one of the $V_j$).
For every $x\in \mathcal{AD}_{L}(X)^0$ we then set $g\cdot x=\gamma_x(1)$. We have thus constructed an action of $\bigoplus_{j \in \, \mathbb{N}} \Pi(W_{j}, V_{j})$
on the $0$-skeleton of $\mathcal{AD}_{L}(X)^0$.

Let now $e$ be a $1$-simplex of $\mathcal{AD}_{L}(X)$ with vertices $v_{0}\in V_i$, $v_{1}\in V_j$ (where possibly $i=j$).
 Let us fix an affine parametrization $\widetilde{\gamma}_{e} \colon [0,1] \rightarrow |e|$ of $|e|$, and set $\gamma_e=S\circ \widetilde{\gamma}_e$. 
 Let us consider the concatenation of paths $\gamma' \colon [0,1] \rightarrow X$ given by
$$
\gamma' = \gamma_{v_{0}}^{-1} \ast  \gamma_{e} \ast \gamma_{v_{1}}.
$$
By construction $\gamma'(0) \in \, V_{i}$ and $\gamma'(1) \in \, V_{j}$. Moreover, the image of $\gamma'$ is contained in $W_i\cup W_j$, hence $\gamma'$ is admissible. Finally,
the strong ad-homotopy class of $\gamma'$  is independent of the choice of the representatives
$\gamma_{v_0}$, $\gamma_{v_1}$. Since $\mathcal{AD}_{L}(X)$ is ad-complete and ad-minimal, there exists a unique $1$-simplex $e'$ of $\mathcal{AD}_{L}(X)$ which is strongly ad-homotopic to
(the class defined by) $\gamma'$, and we set $g\cdot e=e'$. It is immediate to check that this construction indeed defines an action of $\bigoplus_{j \in \, \mathbb{N}} \Pi(W_{j}, V_{j})$
on $\mathcal{AD}(X)^1$.

For every $j\in\mathbb{N}$ we have a natural group homomorphism $$\Pi(U_j,V_j) \to \Pi(W_j,V_j).$$ As in the previous sections,
we denote by the symbol $\Pi_{W_{j}}(U_{j}, V_{j})$ the image of $\Pi(U_{j}, V_{j})$ under this map. By restricting the action just described to the direct sum of the
$\Pi_{W_{j}}(U_{j}, V_{j})$ we obtain the following:

\begin{prop}\label{Cor-az-piUj-Wj-1-skel}
There exists a well-defined action of $$\bigoplus_{j \in \, \mathbb{N}} \Pi_{W_{j}}(U_{j}, V_{j})$$ on the $1$-skeleton of $\mathcal{AD}_{L}(X)$. 
In particular, this action induces a natural homomorphism $$\Phi \colon \bigoplus_{j \in \, \mathbb{N}} \Pi_{W_{j}}(U_{j}, V_{j}) \rightarrow \aut(\mathcal{AD}_{L}(X)^{1}),$$ 
where $\aut(\mathcal{AD}_{L}(X)^{1})$ denotes the group  of simplicial automorphisms of the $1$-skeleton of $\mathcal{AD}_{L}(X)^{1}$. 
\end{prop}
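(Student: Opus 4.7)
The plan is to reduce the proposition to the action already constructed in the paragraph preceding the statement. Namely, the construction just described yields a homomorphism
\[
\psi\colon \bigoplus_{j\in\mathbb{N}}\Pi(W_j,V_j)\longrightarrow \aut(\mathcal{AD}_L(X)^1),
\]
and by the very definition of $\Pi_{W_j}(U_j,V_j)$ as the image of the natural morphism $\Pi(U_j,V_j)\to\Pi(W_j,V_j)$ induced by the inclusion $(U_j,V_j)\hookrightarrow (W_j,V_j)$, the group $\bigoplus_j \Pi_{W_j}(U_j,V_j)$ sits canonically as a subgroup of $\bigoplus_j \Pi(W_j,V_j)$. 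The desired homomorphism $\Phi$ will therefore be obtained simply as the restriction of $\psi$ along this inclusion. The bulk of the proof thus reduces to verifying carefully that $\psi$ itself is well-defined as a group homomorphism.

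First I would check that, for an edge $e$ of $\mathcal{AD}_L(X)$ with endpoints $v_0\in V_i$, $v_1\in V_j$ and an element $g=(g_k)_k$, the edge $g\cdot e$ is independent of the choice of representative paths $\{\gamma_x\}_{x\in V_k}$ for the classes $g_k$. If $\gamma'_{v_0}$ is another representative, then $\gamma_{v_0}$ and $\gamma'_{v_0}$ are homotopic relative to their endpoints through a homotopy with image in $W_i$; concatenating this with the constant homotopies on $\gamma_e$ and $\gamma_{v_1}$ produces an admissible homotopy (image contained in $W_i\cup W_j$, colors of the endpoints preserved, and constant at the endpoints) between $\gamma_{v_0}^{-1}*\gamma_e*\gamma_{v_1}$ and $(\gamma'_{v_0})^{-1}*\gamma_e*\gamma_{v_1}$, so ad-minimality of $\mathcal{AD}_L(X)$ forces the two resulting edges to coincide. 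Next, $\gamma'=\gamma_{v_0}^{-1}*\gamma_e*\gamma_{v_1}$ is itself admissible because its image lies in $W_i\cup W_j$ and its endpoints, belonging to $V_i\cup V_j$, are distinct; ad-completeness and ad-minimality of $\mathcal{AD}_L(X)$ then provide the unique edge $e'=g\cdot e$ that is strongly ad-homotopic to $\gamma'$.

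Finally, I would check that the assignment $g\mapsto\Phi(g)$ is a group homomorphism and that it really defines an action of the \emph{direct sum}. The first point follows from associativity of path concatenation together with compatibility of the latter with strong ad-homotopy; the second point amounts to showing that the actions of $\Pi(W_i,V_i)$ and $\Pi(W_j,V_j)$ commute for $i\neq j$, which is immediate since the $V_k$ are pairwise disjoint, so any element of $\Pi(W_i,V_i)$ fixes all vertices in $V_j$ and only alters edges incident to $V_i$, up to strong ad-homotopy. Restricting $\psi$ along the inclusion $\bigoplus_j\Pi_{W_j}(U_j,V_j)\hookrightarrow\bigoplus_j\Pi(W_j,V_j)$ then yields $\Phi$. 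The main obstacle is not conceptual but purely bookkeeping: one must keep track of how representative paths, colors of vertices, and the ambient sets $W_j$ interact so as to guarantee that every arising homotopy is genuinely admissible; once that is in place, the statement follows.
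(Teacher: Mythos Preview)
Your proposal is correct and follows exactly the same approach as the paper: the proposition is stated immediately after the construction of the action of $\bigoplus_j \Pi(W_j,V_j)$ on $\mathcal{AD}_L(X)^1$, and the paper simply says that one obtains the result ``by restricting the action just described to the direct sum of the $\Pi_{W_j}(U_j,V_j)$.'' You have merely spelled out in more detail the verifications (independence of representatives, admissibility, homomorphism property) that the paper dismisses with ``it is immediate to check.''
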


In fact, we are going to prove that an automorphism of $\mathcal{AD}_{L}(X)^1$ arises from an element in $\bigoplus_{j \in \, \mathbb{N}} \Pi_{W_{j}}(U_{j}, V_{j})$ if and only if it is \emph{admissible}
in the following sense:

\begin{Definizione}
 Let $K,K'$ be submulticomplexes of $\mathcal{AD}_{L}(X)$, and let $\varphi\colon K\to K'$ be a simplicial map. We say that $\varphi$ is \emph{admissible} if 
 %the following conditions hold:
 %\begin{enumerate}
  %\item 
  the map $S|_{|K'|}\circ \varphi\colon |K|\to X$ 
  %extends to a map $f\colon | \mathcal{AD}_{L}(X)|\to X$ which
   is ad-homotopic to $S|_{|K|}\colon |K|\to X$.
 %\item 
 %\end{enumerate}
 
 If $K=K'$ and $\varphi$ is an automorphism, then we say that
$\varphi$ belongs to the group of admissible automorphisms of $K$, which will be denoted by $\aut_{AD}(K)$. (The fact that admissible automorphisms indeed
form a group is immediate.)
 \end{Definizione}
 
 The following lemma readily implies that every admissible  automorphism of $\mathcal{AD}_{L}(X)^1$ is induced by an element of $\bigoplus_{j \in \, \mathbb{N}} \Pi_{W_{j}}(U_{j}, V_{j})$.
 
 \begin{lemma}\label{adm:cammini}
  Let $\varphi\in \aut_{AD}(\mathcal{AD}_{L}(X)^{1})$, and let $H$ be an admissible homotopy between $S|_{|\mathcal{AD}_{L}(X)^{1}|}$
  and $S|_{|\mathcal{AD}_{L}(X)^{1}|}\circ\varphi$. For every vertex $v$ of $\mathcal{AD}_{L}(X)$ let $\gamma_v\colon [0,1]\to X$ be defined by $\gamma_v(t)=H(v,t)$. Then the element
  $\gamma\in \bigoplus_{j \in \, \mathbb{N}} \Pi_{W_{j}}(U_{j}, V_{j})$ defined by the (classes of) the $\gamma_v$ is such that $\Phi(\gamma)=\varphi$.
 \end{lemma}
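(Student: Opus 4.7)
The plan is to verify first that the data $\{\gamma_v\}_v$ packages into a well-defined element $\gamma \in \bigoplus_j \Pi_{W_j}(U_j,V_j)$, and then to check that $\Phi(\gamma)$ and $\varphi$ coincide on the $0$- and $1$-simplices of $\mathcal{AD}_L(X)^1$, which suffices since both are simplicial automorphisms of the $1$-skeleton.

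For the first step I would exploit each of the four conditions in the definition of admissible homotopy. For a vertex $v \in V_j$, condition~(2) forces $\gamma_v([0,1]) \subseteq U_j$, and condition~(1) gives $\gamma_v(1) = (S\circ\varphi)(v) \in V_j$, so $\gamma_v$ is an admissible path inside $U_j$ with endpoints in $V_j$. Because $\varphi$ is a simplicial automorphism of the $0$-skeleton preserving colors, the map $V_j \to V_j$, $v \mapsto \gamma_v(1)$, is a permutation of $V_j$. Condition~(4) (bounded support) produces a finite set $V_0$ of vertices outside which $\gamma_v$ is constant; combined with the local finiteness of $\{W_j\}_{j\in\mathbb{N}}$ (or simply with the fact that each $V_j$ is finite) this implies that only finitely many components $g_j \in \Pi(U_j,V_j)$ are non-trivial. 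Hence $\gamma = (g_j)_{j}$ gives a well-defined element of $\bigoplus_j \Pi_{W_j}(U_j,V_j)$.

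For the second step, the equality $\Phi(\gamma)\cdot v = \varphi(v)$ on vertices is immediate from $\gamma\cdot v = \gamma_v(1) = H(v,1) = \varphi(v)$. For an edge $e$ with endpoints $v_0 \in V_i$, $v_1 \in V_j$ and affine parametrization $\widetilde{\gamma}_e\colon [0,1] \to |e|$, the key object is the ``square'' $H^e(s,t) = H(\widetilde{\gamma}_e(s), t)$, whose image lies in $W_i \cup W_j$ by admissibility condition~(3). Its bottom side is $\gamma_e = S\circ\widetilde{\gamma}_e$, its top side is $(S\circ\varphi)\circ\widetilde{\gamma}_e$ (which is a parametrization of $\varphi(e)$), and its left and right sides are $\gamma_{v_0}$ and $\gamma_{v_1}$. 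The standard square-filling argument then produces a homotopy, inside $W_i\cup W_j$ and relative to the endpoints, between the concatenation $\gamma_{v_0}^{-1} \ast \gamma_e \ast \gamma_{v_1}$ and $(S\circ\varphi)\circ\widetilde{\gamma}_e$.

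Finally, this homotopy is exactly a strong ad-homotopy between the path used to define $\gamma\cdot e$ and the singular simplex representing $\varphi(e)$. By ad-minimality of $\mathcal{AD}_L(X)$ there is at most one edge strongly ad-homotopic to a given admissible path, and by ad-completeness there is at least one, so $\gamma \cdot e = \varphi(e)$ and thus $\Phi(\gamma) = \varphi$. No serious obstacle arises; the only mild care required is bookkeeping the colors (so that each $\gamma_v$ really lives in the prescribed $U_j$ and connects points of the same $V_j$) and the fact that reparametrizing $\varphi(e)$ by an affine automorphism of $\Delta^1$ does not affect its strong ad-homotopy class as an abstract simplex of $\calK(X)$.
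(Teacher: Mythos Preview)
Your proof is correct and follows essentially the same approach as the paper's. The paper dispatches the first step in two sentences (bounded support gives finitely many non-constant $\gamma_v$; condition~(2) forces each $\gamma_v$ to stay in the appropriate $U_j$) and then simply asserts that $\Phi(\gamma)=\varphi$ ``readily follows from the definitions''; your square argument for the edges is exactly the unpacking of that assertion.
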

\begin{proof}
 Since the homotopy $H$ has bounded support, all but a finite number of the $\gamma_v$ are constant. Moreover, 
 by definition of admissible homotopy, if $v\in V_j$ then $\gamma_v$ is supported in $U_j$. Therefore, the paths
 $\gamma_v$ indeed define an element 
 $\gamma\in \bigoplus_{j \in \, \mathbb{N}} \Pi_{W_{j}}(U_{j}, V_{j})$. The fact that $\Phi(\gamma)=\varphi$ now readily follows from the definitions. 
 \end{proof}

 \begin{lemma}\label{1dimensional:case}
  For every $\gamma\in\bigoplus_{j \in \, \mathbb{N}} \Pi_{W_{j}}(U_{j}, V_{j})$ we have $\Phi(\gamma)\in \aut_{AD}(\mathcal{AD}_{L}(X)^1)$.
 \end{lemma}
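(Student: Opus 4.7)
The plan is to construct an explicit ad-homotopy
$$H\colon |\mathcal{AD}_L(X)^1|\times [0,1]\to X$$
between $S|_{|\mathcal{AD}_L(X)^1|}$ and $S|_{|\mathcal{AD}_L(X)^1|}\circ\Phi(\gamma)$, verifying the four defining conditions of ad-homotopy. Write $\gamma=(\gamma_j)_{j\in\mathbb{N}}$ with only finitely many $\gamma_j$ non-trivial, and for each $j$ fix a representative family $\{\gamma_{j,v}\}_{v\in V_j}$ of paths in $U_j$ with $\gamma_{j,v}(0)=v$, $\gamma_{j,v}(1)\in V_j$, such that $\gamma_{j,v}$ is constant for all but finitely many $v$. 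Let $V_0$ be the finite set of vertices $v$ for which $\gamma_{j(v),v}$ is non-constant; this will serve as the bounded-support set.

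On the $0$-skeleton, I would define $H(v,t)=\gamma_{j(v),v}(t)$ for $v\in V_{j(v)}$. This immediately ensures conditions (1) and (2): the color of $H(v,0)=v$ agrees with that of $H(v,1)=\gamma_{j(v),v}(1)=\Phi(\gamma)(v)\in V_{j(v)}$, and the whole path stays in $U_{j(v)}$. On an edge $e$ with endpoints $v_0\in V_i$, $v_1\in V_j$ and affine parametrization $\widetilde{\gamma}_e$ with $\gamma_e=S\circ\widetilde{\gamma}_e$, I would build $H$ on $|e|\times[0,1]$ in two stages. First, for $t\in[0,1/2]$ I use the obvious natural homotopy deforming $\gamma_e$ through the family of paths
$$s\mapsto (\gamma_{i,v_0}|_{[0,s]})^{-1} * \gamma_e * (\gamma_{j,v_1}|_{[0,s]})$$
(suitably reparametrized), producing at time $1/2$ the concatenation $\gamma':=\gamma_{i,v_0}^{-1}*\gamma_e*\gamma_{j,v_1}$; this homotopy has image in $U_i\cup W_{j(e)}\cup U_j\subseteq W_i\cup W_j=\bigcup_{k\in J_{S}(e)}W_k$, where $J_S(e)=\{i,j\}$, since $\gamma_e$ is admissible and $\gamma_{i,v_0}\subseteq U_i\subseteq W_i$, $\gamma_{j,v_1}\subseteq U_j\subseteq W_j$. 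Second, for $t\in[1/2,1]$ I would use the strong ad-homotopy (relative to the boundary, with image in $W_i\cup W_j$) between $\gamma'$ and $S\circ\Phi(\gamma)(e)$ guaranteed by the very definition of $\Phi(\gamma)(e)$ as the unique edge of $\mathcal{AD}_L(X)$ strongly ad-homotopic to $\gamma'$. Concatenation of these two stages matches the prescribed vertex homotopies at the endpoints, so $H$ is well-defined and continuous.

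It remains to check the ad-homotopy conditions on edges. Condition (1) on colors was already verified on vertices, hence trivially for any edge $\sigma$: $J_S(\sigma)=J_{S\circ\Phi(\gamma)}(\sigma)$. Condition (3) follows from the two explicit constructions above, both of which keep the image inside $W_i\cup W_j=\bigcup_{k\in J_S(e)}W_k$. For condition (4) (bounded support), if $e$ has no vertex in $V_0$, then both $\gamma_{i,v_0}$ and $\gamma_{j,v_1}$ are constant, hence $\gamma'=\gamma_e$, and ad-minimality of $\mathcal{AD}_L(X)$ forces $\Phi(\gamma)(e)=e$ with $S\circ\Phi(\gamma)(e)=\gamma_e$; in this case I can (and do) choose the homotopy to be constant on $|e|\times[0,1]$. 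Thus only finitely many edges — those with at least one vertex in $V_0$ — carry a non-trivial homotopy, giving condition (4).

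The main obstacle is the second stage of the construction on edges: one must be sure that a \emph{single} strong ad-homotopy between $\gamma'$ and $S\circ\Phi(\gamma)(e)$ can be fitted into a globally continuous $H$ on $|\mathcal{AD}_L(X)^1|\times[0,1]$. This is harmless because each edge is treated independently and the two stages glue continuously at $t=1/2$, but it is where the structure of the proof is concentrated: the very definition of $\Phi(\gamma)$ via strong ad-homotopy classes supplies exactly the homotopy we need. Once the four conditions of ad-homotopy are verified, we conclude $S\circ\Phi(\gamma)$ is ad-homotopic to $S$, i.e.~$\Phi(\gamma)\in\aut_{AD}(\mathcal{AD}_L(X)^1)$, as desired.
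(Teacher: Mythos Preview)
Your argument is correct and follows essentially the same route as the paper's proof: define the vertex homotopy via the paths $\gamma_{j(v),v}$, extend over each edge first by ``dragging'' the endpoints along these paths to reach the concatenation $\gamma'=\gamma_{i,v_0}^{-1}*\gamma_e*\gamma_{j,v_1}$, and then concatenate with the strong ad-homotopy to the representative edge in $\mathcal{AD}_L(X)^1$. One tiny bookkeeping point: your initial declaration $H(v,t)=\gamma_{j(v),v}(t)$ does not literally match the endpoint trace of your two-stage edge homotopy, which yields $H(v,t)=\gamma_{j(v),v}(\min(2t,1))$; just take the latter as the definition on vertices (it is the same for every edge through $v$, so $H$ is globally well-defined), and all four ad-homotopy conditions go through exactly as you describe.
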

\begin{proof}
 Let $\gamma$ be defined by the (classes of the)
paths $\{\gamma_x\}_{x\in V_j, j\in \mathbb{N}}$, and let $\varphi=\Phi(\gamma)$. 
We need to prove that the map $S|_{|\mathcal{AD}_{L}(X)^{1}|}\circ \varphi$  is ad-homotopic to the natural projection
 $S|_{|\mathcal{AD}_L(X)^1|}$.
 
By definition, if $v\in V_j$ is a vertex of $X$, then the path $\gamma_v$ is supported in $U_j$. Therefore,
the map 
$$H \colon |\mathcal{AD}_{L}(X)^{0}| \times I \rightarrow X\ , \quad H(v, t) = \gamma_{v}(t)$$ 
is an admissible homotopy between 
$S \vert_{|\mathcal{AD}_{L}(X)^{0}|}$ and $S \circ \varphi \vert_{|\mathcal{AD}_{L}(X)^{0}|} = H(\cdot, 1)$. 

Let now $e$ be a $1$-simplex of $\mathcal{AD}_{L}(X)^{1}$ with endpoints $v_0,v_1$. Let $\widetilde{\gamma}_e\colon [0,1]\to |\mathcal{AD}_{L}(X)^{1}|$ be an affine parametrization of $e$
and let $\gamma_e=S\circ\widetilde{\gamma}_e$. The homotopy $H$ may be extended to a homotopy 
$H'\colon |\mathcal{AD}_{L}(X)^{1}| \times I \rightarrow X$ such that 
 the path 
$\gamma'_e\colon [0,1]\to X$ given by
$\gamma'_e(t)=H'(\widetilde{\gamma}_e(t),1)$ is a reparametrization
of the path $\gamma_{v_0}^{-1}*\gamma_e*\gamma_{v_1}$.

 We already observed that such a path is admissible, and 
$\varphi(e)$ is in fact defined as the unique $1$-simplex of $\mathcal{AD}_{L}(X)^{1}$ lying in the same strong ad-homotopy
class of $\gamma_{v_0}^{-1}*\gamma_e*\gamma_{v_1}$. As a consequence, we can
simultaneously homotope (via an admissible homotopy relative to the endpoints) each path of the form $\gamma_{v_0}^{-1}*\gamma_e*\gamma_{v_1}$ into its representative in 
$\mathcal{AD}_{L}(X)^{1}$. By concatenating these homotopies with  $H'$ we obtain the desired 
admissible homotopy
between $S|_{|\mathcal{AD}_{L}(X)^{1}|}$ and $S |_{|\mathcal{AD}_{L}(X)^{1}|}\circ \varphi$. This concludes the proof.
\end{proof}

Putting together Lemmas~\ref{adm:cammini} and~\ref{1dimensional:case} we get the following:

\begin{prop}
 $$
\Phi\left(\bigoplus_{j \in \, \mathbb{N}} \Pi_{W_{j}}(U_{j}, V_{j})\right)=\aut_{AD}(\mathcal{AD}_{L}(X)^1)\ .
$$
\end{prop}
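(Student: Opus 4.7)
The plan is to observe that the proposition is now essentially a formal combination of the two preceding lemmas, each of which supplies one of the two inclusions. The equality will be established by proving the two containments separately.

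First, I would prove the inclusion $\Phi\left(\bigoplus_{j \in \mathbb{N}} \Pi_{W_{j}}(U_{j}, V_{j})\right) \subseteq \aut_{AD}(\mathcal{AD}_{L}(X)^1)$. This is a direct application of Lemma~\ref{1dimensional:case}: given any element $\gamma \in \bigoplus_{j \in \mathbb{N}} \Pi_{W_{j}}(U_{j}, V_{j})$, the lemma asserts precisely that the induced automorphism $\Phi(\gamma)$ is admissible, i.e.~it lies in $\aut_{AD}(\mathcal{AD}_{L}(X)^1)$. Hence every element in the image of $\Phi$ is an admissible automorphism of the $1$-skeleton.

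For the reverse inclusion $\aut_{AD}(\mathcal{AD}_{L}(X)^1) \subseteq \Phi\left(\bigoplus_{j \in \mathbb{N}} \Pi_{W_{j}}(U_{j}, V_{j})\right)$, I would invoke Lemma~\ref{adm:cammini}. Starting from an arbitrary admissible automorphism $\varphi$, by definition there exists an admissible homotopy $H$ between $S|_{|\mathcal{AD}_{L}(X)^1|}$ and $S|_{|\mathcal{AD}_{L}(X)^1|}\circ\varphi$. The lemma then produces explicitly an element $\gamma \in \bigoplus_{j \in \mathbb{N}} \Pi_{W_{j}}(U_{j}, V_{j})$, whose components are the homotopy classes of the paths $\gamma_v(t)=H(v,t)$ traced by the vertices, and shows that $\Phi(\gamma) = \varphi$. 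The bounded support condition in the definition of admissible homotopy guarantees that all but finitely many of the $\gamma_v$ are constant, so that the direct sum is well-defined, while the vertex condition in the definition of admissible homotopy ensures that each $\gamma_v$ with $v \in V_j$ is supported in $U_j$, yielding indeed an element of $\Pi_{W_{j}}(U_{j}, V_{j})$.

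There is no real obstacle here, since both lemmas have already been proved in the previous section; the proposition is simply a bookkeeping statement combining them. Accordingly, the proof reduces to a one-line argument quoting Lemmas~\ref{adm:cammini} and~\ref{1dimensional:case} and observing that the two inclusions together give the desired equality.
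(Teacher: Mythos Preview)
Your proposal is correct and matches the paper's approach exactly: the paper simply states that the proposition follows by ``putting together Lemmas~\ref{adm:cammini} and~\ref{1dimensional:case}'', and your argument spells out precisely how each lemma furnishes one of the two inclusions.
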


 We are now going to prove that every admissible automorphism of $\mathcal{AD}_{L}(X)^1$ extends to an admissible automorphism
 of $\mathcal{AD}_{L}(X)$.
%Take $\gamma\in\bigoplus_{j \in \, \mathbb{N}} \Pi_{W_{j}}(U_{j}, V_{j})$. 
%We are now going to extend the simplicial automorphism $\Phi(\gamma)$ of the $1$-skeleton of $\mathcal{AD}_{L}(X)$ to a simplicial automorphism of the whole 
%$\mathcal{AD}_{L}(X)$. 
A rather disappointing consequence of the non-asphericity of (finite unions of) the $W_j$ is that such extension need not be unique (see Remark~\ref{extension-non-unique}).
We begin with the following:

\begin{lemma}\label{extend:admissible}
 Let $K$ be any submulticomplex of $\mathcal{AD}_{L}(X)$, let $f\colon |\mathcal{AD}_{L}(X)|\to X$ be 
 an admissible map 
 and let 
 $$
 H'\colon |K|\times [0,1]\to X
 $$
 be an admissible homotopy such that $H'(x,0)=f(x)$ for every $x\in |K|$. 
 Let also $A$ be the submulticomplex of $\mathcal{AD}_L(X)$ such that
a  simplex $\Delta$ of $\mathcal{AD}_L(X)$ belongs to $A$ if and only if $H(x,t)=x$ for every $x\in |\Delta\cap K|$ (in particular, if $\Delta\cap K=\emptyset$, i.e.~if no vertex of $\Delta$ lies in $K$,
then $\Delta\subseteq A$).

 Then $H'$ may be extended to an admissible homotopy
 $$
 H\colon |\mathcal{AD}_{L}(X)|\to X
 $$
such that $H(x,0)=f(x)$ for every $x\in |\mathcal{AD}_{L}(X)|$ and $H(x,t)=x$ for every $x\in |A|$.
\end{lemma}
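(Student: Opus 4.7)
The plan is to extend $H'$ by induction on the dimension of the simplices of $\mathcal{AD}_L(X)$ that lie outside $K\cup A$. I first set $H=H'$ on $|K|\times[0,1]$ and $H(x,t)=f(x)$ on $|A|\times[0,1]$. These prescriptions are compatible on the overlap $|K|\cap|A|$: for a simplex $\Delta$ of $K\cap A$ one has $|\Delta\cap K|=|\Delta|$, and by the very definition of $A$ the homotopy $H'$ equals $f$ throughout $|\Delta|\times[0,1]$, so both recipes give $H=f$ there. Note also that any vertex $v$ outside $K$ satisfies $|\{v\}\cap K|=\emptyset$, so the defining condition of $A$ is vacuous at $v$ and hence $\{v\}\in A$; the $0$-skeleton of $\mathcal{AD}_L(X)$ is therefore already entirely covered.

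For the inductive step, assume $H$ has been defined on $|K|\cup|A|$ and on all simplices of dimension less than $n$, and let $\Delta$ be an $n$-simplex lying in neither $K$ nor $A$. Every proper face of $\Delta$ lies in $K$, in $A$, or in the $(n-1)$-skeleton outside $K\cup A$, so $H$ is already defined on $|\partial\Delta|\times[0,1]$. Together with the prescription $H(x,0)=f(x)$ on $|\Delta|\times\{0\}$ (which agrees with the existing values on $|\partial\Delta|\times\{0\}$ since $H'(\cdot,0)=f$), this produces a continuous partial homotopy
$$
\tilde H\colon |\Delta|\times\{0\}\,\cup\,|\partial\Delta|\times[0,1]\longrightarrow X .
$$
Composing $\tilde H$ with a standard deformation retraction $r\colon |\Delta|\times[0,1]\to |\Delta|\times\{0\}\cup|\partial\Delta|\times[0,1]$ yields the desired extension of $H$ over $|\Delta|\times[0,1]$.

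The retraction construction preserves admissibility essentially for free. Writing $W_\Delta=\bigcup_{j\in J_f(\Delta)}W_j$, the admissibility of $f$ gives $f(|\Delta|)\subseteq W_\Delta$; for every face $\sigma$ of $\Delta\cap K$ one has $J_f(\sigma)\subseteq J_f(\Delta)$, hence by admissibility of $H'$ also $H'(|\sigma|\times[0,1])\subseteq W_\Delta$; and by induction $H$ sends $|\partial\Delta|\times[0,1]$ into $W_\Delta$ as well. Therefore $\tilde H$, and thus $\tilde H\circ r$, takes values in $W_\Delta$, verifying condition (3) of the definition of admissible homotopy. Conditions (1) and (2) at vertices are immediate, since each vertex $v\notin K$ receives the constant path $t\mapsto f(v)$ while each vertex $v\in K$ is handled by $H'$. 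For the bounded-support condition (4), let $V_0$ be the finite set of vertices of $K$ moved non-trivially by $H'$; any simplex of $\mathcal{AD}_L(X)$ with no vertex in $V_0$ has all of its $K$-vertices fixed by $H'$ and is therefore mapped constantly by $H'$ on $|\Delta\cap K|\times[0,1]$, hence lies in $A$ and is mapped by the constant homotopy under our extension.

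The main point requiring care is the combinatorial organization of the induction: for each $n$-simplex $\Delta$ outside $K\cup A$, one must check that every proper face falls into a case already handled and that the gluings along overlaps are consistent. No further serious obstacle arises, because the retraction $r$ never enlarges the image of $\tilde H$ and so admissibility transfers automatically from the boundary of the simplex to its interior.
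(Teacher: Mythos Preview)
Your proof is correct and follows essentially the same approach as the paper's. The paper simply invokes the standard homotopy extension property for CW pairs (citing Hatcher, Prop.~0.16) and then runs the same inductive admissibility check via the containment $H(|\sigma|\times[0,1])\subseteq H(|\partial\sigma|\times[0,1])\cup f(|\sigma|)$; you spell out the retraction construction explicitly, including the compatibility check on $|K\cap A|$ and the reason the extension is constant on $|A|$, which the paper takes for granted.
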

\begin{proof}
 We claim that the standard proof of the homotopy extension property for CW pairs (see e.g.~\cite[Prop. 0.16]{hatcher}) already provides the required homotopy. 
 Indeed, if $H\colon |\mathcal{AD}_{L}(X)|\to X$ is the homotopy described in the reference, then for every  simplex $\sigma$ of $\mathcal{AD}_{L}(X)$
 the inclusion 
 $$
 H(|\sigma|\times [0,1])\subseteq H(|\partial \sigma|\times [0,1])\cup f(|\sigma|)
 $$
 holds. Moreover, if $x\in|A|$, then $H(x,t)=f(x)$ for every $t\in [0,1]$. This already implies that $H$ has bounded support,
 since $H'$ has bounded support.
 
 Using these facts, it is not difficult to show by induction on the dimension of simplices that $H$ is admissible. Indeed, the fact that the restriction
 of $H$ to $|\mathcal{AD}_{L}(X)|^0$ is admissible is an immediate consequence of the description of the behaviour of $H$ on the  vertices 
 of $\mathcal{AD}_{L}(X)$. Let us now suppose that the restriction of $H$ to $|\mathcal{AD}_{L}(X)^k|\times [0,1]$ is admissible, and let $\sigma$ be a $(k+1)$-dimensional simplex
 of $\mathcal{AD}_{L}(X)$ with facets $\tau_0,\ldots,\tau_{k+1}$. Then $J(\tau_i)\subseteq J(\sigma)$ for every $i=0,\ldots,k+1$,
 so thanks to our inductive hypothesis and to the admissibility of $f$ we have
\begin{align*}
 {H}(|\sigma|\times [0,1]) & \subseteq H(|\partial \sigma|\times [0,1])\cup f(|\sigma|)=\left(\bigcup_{i=0}^{k+1} H(|\tau_i|\times [0,1])\right)\cup f(|\sigma|)\\
 &\subseteq \left(\bigcup_{i=0}^{k+1} \bigcup_{j\in J(\tau_i)} W_j\right)\cup \left(\bigcup_{j\in J(\sigma)} W_j\right)
 \subseteq \bigcup_{j\in J(\sigma)} W_j\ .
 %\subseteq S(|\sigma|)\cup \left(\bigcup_{i=0}^k \gamma_{v_i}([0,1])\right)
%\subseteq S(|\sigma|)\cup \left(\bigcup_{j\in J(\sigma)} U_j\right)\\ &\subseteq \bigcup_{j\in J(\sigma)} U_j\subseteq \bigcup_{j\in J(\sigma)} W_j\ .
\end{align*}
This shows that the restriction of $H$ to $|\mathcal{AD}_{L}(X)^{k+1}|\times [0,1]$ is admissible, thus concluding the proof.
\end{proof}

Let $\varphi$ be an element of $\aut_{AD}(\mathcal{AD}_L(X)^1)$, and recall that we aim at extending $\varphi$ to an admissible automorphism of the whole of
$\mathcal{AD}_L(X)$.
Our strategy is straightforward. 
By Lemma~\ref{extend:admissible}, 
the composition $S|_{|\mathcal{AD}_L(X)^1|}\circ\varphi$ extends to an admissible
map $f\colon |\mathcal{AD}_L(X)|\to X$ that is ad-homotopic to $S$.
Since $f$ is admissible, for every abstract $k$-dimensional simplex $\sigma$ in $\mathcal{AD}_{L}(X)$, the restriction of $f$ to $\sigma$ is an admissible singular $k$-simplex,
so we would be tempted to construct the extension $\widetilde{\varphi}$ of $\varphi$ by defining $\widetilde{\varphi}(\sigma)$ to be the class of $f|_{\sigma}$. Unfortunately, not every admissible simplex is represented
by an abstract simplex in the multicomplex $\mathcal{AD}_{L}(X)$. The following proposition takes care of this issue by suitably modifying $f$ within
its ad-homotopy class.

\begin{prop}\label{extension:prop}
Let $K,K'$ be submulticomplexes of $\mathcal{AD}_L(X)$ each of which contains $\mathcal{AD}_L(X)^1$, and let
$\varphi\colon K\to K'$ be an admissible isomorphism between $K$ and $K'$. Let also $H\colon |K|\times [0,1]\to X$ be an admissible homotopy between
$S|_{|K|}$ and $S|_{|K'|}\circ \varphi$, and let $A$ be the submulticomplex of $\mathcal{AD}_L(X)$ such that
a $k$-dimensional simplex $\Delta$ of $\mathcal{AD}_L(X)$ belongs to $A$ if and only if $H(x,t)=x$ for every $x\in |\Delta\cap K|$.

Then, there exists an admissible automorphism $\widetilde{\varphi}\in\aut_{AD}(\mathcal{AD}_L(X))$ extending $\varphi$. Moreover,
the maps $S\circ \widetilde{\varphi}$ and $\widetilde{\varphi}$ are ad-homotopic via a homotopy which is constant on $|A|$.
\end{prop}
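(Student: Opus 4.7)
The plan proceeds in three stages: extend the admissible homotopy from $|K|$ to all of $|\mathcal{AD}_L(X)|$; define $\widetilde{\varphi}$ skeleton by skeleton using ad-completeness and ad-minimality to make canonical choices; and verify bijectivity via a special-sphere argument based on Proposition~\ref{bijection-ad}.

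First, I would apply Lemma~\ref{extend:admissible} to the admissible projection $S\colon|\mathcal{AD}_L(X)|\to X$ and to the admissible homotopy $H$ to produce an admissible homotopy $\widetilde{H}\colon|\mathcal{AD}_L(X)|\times[0,1]\to X$ that extends $H$, starts at $S$, and is constant on $|A|$. Denote its time-$1$ map by $f_0$, so that $f_0=S\circ\varphi$ on $|K|$ and $f_0=S$ on $|A|$.

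Next, I would inductively define $\widetilde{\varphi}$ together with a sequence of modifications of $f_0$. Set $\widetilde{\varphi}=\varphi$ on $K$. Assume that $\widetilde{\varphi}$ has been defined simplicially on $\mathcal{AD}_L(X)^{n-1}$ and that an admissible map $f_{n-1}$, obtained from $f_0$ through strong ad-homotopies rel boundary performed on lower-dimensional simplices outside $K$, satisfies $f_{n-1}=S\circ\widetilde{\varphi}$ on $\mathcal{AD}_L(X)^{n-1}$. For each $n$-simplex $\sigma\not\in K$, the singular simplex $f_{n-1}|_{|\sigma|}$ is admissible and has boundary $S\circ\widetilde{\varphi}|_{\partial\sigma}$, which factors as $S$ composed with a simplicial embedding of $\partial\sigma$ into $\mathcal{AD}_L(X)^{n-1}$. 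By ad-completeness and ad-minimality, there exists a unique $n$-simplex $\sigma'$ whose characteristic map with this prescribed boundary is strongly ad-homotopic to $f_{n-1}|_{|\sigma|}$; I would set $\widetilde{\varphi}(\sigma)=\sigma'$ and use the strong ad-homotopy to modify $f_{n-1}$ on $|\sigma|$ (rel $\partial|\sigma|$) into $f_n$. Concatenating all these modifications with $\widetilde{H}$ produces an admissible homotopy between $S$ and $S\circ\widetilde{\varphi}$ that is constant on $|A|$, which gives the second statement of the proposition.

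Finally, I would verify that $\widetilde{\varphi}$ is bijective by induction on dimension; the base case holds because $\widetilde{\varphi}$ restricts to the isomorphism $\varphi$ on $\mathcal{AD}_L(X)^1\subseteq K$. For injectivity, if $\widetilde{\varphi}(\sigma_1)=\widetilde{\varphi}(\sigma_2)$ for distinct $n$-simplices $\sigma_1,\sigma_2$, inductive injectivity on the $(n-1)$-skeleton gives $\partial\sigma_1=\partial\sigma_2$, so $\sigma_1,\sigma_2$ are compatible with common color set $J(\sigma_1)$. Applying the admissible homotopy from $S$ to $S\circ\widetilde{\varphi}$ pointwise in $t$ to the special sphere $\dot{S}^n(\sigma_1,\sigma_2)\to|\mathcal{AD}_L(X)|$ composed with $S$ yields, thanks to the equator consistency coming from $\partial\sigma_1=\partial\sigma_2$, a homotopy of maps $\dot{S}^n\to\bigcup_{j\in J(\sigma_1)}W_j$ whose time-$1$ value sends both hemispheres to the contractible set $|\widetilde{\varphi}(\sigma_1)|$; hence the special sphere is null-homotopic within $\bigcup_{j\in J(\sigma_1)}W_j$, and Proposition~\ref{bijection-ad} forces $\sigma_1=\sigma_2$, a contradiction. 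For surjectivity, performing the analogous construction for $\varphi^{-1}\colon K'\to K$ with the reversed admissible homotopy produces an admissible simplicial map $\widetilde{\psi}$ extending $\varphi^{-1}$, which is also injective by the same argument; combining the injectivity of both $\widetilde{\varphi}$ and $\widetilde{\psi}$ with a careful synchronization of the two extension procedures, exploiting the uniqueness clause of ad-minimality, should yield $\widetilde{\psi}\circ\widetilde{\varphi}=\id$ and $\widetilde{\varphi}\circ\widetilde{\psi}=\id$, so $\widetilde{\varphi}$ is an automorphism.

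The main obstacle is the surjectivity half of the third stage: while the injectivity argument converts cleanly into a null-homotopy of a special sphere via Proposition~\ref{bijection-ad}, the symmetric statement $\widetilde{\psi}\circ\widetilde{\varphi}=\id$ cannot be obtained by a direct iteration of the injectivity argument, since the second iterate of the composition may fail to factor through a contractible subcomplex when one tries to set up the analogous special sphere. The delicate point is to exploit the admissibility and color-preservation property of the concatenated homotopy in order to synchronize the two extension procedures so that they produce genuine mutual inverses.
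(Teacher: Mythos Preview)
Your first two stages are essentially the paper's construction, and your injectivity argument is close to the paper's (the paper packages it as showing that the map $\Phi_\Delta\colon\pi(\Delta)\to\pi(\varphi_{k+1}(\Delta))$ equals $\Theta_2^{-1}\circ F_{k+1}$, where $F_{k+1}$ is the map induced by the admissible $f_{k+1}$ and is bijective because $f_{k+1}$ is ad-homotopic to $S$; this is the same special-sphere idea you sketch).

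The surjectivity argument, however, is where your proposal has a real gap, and you have correctly identified it. Building a second extension $\widetilde\psi$ of $\varphi^{-1}$ and hoping to ``synchronize'' the two procedures does not work without further input: the strong ad-homotopy used to pick $\widetilde\varphi(\sigma)$ is not canonical (there are many admissible fillings in the same strong ad-homotopy class, and many different compatible simplices), so there is no reason $\widetilde\psi\circ\widetilde\varphi$ should equal the identity simplex by simplex. The paper avoids this entirely. It interleaves the bijectivity proof with the inductive construction, proving at step $k+1$ that $\varphi_{k+1}$ is already an isomorphism onto $K'\cup\mathcal{AD}_L(X)^{k+1}$, and it does surjectivity directly in two moves. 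First, given a target $(k+1)$-simplex $\Delta'$, it uses the inductive bijectivity of $\varphi_k$ to pull back $\partial\Delta'$ to a simplicial embedding $g=\varphi_k^{-1}\circ\psi|_{\partial\Delta^{k+1}}$, and then manufactures an admissible singular filling of $S\circ g$ by gluing the admissible homotopy $H_k$ along the boundary to the singular simplex $S\circ\psi$; ad-completeness then produces a genuine simplex $\Delta$ of $\mathcal{AD}_L(X)$ with $\varphi_k(\partial\Delta)=\partial\Delta'$. Second, since $\Phi_\Delta\colon\pi(\Delta)\to\pi(\varphi_{k+1}(\Delta))$ is already known to be a bijection (from the same $\Theta_2^{-1}\circ F_{k+1}$ argument), $\Delta'$ lies in its image. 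No second extension $\widetilde\psi$ is ever built.
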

\begin{proof}
We first prove the following claim: 
for every $k\geq 1$, there exist a simplicial isomorphism $\varphi_k\colon K\cup \mathcal{AD}_{L}(X)^{k}\to K'\cup \mathcal{AD}_{L}(X)^{k}$
%automorphism $\varphi_k\in\aut_{AD}(K\cup \mathcal{AD}_{L}(X)^{k})$ 
and an admissible map
 $f_k\colon |\mathcal{AD}_{L}(X)|\to X$ such that the following conditions hold: 
 \begin{enumerate}
  \item $\varphi_1=\varphi$ and $f_1$ is ad-homotopic to $S$ relative to $|A|$;
\item for every $k\geq 1$, 
  $
  S|_{|K'\cup \mathcal{AD}_{L}(X)^{k}|}\circ \varphi_k=f_k|_{|K\cup \mathcal{AD}_{L}(X)^{k}|}
  $;
  \item for every $k\geq 2$, the map $f_{k}$ is ad-homotopic to $f_{k-1}$ relative to $|K\cup A\cup \mathcal{AD}_{L}(X)^{k-1}|$;
  (in particular, $f_k$ is ad-homotopic to $f_1$, hence to $S$, for every $k\geq 1$).
%  \item for every $k\geq 2$, we have $\varphi_{k}|_{|\mathcal{AD}_{L}(X)^{k-1}|}=\varphi_{k-1}$.
 \end{enumerate}
 We set $\varphi_1=\varphi$ and argue by induction on $k\geq 1$. When $k=1$, our claim reduces to the existence of a map
 $f_1\colon |\mathcal{AD}_{L}(X)|\to X$ which extends $S|_{|K'|}\circ \varphi_1$ and is ad-homotopic to $S$ relative to $|A|$.
 We can then set $f_1=H_1(x,1)$, where $H_1\colon |\mathcal{AD}_L(X)|\times [0,1]\to X$ is a homotopy extending $H$ as provided by Lemma~\ref{extend:admissible}.
 
 We may thus suppose that 
maps $f_1,\ldots,f_k,\varphi_1,\ldots,\varphi_k$ as described in the claim exist, and we now need to construct $f_{k+1}\colon |\mathcal{AD}_{L}(X)|\to X$
and $\varphi_{k+1}\colon K\cup \mathcal{AD}_{L}(X)^{k+1}\to K'\cup \mathcal{AD}_{L}(X)^{k+1}$.
%$\varphi_{k+1}\in\aut_{AD}(K\cup \mathcal{AD}_{L}(X)^{k+1})$.  

Let us first define the simplicial map $\varphi_{k+1}$ as follows.
We require that the restriction of $\varphi_{k+1}$ to $K\cup \mathcal{AD}_{L}(X)^{k}$ is equal to $\varphi_k$. Then,
let $\sigma$ be a $(k+1)$-dimensional simplex of $\mathcal{AD}_{L}(X)$, and let us fix an affine parametrization $\alpha\colon \Delta^{k+1}\to |\sigma|$. 
Since $f_k$ is admissible, the singular simplex $\beta\colon \Delta^{k+1}\to X$ defined by $\beta=f_k\circ \alpha$ is admissible. 
Moreover, since $f_k|_{|\partial \sigma|}=S\circ \varphi_k|_{|\partial \sigma|}$, we have $\beta|_{|\partial \Delta^{k+1}|}=(S\circ \varphi_k\circ \alpha)|_{|\partial \Delta^{k+1}|}$, i.e.~$\beta$
is the composition of the natural projection $S$ with a simplicial embedding of $\partial \Delta^{k+1}$ into $\mathcal{AD}_{L}(X)$. Since $\mathcal{AD}_{L}(X)$
is ad-complete, this implies that the class of $\beta$ is strongly ad-homotopic to a simplex $\sigma'$ of $\mathcal{AD}_{L}(X)$. We then 
set $\varphi_{k+1}(\sigma)=\sigma'$. It is immediate to check that $\varphi_{k+1}$ is indeed a simplicial map.
Moreover, if $\sigma$ is a simplex of 
$K\cup \mathcal{AD}_{L}(X)^{k}$, then from the fact that $S|_{|K'\cup \mathcal{AD}_{L}(X)^{k}|}\circ \varphi_k=f_k|_{|K\cup \mathcal{AD}_{L}(X)^{k}|}$
we deduce that the restriction of $f_k$ to $|\sigma|$ already corresponds to a simplex of $K'\cup \mathcal{AD}_{L}(X)^{k}$, and this readily implies that
$\varphi_{k+1}$ coincides with $\varphi_k$ on $K\cup \mathcal{AD}_{L}(X)^{k+1}$.
We will prove later that $\varphi_{k+1}$ is an isomorphism of $K\cup\mathcal{AD}_{L}(X)^{k+1}$
onto $K'\cup\mathcal{AD}_{L}(X)^{k+1}$.

Let us now construct the required map $f_{k+1}\colon |\mathcal{AD}_{L}(X)|\to X$. 
By definition of strong admissible homotopy between singular simplices, we can
homotope $\beta$ (via an admissible homotopy relative to the boundary of $|\Delta^{k+1}|$) to a singular simplex whose class is equal to $\sigma'$. 
By simultaneously taking into account all the $(k+1)$-dimensional simplices of $\mathcal{AD}_{L}(X)$ we thus get an admissible 
 homotopy
$$
H'_{k+1}\colon |K\cup \mathcal{AD}_{L}(X)^{k+1}| \times I \rightarrow X
$$
between $f_k|_{|K\cup \mathcal{AD}_{L}(X)^{k+1}|}$ and the map $S_{|K'\cup \mathcal{AD}_{L}(X)^{k+1}|}\circ \varphi_{k+1}$.
We have already observed that, if $\sigma$ is a simplex of 
$K\cup \mathcal{AD}_{L}(X)^{k}$, then the restriction of $f_k$ to $|\sigma|$ corresponds to a simplex of $K'\cup \mathcal{AD}_{L}(X)^{k}$. This implies that the homotopy $H'_{k+1}$
may be chosen to be constant on $|\sigma|$. Moreover, if $\sigma$ is a simplex of $A$, then $f_k$ and the restriction of $S$ coincide on $|\sigma|$, so again we may
assume that $H'_{k+1}$ is constant on $|\sigma|$. We may thus assume that the homotopy $H'_{k+1}$ is constant on $ |K\cup A\cup \mathcal{AD}_{L}(X)^{k}|$.

Thanks to Lemma~\ref{extend:admissible} we may extend $H'_{k+1}$ to an admissible homotopy 
$$
H_{k+1}\colon |\mathcal{AD}_{L}(X)| \times I \rightarrow X
$$
relative to $ |K\cup A\cup \mathcal{AD}_{L}(X)^{k}|$
between $f_k$ and a map $f_{k+1}\colon |\mathcal{AD}_{L}(X)|\to X$ extending $S\circ \varphi_{k+1}$, and this concludes the construction of $f_{k+1}$. 

In order to conclude the proof of the claim, we are now left to show that $\varphi_{k+1}$ is an isomorphism of $K\cup\mathcal{AD}_{L}(X)^{k+1}$
onto $K'\cup\mathcal{AD}_{L}(X)^{k+1}$.

We already know that $\varphi_{k+1}|_{K\cup \mathcal{AD}_{L}(X)^k}=\varphi_k$ is an isomorphism,
so it suffices to show that $\varphi_{k+1}$ induces a bijection on simplices of dimension $k+1$. Let $\Delta$ be such a simplex, and let
$\pi(\Delta)$ be the set of simplices of $\mathcal{AD}_{L}(X)$ that are compatible with $\Delta$. As usual, we fix an ordering on the vertices of $\Delta$,
and we denote by $x_0$ the first vertex of $\Delta$. We also set $W=\bigcup_{j\in J(\Delta)} W_j$. Observe that, since the simplices of $\mathcal{AD}_{L}(X)$
are admissible, the composition of the natural projection $S$ with any special sphere constructed via simplices in $\pi(\Delta)$ has values in $W$. Moreover, by construction the coloring
of the vertices of $\varphi_{k+1}(\Delta)$ is the same as the coloring of the vertices of $\Delta$, so the same is true also for special spheres constructed via simplices in $\pi(\varphi_{k+1}(\Delta))$,
and also for the composition of $f_{k+1}$ with special spheres constructed via simplices in $\pi(\Delta)$. 
We may thus consider the maps
$$
\begin{array}{cc}
\Theta_1\colon \pi(\Delta)\to \pi_{k+1}(W,x_0)\, ,\quad &
\Theta_1(\Delta')=[S\circ \dot{S}^{k+1}(\Delta,\Delta')]\ ,\\
F_{k+1}\colon \pi(\Delta)\to \pi_{k+1}(W,\varphi_{k+1}(x_0))\, ,\quad & 
F_{k+1}(\Delta')=[f_{k+1}\circ \dot{S}^{k+1}(\Delta,\Delta')]\ ,\\
\Theta_2\colon \pi(\varphi_{k+1}(\Delta))\to \pi_{k+1}(W,\varphi_{k+1}(x_0))\, ,\quad  &
\Theta_2(\Delta')=[S\circ \dot{S}^{k+1}(\varphi_{k+1}(\Delta),\Delta')]\ .
\end{array}
$$
We know from Proposition~\ref{bijection-ad} that both $\Theta_1$ and $\Theta_2$ are bijective. Let us prove that also $F_{k+1}$ is. 
Indeed, since $\Theta_1$ is bijective, every element of $\pi_{k+1}(W,x_0)$ is of the form
$[S\circ \dot{S}^{k+1}(\Delta,\Delta')]$ for exactly one choice of $\Delta' \in \pi(\Delta)$. Now $f_{k+1}$ is ad-homotopic to $S$,
and this readily implies that, if we denote by $Z$ the submulticomplex of $\mathcal{AD}_{L}(X)$ given by all the simplices of $\pi(\Delta)$
and all their faces, the restrictions $f_{k+1}|_{|Z|}\colon |Z|\to W$ and $S|_{|Z|}\colon |Z|\to W$ are homotopic. 
As a consequence, 
every element of $\pi_{k+1}(W,\varphi_{k+1}(x_0))$ is of the form
$[f_{k+1}\circ \dot{S}^{k+1}(\Delta,\Delta')]$ for exactly one choice of $\Delta' \in \pi(\Delta)$, i.e.~$F_{k+1}$ is bijective.

Let us now consider the map
$$
\Phi_{\Delta}\colon \pi(\Delta)\to \pi(\varphi_{k+1}(\Delta))\, ,\quad \Phi_{\Delta}(\Delta')=\varphi_{k+1}(\Delta')
$$
induced by $\varphi_{k+1}$. By construction we have $\Phi_{\Delta}=\Theta_2^{-1}\circ F_{k+1}$, so $\Phi_{\Delta}$ is bijective. 
Using this we can now prove that $\varphi_{k+1}$ is itself bijective. Take two $(k+1)$-simplices $\Delta_1,\Delta_2$ of $\mathcal{AD}_{L}(X)$
such that $\varphi_{k+1}(\Delta_1)=\varphi_{k+1}(\Delta_2)$. Since $\varphi_{k+1}|_{\mathcal{AD}_{L}(X)^k}=\varphi_k$ is bijective, 
$\Delta_1$ and $\Delta_2$ are compatible, i.e.~$\Delta_2\in\pi(\Delta_1)$ and $\Phi_{\Delta_1}(\Delta_1)=\Phi_{\Delta_1}(\Delta_2)$.
Since $\Phi_{\Delta_1}$ is injective, we thus have $\Delta_1=\Delta_2$. This shows that $\varphi_{k+1}$ is injective. 

Let now $\Delta'$ be a given $(k+1)$-simplex of $\mathcal{AD}_{L}(X)$, and fix an affine parametrization $\psi\colon |\Delta^{k+1}|\to |\Delta'|$ of $\Delta'$. 
Since $\varphi_k$ is an automorphism of $\mathcal{AD}_{L}(X)^k$,
we may consider the simplicial embedding $g\colon \partial \Delta^{k+1}\to \mathcal{AD}_{L}(X)$ defined by $g=\varphi_{k}^{-1}\circ \psi|_{\partial \Delta^{k+1}}$. 
We claim that the map $S\circ g \colon |\partial \Delta^{k+1}|\to X$ may be extended to an admissible singular simplex defined on the whole of $|\Delta^{k+1}|$.
Indeed, if $H_k$ is an admissible  homotopy between $S$ and $f_k$, then we can define a continuous map $$G\colon (|\partial \Delta^{k+1}|\times [0,1])\cup (|\Delta^{k+1}|\times \{1\})\to X$$
by setting $G(x,t)=H_k(g(x),t)$ if $x\in |\partial \Delta^{k+1}|$ and $G(x,1)=S(\psi(x))$ for every $x\in |\Delta^{k+1}|$. 
After fixing an identification of $(|\partial \Delta^{k+1}|\times [0,1])\cup (|\Delta^{k+1}|\times \{1\})$ with $|\Delta^{k+1}|$ itself
(identifying the vertices of $|\partial \Delta^{k+1}|\times \{0\}$ with the vertices of $|\Delta^{k+1}|$) we may now consider $G$ as an extension
of $g$ to $|\Delta^{k+1}|$. Using that $H_k$ is an admissible homotopy, it is immediate to check that $G$ is an admissible simplex.
By ad-completeness of $\mathcal{AD}_{L}(X)$, the map $G$ is strongly ad-homotopic to a singular simplex appearing in $\mathcal{AD}_{L}(X)$. As a consequence, 
there exists a simplex $\Delta$ in $\mathcal{AD}_{L}(X)$ such that $\partial \Delta=\varphi_{k}^{-1}(\partial \Delta')$. Therefore, $\Delta' \in \, \pi(\varphi_{k+1}(\Delta)).$ Now the surjectivity of the map
$$
\Phi_{\Delta} \colon \pi(\Delta) \rightarrow \pi(\varphi_{k+1}(\Delta))
$$
implies that $\Delta'=\varphi_{k+1}(\Delta'')$ for some $\Delta''\in \pi(\Delta)$, and this concludes the proof of the surjectivity of $\varphi_{k+1}$, whence of the claim.

Let us now conclude the proof of the proposition. Observe that from (2) and (3) (or from the very construction of the $\varphi_k$) we have
$\varphi_{k+1}|_{|K\cup \mathcal{AD}_{L}(X)^{k}|}=\varphi_{k}$ for every $k\geq 1$. 
Therefore, there exist a well-defined simplicial automorphism 
$$
\widetilde{\varphi}\colon \mathcal{AD}_{L}(X)\to \mathcal{AD}_{L}(X)
$$
such that $\widetilde{\varphi}|_{|K\cup \mathcal{AD}_{L}(X)^k|}=\varphi_k$ for every $k\geq 1$, and a well-defined continuous map
$$
\widetilde{f} \colon |\mathcal{AD}_{L}(X)|\to X
$$
such that $\widetilde{f}|_{|\mathcal{AD}_{L}(X)^k|}=f_k|_{|\mathcal{AD}_{L}(X)^k|}$ for every $k\geq 1$. Moreover,
since $S|_{|\mathcal{AD}_{L}(X)^k|}\circ \varphi_k=f_k|_{|\mathcal{AD}_{L}(X)^k|}$ we also have
$$
S\circ\widetilde{\varphi}=\widetilde{f}\ ,
$$
so in order to conclude we only need to observe that $\widetilde{f}$ is ad-homotopic to $S$ relative to $|A|$. We know that $f_1$ is ad-homotopic to $S$ relative to $|A|$, and for every $k\geq 1$
the map $f_{k+1}$ is ad-homotopic to $f_k$ relative to $|A\cup\mathcal{AD}_{L}(X)^k|$. By suitably concatenating
the homotopies between $f_k$ and $f_{k+1}$ one can then construct a homotopy between $f_1$ and $\widetilde{f}$. The usual construction
of this homotopy, which is described e.g.~in \cite[Proposition 11.2]{Strom}, also ensures that the homotopy between
$f_1$ and $\widetilde{f}$ may be chosen to be admissible and relative to $|A|$, since the homotopies between $f_k$ and $f_{k+1}$ are.
\end{proof}

\begin{cor}
Every element of $\aut_{AD}(\mathcal{AD}_L(X)^1)$ extends to an element of $\aut_{AD}(\mathcal{AD}_L(X))$.
\end{cor}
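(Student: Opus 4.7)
The plan is to reduce the statement immediately to Proposition~\ref{extension:prop} by taking $K = K' = \mathcal{AD}_L(X)^1$. Given $\varphi \in \aut_{AD}(\mathcal{AD}_L(X)^1)$, the definition of admissible automorphism guarantees that $S|_{|\mathcal{AD}_L(X)^1|} \circ \varphi$ is ad-homotopic to $S|_{|\mathcal{AD}_L(X)^1|}$, so one can pick a witnessing ad-homotopy $H \colon |\mathcal{AD}_L(X)^1| \times [0,1] \to X$. Both $K$ and $K'$ trivially contain $\mathcal{AD}_L(X)^1$ (they equal it), and $\varphi$ is by hypothesis an admissible simplicial isomorphism between them, so all the hypotheses of Proposition~\ref{extension:prop} are satisfied.

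Applying the proposition yields an admissible automorphism $\widetilde{\varphi} \in \aut_{AD}(\mathcal{AD}_L(X))$ whose restriction to $K = \mathcal{AD}_L(X)^1$ equals $\varphi$, which is exactly the conclusion we want. No further work is required; the corollary is essentially a restatement of Proposition~\ref{extension:prop} in the special case when the source and target submulticomplexes already equal the $1$-skeleton. The substantive content — the inductive skeleton-by-skeleton extension using ad-completeness, together with the correction of $f_k$ via the bijectivity of the maps $\Phi_\Delta$ derived from Proposition~\ref{bijection-ad} — has been absorbed entirely into the proof of Proposition~\ref{extension:prop}, so there is no remaining obstacle to overcome at the level of the corollary.
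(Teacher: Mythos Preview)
Your proof is correct and takes essentially the same approach as the paper: the paper's proof is the single sentence ``Just apply the previous proposition with $K=K'=\mathcal{AD}_L(X)^1$.'' Your additional verification of the hypotheses is accurate but not strictly needed.
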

\begin{proof}
 Just apply the previous proposition with $K=K'=\mathcal{AD}_L(X)^1$.
\end{proof}

\begin{rem}\label{extension-non-unique}
The extension  
$\widetilde{\varphi}$ of $\varphi$ constructed in the previous proposition need not be unique due to the non-asphericity of (finite unions of) the $W_i$.
Indeed, 
the possible non-triviality of the higher homotopy groups of the $W_i$ 
implies that 
two strongly ad-homotopic $k$-simplices may be joined by inequivalent admissible homotopies $H_{1}, H_{2}$ relative to the boundary $|\partial \Delta^{k}|$. 
Here by inequivalent homotopies we mean that there is no homotopy between $H_{1}$ and $H_{2}$ relatively to $|\partial \Delta^k| \times I$. This clearly affects the construction
of $\varphi_{k+1}$, which, therefore, is not determined by $\varphi_k$. Things would be much easier in an aspherical context: any simplicial group action
on the $1$-skeleton of a complete, minimal and aspherical multicomplex may be canonically extended to an action on the whole multicomplex, as we described in Section~\ref{actiononA}.
\end{rem}

Recall that we are identifying the fixed triangulation $L$ of $X$ with a submulticomplex of $\mathcal{AD}_{L}(X)$. The following corollary should hold for every simplex
of $\mathcal{AD}_L(X)$. However, to our purposes it is sufficient to establish it for simplices of $L$.

\begin{cor}\label{scambi}
 Let $\Delta$ be a $k$-simplex of $L\subseteq \mathcal{AD}_L(X)$ with vertices $v_0,\ldots,v_k$, let $j_{i}\in\mathbb{N}$ be the color of $v_i$ for every $i=0,\ldots,k$, and
 suppose that there exist distinct indices $i_1,i_2\in\{0,\ldots,k\}$ such that  $j_{i_1}=j_{i_2}$.
 Then, there exists an element
 $\varphi\in\aut_{AD}(\mathcal{AD}_L(X))$ such that $$\varphi(\Delta)=\Delta\ ,\quad \varphi(v_{i_1})=v_{i_2}\ , \quad \varphi(v_{i_2})=v_{i_1}\ ,$$ and
 $\varphi(v_i)=v_i$ for every $i\in\{0,\ldots,k\}\setminus \{i_1,i_2\}$.
\end{cor}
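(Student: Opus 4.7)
Let $j = j_{i_1} = j_{i_2}$ denote the common color. Since $v_{i_1}$ is a vertex of the $L$-simplex $\Delta$, the closed star assumption on the triangulation places all of $\Delta$ inside $U_j$; in particular the edge $e$ of $\Delta$ from $v_{i_1}$ to $v_{i_2}$ is contained in $U_j$. Let $\gamma\colon [0,1]\to U_j$ be the affine parameterization of $e$ from $v_{i_1}$ to $v_{i_2}$, and let $g\in \Pi_{W_j}(U_j,V_j)\subset \bigoplus_{j'\in\mathbb{N}}\Pi_{W_{j'}}(U_{j'},V_{j'})$ be represented by the family whose only non-constant components are $\gamma_{v_{i_1}}=\gamma$ and $\gamma_{v_{i_2}}=\gamma^{-1}$. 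Set $\varphi_1=\Phi(g)$, which belongs to $\aut_{AD}(\mathcal{AD}_L(X)^1)$ by Lemma~\ref{1dimensional:case}.

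The first key observation is that $\varphi_1$ agrees on the $1$-skeleton of $\Delta$ with the combinatorial swap of $v_{i_1}$ and $v_{i_2}$. On vertices this is immediate. For any edge $e'$ of $\Delta$ with endpoints $v_0,v_1$, the concatenation $\gamma_{v_0}^{-1}\ast \gamma_{e'}\ast \gamma_{v_1}$ used to define $\varphi_1(e')$ has all nonconstant factors supported inside $\Delta\subset U_j$, so it is strongly ad-homotopic within the convex set $\Delta\subset W_j$ to the affine path between the swapped endpoints; by ad-minimality of $\mathcal{AD}_L(X)$ the corresponding $1$-simplex is exactly the edge of $\Delta$ joining those swapped endpoints, as desired.

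Now set $K=\mathcal{AD}_L(X)^1\cup\Delta$, where $\Delta$ is understood with all its faces; this is a submulticomplex of $\mathcal{AD}_L(X)$ containing $\mathcal{AD}_L(X)^1$. Define $\varphi\colon K\to K$ by extending $\varphi_1$ via the combinatorial swap on $\Delta$ and on each of its faces. The preceding paragraph ensures this is a well-defined simplicial automorphism with $\varphi(\Delta)=\Delta$ and the prescribed action on vertices. To see that $\varphi$ is admissible on $K$, let $\tau\colon |\Delta|\to |\Delta|$ be the affine automorphism realizing the swap (using the canonical identification $|\Delta|\cong \Delta\subset X$) and consider the straight-line homotopy $H(x,t)=(1-t)\,x+t\,\tau(x)$. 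On every face $F$ of $\Delta$ containing $v_{i_1}$ or $v_{i_2}$ the homotopy lies in $\Delta\subset U_j\subset W_j$ and $j\in J(F)$, verifying admissibility on $F$; on every face not containing $v_{i_1}$ nor $v_{i_2}$, $\tau$ is the identity, so $H$ is the constant homotopy. Choosing the representatives of $g$ to be the straight-line paths along $e$, one can arrange the extension of Lemma~\ref{1dimensional:case} on the $1$-skeleton so that it restricts to $H$ on the $1$-skeleton of $\Delta$; the two pieces glue into an admissible homotopy on $|K|$ between $S|_K$ and $S|_K\circ \varphi$ with bounded support $\{v_{i_1},v_{i_2}\}$.

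Finally, Proposition~\ref{extension:prop}, applied with $K=K'$ and $\varphi$ as above, produces an admissible automorphism of the whole $\mathcal{AD}_L(X)$ extending $\varphi$, which has all the required properties. The delicate step is the middle one: ensuring that the combinatorial vertex swap on $\Delta$ is \emph{admissibly} compatible with the $1$-skeletal action $\Phi(g)$. For this it is essential both to choose $\gamma_{v_{i_1}},\gamma_{v_{i_2}}$ to travel along $e\subset\Delta$ and to exploit the convexity of $\Delta$ together with the containment $\Delta\subset U_j$, which bypasses the possible non-asphericity of the $W_j$. The non-uniqueness of the final extension noted in Remark~\ref{extension-non-unique} poses no obstacle, since Proposition~\ref{extension:prop} is only required to supply \emph{some} admissible extension matching the prescribed data on $K$.
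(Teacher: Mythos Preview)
Your proof is correct and follows essentially the same route as the paper's: define $g\in\Pi_{W_j}(U_j,V_j)$ via the edge $e$, verify that $\Phi(g)$ realizes the vertex swap on the $1$-skeleton of $\Delta$ using that $\Delta\subset U_j$, extend to $K=\mathcal{AD}_L(X)^1\cup\Delta$ by the combinatorial swap, check admissibility of this extension, and invoke Proposition~\ref{extension:prop}. The paper's justification of admissibility is the one-line remark that $S(|\Delta|)\subset U_{j_i}$ for every $i$, while you spell out the straight-line homotopy on $|\Delta|$; the content is the same, and your gluing claim (that the Lemma~\ref{1dimensional:case} homotopy can be arranged to match on $\Delta^1$) is at the same level of informality as the paper's ``it is easy to check''---both rest on the fact that everything takes place inside the convex simplex $\Delta$, so any required extensions or matchings exist by contractibility.
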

\begin{proof}
 Without loss of generality we may suppose $i_1=0$, $i_2=1$. 
 Recall that the closed stars of $v_0$ and $v_1$ in $L$ are contained in $U_{j_0}=U_{j_1}$. As a consequence, if we denote by $e_{ij}$
 the edge of $\Delta$ joining $v_i$ and $v_j$, and by $\widetilde{e}_{ij}$ the affine parametrization of $e_{ij}$ starting at $v_i$ and ending at $v_j$, then
 the paths $\gamma_0=S\circ \widetilde{e}_{01}\colon [0,1]\to X$, $\gamma_1=S\circ \widetilde{e}_{10}\colon [0,1]\to X$ take values in $U_{j_0}$. 
 Therefore, we can define an element $g=(g_j)\in\bigoplus_{j\in\mathbb{N}} \Pi_{W_j} (U_j,V_j)$
as follows: $g_{j_0}=\{\gamma_{v_0},\gamma_{v_1}\}$, and $g_{j}=1$ for every $j\neq j_0$. 

Since $S(\Delta)\subseteq U_{j_0}$, for every $i_1,i_2,i_3\in\{0,\ldots,k\}$ the paths $S\circ (\widetilde{\gamma}_{i_1i_2}*\widetilde{\gamma}_{i_2i_3})$ and
$S\circ \widetilde{\gamma}_{i_1i_3}$ are strongly ad-homotopic. This implies that the element
$$
\Phi(g)\in \aut_{AD}(\mathcal{AD}_L(X)^1)
$$
acts on the $1$-skeleton of $\Delta$ in such a way that $\Phi(g)(v_0)=v_1$, $\Phi(g)(v_1)=v_0$, and $\Phi(g)(v_j)=v_j$ for every $j\in\{2,\ldots,k\}$. 
Let us denote by $\varphi_1\in \aut (\Delta\cup \mathcal{AD}_L(X)^1)$ the unique simplicial extension of $\Phi(g)$ to $\Delta\cup \mathcal{AD}_L(X)^1$ sending $\Delta$ to $\Delta$.
Using that the whole of $S(|\Delta|)$ is contained in $U_{j_i}$ for every $i=0,\ldots,k$, it is easy to check that $\varphi_1$ is admissible. 

Now Proposition~\ref{extension:prop} (applied to $K=K'=\Delta\cup \mathcal{AD}_L(X)^1$) ensures that $\varphi_1$ can be extended to the desired automorphism $\varphi\in\aut_{AD}(\mathcal{AD}_L(X))$.
 \end{proof}

Our final purpose is the study of locally finite chains in $X$, and to this aim the multicomplex
$\mathcal{AD}_{L}(X)$ seems to be too large (for example, Theorem~\ref{Teor-Az-Tilde-Gamma-Locally-Finite} would not hold with $\mathcal{AD}'_{L}(X)$ -- see the definition below -- replaced by $\mathcal{AD}_{L}(X)$). 
Following Gromov~\cite[Section~4.2, page~63]{Grom82}, we single out a suitable smaller submulticomplex of $\mathcal{AD}_{L}(X)$, namely the smallest
submulticomplex of $\mathcal{AD}_{L}(X)$ which contains $L$ and is left invariant by the action of $\aut_{AD}(\mathcal{AD}_L(X))$.

\begin{Definizione}
Henceforth we denote by $\mathcal{AD}_{L}'(X)$ the submulticomplex of $\mathcal{AD}_{L}(X)$ which is defined as follows:
$$\mathcal{AD}_{L}'(X) \coloneqq \bigcup_{\varphi \in \aut_{AD}(\mathcal{AD}_L(X))} \varphi(L)\ .$$
The fact that $\mathcal{AD}_{L}'(X)$ is indeed a multicomplex is straightforward, and by construction the group $\aut_{AD}(\mathcal{AD}_L(X))$
acts on $\mathcal{AD}_{L}'(X)$ via simplicial automorphisms.
\end{Definizione}

\section{Amenable subgroups of $\aut_{AD}(\mathcal{AD}_{L}'(X))$}\label{Subsec:proof-amen-cert-subgroups}

Our proofs of the Finiteness and the Vanishing Theorem  for locally finite homology make an essential use of the diffusion of chains that we described in Section~\ref{Locally-Finite-Diff}. 
For technical reasons, we need to distinguish the group $\aut_{AD}(\mathcal{AD}_L(X))$ from the group given by the restriction of elements of $\aut_{AD}(\mathcal{AD}_L(X))$ to $\mathcal{AD}_{L}'(X)$, which will be denoted henceforth by the symbol
$\G$ (by construction this group is a subgroup of $\aut_{AD}(\mathcal{AD}_{L}'(X))$).

Our next goal is the construction of a suitable family of subgroups of $\G$ with respect to which the action of $\G$ on the simplices
of $\mathcal{AD}_{L}'(X)$  is locally finite. 
In order to exploit the full power of diffusion of chains, we are also interested in studying when  these subgroups  are amenable.

%Our goal in this section is to introduce an important family of subgroups of $\widetilde{\Gamma}$ and to prove that up to quotient they are \emph{amenable}.

Let us fix an open set $U_{\bar{j}}\in\calU$, and let us set
$$
J(\overline{j})=\{j\in J\, |\, U_j\cap U_{\overline{j}}\neq\emptyset\}\ .
$$
Since $U_{\overline j}$ is relatively compact and the family $\{U_{j}\}_{j\in \mathbb{N}}$ is locally finite, the set $J(\overline{j})$ is finite.

\begin{Definizione}\label{Gjdef}
For every $\overline{j}\in\mathbb{N}$ we denote by $A^{(\overline{j})}$ the subcomplex of $\mathcal{AD}'_L(X)$ given by all the simplices
of $\mathcal{AD}'_L(X)$ having no vertices in $\bigcup_{j\in J(\overline{j})} V_j$. 
We define $\Gamma^{(\bar{j})}$ to be the subgroup of $\G$ given by all the elements $\varphi\in \G$
such that $S|_{\mathcal{AD}'_L(X)}\circ \varphi$ is ad-homotopic to $S|_{\mathcal{AD}'_L(X)}$ relative to $A^{(\overline{j})}$.
%lies in $\Gamma^{(\bar{j})}$
%if and only if the following conditions hold:
%\begin{itemize}
%\item[(i)] If $\Delta$ is a simplex of $\mathcal{AD}_{L}'(X)$ such that $J(\Delta)\cap J(\overline{j})=\emptyset$, then $\varphi(\Delta)=\Delta$.
%(In other words, $\varphi$ fixes all the simplices having no vertices in $\bigcup_{j\in J(\overline{j})} V_{j}$.)
%\item[(ii)] There exists an element $\gamma\in \bigoplus_{j\in J(\overline{j})} \Pi_{W_{j}}(U_{j}, V_{j})$
%such that the restriction of $\varphi$ to $\mathcal{AD}_{L}'(X)^{1}$ coincides with the restriction of $\Phi(\gamma)$ to 
%$\mathcal{AD}_{L}'(X)^{1}$.
%\end{itemize}
\end{Definizione}

\begin{rem}\label{gamma2}
Gromov's definition of the groups $\Gamma^{(\bar{j})}$ is slightly different from ours: according to~\cite[page 63]{Grom82}, for an element $\varphi\in \G$
to lie in $\Gamma^{(\bar{j})}$ it is sufficient that $\varphi$ restricts to the identity of $A^{(\overline{j})}$. 
There is no apparent reason why this condition should imply that $S\circ \varphi$ is ad-homotopic to $S$ relative to $A^{(\overline{j})}$.
With this definition of $\Gamma^{(\bar{j})}$ we are not able to prove the fundamental Theorem~\ref{Gjamenable}, on which the proof of the Vanishing and the Finiteness Theorems
for locally finite homology are heavily based.
\end{rem}
%\begin{rem}
%At first sight, it may seem that condition (i) in the definition of $\G^{(\overline{j})}$ should imply condition (ii). However, this is not clear at all. Indeed, let $\varphi\in\aut_{AD}(\mathcal{AD}_L(X))$
%be an element satisfying (i). By definition, there exists 
%$$\gamma=(\gamma_j) \in \bigoplus_{j\in \mathbb{N}} \Pi_{W_{j}}(U_{j}, V_{j})$$
%such that the restriction of $\varphi$ to $\mathcal{AD}_{L}'(X)^{1}$ coincides with the restriction of $\Phi(\gamma)$ to 
%$\mathcal{AD}_{L}'(X)^{1}$. Moreover, $\gamma$ must act as the identity on every $1$-simplex of $\mathcal{AD}_{L}'(X)^{1}$ having no vertices in $\bigcup_{j\in J(\overline{j})} V_{j}$.
%Unfortunately, this is not sufficient to conclude that $\gamma_j=1$ for every $j\notin J(\overline{j})$.
%\end{rem}

Recall that, if $\Delta$ is a simplex of $\mathcal{AD}'_L(X)$, then $J(\Delta)$ is the set of colors of the vertices of $\Delta$.

\begin{lemma}\label{Jj}
 Let $\Delta$ be a simplex of $\mathcal{AD}'_L(X)$. If $j\in J(\Delta)$, then $J(\Delta)\subseteq J(j)$.
\end{lemma}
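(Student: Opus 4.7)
The plan is to first establish the statement for simplices of the triangulation $L \subseteq \mathcal{AD}'_L(X)$, and then to bootstrap to all simplices of $\mathcal{AD}'_L(X)$ by exploiting the fact that admissible automorphisms preserve the coloring of vertices.

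First I would handle the case $\Delta \in L$. Suppose $v$ is a vertex of $\Delta$ with color $j$, i.e.~$v \in V_j$. By our choice of the triangulation $L$, which was taken so fine that for every vertex $w$ the closed star of $w$ in $L$ is contained in $U_{j(w)}$, we have in particular $|\Delta| \subseteq \overline{\textup{st}}(v,L) \subseteq U_j$. Now let $v'$ be any other vertex of $\Delta$, of color $j' \in J(\Delta)$. Then on the one hand $v' \in |\Delta| \subseteq U_j$, and on the other hand $v' \in \overline{\textup{st}}(v',L) \subseteq U_{j'}$. Hence $v' \in U_j \cap U_{j'}$, so this intersection is nonempty, which by definition means $j' \in J(j)$. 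This proves $J(\Delta) \subseteq J(j)$ when $\Delta \in L$.

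Next, I would pass to a general simplex $\Delta$ of $\mathcal{AD}'_L(X)$. By definition of $\mathcal{AD}'_L(X) = \bigcup_{\varphi \in \aut_{AD}(\mathcal{AD}_L(X))} \varphi(L)$, there exist $\varphi \in \aut_{AD}(\mathcal{AD}_L(X))$ and a simplex $\Delta_0 \in L$ such that $\Delta = \varphi(\Delta_0)$. The key observation is that $\varphi$ preserves the coloring of vertices: by definition of admissible automorphism, $S \circ \varphi$ is ad-homotopic to $S$, and condition~(1) in the definition of ad-homotopy forces every vertex $w$ of $\mathcal{AD}_L(X)$ and its image $\varphi(w)$ to have the same color. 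Consequently $J(\Delta) = J(\varphi(\Delta_0)) = J(\Delta_0)$, and if $j \in J(\Delta) = J(\Delta_0)$ the case already treated yields $J(\Delta_0) \subseteq J(j)$, so $J(\Delta) \subseteq J(j)$, as desired.

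The main (and only) delicate point is confirming that elements of $\aut_{AD}(\mathcal{AD}_L(X))$ indeed preserve the coloring of vertices; this is immediate from the definition of ad-homotopy, so in fact there is no real obstacle. Once this is noted, the lemma reduces cleanly to the geometric statement about the fineness of the triangulation $L$ with respect to the cover $\calU$.
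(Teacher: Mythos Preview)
Your proof is correct and follows essentially the same approach as the paper's: first verify the statement for simplices of $L$ using that closed stars refine the cover $\calU$, then transfer to arbitrary simplices of $\mathcal{AD}'_L(X)$ via the fact that every such simplex lies in the orbit of a simplex of $L$ under a color-preserving admissible automorphism. The paper's proof is more terse, but the argument is the same.
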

\begin{proof}
 If $\Delta$ lies in $L$, then the statement readily follows from the fact that the cover of $X$ given by the closed stars of the vertices of $L$ refines the cover $\{U_j\}_{j\in\mathbb{N}}$. In order to conclude it is now sufficient to observe that every simplex
 of $\mathcal{AD}'_L(X)$ lies in the $\G$-orbit of a simplex of $L$, and $\G$ acts on $\mathcal{AD}'_L(X)$ by color-preserving automorphisms.
\end{proof}

The following result shows that the groups $\G^{(\overline{j})}$ act transitively on certain $\G$-orbits of simplices in $\mathcal{AD}'_L(X)$.

\begin{Teorema}\label{transitive:action}
Let $\Delta$ be a $k$-dimensional simplex of $\mathcal{AD}'_L(X)$ such that at least one vertex of $\Delta$ lies in $V_{\overline{j}}$, and let 
%$\Delta'=\varphi(\Delta)$ 
%for some 
%be the image of $\Delta$ under an element
$\varphi\in \G$.
%\aut_{AD}(\mathcal{AD}_L(X))$. 
%Suppose that $\overline{j}\in\mathbb{N}$ is such that $J(\Delta)=J(\Delta')\subseteq J(\overline{j})$. 
Then there exists $\varphi'\in \G^{(\overline{j})}$ such that $\varphi'|_\Delta = \varphi|_\Delta$.
\end{Teorema}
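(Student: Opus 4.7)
The plan is to construct $\varphi'$ by truncating the paths in $\bigoplus_j \Pi_{W_j}(U_j, V_j)$ which encode a lift of $\varphi$, keeping only those components indexed by $J(\overline{j})$, and then extending the resulting admissible automorphism of the $1$-skeleton to the full multicomplex via Proposition \ref{extension:prop}, in such a way that the extension agrees with $\varphi$ on $\Delta$. The starting observation is that the hypothesis that $\Delta$ has a vertex in $V_{\overline{j}}$ combined with Lemma \ref{Jj} forces $J(\Delta) \subseteq J(\overline{j})$, so every vertex of $\Delta$ has colour in $J(\overline{j})$. Consequently $\Delta$ and $A^{(\overline{j})}$ share no vertex, and hence no face: they are disjoint as subcomplexes of $\mathcal{AD}_L(X)$. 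This rigidity is what will allow the truncated automorphism both to reproduce $\varphi$ on $\Delta$ and to act trivially (up to ad-homotopy) on $A^{(\overline{j})}$.

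Concretely, I would first fix a lift $\widetilde{\varphi} \in \aut_{AD}(\mathcal{AD}_L(X))$ of $\varphi$ together with an admissible homotopy $H$ between $S$ and $S \circ \widetilde{\varphi}$. By Lemma \ref{adm:cammini}, the restriction $\widetilde{\varphi}|_{\mathcal{AD}_L(X)^1}$ equals $\Phi(g)$ for some $g = (g_j)_j \in \bigoplus_j \Pi_{W_j}(U_j, V_j)$ read off from $H$. I would then set $g'_j := g_j$ for $j \in J(\overline{j})$ and $g'_j := 1$ otherwise, and consider $\Phi(g') \in \aut_{AD}(\mathcal{AD}_L(X)^1)$. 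Using the definition of $\Phi$ together with the disjointness above, I would verify two facts: (a) $\Phi(g')$ coincides with $\widetilde{\varphi}$ on $\Delta^1$, because all contributions from $g$ on vertices and edges of $\Delta$ come from components with $j \in J(\overline{j})$, which are preserved by the truncation; and (b) $\Phi(g')$ acts as the identity on $(A^{(\overline{j})})^1$, because all involved paths become trivial, so every edge is fixed. Then I would apply Proposition \ref{extension:prop} with $K = \Delta \cup \mathcal{AD}_L(X)^1$, $K' = \widetilde{\varphi}(\Delta) \cup \mathcal{AD}_L(X)^1$, and $\varphi_0 \colon K \to K'$ equal to $\widetilde{\varphi}$ on $\Delta$ and $\Phi(g')$ on $\mathcal{AD}_L(X)^1$ (compatible on $\Delta^1$ by (a)). The resulting admissible automorphism $\widetilde{\psi} \in \aut_{AD}(\mathcal{AD}_L(X))$ extends $\varphi_0$, and the ad-homotopy between $S$ and $S \circ \widetilde{\psi}$ supplied by the proposition is constant on the subcomplex $A$ of simplices whose intersection with $K$ is fixed by the witnessing homotopy on $|K|$. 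Property (b) together with the fact that $A^{(\overline{j})}$ shares no face with $\Delta$ gives $A^{(\overline{j})} \subseteq A$, so the restriction $\varphi' := \widetilde{\psi}|_{\mathcal{AD}_L'(X)}$ lies in $\G^{(\overline{j})}$ and manifestly satisfies $\varphi'|_\Delta = \varphi|_\Delta$.

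The main obstacle will be producing the admissible homotopy $H_0 \colon |K| \times [0,1] \to X$ required to witness the admissibility of $\varphi_0$: the natural candidate on $\Delta$ is $H|_\Delta$, the natural candidate on $\mathcal{AD}_L(X)^1$ is the path-based homotopy coming from $g'$, and the two must agree on the overlap $|\Delta^1| \times [0,1]$. These two admissible homotopies share endpoints but need not coincide, and directly equating them is a second-order homotopy question obstructed a priori by $\pi_2$ of $\bigcup_{j \in J(\Delta)} W_j$. I expect to bypass this by exploiting the flexibility in the path-based construction: since the individual homotopies on each edge of $\mathcal{AD}_L(X)^1$ are only determined up to strong ad-homotopy, one can pick them on $|\Delta^1|$ to equal $H|_{\Delta^1}$ without affecting the behaviour elsewhere. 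If this direct choice is not enough, one can instead interpolate between the two candidate homotopies on $|\Delta^1|$ by an application of Lemma \ref{extend:admissible}, relying on the fact that both take values in $\bigcup_{j \in J(\overline{j})} W_j$. Once $H_0$ has been constructed, the rest of the argument reduces to a direct application of the extension machinery already developed.
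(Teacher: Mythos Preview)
Your proposal is correct and follows essentially the same approach as the paper: truncate the family of paths to the colours in $J(\overline{j})$, observe that this reproduces $\varphi$ on $\Delta^1$ while acting trivially on $(A^{(\overline{j})})^1$, glue with $\varphi|_\Delta$ to get an admissible isomorphism on $K=\Delta\cup\mathcal{AD}_L(X)^1$, and then invoke Proposition~\ref{extension:prop}. Your anticipated resolution of the gluing issue---choosing the edge homotopies on $|\Delta^1|$ to equal $H|_{\Delta^1}$---is exactly what the paper does (it simply asserts this is ``clear from the proof of Lemma~\ref{1dimensional:case}''), and no genuine $\pi_2$ obstruction arises since $H|_e$ already lands at the correct representative $S\circ\varphi(e)$ at time~$1$.
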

\begin{proof}
Recall that $\mathcal{AD}_{L}'(X)$
is a union of $\aut_{AD}(\mathcal{AD}_L(X))$-orbits in $\mathcal{AD}_L(X)$. Therefore, 
 it suffices to construct an element $\varphi' \in \, \aut_{AD}(\mathcal{AD}_L(X))$ such that
\begin{itemize}
\item[(i)] $\varphi'|_\Delta = \varphi|_\Delta$;
\item[(ii)] $S|_{\mathcal{AD}_L(X)}\circ \varphi'$ is ad-homotopic to $S|_{\mathcal{AD}_L(X)}$ relative to $A^{(\overline{j})}$.
%$\varphi'(\sigma) = \sigma$ for any $\sigma \subset \mathcal{AD}_{L}(X)$ such that $J(\sigma) \cap J(\overline{j}) = \emptyset$;
%\item[(iii)] there exists an element $\gamma\in \bigoplus_{j\in J(\overline{j})} \Pi_{W_{j}}(U_{j}, V_{j})$
%such that the restriction of $\varphi'$ to $\mathcal{AD}_{L}(X)^{1}$ coincides with the restriction of $\Phi(\gamma)$ to 
%$\mathcal{AD}_{L}(X)^{1}$.
\end{itemize}
%Thus, we are reduce to construct an automorphism $\varphi' \in \, \Gamma$ as above. To that end, we consider an element $\varphi \in \, \Gamma$ sending $\Delta$ to $\Delta'$ and then we modify it in order to obtain an element $\varphi' \in \, \Gamma$ which keeps fixed the simplices whose vertices are not coloured as $J(\overline{j})$.

By definition of $\G$, there exists an admissible homotopy $H\colon |\mathcal{AD}'_L(X)|\times [0,1]\to X$ between $S|_{\mathcal{AD}'_L(X)}$ and $S|_{\mathcal{AD}'_L(X)}\circ \varphi$. Let us now consider the following
family of paths: if $v\in \bigcup_{j\in J(\overline{j})} V_j$, then $\gamma_v(t)=H(v,t)$ for every $t\in [0,1]$; if $v$ is any other vertex of $\mathcal{AD}_L(X)$, then $\gamma_v(t)=v$
for every $t\in [0,1]$. By Lemma~\ref{adm:cammini}, 
%By definition of admissible homotopy, if $v\in V_j$ then $\gamma_v$ is supported in $U_j$. Moreover,
%the set $\bigcup_{j\in J(\overline{j})} V_j$ is finite, hence 
the (classes of the) paths $\{\gamma_v\}_{v\in V}$ define an element $\gamma$ of $\bigoplus_{j\in\mathbb{N}} \Pi_{W_j}(U_j,V_j)$.

Observe now that the action of $\Phi(\gamma)$ on $\Delta$ is determined by the paths $\gamma_v$, as $v$ varies among the vertices of $\Delta$. 
Therefore, since $J(\Delta)\subseteq J(\overline{j})$ (see Lemma~\ref{Jj}), Lemma~\ref{adm:cammini} implies that $\Phi(\gamma)$ and $\varphi$ coincide on the $1$-skeleton of $\Delta$. If $\Delta'=\varphi(\Delta)$,
$K=\Delta\cup \mathcal{AD}_L(X)^1$ and $K'=\Delta'\cup \mathcal{AD}_L(X)^1$
we may thus define $\varphi_1\colon K\to K'$ as the unique simplicial 
isomorphism which coincides with $\Phi(\gamma)$ on $ \mathcal{AD}_L(X)^1$ and with $\varphi$ on $\Delta$. It is clear from the proof of  Lemma~\ref{1dimensional:case}
that $S|_{|K'|}\circ\varphi_1$ is ad-homotopic to $S|_{|K|}$ via a homotopy $H_1$ which coincides with $H$ on $|\Delta|\times [0,1]$
and is constant on $A^{(\overline{j})}$. 
Hence, the simplicial isomorphism $\varphi_1$ is admissible. We can now apply
Proposition~\ref{extension:prop} to extend $\varphi_1$ to an automorphism $\varphi'\in\aut_{AD}(\mathcal{AD}_L(X))$ such that
$S\circ \varphi'$ is ad-homotopic to $\varphi'$ relative to $A^{(\overline{j})}$, and this concludes the proof.
\end{proof}

Recall that in order to exploit diffusion of chains we need to look for actions by amenable subgroups. To this aim we fix $\overline{j}\in\mathbb{N}$ and we make the following: 

\vspace{.3cm}\par
\noindent{\bf Assumption:}
$U_j$ is amenable in $W_j$ for every $j\in J(\overline j)$. 
\vspace{.3cm}\par

Under this assumption,
we would like to prove that the group
$\Gamma^{(\bar{j})}$ is amenable. 
Recall that $L$ is locally finite. Therefore, the maximal dimension of the simplices of $L$ having at least one vertex in 
$\bigcup_{j\in J(\overline{j})} V_j$ is equal to a natural number $N\in\mathbb{N}$ (of course, this number may depend on $\overline{j}$). 
%Let $N$ be the dimension of $L$ as a simplicial complex, and observe that 
Since each simplex of  $\mathcal{AD}_{L}'(X)$ is obtained from a simplex of $L$ via a translation by a color-preserving automorphism, also the dimension of any simplex of
 $\mathcal{AD}_{L}'(X)$ having at least one vertex in $\bigcup_{j\in J(\overline{j})} V_j$ is at most $N$.

Let $\Gamma^{(\bar{j})}_{i}$ be the normal subgroup of $\Gamma^{(\bar{j})}$ pointwise fixing the $i$-skeleton $\mathcal{AD}_{L}'(X)^i$ of $\mathcal{AD}_{L}'(X)$.
The following result reduces the amenability of $\Gamma^{(\bar{j})}$ to the amenability of the quotients
$\Gamma^{(\bar{j})}_{i} \slash \Gamma_{i+1}^{(\bar{j})}$, $i=1,\ldots, N-1$.

\begin{lemma}\label{reductionlem}
 Suppose that the quotient $$\Gamma^{(\bar{j})}_{i} \slash \Gamma_{i+1}^{(\bar{j})}$$ is amenable for every $i=1,\ldots, N-1$. Then 
  $\Gamma^{(\bar{j})}$ is amenable.
\end{lemma}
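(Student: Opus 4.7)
\medskip

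\noindent\textbf{Proof plan.} The strategy is to exhibit a finite normal filtration
$$\Gamma^{(\bar{j})} = \Gamma^{(\bar{j})}_0 \supseteq \Gamma^{(\bar{j})}_1 \supseteq \cdots \supseteq \Gamma^{(\bar{j})}_{N-1} \supseteq \Gamma^{(\bar{j})}_N = \{1\}$$
in which each successive quotient is amenable, and then invoke the classical fact that iterated extensions of amenable groups are amenable. The hypothesis of the lemma already provides amenability of $\Gamma^{(\bar{j})}_i/\Gamma^{(\bar{j})}_{i+1}$ for $i=1,\dots,N-1$, so the two genuinely non-trivial points are the triviality of $\Gamma^{(\bar{j})}_N$ and the amenability of $\Gamma^{(\bar{j})}/\Gamma^{(\bar{j})}_1$.

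\smallskip

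To prove $\Gamma^{(\bar{j})}_N = \{1\}$, I would take $\varphi\in\Gamma^{(\bar{j})}_N$ and argue that $\varphi$ fixes every simplex of $\mathcal{AD}'_L(X)$. Simplices of dimension $\le N$ are fixed by definition of $\Gamma^{(\bar{j})}_N$. For $\dim\Delta>N$, the definition of $N$ forces all vertices of $\Delta$ to have colors outside $\bigcup_{j\in J(\bar{j})}V_j$, hence $\Delta\subseteq A^{(\bar{j})}$. The refined definition of $\Gamma^{(\bar{j})}$ (see Remark~\ref{gamma2}) requires $S\circ\varphi$ to be ad-homotopic to $S$ through a homotopy that is \emph{constant} on $|A^{(\bar{j})}|$; hence $S|_{|\Delta|}=S\circ\varphi|_{|\Delta|}$. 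Unpacking the definition of simplices of $\mathcal{K}(X)$ (equivalence classes of singular simplices modulo affine automorphisms of $\Delta^n$), this identifies $\varphi(\Delta)$ with $\Delta$ as abstract simplices, and the equality on vertices (which also lie in $A^{(\bar{j})}$) forces $\varphi|_\Delta=\mathrm{Id}$.

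\smallskip

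For the amenability of $\Gamma^{(\bar{j})}/\Gamma^{(\bar{j})}_1$, I would exploit Lemma~\ref{adm:cammini}: given $\varphi\in\Gamma^{(\bar{j})}$ and a choice of ad-homotopy $H$ between $S$ and $S\circ\varphi$, the paths $\gamma_v(t)=H(v,t)$ assemble into an element $\gamma\in\bigoplus_{j\in\mathbb{N}}\Pi_{W_j}(U_j,V_j)$ with $\Phi(\gamma)=\varphi|_{\mathcal{AD}'_L(X)^1}$. The relativity of $H$ to $A^{(\bar{j})}$ forces $\gamma_v$ to be constant for every vertex $v\notin\bigcup_{j\in J(\bar{j})}V_j$, so in fact $\gamma\in\bigoplus_{j\in J(\bar{j})}\Pi_{W_j}(U_j,V_j)$. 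Our standing assumption that each $U_j$ is amenable in $W_j$ for $j\in J(\bar{j})$ and Lemma~\ref{prop-psi-pi-u-v-amenable} show that the individual summands are amenable; since $J(\bar{j})$ is finite (because $U_{\bar{j}}$ is relatively compact and $\{W_j\}$ is locally finite), the direct sum itself is amenable. The induced map $\Gamma^{(\bar{j})}/\Gamma^{(\bar{j})}_1\hookrightarrow\aut(\mathcal{AD}'_L(X)^1)$ takes values in $\Phi\bigl(\bigoplus_{j\in J(\bar{j})}\Pi_{W_j}(U_j,V_j)\bigr)$, exhibiting $\Gamma^{(\bar{j})}/\Gamma^{(\bar{j})}_1$ as a subgroup of a quotient of an amenable group, hence amenable.

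\smallskip

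Combining these two steps with the hypothesis gives a length-$N$ filtration with amenable quotients, and a straightforward induction using the closure of amenability under group extensions completes the proof.

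\smallskip

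\noindent\emph{Main obstacle.} The delicate point is the triviality of $\Gamma^{(\bar{j})}_N$: one needs to convert the \emph{homotopical} condition ``$S\circ\varphi\simeq S$ rel $A^{(\bar{j})}$'' into the \emph{combinatorial} statement that $\varphi$ fixes every simplex of $A^{(\bar{j})}$ individually, not merely setwise. This is exactly the reason why the definition of $\Gamma^{(\bar{j})}$ had to be strengthened with respect to Gromov's original one (see Remark~\ref{gamma2}); with the weaker definition this step seems to fail.
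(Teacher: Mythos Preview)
Your proposal is correct and matches the paper's proof essentially step for step: first $\Gamma^{(\bar j)}_N=\{1\}$ (using that elements of $\Gamma^{(\bar j)}$ act trivially on $A^{(\bar j)}$, which in turn is exactly where the strengthened definition discussed in Remark~\ref{gamma2} is used), then the hypothesis plus iterated extensions give amenability of $\Gamma^{(\bar j)}_1$, and finally the quotient $\Gamma^{(\bar j)}/\Gamma^{(\bar j)}_1$ is handled via Lemma~\ref{adm:cammini} and Lemma~\ref{prop-psi-pi-u-v-amenable} as a subgroup of a quotient of the finite direct sum $\bigoplus_{j\in J(\bar j)}\Pi_{W_j}(U_j,V_j)$. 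One minor caveat: you write the filtration with $\Gamma^{(\bar j)}_0=\Gamma^{(\bar j)}$, but in the paper's indexing convention $\Gamma^{(\bar j)}_i$ is the subgroup fixing the $i$-skeleton, so $\Gamma^{(\bar j)}_0$ would in principle denote the subgroup fixing all vertices (which is strictly smaller); since the paper never uses the symbol $\Gamma^{(\bar j)}_0$ this causes no real conflict, but it would be cleaner to write the top of the chain simply as $\Gamma^{(\bar j)}\supseteq\Gamma^{(\bar j)}_1\supseteq\cdots$.
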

\begin{proof}
Since the dimension of any simplex of
 $\mathcal{AD}_{L}'(X)$ having at least one vertex in $\bigcup_{j\in J(\overline{j})} V_j$ is at most $N$ and $\Gamma^{(\bar{j})}$ acts
 as the identity on simplices with no vertices in $\bigcup_{j\in J(\overline{j})} V_j$, we have
 %$\mathcal{AD}_{L}'(X)$ is equal to $N$ we obviously 
 $\Gamma_{N}^{(\bar{j})}=\{1\}$. 
The extension of an amenable group by an amenable group is amenable, so an easy inductive argument based on the analysis of the short exact sequences 
$$
1 \rightarrow \Gamma^{(\bar{j})}_{i+1} \slash \Gamma_{N}^{(\bar{j})} \rightarrow \Gamma^{(\bar{j})}_{i} \slash \Gamma_{N}^{(\bar{j})} \rightarrow \Gamma^{(\bar{j})}_{i} \slash \Gamma_{i+1}^{(\bar{j})} \rightarrow 1\ ,
$$
$i=1,\ldots,N-1$, implies that $\Gamma^{(\bar{j})}_{1} \slash \Gamma_{N}^{(\bar{j})}=\Gamma^{(\bar{j})}_{1}$ is amenable. 

Let us now consider the restriction map
$$
R\colon \G^{(\bar{j})} \to \aut_{AD}(\mathcal{AD}_{L}'(X)^{1})\ .
$$
The kernel of $R$ is equal to the amenable group $\Gamma^{(\bar{j})}_{1}$, so in order to conclude it is sufficient to prove that also the image of $R$ is amenable.

By definition,  if $\varphi$ belongs to $\G^{(\overline{j})}$, then $S|_{\mathcal{AD}'_L(X)}\circ\varphi$ is ad-homotopic to $S|_{\mathcal{AD}'_L(X)}$ via a homotopy
which is constant on every vertex of $X$ not belonging to $\bigcup_{j\in J(\overline{j})} V_j$.
By Lemma~\ref{adm:cammini}, this implies that the restriction of $\varphi$ to $\mathcal{AD}_{L}'(X)^{1}$ coincides with the restriction to
$\mathcal{AD}_{L}'(X)^{1}$ of an element in 
$$
\Phi\left(\bigoplus_{j\in J(\overline{j})} \Pi_{W_{j}}(U_{j}, V_{j})\right)\subseteq  \aut_{AD} (\mathcal{AD}_L(X)^1)\ .
$$
Therefore, the group $R(\G^{(\overline{j})}) $ is isomorphic to a subgroup of a quotient of $\bigoplus_{j\in J(\overline{j})} \Pi_{W_{j}}(U_{j}, V_{j}).$
Now Lemma~\ref{prop-psi-pi-u-v-amenable} ensures that each $ \Pi_{W_{j}}(U_{j}, V_{j})$, $j\in J(\overline{j})$, is amenable, and the product
of a finite number of amenable groups is amenable. Since a subgroup of a quotient of an amenable group is amenable, this concludes the proof.
%\rightarrow \Gamma^{(\bar{j})} \slash \Gamma_{N}^{(\bar{j})} \rightarrow R(\G^{(\overline{j})}) \rightarrow 1.
%$$

\end{proof}

We are now reduced to studying the amenability of the group $\Gamma^{(\bar{j})}_{i} \slash \Gamma_{i+1}^{(\bar{j})}$ for $i = 1, \cdots, N-1$, and this will require some work.
Let us fix $i\in\{1,\ldots,N-1\}$. 
For every simplex $\Delta$ of $\mathcal{AD}_{L}'(X)$ we denote by $V(\Delta)$ the set of vertices of $\Delta$. 
Recall that the group  $\Gamma^{(\bar{j})}$ acts as the identity on any simplex having no vertices in $\bigcup_{j\in J(\overline{j})} V_j$. We need to understand the action
of $\Gamma^{(\bar{j})}$ on the remaining simplices, and we are thus lead to the following definition:
%Then
%for every $i\in\mathbb{N}$, we define $\Theta_i$ as the following set of $i$-simplices of  $\mathcal{AD}_{L}'(X)$:
\begin{align*}
 \Theta&=\left\{\Delta\, |\, \Delta\ \textrm{is\ an\ } (i+1)-\textrm{simplex\ of}\  \mathcal{AD}_{L}'(X)\ \textrm{s.t.}\ J(\Delta)\cap J(\overline{j})\neq\emptyset\right\}
 \\ &=\left\{\Delta\, |\, \Delta\ \textrm{is\ an\ } (i+1)-\textrm{simplex\ of}\  \mathcal{AD}_{L}'(X)\ \textrm{s.t.}\ V(\Delta)\cap \left(\bigcup_{j\in J(\overline{j})} V_j\right)\neq\emptyset\right\}\ .
 \end{align*}
 Recall that $\G$ leaves invariant each $V_j$, hence it acts on $\Theta$. Let us say that two simplices $\Delta_1,\Delta_2\in\Theta$ are equivalent if
 they share the same vertex set, i.e.~if $V(\Delta_1)=V(\Delta_2)$. 
  Since $L$ is locally finite and each $V_j$ is finite, the set $\Theta$ contains a finite number of 
 simplices of $L$. 
 Using that the action of $\G$ preserves the coloring of vertices, one can observe that $\Theta$ is the union of the $\Gamma$-orbits of this finite number of simplices, and this implies
 in turn (using again that the action of $\G$ preserves the coloring of vertices) that there exists a finite subset $F$ of the set of vertices of $X$ such that $V(\Delta)\subseteq F$ for every
 $\Delta\in\Theta$. As a consequence, the number of equivalence classes of simplices in $\Theta$ is finite. We denote by $\Theta_1,\ldots,\Theta_h$ these classes, and 
 for every $k=1,\ldots,h$ we
 choose
a representative $\Delta_k$ of $\Theta_k$.

% For every $i=1,\ldots,h$, let $J(i)\subseteq \mathbb{N}$ the set of indices defined by $J(i)=\{j\in \mathbb{N}\, |\, J(i)\cap\ mah $
%Now, for each vertex set $V(\Delta_{i})$, we take the union of all $U_{j}$'s which intersect it and we denote them by $$\bigcup_{s = 1}^{r(V(\Delta))} U_{j_{s}}.$$ Notice that again the union is finite since $\Delta_{i}$ only intersect a finite number of $U_{j}$'s because of $U_{j} \rightarrow +\infty$.
%\begin{Osservazione}
%It is worth noting that also the following union is finite: $$\bigcup_{i = 1}^{h} \bigcup_{s = 1}^{r(V(\Delta_{i}))} U_{j_{s}}.$$
%Moreover, it is clear that if $\sigma \in \, \Theta$ is such that $V(\sigma) = V(\Delta_{i})$, then $r(V(\sigma)) = r(V(\Delta_{i}))$. This shows that all the vertices in $\Theta$ lie in the finite union $$\bigcup_{i = 1}^{h} \bigcup_{s = 1}^{r(V(\Delta_{i}))} U_{j_{s}}.$$
%\end{Osservazione}
%Recall that we are assuming that all the open sets $U_{j}$ in $\{\{U_{j_{s}}\}_{s = 1}^{r(V(\Delta_{i}))}\}_{i = 1}^{h}$ are amenable in the corresponding $W_{j}$.

For every $k=1,\ldots,h$, let us now fix a vertex $p_k\in V(\Delta_k)$ such that $p_k\in \bigcup_{j\in J(\overline{j})} V_j$. 
We will soon deal with special spheres obtained from simplices in $\Theta$. Therefore, for every $k=1,\ldots,h$ 
we fix an ordering on $V(\Delta_k)$ with minimal element $p_k$. 
In this way, the set of vertices of every $\Delta\in\Theta$ is endowed with an ordering such that equivalent simplices are endowed with the same ordering on vertices.
\begin{Definizione}
For every $k = 1, \cdots, h$, we denote by $\mathcal{A}^{*}_k$ the submulticomplex of $\mathcal{AD}_{L}'(X)$ which contains all the $(i+1)$-simplices of 
$\Theta_k$, 
%$\mathcal{AD}_{L}'(X)$
%with vertex set $V(\Delta_{k})$, 
together with their faces. 
%Since each such simplex lies in $\Theta$, hence in $\Theta_k$, we can equivalently define $\mathcal{A}^{*}_k$
%as the multicomplex generated by the simplices in $Theta_k$.
\end{Definizione}

\begin{Definizione}
Let $\sigma$ be a simplex in $\Theta_k$. Then we define the map 
$$\Phi_\sigma\colon \Gamma^{(\bar{j})}_{i} \slash \Gamma_{i+1}^{(\bar{j})} \rightarrow \pi_{i+1}(|\calA_k^*|,p_k)$$
$$ [\varphi]\to [\dot{S}(\sigma, \varphi(\sigma))]\ ,$$
where $\dot{S}(\sigma, \varphi(\sigma))$ denotes the special sphere in $\mathcal{A}^{*}_{k}$ as  in Definition~\ref{special:def}. The definition is well posed:
if $\sigma\in \Theta_k$, the simplices $\sigma$ and $\varphi(\sigma)$ lie in $\calA^*_k$; moreover, since $\varphi$ fixes the $i$-skeleton, they may indeed be glued to define a special sphere.
Finally, if $\varphi\in  \Gamma_{i}^{(\bar{j})}$ and $\widetilde{\varphi}\in  \Gamma_{i+1}^{(\bar{j})}$, then $\varphi(\widetilde{\varphi}(\sigma))=\varphi(\sigma)$, 
so $\Phi_\sigma$ is well defined on the quotient $ \Gamma^{(\bar{j})}_{i} \slash \Gamma_{i+1}^{(\bar{j})}$.

For every $k=1,\ldots,h$ let 
$$
\widehat{W}_k=\bigcup_{j\in V(\Delta_k)} W_j\ ,
$$
and observe that by construction we have $S(|\calA^*_k|)\subseteq \widehat{W}_k$. Therefore, if we denote by
$S_k\colon |\calA^*_k|\to \widehat{W}_k$ the restriction of $S$, then for every $\sigma\in \Theta_k$ we can consider the composition
$$
\Psi_\sigma\colon \Gamma^{(\bar{j})}_{i} \slash \Gamma_{i+1}^{(\bar{j})} \rightarrow \pi_{i+1}(\widehat{W}_k,p_k)\, ,\quad \Psi_\sigma=(S_k)_*\circ \Phi_\sigma\ .
$$
We finally set
$$
\Psi \colon \Gamma^{(\bar{j})}_{i} \slash \Gamma_{i+1}^{(\bar{j})} \rightarrow 
\bigoplus_{k=1}^h \bigoplus_{\sigma \in \, \Theta_k} \pi_{i+1}(\widehat{W}_k, p_{k})\, ,\quad 
\Psi=\bigoplus_{k=1}^h \bigoplus_{\sigma \in \, \Theta_k} \Psi_\sigma\ .
$$
%$$
%[\varphi] \mapsto ([S \circ \dot{S}(\sigma, \varphi(\sigma))])_{\sigma \in \, \Theta},
%$$
%$$
%\Psi \colon \Gamma^{(\bar{j})}_{i} \slash \Gamma_{i+1}^{(\bar{j})} \rightarrow \bigoplus_{\sigma \in \, \Theta} S_{i+1}(\pi_{i+1}(|\mathcal{A}_{\sigma}^{*}|, p_{\sigma}))
%$$
%$$
%\varphi \mapsto ([S \circ \dot{S}(\sigma, \varphi(\sigma))])_{\sigma \in \, \Theta},
%$$
%where $S$ is as usual the natural projection and $\dot{S}(\sigma, \varphi(\sigma))$ denotes a special sphere in $\mathcal{A}^{*}_{\sigma}$. The definition is well posed:
%if $\sigma\in \Theta_k$, the simplices $\sigma$ and $\varphi(\sigma)$ are admissible; moreover, since $\varphi$ fixes the $i$-skeleton, they may indeed glued to define a special sphere.
%Finally, if $\varphi\in  \Gamma_{i}^{(\bar{j})}$ and $\widetilde{\varphi}\in  \Gamma_{i+1}^{(\bar{j})}$, then $\varphi(\widetilde{\varphi}(\sigma))=\varphi(\sigma)$, 
%so $\Psi$ is well defined on the quotient $ \Gamma^{(\bar{j})}_{i} \slash \Gamma_{i+1}^{(\bar{j})}$.
%where $S \colon |\mathcal{A}^{*}_{\sigma}| \rightarrow \bigcup_{s = 1}^{r(V(\sigma))} W_{j_{s}}$ is the restriction of the natural projection and $\dot{S}(\sigma, \varphi(\sigma))$ denotes a special sphere in $\mathcal{A}^{*}_{\sigma}$.
\end{Definizione}

\begin{Osservazione}\label{nohomorem}
The map $\Psi$ introduced in the definition above is \emph{not} a group homomorphism. 
A very similar map was shown to be a group homomorphism in Lemma~\ref{phi:hom}. However, in that context we were restricting our attention only to simplicial automorphisms
homotopic to the identity \emph{relatively to the $0$-skeleton} (see Definition~\ref{definizioneGamma}). Here elements in $\Gamma^{(\bar{j})}_{i}$ are probably homotopic to the identity.
We know that their composition with the projection $S$ is homotopic to $S$ itself, but this homotopy cannot be
relative to the $0$-skeleton, and this causes some technical issues that we need to take into account. The study of the behaviour of the map $\Psi$
with respect to the group structures of its domain and of its target is carried out in Proposition~\ref{algebraicpsi}.
\end{Osservazione}

\begin{Lemma}\label{Lemma-the-set-map-is-inj}
The map $\Psi$ is  injective.
\end{Lemma}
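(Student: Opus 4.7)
The plan is to show that $\ker \Psi = \{0\}$, equivalently, that any $\varphi \in \Gamma^{(\bar j)}_i$ with $\Psi([\varphi])=0$ must fix the entire $(i+1)$-skeleton of $\mathcal{AD}'_L(X)$. Since $\varphi\in\Gamma^{(\bar j)}_i$ already fixes the $i$-skeleton, only the action of $\varphi$ on $(i+1)$-simplices needs to be analysed.

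First I would introduce the natural dichotomy for an $(i+1)$-simplex $\Delta$ of $\mathcal{AD}'_L(X)$: either $\Delta\subseteq A^{(\bar j)}$ (no vertex of $\Delta$ lies in $\bigcup_{j\in J(\bar j)} V_j$), or $\Delta\in\Theta$ (at least one vertex does, in which case $\Delta\in\Theta_k$ for a unique $k$). Every $(i+1)$-simplex falls into exactly one of these two classes, so it suffices to handle each separately.

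For the first case I would use the very definition of $\Gamma^{(\bar j)}$ (Definition~\ref{Gjdef}): the ad-homotopy between $S|_{\mathcal{AD}'_L(X)}$ and $S|_{\mathcal{AD}'_L(X)}\circ\varphi$ is relative to $A^{(\bar j)}$, hence constant on $|\Delta|$, yielding the pointwise equality $S=S\circ\varphi$ on $|\Delta|$. Combined with the fact that $\varphi$ fixes the vertices of $\Delta$ (since $i\geq 1$), for any affine parameterization $\alpha\colon \Delta^{i+1}\to |\Delta|$ we obtain $S\circ\alpha=S\circ\varphi\circ\alpha$. Hence the singular simplex $S\circ\alpha$ simultaneously represents both $\Delta$ (through $\alpha$) and $\varphi(\Delta)$ (through the affine parameterization $\varphi\circ\alpha$ of $|\varphi(\Delta)|$), which forces $\varphi(\Delta)=\Delta$ as abstract simplices of $\mathcal{K}(X)$.

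For the second case I plan to invoke Proposition~\ref{bijection-ad} applied to $\Delta_0=\Delta$; this is the key ingredient. Since $\Delta$ and $\Delta_k$ share the same ordered vertex set (with minimal vertex $p_k$) and $J(\Delta)=J(\Delta_k)$, the proposition yields a bijection $\pi(\Delta)\to \pi_{i+1}(\widehat W_k, p_k)$ sending a compatible simplex $\Delta'$ to $S_*([\dot S^{i+1}(\Delta,\Delta')])$. Because $\varphi$ fixes the $i$-skeleton, $\varphi(\Delta)$ is compatible with $\Delta$, i.e.\ $\varphi(\Delta)\in \pi(\Delta)$; and the hypothesis $\Psi([\varphi])=0$ specializes to $\Psi_\Delta([\varphi])=S_*([\dot S^{i+1}(\Delta,\varphi(\Delta))])=0$. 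Since $\Delta$ itself obviously maps to $0$ under this bijection (the sphere $\dot S^{i+1}(\Delta,\Delta)$ factors through the contractible $|\Delta|$), injectivity gives $\varphi(\Delta)=\Delta$.

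The routine part should be the second case, since Proposition~\ref{bijection-ad} packages the bulk of the work. The main obstacle I anticipate is the first case: one must carefully translate the pointwise identity $S=S\circ\varphi$ on $|\Delta|$ into equality of abstract simplices, exploiting both the definition of simplices of $\mathcal{K}(X)$ as equivalence classes of singular simplices modulo affine symmetry and the fact that $\varphi$ is already known to act trivially on vertices. Once both cases are settled, $\varphi$ fixes every $(i+1)$-simplex, hence $\varphi\in\Gamma^{(\bar j)}_{i+1}$ and $[\varphi]=0$ in the quotient.
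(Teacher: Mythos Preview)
There is a genuine gap in your framing. You write that showing $\Psi^{-1}(0)=\{[\mathrm{id}]\}$ is \emph{equivalent} to injectivity of $\Psi$, but $\Psi$ is \emph{not} a group homomorphism (this is precisely the content of Remark~\ref{nohomorem}; the failure is quantified in Proposition~\ref{algebraicpsi}). For a set map, having a single preimage over $0$ says nothing about other fibres, so your reduction is illegitimate as stated.

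The good news is that your underlying technique is correct and the fix is immediate: run the whole argument with two elements $\varphi_1,\varphi_2$ satisfying $\Psi([\varphi_1])=\Psi([\varphi_2])$. For $\sigma\notin\Theta$ your first-case argument (or the simpler observation that elements of $\Gamma^{(\bar j)}$ act trivially on $A^{(\bar j)}$) gives $\varphi_1(\sigma)=\varphi_2(\sigma)=\sigma$. For $\sigma\in\Theta_k$, the equality $\Psi_\sigma([\varphi_1])=\Psi_\sigma([\varphi_2])$ reads
\[
S_*\bigl([\dot S^{i+1}(\sigma,\varphi_1(\sigma))]\bigr)=S_*\bigl([\dot S^{i+1}(\sigma,\varphi_2(\sigma))]\bigr)
\]
in $\pi_{i+1}(\widehat W_k,p_k)$, and the injectivity of the map $\Theta$ in Proposition~\ref{bijection-ad} (applied with $\Delta_0=\sigma$) immediately yields $\varphi_1(\sigma)=\varphi_2(\sigma)$. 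This is exactly what the paper does; the only difference is that the paper inlines the injectivity part of Proposition~\ref{bijection-ad} (via Lemma~\ref{lemma-the-following-3-cond-equivalent} and the ad-minimality of $\mathcal{AD}_L(X)$) rather than citing the proposition as a black box.
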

\begin{proof}
Let $\varphi_{1}, \varphi_{2} \in \, \Gamma^{(\bar{j})}_{i}$ be such that $\Psi([\varphi_{1}]) = \Psi([\varphi_{2}])$. 
We need to show that the restrictions of $\varphi_1$ and of $\varphi_2$ to $\mathcal{AD}_{L}'(X)^{i+1}$ coincide.
%$\varphi_1|_{\mathcal{AD}_{L}'(X)^{i+1}}=\varphi_2|_{\mathcal{AD}_{L}'(X)^{i+1}}$. 
Let $\sigma$ be an $(i+1)$-simplex of $\mathcal{AD}_{L}'(X)$. If $\sigma\notin \Theta$, then since $\varphi_i\in \G^{(\bar{j})}$ for $i=1,2$ we have that
$\varphi_1(\sigma)=\varphi_2(\sigma)=\sigma$. We may thus suppose that $\sigma\in \Theta_k$ for some $k\in \{1,\ldots,h\}$. 
From $\Psi_\sigma(\varphi_1)=\Psi_\sigma(\varphi_2)$ we deduce that 
$$[S \circ \dot{S}(\sigma, \varphi_{1}(\sigma))] = [S \circ \dot{S}(\sigma, \varphi_{2}(\sigma))]\ \textrm{in}\ \pi_{i+1}(\widehat{W}_k,p_k)\ .$$
By Lemma~\ref{lemma-the-following-3-cond-equivalent} we thus have that 
$S \circ \varphi_{1}(\sigma)$ is homotopic to $S \circ \varphi_{2}(\sigma)$ in $\widehat{W}_k$ relative to $\partial \Delta^{i+1}$.
By definition, this means that the simplices $\varphi_1(\sigma)$ and $\varphi_2(\sigma)$ are strongly ad-homotopic. But $\varphi_1(\sigma)$ and $\varphi_2(\sigma)$
are simplices of the multicomplex $\mathcal{AD}_{L}(X)$, which is ad-minimal, so $\varphi_1(\sigma)=\varphi_2(\sigma)$, and this concludes the proof.
\end{proof}

Recall now that, if $(Y,y_0)$ is a pointed topological space, then for every $n\geq 1$ there is an action $\pi_1(Y,y_0)\actson \pi_n(Y,y_0)$ of the fundamental group of $Y$ on the $n$-th homotopy group
of $Y$
 such that, for every $\gamma\in \pi_1(Y,y_0)$, the 
action by $\gamma$ is a group automorphism of
$\pi_n(Y,y_0)$ (see e.g.~\cite[Chapter 7]{spanier}).

The following lemma is proved in~\cite[Theorem 14, page 386]{spanier}:

\begin{lemma}\label{homaction}
 Let $(Z,z_0)$ and $(Y,y_0)$ be pointed topological spaces, let $f,g\colon  Z\to Y$ be homotopic maps such that 
 $f(z_0)=g(z_0)=y_0$, and let $H\colon Z\times [0,1]\to Y$ be a homotopy such that $H(\cdot,0)=f$ and $H(\cdot,1)=g$.
 Let $n\geq 1$, and let $f_*,g_*\colon \pi_n(Z,z_0)\to \pi_n(Y,y_0)$ be the maps induced by $f,g$, respectively.
 Then we have
 $$
 f_*(\alpha)=\gamma\cdot g_*(\alpha)
 $$
 for every $\alpha\in \pi_n(Z,z_0)$, where  $\gamma\in \pi_1(Y,y_0)$ is the class of the loop $t\mapsto H(x_0,t)$.
\end{lemma}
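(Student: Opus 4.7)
The plan is to deduce the lemma directly from the very definition of the action of $\pi_{1}(Y,y_{0})$ on $\pi_{n}(Y,y_{0})$. Recall (see e.g.~Hatcher, Section~4.A) that, representing classes in $\pi_{n}(Y,y_{0})$ by maps $\psi\colon (I^{n},\partial I^{n})\to (Y,y_{0})$, the class $\gamma\cdot [\psi]$ is represented by the map $\psi_{\gamma}$ which equals a rescaled copy of $\psi$ on the concentric subcube $I^{n}_{1/2}\subset I^{n}$ of side $1/2$, and which on every radial segment from $\partial I^{n}_{1/2}$ to $\partial I^{n}$ runs through the loop $\gamma^{-1}$ (so that $\psi_{\gamma}(\partial I^{n})=\{y_{0}\}$, while on $\partial I^{n}_{1/2}$ the map agrees with the boundary value of the rescaled $\psi$, which is again $y_{0}$, after connection by $\gamma^{-1}$). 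One then verifies that $[\psi_{\gamma}]$ depends only on $[\gamma]\in\pi_{1}(Y,y_{0})$ and on $[\psi]\in\pi_{n}(Y,y_{0})$, and this defines the action.

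First I would pick a representative $\phi\colon (I^{n},\partial I^{n})\to (Z,z_{0})$ of $\alpha$ and form the composite $K\colon I^{n}\times I\to Y$, $K(s,t)=H(\phi(s),t)$. Then $K(\cdot,0)=f\circ\phi$, $K(\cdot,1)=g\circ\phi$, and $K(s,t)=\gamma(t)$ for every $s\in\partial I^{n}$. In particular the trace of the basepoint under $K$ is precisely the loop $\gamma$, even though $K$ itself is only a \emph{free} homotopy between the based maps $f\circ\phi$ and $g\circ\phi$.

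Next I would construct an explicit \emph{based} homotopy $\widetilde{H}\colon I^{n}\times I\to (Y,y_{0})$ from $f\circ\phi$ to the specific representative $(g\circ\phi)_{\gamma}$ of $\gamma\cdot g_{*}(\alpha)$ described above. The idea is to spread the free homotopy $K$ over the central subcube while using the loop $\gamma$ to absorb the basepoint drift in the surrounding collar. Concretely, for $t\in[0,1]$ let $I^{n}_{t}\subset I^{n}$ be the concentric subcube of side $1-t/2$, and let $r_{t}\colon I^{n}_{t}\to I^{n}$ be the affine dilation. Set
\[
\widetilde{H}(s,t)=
\begin{cases}
K(r_{t}(s),t) & \text{if } s\in I^{n}_{t},\\[2pt]
\gamma\bigl((1-u(s,t))\,t\bigr) & \text{if } s\in I^{n}\setminus I^{n}_{t},
\end{cases}
\]
where $u(s,t)\in[0,1]$ is the normalized radial parameter from $\partial I^{n}_{t}$ (where $u=0$) to $\partial I^{n}$ (where $u=1$). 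The two pieces glue along $\partial I^{n}_{t}\times\{t\}$ because there $K$ takes the value $\gamma(t)$, and the outer piece sends $\partial I^{n}$ to $\gamma(0)=y_{0}$ at every time $t$, so $\widetilde{H}$ is based. At $t=0$ the shell collapses and $\widetilde{H}(\cdot,0)=f\circ\phi$; at $t=1$ the inner part is a rescaled copy of $g\circ\phi$ on $I^{n}_{1/2}$ and the outer part traces $\gamma$ radially, i.e.\ $\widetilde{H}(\cdot,1)=(g\circ\phi)_{\gamma}$ by definition of the action.

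The main (modest) obstacle is not conceptual but bookkeeping: choosing reparametrisations so that the formula above is jointly continuous in $(s,t)$ and matches correctly at the interface $\partial I^{n}_{t}$. Once $\widetilde{H}$ is in place, reading off the endpoints yields
\[
f_{*}(\alpha)=[f\circ\phi]=[\widetilde{H}(\cdot,1)]=[(g\circ\phi)_{\gamma}]=\gamma\cdot g_{*}(\alpha),
\]
which is the conclusion of the lemma.
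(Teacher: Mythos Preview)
Your proof is correct: the explicit based homotopy $\widetilde{H}$ does the job, and the gluing and endpoint checks you sketch are exactly the standard verification. The paper, however, does not give its own proof of this lemma at all; it simply cites \cite[Theorem~14, page~386]{spanier} and moves on. So there is nothing to compare at the level of argument---you have supplied a self-contained proof of a classical result that the authors were content to quote from the literature.
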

%\begin{proof}
% Let $H\colon Z\times [0,1]\to Y$ be a homotopy between $f$ and $g$, and let $\beta\colon [0,1]\to Y$ be the loop in $Y$ defined by
% $\beta(t)=H(z_0,t)$. Then the statement holds with $\gamma=[\beta]\in \pi_1(Y,y_0)$.
%\end{proof}

Let now $H^1$ be a subgroup of $\pi_1(Y,y_0)$ and  $H^2$ be a subgroup of $\pi_n(Y,y_0)$. 
Then we denote by $\aut_{H^1}(H^2)$ the following group of automorphisms of $H^2$:
$$
\aut_{H^1}(H^2)=\{f\in \aut(H^2)\, |\, \exists \gamma\in H^1\ \textrm{s.t.}%\ \gamma\cdot H^2=H^2\ \textrm{and}
\ f(h)=\gamma\cdot h\ \forall h\in H^2\}\ . 
$$
It is immediate to check that $\aut_{H^1}(H^2)$ is indeed a group.

For every $k=1,\ldots,h$ let $j_k\in\mathbb{N}$ be such that $p_k\in V_{j_k}$, and let
$H_k^1$ be the image of $\pi_1(U_{j_k},p_k)$ in $\pi_1(\widehat{W}_k,p_k)$ under the map induced by the inclusion $U_{j_k}\hookrightarrow \widehat{W}_k$, and 
$H_k^2<\pi_{i+1}(\widehat{W}_k,p_k)$ be the image  of  $\pi_{i+1}(|\calA^*_k|,p_k)$ under the map induced by the projection $S_k = S |_{|\calA^*_k|}$.
%of the subgroup of  $\pi_{i+1}(|\calA^*_k|,p_k)$ generated by the special spheres 
%with basepoint in $p_k$.

\begin{lemma}\label{homactionamenable}
 For every $k=1,\ldots,h$, 
 the group $\aut_{H^1_k}(H^2_k)$ is  amenable.
\end{lemma}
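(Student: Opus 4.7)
The plan is to exhibit $\aut_{H^1_k}(H^2_k)$ as a quotient of a subgroup of $H^1_k$, and then to show that $H^1_k$ itself is amenable. Once both facts are established, the amenability of $\aut_{H^1_k}(H^2_k)$ follows from the standard closure properties of the class of amenable groups (under subgroups and quotients).

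First, I would introduce the subgroup
\[
H^1_{k,0}=\bigl\{\gamma\in H^1_k\ \bigl|\ \gamma\cdot H^2_k=H^2_k\bigr.\bigr\}\subseteq H^1_k,
\]
which is a genuine subgroup because the action $\pi_1(\widehat{W}_k,p_k)\actson\pi_{i+1}(\widehat{W}_k,p_k)$ is a group action (see~\cite[Chapter 7]{spanier}). The assignment $\gamma\mapsto(h\mapsto\gamma\cdot h)$ defines a group homomorphism $\rho\colon H^1_{k,0}\to\aut(H^2_k)$, because $(\gamma_1\gamma_2)\cdot h=\gamma_1\cdot(\gamma_2\cdot h)$. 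By the very definition of $\aut_{H^1_k}(H^2_k)$, every element of $\aut_{H^1_k}(H^2_k)$ is of the form $h\mapsto\gamma\cdot h$ for some $\gamma\in H^1_k$; since being a well-defined automorphism of $H^2_k$ forces $\gamma$ to belong to $H^1_{k,0}$, the image of $\rho$ is precisely $\aut_{H^1_k}(H^2_k)$. Thus $\aut_{H^1_k}(H^2_k)$ is a quotient of $H^1_{k,0}$, which is in turn a subgroup of $H^1_k$.

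It remains to prove that $H^1_k$ is amenable. Recall that $p_k\in V_{j_k}$ with $j_k\in J(\overline{j})$ (by the very choice of $p_k$), and that by our standing assumption $U_{j_k}$ is amenable in $W_{j_k}$. Since $U_{j_k}\subseteq W_{j_k}\subseteq\widehat{W}_k$, the inclusion-induced map
\[
\pi_1(U_{j_k},p_k)\longrightarrow\pi_1(\widehat{W}_k,p_k)
\]
factors through $\pi_1(W_{j_k},p_k)$. By definition of amenability of a subset, the image $N$ of $\pi_1(U_{j_k},p_k)$ in $\pi_1(W_{j_k},p_k)$ is an amenable subgroup, and $H^1_k$ is, by definition, the image of $N$ under the map $\pi_1(W_{j_k},p_k)\to\pi_1(\widehat{W}_k,p_k)$ induced by the inclusion. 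Therefore $H^1_k$ is a homomorphic image of the amenable group $N$, and hence amenable.

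Putting everything together: $H^1_{k,0}$ is a subgroup of the amenable group $H^1_k$, hence amenable, and $\aut_{H^1_k}(H^2_k)$ is a quotient of $H^1_{k,0}$, hence amenable, which proves the lemma. There is no serious obstacle here; the only point that requires care is checking that the action by $\gamma\in H^1_{k,0}$ does give rise to a group homomorphism into $\aut(H^2_k)$, but this is immediate from the general theory of the action of $\pi_1$ on higher homotopy groups.
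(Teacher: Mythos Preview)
Your proof is correct and follows essentially the same approach as the paper: both argue that $H^1_k$ is amenable (as a homomorphic image of the amenable image of $\pi_1(U_{j_k},p_k)$ in $\pi_1(W_{j_k},p_k)$, using the standing assumption and $j_k\in J(\overline{j})$), and then observe that $\aut_{H^1_k}(H^2_k)$ is a quotient of the subgroup of $H^1_k$ preserving $H^2_k$. Your version simply spells out the details of the homomorphism $\rho$ more explicitly than the paper does.
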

\begin{proof}
Recall that the image of $\pi_1(U_{j_k},p_k)$ in $\pi_1(W_{j_k},p_k)$ is amenable, so from the 
 inclusions $U_{j_k}\subseteq W_{j_k}\subseteq \widehat{W}_k$ we deduce that $H_k^1$, being the quotient of an amenable
 group, is amenable. But $\aut_{H^1_k}(H^2_k)$ is a quotient of a subgroup of $H^1_k$ (namely, of the subgroup given by those elements which restrict to automorphisms of $H^2_k$), whence the conclusion.
\end{proof}

From Lemma~\ref{homaction} we deduce the following:

\begin{lemma}\label{homaction2}
For every $k=1,\ldots,h$, there exists a homomorphism $$\omega_k\colon \Gamma^{(\overline{j})}_{i}/\Gamma^{(\overline{j})}_{i+1}\to \aut_{H^1_k}(H^2_k)$$ such that 
$$
(S_k)_*\circ \varphi_*=\omega_k([\varphi])\circ (S_k)_*\ ,
$$
for every $\varphi \in \Gamma^{(\overline{j})}_{i}$,
where $$(S_k)_*\colon \pi_{i+1}(|\calA^*_k|,p_k)\to \pi_{i+1}(\widehat{W}_k,p_k)\, ,\quad  \varphi_*\colon \pi_{i+1}(|\calA^*_k|,p_k)\to \pi_{i+1}(|\calA^*_k|,p_k)$$
are the maps induced by $S_k=S|_{|\mathcal{A}^*_k|}$ and by $\varphi$ on homotopy groups.
\end{lemma}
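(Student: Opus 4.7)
My plan is to use Lemma~\ref{homaction} as the main input: any admissible homotopy from $S$ to $S\circ\varphi$ produces a loop in $\widehat{W}_k$ based at $p_k$, and the action of the class of this loop on $\pi_{i+1}(\widehat{W}_k,p_k)$ will be precisely $\omega_k([\varphi])$. First I would fix $\varphi\in\Gamma_i^{(\bar{j})}$ and pick an admissible homotopy $H\colon|\mathcal{AD}_L'(X)|\times[0,1]\to X$ between $S|_{\mathcal{AD}_L'(X)}$ and $S|_{\mathcal{AD}_L'(X)}\circ\varphi$ which is constant on $A^{(\bar{j})}$ (which exists by the definition of $\Gamma^{(\bar{j})}$). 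Since $p_k\in V_{j_k}$ with $j_k\in J(\bar{j})$, the loop $\gamma_\varphi\colon t\mapsto H(p_k,t)$ is by condition~(2) of the definition of admissible homotopy contained in $U_{j_k}$, so its class lies in $H^1_k$. Because $\varphi$ permutes the simplices of $\Theta_k$ (their vertex sets are $\varphi$-invariant, and $\varphi$ fixes the $i$-skeleton, hence the boundaries), $\varphi$ restricts to a simplicial automorphism of $\mathcal{A}^*_k$ that fixes $p_k$; applying Lemma~\ref{homaction} to the restriction of $H$ to $|\mathcal{A}^*_k|\times[0,1]$ with $f=S_k$ and $g=S_k\circ\varphi|_{|\mathcal{A}^*_k|}$ yields
\[
(S_k)_*\circ\varphi_* \;=\; [\gamma_\varphi]^{-1}\cdot(S_k)_*
\]
on $\pi_{i+1}(|\mathcal{A}^*_k|,p_k)$. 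I would then define $\omega_k([\varphi])$ to be the automorphism of $H^2_k$ given by the action of $[\gamma_\varphi]^{-1}$; it is indeed an automorphism of $H^2_k$ because $\varphi_*$ is an automorphism of $\pi_{i+1}(|\mathcal{A}^*_k|,p_k)$ and hence $[\gamma_\varphi]^{-1}\cdot H^2_k=H^2_k$, and it lies in $\aut_{H^1_k}(H^2_k)$ by the previous sentence.

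Next I would check that $\omega_k$ is well defined. For the independence of the choice of $H$: if $H_1,H_2$ are two such admissible homotopies producing loops $\gamma_1,\gamma_2\in H^1_k$, then the displayed identity above gives $[\gamma_1]^{-1}\cdot(S_k)_*=[\gamma_2]^{-1}\cdot(S_k)_*$, so $[\gamma_1\gamma_2^{-1}]$ acts trivially on $H^2_k=\mathrm{Im}(S_k)_*$, hence $[\gamma_1]^{-1}\cdot$ and $[\gamma_2]^{-1}\cdot$ agree on $H^2_k$. For the independence of the representative of $[\varphi]$ modulo $\Gamma_{i+1}^{(\bar{j})}$: if $\varphi'=\varphi\psi$ with $\psi\in\Gamma_{i+1}^{(\bar{j})}$, then $\psi$ fixes the whole of $\mathcal{AD}_L'(X)^{i+1}\supseteq\mathcal{A}^*_k$ pointwise, so $\varphi$ and $\varphi'$ agree on $|\mathcal{A}^*_k|$; consequently every homotopy from $S$ to $S\circ\varphi$ restricts on $|\mathcal{A}^*_k|$ to a homotopy from $S_k$ to $S_k\circ\varphi'|_{|\mathcal{A}^*_k|}$ and produces the same loop at $p_k$.

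The homomorphism property is then a direct computation: given $\varphi_1,\varphi_2\in\Gamma_i^{(\bar{j})}$, using $\varphi_i$ fixes $p_k$,
\[
(S_k)_*\circ(\varphi_1\varphi_2)_* \;=\; (S_k)_*\circ\varphi_{1*}\circ\varphi_{2*} \;=\; \omega_k([\varphi_1])\circ(S_k)_*\circ\varphi_{2*}\;=\;\omega_k([\varphi_1])\circ\omega_k([\varphi_2])\circ(S_k)_*;
\]
since $(S_k)_*$ surjects onto $H^2_k$, this forces $\omega_k([\varphi_1\varphi_2])=\omega_k([\varphi_1])\circ\omega_k([\varphi_2])$. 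Alternatively, one sees this more geometrically by concatenating an admissible homotopy $H_2$ for $\varphi_2$ with $H_1\circ\varphi_2$ (an admissible homotopy from $S\circ\varphi_2$ to $S\circ\varphi_1\varphi_2$): at $p_k$ this concatenation yields exactly $\gamma_{\varphi_2}*\gamma_{\varphi_1}$, matching the multiplicative structure of a left action of $\pi_1$ on $\pi_{i+1}$.

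The main technical obstacle I foresee is the well-definedness analysis, specifically making sure that elements of $\Gamma_i^{(\bar{j})}$, which in our definition are \emph{not} required to be homotopic to the identity relative to the $0$-skeleton (see Remark~\ref{nohomorem}), nevertheless produce a loop at $p_k$ lying in $H^1_k$ and not merely in $\pi_1(\widehat{W}_k,p_k)$; this is exactly what condition~(2) in the definition of admissible homotopy, together with the constraint that $H$ be constant on $A^{(\bar{j})}$, buys us, and it is the whole reason for our (stronger than Gromov's) definition of $\Gamma^{(\bar{j})}$ in Definition~\ref{Gjdef}.
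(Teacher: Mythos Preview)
Your proposal is correct and follows essentially the same route as the paper: pick an admissible homotopy, apply Lemma~\ref{homaction} to the restriction on $|\mathcal{A}^*_k|$, observe that the resulting action on $H^2_k$ is determined by $\varphi$ alone (since $(S_k)_*$ surjects onto $H^2_k$), and use this to deduce both well-definedness and the homomorphism property. The only cosmetic difference is that you orient the homotopy from $S$ to $S\circ\varphi$ (getting $[\gamma_\varphi]^{-1}$) while the paper goes from $S\circ\varphi$ to $S$ (getting $\gamma$ directly); you are also more explicit than the paper about invoking condition~(2) of admissible homotopy to see that $\gamma_\varphi$ lies in $U_{j_k}$ and hence $[\gamma_\varphi]\in H^1_k$.
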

\begin{proof}
 Take $\varphi\in \Gamma^{(\overline{j})}_{i}$. By definition of $\Gamma^{(\overline{j})}_{i}$, there exists an admissible homotopy $H\colon |\mathcal{AD}_{L}'(X)|\times [0,1]\to X$
 between $S\circ\varphi$ and $S$. By definition of admissibility, this homotopy restricts to a homotopy $H'\colon |\calA^*_k|\times [0,1]\to \widehat{W}_k$ between $S_k\circ \varphi|_{|\calA^*_k|}$ and 
 $S_k$, so Lemma~\ref{homaction} implies that, if $\gamma$ is the class of the loop $\beta(t)=H(p_k,t)$, then for every $\alpha\in \pi_{i+1}(|\calA^*_k|,p_k)$ we have
 \begin{equation}\label{sk}
 (S_k)_* (\varphi_*(\alpha))=\gamma\cdot ((S_k)_*(\alpha))\ .
 \end{equation}
 Equation~\eqref{sk} shows that the action of $\gamma$ leaves $H^2_k$ invariant, thus restricting to a homomorphism
 of $H^2_k$ into itself. Being the restriction of an automorphism of $\pi_{i+1}(\widehat{W}_k,p_k)$, this homomorphism is injective, and by replacing
 $\alpha$ with $\varphi_*^{-1}(\alpha)$ in Equation~\eqref{sk} it is easily seen that this homomorphism is also surjective. In other words,
 the action of $\gamma$ on $H^2_k$ restricts to  an element $\psi(\varphi)\in \aut_{H^1_k}(H^2_k)$ such that 
 \begin{equation}\label{sk2}
 (S_k)_* \circ \varphi_*=\psi(\varphi)\circ (S_k)_*\ .
 \end{equation}
 Observe that Equation~\eqref{sk2} implies that $\psi(\varphi)$ is completely determined by $\varphi$ (recall that by definition the domain
 of $\psi(\varphi)$ is equal to the image of $(S_k)_*$). Using this it is immediate to check that the map $\psi\colon \Gamma^{(\overline{j})}_{i}\to \aut_{H^1_k}(H^2_k)$
 is a group homomorphism. In order to conclude, it is sufficient to observe that, if $\varphi\in \Gamma^{(\overline{j})}_{i+1}$, then $\varphi$ restricts to the identity of
 $\calA^*_k$ (whose dimension, by definition, is equal to $i+1$).
% the $(i+1)$-skeleton $(\calA^*_k)^{i+1}$ of $\calA^*_k$. As a consequence, since every element of $\pi_{i+1}(|\calA^*_k|,p_k)$ admits a representative which is supported
 %on $(\calA^*_k)^{i+1}$, the map $\varphi_*\colon \pi_{i+1}(|\calA^*_k|,p_k)\to \pi_{i+1}(|\calA^*_k|,p_k)$ is the identity. 
 This implies that $\psi(\varphi)$ is the identity,
 hence the homomorphism $\psi$ induces the desired homomorphism $\omega_k$ on $\Gamma^{(\overline{j})}_{i}/\Gamma^{(\overline{j})}_{i+1}$.
 \end{proof}

 The following proposition provides a precise description of the fact that the maps $\Psi_\sigma\colon \Gamma^{(\overline{j})}_{i}/\Gamma^{(\overline{j})}_{i+1} \to \pi_{i+1}(\widehat{W}_k,p_k)$ 
 introduced above are not group homomorphisms.
 
 \begin{prop}\label{algebraicpsi}
  Let $\sigma\in\Theta_k$. Then for every $\varphi_1,\varphi_2\in \Gamma^{(\overline{j})}_{i}/\Gamma^{(\overline{j})}_{i+1}$ we have
  $$
  \Psi_\sigma(\varphi_1\varphi_2)=\Psi_\sigma(\varphi_1)+\omega_k(\varphi_1)(\Psi_\sigma(\varphi_2))\ .
  $$
 \end{prop}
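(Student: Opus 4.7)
The plan is to retrace, with care, the computation that was carried out in Lemma~\ref{phi:hom} for the group $\Gamma_{i-1}$ of simplicial automorphisms homotopic to the identity relative to the $0$-skeleton. The whole point of Remark~\ref{nohomorem} is that in the present setting we have only that $S\circ\varphi$ is ad-homotopic to $S$, and such a homotopy may well move the basepoint $p_k$ along a non-trivial loop in $\widehat{W}_k$; this is precisely what will produce the twisting term $\omega_k(\varphi_1)$.

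First I would observe that, since $i\geq 1$ and $\varphi_1,\varphi_2\in\Gamma_i^{(\bar j)}$ fix the $i$-skeleton pointwise, both maps fix every vertex of $\mathcal{AD}'_L(X)$; in particular $\varphi_1(p_k)=\varphi_2(p_k)=p_k$, so that $\varphi_1,\varphi_2$ act on $\pi_{i+1}(|\mathcal{A}^*_k|,p_k)$ as automorphisms based at $p_k$. Next, by the same standard manipulation of special spheres used in Lemma~\ref{phi:hom} (concatenate $\dot S(\sigma,\varphi_1(\sigma))$ with $\dot S(\varphi_1(\sigma),\varphi_1\varphi_2(\sigma))$ through the intermediate simplex $\varphi_1(\sigma)$), we obtain in $\pi_{i+1}(|\mathcal{A}^*_k|,p_k)$ the identity
\[
\Phi_\sigma(\varphi_1\varphi_2)=\Phi_\sigma(\varphi_1)+[\dot S(\varphi_1(\sigma),\varphi_1\varphi_2(\sigma))].
\]
Since $\dot S(\varphi_1(\sigma),\varphi_1\varphi_2(\sigma))=\varphi_1\circ\dot S(\sigma,\varphi_2(\sigma))$ as simplicial maps, the second summand equals $(\varphi_1)_*\Phi_\sigma(\varphi_2)$.

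To finish I would apply $(S_k)_*$ to both sides and use Lemma~\ref{homaction2}, which gives $(S_k)_*\circ(\varphi_1)_*=\omega_k([\varphi_1])\circ(S_k)_*$. This yields
\[
\Psi_\sigma(\varphi_1\varphi_2)=(S_k)_*\Phi_\sigma(\varphi_1)+(S_k)_*(\varphi_1)_*\Phi_\sigma(\varphi_2)=\Psi_\sigma(\varphi_1)+\omega_k([\varphi_1])(\Psi_\sigma(\varphi_2)),
\]
as required. The argument is essentially algebraic bookkeeping, and the only genuinely non-trivial input is Lemma~\ref{homaction2}, which has already been proved; so I do not expect any serious obstacle here. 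The real substance of the discussion around this proposition is not the computation itself but its consequence, namely that although $\Psi$ fails to be a homomorphism, its deviation from being one is controlled by the amenable action $\omega_k$, and this will be exactly what is needed to deduce the amenability of $\Gamma^{(\bar j)}_i/\Gamma^{(\bar j)}_{i+1}$ (and hence of $\Gamma^{(\bar j)}$ via Lemma~\ref{reductionlem}).
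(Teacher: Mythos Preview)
Your proof is correct and follows essentially the same approach as the paper: decompose the special sphere $\dot S(\sigma,\varphi_1\varphi_2(\sigma))$ through the intermediate simplex $\varphi_1(\sigma)$, recognize the second piece as $\varphi_1\circ\dot S(\sigma,\varphi_2(\sigma))$, and then invoke Lemma~\ref{homaction2} to convert $(S_k)_*\circ(\varphi_1)_*$ into $\omega_k(\varphi_1)\circ(S_k)_*$. The only cosmetic difference is that you first carry out the decomposition at the level of $\Phi_\sigma$ in $\pi_{i+1}(|\mathcal{A}^*_k|,p_k)$ and then push forward by $(S_k)_*$, whereas the paper works directly with $\Psi_\sigma$ in $\pi_{i+1}(\widehat{W}_k,p_k)$ throughout.
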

\begin{proof}
The proof is very similar to the argument for Lemma~\ref{phi:hom}. Indeed, by definition of sum
in $\pi_{i+1}(\widehat{W}_k,p_k)$ and by Lemma~\ref{homaction2} we have
\begin{align*}
\psi_\sigma(\varphi_1\varphi_2)&=\left[S_k\circ \dot{S} (\sigma,\varphi_1\varphi_2(\sigma))\right]=
\left[S_k\circ \dot{S} (\sigma,\varphi_1(\sigma))\right]+ \left[S_k\circ \dot{S} (\varphi_1(\sigma),\varphi_1\varphi_2(\sigma))\right]\\ &=
\Psi_\sigma(\varphi_1)+\left[S_k\circ \varphi_1\circ \dot{S} (\sigma,\varphi_2(\sigma))\right]\\ &=
\Psi_\sigma(\varphi_1)+(S_k)_*\left((\varphi_1)_*\left(\left[\dot{S} (\sigma,\varphi_2(\sigma))\right]\right)\right)\\
&=\Psi_\sigma(\varphi_1)+ \omega_k(\varphi_1)\left((S_k)_*\left(\left[\dot{S} (\sigma,\varphi_2(\sigma))\right]\right)\right)\\
&=\Psi_\sigma(\varphi_1)+ \omega_k(\varphi_1)(\Psi_\sigma(\varphi_2))\ .
\end{align*}
\end{proof}

 \begin{prop}\label{quotientamenable}
  Let $i\in \{1,\ldots,N-1\}$. Then the group $\Gamma^{(\overline{j})}_{i}/\Gamma^{(\overline{j})}_{i+1}$ is amenable.
 \end{prop}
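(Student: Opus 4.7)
The plan is to realize $\Gamma^{(\overline j)}_i/\Gamma^{(\overline j)}_{i+1}$ as a subgroup of an extension built from $\Psi$ and the homomorphisms $\omega_k$, and then check amenability of the extension. The first observation is that since $i\geq 1$, each homotopy group $\pi_{i+1}(\widehat{W}_k,p_k)$ is abelian, so in particular $H^2_k$ is abelian. The second is that, although the map $\Psi$ of Lemma~\ref{Lemma-the-set-map-is-inj} is not a homomorphism, Proposition~\ref{algebraicpsi} tells us exactly how it fails to be: the deviation is governed by the genuine homomorphism $\omega_k$ taking values in the amenable group $\aut_{H^1_k}(H^2_k)$. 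This is precisely a cocycle condition for a semidirect product, which suggests the right ambient group.

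Concretely, I will form the semidirect product
$$G=\left(\bigoplus_{k=1}^h\bigoplus_{\sigma\in\Theta_k}H^2_k\right)\rtimes \prod_{k=1}^h\aut_{H^1_k}(H^2_k),$$
in which the $k$-th factor of $\prod_k\aut_{H^1_k}(H^2_k)$ acts diagonally on all the $\sigma$-summands indexed by $\Theta_k$ via its tautological action on $H^2_k$, and trivially on the remaining summands. Note that $\Psi_\sigma(\varphi)$ actually lies in $H^2_k$ whenever $\sigma\in\Theta_k$, because by construction $[\dot S(\sigma,\varphi(\sigma))]$ is a class in $\pi_{i+1}(|\mathcal A^*_k|,p_k)$, so that $\Psi$ does take values in the direct sum $\bigoplus_{k,\sigma}H^2_k$. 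I will then define
$$\widetilde\Psi\colon \Gamma^{(\overline j)}_i/\Gamma^{(\overline j)}_{i+1}\longrightarrow G\, ,\qquad \varphi\longmapsto \bigl(\Psi(\varphi),(\omega_k(\varphi))_{k=1}^h\bigr),$$
and verify that $\widetilde\Psi$ is a group homomorphism. The second coordinate is automatically multiplicative because each $\omega_k$ is, while the first coordinate is precisely governed by Proposition~\ref{algebraicpsi}: the identity $\Psi_\sigma(\varphi_1\varphi_2)=\Psi_\sigma(\varphi_1)+\omega_k(\varphi_1)(\Psi_\sigma(\varphi_2))$ is exactly the product rule in the semidirect product $G$. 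Injectivity of $\widetilde\Psi$ is then immediate from the injectivity of $\Psi$ established in Lemma~\ref{Lemma-the-set-map-is-inj}, so $\Gamma^{(\overline j)}_i/\Gamma^{(\overline j)}_{i+1}$ embeds into $G$.

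It will then remain to show that $G$ is amenable. The kernel of the projection $G\to\prod_k\aut_{H^1_k}(H^2_k)$ is the abelian group $\bigoplus_{k,\sigma}H^2_k$, hence amenable, while the quotient is a finite product of amenable groups by Lemma~\ref{homactionamenable}, hence amenable. Since an extension of an amenable group by an amenable group is amenable, $G$ itself is amenable, and therefore so is the subgroup $\Gamma^{(\overline j)}_i/\Gamma^{(\overline j)}_{i+1}$. I do not foresee any serious obstacle: the only non-routine point is the identification of the algebraic shape of the semidirect product, and once the homomorphism $\widetilde\Psi$ is correctly set up, matching the multiplication in $G$ with the twisted identity of Proposition~\ref{algebraicpsi} is a direct check.
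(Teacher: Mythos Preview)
Your proof is correct and uses the same ingredients as the paper (Lemmas~\ref{Lemma-the-set-map-is-inj} and~\ref{homactionamenable}, Proposition~\ref{algebraicpsi}), but the packaging differs. The paper does not build the semidirect product $G$; instead it considers the homomorphism
\[
\omega=\bigoplus_{k=1}^h \omega_k\colon \Gamma^{(\overline j)}_i/\Gamma^{(\overline j)}_{i+1}\longrightarrow \bigoplus_{k=1}^h \aut_{H^1_k}(H^2_k),
\]
observes that its image is amenable, and then notes that on $K=\ker\omega$ the twisted identity of Proposition~\ref{algebraicpsi} degenerates to $\Psi_\sigma(\varphi_1\varphi_2)=\Psi_\sigma(\varphi_1)+\Psi_\sigma(\varphi_2)$, so $\Psi|_K$ is a genuine homomorphism into an abelian group; injectivity of $\Psi$ then makes $K$ abelian. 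Your approach instead absorbs the twist into the group law of the semidirect product, turning the crossed-homomorphism pair $(\Psi,\omega)$ into a single honest embedding. Both arguments amount to the same extension analysis; yours has the virtue of naming the ambient amenable group explicitly, while the paper's is marginally quicker since it avoids setting up $G$.
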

\begin{proof}
 Let 
 $$
 \omega \colon \Gamma^{(\bar{j})}_{i} \slash \Gamma_{i+1}^{(\bar{j})} \rightarrow 
\bigoplus_{k=1}^h \aut_{H^1_k}(H^2_k)\, ,\quad 
\omega=\bigoplus_{k=1}^h \omega_k\ .
$$
The product of a finite number of amenable groups is amenable, so we know from Lemma~\ref{homactionamenable} that
the image of $\omega$ is amenable. Therefore, in order to conclude it is sufficient to show that also the kernel
$K=\ker \omega$ is amenable. However, by Proposition~\ref{algebraicpsi} the restriction 
$$
\Psi|_K\colon K\to \bigoplus_{k=1}^h \bigoplus_{\sigma \in \, \Theta_k} \pi_{i+1}(\widehat{W}_k, p_{\sigma})
$$
is now a group homomorphism. We know from Lemma~\ref{Lemma-the-set-map-is-inj} that $\Psi|_K$ is injective, so $K$
is in fact isomorphic to the image of $\Psi|_K$, which is abelian (hence amenable) since $i\geq 1$. 
\end{proof}

We can finally state the main result of this section:

\begin{thm}\label{Gjamenable}
Let $j\in\mathbb{N}$, and suppose that $U_j$ is amenable in $W_j$ for every $j\in J(\overline j)$. 
Then the group
$\Gamma^{(\bar{j})}$ is amenable.
\end{thm}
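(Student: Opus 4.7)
The proof of Theorem \ref{Gjamenable} should essentially consist of combining the two main preparatory results already established in this section, namely Lemma \ref{reductionlem} and Proposition \ref{quotientamenable}. More precisely, the plan is to first invoke Proposition \ref{quotientamenable}, which grants that the successive quotients $\Gamma^{(\bar{j})}_{i}/\Gamma^{(\bar{j})}_{i+1}$ are amenable for every $i=1,\ldots, N-1$, and then to feed this information into Lemma \ref{reductionlem} to conclude that the whole group $\Gamma^{(\bar{j})}$ is amenable.

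A subtle point that I want to emphasize is where the standing amenability hypothesis of the statement (namely, that $U_j$ is amenable in $W_j$ for every $j\in J(\overline j)$) actually enters the argument. It enters twice, in two different places. First, it is needed in the proof of Proposition \ref{quotientamenable}, since that result relies on Lemma \ref{homactionamenable} to control the image of the map $\omega$, and Lemma \ref{homactionamenable} uses precisely that the image of $\pi_1(U_{j_k}, p_k)$ in $\pi_1(W_{j_k}, p_k)$ is amenable. Second, the amenability hypothesis is used at the very end of the proof of Lemma \ref{reductionlem}, where one bounds the image of the restriction $R\colon \G^{(\bar{j})}\to \aut_{AD}(\mathcal{AD}_{L}'(X)^1)$ by the amenable group $\bigoplus_{j\in J(\bar j)} \Pi_{W_j}(U_j,V_j)$, invoking Lemma \ref{prop-psi-pi-u-v-amenable}.

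Given these preparations, the proof itself reduces to a couple of lines. I would write: since by hypothesis $U_j$ is amenable in $W_j$ for every $j \in J(\overline j)$, Proposition \ref{quotientamenable} yields that $\Gamma^{(\bar{j})}_{i} / \Gamma_{i+1}^{(\bar{j})}$ is amenable for every $i=1,\ldots, N-1$. The conclusion then follows at once from Lemma \ref{reductionlem}, whose proof also uses the amenability assumption via the structure of the image of the restriction homomorphism $R$. In this sense there is essentially no hard part left: the genuine obstacles (constructing the group $\Psi$ and showing it is essentially controlled by $\omega$, and showing that the full group is built as a finite tower of amenable extensions sitting over an amenable quotient) have already been surmounted in the body of the section, and only a short synthesis remains.
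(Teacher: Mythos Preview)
Your proposal is correct and matches the paper's proof exactly: the paper simply writes that the conclusion follows from Lemma~\ref{reductionlem} and Proposition~\ref{quotientamenable}. Your additional remarks about where the amenability hypothesis enters (via Lemma~\ref{homactionamenable} inside Proposition~\ref{quotientamenable}, and via Lemma~\ref{prop-psi-pi-u-v-amenable} inside Lemma~\ref{reductionlem}) are accurate and helpful elaborations of the same argument.
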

\begin{proof}
 The conclusion follows from Lemma~\ref{reductionlem} and Proposition~\ref{quotientamenable}.
\end{proof}

\chapter{Diffusion of locally finite chains \\ on the admissible multicomplex}\label{vanfin:proof:chap}

%Recall that our $N$-dimensional open manifold $X$ is supposed to be homeomorphic to the geometric realization of a locally finite simplicial complex $L$ such that for each vertex $v \in \, L$, 
%then the closed star of $v$ is entirely contained in some fixed $U_{j(v)}$. Moreover, these assumptions allow us to define the  the admissible multicomplex $\mathcal{AD}_{L}(V)$ and its submulticomplex $\mathcal{AD}_{L}'(V)$. 

%It is worth noting that the canonical identification $L \subset \mathcal{AD}_{L}(V)$, whence $L \subset \mathcal{AD}_{L}'(V)$ easily implies the following lemma.

Let $K$ be a multicomplex. Then the chain complex of locally finite sums of simplices of $K$ gives rise to 
 the locally finite homology of $K$. 
 More precisely, recall that an (algebraic) $n$-simplex over $K$ is a pair $(\Delta,(v_0,\ldots,v_n))$, where $\Delta$ is a simplex of $K$ with vertices 
 $\{v_0,\ldots,v_n\}$ (in particular, the dimension of the geometric simplex $\Delta$ is smaller than or equal to $n$).
 As usual, when this does not cause any confusion,
 we  simply call \emph{simplices} both the geometric and the algebraic ones.
 We say that a family $\Omega$ of algebraic $n$-dimensional simplices of $K$
 is \emph{locally finite} if for every vertex $v$ of $K$ the number of elements of $\Omega$ having $v$ as a vertex is finite.
 (It is an easy exercise to check that $\Omega$ is locally finite if and only if the geometric realizations of the corresponding geometric simplices 
 define a locally finite family of subsets of  $|K|$.) 
 A locally finite (simplicial) $n$-chain on $K$ (with coefficients in $R$) is a possibly infinite  formal sum 
 $\sum_{\sigma\in \Omega} a_\sigma \sigma$, where $\Omega$ is a locally finite family of  $n$-simplices of $K$, and $a_\sigma$ is an element of $R$ for every $\sigma\in\Omega$. 
It is easy to check that the set $C_*^{\lf}(K;R)$ of locally finite chains is an $R$-module, 
and that
the obvious  extension of the usual boundary operator sends locally finite chains to locally finite chains, so it makes sense
to define the locally finite homology $H^{\lf}_*(K;R)$ as the homology of the complex $C_*^{\lf}(K;R)$.
As usual, we  simply denote by $H_*^{\lf}(K)$ (resp.~$C_*^{\lf}(K)$) the module $H_*^{\lf}(K;\R)$ (resp.~$C_*^{\lf}(K;\R)$).
 
 We have recalled in Theorem~\ref{simpl-hom-eq-sing-one} that 
 the simplicial homology of a multicomplex is canonically isomorphic to
the singular homology of its geometric realization. 
The same result holds true for locally finite homology.
More precisely, 
let 
\begin{displaymath}
\phi_* \colon C_{*}^{\lf}(K ; R) \rightarrow C_{*}^{\lf}(\lvert K \rvert ; R)
\end{displaymath}
be the $R$-linear chain map
sending every algebraic simplex $(\sigma,(v_0,\ldots,v_n))\in C_*^{\lf}(K;R)$ to the singular simplex 
$$
\Delta^n\to |K|\, ,\quad (t_0,\ldots,t_n)\mapsto (\sigma,t_0v_0+\ldots+\ldots t_nv_n)\ .
$$
%and denote by $\phi^*\colon C^*(|K|;R)\to C^*(K;R)$ the dual chain map induced by $\phi_*$ on cochains. Then we have the following
The following result may be easily deduced from \cite[Theorem 7.4]{spanier2}. Indeed, the assumption that $K$ be locally finite is probably unnecessary (but
it does not conflict with our purposes).

\begin{thm}\label{simPL hom-eq-sing-one-lf}
For any locally finite multicomplex $K$, the homomorphism
\begin{displaymath}
H_{*}^\lf(K) \rightarrow H_{*}^\lf(\lvert K \rvert)
%\, ,\qquad
%H^*(|K|;R)\to H^*(K;R)
\end{displaymath}
induced by $\phi_*$ 
%and $\phi^*$, respectively, 
is an isomorphism in every degree.
\end{thm}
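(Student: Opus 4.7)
The plan is to adapt the proof of Theorem~\ref{simpl-hom-eq-sing-one} (which establishes the analogous statement for ordinary simplicial versus singular homology) to the locally finite setting. The hypothesis that $K$ be locally finite is crucial for the bookkeeping: it ensures that every compact subset of $|K|$ is contained in a finite subcomplex, and in particular that every singular simplex $\sigma\colon\Delta^n\to|K|$, having compact image, meets only finitely many open cells of $|K|$.

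First I would construct an explicit chain-homotopy inverse $\psi_*\colon C^\lf_*(|K|)\to C^\lf_*(K)$ to $\phi_*$. On a \emph{single} singular simplex $\sigma$ the classical method produces $\psi_*(\sigma)$ by iterating barycentric subdivision $\sd^{N(\sigma)}\sigma$ until every simplex appearing in the subdivision lies in the closed star of some vertex of $K$, and then applying simplicial approximation. The crucial feature for the present purposes is that both iterated subdivision and the canonical chain homotopy between $\sd^N$ and the identity are \emph{support-preserving}: every singular simplex occurring in $\sd^N\sigma$ or in the homotopy has image contained in that of $\sigma$. Extending $\psi_*$ linearly to infinite sums, a locally finite singular chain is sent to a locally finite simplicial chain because, for any simplex $\Delta$ of $K$, the compact set $|\Delta|$ is met by only finitely many singular simplices in the support of the original chain. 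The composition $\phi_*\psi_*$ is chain homotopic to the identity through the standard subdivision homotopy, which is support-preserving and hence preserves local finiteness, while $\psi_*\phi_*$ equals the identity on $C^\lf_*(K)$ up to an analogous locally finite chain homotopy.

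The principal obstacle I foresee is verifying that the adaptive choice of $N(\sigma)$ yields a genuine chain map — in particular, that $\partial\psi_*(\sigma)=\psi_*(\partial\sigma)$ even when the subdivision depths chosen for $\sigma$ and its faces disagree — and that the various chain homotopies can be assembled consistently across varying depths. An alternative, more formal route that I would fall back on (and which is presumably what the cited Theorem~7.4 of Spanier provides) is to exploit the locally finite CW structure on $|K|$ and identify both sides with an inverse limit: by excision, $H^\lf_*(|K|)\cong\varprojlim_\alpha H_*(|K|,|K|\setminus|L_\alpha|)$ as $L_\alpha$ ranges over the cofinal directed system of finite subcomplexes of $K$, and similarly $H^\lf_*(K)\cong\varprojlim_\alpha H_*(K,K\setminus L_\alpha)$; applying the relative isomorphism of Theorem~\ref{simpl-hom-eq-sing-one-rel} at each stage together with a Mittag–Leffler argument (which holds trivially in the countable case) would then yield the isomorphism in the limit.
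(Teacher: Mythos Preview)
The paper does not actually prove this theorem: it simply asserts that the result ``may be easily deduced from \cite[Theorem 7.4]{spanier2}'' and moves on. So there is no proof in the paper to compare your proposal against; your second route (inverse limits over finite subcomplexes) is exactly the kind of argument that reference is meant to supply.

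Your self-diagnosis of the first approach is accurate: the subdivision depth $N(\sigma)$ varies with $\sigma$, so the naive $\psi_*$ is not a chain map, and patching this up (via acyclic models or a telescoping trick) is genuinely delicate. It can be done, but it is not the clean route.

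On the second approach, one correction. Your parenthetical ``a Mittag--Leffler argument (which holds trivially in the countable case)'' is not right: countability of the tower does not by itself make $\varprojlim^1$ vanish. What actually makes the argument work is that you do not need $\varprojlim^1$ to vanish at all. For a locally finite (hence $\sigma$-compact, once $K$ is connected or at least has countably many components) complex one has, on \emph{both} the simplicial and the singular side, a Milnor short exact sequence
\[
0 \longrightarrow {\textstyle\varprojlim^1}\, H_{n+1}(-,\,-\setminus L_\alpha) \longrightarrow H_n^{\lf}(-) \longrightarrow {\textstyle\varprojlim}\, H_n(-,\,-\setminus L_\alpha) \longrightarrow 0,
\]
and $\phi_*$ is compatible with the two towers. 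Since the relative comparison map is an isomorphism at each stage by Theorem~\ref{simpl-hom-eq-sing-one-rel}, it induces isomorphisms on both $\varprojlim$ and $\varprojlim^1$, and the five lemma finishes. So replace ``Mittag--Leffler'' with ``Milnor $\varprojlim^1$ sequence plus five lemma'' and your outline is sound.
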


Just as for ordinary homology, one may also restrict to considering chains that are \emph{alternating}, in the following sense:
if $c=\sum_{\sigma\in\Omega} a_\sigma\cdot \sigma$ is a locally finite chain, and $\sigma=(\Delta,(v_0,\ldots,v_k))$, $\sigma'=(\Delta,(v_{\tau(0)},\ldots,v_{\tau(k)}))$ are algebraic simplices
which can be obtained one from the other via a permutation $\tau$ of the vertices, then $a_\sigma=\varepsilon(\tau)a_{\sigma'}$
(here $\varepsilon(\tau)=\pm 1$ denotes the sign of $\tau$).  Indeed, the linear operator
$\alt_*\colon C_*^\lf(K)\to C_*^\lf(K)$ such that
$$
\alt(\Delta,(v_0,\ldots,v_k))=\frac{1}{(k+1)!} \sum_{\tau\in\mathfrak{S}_{k+1}} \varepsilon(\tau)(\Delta,(v_{\tau(0)},\ldots,v_{\tau(k)})) 
$$
is well defined and homotopic to the identity.

%\begin{proof}
%The proof uses the very same arguments of the standard proof about the isomorphism between 
%simplicial and singular cohomology. It just suffices to adapt the standard proof to the definitions introduced above. See for instance \cite[Thm. 2.27]{hatcher} as a reference for the classical proof.
%\end{proof}

%\todo{Citare qui spanier}
\section{An important locally finite action}

Let us now come to the context we are interested in. 
We keep notation from the previous chapter, i.e.~we denote by $X$ the geometric realization of a locally finite simplicial complex $L$,
and by $\mathcal{AD}_L(X)$ and $\mathcal{AD}'_L(X)$ the multicomplexes described there. Also recall that the construction of $\mathcal{AD}_L(X)$ and  $\mathcal{AD}'_L(X)$
is based on 
fixed locally finite open covers $\{U_i\}_{i\in\mathbb{N}}$  and $\{W_i\}_{i\in\mathbb{N}}$ 
such that $U_i\subseteq W_i$ and $W_i$ is large for every $i\in\mathbb{N}$.
Finally, let $\G<\aut_{AD}(\mathcal{AD}'_L(X))$ be the group described at the beginning of Section~\ref{Subsec:proof-amen-cert-subgroups}.

\begin{lemma}\label{lfequiv}
Let $\Omega$ be a family of algebraic $n$-simplices in $\mathcal{AD}'_L(X)$. Then $\Omega$ is locally finite if and only if the 
family of singular simplices 
$$
\{S\circ (\phi_n(\sigma))\, |\, \sigma\in\Omega\}
$$
is locally finite in $X$.
\end{lemma}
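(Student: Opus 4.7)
\bigskip

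\noindent\textbf{Plan of proof.} Both directions will be settled by exploiting the admissibility of the simplices of $\mathcal{AD}'_L(X)$ (in particular, the fact that the vertices of such simplices correspond under $S$ to vertices of $L$, and that the image under $S$ of a simplex $\sigma$ is controlled by the colors $J(\sigma)$ of its vertices) together with the local finiteness of the cover $\{W_j\}_{j\in\mathbb{N}}$.

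\medskip

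For the implication $(\Leftarrow)$, I would argue as follows. Assume that $\{S\circ\phi_n(\sigma)\, |\, \sigma\in\Omega\}$ is locally finite in $X$, and let $v$ be a vertex of $\mathcal{AD}'_L(X)$. Under the identification of vertices given by $S$, the vertex $v$ is a point of $X$, hence $\{v\}$ is a compact subset of $X$. For every $\sigma\in\Omega$ having $v$ among its vertices, the singular simplex $S\circ\phi_n(\sigma)$ passes through $v$, so its image meets $\{v\}$. By the assumed local finiteness in $X$, only finitely many elements of $\{S\circ\phi_n(\sigma)\}$ can do so, hence $v$ belongs to at most finitely many simplices of $\Omega$; this gives local finiteness of $\Omega$ in $\mathcal{AD}'_L(X)$.

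\medskip

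For the implication $(\Rightarrow)$, which is the direction where admissibility plays the essential role, the plan is the following. Suppose $\Omega$ is locally finite in $\mathcal{AD}'_L(X)$, and let $K\subseteq X$ be compact. Since the family $\{W_j\}_{j\in\mathbb{N}}$ is locally finite in $X$, the set $J_K=\{j\in\mathbb{N}\, |\, W_j\cap K\neq\emptyset\}$ is finite. Now, if $\sigma\in\Omega$ is such that the image of $S\circ\phi_n(\sigma)$ meets $K$, then by admissibility of $\sigma$ we have
\[
\emptyset\neq K\cap S(|\phi_n(\sigma)|)\subseteq K\cap\bigcup_{j\in J(\sigma)}W_j,
\]
so $J(\sigma)\cap J_K\neq\emptyset$. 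In particular, at least one vertex of $\sigma$ lies in the finite set $V_K=\bigcup_{j\in J_K}V_j$ (recall that each $V_j$ is finite because $U_j$ is relatively compact in $X$, hence contains only finitely many vertices of the locally finite triangulation $L$). By the local finiteness of $\Omega$ in $\mathcal{AD}'_L(X)$, each vertex of $V_K$ is contained in only finitely many simplices of $\Omega$, and since $V_K$ itself is finite, only finitely many $\sigma\in\Omega$ can have a vertex in $V_K$. This shows that only finitely many singular simplices $S\circ\phi_n(\sigma)$, $\sigma\in\Omega$, can meet $K$, concluding the proof.

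\medskip

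The main obstacle, if any, is really book-keeping: one must carefully use the fact that $\sigma$ being a simplex of $\mathcal{AD}'_L(X)$ forces both that $S$ sends the vertices of $\sigma$ to vertices of $X$ (so they belong to some $V_j$) and that the image of $\sigma$ under $S$ stays within the union of the $W_j$'s indexed by the colors of these vertices. Once these two admissibility features are combined with the finiteness of each $V_j$ and the local finiteness of the cover $\{W_j\}$, the argument is essentially immediate.
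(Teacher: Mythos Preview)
Your proof is correct and follows essentially the same approach as the paper's: the $(\Rightarrow)$ direction uses admissibility to trap the image of each simplex in $\bigcup_{j\in J(\sigma)}W_j$, then uses local finiteness of $\{W_j\}$ and finiteness of each $V_j$ exactly as you do; the $(\Leftarrow)$ direction in the paper is the same one-line observation that a vertex $v$ is a compact subset of $X$ meeting the image of every $S\circ\phi_n(\sigma)$ for which $\sigma$ contains $v$.
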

\begin{proof}
Suppose first that $\Omega$ is locally finite, and let $Z$ be a compact subset of $X$. 
Since the $W_i$ are locally finite, the set
$F=\{j\in\mathbb{N}\, |\, Z\cap W_j\neq \emptyset \}$ is finite. Let us take an element  $\sigma\in\Omega$, and denote
by $\sigma'$ the geometric simplex of $\mathcal{AD}'_L(X)$ corresponding to the algebraic simplex $\sigma$.

Using that $\sigma'$ is admissible, we deduce that
$$
\emptyset \neq S(|\sigma'|)\cap Z\subseteq \left(\bigcup_{j\in J(\sigma')} W_j\right) \cap Z\ ,
$$
hence $J(\sigma')\cap F\neq \emptyset$.
%, i.e.~there exists $j\in J(\sigma')$ such that $Z\cap W_j\neq\emptyset$, i.e.~$J(\sigma')\cap F\neq \emptyset$.
We have thus shown that 
any simplex $\sigma\in\Omega$ such that the image of $S\circ(\phi_n(\sigma))$ intersects $Z$ 
has at least one vertex in the finite set $\bigcup_{j\in F} V_j$. By definition of locally finiteness for simplicial chains, this readily implies that the number of simplices
$\sigma\in\Omega$ such that  the image of $S\circ(\phi_n(\sigma))$ intersects $Z$  is finite, and this proves that the family $\{S\circ (\phi_n(\sigma))\, |\, \sigma\in\Omega\}$
is locally finite in $X$.

On the other hand, if the family $\{S\circ (\phi_n(\sigma))\, |\, \sigma\in\Omega\}$
is locally finite in $X$, then every vertex of $X$ appears in a finite number of simplices of the form $S\circ (\phi_n(\sigma))$,  $\sigma\in\Omega$,
which means that $\Omega$ is locally finite.
\end{proof}

Lemma~\ref{lfequiv} implies that the composition $S_*\circ\phi_*\colon C_*^{\lf}(\mathcal{AD}'_L(X))\to C_*^{\lf}(X)$ is well defined, and it induces a  map
$$
P_*\colon H_*^{\lf}(\mathcal{AD}'_L(X))\to H_*^{\lf}(X)\ .
$$

Our next aim is to prove 
 that the action of $\G$ over the set of algebraic simplices of $\mathcal{AD}_{L}'(X)$ is locally finite according to Definition \ref{Def-locally-finite-action}. 
%To this end we  consider the action of the family of subgroups $\Gamma^{(j)}<\G$, ${j \in \, \mathbb{N}}$ introduced in Subsection \ref{Subsec:proof-amen-cert-subgroups}.

Let us fix a natural number $k$ with $0\leq k\leq N$. We denote by $\Lambda(k)$ the set of algebraic $k$-simplices of $\mathcal{AD}'_L(X)$. For 
every $(j_0,\ldots,j_k)\in \mathbb{N}^{k+1}$, we denote by $\Lambda_{(j_0,\ldots,j_k)}\subseteq \Lambda(k)$ the set of algebraic simplices whose vertex set coloring is equal to 
$(j_0,\ldots,j_k)$, i.e.~we set
$$
\Lambda_{(j_0,\ldots,j_k)}=\{(\sigma,(v_0,\ldots,v_k))\in \Lambda(k)\, |\, v_i\in V_{j_i}\ \forall\ i=0,\ldots,k\}\ .
$$
We also denote by $\calC(k)$ the subset of $\mathbb{N}^{k+1}$ given by those $(k+1)$-tuples $(j_0,\ldots,j_k)$ for which
$\Lambda_{(j_0,\ldots,j_k)}\neq\emptyset$. 

\begin{lemma}\label{finitiCk}
Let $\overline{j_0}\in\mathbb{N}$ be fixed. Then, the set of $(k+1)$-tuples of $\calC(k)$ which contain $\overline{j_0}$ is finite.
\end{lemma}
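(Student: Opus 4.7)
The plan is to reduce the statement to a counting argument on the original triangulation $L$, exploiting the fact that $\mathcal{AD}'_L(X)$ is obtained from $L$ by applying color-preserving automorphisms.

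First, I would unravel the definition $\mathcal{AD}'_L(X) = \bigcup_{\varphi \in \aut_{AD}(\mathcal{AD}_L(X))} \varphi(L)$ to observe that every simplex $\Delta$ of $\mathcal{AD}'_L(X)$ is of the form $\varphi(\Delta_0)$ for some simplex $\Delta_0$ of $L$ and some $\varphi \in \aut_{AD}(\mathcal{AD}_L(X))$. Moreover, by definition $S\circ\varphi$ is ad-homotopic to $S$, and condition (1) of the definition of admissible homotopy ensures that the color of $\varphi(v)$ (viewed as a vertex of $X$ via the natural projection $S$) coincides with the color of $v$ for every vertex $v$. Therefore $\varphi$ preserves the coloring of vertices, and the ordered tuple of colors of the vertices of $\varphi(\Delta_0)$ (in any ordering) matches the ordered tuple of colors of the vertices of $\Delta_0$ in the corresponding ordering.

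Next, I would use this to reduce the counting to $L$: if $(j_0,\ldots,j_k) \in \mathcal{C}(k)$ contains $\overline{j_0}$, then there exists an algebraic simplex $(\Delta,(v_0,\ldots,v_k))$ of $\mathcal{AD}'_L(X)$ with $v_i \in V_{j_i}$ for all $i$ and with $v_{\ell} \in V_{\overline{j_0}}$ for some $\ell$. Writing $\Delta = \varphi(\Delta_0)$ as above, the simplex $\Delta_0 \in L$ has a vertex in $V_{\overline{j_0}}$ and its vertex coloring (as an ordered tuple, up to permutation) realizes $(j_0,\ldots,j_k)$.

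Finally, the counting is immediate from local finiteness: since $U_{\overline{j_0}}$ is relatively compact, the set $V_{\overline{j_0}}$ is finite, and since $L$ is a locally finite simplicial complex, each vertex lies in only finitely many $k$-simplices of $L$. Hence the set $\mathcal{S}$ of $k$-simplices of $L$ with at least one vertex in $V_{\overline{j_0}}$ is finite. Each element of $\mathcal{S}$ contributes at most $(k+1)!$ tuples of colors (one for each ordering of its vertex set), and every tuple in $\mathcal{C}(k)$ containing $\overline{j_0}$ arises this way. This yields the desired finiteness.

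The argument is short, and I do not expect any serious obstacle; the only point requiring a bit of care is the verification that admissible automorphisms preserve vertex colorings, which is an almost immediate consequence of condition (1) of the definition of ad-homotopy but is the crucial ingredient that permits the reduction from $\mathcal{AD}'_L(X)$ to $L$.
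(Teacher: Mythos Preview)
Your argument is correct and close in spirit to the paper's, but the route is slightly different. The paper does not count simplices of $L$; instead it invokes Lemma~\ref{Jj}, which says that if $\overline{j_0}\in J(\Delta)$ for some simplex $\Delta$ of $\mathcal{AD}'_L(X)$ then $J(\Delta)\subseteq J(\overline{j_0})$, where $J(\overline{j_0})=\{j:U_j\cap U_{\overline{j_0}}\neq\emptyset\}$ is finite by local finiteness of the cover. Hence every entry of a tuple in $\calC(k)$ containing $\overline{j_0}$ lies in the fixed finite set $J(\overline{j_0})$, and one is done. Your approach bypasses Lemma~\ref{Jj} and goes straight to the local finiteness of $L$ together with the finiteness of $V_{\overline{j_0}}$; since the proof of Lemma~\ref{Jj} itself uses the very same color-preservation reduction to $L$ that you spell out, the two proofs share their key ingredient and differ only in packaging. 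Your version is arguably more self-contained; the paper's version gives a cleaner bound (all entries lie in $J(\overline{j_0})$).

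One small imprecision worth tightening: an algebraic $k$-simplex $(\Delta,(v_0,\ldots,v_k))$ may have repeated $v_i$'s, so the underlying geometric simplex $\Delta$ (hence $\Delta_0$) can have dimension strictly less than $k$. Your final count speaks only of ``$k$-simplices of $L$'' and of ``$(k+1)!$ orderings'', which literally covers only the non-degenerate case. The fix is trivial---replace ``$k$-simplices'' by ``simplices of dimension at most $k$'' and bound the number of tuples per simplex by $(k+1)^{k+1}$---but it is worth saying explicitly.
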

\begin{proof}
It is sufficient to show that the set of $(k+1)$-tuples of $\calC(k)$ of the form
$(\overline{j_0},j_1,\ldots,j_k)$ is finite. So, suppose $(\overline{j_0},j_1,\ldots,j_k)\in\calC(k)$. Then, there exists a $k$-simplex $\sigma$ of $\mathcal{AD}_L'(X)$
such that $\{\overline{j_0},j_1,\ldots,j_k\}=J(\sigma)$. By Lemma~\ref{Jj}, from $\overline{j_0}\in J(\sigma)$ we deduce that 
$j_i\in J(\sigma)\subseteq J(\overline{j_0})$ for every $i=1,\ldots,k$. Since $J(\overline{j_0})$ is finite, this concludes the proof.
\end{proof}

\begin{lemma}\label{Lemma-numero-di-orbite-tilde-gamma-numerabili}
For every $(j_0,\ldots,j_k)\in \calC(k)$, the group
$\G$ acts on $\Lambda_{(j_0,\ldots,j_k)}$. Moreover, the set $\Lambda_{(j_0,\ldots,j_k)}$ is partitioned
into finitely many $\G$-orbits, whose number will be denoted by $M(j_0,\ldots,j_k)$.
\end{lemma}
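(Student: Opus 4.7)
The plan is to verify the two assertions in order. First I would show that the simplicial action of $\G$ on $\mathcal{AD}'_L(X)$, which naturally induces an action on the set $\Lambda(k)$ of all algebraic $k$-simplices, restricts to an action on $\Lambda_{(j_0,\ldots,j_k)}$. This boils down to showing that every element of $\G$ preserves the coloring of the vertices of $\mathcal{AD}'_L(X)$. By definition, any $\varphi \in \G$ is the restriction to $\mathcal{AD}'_L(X)$ of some admissible automorphism $\widetilde{\varphi} \in \aut_{AD}(\mathcal{AD}_L(X))$, and admissibility means that $S \circ \widetilde{\varphi}$ is ad-homotopic to $S|_{|\mathcal{AD}_L(X)|}$. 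Condition~(1) in the definition of ad-homotopy forces the color of $S(\widetilde{\varphi}(v))$ to coincide with the color of $S(v)$ for every vertex $v$; after identifying vertices of $\mathcal{AD}_L(X)$ with vertices of $X$ via $S$, this yields the desired coloring preservation, and hence the invariance of $\Lambda_{(j_0,\ldots,j_k)}$ under $\G$.

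For the second statement, the key remark is that, by the very definition
$$\mathcal{AD}'_L(X) = \bigcup_{\widetilde{\varphi} \in \aut_{AD}(\mathcal{AD}_L(X))} \widetilde{\varphi}(L),$$
every geometric simplex of $\mathcal{AD}'_L(X)$ is a $\G$-translate of a geometric simplex of $L$ (recall that $\widetilde{\varphi}$ preserves $\mathcal{AD}'_L(X)$, so its restriction lies in $\G$). Promoting this from geometric to algebraic simplices and using the coloring preservation established in the first step, every $\G$-orbit in $\Lambda_{(j_0,\ldots,j_k)}$ contains an algebraic simplex of the form $(\tau,(w_0,\ldots,w_k))$ with $\tau$ a geometric simplex of $L$ and $w_i \in V_{j_i}$ for every $i=0,\ldots,k$. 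It therefore suffices to bound the number of such algebraic simplices.

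To conclude, I would invoke three finiteness properties, each immediate from the standing assumptions: the set $V_{j_0}$ is finite, since $U_{j_0}$ is relatively compact and $L$ is locally finite; only finitely many geometric simplices of $L$ contain any fixed vertex of $V_{j_0}$, again by local finiteness of $L$; and each such simplex has only finitely many orderings of its vertex set. Combining these three facts, the set of algebraic simplices of the form $(\tau,(w_0,\ldots,w_k))$ with $\tau \in L$ and $w_0 \in V_{j_0}$ is finite, which yields the finiteness of the number $M(j_0,\ldots,j_k)$ of $\G$-orbits in $\Lambda_{(j_0,\ldots,j_k)}$. The only mild subtlety is the passage from geometric to algebraic simplices, but there is no deep obstacle here: both key inputs — coloring preservation under $\aut_{AD}$ and the defining formula for $\mathcal{AD}'_L(X)$ — have already been secured in the previous chapter.
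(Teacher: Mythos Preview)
Your proof is correct and follows essentially the same approach as the paper. Both arguments establish color-preservation from the definition of admissible automorphism for the first claim, and then use that every simplex of $\mathcal{AD}'_L(X)$ is a $\G$-translate of a simplex of $L$ together with the finiteness of each $V_j$ for the second; the only cosmetic difference is that the paper bounds the relevant $L$-simplices by noting all their vertices lie in the finite set $F=V_{j_0}\cup\cdots\cup V_{j_k}$, whereas you use local finiteness of $L$ around the finitely many vertices in $V_{j_0}$.
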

\begin{proof}
Every element of $\G$ acts on the $0$-skeleton of $\mathcal{AD}'_L(X)$ by color-preserving automorphisms, and this implies
that $\G$ acts on $\Lambda_{(j_0,\ldots,j_k)}$. Since every $V_j$ is finite, there exists a finite set $F$ of vertices of $X$ such that
all the vertices of any simplex in  $\Lambda_{(j_0,\ldots,j_k)}$ are contained in $F$. Thus $\Lambda_{(j_0,\ldots,j_k)}$ contains a finite
number of simplices of $L$. Since every simplex of $\mathcal{AD}'_L(X)$ is $\G$-equivalent to a simplex
in $L$, this implies in turn that the number of the $\G$-orbits contained in $\Lambda_{(j_0,\ldots,j_k)}$ is finite.
\end{proof}

\begin{cor}
The action $\G \actson \Lambda(k)$ has countably many orbits.
\end{cor}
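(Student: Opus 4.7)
The plan is to deduce the corollary directly from the preceding lemmas by decomposing $\Lambda(k)$ according to the color of the vertex tuples of its simplices. First I would observe that, since the coloring $\{V_j\}_{j\in\mathbb{N}}$ is a partition of the set of vertices of $X$ (identified with the set of vertices of $\mathcal{AD}'_L(X)$), we have a disjoint union
\[
\Lambda(k)=\bigsqcup_{(j_0,\ldots,j_k)\in\calC(k)} \Lambda_{(j_0,\ldots,j_k)}\ .
\]
By definition $\calC(k)\subseteq \mathbb{N}^{k+1}$, hence $\calC(k)$ is countable.

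Next I would invoke Lemma~\ref{Lemma-numero-di-orbite-tilde-gamma-numerabili}: each $\Lambda_{(j_0,\ldots,j_k)}$ is $\G$-invariant (because $\G$ acts by color-preserving automorphisms on the $0$-skeleton of $\mathcal{AD}'_L(X)$) and is partitioned into the finite number $M(j_0,\ldots,j_k)$ of $\G$-orbits. Since unions of $\G$-invariant subsets decompose the orbit set as a union of their orbit sets, the total number of $\G$-orbits in $\Lambda(k)$ equals
\[
\sum_{(j_0,\ldots,j_k)\in\calC(k)} M(j_0,\ldots,j_k)\ ,
\]
which is a countable sum of finite cardinals, hence at most countable.

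There is no serious obstacle here: the only point worth double-checking is that $\calC(k)$ is really countable, and this is built into its definition as a subset of $\mathbb{N}^{k+1}$ (it is not even necessary to use Lemma~\ref{finitiCk}, which gives the much finer information that only finitely many tuples in $\calC(k)$ contain any given index, a fact we will exploit later when verifying asymptotic disjointness of the subgroups $\G^{(\bar j)}$). Thus the proof reduces to one line combining the color partition with the previous lemma.
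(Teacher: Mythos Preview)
Your proof is correct and is exactly the argument the paper has in mind: the corollary is stated without proof because it follows immediately from the color decomposition $\Lambda(k)=\bigsqcup_{(j_0,\ldots,j_k)\in\calC(k)}\Lambda_{(j_0,\ldots,j_k)}$ together with Lemma~\ref{Lemma-numero-di-orbite-tilde-gamma-numerabili}, which is precisely what you wrote.
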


\begin{Definizione}
We denote by $$\{\Lambda_{(j_{0}, \cdots, j_{k})}^{i}\}_{i = 1,\ldots,M(j_0,\ldots,j_k)}$$
the orbits of the action $\G \actson \Lambda(k)$.
\end{Definizione}

We are now going  to prove that for every $k \in \mathbb{N}$ the action $\G \actson \Lambda(k)$ is locally finite.
To this end, for every $(j_0,\ldots,j_k)\in\calC(k)$ and every $i\in\{1,\ldots,M(j_0,\ldots,j_k)\}$ we need to choose
a subgroup  $\Gamma_{(j_{0}, \cdots, j_{k})}^{i}$ of $\aut_{AD}(\mathcal{AD}_{L}'(X))$. Indeed, it is sufficient to set
$$
\Gamma_{(j_{0}, \cdots, j_{k})}^{i}=\G^{(j_0)}\ ,
$$
where $\G^{(j_0)}$ denotes the group 
 described in Definition~\ref{Gjdef}. With this choice, we have the following:

\begin{Teorema}\label{Teor-Az-Tilde-Gamma-Locally-Finite}
The action $\G \actson \Lambda(k)$ is locally finite.
\end{Teorema}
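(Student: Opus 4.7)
The plan is to verify conditions (1) and (2) of Definition~\ref{Def-locally-finite-action} for the chosen subgroups $\Gamma^i_{(j_0,\ldots,j_k)}=\G^{(j_0)}$. For condition (1), transitivity of $\G^{(j_0)}$ on the orbit $\Lambda^i_{(j_0,\ldots,j_k)}$, one observes that any two simplices in this orbit share the same vertex coloring $(j_0,\ldots,j_k)$ and so in particular each of them has a vertex in $V_{j_0}$. Given $\Delta_1,\Delta_2 \in \Lambda^i_{(j_0,\ldots,j_k)}$, one picks $\varphi\in\G$ such that $\varphi(\Delta_1)=\Delta_2$, and then applies Theorem~\ref{transitive:action} (with $\overline{j}=j_0$) to produce $\varphi'\in \G^{(j_0)}$ with $\varphi'|_{\Delta_1}=\varphi|_{\Delta_1}$, whence $\varphi'(\Delta_1)=\Delta_2$.

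The bulk of the work lies in condition (2). The key preliminary step is to identify the support of $\G^{(j_0)}$ on $\Lambda(k)$. I claim that if $\sigma=(\Delta,(v_0,\ldots,v_k))$ has no vertex in $\bigcup_{j\in J(j_0)} V_j$, then every $\varphi\in\G^{(j_0)}$ fixes $\sigma$. Indeed, all vertices of $\sigma$ belong to the subcomplex $A^{(j_0)}$, and the fact that $S\circ\varphi$ is ad-homotopic to $S$ relative to $A^{(j_0)}$ together with the bijectivity of $S$ on vertices forces $\varphi(v_i)=v_i$ for every $i$; a short induction on dimension, using that $\mathcal{AD}_L(X)$ is ad-minimal and that $S\circ\varphi=S$ on each such simplex, then shows that $\varphi(\Delta)=\Delta$. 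Consequently
$$
\supp\bigl(\G^{(j_0)}\actson\Lambda(k)\bigr)\ \subseteq\ T(j_0)\coloneqq \Bigl\{\sigma\in\Lambda(k)\ :\ V(\sigma)\cap \bigcup_{j\in J(j_0)} V_j\neq\emptyset\Bigr\}\ ,
$$
and we also have $\Lambda^i_{(j_0,\ldots,j_k)}\subseteq T(j_0)$.

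Next, using Lemma~\ref{Jj}, if a $k$-simplex $\sigma$ has a vertex in $V_j$ with $j\in J(j_0)$ and another vertex in $V_{j'}$ with $j'\in J(j_0')$, then $j,j'\in J(\sigma)\subseteq J(j)$, so $U_j\cap U_{j'}\neq\emptyset$. Thus $T(j_0)\cap T(j_0')=\emptyset$ whenever the finite set $\widetilde{J}(j_0)\coloneqq \bigcup_{j\in J(j_0)} J(j)$ is disjoint from $J(j_0')$. The set $B(j_0)$ of ``bad'' indices $j_0'$ for which $T(j_0)\cap T(j_0')\neq\emptyset$ is therefore contained in $\bigcup_{j''\in \widetilde{J}(j_0)}\{j_0'\in\matN\ :\ j''\in J(j_0')\}$, and each of these sets is finite by local finiteness of $\{U_i\}$ together with the relative compactness of each $U_{j''}$. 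Hence $B(j_0)$ is finite.

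Finally, one needs to fix a global enumeration $\{\Lambda_s\}_{s\in\matN}$ of the orbits compatible with this finiteness. By Lemmas~\ref{finitiCk} and~\ref{Lemma-numero-di-orbite-tilde-gamma-numerabili}, for each fixed $j_0\in\matN$ only finitely many orbits have first color equal to $j_0$, so we may enumerate the orbits in such a way that $j_0(s)\to\infty$ as $s\to\infty$. With such an enumeration, for every $s$ the set $B(j_0(s))$ is finite, so for $s'$ large enough we have $j_0(s')\notin B(j_0(s))$ and consequently $\supp(\G^{(j_0(s'))})\cap T(j_0(s))=\emptyset$; together with $\Lambda_s\cup\supp(\G^{(j_0(s))})\subseteq T(j_0(s))$ this yields the required asymptotic disjointness. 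The main technical obstacle is the lemma that $\supp(\G^{(j_0)})\subseteq T(j_0)$, since the definition of $\G^{(j_0)}$ imposes an ad-homotopy condition rather than a pointwise fixing condition on $A^{(j_0)}$; the argument relies essentially on ad-minimality of $\mathcal{AD}_L(X)$ to upgrade this homotopy condition to actual invariance.
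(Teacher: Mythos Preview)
Your proof is correct and follows essentially the same route as the paper's: Theorem~\ref{transitive:action} for transitivity, the containment $\supp(\G^{(j_0)})\cup\Lambda^i_{(j_0,\ldots,j_k)}\subseteq T(j_0)$ for the support analysis, Lemma~\ref{Jj} to control the colors of simplices in $T(j_0)$, and local finiteness of $\{U_i\}$ together with Lemma~\ref{finitiCk} to conclude. Two small remarks. First, your careful justification that $\varphi\in\G^{(j_0)}$ actually fixes each simplex of $A^{(j_0)}$ (upgrading the ad-homotopy condition via ad-minimality) is a point the paper leaves implicit, so it is good that you spell it out; note, however, that no induction on dimension is needed, since $S\circ\varphi=S$ pointwise on $|\sigma|$ already forces $\varphi(\sigma)$ and $\sigma$ to represent the same singular simplex, hence to coincide by ad-minimality. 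Second, your requirement that the enumeration satisfy $j_0(s)\to\infty$ is in fact automatic: by Lemmas~\ref{finitiCk} and~\ref{Lemma-numero-di-orbite-tilde-gamma-numerabili} only finitely many orbits have a fixed first color $j_0$, so \emph{every} enumeration satisfies this, and the paper's argument correspondingly shows directly that for each $s$ the set of bad indices $s'$ is finite (combining your finiteness of $B(j_0(s))$ with Lemma~\ref{finitiCk}).
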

\begin{proof}
We need to prove that the conditions described in Definition~\ref{Def-locally-finite-action} are satisfied. 
%By Lemma~\ref{Jj}, if $\Delta$ is the geometric $k$-simplex of $\mathcal{AD}'_L(X)$ corresponding to an algebraic simplex $\sigma\in\Lambda^i_{(j_0,\ldots,j_k)}$, then
%$J(\Delta)\subseteq J(j_0)$. Therefore,
Theorem~\ref{transitive:action} ensures that $\G^i_{(j_0,\ldots,j_k)}$ acts transitively on $\Lambda^i_{(j_0,\ldots,j_k)}$, hence we are left 
to show that the actions of the $\Gamma_{(j_{0}, \cdots, j_{k})}^{i}$ are asymptotically disjoint. 
To this aim, let us consider an algebraic $k$-simplex $\sigma \subset \Lambda(k)$, and suppose that 
either $\sigma$ belongs to 
$\Lambda^i_{(j_{0}, \cdots, j_{k})}$, or $\sigma$ is not fixed by some element in $\G^i_{(j_{0}, \cdots, j_{k})}$ (or both). In any case we have
$J(\sigma)\cap J(j_0)\neq \emptyset$,
%at least one vertex of $\sigma$ is contained in $\bigcup_{j\in J(j_0)} V_j$, 
and thanks to Lemma~\ref{Jj} 
$$
J(\sigma)\subseteq \bigcup_{j\in J(j_0)} J(j)\ . 
$$
As a consequence, there exists a finite set $F$ of vertices of $X$, which only depends on $j_0$, such that all the vertices of $\sigma$ are contained in $F$.
%By Lemma~\ref{finiteness2}, 
This easily implies that 
%there exists a finite set of indices
%$J\subset\mathbb{N}$, 
%\widehat{W} only depending on $(j_{0}, \cdots, j_{k})$, such that the vertices of $\sigma$ are contained in the finite set $\bigcup_{j\in J} V_j$. 
% Using that the $U_i$ are locally finite, it is immediate to check that the set $J'=\{j\in\mathbb{N}\, |\, J(j)\cap (\bigcup_{j\in J} V_j)\neq \emptyset\}$ is finite. Due to the definition of 
 %the subgroups $\G^{(j)}$, this implies that 
 only  a finite number of $\G^{(j)}$ may act non-trivially on $\sigma$. Now the conclusion follows from the fact that, for any fixed
 $j\in\mathbb{N}$, there exists only a finite number of groups $\Gamma_{(j'_{0}, \cdots, j'_{k})}^{i'}$ which are equal to $\G^{(j)}$ (see Lemma~\ref{finitiCk}).
\end{proof}

%We are now ready to prove the vanishing-finiteness Theorem \ref{Van-Fin-Theor}. Since the theorem consists in two parts each of them with slightly different (but crucial!) hypotheses, we split the proof in two paragraph. 
%More precisely, we will discuss in detail the proof of the Vanishing part and then we will show how to reduce the Finiteness case to the previous one.

\section{Locally finite functions versus locally finite chains}\label{lff:lfc}
Just as we did in Section~\ref{toy:sec}, 
we now need to translate chains into functions, and viceversa. The following formula
describes
a tautological correspondence between (possibly non-locally finite) $k$-dimensional chains on $\mathcal{AD}_L'(X)$ and (possibly non-locally finite) real valued
functions on $\Lambda(k)$:
$$
f\colon \Lambda(k)\to\mathbb{R} \quad \longleftrightarrow \quad \sum_{\sigma \in \Lambda(k)} f(\sigma) \cdot \sigma \ .
$$

Recall that $\Lambda(k)$ is partitioned into the $\G$-orbits $\Lambda^i_{(j_0,\ldots,j_k)}$, $(j_0,\ldots,j_k)\in\calC(k)$, $i=1,\ldots,M(j_0,\ldots,j_k)$. A function 
$f\colon \Lambda(k)\to\mathbb{R}$ will be said to be \emph{locally finite} if it is locally finite with respect to this partition (see Definition~\ref{locallyfinitefun}).

\begin{prop}\label{local:finit:translation}
Let $f\colon \Lambda(k)\to\mathbb{R}$. Then $$f\in \lf(\Lambda(k))\quad \Longleftrightarrow \quad \sum_{\sigma \in \Lambda(k)} f(\sigma) \cdot \sigma \ \in \ C_k^\lf(\mathcal{AD}_L'(X))\ .$$
\end{prop}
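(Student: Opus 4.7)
The plan is to unravel both definitions and reduce the equivalence to elementary counting, relying essentially on the finiteness results already established for the coloring of orbits.

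First I would recall the two conditions explicitly. Writing the $\Gamma$-orbit decomposition $\Lambda(k)=\bigsqcup_{(j_0,\ldots,j_k)\in\mathcal{C}(k)}\bigsqcup_{i=1}^{M(j_0,\ldots,j_k)}\Lambda^i_{(j_0,\ldots,j_k)}$, the condition $f\in\lf(\Lambda(k))$ says that $\supp(f)\cap \Lambda^i_{(j_0,\ldots,j_k)}$ is finite for every orbit, while $\sum_\sigma f(\sigma)\sigma\in C_k^\lf(\mathcal{AD}'_L(X))$ says that for every vertex $v$ of $\mathcal{AD}'_L(X)$ only finitely many simplices in $\supp(f)$ have $v$ among their vertices. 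The key structural input I would isolate is that, because every element of $\Gamma$ preserves the coloring, every simplex of $\Lambda^i_{(j_0,\ldots,j_k)}$ has its vertex set contained in the finite set $F_{(j_0,\ldots,j_k)}:=V_{j_0}\cup\cdots\cup V_{j_k}$ (finite by relative compactness of the $U_j$).

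For the direction $f\in\lf(\Lambda(k))\Rightarrow \sum f(\sigma)\sigma\in C_k^\lf$, I would fix a vertex $v$ of $\mathcal{AD}'_L(X)$, say with color $j$. Any simplex $\sigma\in\Lambda(k)$ containing $v$ belongs to some orbit $\Lambda^i_{(j_0,\ldots,j_k)}$ with $j\in\{j_0,\ldots,j_k\}$. By Lemma~\ref{finitiCk} there are only finitely many tuples $(j_0,\ldots,j_k)\in\mathcal{C}(k)$ containing $j$, and by Lemma~\ref{Lemma-numero-di-orbite-tilde-gamma-numerabili} each such tuple gives rise to only finitely many $\Gamma$-orbits. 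Thus $v$ is contained in simplices belonging to only finitely many orbits, and on each of these $\supp(f)$ is finite by hypothesis; summing, only finitely many simplices in $\supp(f)$ contain $v$.

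For the converse, I would fix an orbit $\Lambda^i_{(j_0,\ldots,j_k)}$ and use that all its simplices have all their vertices in the finite set $F_{(j_0,\ldots,j_k)}$. If $\sum f(\sigma)\sigma$ is locally finite, then for each $v\in F_{(j_0,\ldots,j_k)}$ the set $\{\sigma\in\supp(f): v\in V(\sigma)\}$ is finite, so
\[
\#\bigl(\supp(f)\cap \Lambda^i_{(j_0,\ldots,j_k)}\bigr)\leq \sum_{v\in F_{(j_0,\ldots,j_k)}}\#\{\sigma\in\supp(f):v\in V(\sigma)\}<+\infty,
\]
since every simplex in the orbit contains at least one (in fact all) of its vertices in $F_{(j_0,\ldots,j_k)}$.

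The whole argument is essentially bookkeeping; the only point where something non-trivial is used is the passage through the finitely many relevant orbits in the forward direction, which is exactly the content of Lemmas~\ref{finitiCk} and~\ref{Lemma-numero-di-orbite-tilde-gamma-numerabili}. I do not anticipate any genuine obstacle, only the need to be careful that the notion of ``locally finite'' on $\Lambda(k)$ is taken with respect to the orbit partition (and not a finer one), so that the two finiteness statements match up exactly.
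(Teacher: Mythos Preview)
Your proposal is correct and follows essentially the same approach as the paper: both arguments reduce the equivalence to the two finiteness inputs (Lemma~\ref{finitiCk} and Lemma~\ref{Lemma-numero-di-orbite-tilde-gamma-numerabili}) together with the finiteness of each $V_j$. The paper packages the argument as a single chain of equivalences rather than treating the two implications separately, but the content is identical.
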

\begin{proof}
Recall that, for every $(j_0,\ldots,j_k)\in\calC(k)$, the set $\Lambda_{(j_0,\ldots,j_k)}$ is the union of a finite number of $\G$-orbits (see Lemma~\ref{Lemma-numero-di-orbite-tilde-gamma-numerabili}). Therefore, $f\in\lf(\Lambda(k))$ if and only if $\supp(f)\cap \Lambda_{(j_0,\ldots,j_k)}$ is finite for every $(j_0,\ldots,j_k)\in\calC(k)$. By Lemma~\ref{finitiCk},
this is in turn equivalent to the fact that, for any given $\overline{j_0}\in\mathbb{N}$, if $\calC(k,\overline{j_0})$ denotes the subset of $\calC(k)$
given by those $(k+1)$-tuples that contain $\overline{j_0}$, then
$$
\supp(f)\cap \left(\bigcup_{({j_0},j_1,\ldots,j_k)\in\calC(k,\overline{j}_0)} \Lambda_{({j_0},\ldots,j_k)}\right)
$$
is finite. Since the number of vertices of $\mathcal{AD}_L'(X)$ colored by $\overline{j_0}$ is finite, we have thus shown that $f\in\lf(\Lambda(k))$ if and only if, for every vertex $v$ of 
$\mathcal{AD}'_L(X)$, the set of simplices in $\supp(f)$ having $v$ as a vertex is finite. But this is exactly the definition of local finiteness for the chain 
$\sum_{\sigma \in \Lambda(k)} f(\sigma) \cdot \sigma$.
\end{proof}

Let us now discuss how diffusion of functions translates into the context of (locally finite) chains. To this aim, we fix an arbitrary ordering $\Lambda_s$, $s\in\mathbb{N}$,
on the orbits $\Lambda_{(j_0,\ldots,j_k)}^i$, as $(j_0,\ldots,j_k)$ varies in $\calC(k)$ and $i$ in $\{1,\ldots, M(j_0,\ldots,j_k)\}$. Of course,
we fix the corresponding ordering also on the subgroups $\G_{(j_0,\ldots,j_k)}^i$, which will be denoted by $\G_s$, $s\in\mathbb{N}$. 
For every $s\in\mathbb{N}$ we choose a probability measure $\mu_s$ on $\G$ with finite support contained in $\G_s$, and as usual we denote
by $\overline{\mu}$ the collection $\{\mu_s\}_{s\in\mathbb{N}}$. If
$$
c=\sum_{\sigma\in \Lambda(k)} f(\sigma)\sigma
$$
is a locally finite $k$-dimensional chain on $\mathcal{AD}'_L(X)$, then we set 
$$
c_s=\mu_s*(\mu_{s-1}*(\ldots *(\mu_1*c)\ldots))\ ,
$$
where we are identifying locally finite chains with locally finite functions on $\Lambda(k)$ thanks to Proposition~\ref{local:finit:translation}. For example,
we have
\begin{align*}
c_1&=\mu_1*c=\sum_{\sigma\in\Lambda(k)} \left(\sum_{\gamma\in\G} \mu_1(\gamma)f(\gamma^{-1}\sigma)\right)\sigma\\ &=
\sum_{\sigma\in\Lambda(k)} \sum_{\gamma\in\G} \mu_1(\gamma)f(\sigma)(\gamma\cdot\sigma)\\ &=\sum_{\gamma\in\G}\mu_1(\gamma) (\gamma\cdot c)
\end{align*}
(where we denote by $\gamma\cdot c$ the image of $c$ under the obvious action of $\gamma\in\G$ on $C^\lf_k(\mathcal{AD}'_L(X))$).

The following important result shows that diffusion with respect to the $\G_s$ does not alter the homology class of the push-forward via the natural projection $S$.
Recall  that we have a natural map $P_*\colon H_*^\lf(\mathcal{AD}_L'(X))\to H_*^\lf(X)$ induced
by the composition $S_*\circ\phi_*\colon C_*^\lf(\mathcal{AD}_L'(X))\to C_*^\lf(X)$. 

%We first observe that we may  ask that $c$ be \emph{alternating}, in the following sense:
%if $c=\sum_{\sigma\in\Lambda(k)} a_\sigma\cdot \sigma$, and $\sigma=(\Delta,(v_0,\ldots,v_k))$, $\sigma'=(\Delta,(v_{\tau(0)},\ldots,v_{\tau(k)}))$ are algebraic simplices
%which can be obtained one from the other just via a permutation $\tau$ of the vertices, then $a_\sigma=\varepsilon(\tau)a_{\sigma'}$
%(here $\varepsilon(\tau)$ denotes the sign of $\tau$).  Indeed, the linear operator
%$\alt_*\colon C_*^\lf(L)\to C_*^\lf(L)$ such that
%$$
%\alt(\Delta,(v_0,\ldots,v_k))=\frac{1}{(k+1)!} \sum_{\tau\in\mathfrak{S}_{k+1}} (\Delta,(v_{\tau(0)},\ldots,v_{\tau(k)})) 
%$$
%is well defined and homotopic to the identity. Hence we can (and we will) replace $c$ with $\alt_k(c)$ to obtain an alternating cycle whose projection
%on $X$ still represents $\alpha$. 

\begin{prop}\label{diffusion:homology}
Let $c\in C_k^\lf(\mathcal{AD}'_L(X))$ be a locally finite alternating cycle. Then 
$$
P_*([c])=P_*([\overline{\mu}*c])
$$
in $H_k^\lf(X)$.
\end{prop}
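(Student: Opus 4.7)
The plan is to exhibit a locally finite chain $b \in C_{k+1}^\lf(X)$ with $\partial b = S_*\phi_*(\overline{\mu}*c) - S_*\phi_*c$, which will immediately yield $P_*([c]) = P_*([\overline{\mu}*c])$. I will build $b$ as a telescoping sum $b = \sum_{s \geq 1} b_s$, where each $b_s$ realizes a homology between $S_*\phi_* c_{s-1}$ and $S_*\phi_* c_s$ (with $c_0 = c$), and then verify that the total sum is locally finite. Note that each $c_s$ is again an alternating cycle, since diffusion by simplicial automorphisms preserves both properties.

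For a fixed $s$, I will write $c_s = \mu_s * c_{s-1} = \sum_{\gamma \in \supp \mu_s} \mu_s(\gamma)\,(\gamma\cdot c_{s-1})$, which is a \emph{finite} linear combination since $\mu_s$ has finite support. Every $\gamma \in \G$ comes from an admissible automorphism of $\mathcal{AD}_L(X)$, so by definition $S \circ \gamma$ is ad-homotopic to $S$ via an admissible homotopy $H_\gamma$. Condition~(4) in the definition of admissible homotopy guarantees that $H_\gamma$ is trivial off the simplices touching some finite vertex set $V_0^\gamma$; consequently the standard prism operator $T_\gamma$ obtained from $H_\gamma$ sends the locally finite chain $\phi_*c_{s-1}$ to a \emph{finite} singular chain in $X$ (since only finitely many simplices in $\supp c_{s-1}$ meet $V_0^\gamma$), and on the cycle $c_{s-1}$ it satisfies $\partial T_\gamma(\phi_*c_{s-1}) = S_*\phi_*(\gamma \cdot c_{s-1}) - S_*\phi_* c_{s-1}$. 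Averaging yields the finite chain $b_s := \sum_\gamma \mu_s(\gamma)\, T_\gamma(\phi_*c_{s-1}) \in C_{k+1}(X)$ with $\partial b_s = S_*\phi_* c_s - S_*\phi_* c_{s-1}$.

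The hard part, and the main obstacle I foresee, is showing that $b = \sum_s b_s$ is locally finite in $X$. The crucial geometric bound is that for $\gamma \in \G_s = \G^{(j_0(s))}$, the rigidity of elements of $\G^{(j_0)}$ (they fix $A^{(j_0)}$ pointwise, as follows from Definition~\ref{Gjdef} combined with ad-minimality of $\mathcal{AD}_L(X)$) forces $V_0^\gamma \subset \bigcup_{j \in J(j_0(s))} V_j$. Together with condition~(3) of admissible homotopy and Lemma~\ref{Jj}, this confines the support of $b_s$ inside
$$R(j_0(s)) := \bigcup_{j' \in J(j_0(s))} \;\bigcup_{j \in J(j')} W_j.$$
For a compact $K \subset X$, the condition $R(j_0) \cap K \neq \emptyset$ chains three cover intersections together; the local finiteness of $\{W_j\}$, the local finiteness of $\{U_j\}$, and the relative compactness of each $U_j$ successively force $j$, then $j'$, then $j_0$ into finite sets, while Lemmas~\ref{finitiCk} and~\ref{Lemma-numero-di-orbite-tilde-gamma-numerabili} reduce this to finitely many orbit indices $s$. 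Hence only finitely many $b_s$ meet $K$, giving $b \in C_{k+1}^\lf(X)$. Finally, the telescoping identity $\partial b = S_*\phi_*(\overline{\mu}*c) - S_*\phi_* c$ follows coefficient by coefficient once one observes that, for each compact $K \subset X$, the asymptotic disjointness of the $\G_s$ ensures that the finitely many algebraic simplices whose $S$-image meets $K$ all lie in finitely many orbits $\Lambda_{s_1},\ldots,\Lambda_{s_m}$, and hence the chains $c_s$ restricted to these orbits stabilize past $s^* = \max_i k(s_i)$.
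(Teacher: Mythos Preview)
Your proposal is correct and takes essentially the same approach as the paper: construct each $b_s$ from the prism operator associated to the admissible homotopy of $\gamma \in \G_s$, confine its support to the set you call $R(j_0(s))$ (the paper's $\widehat{W}_s$), show these sets form a locally finite family, and telescope. Your handling of the local-finiteness bound and of the stabilization of the $c_s$ on each compact set is somewhat more explicit than the paper's, but the ideas are identical.
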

\begin{proof}
Let us set $c_0=c$, and for every $s\in\mathbb{N}$ set $c_s=\mu_s*(\mu_{s-1}*(\ldots *(\mu_1*c)\ldots))$. Observe that, since $c$ is alternating,
every $c_s$ is also alternating.

Let us now fix $s\geq 1$. 
Recall that $\G_s=\G^i_{(j_0,\ldots,j_k)}=\G^{(j_0)}$ for some $j_0=j_0(s)\in\mathbb{N}$. We then set
$$
\widehat{W}_s=\bigcup_{j\in J(j_0)} \bigcup_{i\in J(j)} W_i\ .
$$
We first observe that the $\widehat{W}_s$ are locally finite. In order to prove this fact, since the $W_i$ are locally finite,
it is sufficient to show that, for every $i\in\mathbb{N}$, we have $i\in \bigcup_{j\in J(j_0(s))}  J(j)$ only for a finite number of indices $s\in\mathbb{N}$. However, this easily follows
from Lemma~\ref{Lemma-numero-di-orbite-tilde-gamma-numerabili} (and the finiteness of $J(k)$ for every $k\in\mathbb{N}$).

We now claim that $S_*(\phi_*(c_s))-S_*(\phi_*(c_{s-1}))=\partial b_s$, where $b_s$ is a finite chain supported in $\widehat{W}_s$.
As observed above, we have
%\begin{equation}\label{cs}
$$
c_{s}=\sum_{\gamma\in{\G}}\mu_{s}(\gamma) (\gamma\cdot c_{s-1})=\sum_{\gamma\in\G_{s}}\mu_{s}(\gamma) (\gamma\cdot c_{s-1})\ ,
$$
%\end{equation}
hence $c_s$ is a convex combination of elements of the form $\gamma\cdot c_{s-1}$, for suitable chosen elements $\gamma\in\G_s$. Therefore, in order to prove the claim it is sufficient to show that, for every $\gamma\in\G_s$, there exists a finite chain $b_s$ supported in $\widehat{W}_s$ and such that
 $S_*(\phi_*(\gamma\cdot c_{s-1}))-S_*(\phi_*(c_{s-1}))=\partial b_s$. 
 
 Let us take an element $\gamma\in\G_s$, 
 %let $\sigma$ be an algebraic $k$-simplex of $\mathcal{AD}'_L(X)$
 %appearing in $c_{s-1}$, and let  $\Delta\subseteq \mathcal{AD}'_L(X)$ be the geometric simplex corresponding to $\sigma$. Let also 
 let 
 $H\colon |\mathcal{AD}'_L(X)|\times [0,1]\to X$ be an admissible homotopy between $S|_{\mathcal{AD}'_L(X)}\circ \gamma$ and $S|_{\mathcal{AD}'_L(X)}$ which is constant on every simplex having 
 no vertex colored by an element of $J(j_0)$, and let $T_k\colon C_k^{\lf}(\mathcal{AD}'_L(X))\to C_{k+1}^{\lf} (X)$ be the algebraic homotopy obtained by composing
 the map $\phi_k\colon C_k^{\lf}(\mathcal{AD}'_L(X))\to C_k^{\lf}(|\mathcal{AD}'_L(X)|)$ with the usual algebraic homotopy associated to $H$. 
 We denote by $\Omega'$ the subset of $\Lambda(k)$ given by the algebraic simplices with at least one vertex colored by an index in $J(j_0)$, and we set
 $\Omega''=\Lambda(k)\setminus \Omega'$. We also set $c_{s-1}=c_{\Omega'}+c_{\Omega''}$, where 
  $$
 c_{\Omega'}=\sum_{\sigma\in\Omega'} a_\sigma \sigma\ ,\qquad c_{\Omega''}=\sum_{\sigma\in\Omega''} a_\sigma \sigma\ .
 $$
 Let us now describe how the homotopy operator $T_k$ acts on simplices in $\Omega''$. 
If $\sigma=(\Delta,(v_0,\ldots,v_k))$ belongs to $\Omega''$, then  by definition of $\G_s$ we have $\gamma\cdot \sigma=\sigma$. Moreover,
 $H(x,t)=S(x)$ for every $x\in |\Delta|$, $t\in [0,1]$. This readily implies that 
 $$
 T_k(\sigma)=\sum_{i=0}^k S\circ (\phi_k(\Delta,(v_0,\ldots,v_i,v_i,\ldots,v_k)))
 $$
 is a degenerate singular simplex. But  $c_{s-1}$ is alternating, hence also $c_{\Omega''}$ is so. Putting together these facts we easily obtain 
 $T_k(c_{\Omega''})=0$. 
 
On the other hand, if $\sigma=(\Delta,(v_0,\ldots,v_k))$ belongs to $\Omega'$,
 then the colors of all the vertices of $\sigma$ lie in $\bigcup_{j\in J(j_0)} J(j)$ (see Lemma~\ref{Jj}). As a consequence, $H(|\Delta|\times [0,1])\subseteq \widehat{W}_s$. Moreover,
  since $c_{s-1}$ is locally finite, the chain $c_{\Omega'}$ is finite. As a consequence, the chain $b_s=T_k(c_{\Omega'})$ is finite and supported in $\widehat{W}_s$.
This concludes the proof of the claim, since
$$
S_*(\phi_*(\gamma\cdot c_{s-1}))-S_*(\phi_*(c_{s-1}))=\partial (T_k(c_{s-1}))=\partial (T_k(c_{\Omega'}))=\partial b_s\ .
$$

It is now straightforward to conclude the proof of the proposition.
Since the $\widehat{W}_s$ are locally finite,  the chain $b=\sum_{s=1}^\infty b_s$ is locally finite. Moreover, it readily follows from the definitions that
$S_*(\phi_*(\overline{\mu}*c))-S_*(\phi_*(c))=\partial b$, whence the conclusion.
\end{proof}

\section{Proof of the Vanishing Theorem for locally finite homology}
Let us briefly recall the setting of Theorem~\ref{Van-Theor-intro}.
We denote by $X$ be a connected non-compact triangulable topological space.
The space $X$ is endowed with  locally finite open covers $\calU=\{U_i\}_{i\in\mathbb{N}}$,   $\mathcal{W}=\{W_i\}_{i\in\mathbb{N}}$ such that each $U_i$ is relatively compact in $X$. 
Moreover:
\begin{enumerate}
\item each $U_i$, $i\in\mathbb{N}$, is amenable in $X$;
\item each $W_i$, $i\in\mathbb{N}$, is large;
\item $U_i\subseteq W_i$ for every $i\in\mathbb{N}$, and there exists $j\in\mathbb{N}$ such that $U_i$ is amenable in $W_i$ for every $i\geq j$;
\item the multiplicity of the cover $\calU$ is equal to $m$. 
\end{enumerate}
We are going to prove that, for every $k\geq m$ and every $h \in \, H^{\lf}_{k}(X)$, we have $$\lVert h \rVert _1= 0\ .$$

First observe that, after replacing $W_i$ with  $X$ for a finite number of indices, we may assume that $U_i$ is amenable in $W_i$ for every $i\in\mathbb{N}$.
%Since $\dim X=N$, we have $H^\lf_n(X)=0$ for every $n\geq N$, hence we may concentrate our attention on $k$-dimensional classes, for $m\leq k\leq N$.

Let $k\geq m$, and fix a class $\alpha\in H_k^\lf(X)$. 
Since $X$ is homeomorphic to the geometric realization of a locally finite simplicial complex $L$, 
Theorem~\ref{simPL hom-eq-sing-one-lf}
implies that there exists a locally finite simplicial cycle $$c \in \, C_{k}^{\lf}(L; \mathbb{R}) \subset C_{k}^{\lf}(\mathcal{AD}_{L}'(X); \mathbb{R})$$ such that $[S_*(\phi_*( c))] = \alpha$, where $S$ is the restriction of the natural projection.
Moreover, we may assume that  $c$ is alternating.
%, in the following sense:
%if $c=\sum_{\sigma\in\Lambda(k)} a_\sigma\cdot \sigma$, and $\sigma=(\Delta,(v_0,\ldots,v_k))$, $\sigma'=(\Delta,(v_{\tau(0)},\ldots,v_{\tau(k)}))$ are algebraic simplices
%which can be obtained one from the other just via a permutation $\tau$ of the vertices, then $a_\sigma=\varepsilon(\tau)a_{\sigma'}$
%(here $\varepsilon(\tau)$ denotes the sign of $\tau$).  Indeed, the linear operator
%$\alt_*\colon C_*^\lf(L)\to C_*^\lf(L)$ such that
%$$
%\alt(\Delta,(v_0,\ldots,v_k))=\frac{1}{(k+1)!} \sum_{\tau\in\mathfrak{S}_{k+1}} (\Delta,(v_{\tau(0)},\ldots,v_{\tau(k)})) 
%$$
%is well defined and homotopic to the identity. Hence we can (and we will) replace $c$ with $\alt_k(c)$ to obtain an alternating cycle whose projection
%on $X$ still represents $\alpha$. 

Let  ${\Gamma}$ be the group introduced in Section \ref{Subsec:proof-amen-cert-subgroups}, and denote by $\Lambda_s$ and $\G_s$ the ${\G}$-orbits in $\Lambda(k)$ 
and the subgroups of ${\G}$ described in Section~\ref{lff:lfc}, respectively. The following result exploits the fact that the multiplicity of $\calU$ is not bigger than $k$.

\begin{lemma}\label{sommazero}
Let $c=\sum_{\sigma\in\Lambda(k)} a_\sigma\cdot \sigma$ be as above. Then
$$
\sum_{\sigma\in\Lambda_s} a_\sigma=0
$$
for every $s\in\mathbb{N}$.
\end{lemma}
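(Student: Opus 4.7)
The plan is to combine three ingredients: the multiplicity bound on $\calU$, the alternation of the chain $c$, and the color-swap automorphisms provided by Corollary~\ref{scambi}.

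First, I would establish a pigeonhole observation. Since we have chosen the triangulation $L$ so finely that the closed star of each vertex is contained in some $U_j\in\calU$, any interior point of a $k$-simplex $\Delta\in L$ with vertices $v_0,\ldots,v_k$ belongs to all the $k+1$ open sets $U_{j(v_0)},\ldots,U_{j(v_k)}$. Since $\mult(\calU)=m\leq k<k+1$, at least two vertices of $\Delta$ must share the same color.

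Next I would fix a $\G$-orbit $\Lambda_s$ and rewrite the sum $\sum_{\sigma\in\Lambda_s}a_\sigma$ after decomposing $\Lambda_s$ according to the underlying geometric simplex. Because the cycle $c$ is supported on algebraic simplices whose underlying geometric simplex lies in $L$, and because $\Lambda_s\subseteq\Lambda_{(j_0,\ldots,j_k)}$ contains only finitely many simplices coming from $L$ (by Lemma~\ref{Lemma-numero-di-orbite-tilde-gamma-numerabili} together with the finiteness of each $V_j$), this is actually a finite sum
\[
\sum_{\sigma\in\Lambda_s}a_\sigma\ =\ \sum_{\Delta}\ \sum_{\substack{\sigma\in\Lambda_s\\ \text{underlying simplex}=\Delta}}a_\sigma,
\]
where the outer sum ranges over the finitely many $k$-simplices $\Delta$ of $L$ meeting $\Lambda_s$. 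For each such $\Delta$ I would verify that the algebraic simplices in $\Lambda_s$ with underlying geometric simplex $\Delta$ are exactly the reorderings $(\Delta,(v_{\pi(0)},\ldots,v_{\pi(k)}))$ as $\pi$ runs over the subgroup $H\leq\mathfrak{S}_{k+1}$ of permutations fixing the color-tuple $(j_0,\ldots,j_k)$ of the orbit. This requires two compatibility checks: elements of $\G$ preserve vertex colors, so no reordering that changes the color-tuple can occur within $\Lambda_s$; conversely, every transposition of two equally-colored vertices of $\Delta$ is realized, by Corollary~\ref{scambi}, by an element of $\aut_{AD}(\mathcal{AD}_L(X))$ fixing $\Delta$ as a set, whose restriction lies in $\G$, and these transpositions generate $H$.

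Finally I would exploit alternation. Fixing a reference ordering $\sigma_0$ of $\Delta$ in $\Lambda_s$, the alternating property of $c$ yields $a_{\sigma_0\circ\pi}=\varepsilon(\pi)\, a_{\sigma_0}$ for every $\pi\in H$, so
\[
\sum_{\substack{\sigma\in\Lambda_s\\ \text{underlying simplex}=\Delta}}a_\sigma\ =\ a_{\sigma_0}\sum_{\pi\in H}\varepsilon(\pi)\ =\ 0,
\]
since by the pigeonhole step $H$ contains at least one transposition and hence surjects onto $\{\pm 1\}$ via the sign character, forcing equal numbers of even and odd elements. Summing over the finitely many $\Delta$ gives the lemma. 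The main subtlety I expect is the bookkeeping in the second step, namely the verification that the $\G$-orbit structure on algebraic simplices with a fixed underlying geometric simplex is precisely the $H$-action on orderings; once that is settled, the alternation argument closes the proof almost tautologically.
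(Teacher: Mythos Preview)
Your proof is correct and uses the same core idea as the paper: the pigeonhole forcing two vertices of the same color, Corollary~\ref{scambi} to realize a color-preserving swap inside the $\G$-orbit, and alternation to make the contributions cancel. The paper's argument is slightly leaner---rather than summing over the full stabilizer $H$, it fixes one transposition $(i_1,i_2)$ with $j_{i_1}=j_{i_2}$ (valid uniformly on $\Lambda_s$ since the color-tuple is constant on the orbit) and uses the resulting fixed-point-free involution $\sigma\mapsto\sigma'$ to pair off the simplices directly---so your decomposition by underlying $\Delta$ and the verification that the $H$-action parametrizes the orderings in $\Lambda_s\cap\{\Delta\}$ are correct but not strictly needed.
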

\begin{proof}
Let us take an algebraic simplex $\sigma=(\Delta,(v_0,\ldots,v_k))\in\Lambda_s$, and for every $i=0,\ldots,k$ let $j_i$ be the color of $v_i$. Since $k\geq m$, there exist 
distinct indices $i_1,i_2\in\{0,\ldots,k\}$ such that $j_{i_1}=j_{i_2}$. Let $\sigma'$ be obtained from $\sigma$ by switching $v_{i_1}$ and $v_{i_2}$, i.e.~let 
$\sigma'=(\Delta,(v'_0,\ldots,v'_k))$, where $v'_{i_1}=v_{i_2}$, $v'_{i_2}=v_{i_1}$, an $v'_j=v_j$ if $j\notin \{i_1,i_2\}$. Corollary~\ref{scambi} ensures that
$\sigma'\in\Lambda_s$. Moreover, since $c$ is alternating, we have $a_{\sigma'}=-a_\sigma$. This shows that the simplices of $\Lambda_s$ appearing in $c$
may be partitioned in pairs, in such a way that the sum of the coefficients of the simplices of each pair vanishes. This concludes the proof. 
\end{proof}

We are now ready to conclude the proof of the Theorem~\ref{Van-Theor-intro}. Let $\varepsilon>0$ be fixed, and for every $s\in\mathbb{N}$ let us set
$\varepsilon_s=2^{-s-1}\varepsilon$. Also denote by $f\colon \Lambda(k)\to\mathbb{R}$ the locally finite function associated to the locally finite cycle $c$ (see
Proposition~\ref{local:finit:translation}).
By Theorem~\ref{Gjamenable}, our assumptions on the covers $\{U_i\}_{i\in\mathbb{N}}$, $\{W_i\}_{i\in\mathbb{N}}$ ensure that every $\G_s$ is amenable. 
Together with Lemma~\ref{sommazero}, this implies that we can apply Proposition~\ref{Prop-locally-finite-diff-op-norm-min-eps} to construct  a
local diffusion operator $\overline{\mu}*\colon \lf(\Lambda(k))\to \lf(\Lambda(k))$ such that
$$
\|\overline{\mu}*f|_{\Lambda_s}\|_1\leq \varepsilon_s=2^{-s-1}\varepsilon\ ,
$$
whence
$$
\|\overline{\mu}*f\|_1=\sum_{s=0}^\infty \|\overline{\mu}*f|_{\Lambda_s}\|_1\leq \sum_{s=0}^\infty 2^{-s-1}\varepsilon=\varepsilon. 
$$

Since the $\ell^1$-norm of a locally finite function on $\Lambda(k)$ coincides with the $\ell^1$-norm of the associated locally finite chain, this implies that
$$
\|\overline{\mu}*c\|_1\leq \varepsilon\ .
$$
Moreover, by 
Proposition~\ref{diffusion:homology} we have
$$
\alpha=P_*([c])=P_*([\overline{\mu}*c])\ ,
$$
and the map $P_*\colon H_k^\lf(\mathcal{AD}'_L(X))\to H_k^\lf(X)$ does not increase the $\ell^1$-norm of chains. We can thus conclude that
$$
\|\alpha\|_1= \|P_*([\overline{\mu}*c])\|\leq \|\overline{\mu}*c\|_1\leq \varepsilon\ .
$$
Since $\varepsilon$ is arbitrary, this concludes the proof of  Theorem~\ref{Van-Theor-intro}.

\section{Proof of the Finiteness Theorem}
The proof of the Finiteness Theorem is completely analogous to the proof of the Vanishing Theorem for locally finite homology. Let $W$ and $\calU=\{U_i\}_{i\in\mathbb{N}}$ 
be as in the statement of Theorem~\ref{Fin-Theor-intro}. By definition, there exists a sequence of large sets $\mathcal{W}=\{W_i\}_{i\in\mathbb{N}}$
such that
$U_i\subseteq W_i$ for every $i\in\mathbb{N}$. %Moreover, there exists $\overline{j}\in\mathbb{N}$ such that $U_i$ is amenable in $W_i$ for every $i\geq \overline{j}$.

Let $K=X\setminus W$. Since $W$ is large, $K$ is compact. Let $U_{-1}$ be a relatively compact open neighbourhood of $K$ in $X$, and let
$\widehat{\calU}=\calU\cup\{U_{-1}\}=\{U_i\}_{i\geq -1}$. By construction, $\widehat{\calU}$ is a locally finite open cover of $X$ by relatively compact sets. 
We set $W_{-1}=X$ and $\widehat{\calW}=\{W_i\}_{i\geq -1}$, so that each $W_i$, $i\geq -1$ is large, and $U_i\subseteq W_i$ for every $i\geq -1$. We consider
the admissible multicomplex $\mathcal{AD}_L'(X)$ associated to the covers $\widehat{\calU}$, $\widehat{\calW}$ just constructed. 

Let us denote by $\widehat{\calU}|_W$ the restriction of $\widehat{\calU}$ to $W$, i.e.~the open cover of $W$ defined by
$\widehat{\calU}|_W=\{U_i\cap W\}_{i\in I}$, where $I=\{i\geq -1\, |\, W\cap U_i\neq\emptyset\}$.

Let $m=\mult(\calU)$. We claim that, up to replacing $W$ with a smaller large set, we may assume that $\mult(\widehat{\calU}|_W)\leq m$. Indeed, since $U_{-1}$ is relatively compact,
there exists $j_0\in\mathbb{N}$ such that $U_{j}\cap U_{-1}=\emptyset$ for every $j\geq j_0$. We may then set $W=\bigcup_{i\geq j_0} U_i$, and it is now clear that
$\mult(\widehat{\calU}|_W)\leq \mult(\calU)=m$.

%The covers $\calU=\{U_i\}_{i\in\mathbb{N}}$,   $\mathcal{W}=\{W_i\}_{i\in\mathbb{N}}$ satisfy now the following conditions:
%\begin{enumerate}
%\item Each $U_i$, $i\in\mathbb{N}$, is relatively compact in $X$;
%\item each $W_i$, $i\in\mathbb{N}$, is large;
%\item $U_i\subseteq W_i$ for every $i\in\mathbb{N}$, and there exists $j\in\mathbb{N}$ such that $U_i$ is amenable in $W_i$ for every $i\geq j$;
%\item there exists a large subset $W\subseteq X$ such that the multiplicity of the restriction of $\calU$ to $W$ is equal to $m$.
%\end{enumerate}

Let $k\geq m$, and let us fix a class $\alpha\in H_k^\lf(X)$. As above, we may take an alternating locally finite 
 simplicial cycle $$c \in \, C_{k}^{\lf}(L) \subset C_{k}^{\lf}(\mathcal{AD}_{L}'(X))$$ such that $[S_*(\phi_*( c))] = \alpha$, where $S$ is the restriction of the natural projection.
We have the following:

\begin{lemma}\label{sommazero2}
Let $c=\sum_{\sigma\in\Lambda(k)} a_\sigma\cdot \sigma$. Then there exists $s'\in\mathbb{N}$ such that
$$
\sum_{\sigma\in\Lambda_s} a_\sigma=0
$$
for every $s\geq s'$.
\end{lemma}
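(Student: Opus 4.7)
The plan is to retrace the argument of Lemma \ref{sommazero}, the only new difficulty being the presence of the auxiliary color $-1$ attached to the open set $U_{-1}$ that was added to cover the compact core $K=X\setminus W$. Since the multiplicity bound $\mult(\calU)\leq k$ holds only for the original cover $\calU=\{U_j\}_{j\in\mathbb{N}}$, the pairing argument by transposition of vertices works without change on those orbits whose color tuples involve only indices in $\mathbb{N}$, while the (finitely many) orbits containing the exceptional color $-1$ have to be excluded. This is precisely what the ``there exists $s'$'' clause permits.

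First I would observe that only finitely many orbits $\Lambda_s$ have a color tuple containing $-1$. Indeed, by Lemma \ref{finitiCk} applied to the fixed index $\overline{j}_0=-1$, only finitely many $(k+1)$-tuples in $\calC(k)$ contain $-1$ as an entry (alternatively, since $U_{-1}$ is relatively compact, the set $V_{-1}$ of vertices colored $-1$ is finite, and hence only finitely many simplices of $L$ have any vertex with color $-1$). Then Lemma \ref{Lemma-numero-di-orbite-tilde-gamma-numerabili} ensures that each such tuple accounts for only finitely many $\G$-orbits. Hence there exists $s'\in\mathbb{N}$ such that for every $s\geq s'$, every algebraic simplex in $\Lambda_s$ has all its vertex colors in $\mathbb{N}$.

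For $s\geq s'$ I would then repeat the argument of Lemma \ref{sommazero} verbatim. Let $\sigma=(\Delta,(v_0,\ldots,v_k))\in\Lambda_s$ satisfy $a_\sigma\neq 0$, so that $\Delta\in L$; denote by $j_i\in\mathbb{N}$ the color of $v_i$. By our assumption on the fineness of $L$, the simplex $\Delta$ lies in the closed star of each of its vertices, hence $\Delta\subseteq \bigcap_{i=0}^{k}U_{j_i}$, and in particular this intersection is non-empty. Since $\mult(\calU)=m\leq k<k+1$, the indices $j_0,\ldots,j_k$ cannot all be distinct, so there exist $i_1\neq i_2$ with $j_{i_1}=j_{i_2}$; moreover the choice of such a colliding pair depends only on the color tuple of $\Lambda_s$ (which is constant along the orbit since $\G$ acts by color-preserving automorphisms), so we may fix $(i_1,i_2)$ uniformly on $\Lambda_s$, e.g.~as the lexicographically smallest such pair. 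Corollary~\ref{scambi} supplies an admissible automorphism which fixes $\Delta$, swaps $v_{i_1}$ and $v_{i_2}$, and fixes all other vertices of $\Delta$; its restriction lies in $\G$, so the transposed algebraic simplex $\sigma'$ belongs to $\Lambda_s$. Since $c$ is alternating, $a_{\sigma'}=-a_\sigma$. This defines a fixed-point-free involution on the set of algebraic simplices of $\Lambda_s$ with nonzero coefficient, grouping them in pairs that contribute opposite values, whence $\sum_{\sigma\in\Lambda_s}a_\sigma=0$ as required.

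The only delicate point will be confirming that the pairing $\sigma\mapsto\sigma'$ is a well-defined involution on the nonzero terms of $c|_{\Lambda_s}$; but this reduces to the two immediate observations that the color tuple is invariant along each orbit (so $(i_1,i_2)$ is intrinsic to $\Lambda_s$ and not to $\sigma$), and that swapping two fixed positions is manifestly an involution on orderings of the vertex set of $\Delta$. No further work is required.
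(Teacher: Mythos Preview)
Your proof is correct and follows essentially the same approach as the paper's: exclude finitely many bad orbits using Lemmas~\ref{finitiCk} and~\ref{Lemma-numero-di-orbite-tilde-gamma-numerabili}, then pair algebraic simplices in each remaining orbit via the transposition supplied by Corollary~\ref{scambi}. Your exclusion criterion (only orbits carrying the color $-1$) is slightly sharper than the paper's (which, after a preliminary shrinking of $W$, excludes all orbits carrying some color $<j_0$), and your direct appeal to $\mult(\calU)\leq m$ lets you bypass that shrinking step entirely; but the substance of the argument is the same.
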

\begin{proof}
Since $W$ is large and the cover $\widehat{\calU}$ is locally finite, there exists ${j_0}\in\mathbb{N}$ such that $U_j\subseteq W$ for every $j\geq j_0$.
%Moreover, up to increasing $j_0$ we may also assume that $U_j$ is amenable in $W_j$ for every $j\geq j_0$. 
By Lemmas~\ref{finitiCk} and~\ref{Lemma-numero-di-orbite-tilde-gamma-numerabili},  
 there exists ${s}'\in\mathbb{N}$ such that, if $s\geq {s}'$, then the colors of all the  vertices of the algebraic simplices of $\Lambda_s$ 
are not smaller than $j_0$.

 Since $\mult(\widehat{\calU}|_W)\leq m$, this implies that, if $s\geq s'$, then 
every simplex of $\Lambda_s$ has two distinct vertices of the same color. The conclusion follows just as in the proof of Lemma~\ref{sommazero}.
\end{proof}

 Let us denote  by $f\colon \Lambda(k)\to\mathbb{R}$ the locally finite function associated to the locally finite cycle $c$.
By Theorem~\ref{Gjamenable}, our assumptions on the sequences $\{U_i\}_{i\in\mathbb{N}}$, $\{W_i\}_{i\in\mathbb{N}}$ ensure that there exists $s''\in\mathbb{N}$
such that $\G_s$ is amenable for every $s\geq s''$. Let $\overline{s}=\max\{s',s''\}$. 
We can now apply Proposition~\ref{Prop-locally-finite-diff-op-norm-min-eps} to construct  a
local diffusion operator $\overline{\mu}*\colon \lf(\Lambda(k))\to \lf(\Lambda(k))$ such that
$$
\|\overline{\mu}*f|_{\Lambda_s}\|_1\leq \varepsilon_s=2^{-s-1}\varepsilon\ ,
$$
for every $s\geq\overline{s}$. Thus, if $M=\sum_{s=0}^{\overline{s}-1} \|(\overline{\mu}*f)|_{\Lambda_s}\|_1$, then
$$
\|\overline{\mu}*c\|_1=\|\overline{\mu}*f\|_1=\sum_{s=0}^\infty \|\overline{\mu}*f|_{\Lambda_s}\|_1\leq M+\sum_{s=\overline{s}}^\infty 2^{-s-1} \leq M+1\ . 
$$
Thanks to Proposition~\ref{diffusion:homology} we can now conclude that 
$$
\|\alpha\|_1\leq \|P_*([\overline{\mu}*c])\|_1\leq \|\overline{\mu}*c\|_1 \leq M+1\ .
$$
This concludes the proof of the Finiteness Theorem~\ref{Fin-Theor-intro}.

\chapter{Some results on the simplicial volume\\ of open manifolds}
In this section we provide some applications of the Vanishing and the Finiteness Theorems for locally finite homology. 
In particular, we show that the simplicial volume of the cartesian product of three {tame} open PL manifolds always vanishes 
(recall from Definition~\ref{tame:defn} that an open topological manifold $X$ is \emph{tame} if it is homeomorphic to the internal part of a compact manifold with boundary, and see below for the definition of PL manifold).

\section{PL manifolds}\label{PL:section}

In this section we restrict our attention to PL manifolds, which define a special subclass of triangulable manifolds. For a much broader overview on PL topology we refer the reader to \cite{Wh61, Zeeman-PL, Stallings, RS-PL}.

\begin{Definizione}
A \emph{combinatorial $n$-manifold} is a simplicial complex of dimension $n$ with the property that the link (i.e.~the boundary of the closed star) of every vertex is combinatorially 
equivalent to an $(n-1)$-sphere (i.e.~it has a subdivision which is isomorphic to a subdivision of the boundary of the standard simplex).
\end{Definizione}

\begin{Definizione}\label{def:PL:mflds}
A topological manifold without boundary $X$ is a \emph{PL manifold} if it admits a piecewise linear structure. Thanks to~\cite[Exercise 2.21(i)]{RS-PL}, a topological manifold is PL if and only if it is homeomorphic to the geometric realization of a combinatorial $n$-manifold.
 \end{Definizione}

 A well-known result of Whitehead~\cite{Wh-Trian} ensures that any smooth (indeed, piecewise smooth) manifold is a PL manifold. 
 
\begin{Osservazione}
One may be tempted to assume that all topological triangulable manifolds without boundary are in fact PL manifolds. However, in dimension $n \geq 5$, there exist examples of triangulable manifolds which do not admit any PL structure. We refer the reader to the survey \cite{Rudyak} for such examples.
\end{Osservazione}

By working in the PL category we are allowed to exploit the useful notion of regular neighbourhood:

\begin{Definizione}
Let $K$ be a simplicial complex and let $L$ be a subcomplex of $K$. Let $L'', K''$ be the second barycentric subdivisions of $L,K$, respectively, and observe that $L''\subset K''$. 
The \emph{closed regular neighbourhood}\index{regular neighbourhood!closed} of $L$ in $K$ is the union of all the closed simplices of $K''$ which meet $L''$ (and of their faces). It will be denoted by the symbol $\mathcal{N}(L, K)$\index{$\mathcal{N}(L, K)$}.

Similarly, the \emph{open regular neighbourhood}\index{regular neighbourhood!open} of $L$ in $K$, which will be denoted by $\mathcal{O}(L, K)$\index{$\mathcal{O}(L, K)$}, is the union of all the open simplices in $K''$ whose closures meet $L''$.

Equivalently, the open (resp.~closed) regular neighbourhood of $L$ in $K$ is the union of all the open (resp.~closed) stars in $K''$ of the vertices of $L''$.
\end{Definizione}

The following fundamental result of Whitehead states that any open PL manifold $X$ is homeomorphic to the open regular neighbourhood of a codimension-1 subcomplex of $X$.

\begin{Teorema}[{\cite[Lemma 2.1]{Wh61}}]\label{Teor:Whitehead:Open:Manifolds}
Let $X$ be an open PL $n$-manifold homeomorphic to the geometric realization of a combinatorial $n$-manifold $K$. 
Then, there exists an $(n-1)$-dimensional subcomplex $L \subset K$ such that $X$ is homeomorphic to the open regular neighbourhood $\mathcal{O}(L, K)$ of $L$ in $K$.
If $X$ is tame, then the subcomplex $L$ may be chosen to be finite.
\end{Teorema}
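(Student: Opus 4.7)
The plan is to exhibit $L$ as a codimension-one PL spine of $X$, obtained by collapsing each $n$-simplex of (a sufficiently fine subdivision of) $K$ across a suitably chosen facet. The mechanism turning this into an honest homeomorphism $X \cong \mathcal{O}(L,K)$ is the standard PL fact that an elementary collapse is a homeomorphism of regular neighbourhoods relative to the target; iterating over all $n$-simplices yields $\overline{M} = \mathcal{N}(L,K)$ in the compact-with-boundary case, and hence $X = \mathcal{O}(L,K)$ after passing to the interior.

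First I would handle the tame subcase, where everything is finite. Write $X = \inte(\overline{M})$ for a compact PL $n$-manifold $\overline{M}$ with nonempty boundary, triangulated by a combinatorial structure $\overline{K}$ extending (a subdivision of) $K$. The nonemptiness of $\partial\overline{M}$ forces any handle decomposition of $\overline{M}$ to contain no $n$-handles; equivalently, there is a proper PL Morse function on $\overline{M}$ whose maxima are all absorbed by the boundary collar. Turning the cancellation of top-dimensional handles into a sequence of elementary PL collapses, one obtains an $(n-1)$-dimensional \emph{finite} subcomplex $L \subset \overline{K}''$ such that $\overline{M}$ collapses onto $L$; equivalently, $\overline{M} = \mathcal{N}(L,\overline{K})$. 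Passing to interiors yields $X = \mathcal{O}(L,\overline{K})$, and by uniqueness of regular neighbourhoods in the PL category one may replace $\overline{K}$ by the original $K$ and adjust $L$ to be an actual subcomplex of $K$, still finite and of dimension at most $n-1$.

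For a general open PL manifold $X$, I would exhaust $X$ by nested compact PL submanifolds with boundary $\overline{M}_1 \subset \overline{M}_2 \subset \cdots$ with $\bigcup_k \inte(\overline{M}_k) = X$; such an exhaustion exists because $K$ is locally finite. Apply the tame case to each cobordism $\overline{M}_{k+1} \setminus \inte(\overline{M}_k)$, arranging the collapses to be trivial on a PL collar of $\partial \overline{M}_k$ so that they glue coherently across all levels. The resulting spines $L_k$ assemble into a locally finite $(n-1)$-dimensional subcomplex $L = \bigcup_k L_k$ with $X = \mathcal{O}(L,K)$.

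The main obstacle I anticipate is keeping the spine inside the originally given combinatorial structure $K$ rather than in some iterated subdivision $K^{(r)}$: PL collapsing arguments naturally live in subdivisions, so a priori $L$ arises as a subcomplex of $K^{(r)}$ rather than of $K$. Overcoming this requires the PL uniqueness of regular neighbourhoods, namely that any two regular neighbourhoods of a compact polyhedron in a PL manifold are ambient PL-homeomorphic; this makes the homeomorphism type of $\mathcal{O}(L,K)$ independent of the subdivision used to build $L$, so that $L$ can be realigned as a subcomplex of $K$ itself without affecting the conclusion. A secondary technical point in the non-tame case is the coherent assembly of the local collapses across the countably many levels $\overline{M}_k$, which is handled by fixing each collapse to be the identity on a PL collar of $\partial \overline{M}_k$.
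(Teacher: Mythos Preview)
The paper does not prove this theorem; it is quoted directly from Whitehead's paper \cite[Lemma~2.1]{Wh61} and used as a black box, so there is no proof in the paper to compare your proposal against.

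That said, your outline is essentially the standard argument and is correct in spirit. The heart of the matter is exactly what you identify: a compact PL $n$-manifold with nonempty boundary always admits a free $(n-1)$-face (there are no $n$-handles), so it collapses to its $(n-1)$-skeleton and hence is a regular neighbourhood of an $(n-1)$-dimensional spine; in the non-tame case one exhausts and assembles as you describe. Your worry about $L$ living only in a subdivision of $K$ rather than in $K$ itself is a genuine technical point, but for the applications in this paper (in particular Theorem~\ref{tre-intro}) only the homeomorphism type of $X$ matters, so replacing $K$ by a subdivision is harmless. Whitehead's original argument is organized slightly differently---he works directly with a locally finite triangulation and removes top-dimensional simplices one at a time via an infinite sequence of collapses ordered so that the process is locally finite---but the underlying idea is the same as yours.
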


\section{Locally coamenable subcomplexes}

Let us now fix an $n$-dimensional PL manifold $X$, suppose that $X$ is homeomorphic to the geometric realization of a simplicial complex $K$, and let
$P$ be a subcomplex of $K$. For every vertex $v$ of $P''$, we denote by $\overline{B}^n_v$ (resp.~$B^n_v$) the closed star (resp.~open star) of $v$ in $X''$, so that
$$\mathcal{N}(P, X) \cong \bigcup_{v \in \, P''} \overline{B}^n_{v}\, ,\qquad \mathcal{O}(P, X) \cong \bigcup_{v \in \, P''} {B}^n_{v}$$ 
(here and henceforth, for ease of notation we will denote subcomplexes of $K$ (and their subdivisions) and their geometric realizations (which are subsets of $X$) with the same symbol).
We will also denote by $S^{n-1}_v$ the link of $v$ in $K''$ (which coincides with the topological boundary of $\overline{B}^n_v$ in $X$, and is therefore homeomorphic to an $(n-1)$-dimensional sphere).
The following definition will play a fundamental role in the sequel.

\begin{Definizione}
Let $P$ be a subcomplex of $K$. Then $P$ is \emph{locally coamenable} (in $K$ or in $X$) if the following conditions hold: $\dim P\leq \dim K-2=n-2$, and 
for every vertex $v$ of $P''$ the  group $$\pi_1(S^{n-1}_{v} \setminus (P \cap S^{n-1}_{v}))$$ is amenable.
\end{Definizione}

\begin{Esempio}\label{Esempi-locally-coamenable}
%\begin{itemize}
%\item[(i)]
 If the codimension of $P\subseteq K$ is at least $3$, then $P$ is locally coamenable.
% Any subpolyhedron $P \subset X$ as above of codimension $codim(P) \geq 3$ is locally coamenable.
Indeed, for every vertex $v$ of $P''$, the subset $P\cap S^{n-1}=P''\cap S^{n-1}$ is a subcomplex of $S^{n-1}$ of codimension at least 3, and this readily implies that
$\pi_1(S^{n-1}_{v} \setminus (P \cap S^{n-1}_{v}))$ is trivial, hence amenable.
%Indeed, it is well-known that given a smooth triangulated closed manifold $X$, the operation which consists in the removal of any of its closed subpolyhedra does not affect the fundamental group (see for instance~\cite[Theorem. X.2.3]{GodET}).

%\todo{La dimostrazione per trasversalita' probabilmente e' piu' diretta e in quel caso potremmo citare Hirsch, qui cito una dimostrazione fatta con la teoria dei rivestimenti}

%This immediately apply to the inclusion  $$i \colon S^{n-1}_{v} \setminus (S^{n-1}_{v} \cap P) \rightarrow S^{n-1}_{v}$$ and so the fundamental group of the difference $S^{n-1}_{v} \setminus P$ is trivial, whence amenable, for any $n \geq 3$.

%\item[(ii)] A $1$-dimensional polyhedron $P$ in $\mathbb{R}^{3}$ is locally coamenable if and only if $P$ is a manifold. Indeed, the intersection between a $2$-sphere centred at $v \in \, P$ and the 1-complex $P$ is a punctured sphere $S^{2}_{p}$, where $p$ is the number of $1$-simplices having $v$ as endpoint. Since $S^{2}_{p}$ has free fundamental group for $p \geq 3$, it is easy to see that $p$ must be equal to $2$ for all $v \in \, P$. This immediately implies that each point of $P$ must admits a neighbourhood which is homeomorphic to an open interval. Thus, $P$ is a $1$-dimensional manifold.
%\end{itemize}
\end{Esempio}

\begin{comment}
The following easy lemma will prove useful for further computations.
\begin{Lemma}\label{lemma-pi-comPL amen}
Let $P$ be a subpolyhedron of an $n$-dimension combinatorial manifold $X$ of codimension $codim(P) \geq 2$. Then, for any vertex $v \in \, P''$ the inclusion
$$
i \colon S^{n-1}_{v} \setminus P \rightarrow \overline{B^{n}_{v}} \setminus P
$$
is a homotopy equivalence.
\end{Lemma}
\begin{proof}
It suffices to prove that $\overline{B^{n}_{v}} \setminus P$ retract by deformation onto $S^{n-1}_{v} \setminus P$. 

To this end it is convenient to rewrite the terms as follows:
\begin{displaymath}
\overline{B^{n}_{v}} = S^{n-1}_{v} \ast v \mbox{ and } P \cap \overline{B^{n}_{v}} = (P \cap S^{n-1}_{v}) \ast v,
\end{displaymath}
where $\ast \, v$ denotes the cone with respect to $v$. Therefore, it is immediate to notice that the radial projection $r \colon \overline{B^{n}_{v}} \setminus \{v\} \rightarrow S^{n-1}_{v}$ restricts to a map $$\widetilde{r} \colon S^{n-1}_{v} \ast v \setminus ((P \cap S^{n-1}_{v}) \ast v) \rightarrow S^{n-1}_{v} \setminus  P.$$ An easy check using the radial homotopy shows that the map $\widetilde{r}$ is the homotopy inverse of the inclusion. This concludes the proof.

\end{proof}
\end{comment}

The following theorems provide criteria for the vanishing or the finiteness of the simplicial volume of the open regular neighbourhood of a locally coamenable subcomplex
 of $K$.
 
\begin{Teorema}\label{teor-co-am}
Let $X=|K|$ be a tame open $n$-dimensional PL manifold,
let $P\subseteq K$ be a finite and locally coamenable subcomplex of $K$, and let $i\geq 2+\dim P $. Then
$$
\| h\|_1=0
$$
for every  $h \in \, H^{\lf}_{i}(\mathcal{O}(P, K))$.
% subpolyhedron of a piecewise linear $n$-manifold $X$. Suppose that the dimension of $P$ is $dim(P) \leq m - 2,$ where $m \leq n = dim(X)$.  Then, the homology of an open regular neighbourhood $\mathcal{O}(P, X)$ of $P$ in $X$ has zero $\ell^{1}$-norm in dimensions greater than or equal to $m$, that is $\lVert h \rVert = 0$ for all $h \in \, H^{\lf}_{i \geq m}(\mathcal{O}(P, X))$. 

%Furthermore, if $K$ is finite (i.e.~if the manifold $X \coloneqq = |K|$ is compact), then 
%$$
%\| h\|_1<+\infty
%$$
%for every  $h \in \, H^{\lf}_{i}(X\setminus P)$.
\end{Teorema}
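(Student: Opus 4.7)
The strategy is to apply the Vanishing Theorem for locally finite homology (Theorem~\ref{Van-Theor-intro}) to the open PL $n$-manifold $Y = \mathcal{O}(P, K)$. The hypotheses require $Y$ to be triangulable, non-compact and connected, and that there exists an amenable open cover of $Y$ of multiplicity $\leq i$ consisting of sets relatively compact in $Y$ and amenable at infinity. The first set of conditions is clear (working connected component by connected component): since $P$ is finite and of codimension $\geq 2$, the closed regular neighbourhood $\mathcal{N}(P, K)$ is a compact PL $n$-manifold with non-empty boundary $\partial\mathcal{N}(P, K)$, and $Y = \inte(\mathcal{N}(P, K))$ is tame, triangulable and non-compact. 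The multiplicity bound to aim for is $\dim P + 2 \leq i$.

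I would construct the cover as follows. Fix a proper continuous function $\rho \colon Y \to [0, \infty)$ that tends to $\infty$ as one approaches $\partial \mathcal{N}(P, K)$ (obtained from the PL bicollar on $\partial \mathcal{N}(P, K)$ in $\mathcal{N}(P, K)$). Borrowing the Loh--Sauer staggering construction used in the proof of Theorem~\ref{invisible:new-intro}, I would choose locally finite covers $\mathcal{R}_v$ of $[0, \infty)$ indexed by $v \in P''$, each consisting of bounded intervals of multiplicity $2$, arranged so that for every subset $S \subseteq P''$ the combined cover $\bigsqcup_{v \in S} \mathcal{R}_v$ has multiplicity $\leq |S| + 1$. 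For each $v \in P''$ and each $I \in \mathcal{R}_v$, set $U_{v, I} = B^n_v \cap \rho^{-1}(I)$. Then $\calU = \{U_{v, I}\}$ is an open cover of $Y$, since the open stars $\{B^n_v\}_{v \in P''}$ cover $Y$ and each $\mathcal{R}_v$ covers $[0, \infty)$.

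Each $U_{v, I}$ is open and relatively compact in $Y$ (because $\rho^{-1}(\overline{I})$ is compact in $Y$ by properness of $\rho$ and boundedness of $I$), and is amenable in $Y$ because the inclusion $U_{v, I} \hookrightarrow Y$ factors through the contractible open star $B^n_v$, so the induced $\pi_1$-map is trivial. For the multiplicity count at a point $x \in Y$: if $x$ lies in $B^n_{v_1}, \ldots, B^n_{v_k}$ with $k \leq \dim P + 1$ (the standard bound coming from the combinatorics of $P'' \subset K''$), then the number of $U_{v, I}$ containing $x$ equals the multiplicity of $\bigsqcup_{j=1}^k \mathcal{R}_{v_j}$ at $\rho(x)$, which by the staggering is $\leq k + 1 \leq \dim P + 2$. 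For amenability at infinity, I would take as enlargement $W_{v, I} = \rho^{-1}((\inf I - 1, \infty))$: each $W_{v, I}$ is large in $Y$ and the sequence $\{W_{v, I}\}$ is locally finite because the intervals $I$ are locally finite in $[0, \infty)$. The set $W_{v, I}$ deformation retracts onto a neighbourhood of $\partial\mathcal{N}(P, K)$, and the image of $\pi_1(U_{v, I}) \to \pi_1(W_{v, I})$ coincides with the image of $\pi_1(B^n_v \cap \partial \mathcal{N}(P, K)) \simeq \pi_1(S^{n-1}_v \setminus (P \cap S^{n-1}_v))$, which is amenable by the local coamenability hypothesis. Theorem~\ref{Van-Theor-intro} then yields $\|h\|_1 = 0$ for every $h \in H^{\lf}_i(Y)$.

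The main technical obstacle is executing the Loh--Sauer staggering to achieve the combined multiplicity bound $\mult(\bigsqcup_{v \in S} \mathcal{R}_v) \leq |S| + 1$ for every subset $S \subseteq P''$, extending the pairwise bound $\mult(\mathcal{R}_v \sqcup \mathcal{R}_w) = 3$ used in Theorem~\ref{invisible:new-intro} to subsets of all sizes. A second, more delicate point will be verifying the homotopy equivalence $B^n_v \cap \partial \mathcal{N}(P, K) \simeq S^{n-1}_v \setminus (P \cap S^{n-1}_v)$ via PL regular neighbourhood theory, together with the deformation retraction of $W_{v, I}$ onto a neighbourhood of $\partial \mathcal{N}(P, K)$, so that the relevant $\pi_1$ computation used to confirm amenability at infinity is fully justified.
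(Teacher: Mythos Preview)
Your strategy is exactly the paper's: cover $\calO(P,K)$ by the open stars $\{B^n_v\}_{v\in (P'')^0}$, refine using preimages of staggered interval covers of $\mathbb{R}$ under a proper map, and apply Theorem~\ref{Van-Theor-intro}. Both technical points you flag, however, are handled more simply in the paper, so your ``obstacles'' dissolve.

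For the multiplicity bound, you do \emph{not} need the general staggering $\mult\bigl(\bigsqcup_{v\in S}\mathcal R_v\bigr)\le |S|+1$ for all $S$. The pairwise conditions $\mult(\mathcal R_v)=2$ and $\mult(\mathcal R_v\sqcup\mathcal R_w)=3$ (exactly those used in Theorem~\ref{invisible:new-intro}) already suffice: if $m+2$ sets $B^n_{v_i}\cap f^{-1}(W_i)$ meet, then since $\mult\{B^n_v\}=m=\dim P+1$ there are at most $m$ distinct vertices among $v_1,\dots,v_{m+2}$, so by pigeonhole either some vertex occurs three times (contradicting $\mult(\mathcal R_v)=2$) or two distinct vertices each occur twice (contradicting $\mult(\mathcal R_v\sqcup\mathcal R_w)=3$).

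For amenability at infinity, instead of a generic collar function $\rho$ the paper uses the star-adapted exhaustion $K_i=P\cup\bigcup_v \overline{B}^n_v(1-1/i)$ and $Z_i=\calO(P,K)\setminus K_i$, assigning to each $F_s$ the large set $W_s=Z_{i(s)}$ with $i(s)$ maximal so that $F_s\subseteq Z_{i(s)}$. The inclusion $F_s\hookrightarrow W_s$ then factors through $G=B^n_v\setminus K_{i(s)}$, which deformation retracts radially onto $B^n_v\setminus P\simeq S^{n-1}_v\setminus(P\cap S^{n-1}_v)$; local coamenability gives $\pi_1(G)$ amenable. This bypasses entirely the identification $B^n_v\cap\partial\mathcal N(P,K)\simeq S^{n-1}_v\setminus(P\cap S^{n-1}_v)$ that you worried about.
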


\begin{proof}
%We will discuss in detail the vanishing part, since the finiteness one easily follows from the very same argument. Moreover, along the proof of the vanishing part, our construction is a slight modification of the proof of  \cite[Theorem 5.2]{Loh-Sauer}.

%Let us prove the first statement. To this aim, 
Our proof is based on a construction inspired by~\cite[Theorem 5.3]{Loh-Sauer}. Let us set $m=\dim P+1$, and
let $\calU$ be the open cover of $\mathcal{O}(P, K)$ given by the family $\{B^n_v\}$, as $v$ varies in the set $(P'')^0$ of vertices of $P''$. %(later, for ease of notation we will simply write $v \in \, P''$). 
We denote by $\mathcal{U} |_{P}$ the restriction of the cover $\calU$ to $P$, i.e.~the open cover of $P$ given by
the family $\{B^n_v\cap P\ |\ B^n_v\in\calU\}$. Since $P$ is finite, the open covers $\calU$ and $\calU|_P$ are finite. Moreover,
it is readily seen that, for every vertex $v$ of $P''$, the set $B^n_v\cap P$ is the open star of $v$ in $P''$. This readily implies that 
$\mult(\calU |_P)=m$. %It follows that the multiplicity $m$ of $\calU$ is bounded from below by $1 + \dim P$. 

We claim that also $\mult(\calU) =m$. 
Of course $\mult(\calU)\geq \mult(\calU|_P)=m$. Suppose now that $B_{v_{1}}^n \cap B_{v_2}^n \cap \cdots \cap B_{v_{m'}}^n \neq \emptyset$, where the $v_i$ are vertices of $P''$. Then there exists a 
simplex $\sigma$ of $K''$ whose set of vertices is equal to
%point $x \in K$ which lies into an $n$-simplex $\sigma$ 
$\{v_1, \cdots, v_{m'}\}$. Since $P''$ is a full subcomplex of $K''$, the simplex $\sigma$ also belongs to $P''$. Thus the internal part of $\sigma$ is contained
in $(B_{v_{1}}^n\cap P) \cap (B_{v_2}^n\cap P) \cap \cdots \cap (B_{v_{m'}}^n\cap P)$. This implies that $m'\leq \mult(\calU|_P)=m$, hence $\mult(\calU)\leq m$ and $\mult(\calU)= m$.

Our next goal is the construction of an open cover of $\mathcal{O}(P, K)$ of multiplicity $m+1=2 + \dim P$ satisfying the hypothesis of Theorem \ref{Van-Theor-intro}. 
To this aim we would like to refine the open cover $\calU$ into an open cover of $\calO(P,K)$ by relatively compact sets, while keeping some control on the multiplicity.

Let us consider locally finite open covers $\mathcal{R}_v$, $v\in P''$, of the real line $\mathbb{R}$ satisfying the following properties: each set in any $\calR_v$ is a bounded interval;  
$\mult(\mathcal{R}_v)= 2$ for every $v \in \, P''$;  $\mult(\mathcal{R}_v \sqcup \mathcal{R}_w) = 3$ for every $v \neq w \in \, P''$. The fact that such open covers indeed exist is observed  in~\cite[Theorem~5.3]{Loh-Sauer}.

Let now $f \colon \mathcal{O}(P, K) \rightarrow \mathbb{R}$ be a proper map, and define a new cover of $\mathcal{O}(P, K)$ as follows:
$$
\mathcal{U}' =\left\{B^n_v \cap f^{-1}(W) \ |\ v \in \, (P'')^0,\  W\in\calR_v\right\}\ .
$$
We check that the open cover $\mathcal{U}'$  satisfies the hypothesis of Theorem \ref{Van-Theor-intro} (where we replace
$X$ by $\calO(P,K)$). 
%In the following we say that a set of $\mathcal{U}''$ of type $U_j \cap f^{-1}(W)$ has $J$-\emph{index} $j$.

Using that each $\calR_v$ is a locally finite cover of $\R$ one easily shows that 
$\calU'$ is locally finite. In addition, since $f$ is a proper map and the elements of the $\mathcal{R}_v$ are bounded,  every open set of $\mathcal{U}'$ is relatively compact in $\calO(P,K)$.  

Let us prove that $\mult(\mathcal{U}') \leq m+1$. By contradiction, suppose  that there exists a collection $\{U_1,\ldots,U_{m+2}\} \subset \mathcal{U}'$ of $m+2$ distinct sets 
such that $\bigcap_{i=1}^{m+2} U_i\neq\emptyset$. Let $v_i\in (P'')^0$ be such that $U_i=B^n_{v_i}\cap f^{-1}(W)$ for some $W\in\calR_{v_i}$.
Since $\mult(\calU)=m$, at least one of the following possibilities must hold: there exist distinct indices $i,j,k\in\{1,\ldots,m+2\}$ such that $v_i=v_j=v_k$; or
there exist distinct indices $i,j,k,h$ such that $v_i=v_j$ and $v_k=v_h$. The first case contradicts $\mult(\calR_{v_i})=2$, while the second one
is not compatible with $\mult(\calR_{v_i}\sqcup \calR_{v_k})=3$. Thus $\mult(\mathcal{U}') \leq m+1$.

The cover $\mathcal{U}'$  of $\mathcal{O}(P, K)$ is amenable, because 
for every $v \in \, (P'')^0$, $W \in \, \calR_v$ 
the inclusion of $B_v^n \cap f^{-1}(W)$ into $\calO(P, K)$ factors through $B_v^n$, which is contractible.

Let us now prove that $\mathcal{U}'$ is amenable at infinity. For every $0<r<1$ and every vertex $v$ of $P''$, we denote by $\overline{B}_v^n(r)$ the closed ball of $\calO(P,K)$ of radius $r$ centered at $v$,
i.e.~the set of points in $B^n_v$ whose barycentric coordinate with respect to $v$ is not smaller than $1-r$. For every $i\in\mathbb{N}$, $i\geq 1$, we  set 
$$
K_i = P \cup \left(\bigcup_{v\in (P'')^0} \overline{B}_v^n\left(1 - \frac{1}{i}\right)\right)
$$
(the subcomplex $P$ is automatically contained in the union of balls of radius $1-1/i$ provided that $i\geq n$). 
It is readily seen that the family $\{K_i\}_{i\geq 1}$ provides an exhaustion of $\calO(P,K)$ by compact sets. Therefore, if we set $Z_i=\calO(P,K)\setminus K_i$,
then each $Z_i$ is large in $\calO(P,K)$, and the family $\{Z_i\}_{i\geq 1}$ is locally finite. Let us now arbitrarily order the elements of $\calU'$ (which are countable) by setting
$\calU'=\{F_s\}_{s\in\mathbb{N}}$. Since $\calU'$ is locally finite, only a finite number of elements of $\calU'$ intersect $K_1$. Therefore, there exists $s_0\in\mathbb{N}$ such that,
for every $s\geq s_0$, the set $F_s$ is contained in $Z_{i}$ for some $i\in\mathbb{N}$. Let $i(s)=\max \{i\, |\, F_s\subseteq Z_i\}$ (for every $s\geq s_0$, this maximum exists since
$\bigcap_{i\geq 1} Z_i=\emptyset$), and for every $s\geq s_0$ set $W_s=Z_{i(s)}$. In order to prove that the cover $\mathcal{U}'$ is amenable at infinity it is sufficient
to show that the family $\{W_s\}_{s\geq s_0}$ is locally finite, and that for every $s\geq s_0$ the set $F_s$ is amenable in $W_s$. 

Since $\{Z_i\}_{i\geq 1}$ is locally finite, the local finiteness of $\{W_s\}_{s\geq s_0}$ readily follows from the fact that the map $s\mapsto i(s)$ is finite-to-one. In fact, if this were
not the case, then for some $i\geq 1$ there would be an infinite number of elements of $\calU'$ intersecting $Z_{i}\setminus Z_{i+1}$, which is relatively compact in $\calO(P,K)$. This would contradict
the local finiteness of $\calU'$. We have thus proved that the family $\{W_s\}_{s\geq s_0}$ is locally finite. Let us now fix $s\geq s_0$. By construction, $F_s\subseteq W_s$
(in particular, $F_s\cap P=\emptyset$), and $F_s=B_v^n \cap f^{-1}(W)$ for some $v \in \, (P'')^0$, $W\in\calR_v$. This implies that $F_s$ is contained in the set $G = B_v^n \setminus (B_v^n \cap K_s)$. 

We claim that $G$ has an amenable fundamental group. Since
the inclusion of $F_s$ into $W_s$ factors through $G$, this proves that $F_s$ is amenable in $W_s$, thus concluding
the proof that $\calU'$ is amenable at infinity.
In order to prove the claim,  observe that $G$ is 
a deformation retract of $B^n_v\setminus (B_v^n \cap P)$, which in turn is homotopy equivalent to $\overline{B}_v^n \setminus (\overline{B}_v^n \cap P)$, hence to $S^{n-1}_v \setminus (S^{n-1}_v \cap P)$.
%homotopy equivalent to $B_v^n \setminus (B_v^n \cap P)$, where the homotopy is obtained by retracting the balls defining $K_s$ into $P$. Then, $B_v^n \setminus (B_v^n \cap P)$ is clearly homotopy equivalent to $\overline{B}_v^n \setminus (\overline{B}_v^n \cap P)$, whence to $S^{n-1}_v \setminus (S^{n-1}_v \cap P)$. 
The  claim now follows from the local  amenability of $P$ in $K$. We have thus proved  that $\calU'$ is amenable at infinity.

Now the conclusion follows by applying  Theorem~\ref{Van-Theor-intro} to $\calO(P, K)$ with respect to the open cover $\calU'$.
\end{proof}

\begin{Teorema}\label{teor-co-am2}
Let $X=|K|$ be a compact $n$-dimensional PL manifold, and let $P\subseteq K$ be a locally coamenable sucomplex of $K$. Assume that $i\geq 2+\dim P $. Then
$$
\| h\|_1<+\infty
$$
for every  $h \in \, H^{\lf}_{i}(X\setminus P)$.
\end{Teorema}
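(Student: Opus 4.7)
The plan is to mirror the proof of Theorem~\ref{teor-co-am} by replacing the Vanishing Theorem~\ref{Van-Theor-intro} with the Finiteness Theorem~\ref{Fin-Theor-intro}. Since $X$ is compact, the complement $X\setminus P$ is an open (possibly disconnected) triangulable subset whose locally finite homology we access via Theorem~\ref{Fin-Theor-intro}; to this end I need a large open subset $W\subseteq X\setminus P$ and an open cover $\calU$ of $W$ of multiplicity at most $i$ whose elements are relatively compact in $X\setminus P$ and whose sequence is amenable at infinity. (Should $X\setminus P$ fail to be connected, one argues component by component.)

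Setting $m = 1+\dim P$, I would take $W = \mathcal{O}(P,K)\setminus P$, which is open in $X\setminus P$ and whose complement $X\setminus\mathcal{O}(P,K)$ is a closed subset of the compact space $X$, hence compact; so $W$ is large in $X\setminus P$. As in the proof of Theorem~\ref{teor-co-am}, the family $\{B_v^n\}_{v\in(P'')^0}$ is an open cover of $W$ of multiplicity $m$, which I refine using a proper map $f\colon W\to\mathbb{R}$ (available since $W$ is a $\sigma$-compact locally compact Hausdorff space) and locally finite covers $\calR_v$ of $\mathbb{R}$ by bounded open intervals with $\mult(\calR_v)=2$ and $\mult(\calR_v\sqcup\calR_w)=3$ for $v\neq w$, setting
$$\calU = \{B_v^n\cap f^{-1}(I)\ |\ v\in(P'')^0,\ I\in\calR_v\}\ .$$
Since $f^{-1}(I)\subseteq W$ for bounded $I$, each element of $\calU$ lies in $W$ (hence is disjoint from $P$) and is relatively compact in $W$, therefore relatively compact in $X\setminus P$. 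Exactly the combinatorial computation of Theorem~\ref{teor-co-am} gives $\mult(\calU)\leq m+1 = 2+\dim P\leq i$.

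The key point is verifying that $\calU$ is amenable at infinity. Fixing any exhaustion $\{K_j\}$ of $W$ by compact sets and setting $Z_j = W\setminus K_j$ yields a locally finite sequence of large open subsets of $W$. As before, after ordering $\calU=\{F_s\}_{s\in\mathbb{N}}$ and defining $W_s = Z_{i(s)}$ with $i(s) = \max\{i : F_s\subseteq Z_i\}$ for all sufficiently large $s$, the family $\{W_s\}$ is locally finite; moreover each $W_s$ is large in $W$ and $W$ is large in $X\setminus P$, so $W_s$ is large in $X\setminus P$. For amenability, each $F_s = B_v^n\cap f^{-1}(I)$ deformation retracts via the radial retraction centered at $v$ onto a subset of $S_v^{n-1}\setminus(S_v^{n-1}\cap P)$, whose fundamental group is amenable by local coamenability of $P$; the inclusion $F_s\hookrightarrow W_s$ therefore factors, up to homotopy, through a space with amenable fundamental group, so $F_s$ is amenable in $W_s$.

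Finally, applying the Finiteness Theorem~\ref{Fin-Theor-intro} to the triangulable non-compact space $X\setminus P$ with the large set $W$ and the cover $\calU$ yields $\|h\|_1<+\infty$ for every $h\in H_i^{\lf}(X\setminus P)$. The main obstacle is a bookkeeping one, namely keeping track of compactness and largeness with respect to the three nested spaces $W\subset X\setminus P\subset X$; however, the combinatorial multiplicity count, the explicit refinement via $f$ and the $\calR_v$, and the radial retraction argument all transfer verbatim from Theorem~\ref{teor-co-am}, so no genuinely new technical difficulty arises.
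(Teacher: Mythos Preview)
Your overall strategy matches the paper's, but there is a genuine gap in the choice of the proper map. You take $f\colon W\to\mathbb{R}$ proper, where $W=\mathcal{O}(P,K)\setminus P$. The trouble is that $W$ has two kinds of ends in its own right: one toward $P$ and one toward the frontier $\partial\,\mathcal{O}(P,K)$. A proper $f$ on $W$ goes to infinity along both, so your refinement $\calU$ produces infinitely many sets $F_s=B_v^n\cap f^{-1}(I)$ accumulating near $\partial\,\mathcal{O}(P,K)$. But $\partial\,\mathcal{O}(P,K)$ sits inside a compact region of $X\setminus P$, so infinitely many $F_s$ meet a single compact subset of $X\setminus P$; hence $\calU$ is not locally finite in $X\setminus P$, and by the remark following Definition~\ref{amenable:infinity} it cannot be amenable at infinity there. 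The same issue hits your choice $Z_j=W\setminus K_j$ for an arbitrary compact exhaustion of $W$: each $Z_j$ has a component near $\partial\,\mathcal{O}(P,K)$, so the family $\{Z_j\}$ is not locally finite in $X\setminus P$ either.

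The paper avoids this by taking $f\colon X\setminus P\to\mathbb{R}$ proper (so that preimages of bounded intervals are compact in $X\setminus P$, not merely in $W$), and by choosing the large sets as $Z_j=\mathcal{O}^j\setminus P$ for a nested sequence of shrinking regular neighbourhoods $\mathcal{O}^j$ of $P$. These $Z_j$ only shrink toward $P$, so they are genuinely locally finite in $X\setminus P$, and the amenability of each $F_s$ in its $W_s$ then follows from the radial retraction argument exactly as you outline. Once you make these two adjustments your argument goes through.
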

\begin{proof}
%The proof is very similar to the argument developed to prove the previous Theorem~\ref{teor-co-am}.
Let us concentrate our attention on the open manifold $X \setminus P$. Since $X$ is compact, the open set $W =\mathcal{O}(P, K) \setminus P$ is large
in $X\setminus P$.
We would like to obtain an open cover of $W$ satisfying the hypothesis of the Finiteness Theorem \ref{Fin-Theor-intro}. To this end, we slightly modify the construction provided in the proof of the previous theorem.

Let $\calU = \{B_v^n\}_{v \in \, (P'')^0}$ be the open cover of $\calO(P, K)$ given by the open stars of the vertices of  $P''$. 
We have seen in the proof of Theorem~\ref{teor-co-am} that
$\mult(\calU) = 1 + \dim P$. 
By setting $\mathcal{U'} = \{B_v^n \setminus P\ |\ B_v^n \in \, \calU\}$,
we restrict $\calU$ to an open covering $\mathcal{U'}$ of $W$ such that $\mult(\calU')=\mult(\calU)=1+\dim P$.

Let us now refine $\mathcal{U}'$ as follows.
%in order to obtain an open cover satisfying the hypothesis of the Finiteness Theorem~\ref{Fin-Theor-intro}. 
We consider the family of open covers $\{\mathcal{R}_v\}_{v \in \, (P'')^0}$ of ${\R}$
described in the proof of Theorem~\ref{teor-co-am}.
Let $f \colon X \setminus P \rightarrow \mathbb{R}$ be a proper map, and consider the refinement of $\calU'$ given by
$$\mathcal{U}'' = \{(B_n^v \setminus P) \cap f^{-1}(A)\ |\  A \in \, \calR_v\}_{v \in \, (P'')^0}\ .$$
Using that $f$ is proper, one easily checks that every element of $\calU''$ is relatively compact in $X\setminus P$. Moreover, the family $\calU''$ is locally finite
in $X\setminus P$, and 
%every compact subset of $X \setminus P$ intersects only finitely many elements in $\calU''$ (in particular, $\calU''$ is a locally finite covering of $L$). Moreover,  
$\mult(\mathcal{U}'')\leq 2 + \dim P$ (for all these facts, see again the proof of Theorem~\ref{teor-co-am}).

%convenzione commentata

\begin{comment}
Now and in the sequel we assume the following convention: Let $A, B$ be two subcomplex of $X$ in its $a$-barycentric subdivision and in its $b$-barycentric subdivision, respectively. Assume that $a > b$. Then, the simbol $A \cup B$ means the subcomplex of the $a$-barycentric subdivision of $X$ obtained by adding to $A$ the $(a-b)$-barycentric subdivision of $B$.
\end{comment}

Let us prove that $\calU''$ is amenable at infinity. 
We inductively define $\calO^1 = \calO(P,K)$, $\calO^2 = \calO(P, \calO^1)$ and $\calO^j = \calO(P, \calO^{j-1})$ for every $j \in \, \mathbb{N}$. This construction provides a decreasing sequence of open regular neighbourhoods of $P$ becoming thinner and thinner.   For every $j \in \, \mathbb{N}$ we set
$$Z_j \coloneqq \calO^j  \setminus P.$$ 
By construction, $Z_1 = \calO^1 \setminus P=W$, and
$Z_j$ is large for every $j \in \, \mathbb{N}$. Moreover, 
the family $\{Z_j\}_{j \in \, \mathbb{N}}$ is locally finite in $X\setminus P$. 
The open cover $\calU''$ is countable, hence we can arbitrarily order it by setting $\calU'' = \{F_s\}_{s \in \, \mathbb{N}}$. 
Just as in the proof of Theorem~\ref{teor-co-am}, for every $j\in\mathbb{N}$ we can define 
%Since each $F_s$ is relatively compact, we know that $F_s$ can be contained in at most a finite number of $Z_j$. Let $s(j)$ be 
the maximum integer $s(j)$ such that $F_s \subset Z_j$, and the family $\{W_s\}_{s \in \, \mathbb{N}}$ turns out to be locally finite in $X\setminus P$. 
%As in the vanishing case, since the family $\{Z_j\}_{j \in \, \mathbb{N}}$ is locally finite, the local finiteness of $\{W_s\}_{s \in \, \mathbb{N}}$ readily follows from the fact that the map $j \rightarrow s(j)$ is finite-to-one (the argument is exactly the same of the vanishing case).

Finally, by arguing as in the proof of Theorem~\ref{teor-co-am} one proves that  $F_s$ is an amenable subset of $W_s$ for every $s \in \, \mathbb{N}$. 
%However, this is the very argument of the vanishing case, which leads to the conclusion of the amenability at infinity of $\calU''$.
We can then apply the Finiteness Theorem~\ref{Fin-Theor-intro} to conclude the proof.
\end{proof}

\section{The simplicial volume of the product of three open manifolds}\label{tre-sec}

One of the most striking applications of the Theorem~\ref{teor-co-am} is the following result, which ensures that the simplicial volume of the product of three tame open manifolds necessarily vanishes.
As mentioned in the introduction, there exist no examples of open manifolds $X_1,X_2$ for which the simplicial volume $\|X_1\times X_2\|$ is known to be positive and finite.

\begin{thm:repeated:tre-intro}%\label{Corollario-prodotto-tre-varieta-aperte-zero-simPL vol}
Let $X_1, X_2,X_3$ be tame open oriented PL manifolds of positive dimension.
 Then
$$\lVert X_1 \times X_2 \times X_3 \rVert = 0\ .$$
\end{thm:repeated:tre-intro}
\begin{proof}
Let $n_i=\dim X_i$, $i=1,2,3$.
By Theorem \ref{Teor:Whitehead:Open:Manifolds} we know that each $X_i$
is homeomorphic to the open regular neighbourhood $\calO(P_i,X_i)$ of a finite subpolyhedron $P_i \subset X_i$ 
such that $\dim P_i=n_i-1$. The product $P_1\times P_2\times P_3$ naturally embeds as a subpolyhedron in the product
$X_1\times X_2\times X_3$, and it is readily seen that the regular neighbourhood $\calO(P_1\times P_2\times P_3,X_1\times X_2\times X_3)$ is homeomorphic
to the product $\calO(P_1,X_1)\times \calO(P_2,X_2)\times \calO(P_3,X_3)$. 
%In particular, we have 
%$$
%\|X_1\times X_2\times X_3\|=\|\calO(P_1\times P_2\times P_3,X_1\times X_2\times X_3)\|\ .
%$$
Let now $n=\dim X_1\times X_2\times X_3=n_1+n_2+n_3$.
The codimension of $P_1\times P_2\times P_3$ in $X_1\times X_2\times X_3$ is equal to 3, so by Example~\ref{Esempi-locally-coamenable} 
$P_1\times P_2\times P_3$ is locally coamenable in $X_1\times X_2\times X_3$. Therefore, the Theorem~\ref{teor-co-am}
implies that, if $i\geq 2+\dim (P_1\times P_2\times P_3)=n-1$ and 
$h\in H^\lf_i(\calO(P_1\times P_2\times P_3,X_1\times X_2\times X_3))$, then $\|h\|_1=0$. 
Now the conclusion follows from the fact that, as observed above, $X_1\times X_2\times X_3$ is homeomorphic to
$\calO(P_1\times P_2\times P_3,X_1\times X_2\times X_3)$.
\end{proof}

\begin{Osservazione}
The proof of the previous corollary shows in fact a bit more: if $X=X_1\times X_2\times X_3$ is the product of three tame open manifolds of positive dimension
and $n=\dim X$, then the $\ell^1$-seminorm vanishes
both on $H^\lf_n(X)$ and on $H^\lf_{n-1}(X)$.
\end{Osservazione}

\bibliographystyle{amsalpha}
\bibliography{biblionote}

\end{document}